\theoremstyle{plain}
\newtheorem{theorem}{Theorem}\numberwithin{theorem}{section}
\newtheorem{main}{Main~Theorem}{}
\newtheorem{lemma}{Lemma}\numberwithin{lemma}{section}
\newtheorem{proposition}{Proposition}\numberwithin{proposition}{section}
\newtheorem{corollary}{Corollary}\numberwithin{corollary}{section}
\theoremstyle{definition}
\newtheorem{definition}{Definition}\numberwithin{definition}{section}
\theoremstyle{remark}
\theoremstyle{exam} \theoremstyle{ob}
\newtheorem{remark}{Remark}\numberwithin{remark}{section}
\newtheorem{exam}{Example}\numberwithin{exam}{section}
\numberwithin{ob}{section}
\numberwithin{equation}{section}
\begin{document}

\title[Multiple
Hilbert Transforms] {Multiple-Hilbert transforms associated\\ with
 Polynomials}

\author
{Joonil Kim}

\address{Department of Mathematics\\
            Yonsei University  \\
            Seoul 121, Korea}
   \email{jikim7030@yonsei.ac.kr}

\keywords{Multiple Hilbert transform, Newton polyhedron, Face, Cone,
  Oscillatory Singular Integral}

\subjclass[2000]{Primary 42B20, 42B25}
\begin{abstract}
Let
$\Lambda=(\Lambda_1,\cdots,\Lambda_d)$ with $\,\Lambda_\nu\subset\mathbb{Z}_+^n\,$, and set
$\mathcal{P}_{\Lambda}$ the family of all vector polynomials,
\begin{eqnarray*}\label{vecpol}
\mathcal{P}_{\Lambda}=\left\{P_{\Lambda}:
P_{\Lambda}(t)=\left(\sum_{\mathfrak{m}\,\in
\Lambda_{1}}\,c_{\mathfrak{m}}^{1}\,t^{\mathfrak{m}},\cdots,\sum_{\mathfrak{m}\,\in
\Lambda_{d}}\,c_{\mathfrak{m}}^{d}\,t^{\mathfrak{m}}\right)\   \text{with}\ \ t\in\mathbb{R}^n\right\}.
\end{eqnarray*}
   Given   $P_\Lambda\in \mathcal{P}_{\Lambda}$,
we consider a class of multi-parameter oscillatory singular integrals,
\begin{eqnarray*}
\mathcal{I}(P_\Lambda,\xi,r)=\text{p.v.}\int_{\prod
[-r_j,r_j]}e^{i\langle\xi,P_{\Lambda}(t)\rangle}
\frac{dt_1}{t_1}\cdots\frac{dt_n}{t_n}\ \ \text{where}\ \ \xi \in\mathbb{R}^d,
 r\in\mathbb{R}_+^n.
\end{eqnarray*}
When $n=1$, the integral $\mathcal{I}(P_\Lambda,\xi,r)$ for any
$P_\Lambda\in\mathcal{P}_\Lambda$ is bounded uniformly in $\xi$ and
$r$. However, when $n\ge 2$, the uniform boundedness depends on each
indivisual polynomial $P_\Lambda$. In this paper, we fix $\Lambda$
and  find a necessary and sufficient condition on $\Lambda$  such that
\begin{eqnarray}\label{pgo}
\text{for all $P_\Lambda\in\mathcal{P}_\Lambda$}, \ \
\sup_{\xi, \, r}
|\mathcal{I}(P_\Lambda,\xi,r)|\le C_{P_\Lambda}<\infty.
\end{eqnarray}
The condition is described by faces and their cones of
polyhedrons associated with $\Lambda_\nu$'s.
\end{abstract}

\maketitle

\setcounter{tocdepth}{1}

\tableofcontents

 \section{Introduction}
Let $\mathbb{Z}_+$ denote the set of all nonnegative
integers and let $\,\Lambda_\nu\subset\mathbb{Z}_+^n\,$ be the
finite set of multi-indices  for each $\nu=1,\cdots,d$.
 Given
$\Lambda=(\Lambda_1,\cdots,\Lambda_d)$, we set
$\mathcal{P}_{\Lambda}$ the family of all vector polynomials
$P_{\Lambda}$  of the following form:
\begin{eqnarray}\label{vecpol}
\mathcal{P}_{\Lambda}=\left\{P_{\Lambda}:
P_{\Lambda}(t)=\left(\sum_{\mathfrak{m}\,\in
\Lambda_{1}}\,c_{\mathfrak{m}}^{1}\,t^{\mathfrak{m}},\cdots,\sum_{\mathfrak{m}\,\in
\Lambda_{d}}\,c_{\mathfrak{m}}^{d}\,t^{\mathfrak{m}}\right)\   \text{with}\ \ t\in\mathbb{R}^n\right\}
\end{eqnarray}
where $c^{\nu}_{\mathfrak{m}}$'s are nonzero real numbers. Given   $P_\Lambda\in \mathcal{P}_{\Lambda}$,
$\xi=(\xi_1,\cdots,\xi_d)\in\mathbb{R}^d$
and $r=(r_1,\cdots,r_n)\in\mathbb{R}_+^n$,
we define a multi-parameter oscillatory singular integral:
\begin{eqnarray*}
\mathcal{I}(P_\Lambda,\xi,r)=\text{p.v.}\int_{\prod
[-r_j,r_j]}e^{i\langle\xi,P_{\Lambda}(t)\rangle}
\frac{dt_1}{t_1}\cdots\frac{dt_n}{t_n}
\end{eqnarray*}
 where the principal value
integral is defined by
\begin{eqnarray*}
 \lim_{\epsilon\rightarrow 0}\int_{\prod
\{\epsilon_j<|t_j|<r_j\}}e^{i\langle\xi,P_{\Lambda}(t)\rangle}
\frac{dt_1}{t_1}\cdots\frac{dt_n}{t_n}
\end{eqnarray*}
where $\epsilon=(\epsilon_1,\cdots,\epsilon_n)$ with $\epsilon_j>0$.
The existence of this limit follows by the Taylor expansion of
$t\rightarrow e^{i\langle\xi,P_{\Lambda}(t)\rangle}$ and the cancelation
property  $\int dt_\nu/t_\nu=0$ with $\nu=1,\cdots,n$.

We see that whether
$\sup_{\xi}\left|\mathcal{I}(P_\Lambda,\xi,r)\right|$ is finite or
not depends on
\begin{itemize}
 \item[(1)] Sets $\Lambda_\nu$ of exponents of monomials in
 $P_{\Lambda}(t)$.
\item[(2)] Coefficients of polynomial  $P_{\Lambda}(t)$.
\item[(3)] Domain of integral $\prod [-r_j,r_j]$.
\end{itemize}
 (1) The dependence on  {\it set $\Lambda_\nu$ of exponents} is observed in the following simple cases:
\begin{eqnarray*}
\quad \sup_{\xi\in\mathbb{R}}|\mathcal{I}(P_\Lambda,\xi,(1,1))|=\begin{cases}
\sup_{\xi\in\mathbb{R}} \left|\int_{-1}^1\int_{-1}^1\sin (\xi t_1t_2)\frac{dt_1}{t_1}\frac{dt_2}{t_2}\right|= \infty\ \ \text{if\ \
$\Lambda =\{(1,1)\}$} \\
\sup_{\xi\in\mathbb{R}}\left|\int_{-1}^1\int_{-1}^1
\sin (\xi t_1^2t_2)\frac{dt_1}{t_1}\frac{dt_2}{t_2}\right|=  0\  \  \ \, \text{if\ \ $\Lambda =\{(2,1)\}$} \end{cases}
\end{eqnarray*}
(2) The dependence on  {\it coefficients of polynomials} $P_\Lambda$
first appeared in \cite{NW}, later in \cite{CWW3}   and \cite{P1}. There exist two different polynomials $P_\Lambda$ and $Q_\Lambda$ in $\mathcal{P}_\Lambda$
 having the same exponent set $\Lambda$, with $\sup_{\xi}\mathcal{I}(P_\Lambda,\xi,r)$ finite but
$\sup_{\xi}\mathcal{I}(Q_\Lambda,\xi,r)$ infinite.
We can check this for $P_\Lambda(t)=t_1^1t_2^3-t_1^3t_2^1$ and
$Q_\Lambda(t)=t_1^1t_2^3+t_1^3t_2^1$.  However, in this paper, we do
not concern with this coefficient dependence. We rather search for a
condition of $\Lambda$ valid for universal $P_\Lambda\in
\mathcal{P}_\Lambda$ that
\begin{eqnarray}\label{pgo}
\text{for all $P_\Lambda\in\mathcal{P}_\Lambda$}, \ \
\sup_{\xi\in\mathbb{R}^d}
|\mathcal{I}(P_\Lambda,\xi,r)|\le C_{P_\Lambda}<\infty.
\end{eqnarray}
(3) The dependence on the domain $\prod [-r_j,r_j]$ is observed for
the case $\Lambda =\{(2,2),(3,3)\}$,
\begin{eqnarray*}
\sup_{\xi\in\mathbb{R} \, ,0<r_1,r_2<1}\left|\int_{-r_2}^{r_2}
\int_{-r_1}^{r_1}e^{i \xi(
t_1^2t_2^2+t_1^3t_2^3)}\frac{dt_1}{t_1}\frac{dt_2}{t_2}\right|\,
 &<&\infty,\\
 \sup_{\xi\in\mathbb{R} \, ,0<r_1,r_2<\infty}\left|\int_{-r_2}^{r_2}
 \int_{-r_1}^{r_1}e^{i \xi( t_1^2t_2^2+t_1^3t_2^3)}\frac{dt_1}{t_1}\frac{dt_2}{t_2}\right|\,
  &=&\infty .
  \end{eqnarray*}
 In the former integral, a monomial $t_1^2t_2^2$  dominating $t_1^3t_2^3$ with
small $t_1,t_2$, makes the vanishing property  $\int
\frac{dt_i}{t_i}=0$ effective. But in the latter integral,
  a monomial $t_1^3t_2^3$, dominating  $t_1^2t_2^2$ with large $t_1,t_2$,
  weakens  the cancellation effect of the integral $\int\frac{dt_i}{t_i}$.
  Knowing this dependence on whether $r_j$ is taken from a finite interval $(0,1)$ or an infinite interval $(0,\infty)$,
   we set up our problem by first fixing the range of $r$ according to $S\subset
    N_n=\{1,\cdots,n\}$:
\begin{eqnarray}\label{an50}
\qquad r\in I(S)=\prod_{j=1}^n I_j \   \text{  where $I_j=(0,1)$ for
$j\in S$ and $I_j=(0,\infty)$ for  $j\in N_n\setminus S$. }
\end{eqnarray}
Instead of (\ref{pgo}), we shall find the necessary and sufficient
condition on $\Lambda$ and $S$ that
\begin{eqnarray}\label{pgob}
\text{for all $P_\Lambda\in\mathcal{P}_\Lambda$}, \ \
\sup_{\xi\in\mathbb{R}^d, \, r\in I(S) }
|\mathcal{I}(P_\Lambda,\xi,r)|\le C_{P_\Lambda}<\infty.
\end{eqnarray}

 For each Schwartz function $f$ on $\mathbb{R}^d$ and  a vector
polynomial $\,P_{\Lambda}\in\mathcal{P}_\Lambda$, the
 multiple Hilbert transform of $f$ associated to   $\,P_{\Lambda}$ is defined to be
\begin{eqnarray*}
\left(\mathcal{H}^{P_\Lambda}_{r}\, f\right)(x)&=&\text{p.v.}
\int_{\prod_{j=1}^n\,[-r_j,r_j]} f\bigl(x-P_\Lambda(t)\bigr)\,
\frac{dt_1}{t_1}\cdots\frac{dt_n}{t_n}.
\end{eqnarray*}
Here $r_j=1$ with $j\in S$ corresponds to a local Hilbert transform,
and $r_j=\infty$ with $j\in N_n\setminus S$  corresponds to  a
global Hilbert transform.
 Since $\mathcal{I}(P_\Lambda,\xi,r)$ is the Fourier multiplier of the Hilbert transform
$ \mathcal{H}^{P_\Lambda}_{r}$,  the boundedness (\ref{pgob})   is
equivalent to that
\begin{eqnarray}\label{pgo5}
\text{for all $P_\Lambda\in\mathcal{P}_\Lambda$}, \ \ \sup_{ r\in
I(S)} \left\|
\mathcal{H}^{P_\Lambda}_{r}\right\|_{L^p(\mathbb{R}^d)\rightarrow
L^p(\mathbb{R}^d)} \le C_{P_\Lambda} \ \text{where}\ p=2.
\end{eqnarray}
In this paper, we show (\ref{pgob}) and (\ref{pgo5}) with
$1<p<\infty$ for all $n$ and $d$ when $S\subset N_n$. To seek and manifest the
condition to determine (\ref{pgob})  and (\ref{pgo5}), we study the concept of faces
and their cones   of the Newton Polyhedron associated with $\Lambda$
and $S\subset N_n$.
It is noteworthy in advance  that the necessary
and sufficient condition of (\ref{pgob}) is not determined by only  faces  but also by
 cones of the Newton polyhedron, which has not appeared explicitly in the graph case  $\Lambda=({\bf e}_1,\cdots,{\bf e}_n,\Lambda_{n+1})$
 or  low dimensional case  $n\le 2$.
\medskip
\\
\noindent {\bf Scheme  and Organization}.\ \
  As a motive  for this problem, we  remark the result
  of A. Carbery, S. Wainger and J. Wright in \cite{CWW}:
  Given a polynomial $P_\Lambda\in\mathcal{P}_\Lambda$
  with $\Lambda=(\{{\bf e}_1\}, \{{\bf e}_2\},\Lambda_3)$ with $n=2,d=3$ and $S=\{1,2\}$, a
necessary and sufficient condition for
  $$\left\|\mathcal{H}_{r}^{P_\Lambda}
  \right\|_{L^p(\mathbb{R}^3)\rightarrow
  L^p(\mathbb{R}^3)}\le C \ \ \text{where $r=(1,1)$}$$   is that  every vertex $\mathfrak{m}$ in a Newton polyhedron $
{\bf N}(\Lambda_3)=\text{Ch}(\Lambda_3+\mathbb{R}^2_+) $ has at least one even component.
 The idea of the proof in  \cite{CWW} is to split the sum of dyadic pieces
   $\mathcal{H}^{P_\Lambda}_{r}=\sum_{J\in\mathbb{Z}_+^2} \mathcal{H}_J^{P_\Lambda}$ into finite sums of
   cones $\{J\in\mathfrak{m}^*\}$ associated with vertices $\mathfrak{m}$ of ${\bf N}(\Lambda_3)$:
\begin{equation}\label{ide1}
 \sum_{\mathfrak{m}\ \text{is a vertex of ${\bf N}(\Lambda_3) $}}
 \left(\sum_{J\in \mathfrak{m}^*} \mathcal{H}^{P_\Lambda}_J\right)\ \
 \text{with}\ \ \mathfrak{m}^*=\{\alpha_1\mathfrak{q}_1+\alpha_2\mathfrak{q}_2:\alpha_1,\alpha_2\ge 0\}
 \end{equation}
  where $\mathfrak{q}_j$ is a normal vector of the supporting
  line $\pi_{\mathfrak{q}_j}$ of an edge $\mathbb{F}_j$ of ${\bf N}(\Lambda_3)$
  such that $\mathfrak{m}=\bigcap_{j=1}^2\mathbb{F}_j$.
They proved that for  $\Lambda'=(\{{\bf e}_1\}, \{{\bf
e}_2\},\{\mathfrak{m}\})$,
\begin{equation}\label{ide2}
\left\| \sum_{J\in \mathfrak{m}^*}
\left(\mathcal{H}^{P_\Lambda}_J- \mathcal{H}^{P_{\Lambda'}}_J\right)
\right\|_{L^p(\mathbb{R}^3)\rightarrow L^p(\mathbb{R}^3)}+\left\| \sum_{J\in \mathfrak{m}^*}
 \mathcal{H}^{P_{\Lambda'}}_J \right\|_{L^p(\mathbb{R}^3)\rightarrow L^p(\mathbb{R}^3)}\le C\
 \end{equation}
   by using the vertex dominating property (1) and   the vanishing property (2):
 \begin{itemize}
\item[(1)]  vertex dominating property: $J\in \mathfrak{m}^* \Rightarrow 2^{-J\cdot \mathfrak{m}}\ge 2^{-J\cdot \mathfrak{n}
}$ for $\mathfrak{n}\in {\bf
N}(\Lambda_3)\setminus\{\mathfrak{m}\}$,
 \item[(2)] vanishing property:  at least one component of
$\mathfrak{m}$ is even, that implies
$\mathcal{H}^{P_{(\emptyset,\emptyset,\mathfrak{m})  }}_J\equiv 0$.
\end{itemize}

For the case $n\ge 3$, we shall establish   the corresponding cone
type decomposition (\ref{ide1}) and the reduction estimate
(\ref{ide2}) together with
 (1) and (2). As an analogue of  (\ref{ide1}),  we split
$\mathcal{H}^{P_\Lambda}_{r}=\sum_{J\in\mathbb{Z}_+^n}
\mathcal{H}_J^{P_\Lambda}$  with $r=(1,\cdots,1)$ into
\begin{equation}\label{ide11}
 \sum_{(\mathbb{F}_\nu);\ \mathbb{F}_\nu \ \text{is a face of ${\bf N}(\Lambda_\nu) $}}
 \left(\sum_{J\in \bigcap_{\nu=1}^d\mathbb{F}_\nu^*} \mathcal{H}^{P_\Lambda}_J\right)\
 \ \text{with}\ \ \mathbb{F}_\nu^*=\left\{\sum_{j=1}^{N_\nu}\alpha_j\mathfrak{q}_j :\alpha_j\ge 0\right\}.
 \end{equation}
 Here $\mathfrak{q}_j$ is normal vector of the supporting plane $\pi_{\mathfrak{q}_j}$
   of a face $\mathbb{F}_\nu$ in the Newton polyhedron ${\bf N}(\Lambda_\nu)$, where
   $\mathbb{F}_\nu=\bigcap_{j=1}^{N_\nu}\pi_{\mathfrak{q}_j}$. For this purpose,
   we
introduce in Section \ref{ptt} the concept of a face  $\mathbb{F}$ and its cone $\mathbb{F}^*$ in a
Polyhedron. In Section \ref{evenset}, we state our main results and
some background  for this problem.   In Sections \ref{secfar}, we
provide  properties of faces and their
 cones  related with their representations.
  In Sections  \ref{sec6}, we   give   a few basic $L^p$ estimation tools.
   In Section \ref{condec}, we make (\ref{ide11}). As an analogue of (\ref{ide2}), we prove in Section
\ref{suffes}  that
\begin{equation}\label{ide22}
\left\| \sum_{J\in \bigcap_{\nu=1}^d\mathbb{F}_\nu^*}
\left(\mathcal{H}^{P_\Lambda}_J- \mathcal{H}^{P_{\Lambda'}}_J\right) \right\|_{L^p(\mathbb{R}^d)
\rightarrow L^p(\mathbb{R}^d)}+\left\| \sum_{J\in \bigcap_{\nu=1}^d\mathbb{F}_\nu^*}
 \mathcal{H}^{P_{\Lambda'}}_J \right\|_{L^p(\mathbb{R}^d)\rightarrow L^p(\mathbb{R}^d)}\le C\
 \end{equation}
 where
$\Lambda'=(\mathbb{F}_\nu\cap\Lambda_\nu)_{\nu=1}^d$. To show (\ref{ide22}),  we
use the dominating  and  vanishing properties:
\begin{itemize}
\item[(1)] If $J\in \bigcap_{\nu=1}^d\mathbb{F}_\nu^*$,
then  $2^{-J\cdot \mathfrak{m}}\ge 2^{-J\cdot \mathfrak{n} }$  where
$\mathfrak{m}\in\mathbb{F}_\nu$ and $\mathfrak{n}\in {\bf
N}(\Lambda_\nu)\setminus \mathbb{F}_\nu$,
\item[(2)] If sum of elements in
$\bigcup_{\nu=1}^d\mathbb{F}_\nu\cap\Lambda_\nu$ has at
least one even component, $\mathcal{H}^{P_{\Lambda'}}_J\equiv 0$.
\end{itemize}
The main feature emerging in the general case $n\ge 3$ is that
 the evenness  hypothesis of
(2) needs to be satisfied only if the following overlapping and low
rank conditions hold
 $$\bigcap_{\nu=1}^d (\mathbb{F}_\nu^*)^{\circ}\ne\emptyset\
 \ \text{and} \ \ \text{rank}\left(\bigcup_{\nu=1}^d\mathbb{F}_\nu\right)\le n-1.$$
 Note that the cones $\mathbb{F}_\nu^*$ as well as faces $\mathbb{F}_\nu$ of the Newton polyhedra associated with $\Lambda_\nu$
 are involved in determining (\ref{pgob}).
Thus, a difficulty in  showing (\ref{ide22}) is to keep the above
cone overlapping condition until the low ranked  faces occurs. For
this purpose, we construct in Section \ref{sec9} a sequence of faces
and cones such that
\begin{eqnarray}\label{pathg}
{\bf N}(\Lambda_\nu)&=&\mathbb{F}_\nu(0) \supset
 \cdots\supset\mathbb{F}_\nu(s) \supset\cdots \supset\mathbb{F}_\nu(N)=\mathbb{F}_\nu,\nonumber\\
  {\bf N}^*(\Lambda_\nu)&=&\mathbb{F}_\nu^*(0) \subset
 \cdots\subset\mathbb{F}_\nu^*(s) \subset\cdots
 \subset\mathbb{F}_\nu^*(N)=\mathbb{F}_\nu^*.
\end{eqnarray}
This sequence plays crucial roles to keep $\bigcap_{\nu=1}^d
(\mathbb{F}_\nu^*(s))^{\circ}\ne\emptyset$ with $s=1,\cdots,N$ and
give an efficient size control of $J\cdot\mathfrak{m}$ with $J\in
\bigcap_{\nu=1}^d\mathbb{F}_\nu^*$ and $\mathfrak{m}\in
\mathbb{F}_\nu(s)$. In Sections \ref{sec10}-\ref{sec13}, we prove necessity parts of
main theorems. In Section \ref{sec15}, we finish the proof for  general situations.

\noindent {\bf Notations.}\hskip.2cm
 For the sake of distinction, we shall use the
notations
$$\imath\cdot\jmath = \imath_1\jmath_1 + \cdots +
\imath_n\jmath_n\,,\quad \langle x,y\rangle=x_1y_1 + \cdots + x_d
y_d$$ for the inner products on $\,\mathbb{Z}^n,\,\mathbb{R}^d,\,$
respectively. Note that a constant  $C$ may be different on each line. As usual, the notation $\,A\lesssim B\,$ for two
scalar expressions $\,A, B\,$ will mean $\,A\leq C B\,$ for some
positive constant $C$ independent of $\,A, B\,$ and $\,A\approx B\,$
will mean $\,A\lesssim B\,$ and $\,B\lesssim A\,.$

\section{Polyhedra, Their Faces and Cones}\label{ptt}
 Throughout this paper, we show detailed proof for basic properties about faces and cones of polyhedra by using an easy
 tool such as elementary linear algebra. For further study, we refer readers to
\cite{F}.

\subsection{Polyhedron}
\begin{definition}
Let $U\subset \mathbb{R}^n$ be a subspace endowed with an inner
product $\langle,\rangle $ in $\mathbb{R}^n$. Then $V$ is called an
affine subspace in $\mathbb{R}^n$ if $V=\mathfrak{p}+ U$ for some
$\mathfrak{p}\in\mathbb{R}^n$.
\end{definition}
\begin{definition}\label{d11}
Let $V$ be an affine subspace in $\mathbb{R}^n$.  A hyperplane in
$V$ is a set
$$\pi_{\mathfrak{q},r}=\{{\bf y}\in  V:   \langle \mathfrak{q},
{\bf y}  \rangle =r\}\ \ \text{where  $\mathfrak{q}\in \mathbb{R}^n$
and $r\in\mathbb{R}$.}$$
  The
corresponding closed upper half-space and lower half-space are
$$ \pi^{+}_{\mathfrak{q},r} =\{{\bf y}\in V: \langle \mathfrak{q}, {\bf y}  \rangle \ge r\}\ \ \text{and}\
\ \pi^{-}_{\mathfrak{q},r} =\{{\bf y}\in V: \langle \mathfrak{q},
{\bf y}  \rangle \le r\}.$$ The open upper half-space and lower half
space are
$$(\pi^{+}_{\mathfrak{q} ,r})^{\circ}=\{{\bf y}\in V: \langle \mathfrak{q}, {\bf y}  \rangle >r\}\ \
\text{and}\ \ (\pi^{-}_{\mathfrak{q} ,r})^{\circ}=\{{\bf y}\in V:
\langle \mathfrak{q}, {\bf y}  \rangle<r\}.$$
\end{definition}

\begin{definition}[Polyhedron in $V$]\label{d12}
Let $V$ be an affine subspace in $\mathbb{R}^n$ and let
$\Pi=\{\pi_{\mathfrak{q}_j,r_j}\}_{j=1}^N $ be a collection of
hyperplanes in $V$. A   polyhedron $\mathbb{P}$ in $V$ is defined to
be an intersection of  closed upper half-spaces $
\pi^+_{\mathfrak{q}_j,r_j} $:
$$\mathbb{P}=\bigcap_{j=1}^N  \pi^+_{\mathfrak{q}_j,r_j} =\bigcap_{j=1}^N\{{\bf y}\in V:
\langle\mathfrak{q}_j, {\bf y}\rangle \ge r_j \ \ \text{for}\ 1\le
j\le N\}.$$  We call the above collection  $\Pi=\Pi(\mathbb{P})$
  the generator of $\mathbb{P} $.
     We  denote the
 polyhedron $\mathbb{P}$ by $\mathbb{P}(\Pi)$ indicating its
generator $\Pi$. Sometimes, we mean also the generator $\Pi$ of
$\mathbb{P}$ to be the collection of   normal vectors
$\{\mathfrak{q}_j\}_{j=1}^N$ instead of   hyperplanes
$\{\pi_{\mathfrak{q}_j,r_j}\}_{j=1}^N $.
\end{definition}
\begin{definition}\label{d13}
Let $B=\{\mathfrak{q}_1,\cdots,\mathfrak{q}_M\}\subset \mathbb{R}^n$
be a finite number of vectors. Then the span of $B$ is the set
$$\text{Sp}(B)=\left\{\sum_{j=1}^M
c_j\mathfrak{q}_j:c_j\in\mathbb{R}\right\}.$$ The convex span of $B$
and its interior are defined by
$$\text{CoSp}(B)=\left\{\sum_{j=1}^M c_j\mathfrak{q}_j:c_j\ge 0\right\}\
\text{ and }\
 \text{CoSp}^{\circ}(B)=\left\{\sum_{j=1}^M c_j\mathfrak{q}_j:c_j>
 0\right\} $$
respectively. Finally the convex hull of $B$ is the set
$$\text{Ch}(B)=\left\{\sum_{j=1}^M c_j\mathfrak{q}_j:c_j\ge 0\
\text{and}\ \sum_{j=1}^M c_j=1\right\}.$$ If $B\subset \mathbb{R}^n$
is not a finite set, then the span of $B$ is defined by the
collection of all finite linear combinations of vectors in $B$.
\end{definition}

\begin{definition}[Ambient Space of  Polyhedron]\label{df35}
Let $\mathbb{P}\subset \mathbb{R}^n$ and
$\mathfrak{p},\mathfrak{q}\in \mathbb{P}$. Then
\begin{eqnarray*}
\text{Sp}(\mathbb{P}-\mathfrak{p})=\text{Sp}(\mathbb{P}-\mathfrak{q})\
\ \text{for all $\mathfrak{p},\mathfrak{q}\in\mathbb{P}$}.
\end{eqnarray*}
We denote the vector space $\text{Sp}(\mathbb{P}-\mathfrak{p})$ by
$V(\mathbb{P})$. The dimension of $\mathbb{P}$ is defined by
$$\text{dim}(\mathbb{P})=
\text{dim}(V(\mathbb{P})).$$
  From the fact $\mathfrak{p}-\mathfrak{q}\in
V(\mathbb{P})$,
$$V(\mathbb{P})+\mathfrak{p}=V(\mathbb{P})+\mathfrak{q}.  $$
We call  $V(\mathbb{P})+\mathfrak{p} $
 the ambient affine space of $\mathbb{P}$ in $\mathbb{R}^n$ and denote it by $V_{am}(\mathbb{P}):$
\begin{eqnarray}\label{jf2}
V_{am}(\mathbb{P})= V(\mathbb{P})+\mathfrak{p},
\end{eqnarray}
which is the smallest affine space containing $\mathbb{P}$.
\end{definition}

\begin{definition}\label{d14}
Let $B\subset \mathbb{R}^n$. Then the rank of a set $B$ is the
number of linearly independent vectors in $B$:
\begin{eqnarray*}
\text{rank}(B)=\text{dim}(\text{Sp}(B)).
\end{eqnarray*}
\end{definition}

\subsection{Faces of  Polyhedron}
\begin{definition}[Faces]\label{dfac}
Let $V$ be an affine subspace in $\mathbb{R}^n$. Given a class $\Pi$
of hyperplane in $V$, let $\mathbb{P}=\mathbb{P}(\Pi)$ be a
polyhedron in $V$. A subset $\mathbb{F}\subset \mathbb{P}$ is a face
if there exists a hyperplane $ \pi_{ \mathfrak{q},r } $ in $ V$
(which does not have to be in $\Pi$) such that
\begin{eqnarray}\label{4g}
\mathbb{F}= \pi_{ \mathfrak{q},r } \cap \mathbb{P}  \ \text{and}\ \
\mathbb{P} \setminus \mathbb{F}\subset  \pi_{ \mathfrak{q},r }^+.
\end{eqnarray}
We may replace $\mathbb{P} \setminus \mathbb{F}$ by $\mathbb{P}$, or
  $\pi_{ \mathfrak{q},r }^+$ by $(\pi_{ \mathfrak{q},r
}^+)^{\circ}$ in  (\ref{4g}). Thus $\mathbb{F}$ is
a face of $\mathbb{P} $ if and only if there exists a   vector $
\mathfrak{q}\in \mathbb{R}^n$ and $r\in\mathbb{R}$
 satisfying
\begin{eqnarray}\label{4g1}
\langle\mathfrak{q},{\bf u}\rangle=r<\langle\mathfrak{q},{\bf y}\rangle\ \ \text{ for all
${\bf u}\in \mathbb{F}$ and ${\bf y}\in
\mathbb{P}\setminus \mathbb{F}$}.
\end{eqnarray}
When $\mathbb{F}$ is a face of $\mathbb{P}$, it is denoted by
 $\mathbb{F}\preceq\mathbb{P}.$ The above hyperplane $\pi_{
\mathfrak{q},r }$ is called the supporting hyperplane of the face
$\mathbb{F}$.
 The dimension of a face $\mathbb{F}$ of $\mathbb{P}$ is the dimension of an ambient affine
 space $V_{am}(\mathbb{F})$ of $\mathbb{F}$ where $V_{am}(\mathbb{F})$ is defined  in (\ref{jf2}).
  We denote the set of all $k$-dimensional faces of
$\mathbb{P}$ by $\mathcal{F}^k(\mathbb{P} )$, and
$\bigcup \mathcal{F}^k(\mathbb{P} )$  by $\mathcal{F}(\mathbb{P})$. By convention, an empty set is
$-1$ dimensional face. Let $\text{dim}(\mathbb{P})=m$. Then we call,
a face $\mathbb{F}$ whose dimension is less than $m$, a proper face
of $\mathbb{P}$ and denote it by $\mathbb{F}\precneqq\mathbb{P}$.
\end{definition}
\begin{lemma}\label{lemss} Let $\mathbb{P}=\mathbb{P}(\Pi)$ be a polyhedron in an affine space $V$.
Then \begin{itemize} \item[(1)] If $\mathbb{F},\mathbb{G}\preceq
\mathbb{P}$ and $\mathbb{G}\subset\mathbb{F}$, then
$\mathbb{G}\preceq \mathbb{F}$.
 \item[(2)] Let $\pi_{ \mathfrak{q},r }\in \Pi$. Then
$\mathbb{F} =\pi_{ \mathfrak{q},r }\cap\mathbb{P} $  is a face of
$\mathbb{P}$.
 \item[(3)] Let $\Delta\subset\Pi$.
  Then $
\mathbb{F}=\bigcap_{\pi_{\mathfrak{q},r}\in \Delta}
\mathbb{F}_{\mathfrak{q},r}$ with $\mathbb{F}_{\mathfrak{q},r}=\pi_{
\mathfrak{q},r }\cap\mathbb{P} $, is a face of $\mathbb{P} $.
\end{itemize}
\end{lemma}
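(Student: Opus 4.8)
The plan is to deduce all three parts directly from the characterization (\ref{4g1}): a subset is a face of a polyhedron exactly when there is a vector $\mathfrak{q}\in\mathbb{R}^n$ and a scalar $r\in\mathbb{R}$ that is constant and minimal on the purported face and strictly larger off it. In each of (1)--(3) I would simply exhibit such a pair $(\mathfrak{q},r)$; no further machinery is needed. I treat (2) first, then (3), which reuses it, and finally (1). For (2): since $\pi_{\mathfrak{q},r}\in\Pi$, the polyhedron $\mathbb{P}$ lies in the closed half-space $\pi^{+}_{\mathfrak{q},r}$, so $\langle\mathfrak{q},{\bf y}\rangle\ge r$ for every ${\bf y}\in\mathbb{P}$, with equality precisely on $\mathbb{F}=\pi_{\mathfrak{q},r}\cap\mathbb{P}$. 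Hence $\langle\mathfrak{q},{\bf u}\rangle=r$ on $\mathbb{F}$ while $\langle\mathfrak{q},{\bf y}\rangle>r$ on $\mathbb{P}\setminus\mathbb{F}$, which is exactly (\ref{4g1}) for the very same pair $(\mathfrak{q},r)$.

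For (3) I would write $\Delta=\{\pi_{\mathfrak{q}_j,r_j}\}_{j=1}^{M}$ and take the ``summed'' supporting data $\mathfrak{q}=\sum_{j=1}^{M}\mathfrak{q}_j$ and $r=\sum_{j=1}^{M}r_j$. For any ${\bf y}\in\mathbb{P}$ each generator contributes $\langle\mathfrak{q}_j,{\bf y}\rangle\ge r_j$ (this is the defining property of $\Pi$), so $\langle\mathfrak{q},{\bf y}\rangle\ge r$; moreover equality holds iff $\langle\mathfrak{q}_j,{\bf y}\rangle=r_j$ for every $j$, i.e. iff ${\bf y}\in\bigcap_{j}\mathbb{F}_{\mathfrak{q}_j,r_j}=\mathbb{F}$. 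Thus $(\mathfrak{q},r)$ satisfies (\ref{4g1}) and $\mathbb{F}\preceq\mathbb{P}$. The degenerate cases are absorbed by the stated conventions: if $\mathbb{F}=\emptyset$ it is the $(-1)$-dimensional face, and if $\mathbb{F}=\mathbb{P}$ then $\mathbb{P}\setminus\mathbb{F}=\emptyset$ and the strict inequality in (\ref{4g1}) is vacuous while $\langle\mathfrak{q},\cdot\rangle\equiv r$ on all of $\mathbb{F}$.

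For (1) I would let $(\mathfrak{q}',r')$ realize $\mathbb{G}\preceq\mathbb{P}$ via (\ref{4g1}), so $\langle\mathfrak{q}',{\bf u}\rangle=r'$ for ${\bf u}\in\mathbb{G}$ and $\langle\mathfrak{q}',{\bf y}\rangle>r'$ for ${\bf y}\in\mathbb{P}\setminus\mathbb{G}$. Since $\mathbb{G}\subset\mathbb{F}\subset\mathbb{P}$ forces $\mathbb{F}\setminus\mathbb{G}\subset\mathbb{P}\setminus\mathbb{G}$, the same pair $(\mathfrak{q}',r')$ continues to satisfy (\ref{4g1}) with $\mathbb{F}$ in place of $\mathbb{P}$, so $\mathbb{G}\preceq\mathbb{F}$. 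Every computation here is a one-line verification; I do not anticipate a genuine obstacle. The only point meriting a word of care is conceptual: one applies the criterion (\ref{4g1}) with the polyhedron replaced by one of its faces (explicitly in (1), and implicitly when (2) is invoked inside (3)), which is legitimate precisely because (\ref{4g1}) only references the ambient vector $\mathfrak{q}$ and scalar $r$ and imposes no relation between $\pi_{\mathfrak{q},r}$ and the generator $\Pi$.
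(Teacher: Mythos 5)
Your proposal is correct and follows essentially the same route as the paper: verify the criterion (\ref{4g1}) directly, using the defining half-space for (2), summing the supporting data over $\Delta$ (with the observation that strict inequality occurs for at least one generator off $\mathbb{F}$, hence for the sum) for (3), and reusing the supporting pair for $\mathbb{G}\preceq\mathbb{P}$ with $\mathbb{P}$ replaced by $\mathbb{F}$ for (1). No gaps.
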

\begin{proof}
Since $\mathbb{G}\preceq \mathbb{P}$, there exist $\mathfrak{q},r$
satisfying (\ref{4g1}) where $\mathbb{F}$ replaced by $\mathbb{G}$.
Next we can also replace $\mathbb{P} $  by $\mathbb{F}$. This proves
(1). By Definitions \ref{d11} and \ref{d12} for
$\pi_{\mathfrak{q},r}$ and $\mathbb{P} $,
\begin{eqnarray*}
 \langle \mathfrak{q}, {\bf
u}\rangle =r<  \langle \mathfrak{q}, {\bf y}\rangle\ \ \text{for all
${\bf u}\in \mathbb{F}=\pi_{\mathfrak{q},r}\cap\mathbb{P} $ and
${\bf y}\in \mathbb{P} \setminus \mathbb{F}$}
\end{eqnarray*}
which shows  (\ref{4g1}). Thus (2) is proved.  Let
$\Delta=\{\pi_{\mathfrak{q}_j,r_j}:1\le j\le M\}\subset\Pi$. Then by
(2), for every $j=1,\cdots,M$
\begin{eqnarray}\label{451g}
\quad \langle \mathfrak{q}_j, {\bf u}\rangle =r_j\le  \langle
\mathfrak{q}_j, {\bf y}\rangle\ \ \text{for all ${\bf u}\in
\mathbb{F}=\bigcap_{j=1}^M\mathbb{F}_{\mathfrak{q}_j,r_j}$ and ${\bf
y}\in \mathbb{P} \setminus \mathbb{F} $.}
\end{eqnarray}
 For ${\bf y}\in \mathbb{P} \setminus
\mathbb{F}=\bigcup_{j=1}^M
 (\mathbb{P}\setminus \mathbb{F}_{\mathfrak{q}_j,r_j})$ above,
 there exists  $j=\ell$ such that
$${\bf y}\in \mathbb{P} \setminus
 \mathbb{F}_{\mathfrak{q}_\ell,r_\ell}.$$ Thus  $\le $
in (\ref{451g}) is replaced by $<$  for $j=\ell$.
   Hence we sum (\ref{451g}) in $j$ to
obtain that
\begin{eqnarray}\label{452g}
\qquad \left\langle \sum_{j=1}^Mc_j\mathfrak{q}_j, {\bf
u}\right\rangle =\sum_{j=1}^Mc_jr_j< \left\langle
\sum_{j=1}^Mc_j\mathfrak{q}_j, {\bf y}\right\rangle\ \ \text{for all
${\bf u}\in \mathbb{F} $ and ${\bf y}\in \mathbb{P} \setminus
\mathbb{F} $}
\end{eqnarray}
where $\mathfrak{q}=\sum_{j=1}^Mc_j\mathfrak{q}_j$ and
$r=\sum_{j=1}^Mc_jr_j$. Hence this with (\ref{4g1}) yields  (3).
\end{proof}

\begin{definition}\label{dfu5}
 Let $\mathbb{F}$ be a face of a convex polyhedron $ \mathbb{P}$.
 Then the boundary $\partial \mathbb{F}$ of $\mathbb{F}$  is
 defined to be $ \bigcup \mathbb{G}$,
 where the union is over all face   $\mathbb{G} \precneqq
 \mathbb{F}$. When $\text{dim}(\mathbb{F})=k$,
 $$\partial\mathbb{F}=\bigcup_{\text{dim}\mathbb{G}=k-1,
  \mathbb{G}\preceq\mathbb{F}}\mathbb{G},$$
 since   faces whose dimensions $< k-1$ are contained on
  $k-1$ dimensional faces of $\mathbb{F}$.
 Note that $\partial \mathbb{F}$ is the boundary
 of
 $\mathbb{F}$ with respect to the usual topology of $V_{am}(\mathbb{F})$ in (\ref{jf2}).
 \end{definition}

\begin{lemma}\label{2323s}
Let $\mathbb{P}=\mathbb{P}(\Pi)$ be a polyhedron.  Then $
\partial\mathbb{P} \subset\bigcup_{\pi\in\Pi}\pi$.
\end{lemma}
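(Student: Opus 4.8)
The plan is to show that any point $\mathfrak{p}\in\mathbb{P}$ not lying on any of the hyperplanes $\pi_{\mathfrak{q}_j,r_j}\in\Pi$ is an interior point of $\mathbb{P}$ relative to its ambient affine space $V_{am}(\mathbb{P})$; contrapositively, every boundary point lies on some $\pi\in\Pi$. So fix $\mathfrak{p}\in\mathbb{P}$ with $\langle\mathfrak{q}_j,\mathfrak{p}\rangle>r_j$ strictly for every $j=1,\dots,N$. I want to produce a relative neighborhood of $\mathfrak{p}$ contained in $\mathbb{P}$.

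First I would reduce to the ambient space: since $\mathbb{P}=\bigcap_{j=1}^N\pi^+_{\mathfrak{q}_j,r_j}$ sits inside $V_{am}(\mathbb{P})=V(\mathbb{P})+\mathfrak{p}$, it suffices to find $\delta>0$ such that every ${\bf y}\in V_{am}(\mathbb{P})$ with $|{\bf y}-\mathfrak{p}|<\delta$ still satisfies $\langle\mathfrak{q}_j,{\bf y}\rangle\ge r_j$ for all $j$. For each $j$, the Cauchy--Schwarz inequality gives $\langle\mathfrak{q}_j,{\bf y}\rangle\ge\langle\mathfrak{q}_j,\mathfrak{p}\rangle-|\mathfrak{q}_j|\,|{\bf y}-\mathfrak{p}|$, and since $\langle\mathfrak{q}_j,\mathfrak{p}\rangle-r_j>0$, choosing
$$
\delta=\min_{1\le j\le N}\frac{\langle\mathfrak{q}_j,\mathfrak{p}\rangle-r_j}{|\mathfrak{q}_j|}>0
$$
(a finite minimum of positive numbers, hence positive) forces $\langle\mathfrak{q}_j,{\bf y}\rangle\ge r_j$ for all such ${\bf y}$. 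Thus the relative ball of radius $\delta$ around $\mathfrak{p}$ in $V_{am}(\mathbb{P})$ lies in $\mathbb{P}$, so $\mathfrak{p}$ is a relative interior point of $\mathbb{P}$, i.e. $\mathfrak{p}\notin\partial\mathbb{P}$.

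It remains to connect "relative interior point" with the face-theoretic definition of $\partial\mathbb{P}$ from Definition \ref{dfu5}. One must check that if $\mathfrak{p}$ lies on no $\pi_j\in\Pi$ then $\mathfrak{p}$ belongs to no proper face $\mathbb{F}\precneqq\mathbb{P}$: indeed, if $\mathfrak{p}\in\mathbb{F}=\pi_{\mathfrak{q},r}\cap\mathbb{P}$ with supporting hyperplane $\pi_{\mathfrak{q},r}$, then by (\ref{4g1}) $\langle\mathfrak{q},\cdot\rangle$ attains its minimum over $\mathbb{P}$ at $\mathfrak{p}$; but the open relative ball around $\mathfrak{p}$ constructed above lies in $\mathbb{P}$, and on that ball $\langle\mathfrak{q},\cdot\rangle$ cannot have a minimum unless $\mathfrak{q}$ is orthogonal to $V(\mathbb{P})$, in which case $\mathbb{F}=\mathbb{P}$ and the face is not proper. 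Hence $\mathfrak{p}\notin\bigcup_{\mathbb{G}\precneqq\mathbb{P}}\mathbb{G}=\partial\mathbb{P}$. Taking the contrapositive yields $\partial\mathbb{P}\subset\bigcup_{\pi\in\Pi}\pi$.

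The main obstacle I anticipate is purely bookkeeping: carefully handling the ambient affine space so that "interior" means relative interior (the polyhedron may well be lower-dimensional, so it has empty interior in $\mathbb{R}^n$), and making the step "a linear functional with no orthogonal component to $V(\mathbb{P})$ cannot attain an interior minimum" precise — this is where the distinction between $\mathbb{F}=\mathbb{P}$ and $\mathbb{F}\precneqq\mathbb{P}$ enters. Everything else is a one-line Cauchy--Schwarz estimate.
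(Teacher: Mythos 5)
Your argument is correct and takes essentially the same route as the paper: contrapositively, a point of $\mathbb{P}$ satisfying every inequality $\langle\mathfrak{q}_j,\cdot\rangle>r_j$ strictly admits a relative ball in $V_{am}(\mathbb{P})$ contained in $\mathbb{P}$, hence cannot be a boundary point. The only divergence is the last step: the paper simply invokes the Note in Definition \ref{dfu5} identifying $\partial\mathbb{P}$ with the topological boundary in $V_{am}(\mathbb{P})$, whereas you verify directly that such a relative interior point lies on no proper face (via the supporting functional attaining its minimum at the center of a ball contained in $\mathbb{P}$), which makes the link between the face-theoretic definition of $\partial\mathbb{P}$ and the topological one explicit rather than assumed.
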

\begin{proof}
Let ${\bf x}\in\partial\mathbb{P}$. Assume ${\bf x}\in
\bigcap_{\pi\in\Pi}(\pi_+)^{\circ}$. Then a ball $B({\bf
x},\epsilon)$ with some $\epsilon>0$ is contained in
$\bigcap_{\pi\in\Pi}(\pi_+)^{\circ}\subset
\bigcap_{\pi\in\Pi}(\pi_+)=\mathbb{P}$ in $V_{am}(\mathbb{P})$. Thus,
 $x\notin \partial \mathbb{P}$ because $\partial \mathbb{P}$ is
 a boundary of $\mathbb{P}$ with respect to the usual topology of $V_{am}(\mathbb{P})$.
 Hence  ${\bf x}\notin
 \bigcap_{\pi\in\Pi}(\pi_+)^{\circ}$.   Combined with
 ${\bf x}\in\partial \mathbb{P}\subset\mathbb{P}=\bigcap_{\pi\in\Pi}\pi_+$,
  we have ${\bf x}\in
\bigcup_{\pi\in\Pi}\pi$.
\end{proof}

\begin{definition}\label{dfu4}
 Let $\mathbb{F}$ be a face of a convex polyhedron $ \mathbb{P}$.
 Then the interior $\mathbb{F}^{\circ}$ of $\mathbb{P}$  is
 defined to be   $\mathbb{F}^{\circ}=\mathbb{F}\setminus \partial
 \mathbb{F}$.  Note also that $ \mathbb{F}^{\circ}$ is the interior of
 $\mathbb{F}$ with respect to the usual topology defined on
 $V_{am}(\mathbb{F})$ in (\ref{jf2}).
 \end{definition}

\begin{exam}\label{exam11}
 Observe that
 $\text{CoSp}(\mathfrak{p}_1,\cdots,\mathfrak{p}_N)^{\circ}=
 \left\{\sum_{j=1}^N\alpha_j\mathfrak{p}_j:\alpha_j>0\right\}$.
 \end{exam}

\begin{lemma}\label{lem411d}
 Let $\mathbb{P}$ be a   polyhedron and $\mathbb{F}\preceq\mathbb{P}$ with $\text{dim}(\mathbb{F})=k$.
  Suppose that
   $\mathbb{B}\subset\partial\mathbb{F} $ is a convex
 set.
Then there is a $k-1$ dimensional face $ \mathbb{G}$ such that $
\mathbb{B}\subset \mathbb{G}\preceq\mathbb{F}$.
\end{lemma}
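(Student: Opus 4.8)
The plan is to reduce the statement to the behavior of the supporting hyperplanes in the generator $\Pi$ of $\mathbb{P}$, using Lemma~\ref{2323s} applied to $\mathbb{F}$ rather than to $\mathbb{P}$ itself. First I would observe that since $\mathbb{F}\preceq\mathbb{P}$ is itself a polyhedron (it is the intersection of $\mathbb{P}$ with its supporting hyperplane, hence of the form $\mathbb{P}(\Pi')$ for a suitable generator $\Pi'$ obtained by adding the supporting hyperplane and its mirror to $\Pi$, and restricting to the ambient affine space $V_{am}(\mathbb{F})$). Thus Lemma~\ref{2323s} applies to $\mathbb{F}$: $\partial\mathbb{F}\subset\bigcup_{\pi\in\Pi'}\pi$, and intersecting with $\mathbb{F}$ shows that $\partial\mathbb{F}$ is covered by the finitely many faces $\mathbb{G}_i=\pi_i\cap\mathbb{F}$ with $\pi_i\in\Pi'$, each of which is a face of $\mathbb{F}$ of dimension $\le k-1$ by Lemma~\ref{lemss}(2). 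By Definition~\ref{dfu5} (and the comment there that lower-dimensional faces sit inside the $(k-1)$-dimensional ones), after enlarging we may take each $\mathbb{G}_i$ to have dimension exactly $k-1$.

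Now I would use convexity of $\mathbb{B}$. We have $\mathbb{B}\subset\partial\mathbb{F}=\bigcup_i\mathbb{G}_i$, a finite union. The key claim is that a convex set contained in a finite union of faces of $\mathbb{F}$ must actually lie inside a single one of them. To see this, suppose not; pick two points $\mathfrak{x},\mathfrak{y}\in\mathbb{B}$ with $\mathfrak{x}\in\mathbb{G}_i\setminus\mathbb{G}_j$ and $\mathfrak{y}\in\mathbb{G}_j$ for distinct $i,j$, chosen so that the segment $[\mathfrak{x},\mathfrak{y}]$ is not contained in any single $\mathbb{G}_\ell$ — such a configuration can be extracted because no single face covers $\mathbb{B}$. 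Each $\mathbb{G}_\ell$ equals $\pi_{\mathfrak{q}_\ell,r_\ell}\cap\mathbb{F}$, so on $\mathbb{F}$ the affine functional $\mathfrak{y}\mapsto\langle\mathfrak{q}_\ell,\mathfrak{y}\rangle-r_\ell$ is $\ge 0$ and vanishes exactly on $\mathbb{G}_\ell$. A point of $[\mathfrak{x},\mathfrak{y}]$ lies in $\mathbb{G}_\ell$ iff that affine functional vanishes there; an affine functional restricted to a segment is either identically zero or has at most one zero. Hence each $\mathbb{G}_\ell$ meets the segment in either the whole segment, a single point, or the empty set. Since the segment has infinitely many points and is covered by finitely many $\mathbb{G}_\ell$, at least one must contain the entire segment — in particular that $\mathbb{G}_\ell$ contains both $\mathfrak{x}$ and $\mathfrak{y}$. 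Iterating this pairwise argument over a spanning finite subset of points of $\mathbb{B}$ (or arguing directly via Baire category / the pigeonhole principle on the relative interior of the convex hull of finitely many chosen points), I would conclude $\mathbb{B}\subset\mathbb{G}$ for one fixed $\mathbb{G}=\mathbb{G}_\ell\preceq\mathbb{F}$ with $\dim\mathbb{G}=k-1$, which is exactly the assertion.

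The step I expect to be the main obstacle is the passage from the pairwise statement (any two points of $\mathbb{B}$ lie in a common $\mathbb{G}_\ell$) to the global statement ($\mathbb{B}$ lies in a single $\mathbb{G}_\ell$). The clean way to handle this is to note that if $\mathbb{B}$ is not contained in any one $\mathbb{G}_\ell$, then for each $\ell$ one can choose $\mathfrak{p}_\ell\in\mathbb{B}\setminus\mathbb{G}_\ell$, and then a generic point in the relative interior of $\mathrm{Ch}(\{\mathfrak{p}_\ell\})\subset\mathbb{B}$ avoids every $\mathbb{G}_\ell$ (since each $\mathbb{G}_\ell$ is cut out inside $\mathbb{F}$ by a single affine equation, and a point in the relative interior of a convex hull of points not all satisfying that equation cannot satisfy it either), contradicting $\mathbb{B}\subset\bigcup_\ell\mathbb{G}_\ell$. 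This uses only elementary convexity together with the fact — already available via Lemma~\ref{lemss}(2) and Definition~\ref{dfu5} — that each $(k-1)$-face of $\mathbb{F}$ is the zero set within $\mathbb{F}$ of one of finitely many affine functionals that are nonnegative on $\mathbb{F}$.
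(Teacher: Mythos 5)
Your proposal is correct, and its core is close in spirit to the paper's own argument, but you handle one step more carefully than the paper does. The paper's proof also reduces to the covering $\partial\mathbb{F}=\bigcup\{\mathbb{G}:\mathbb{G}\preceq\mathbb{F},\ \dim\mathbb{G}=k-1\}$ (this is just Definition \ref{dfu5}, so your first paragraph's detour through Lemma \ref{2323s} and a generator $\Pi'$ of $\mathbb{F}$ is not needed, and is the only slightly delicate point of your write-up, since hyperplanes of $\Pi'$ containing $\mathbb{F}$ must be discarded — which you do implicitly by restricting to $V_{am}(\mathbb{F})$). For two points $\mathfrak{p}_1,\mathfrak{p}_2\in\mathbb{B}$ the paper argues via the midpoint: $(\mathfrak{p}_1+\mathfrak{p}_2)/2$ lies in some $(k-1)$-face $\mathbb{G}$ with supporting plane $\pi$, and since $\mathbb{P}\subset\pi^+$ the endpoints cannot lie strictly on opposite sides, so $\mathrm{Ch}(\mathfrak{p}_1,\mathfrak{p}_2)\subset\pi\cap\mathbb{P}=\mathbb{G}$; your observation that the affine functional cutting out $\mathbb{G}_\ell$ inside $\mathbb{F}$ either vanishes identically on the segment or at most at one point is the same mechanism in a different guise. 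Where you genuinely add something is the passage from the pairwise statement to the global one: the paper simply asserts that if $\mathbb{B}$ is not contained in a single proper face then some segment $\mathrm{Ch}(\mathfrak{p}_1,\mathfrak{p}_2)$ lies in no proper face ("that is, \dots"), which is exactly the nontrivial step you flag. Your averaging argument — pick $\mathfrak{p}_\ell\in\mathbb{B}\setminus\mathbb{G}_\ell$ for each $\ell$ and note that the barycenter of the $\mathfrak{p}_\ell$, which lies in $\mathbb{B}$ by convexity, makes every defining functional strictly positive and hence avoids $\partial\mathbb{F}$, contradicting $\mathbb{B}\subset\partial\mathbb{F}$ — closes this gap cleanly, and in fact constitutes a complete standalone proof of the lemma (the pairwise segment analysis then becomes unnecessary). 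So: correct, same basic toolkit (faces as zero sets of affine functionals nonnegative on $\mathbb{F}$, convexity of $\mathbb{B}$), but your version is the more complete one at precisely the point the paper glosses over.
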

\begin{proof}
Assume contrary. Then $\mathbb{B}$ is not contained in one proper
face of $\mathbb{F}$, that is, there exists
$\mathfrak{p}_1,\mathfrak{p}_2\in\mathbb{B}$ such that
$\text{Ch}(\mathfrak{p}_1,\mathfrak{p}_2)\nsubseteq \mathbb{G}$ for
any $\mathbb{G} \precneqq \mathbb{F}$. We shall find a
contradiction.  Given a plane $\pi$ and a line segment
$\text{Ch}(\mathfrak{p}_1,\mathfrak{p}_2)$ with
$\frac{\mathfrak{p}_1+\mathfrak{p}_2}{2} \in\pi$, we  have only two
cases:
\begin{equation}\label{jegy}
\text{(1) $\text{Ch}(\mathfrak{p}_1,\mathfrak{p}_2)\subset\pi$,\ or\
 (2) $
\mathfrak{p}_1\in(\pi_+)^{\circ}\ \text{and}\
\mathfrak{p}_2\in(\pi_-)^{\circ}$}
\end{equation}
where $\mathfrak{p}_1,\mathfrak{p}_2$ may be switched. By Definition
\ref{dfu5},
\begin{eqnarray}\label{0770}
 \bigcup_{\mathbb{G}\precneqq \mathbb{F}}\mathbb{G}=\partial\mathbb{F} \
\  \text{where  $\text{dim}(\mathbb{G})=k-1$.}
 \end{eqnarray}
It suffices to show that
\begin{eqnarray*}
 \mathfrak{p}_1,\mathfrak{p}_2 \subset\mathbb{B}\ \ \text{implies
that} \ \ \text{Ch}(\mathfrak{p}_1,\mathfrak{p}_2)\subset\mathbb{G}\
\ \text{for some face $\mathbb{G}$ in (\ref{0770}).}
\end{eqnarray*}
  By
$\text{Ch}(\mathfrak{p}_1,\mathfrak{p}_2)\subset\mathbb{B}\subset\partial\mathbb{F}$
and (\ref{0770}), we have  $(\mathfrak{p}_1+\mathfrak{p}_2)/2\in
\mathbb{G}$ for some $ \mathbb{G}$ in (\ref{0770}). Let $\pi $ be a
supporting plane of $\mathbb{G}$ such that $\mathbb{G}=
\mathbb{P}\cap\pi$ and $\mathbb{P}\subset\pi^+$. Then
$\mathfrak{p}_1,\mathfrak{p}_2\in\mathbb{B}\subset\mathbb{F}\subset\mathbb{P}\subset\pi^+$.
This implies that (2) in (\ref{jegy}) is impossible. So
  we have (1) in (\ref{jegy}), that is, $
\text{Ch}(\mathfrak{p}_1,\mathfrak{p}_2)\subset \pi$. Thus $
\text{Ch}(\mathfrak{p}_1,\mathfrak{p}_2)\subset
\pi\cap\mathbb{P}=\mathbb{G}$.
\end{proof}

\subsection{A Cone of Face}
\begin{definition}[Cones, Dual Face]\label{dualface}
Let $\mathbb{F} $ be a face of a polyhedron $\mathbb{P}$ in
$\mathbb{R}^n$.   Then the cone $\mathbb{F}^* $ of $\mathbb{F} $ is
defined by
\begin{eqnarray}\label{brg}
 \quad \mathbb{F}^*|\mathbb{P}  & =&
 \{\mathfrak{q}\in \mathbb{R}^n : \exists\, r\in\mathbb{R}  \ \
\text{such that}\ \ \mathbb{F}\subset \pi_{ \mathfrak{q},r } \cap
\mathbb{P} \ \text{and}\ \ \mathbb{P}\setminus \mathbb{F}\subset
\pi_{ \mathfrak{q},r }^+ \}\nonumber\\
&=& \{\mathfrak{q} \in \mathbb{R}^n : \exists\, r\in\mathbb{R}\
\text{such that}\ \langle\mathfrak{q},{\bf u}\rangle=r\le
\langle\mathfrak{q},{\bf y}\rangle\ \text{for all ${\bf u}\in
\mathbb{F}$,\ ${\bf y}\in \mathbb{P}\setminus \mathbb{F}$}\}.
\end{eqnarray}
The interior of a cone $ \mathbb{F}^*$  is
 the set of all nonzero  normal vectors   $\mathfrak{q}$ satisfying (\ref{4g}):
 \begin{eqnarray}\label{brg2}
 (\mathbb{F}^*)^{\circ}|\mathbb{P}  & =&\{\mathfrak{q}\in \mathbb{R}^n
 : \exists\, r\in\mathbb{R}\ \
\text{such that}\  \  \mathbb{F}= \pi_{ \mathfrak{q},r } \cap
\mathbb{P} \  \text{and}\ \ \mathbb{P}\setminus \mathbb{F}\subset
 \pi_{ \mathfrak{q},r }^+ \}\nonumber\\
 &=&\{\mathfrak{q}\in \mathbb{R}^n  : \exists\, r\in\mathbb{R}\ \
\text{such that}\  \  \mathbb{F}= \pi_{ \mathfrak{q},r } \cap
\mathbb{P} \  \text{and}\ \ \mathbb{P}\setminus \mathbb{F}\subset
 (\pi_{ \mathfrak{q},r }^+)^{\circ} \}\label{brr2}\\
&=& \{\mathfrak{q} \in \mathbb{R}^n : \exists\, r\in\mathbb{R}\
\text{such that}\ \langle\mathfrak{q},{\bf u}\rangle=r<
\langle\mathfrak{q},{\bf y}\rangle\ \text{for all ${\bf u}\in
\mathbb{F}$,\ ${\bf y}\in \mathbb{P}\setminus
\mathbb{F}$}\}.\nonumber
\end{eqnarray}
We use the notation $ \mathbb{F}^*|(\mathbb{P},V) $ when we restrict
$\mathfrak{q}$ in a given vector space $V$. Thus
$\mathbb{F}^*|\mathbb{P}=\mathbb{F}^*|(\mathbb{P},\mathbb{R}^n)$ in
(\ref{brg}). If not confused, we write just
 $\mathbb{F}^*$  instead of  $\mathbb{F}^*|\mathbb{P}$ or $\mathbb{F}^*|(\mathbb{P},\mathbb{R}^n)$.
  We note that $\mathbb{F}^*$ itself is a polyhedron in
$\mathbb{R}^n$ and $(\mathbb{F}^*)^{\circ}$ is an interior of
$\mathbb{F}^*$.
\end{definition}
\begin{remark}
To understand a cone $\mathbb{F}^*$ as a  dual face of $\mathbb{F}$, it is likely
that a cone of $\mathbb{F}$  is to be defined by
  the collection of all normal vectors $\mathfrak{q}$ satisfying (\ref{4g})
 as in (\ref{brg2}).
If so, the collection (\ref{brg2}) is an open set,   not a polyhedron anymore.
 To make $\mathbb{F}^*$ itself a polyhedron, we define a cone of $\mathbb{F}$ by
  (\ref{brg}) instead of  its interior  (\ref{brg2}).
\end{remark}

\begin{lemma}\label{lem2525}
Let $\mathbb{P}$ be a  polyhedron and $\mathbb{F},\mathbb{G}\in
\mathcal{F}(\mathbb{P})$. Then  $\mathbb{F}\preceq \mathbb{G} $ if
and only if $\mathbb{G}^*\preceq \mathbb{F}^*$.
\end{lemma}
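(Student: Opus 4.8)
The plan is to prove both implications directly from the defining inequalities in Definitions \ref{dfac} and \ref{dualface}, exploiting the transitivity of the face relation recorded in Lemma \ref{lemss}(1). Recall that $\mathbb{F}\preceq\mathbb{G}$ means there is a vector $\mathfrak{p}$ and scalar $s$ with $\langle\mathfrak{p},{\bf u}\rangle=s<\langle\mathfrak{p},{\bf w}\rangle$ for all ${\bf u}\in\mathbb{F}$, ${\bf w}\in\mathbb{G}\setminus\mathbb{F}$; and $\mathbb{G}^*\preceq\mathbb{F}^*$ means $\mathbb{G}^*$ is a face of the polyhedron $\mathbb{F}^*$. So the statement relates the order on faces of $\mathbb{P}$ to the order on faces of the ambient polyhedron $\mathbb{R}^n$ in which the cones live.

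First I would prove the forward direction. Assume $\mathbb{F}\preceq\mathbb{G}\preceq\mathbb{P}$. Given $\mathfrak{q}\in\mathbb{G}^*$, there is $r\in\mathbb{R}$ with $\langle\mathfrak{q},{\bf u}\rangle=r\le\langle\mathfrak{q},{\bf y}\rangle$ for all ${\bf u}\in\mathbb{G}$, ${\bf y}\in\mathbb{P}\setminus\mathbb{G}$. Since $\mathbb{F}\subset\mathbb{G}$, the same $\mathfrak{q},r$ witness $\langle\mathfrak{q},{\bf u}\rangle=r$ on $\mathbb{F}$ and $\le$ on $\mathbb{G}\setminus\mathbb{F}$ as well (by Lemma \ref{lemss}(1), $\mathbb{F}\preceq\mathbb{G}$ gives the constant value, but more directly $\mathbb{F}\subset\mathbb{G}\subset\pi_{\mathfrak{q},r}$ forces $\langle\mathfrak{q},\cdot\rangle=r$ on $\mathbb{F}$), so $\mathfrak{q}\in\mathbb{F}^*$: hence $\mathbb{G}^*\subset\mathbb{F}^*$. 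To upgrade $\subset$ to $\preceq$, I produce a supporting hyperplane for $\mathbb{G}^*$ inside $\mathbb{F}^*$. Using the vector $\mathfrak{p}$ with $\langle\mathfrak{p},{\bf u}\rangle=s<\langle\mathfrak{p},{\bf w}\rangle$ witnessing $\mathbb{F}\preceq\mathbb{G}$, consider the linear functional $L$ on $\mathbb{R}^n$ (the ambient space of the cones) defined by $L(\mathfrak{q})=\langle\mathfrak{q},\mathfrak{p}_{\mathbb{G}}\rangle-\langle\mathfrak{q},\mathfrak{p}_{\mathbb{F}}\rangle$ for fixed points $\mathfrak{p}_{\mathbb{F}}\in\mathbb{F}^\circ$, $\mathfrak{p}_{\mathbb{G}}\in\mathbb{G}^\circ$. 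For $\mathfrak{q}\in\mathbb{F}^*$ with associated $r$, $L(\mathfrak{q})\ge 0$ with equality exactly when $\mathfrak{p}_{\mathbb{G}}$ also lies on $\pi_{\mathfrak{q},r}$, i.e. exactly when $\mathbb{G}\subset\pi_{\mathfrak{q},r}$ (since $\mathbb{G}$ is the smallest face containing $\mathfrak{p}_{\mathbb{G}}$), i.e. exactly when $\mathfrak{q}\in\mathbb{G}^*$. This $L$, being linear, is of the form $\langle\cdot,{\bf v}\rangle$ for some ${\bf v}\in\mathbb{R}^n$, so $\{L=0\}$ is a hyperplane in $\mathbb{R}^n$ exhibiting $\mathbb{G}^*$ as a face of $\mathbb{F}^*$.

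For the converse, assume $\mathbb{G}^*\preceq\mathbb{F}^*$. I would first note that faces are monotone under $\subset$ for cones just as for polyhedra, so $\mathbb{G}^*\subset\mathbb{F}^*$. Then I want to recover $\mathbb{F}\subset\mathbb{G}$, after which Lemma \ref{lemss}(1) gives $\mathbb{F}\preceq\mathbb{G}$. This is where I expect the real work to be, and the natural tool is a biduality/reflexivity statement: for a face $\mathbb{F}$ of $\mathbb{P}$ one has $(\mathbb{F}^*)^*=\mathbb{F}$ (suitably interpreted, e.g. as $V_{am}(\mathbb{F})\cap\mathbb{P}$ recovered from the cone, or with $\mathbb{P}$ replaced by its relevant normal-fan data). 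Granting such a biduality, applying the already-proved forward direction to $\mathbb{G}^*\preceq\mathbb{F}^*$ yields $(\mathbb{F}^*)^*\preceq(\mathbb{G}^*)^*$, that is $\mathbb{F}\preceq\mathbb{G}$, closing the loop. If the biduality lemma is not yet available in the text at this point, the fallback is a direct argument: take $\mathfrak{q}\in(\mathbb{F}^*)^\circ$, so $\mathbb{F}=\pi_{\mathfrak{q},r}\cap\mathbb{P}$ exactly; since $\mathbb{G}^*\subset\mathbb{F}^*$ and $\mathbb{G}^*$ is a face of $\mathbb{F}^*$, there is $\mathfrak{q}'\in\mathbb{G}^*$ arbitrarily close to $\mathfrak{q}$, giving $\mathbb{F}\subset\pi_{\mathfrak{q}',r'}\cap\mathbb{P}$ while $\mathbb{G}=\pi_{\mathfrak{q}',r'}\cap\mathbb{P}$ for $\mathfrak{q}'$ in the interior of $\mathbb{G}^*$; choosing $\mathfrak{q}'\in(\mathbb{G}^*)^\circ$ directly and using that $\mathbb{F}^*\supset\mathbb{G}^*$ forces every defining inequality of $\mathbb{F}^*$ to hold at $\mathfrak{q}'$ yields $\langle\mathfrak{q}',\cdot\rangle$ constant on $\mathbb{F}$, hence $\mathbb{F}\subset\pi_{\mathfrak{q}',r'}\cap\mathbb{P}=\mathbb{G}$.

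The main obstacle is the converse: it ultimately rests on the duality $(\mathbb{F}^*)^*=\mathbb{F}$, which must either be cited or established, and one has to be careful about the ambient spaces in which the double cone is formed (the cone $\mathbb{F}^*$ sits in $\mathbb{R}^n$ as a polyhedron, and its own cones must be related back to faces of $\mathbb{P}$). I would isolate this reflexivity as the key input and, if it is proved later in Section \ref{secfar}, simply invoke it; otherwise the direct interior-normal-vector argument above suffices, at the cost of a short case check that $\mathfrak{q}'\in(\mathbb{G}^*)^\circ$ indeed lies in $\mathbb{F}^*$ and pins $\mathbb{F}$ inside the hyperplane $\pi_{\mathfrak{q}',r'}$.
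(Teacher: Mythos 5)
Your proposal is correct and, in its working parts, is essentially the paper's own proof: for the forward direction you separate $\mathbb{G}^*$ inside $\mathbb{F}^*$ by the linear functional $\langle\cdot,\mathfrak{p}_{\mathbb{G}}-\mathfrak{p}_{\mathbb{F}}\rangle$ (the paper uses $\mathfrak{n}-\mathfrak{m}$ with $\mathfrak{n}\in\mathbb{G}\setminus\mathbb{F}$, $\mathfrak{m}\in\mathbb{F}$; your relative-interior choice is a harmless refinement), and your fallback for the converse—take $\mathfrak{q}'\in(\mathbb{G}^*)^{\circ}\subset\mathbb{F}^*$, note its supporting level cuts out exactly $\mathbb{G}$ while constancy on $\mathbb{F}$ forces $\mathbb{F}\subset\pi_{\mathfrak{q}',\rho}\cap\mathbb{P}=\mathbb{G}$, then apply Lemma \ref{lemss}(1)—is precisely the paper's argument. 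The biduality $(\mathbb{F}^*)^*=\mathbb{F}$ you float first is not needed (and not available in the text), but since you supply the direct argument, the proof stands.
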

\begin{proof}
We first show $\mathbb{F}\preceq \mathbb{G} $ implies that
$\mathbb{G}^*\preceq \mathbb{F}^*$. If $\mathbb{F}=\mathbb{G} $, we
are done. Let $\mathbb{F} \precneqq \mathbb{G} $. It suffices to
show that there exists $\mathfrak{q}\in \mathbb{R}^n$ and
$r\in\mathbb{R}$ such that
\begin{eqnarray}\label{kkll}
\langle\mathfrak{q}, {\bf u}\rangle=r< \langle\mathfrak{q}, {\bf v}\rangle\ \ \text{for
all ${\bf u}\in \mathbb{G}^*$ and ${\bf v}\in\mathbb{F}^*\setminus
\mathbb{G}^*$},
\end{eqnarray}
 which means that $\mathbb{G}^*\preceq
\mathbb{F}^*
 $ by (\ref{4g1}) in Definition \ref{dfac}. Choose $\mathfrak{q}=\mathfrak{n}-\mathfrak{m}$ with
$\mathfrak{n}\in \mathbb{G}\setminus \mathbb{F}$ and
$\mathfrak{m}\in\mathbb{F}$. Then $\langle\mathfrak{q},{\bf u}\rangle=0$
because $\mathfrak{m},\mathfrak{n}\in \mathbb{G}$ and ${\bf
u}\in\mathbb{G}^*$. By ${\bf v}\in
\mathbb{F}^*\setminus\mathbb{G}^*$ with $\mathfrak{m}\in\mathbb{F}$
and $\mathfrak{n}\in\mathbb{G}\setminus\mathbb{F}$,
$\langle\mathfrak{q}, {\bf v}\rangle> 0$ in view of Definition \ref{dualface}.
Therefore (\ref{kkll}) is proved. We next show that
$\mathbb{G}^*\preceq \mathbb{F}^*$  implies that $\mathbb{F}\preceq
\mathbb{G} $. Observe that if $\mathfrak{q}\in \mathbb{G}^*$, then
there exists unique $\rho=\inf\{\langle{\bf x}, \mathfrak{q}\rangle:{\bf
x}\in\mathbb{P}\}$ such that $\pi_{\mathfrak{q},\rho}$ is a
supporting plane of a face containing $\mathbb{G}$. Since
$\mathbb{G}$ is a face, there exists
$\mathfrak{q}\in(\mathbb{G}^*)^{\circ}\subset \mathbb{G}^*$.  By
Definition \ref{dualface},
$
\pi_{\mathfrak{q},\rho}\cap\mathbb{P}=\mathbb{G}.
$
From $\mathfrak{q}\in\mathbb{G}^*\subset\mathbb{F}^*$, it follows that
  $\mathbb{F}\subset
\pi_{\mathfrak{q},\rho}\cap\mathbb{P}=\mathbb{G}$, which yields
$\mathbb{F}\preceq\mathbb{G}$ by (1) of Lemma \ref{lemss}.
\end{proof}
\subsection{Generalized Newton Polyhedron}
For each  $S\subset  N_n=\{1,\cdots,n\}$, we define
\begin{eqnarray*}
\mathbb{R}_+^{S}&=&\{ (u_1,\cdots,u_n):u_j\ge 0\ \ \text{for}\ \
j\in S\ \ \text{and}\ \ \ u_j=0\
 \ \text{for}\ j\in N_n\setminus S\}.
 \end{eqnarray*}
\begin{definition}\label{ded28}
Let $\Omega $ be a finite subset of $\mathbb{Z}_+^n$ and $S\subset
N_n=\{1,\cdots,n\}$. We define a Newton polyhedron  ${\bf N}(\Omega,
 S)$ associated with  $\Omega$ and  $S$   by the convex hull containing
 $(\Omega+\mathbb{R}_+^{S}) $ in $\mathbb{R}^n$:
 $${\bf N}(\Omega,
S)=\text{Ch}\left( \Omega+\mathbb{R}_+^{S}  \right).$$ By
$\mathbb{R}_+^{\emptyset}=\{0\}$ and $\mathbb{R}_+^{N_n}=\mathbb{R}_+^n$,
we see that   ${\bf N}(\Omega,
\emptyset)=\rm{Ch}(\Omega)$, and
${\bf N}(\Omega, N_n)=\text{Ch}\left( \Omega+\mathbb{R}_+^{n}
\right)$ that is the usual Newton Polyhedron denoted by  ${\bf N}(\Omega)$.
Note  that ${\bf N}(\Omega, S)$  is a polyhedron in the sense of
Definition \ref{d12}. See Figure \ref{graph5}.
\end{definition}

  \begin{figure}
   \centerline{\includegraphics[width=10cm,height=7cm]{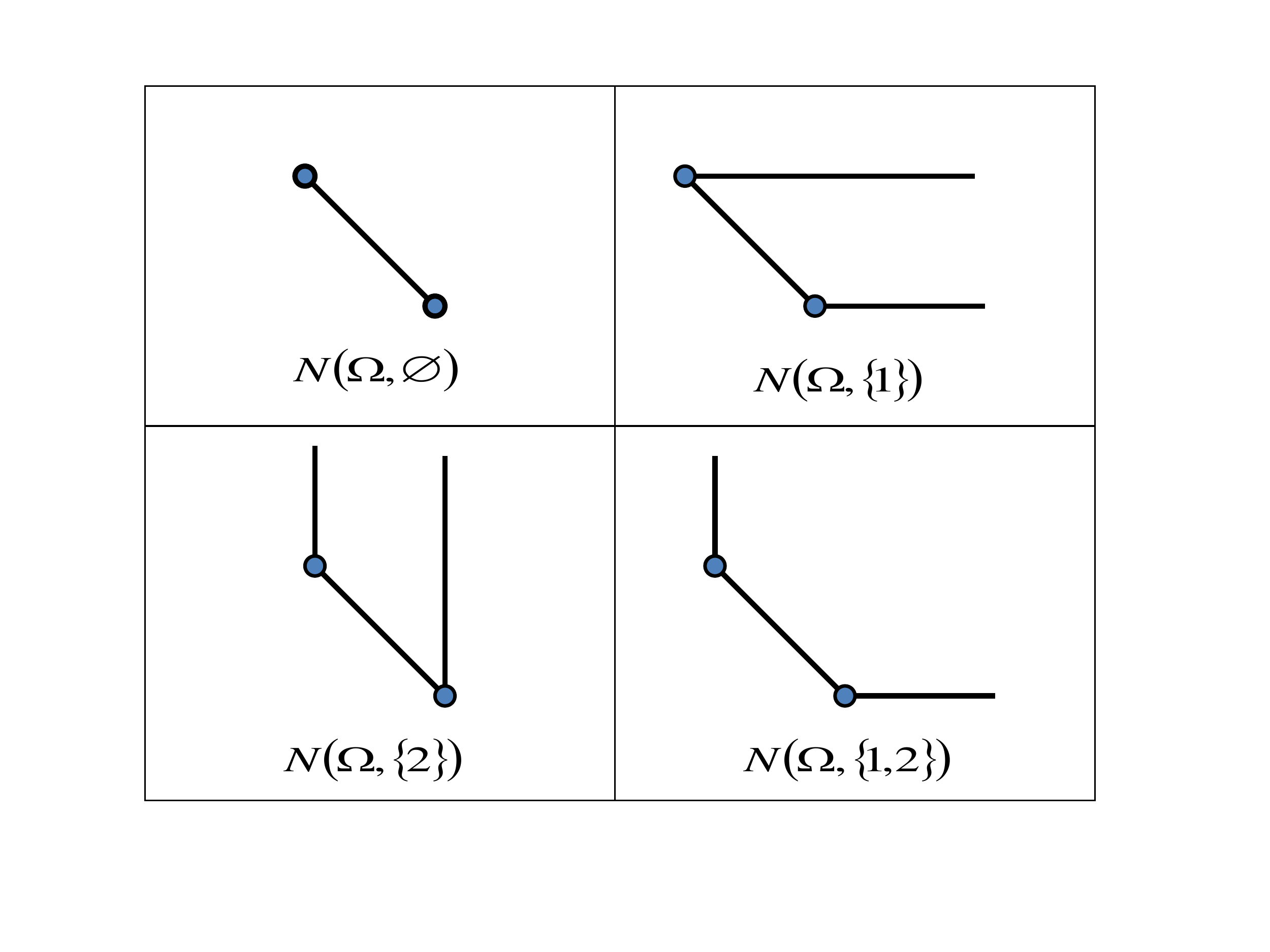}}
 \caption{Newton Polyhedra  ${\bf N}(\Omega,S)$ for $n=2$.} \label{graph5}
  \end{figure}

\begin{definition}
Let $\Lambda=(\Lambda_\nu)$ with $\Lambda_\nu\subset \mathbb{Z}_+^n$
and  $S\subset \{1,\cdots,n\} $. Then, the ordered $d$-tuple of
Newton polyhedra ${\bf N}(\Lambda_\nu,S)$'s is defined by
\begin{eqnarray*}
 \vec{{\bf N}}(\Lambda,S) =\left( {\bf N}(\Lambda_\nu,S)
 \right)_{\nu=1}^d.
\end{eqnarray*}
   To indicate a given polynomial $P=(P_\nu)\in\mathcal{P}_\Lambda$,
   we also
denote $   \vec{{\bf N}} (\Lambda,S)$  by $  \vec{{\bf N}}(P,S)$.
\end{definition}
\begin{definition}
 Let
$\Lambda=(\Lambda_\nu)$ with $\Lambda_\nu\subset \mathbb{Z}_+^n$ and
$S\subset \{1,\cdots,n\} $. We define the collection of
 $d$-tuples  of faces $\mathbb{F}_\nu\in\mathcal{F}({\bf N}(\Lambda_\nu,S))$
by
\begin{eqnarray*}
\mathcal{F}(  \vec{{\bf
N}}(\Lambda,S))=\{\mathbb{F}=\left(\mathbb{F}_1,\cdots,\mathbb{F}_d\right):\mathbb{F}_\nu\in\mathcal{F}({\bf
N}(\Lambda_\nu,S) )\}.
\end{eqnarray*}
For each $\mathbb{F}\in \mathcal{F}( \vec{{\bf N}}(\Lambda,S))$, we
denote $d$-tuple  of cones   by $\mathbb{F}^*=(\mathbb{F}_\nu^*)$.
\end{definition}

\subsection{Basic Decompositions According to Faces and Cones}
Choose $\,\psi\in C^\infty_c([-2,2])\,$ such that $\,0\leq \psi\leq
1\,$ and $\,\psi(u) =1\,$ for $\,|u|\leq 1/2\,.$ Put $\,\eta(u) =
\psi(u) -\psi(2u)\,$ and $\,h(u)=\eta(u)/u\,$ for $\,u\neq 0\,.$
Let $\,\Lambda = \left(\Lambda_1, \cdots, \Lambda_{d}\right)\,$ and
$P_\Lambda\in\mathcal{P}_{\Lambda}$.
For each $\mathbb{F}=(\mathbb{F}_\nu)\in\mathcal{F}( \vec{{\bf N}} (\Lambda,S))$
and $\,J\in\mathbb{Z}^n\,,$ define
\begin{eqnarray}\label{4.0}
  \mathcal{I}_J( P_{\mathbb{F}} ,\xi)&=& \int_{\mathbb{R}^n} \exp\left(i\,\sum_{\nu=1}^d
\left(\sum_{\mathfrak{m}\in\mathbb{F}_\nu\cap\Lambda_{\nu}}
c_{\mathfrak{m}}^\nu2^{-J\cdot\mathfrak{m}}
t^{\mathfrak{m}}\right)\xi_\nu
\right) \,\prod_{\ell=1}^nh(t_\ell)\,dt\\
&=& \int_{\mathbb{R}^n} \exp\left(i\, \sum_{\mathfrak{m}\in
\bigcup\mathbb{F}_\nu\cap\Lambda_\nu}2^{-J\cdot\mathfrak{m}}\langle\xi,
c_{\mathfrak{m}}\rangle t^{\mathfrak{m}} \right)
\,\prod_{\ell=1}^nh(t_\ell)\,dt\nonumber
\end{eqnarray}
where    $c_{\mathfrak{m}}=(c^{\nu}_{\mathfrak{m}})$ defined in
(\ref{vecpol}). We shall write $\mathcal{I}_J( P_{\Lambda} ,\xi)$ instead of
$\mathcal{I}_J(P_{{\bf
N}(\Lambda,S))} ,\xi) $.
\begin{definition}\label{dine}
 Given $S\subset N_n=\{1,\cdots,n\}$, we define
 $$1_S=(r_j)\ \text{where $r_i=1$ for $i\in S$ and $r_i=\infty$ for $i\in
N_n\setminus S$}$$ and
$$ Z(S)=\prod_{i=1}^n Z_i\ \ \text{where}\ \  Z_i=\mathbb{R}_+  \ \ \text{if}\
\ i\in S\ \ \text{and}\ \ Z_i=\mathbb{R}\ \ \text{if}\ \ i\in
 N_n \setminus S.$$
 \end{definition}
Then by using $Z_i$ above, we write
$$  \prod_{i\in S}\{-1<t_i<1\}\prod_{i\in N_n\setminus S}
\{-\infty<t_i<\infty\}=\prod_{i=1}^n\left(\bigcup_{k_i\in
Z_i\cap\,\mathbb{Z}}\{ |t_i|\approx 2^{-k_i}\}\right),$$ and make
the following dyadic decomposition:
\begin{eqnarray}\label{8765}
\mathcal{I}(P_{\Lambda} ,\xi,1_S) = \sum_{J\in
Z(S)\cap\mathbb{Z}^n}\mathcal{I}_J( P_{\Lambda} ,\xi).
\end{eqnarray}
As the name (dual face) tells, each $
J\in\mathbb{F}^*_\nu\cap\mathbb{Z}^n$ can be understood as a linear
functional
 mapping $\mathfrak{n}\in\mathbb{R}^n$ to $J\cdot \mathfrak{n}\in\mathbb{R}$
 satisfying the following dominating property:
\begin{equation}\label{dar}
2^{-  J\cdot\mathfrak{m} }=2^{-r(J)}\ge 2^{-  J\cdot\mathfrak{n}}\ \
\text{for all}\ \mathfrak{m}\in\mathbb{F}_\nu\ \text{and}\
\mathfrak{n}\in\mathbb{P}_\nu\setminus\mathbb{F}_\nu.
\end{equation}
Thus,  for $J\in\bigcap_{\nu=1}^d\mathbb{F}^*_\nu $ in (\ref{8765})
with the property  (\ref{dar}) in (\ref{4.0}),
\begin{equation}\label{dary}
\exists\, \alpha\in\mathbb{Z}_+^n\,,\ \ \left(\frac{\partial}{\partial
t}\right)^{\alpha}\left(\sum_{\nu=1}^d \left(\sum_{\mathfrak{m}\in
\Lambda_{\nu}} c_{\mathfrak{m}}^\nu2^{-J\cdot\mathfrak{m}}
t^{\mathfrak{m}}\right)\xi_\nu\right)\approx
2^{-J\cdot\mathfrak{m}_{\nu}} \xi_\nu \ \ \text{for all
$\mathfrak{m}_{\nu}\in \mathbb{F}_\nu\cap\Lambda_{\nu}$}.
\end{equation}
This combined with $Z(S)=\bigcup_{\mathbb{F}=(\mathbb{F}_\nu)\in
\mathcal{F}( \vec{{\bf N}} (\Lambda,S)) }\left(\bigcap_{\nu=1}^d \mathbb{F}^*_\nu\right)$
suggests us  to decompose in Section \ref{condec}
\begin{eqnarray*}
\sum_{J\in Z(S)\cap\mathbb{Z}^n}\mathcal{I}_J( P_{\Lambda} ,\xi)=
\sum_{ \mathbb{F} \in  \mathcal{F}( \vec{{\bf N}} (\Lambda,S)) }\sum_{J\in
 \bigcap_{\nu=1}^d\mathbb{F}^*_\nu \cap\mathbb{Z}^n}\mathcal{I}_J( P_{\Lambda}
,\xi),
\end{eqnarray*}
and  next prove in Sections \ref{sec9} and \ref{suffes} that for
each $\mathbb{F}=(\mathbb{F}_\nu)\in  \mathcal{F}( \vec{{\bf N}}
(\Lambda,S))$,
\begin{eqnarray}
\qquad \sum_{s=1}^N\sum_{J\in
 \bigcap_{\nu=1}^d\mathbb{F}^*_\nu \cap\mathbb{Z}^n}\left|\mathcal{I}_J( P_{\mathbb{F}(s-1)}
,\xi)- \mathcal{I}_J( P_{\mathbb{F}(s)} ,\xi) \right|+\sum_{J\in
 \bigcap_{\nu=1}^d\mathbb{F}^*_\nu \cap\mathbb{Z}^n}  \left|\mathcal{I}_J(
 P_{\mathbb{F}}
,\xi) \right|\le C. \label{danbi5}
\end{eqnarray}
Here  $\mathbb{F}(s)=(\mathbb{F}_\nu(s))$ will be chosen in an
suitable way so that $\mathbb{F}_\nu(s-1) \succeq
\mathbb{F}_\nu(s)$ with $\nu=1,\cdots,d$ where  $\vec{{\bf
N}}(\Lambda,S)=\mathbb{F}(0)$ and $\mathbb{F}(N)=\mathbb{F}$ as in (\ref{pathg}).

\section{Main Theorem and Background}\label{evenset}
In order to state main results, we first try to find an appropriate
condition
  on an exponent set
$\bigcup_{\nu=1}^d\mathbb{F}_\nu\cap\Lambda_\nu$ which guarantees
   $ \mathcal{I}_J( P_{\mathbb{F}} ,\xi)\equiv 0$.
\subsection{Even Sets} Let
 $\bigcup_{\nu=1}^d\mathbb{F}_\nu\cap\Lambda_\nu=\left\{\mathfrak{m}_1,\cdots,\mathfrak{m}_N \right\}.$
 Suppose
every vector $\mathfrak{m}$ of the form $$
 \alpha_1\mathfrak{m}_1+\cdots+\alpha_N\mathfrak{m}_N\ \text{with}\ \alpha_j=0\
\text{or}\ \ 1$$ has at least one even component. Then,  the Taylor
expansion of the exponential function in (\ref{4.0}) yields that
\begin{eqnarray}\label{m1.1}
\mathcal{I}_J( \mathbb{F},\xi)&=&
\sum_{k=0}^{\infty}\int_{\mathbb{R}^n} \, \frac{
\left(i\sum_{\mathfrak{m}\in\bigcup\mathbb{F}_\nu\cap\Lambda_{\nu}}2^{-J\cdot\mathfrak{m}}\langle\xi,
c_{\mathfrak{m}}\rangle
t^{\mathfrak{m}}\right)^k}{k!}\,\prod_{\ell=1}^nh(t_\ell)dt\, \nonumber\\
&=&\sum_{k=0}^{\infty}\int_{\mathbb{R}^n} \,
\sum_{\alpha_1+\cdots+\alpha_N=k}C(J,\mathfrak{m},\alpha,\xi)\frac{
t^{\alpha_{1}\mathfrak{m}_1+\cdots+\alpha_{1}\mathfrak{m}_N
}}{k!}\,\prod_{\ell=1}^nh(t_\ell)dt=0
\end{eqnarray}
since $h(t_\ell)$ is an odd function for each $\ell=1,\cdots,n$.
This observation leads to the following notions of even and odd sets
in $\mathbb{Z}_+^n$. Let  $\Omega= \{\,\mathfrak{m}_1, \cdots,
\mathfrak{m}_N\,\}\subset\mathbb{Z}^n_+$ and let the class of sum of  vectors in $\Omega$ be
\begin{equation*}\label{411}
\Sigma(\Omega)=\left\{\alpha_1\mathfrak{m}_1+\cdots+\alpha_N\mathfrak{m}_N:\alpha_j=0\
\text{or}\ \ 1\right\}.
\end{equation*}
\begin{definition}\label{def1.3}
A finite subset $\Omega= \{\,\mathfrak{m}_1, \cdots,
\mathfrak{m}_N\,\} $ of $\mathbb{Z}_+^n$ is said to be   {\bf
odd} iff there exists at least one vector
$\mathfrak{m} \in\Sigma(\Omega)$
all of whose components are odd numbers such that
$$
\mathfrak{m}= (\mbox{odd},\cdots,\mbox{odd}) .$$
\end{definition}
\begin{definition}\label{def1.4}
A finite subset $\Omega$ of $\mathbb{Z}_+^n$ is said to be    {\bf
even
 }  iff $\Omega$ is not odd, that is,  every  $\mathfrak{m}=(m_1,\cdots,m_n) \in
\Sigma(\Omega)$ has at least one even numbered component $m_j$.
\end{definition}
\begin{exam}
In $\mathbb{Z}_+^3$,  let  $A =\{(1,1,0), (3,2,1)\}$,
 and $B=\{ (1,1,0),(0,0,3)  \} $. Then $A$ is an even set and
 $B$ an odd set.
 Notice that $A$ is an even set,  though there is no $k\in\{1,2,3\}$
 such that  $k^{th}$ component of every vector in $A$ is even.
\end{exam}
In (\ref{m1.1}), we have proved the following proposition:
\begin{proposition}\label{pr5g}
Suppose that $\bigcup_{\nu=1}^d(\mathbb{F}_\nu\cap\Lambda_\nu)$ is
an even set. Then $ \mathcal{I}_J( \mathbb{F},\xi)\equiv 0$
\end{proposition}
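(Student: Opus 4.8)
The plan is to deduce the vanishing directly from the Taylor expansion of the exponential, exactly as indicated in (\ref{m1.1}); the only point requiring care is the passage from the evenness hypothesis, which is phrased via $0/1$-combinations, to the arbitrary nonnegative integer combinations that actually occur in the expansion.

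First I would record that $h$ is supported on $[-2,2]$, so the integrand of $\mathcal{I}_J(P_{\mathbb{F}},\xi)$ in (\ref{4.0}) is supported on the cube $[-2,2]^n$. On this compact set the phase $\sum_{\mathfrak{m}\in\bigcup_\nu\mathbb{F}_\nu\cap\Lambda_\nu}2^{-J\cdot\mathfrak{m}}\langle\xi,c_{\mathfrak{m}}\rangle t^{\mathfrak{m}}$ is bounded, so $e^{ix}=\sum_{k\ge 0}(ix)^k/k!$ converges uniformly in $t$ and term-by-term integration is legitimate. Expanding each $k$-th power multinomially and using that $\int_{\mathbb{R}^n}t^{\mathfrak{m}}\prod_{\ell=1}^n h(t_\ell)\,dt=\prod_{\ell=1}^n\int_{\mathbb{R}}t_\ell^{m_\ell}h(t_\ell)\,dt_\ell$, one sees that $\mathcal{I}_J(P_{\mathbb{F}},\xi)$ is a convergent sum of scalar multiples of such products, over exponents $\mathfrak{m}=\alpha_1\mathfrak{m}_1+\cdots+\alpha_N\mathfrak{m}_N$ with $\{\mathfrak{m}_1,\dots,\mathfrak{m}_N\}=\bigcup_{\nu=1}^d(\mathbb{F}_\nu\cap\Lambda_\nu)$ and $\alpha_j\in\mathbb{Z}_+$.

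The one real point is then the following: for any exponent $\mathfrak{m}=\alpha_1\mathfrak{m}_1+\cdots+\alpha_N\mathfrak{m}_N$ with $\alpha_j\in\mathbb{Z}_+$, the vector $\mathfrak{m}$ has an even component. To see this, write $\alpha_j=2\gamma_j+\beta_j$ with $\beta_j\in\{0,1\}$; then componentwise $\mathfrak{m}\equiv\beta_1\mathfrak{m}_1+\cdots+\beta_N\mathfrak{m}_N\pmod 2$, and the vector on the right lies in $\Sigma(\bigcup_{\nu=1}^d\mathbb{F}_\nu\cap\Lambda_\nu)$. Since that union is an even set, Definition \ref{def1.4} supplies an index $j_0$ for which its $j_0$-th component is even, whence $m_{j_0}$ is even as well. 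As $h$ is odd, $t_{j_0}^{m_{j_0}}h(t_{j_0})$ is then an odd function of $t_{j_0}$, so $\int_{\mathbb{R}}t_{j_0}^{m_{j_0}}h(t_{j_0})\,dt_{j_0}=0$ and the corresponding product vanishes. Hence every term of the expanded sum is zero, and summing gives $\mathcal{I}_J(P_{\mathbb{F}},\xi)=0$, which is the assertion.

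There is essentially no obstacle here: the computation is the one already displayed in (\ref{m1.1}), and the only things worth stating carefully are the uniform convergence that justifies term-by-term integration -- immediate from the compact support of $h$ -- and the reduction of the multi-indices $\alpha$ modulo $2$, which explains why the $0/1$-formulation of an \emph{even set} is exactly the right one in spite of the higher powers appearing in the Taylor series.
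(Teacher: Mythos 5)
Your argument is exactly the paper's proof: Proposition \ref{pr5g} is obtained from the Taylor-expansion computation displayed in (\ref{m1.1}), using the oddness of $h$ in each variable. You additionally make explicit the reduction of the multinomial exponents modulo $2$ (so that the $0/1$-formulation of an even set covers the arbitrary nonnegative powers in the expansion) and the justification of term-by-term integration, both of which the paper leaves implicit; these are correct and harmless refinements of the same approach.
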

We shall perform  the estimates  (\ref{danbi5}) by using
 a full rank condition of
$\bigcup\mathbb{F}_\nu(s)$ (formulated in Proposition \ref{propyy})
or vanishing property in Propositions \ref{pr5g}. Thus, the evenness
condition in Propositions \ref{pr5g} shall be imposed on
 the only faces  contained in the subclass
  $\mathcal{A}$ of $\mathcal{F}(\vec{{\bf N}}(\Lambda,S))$ satisfying
  the following two conditions:
\begin{eqnarray}\label{4.1vo}
&& \text{ {\bf Low Rank Condition}: \ \
$\text{rank}\left(\bigcup_{\nu=1}^d \mathbb{F}_\nu \right)\le n-1$
for
  $\mathbb{F}\in\mathcal{A}$,}
\\
&&  \text{ {\bf Overlapping Cone  Condition}: \ \ $
 \bigcap_{\nu=1}^d(\mathbb{F}_\nu^*)^{\circ}  \ne \emptyset\ $  for
$\mathbb{F}\in\mathcal{A}$}\label{4.1dd}
\end{eqnarray}
where the overlapping cone condition comes  from the decompositions
in $J$ and the dominating condition (\ref{dar}).
\subsection{Statement of Main Results}
We start with the simplest case $d=1$. Observe for this case that
$\bigcap_{\nu=1}^d \left(\mathbb{F}^*_{\nu}\right)^{\circ}
\ne\emptyset$ always   holds whenever $\text{rank}\left(
\bigcup_{\nu=1}^d \mathbb{F}_\nu \right)\le n-1.$
\begin{main}\label{main18}
Let $\Lambda \subset\mathbb{Z}_+^n$ and $S\subset N_n$. Suppose that $d=1$ in (\ref{vecpol}).  Let
$1<p<\infty$.
 Then
  $$ \text{for all $P \in\mathcal{P}_\Lambda$,}\ \ \exists \, C_{P}>0  \
  \text{such that}\ \sup_{r\in I(S)}\left\|
  \mathcal{H}^{P}_{r}\right\|_{L^p(\mathbb{R}^1)\rightarrow L^p(\mathbb{R}^1)}\le
  C_{P} $$
 if
and only if
$\mathbb{F} \cap\Lambda$
 is an even set for $\mathbb{F}\in\mathcal{F}({\bf N}(\Lambda,S))$
 whenever $\text{rank}\left( \mathbb{F} \right)\le
 n-1$.
\end{main}
For $d>1$, the overlapping condition (\ref{4.1dd}) is crucial as
well as the  rank condition (\ref{4.1vo}).
\begin{definition}\label{ird}
 Given $\vec{{\bf
N}}(\Lambda,S)=\left({\bf N}(\Lambda_\nu,S)\right)_{\nu=1}^d$, we
set the collection of all $d$-tuples of faces satisfying both
  low rank condition  (\ref{4.1vo}) and overlapping  (\ref{4.1dd}) by
\begin{eqnarray*}\label{336t}
\mathcal{F}_{\rm{lo}}( \vec{{\bf N}}(\Lambda,S))=\left\{
(\mathbb{F}_\nu) \in \mathcal{F}\left(\vec{ {\bf
N}}(\Lambda,S)\right):
 \text{rank}\left(  \bigcup_{\nu=1}^d \mathbb{F}_\nu  \right)\le n-1\
\text{and}  \  \bigcap_{\nu=1}^d
\left(\mathbb{F}^*_{\nu}\right)^{\circ}  \ne\emptyset \right\}.
\end{eqnarray*}
\end{definition}
We assume first that $\Lambda_{\nu}$'s are mutually disjoint such
that $\Lambda_{\mu}\cap\Lambda_{\nu}=\emptyset$ for any $\mu\ne
\nu$.
\begin{main}\label{main3}
Let $\Lambda=(\Lambda_1,\cdots,\Lambda_d)$ with
$\Lambda_\nu\subset\mathbb{Z}_+^n$ and $ S\subset N_n$. Suppose that
$\Lambda_{\nu}$'s are mutually disjoint. Let $1<p<\infty$.
 Then
  $$ \text{for all $P \in\mathcal{P}_\Lambda$,}\ \ \exists \, C_{P}>0
  \ \text{such that}\ \sup_{r\in I(S)}\left\|
  \mathcal{H}^{P}_{r}\right\|_{L^p(\mathbb{R}^d)\rightarrow L^p(\mathbb{R}^d)}\le
  C_{P} $$
 if
and only if\ \
$
\bigcup_{\nu=1}^d (\mathbb{F}_\nu \cap\Lambda_\nu)$
 is an even set for $\mathbb{F}=(\mathbb{F}_\nu)
\in\mathcal{F}_{\rm{lo}}( \vec{{\bf N}}(\Lambda,S))$.
\end{main}

\begin{remark}
Main Theorems 1 and 2 do  not give a criteria  for the boundedness with a
given individual polynomial $P_\Lambda$,
but enables us to determine the boundedness
 for universal polynomials $P_\Lambda$ with a set $\Lambda$ of exponents fixed.
Also, Main Theorems 1 and 2 do not give a condition for  the boundedness of
$\left\|
  \mathcal{H}^{P}_{r}\right\|_{L^p(\mathbb{R}^d)\rightarrow
  L^p(\mathbb{R}^d)}$ with fixed $r$, but for the uniform boundedness
 $\sup_{r\in I(S)}\left\|
  \mathcal{H}^{P}_{r}\right\|_{L^p(\mathbb{R}^d)\rightarrow
  L^p(\mathbb{R}^d)}$.
  It is interesting to know if  $\sup_{r\in I(S)}\left\|
  \mathcal{H}^{P}_{r}\right\|_{L^p(\mathbb{R}^d)\rightarrow
  L^p(\mathbb{R}^d)}$
   can be replaced by $ \left\|
  \mathcal{H}^{P}_{1_S}\right\|_{L^p(\mathbb{R}^d)\rightarrow
  L^p(\mathbb{R}^d)}$ in the above theorems where $1_S$ is defined in Definition \ref{dine}.
\end{remark}
Let  $P_\Lambda$ be a form of  graph  $(t_1,\cdots,t_n,P_{n+1}(t))$
so that
 $\Lambda=(\{{\bf e}_1\},\cdots,\{{\bf e}_n\},\Lambda_{n+1})$.
For this case, we are able to show that the $L^p$ boundedness of
$\mathcal{H}^{P_\Lambda}_{1_S}$ and the uniform $L^p$ boundedness of
$\mathcal{H}^{P_\Lambda}_{r}$ in $r\in I(S)$ are equivalent. Moreover,
we do not need the  overlapping condition (\ref{4.1dd}), since we
can make the  condition $ \bigcap_{\nu=1}^d
\left(\mathbb{F}^*_{\nu}\right)^{\circ}\ne\emptyset$ always hold.
\begin{corollary}\label{ccoo}
Let $1<p<\infty$ and let $\Lambda=(\{{\bf e}_1\},\cdots,\{{\bf e}_n\},\Lambda_{n+1})$ and $S\subset N_n$.
 Then
 \begin{eqnarray*}\label{4dd4}
 \text{for all $P \in\mathcal{P}_\Lambda$,}\ \ \exists \, C_{P}>0  \ \text{such that}\ \ \left\|
  \mathcal{H}^{P}_{1_S}\right\|_{L^p(\mathbb{R}^d)\rightarrow L^p(\mathbb{R}^d)}\le
  C_{P}
  \end{eqnarray*}
 if and only if $
  (\mathbb{F}_{n+1} \cap\Lambda_{n+1})\cup A$, for
 $\mathbb{F}_{n+1} \in\mathcal{F}({\bf N}(\Lambda_{n+1},S))$ and $A\subset\{{\bf e}_1,\cdots,{\bf e}_n\}$,
 is an even set
whenever   $\text{rank}\left( \mathbb{F}_{n+1} \cup A\right)\le n-1$.
\end{corollary}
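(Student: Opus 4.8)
The plan is to deduce the corollary from Main Theorem~\ref{main3} after two preliminary reductions. First, pulling out a constant phase we may assume $0\notin\Lambda_{n+1}$, and if $\mathbf{e}_j\in\Lambda_{n+1}$ for some $j\le n$ then the measure--preserving shear $(x_1,\dots,x_n,x_{n+1})\mapsto(x_1,\dots,x_n,\,x_{n+1}-c^{n+1}_{\mathbf{e}_j}x_j)$ conjugates $\mathcal H^{P_\Lambda}_r$ to $\mathcal H^{P'_\Lambda}_r$, where $P'_{n+1}=P_{n+1}-c^{n+1}_{\mathbf{e}_j}t_j$ has exponent set $\Lambda_{n+1}\setminus\{\mathbf{e}_j\}$; iterating, I may assume $\Lambda_{n+1}\cap\{\mathbf{e}_1,\dots,\mathbf{e}_n\}=\emptyset$, so that $\Lambda_1,\dots,\Lambda_{n+1}$ are mutually disjoint. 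The asserted criterion is insensitive to this shear, since a deleted monomial $\mathbf{e}_j$ is free to rejoin the auxiliary set $A$. Second, the anisotropic dilation $t_j\mapsto r_j^{-1}t_j$ ($j\in S$) together with the matching dilation on $\mathbb R^{d}$ carries $\mathcal H^{P_\Lambda}_r$, $r\in I(S)$, into $\mathcal H^{\widetilde P_\Lambda}_{\widetilde r}$ with $\widetilde P_\Lambda\in\mathcal P_\Lambda$ (only the coefficients change) and $\widetilde r_j=1$ for $j\in S$, $\widetilde r_j=r_j$ for $j\notin S$; since the hypothesis is quantified over \emph{all} $P\in\mathcal P_\Lambda$, a uniform bound for $\|\mathcal H^{P}_{1_S}\|$ yields a bound for $\|\mathcal H^{\widetilde P}_{\widetilde r}\|$ once the remaining finite global parameters $\widetilde r_j<\infty$, $j\notin S$, are removed by a Littlewood--Paley/maximal--function tail estimate, monotone in each $r_j$ (the global directions are already present in $1_S$). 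The converse implication is Fatou's lemma as $r_j\uparrow\infty$ for $j\notin S$. Hence the left side of the corollary is equivalent to ``$\sup_{r\in I(S)}\|\mathcal H^{P}_r\|_{L^p\to L^p}\le C_P$ for all $P\in\mathcal P_\Lambda$.''

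By this and Main Theorem~\ref{main3} (now applicable) the left side holds iff $\bigcup_{\nu=1}^{n+1}(\mathbb F_\nu\cap\Lambda_\nu)$ is an even set for every $\mathbb F=(\mathbb F_\nu)\in\mathcal F_{\mathrm{lo}}(\vec{\mathbf N}(\Lambda,S))$, and it remains to identify the resulting family of exponent sets with the one in the statement. For $\nu\le n$ one has $\mathbf N(\{\mathbf{e}_\nu\},S)=\mathbf{e}_\nu+\mathbb R_+^{S}$, the translate of a pointed cone with apex $\mathbf{e}_\nu$; hence every nonempty face of it contains $\mathbf{e}_\nu$, so $\mathbb F_\nu\cap\Lambda_\nu$ equals $\{\mathbf{e}_\nu\}$ when $\mathbb F_\nu\ne\emptyset$ and $\emptyset$ when $\mathbb F_\nu=\emptyset$. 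Writing $A=\{\mathbf{e}_\nu:\mathbb F_\nu\ne\emptyset,\ \nu\le n\}$ gives $\bigcup_\nu(\mathbb F_\nu\cap\Lambda_\nu)=A\cup(\mathbb F_{n+1}\cap\Lambda_{n+1})$, and choosing for $\nu\le n$ the minimal representative ($\{\mathbf{e}_\nu\}$ or $\emptyset$) one has $\mathrm{rank}\bigl(\bigcup_\nu\mathbb F_\nu\bigr)=\mathrm{rank}(\mathbb F_{n+1}\cup A)$. Since the vertex $\{\mathbf{e}_\nu\}$ and the empty face of $\mathbf N(\{\mathbf{e}_\nu\},S)$ share the cone interior $\{\mathfrak q:\mathfrak q_j>0\ \text{for}\ j\in S\}$, while the normal fan of $\mathbf N(\Lambda_{n+1},S)$ lives in $\{\mathfrak q:\mathfrak q_j\ge 0,\ j\in S\}$, the overlapping condition~\ref{4.1dd} puts no constraint on $A$ and can be arranged by choosing $\mathbb F_{n+1}$ suitably among the faces with a prescribed active exponent set $\mathbb F_{n+1}\cap\Lambda_{n+1}$; this is the precise content of ``the overlapping condition is not needed.'' Consequently $\mathcal F_{\mathrm{lo}}$ produces exactly the sets $(\mathbb F_{n+1}\cap\Lambda_{n+1})\cup A$ with $\mathbb F_{n+1}\in\mathcal F(\mathbf N(\Lambda_{n+1},S))$, $A\subset\{\mathbf{e}_1,\dots,\mathbf{e}_n\}$, $\mathrm{rank}(\mathbb F_{n+1}\cup A)\le n-1$, and Main Theorem~\ref{main3} becomes the stated criterion. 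As a consistency check, for $n=2$ and $S=\{1,2\}$ this collapses to the Carbery--Wainger--Wright condition: taking $A=\emptyset$ forces $\mathbb F_3$ to be a vertex of $\mathbf N(\Lambda_3)$, whereas a nonempty $A$ forces $\mathbb F_3$ onto a coordinate axis, where evenness is automatic.

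I expect the main obstacle to be the forward implication in the second reduction, namely transferring a bound at the fixed parameter $1_S$ to a uniform bound over $r\in I(S)$ for the graph polynomial: this genuinely exploits the dilation symmetry of the graph together with a square--function/maximal estimate to dispose of the finite global parameters $r_j$, $j\notin S$, and the equivalence fails without the graph structure (as the example $\Lambda=\{(2,2),(3,3)\}$ in the introduction shows). A secondary difficulty is the convex--geometry verification in the last step --- that for each realizable active exponent set $\mathbb F_{n+1}\cap\Lambda_{n+1}$ one can pick $\mathbb F_{n+1}$ so that the $d$ cones $\mathbb F_\nu^{*}$ overlap --- which uses the structure of faces and cones of $\mathbf N(\Lambda_{n+1},S)$ developed earlier; everything else (full--rank oscillation estimates, the reduction estimates, and the vanishing Proposition~\ref{pr5g}) is packaged inside Main Theorem~\ref{main3}.
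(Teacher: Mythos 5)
Your sufficiency half and the identification of the face families essentially reproduce the paper's argument, but your necessity half has a genuine gap at its central step: the claimed equivalence between ``$\|\mathcal H^{P}_{1_S}\|\le C_P$ for all $P$'' and ``$\sup_{r\in I(S)}\|\mathcal H^{P}_r\|\le C_P$ for all $P$.'' After the anisotropic scaling $t_j\mapsto r_jt_j$, $j\in S$, the operator $\mathcal H^P_r$ is conjugate to $\mathcal H^{\widetilde P(r)}_{\widetilde r}$ where $\widetilde P(r)\in\mathcal P_\Lambda$ has $r$-dependent coefficients; the hypothesis of the corollary only supplies a constant $C_{\widetilde P(r)}$ depending on that polynomial, so for a fixed $P$ no bound uniform in $r$ follows. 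Likewise, removing the remaining finite truncations $\widetilde r_j<\infty$, $j\notin S$, by a ``Littlewood--Paley/maximal tail estimate, monotone in each $r_j$'' is unsubstantiated: uniform control of truncations of these multi-parameter operators is precisely what is at stake (the paper's remark after Main Theorem \ref{main3} flags the equivalence of $\|\mathcal H^{P}_{1_S}\|$ and $\sup_{r\in I(S)}\|\mathcal H^{P}_r\|$ as unknown in general), and in the necessity direction you cannot invoke the even-set machinery that controls sums over subsets of $Z(S)$. The paper never uses such an equivalence as an input. Instead, assuming the corollary's evenness condition fails, it builds the faces, derives the logarithmic divergence of the face multiplier, hence $\|\mathcal H^{P_{\mathbb F}}_{1_{S_0}}\|=\infty$, and transfers this to $\|\mathcal H^{P}_{1_S}\|=\infty$ by an exact dilation argument that exploits the graph structure: the dilation $t_j\mapsto\delta^{q_j}t_j$ acts linearly on $\mathbb R^{n+1}$, so $\mathcal H^{P}_{1_S}f=[\mu^S_\delta\ast f_{\delta^{-1}}]_\delta$ preserves the operator norm exactly (no varying constants enter), and the weak limit $\mu^S_\delta\to$ face measure as $\delta\to0$ yields the contradiction. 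This conjugation-plus-weak-limit step is what your proposal is missing, and it is the only place the fixed parameter $1_S$ is genuinely handled.

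A secondary problem is your justification of the overlapping condition (\ref{4.1dd}) in the necessity direction. The vertex $\{{\bf e}_\nu\}$ of ${\bf N}(\{{\bf e}_\nu\},S)$ has cone interior requiring $q_j>0$ for every $j\in S$, so when $(\mathbb F_{n+1}^{*})^{\circ}$ contains only vectors with $q_j=0$ for some $j\in S$ (i.e.\ $\mathbb F_{n+1}$ is unbounded in some $S$-directions), the vertex cones of the graph factors miss $(\mathbb F_{n+1}^{*})^{\circ}$ entirely, and re-choosing $\mathbb F_{n+1}$ among faces with the same active exponent set does not repair this. The paper keeps $\mathbb F_{n+1}$ and instead takes, for $\nu$ indexing $A$, the unbounded faces $\mathbb F_\nu=\{{\bf e}_\nu\}+\mathbb R_+^{S_1}$ with $S_1$ read off from $\text{Sp}(\mathbb F_{n+1})$, which forces $(\mathbb F_{n+1}^{*})^{\circ}\subset(\mathbb F_\nu^{*})^{\circ}$ while keeping $\text{rank}\bigl(\bigcup_\nu\mathbb F_\nu\bigr)\le n-1$. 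Finally, your preliminary shear reduction asserts without proof that the stated evenness criterion is unchanged when ${\bf e}_j$ is deleted from $\Lambda_{n+1}$; since ${\bf N}(\Lambda_{n+1},S)$ and its face lattice change under this deletion, that invariance also needs an argument rather than the remark that ${\bf e}_j$ ``can rejoin $A$.''
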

\begin{remark}
The above evenness condition in Corollary \ref{ccoo}  is equivalent to
\begin{eqnarray*}
\bigcup_{\nu=1}^d (\mathbb{F}_\nu \cap\Lambda_\nu)\ \text{
 is an even set whenever    $\text{rank}\left(\bigcup_{\nu=1}^d\mathbb{F}_\nu\right)\le
 n-1$  for $\mathbb{F}\in\mathcal{F}(\vec{{\bf N}}(\Lambda,S))$.}
\end{eqnarray*}
\end{remark}
  We   exclude the assumption of mutually disjointness of $\Lambda_\nu$'s in
  the hypotheses of Main Theorem \ref{main3}. Let $P=(P_\nu)_{\nu=1}^d$ be a vector polynomial.
  For each $\nu=1,\cdots,d$, we define a set $\Lambda(P_\nu)$ to be a set of all
   exponents  of the monomials in $P_\nu$:
  $$\Lambda(P_{\nu})=\left\{ \mathfrak{m} \in \mathbb{Z}^n_+
:c^{\nu}_{\mathfrak{m} } \ne 0\ \text{in $P_{\nu}(t)=\sum
c^{\nu}_{\mathfrak{m} }t^{\mathfrak{m}}$}\right\}.$$
 Moreover, we denote  a $d$-tuple $(\Lambda(P_\nu))_{\nu=1}^d$   by $\Lambda(P)$.
Denote  the set of $d\times d$ invertible matrices by $GL(d)$.
 For $A\in GL(d)$ and $P \in\mathcal{P}_\Lambda$ with $P(t)=(P_{1}(t),\cdots, P_{d}(t))$, we
 let  $AP$   be a vector polynomial given by  the matrix multiplication
$$AP(t)= \left(\sum_{\mathfrak{m}\in
\Lambda((AP)_\nu)} a^{\nu}_{\mathfrak{m}}
t^{\mathfrak{m}}\right)_{\nu=1}^d\ \ \text{for some
$a_{\mathfrak{m}}^\nu\ne 0$} $$ where we regard  $P(t)$ and $AP(t)$
above as column vectors. Then
 $AP \in \mathcal{P}_{\Lambda'}$ where
 $\Lambda'= \Lambda(AP )$.
If $A=I$ an identity matrix,
$\Lambda'=\left(\Lambda((AP)_{\nu})\right)_{\nu=1}^d=
\left(\Lambda(P_{\nu})\right)_{\nu=1}^d=(\Lambda_\nu)_{\nu=1}^d=\Lambda$.
\begin{definition}\label{de0303}
Let $P \in\mathcal{P}_\Lambda$ where $\Lambda=(\Lambda_\nu)$ with
$\Lambda_\nu\subset \mathbb{Z}_+^n$ and  $S\subset \{1,\cdots,n\} $.
Let $A\in GL(d)$.  Given a vector polynomial  $AP$,
 we consider the $d$-tuple of Newton polyhedrons
  \begin{eqnarray*}
  \vec{{\bf N}}(AP,S)=  \left({\bf
 N}((AP)_\nu,S) \right)_{\nu=1}^d
 \end{eqnarray*}
   and $d$-tuple of their faces $$\mathcal{F}(\vec{{\bf
 N}}(AP,S))=\left\{\mathbb{F}_A=((\mathbb{F}_A)_1,\cdots,(\mathbb{F}_A)_d):
 (\mathbb{F}_A)_\nu\in  \mathcal{F}\left({\bf
 N}((AP)_\nu,S)\right)\right\}.$$
 \end{definition}
\begin{main}\label{main4}
Let $\Lambda=(\Lambda_1,\cdots,\Lambda_d)$ with
$\Lambda_\nu\subset\mathbb{Z}_+^n$ and $S\subset N_n$.    Let
$1<p<\infty$.
$$\text{For  all $P \in\mathcal{P}_\Lambda$}\ \ \exists\ C_{P}>0  \ \text{such that}\
  \sup_{r\in I(S)}\left\|\mathcal{H}^{P}_{r}\right\|_{L^p(\mathbb{R}^d)\rightarrow L^p(\mathbb{R}^d)}\le
  C_{P}\ \  $$
 if
and only if  for all $A\in GL_d$ and $P\in\mathcal{P}_\Lambda,$
\begin{eqnarray}
\qquad \bigcup_{\nu=1}^d(\mathbb{F}_A)_\nu\cap \Lambda((AP)_\nu) \
\text{
 is an even set  whenever $\mathbb{F}_A= ((\mathbb{F}_A)_\nu)\in
 \mathcal{F}_{\rm{lo}}( \vec{{\bf N}}(AP,S))$ }\label{enm1}
\end{eqnarray}
where  the class $\mathcal{F}_{\rm{lo}}( \vec{{\bf N}}(AP,S))$ is
defined as in Definition \ref{ird}.
\end{main}
\begin{remark}
For the case  $n= 2$ in Main Theorems \ref{main3} and \ref{main4},   the overlapping
condition of cones in $\mathcal{F}_{\rm{lo}}( \vec{{\bf N}}(\Lambda,S))$
does not have to appear explicitly. By omitting the overlapping cone condition
 in $\mathcal{F}_{\rm{lo}}( \vec{{\bf N}}(\Lambda,S))$, we let
  $$\mathcal{F}_{\rm{l}}( \vec{{\bf N}}(\Lambda,S))=\left\{
(\mathbb{F}_\nu) \in \mathcal{F}\left(\vec{ {\bf
N}}(\Lambda,S)\right):
 \text{rank}\left(  \bigcup_{\nu=1}^d \mathbb{F}_\nu  \right)\le n-1  \right\}
 \supset  \mathcal{F}_{\rm{lo}}( \vec{{\bf N}}(\Lambda,S)).
$$ Then, for the case $n=2$, the evenness condition for
$ \mathcal{F}_{\rm{lo}}( \vec{{\bf N}}(\Lambda,S))$ is equivalent
 to the condition for $ \mathcal{F}_{\rm{l}}( \vec{{\bf N}}(\Lambda,S))$.
  It suffices to show $\Rightarrow$. Suppose that $ \bigcup_{\nu=1}^d \mathbb{F}_\nu \cap\Lambda_\nu$ is an odd set with
$\text{rank}\left(  \bigcup_{\nu=1}^d \mathbb{F}_\nu  \right)\le 1$. Then
there exists $\mu$ such that  $\mathbb{F}_\mu\cap\Lambda_\mu$ has a
point $(odd,odd)$, because both of two points $(even,odd),(odd,even)$
can not lie in the one line passing through the origin. Therefore,
  $\mathbb{G}=(\mathbb{G}_\nu) $ defined by $\mathbb{G}_\mu=\mathbb{F}_\mu$
   and $\mathbb{G}_\nu=\emptyset$ for $\nu\ne \mu$ satisfies that
    $\mathbb{G}\in \mathcal{F}_{\rm{lo}}( \vec{{\bf N}}(\Lambda,S))$
     and $ \bigcup_{\nu=1}^d \mathbb{G}_\nu \cap\Lambda_\nu$ is an odd set.
\end{remark}
\begin{remark}\label{r32}
For the case $n\ge 3$, the overlapping condition is crucial in
 Main Theorems \ref{main3} and \ref{main4}. Moreover, we
 note that it is not  just cones $\bigcap_{\nu=1}^d\mathbb{F}_\nu^*$,
 but  their interiors $\bigcap_{\nu=1}^d (\mathbb{F}_\nu^*)^{\circ}$
 that satisfy the overlapping condition  (\ref{4.1dd}).
The   example \ref{ex32} in Section \ref{secfar} shows that
  there exists $\mathbb{F} \in \mathcal{F}_{\rm{l}}( \vec{{\bf N}}(\Lambda,S))$
  such that  $\bigcap_{\nu=1}^d\mathbb{F}_\nu^*  \ne\emptyset$ and
$\bigcup_{\nu=1}^d (\mathbb{F}_\nu\cap\Lambda_\nu)$ is an odd set, but
$$\text{ for all $P \in\mathcal{P}_\Lambda$ }\ \ \sup_{r\in I(S)}\left\|
  \mathcal{H}^{P}_{r}\right\|_{L^p(\mathbb{R}^d)\rightarrow L^p(\mathbb{R}^d)}\le
  C_{P}. $$
 \end{remark}

\subsection{Background} In the one parameter case ($n=1$), the operator
 $\mathcal{H}^{P_\Lambda}_{r}$ with $r=(1,\cdots,1)$ can be regarded as a particular
instance of singular integrals along curves satisfying finite type
condition in E. M. Stein and S. Wainger \cite{SW}. The $L^p$ theory
of those singular integrals has been developed quite well.  For
example, see  M. Christ, A. Nagel, E. M. Stein and S. Wainger
\cite{CNSW} for singular Radon transforms with the curvature
conditions in a very general setting. See also M. Folch-Gabayet and
J. Wright \cite{FW} for the case that phase functions $P_\Lambda$ are given by
rational functions.

In the multi-parameter case ($\,n\geq 2\,$), it is A. Nagel and S.
Wainger \cite{NW} who introduced the (global) multiple Hilbert
transforms along surfaces having certain dilation invariance
properties and obtained their $L^2$ boundedness. In \cite{RS}, F.
Ricci and E. M. Stein established an $L^p$ theorem for
multi-parameter singular integrals whose kernels satisfy more
general dilation structure.  A special case of their results implies
that if $\Lambda=\left(\{{\bf e}_1\},\cdots, \{{\bf
e}_{n}\},\{\mathfrak{m}\}\right)$ where at least $n-1$ coordinates
of $\mathfrak{m}$ are even,    then
 $ \|\mathcal{H}^{P_\Lambda}_{1_S} \|_{L^p(\mathbb{R}^{n+1})\rightarrow L^p(\mathbb{R}^{n+1})}$
    are bounded   for $1<p<\infty$.
In \cite{CWW}, A. Carbery, S. Wainger and J. Wright obtained a
necessary and sufficient condition for $L^p(\mathbb{R}^3)$
boundedness of $\mathcal{H}_{1_S}^{\Lambda}$ with $S=\{1,2\}$,
 $\Lambda=(\{{\bf e}_1\}, \{{\bf e}_2\},\Lambda_3)$
 where $d=3$ and $n=2$. Their theorem states that
\begin{theorem}[Double Hilbert transform \cite{CWW}]\label{T1}
 Let $\,\Lambda=\left(\{{\bf e}_{1}\},\{{\bf e}_2\},
 \Lambda_{3}\right)$ and $S=\{1,2\}$ with $n=2$ and $d=3$.
  For $1<p<\infty$, the local double
Hilbert transform $\mathcal{H}^{P_\Lambda}_{1_S}$  is bounded in
$L^p(\mathbb{R}^3)$ if and only if every vertex $\mathfrak{m}$ in $
{\bf N}(\Lambda_3,S) $ has at least one even component.
\end{theorem}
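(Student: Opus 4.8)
Taking the Fourier transform in $x\in\mathbb{R}^3$, the operator $\mathcal{H}^{P_\Lambda}_{1_S}$ becomes multiplication by
\[
m(\xi)=\text{p.v.}\int_{[-1,1]^2}e^{i(\xi_1t_1+\xi_2t_2+\xi_3P_3(t))}\,\frac{dt_1}{t_1}\,\frac{dt_2}{t_2},
\qquad P_3(t)=\sum_{\mathfrak{m}\in\Lambda_3}c_{\mathfrak{m}}t^{\mathfrak{m}}.
\]
As in (\ref{8765}) I would dyadically decompose $m=\sum_{J\in\mathbb{Z}_+^2}\mathcal{I}_J(P_\Lambda,\xi)$, where after the rescaling $t_i\mapsto 2^{-k_i}t_i$ the monomial $t^{\mathfrak{m}}$ acquires the coefficient $2^{-J\cdot\mathfrak{m}}$ (cf.\ (\ref{4.0})), and then split $\mathbb{Z}_+^2$ according to which vertex $\mathfrak{m}$ of ${\bf N}(\Lambda_3,S)$ minimises $J\cdot\mathfrak{m}$ over $\Lambda_3$. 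The resulting pieces are the cones $\mathfrak{m}^*$ of Definition \ref{dualface}; they cover $\mathbb{Z}_+^2$, and two adjacent ones meet only along the ray $\mathbb{F}^*$ dual to the common edge $\mathbb{F}$. The proof divides accordingly into the interiors $(\mathfrak{m}^*)^{\circ}$ of the two–dimensional vertex cones, the one–dimensional edge rays, and the necessity half.

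\emph{Two–dimensional cones.} Fix a vertex $\mathfrak{m}$ with an even component, say $m_1$ even. After excising a thin conic neighbourhood of each of the two boundary rays of $\mathfrak{m}^*$, the core satisfies $J\cdot(\mathfrak{n}-\mathfrak{m})\gtrsim|J|$ for every $\mathfrak{n}\in\Lambda_3\setminus\{\mathfrak{m}\}$ (the dominating property (\ref{dar}) quantified away from the boundary). This legitimises replacing $P_\Lambda$ by the single–monomial model $\Lambda'=(\{{\bf e}_1\},\{{\bf e}_2\},\{\mathfrak{m}\})$ exactly as in (\ref{ide2})--(\ref{ide22}): the error from each omitted monomial gains a factor $2^{-c|J|}$ from the gap above (together with van der Corput decay from the $t^{\mathfrak{m}}$–oscillation when $|\xi_3|2^{-J\cdot\mathfrak{m}}\gg1$), hence sums. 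For the model symbol $\mathcal{I}_J(P_{\Lambda'},\xi)$, put $a_1=2^{-k_1}\xi_1$, $a_2=2^{-k_2}\xi_2$, $a_3=c_{\mathfrak{m}}2^{-J\cdot\mathfrak{m}}\xi_3$ and split according to whether each $|a_i|\le1$ or $>1$, bounding $|\mathcal{I}_J(P_{\Lambda'},\xi)|$ by combining the cancellation $\int h=0$ in each variable, van der Corput for large $|a_2|,|a_3|$, and --- crucially --- the parity identity $\int t_1^{k}h(t_1)\,dt_1=0$ for even $k$. Since $m_1$ is even, every monomial produced by Taylor–expanding the phase carries an even power of $t_1$, so a non–vanishing contribution is forced to borrow an odd power of $t_1$ from $e^{ia_1t_1}$; this yields a gain of a power of $|a_1|=2^{-k_1}|\xi_1|$ (and a symmetric gain in $k_2$ if $m_2$ is even as well, otherwise a gain of a power of $|a_3|$), which turns $\sum_{J\in(\mathfrak{m}^*)^{\circ}}|\mathcal{I}_J(P_{\Lambda'},\xi)|$ into a convergent geometric series uniformly in $\xi$. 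This gives $L^2$; the same kernel bounds, fed into the product Littlewood--Paley / square–function machinery for two–parameter singular integrals of Ricci--Stein \cite{RS}, give $L^p$, $1<p<\infty$.

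\emph{Edge rays.} Along the ray $\mathbb{F}^*$ dual to an edge $\mathbb{F}$ all monomials of $\mathbb{F}\cap\Lambda_3$ carry the same rescaled size, so no single–monomial reduction is available; instead $\sum_{J\in\mathbb{F}^*}\mathcal{H}_J$ is, after rescaling, a genuinely \emph{one–parameter} singular integral --- one dilation parameter, with the cancellation $\int h(t_1)h(t_2)\,dt=0$ --- and such singular Radon transforms along a polynomial surface are $L^p$–bounded for $1<p<\infty$ by the Stein--Wainger / Christ--Nagel--Stein--Wainger theory \cite{SW,CNSW}, \emph{irrespective of the parity of the exponents}. This is the same mechanism that makes the $n=1$ case automatic, and it is precisely why the criterion concerns only the \emph{vertices} of ${\bf N}(\Lambda_3,S)$. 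Summing the finitely many vertex–cone and edge–ray contributions completes sufficiency.

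\emph{Necessity.} Suppose some vertex $\mathfrak{m}=(m_1,m_2)$ of ${\bf N}(\Lambda_3,S)$ has both $m_1,m_2$ odd. Take $P_\Lambda=(t_1,t_2,P_3)$ with all coefficients of $P_3$ equal to $1$ and evaluate at $\xi=(0,0,\tau)$: the graph directions drop out, the growth concentrates in the conic region where $t^{\mathfrak{m}}$ dominates (the remaining monomials giving a bounded perturbation, by the estimates of the first part run in reverse, and contributing with the same sign since all coefficients are positive), and for the pure model $P_3=t^{\mathfrak{m}}$,
\[
m(0,0,\tau)=4\int_0^1\!\!\int_0^1\frac{\sin(\tau\,s_1^{m_1}s_2^{m_2})}{s_1\,s_2}\,ds_1\,ds_2\ \gtrsim\ \log\tau,
\]
the cosine part vanishing by the oddness of $m_1,m_2$ and the substitution $v=\tau s_1^{m_1}s_2^{m_2}$ turning the inner integral into a truncated $\int\frac{\sin v}{v}$. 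Hence $\sup_\xi|m(\xi)|=\infty$, so $\mathcal{H}^{P_\Lambda}_{1_S}$ is unbounded on $L^2$; unboundedness on $L^p$ for $p\ne2$ follows by the standard dilation argument (testing against modulated dilates of a fixed bump), and perturbing $\xi_1$ to break any accidental cancellation among several odd vertices shows the obstruction is coefficient–independent, so it holds for every $P_\Lambda\in\mathcal{P}_\Lambda$.

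\medskip\noindent The step I expect to be the main obstacle is the two–dimensional–cone estimate: organising the oscillatory bounds for $\mathcal{I}_J(P_{\Lambda'},\xi)$ so that the evenness–induced gain in one frequency variable, combined with van der Corput decay in the others, yields summability of $\sum_{J\in(\mathfrak{m}^*)^{\circ}}|\mathcal{I}_J|$ that is \emph{uniform in $\xi$} across all size regimes, and then upgrading these kernel bounds to a genuine $L^p$ statement for the two–parameter operator while correctly excising the edge rays and passing them to the one–parameter theory without losing cancellation.
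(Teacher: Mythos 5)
Your outline is, in structure, the same route the paper's general machinery (and \cite{CWW}) takes for this statement: dyadic decomposition, decomposition of $\mathbb{Z}_+^2$ into vertex cones via the dominating property (\ref{dar}), reduction to face-restricted models as in (\ref{ide2}), the parity gain for a vertex with an even component (the mechanism behind Proposition \ref{pr5g} and Theorem \ref{th60}), square-function/product Littlewood--Paley for $L^p$, and a scaling argument for necessity; note the paper itself only cites \cite{CWW} for Theorem \ref{T1}, but proves it in general form via Corollary \ref{ccoo}. However, two steps of your sketch have genuine gaps.

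First, the boundary regions between adjacent vertex cones. Once you excise thin conic neighbourhoods of the two boundary rays of $\mathfrak{m}^*$ (which you must, since for $\mathfrak{n}$ on the common edge the gap $J\cdot(\mathfrak{n}-\mathfrak{m})$ is \emph{not} comparable to $|J|$ there), what remains unhandled is not the ray but a genuinely two-parameter set of $J$'s. On that set the correct procedure is a two-step comparison: first replace $P_\Lambda$ by the edge-restricted polynomial, gaining $2^{-c\alpha_1}$ only in the coordinate $\alpha_1$ along the edge normal, and then bound the remaining two-parameter sum for the edge polynomial using the full-rank van der Corput estimates together with the frequency-sorting operators $A_J^{\gamma}$ and the square-function lemmas, so that the decay is uniform in the graph frequencies $\xi_1,\xi_2$; this is exactly what the paper's descending chain, the size control of Proposition \ref{prop66aa}, and Proposition \ref{propyy} accomplish (specialized to $n=2$). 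Your appeal to the one-parameter Stein--Wainger/CNSW theory \cite{SW,CNSW} applies at best to the literal ray, does not cover the excised neighbourhoods, and does not address uniformity in $\xi_1,\xi_2$; as stated this step would not close.

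Second, necessity. The theorem asserts unboundedness for the given $P_\Lambda$ with \emph{arbitrary} nonzero coefficients whenever some vertex is $(odd,odd)$. Computing only with all coefficients equal to $1$ at $\xi=(0,0,\tau)$ does not prove this, and when several vertices are odd the individual logarithmic divergences could a priori cancel --- a real issue, since for $d=1$ the criterion is genuinely coefficient-dependent (Theorem \ref{T33}); ``perturbing $\xi_1$'' is a gesture, not an argument. The clean argument, which is what the paper does in Lemma \ref{lem92d} and Section \ref{sec13}, is to scale $\xi$ anisotropically along a vector in the interior of the dual cone of the offending vertex so that in the limit every monomial off the vertex and both graph terms drop out of the multiplier; the limiting integral is the pure $t^{\mathfrak{m}}$ double Hilbert integral, whose $\log\tau$ growth cannot be cancelled by anything, and since an $L^p$ Fourier multiplier must be an $L^\infty$ function, $\sup_\xi|m(\xi)|=\infty$ already rules out boundedness for every $1<p<\infty$ without any separate dilation argument.
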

 S. Patel \cite{P2} extends this result to $S=\emptyset$ corresponding
 to the global Hilbert transform.
\begin{theorem}[Double Hilbert transform \cite{P2}]\label{T2}
 Let $\,\Lambda=\left(\{{\bf e}_{1}\},\{{\bf e}_2\},
 \Lambda_{3}\right)$ and $S=\{\emptyset\}$ with $n=2$ and $d=3$.
  For $1<p<\infty$, the global double
Hilbert transform $\mathcal{H}^{P_\Lambda}_{1_S}$  is bounded in
$L^p(\mathbb{R}^3)$ if and only if  every vertex $\mathfrak{m}$ in $
{\bf Ch}(\Lambda_3) $ and every edge $E$ in $ {\bf Ch}(\Lambda_3) $
passing through the origin has at least one even component.
\end{theorem}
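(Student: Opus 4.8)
The plan is to obtain Theorem \ref{T2} as the $S=\emptyset$ case of the graph--case result Corollary \ref{ccoo} (with $n=2$, $d=3$, $\Lambda=(\{{\bf e}_1\},\{{\bf e}_2\},\Lambda_3)$), using the reformulation of its hypothesis recorded in the Remark following that corollary; the fact that boundedness is coefficient--free in the graph case --- so that the ``for all $P$'' statement of Corollary \ref{ccoo} agrees with the fixed--$P$ statement of Theorem \ref{T2} --- is part of that assertion. It then remains to carry out a short combinatorial translation. First I would enumerate the faces of a planar polyhedron: the empty face, the vertices, the edges, and $\text{Ch}(\Lambda_3)$ itself. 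The two factors $\mathbb{F}_1\in\{\{{\bf e}_1\},\emptyset\}$ and $\mathbb{F}_2\in\{\{{\bf e}_2\},\emptyset\}$ contribute to $\bigcup_\nu(\mathbb{F}_\nu\cap\Lambda_\nu)$ only a subset $A\subseteq\{{\bf e}_1,{\bf e}_2\}$; since every such $A$ except $\{{\bf e}_1,{\bf e}_2\}$ has a zero (hence even) component in some coordinate, while $\{{\bf e}_1,{\bf e}_2\}$ already has rank $2$, the set $A$ can never turn an even set into an odd one inside the regime $\text{rank}(\bigcup_\nu\mathbb{F}_\nu)\le 1$; so the condition collapses to ``$\mathbb{F}_3\cap\Lambda_3$ is an even set whenever $\text{rank}(\mathbb{F}_3)\le 1$''. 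Finally $\text{rank}(\mathbb{F}_3)\le 1$ picks out precisely the vertices of $\text{Ch}(\Lambda_3)$ and the edges whose supporting line passes through the origin, and for a vertex ``even set'' is the same as ``has at least one even component'' --- which is the statement of Theorem \ref{T2}.

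For a direct proof one runs the scheme of Sections \ref{condec}--\ref{suffes} in this setting. For sufficiency, expand $\mathcal{I}(P_\Lambda,\xi,1_{\emptyset})=\sum_{J\in\mathbb{Z}^2}\mathcal{I}_J(P_\Lambda,\xi)$ and split $\mathbb{Z}^2$ into the cones $\bigcap_{\nu=1}^{3}\mathbb{F}_\nu^*$; since ${\bf N}(\{{\bf e}_\nu\},\emptyset)$ is a single point, this decomposition is effectively governed by the face $\mathbb{F}_3$ of $\text{Ch}(\Lambda_3)$ together with the choice of which of $t_1,t_2$ is retained. On each cone, pass from $P_\Lambda$ to the face polynomial $P_{\mathbb{F}}$ via the telescoping estimate (\ref{danbi5}) and the dominating property (\ref{dar}); the model piece $\mathcal{I}_J(P_{\mathbb{F}},\xi)$ vanishes identically by Proposition \ref{pr5g} when $\text{rank}(\bigcup_\nu\mathbb{F}_\nu)\le 1$ --- this is where the evenness hypothesis is consumed --- and, its phase being non-degenerate otherwise, satisfies a van der Corput bound summable in $J$. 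The passage from the uniform $L^2$ multiplier bound to $L^p$, $1<p<\infty$, uses the Littlewood--Paley tools of Section \ref{sec6}; for $n=2$ the overlapping--cone condition need not appear explicitly, cf. the Remark after Main Theorem \ref{main4}.

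For necessity one exhibits, when the condition fails, a ray in $\xi$-space along which $\mathcal{I}(P_\Lambda,\xi,1_{\emptyset})$ is unbounded. If a vertex $\mathfrak{m}=(a,b)$ of $\text{Ch}(\Lambda_3)$ has $a,b$ both odd, take $\xi=(0,0,\xi_3)$, localize $|t_i|\approx 2^{-j_i}$ with $J$ in the interior of $\{\mathfrak{m}\}^*$, and rescale: each $\mathcal{I}_J$ is then a fixed nonzero number plus an error decaying in $|J|$, and the number of such $J$ with $J\cdot\mathfrak{m}$ near $\log_2\xi_3$ grows like $\log\xi_3$, reproducing the $\Lambda=\{(1,1)\}$ example of the Introduction. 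The new phenomenon, absent from the local Theorem \ref{T1}, is an all-odd lattice point in the interior of an edge $E$ whose supporting line $\mathbb{R}\mathfrak{v}$ passes through the origin; along the one-parameter cone $E^*$ (generated by $\mathfrak{q}\perp\mathfrak{v}$) one has $J\cdot\mathfrak{m}=0$ for every $\mathfrak{m}\in E$ but $J\cdot\mathfrak{n}\to+\infty$ for $\mathfrak{n}\notin E$, so after rescaling the edge monomials survive undamped while everything else (including, with $\xi_1=\xi_2=0$, the linear terms) decays, and the cone sum becomes an infinite sum of copies of a single oscillatory integral $\int_{\mathbb{R}^2}e^{i\xi_3 Q(t^{\mathfrak{v}})}h(t_1)h(t_2)\,dt$, where $Q$ is the one-variable polynomial carrying the edge coefficients; this diverges once that integral is nonzero. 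Here $S=\emptyset$ genuinely enters: in the local case $E^*$ meets $\mathbb{Z}_+^2$ only at the origin and the edge is dominated by an endpoint vertex. The main obstacle I expect is showing that $\int_{\mathbb{R}^2}e^{i\xi_3 Q(t^{\mathfrak{v}})}h(t_1)h(t_2)\,dt$ is nonzero for \emph{some} $\xi_3$ for \emph{every} admissible choice of the coefficients $c^3_{\mathfrak{m}}$ --- i.e. that no cancellation among the edge monomials can rescue an odd origin-edge; this should follow by expanding in $\xi_3$, reducing to positivity of the pushforward moments under $t\mapsto t^{\mathfrak{v}}$, and using that $\int_{\mathbb{R}} t^{2k+1}h(t)\,dt\neq 0$ for every $k\ge 0$, so the odd part of the phase leaves a nonvanishing trace along the whole ray.
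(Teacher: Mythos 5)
Your combinatorial translation of the hypothesis is sound: for $S=\emptyset$ and $\Lambda=(\{{\bf e}_1\},\{{\bf e}_2\},\Lambda_3)$ the condition of Corollary \ref{ccoo} does collapse to ``$\mathbb{F}_3\cap\Lambda_3$ is an even set whenever $\text{rank}(\mathbb{F}_3)\le 1$'', which picks out the vertices of $\text{Ch}(\Lambda_3)$ and the faces lying on lines through the origin, and on such a line ``even set'' is equivalent to every point of $\mathbb{F}_3\cap\Lambda_3$ having an even component. This is also how the paper itself treats Theorem \ref{T2}: it is a quoted result of \cite{P2}, not proved in the paper, and the Remark after Theorem \ref{T6} only matches its hypotheses with those of Corollary \ref{ccoo}.

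The genuine gap is the individual-versus-universal issue, which you name but do not close. Theorem \ref{T2} is about one fixed $P_\Lambda$, while Corollary \ref{ccoo} and every necessity argument in the paper are universal over $\mathcal{P}_\Lambda$: the Remark following Main Theorem \ref{main3} says this explicitly, and the nonvanishing step (\ref{nec55g}) only produces \emph{some} bad choice of coefficients and $\xi$. So sufficiency transfers, but the fixed-$P$ necessity is exactly the coefficient-freeness you call ``the main obstacle'', and your proposed argument for it does not work as stated. Along the cone of an origin edge $E$ with primitive direction $\mathfrak{v}=(v_1,v_2)$, both components odd, the limiting object is $F(\xi_3)=\int e^{i\xi_3\sum_j c_j t^{k_j\mathfrak{v}}}h(t_1)h(t_2)\,dt$, and its first Taylor coefficient is $i\sum_{k_j\ \mathrm{odd}} c_j\,\mu_{k_jv_1}\mu_{k_jv_2}$ with $\mu_m=\int t^m h(t)\,dt>0$ for odd $m$; positivity of these moments does not prevent cancellation between distinct all-odd lattice points of $E$, so this coefficient (and a priori every higher one) can vanish for adversarial $(c_j)$. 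Hence nonvanishing of $F$ for \emph{every} admissible coefficient choice --- the only case that matters being an origin edge carrying two or more all-odd points, since a single all-odd point or an all-odd vertex is handled by the first-order term --- is left unproved, and it is precisely the point where the fixed-$P$ statement goes beyond the paper's machinery; closing it requires Patel's argument from \cite{P2} or a genuinely new one (e.g.\ showing that the pushforwards of $h\otimes h$ under $u=t^{\mathfrak{v}}$ composed with $Q(u)$ and with $Q(-u)$ cannot coincide when $Q$ has a nonzero odd part).
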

S. Patel \cite{P1} also studies the case $n=2$ and $d=1$. He has
shown that the necessary and sufficient condition   for the $L^p$
boundedness of $\mathcal{H}^{P_\Lambda}_{1_S}$ cannot be determined
by only the geometry of ${\bf N}(\Lambda)$ but by coefficients of
the given polynomial $P_\Lambda(t)$. More precisely, the condition
is described in terms of not  a single vertex $\mathfrak{m}$ and its
coefficient $c_{\mathfrak{m}}$ in $P_\Lambda$, but   the sum of quantities
associated with many vertices and their corresponding coefficients:
\begin{theorem}[Double Hilbert transform \cite{P1}]\label{T33}
 Let $\,\Lambda=\left(
 \Lambda_{1}\right) $ and $S=\{1,2\}$ with $n=2$ and $d=1$.
Then, the local double
Hilbert transform $\mathcal{H}^{P_\Lambda}_{1_S}$  is bounded in
$L^p(\mathbb{R}^1)$  for $1<p<\infty$ if and only if
\begin{eqnarray}\label{sppt}
\sum_{(m_j,n_j)\in \mathcal{F}^0_{odd} }\frac{\text{sgn}(a_{m_jn_j})}{m_jn_j}\left(\begin{matrix}
m_j&0\\
0&n_j\end{matrix}
\right)(\mathfrak{n}_j^+-\mathfrak{n}_j^-)=0.
\end{eqnarray}
where $\mathcal{F}^0_{odd}=\left\{(m_j,n_j)\in\mathcal{F}^0({\bf
N}(\Lambda)): (m_j,n_j)=(odd,odd)\right\}$, and
$(m_j,n_j)\in\mathcal{F}^0_{odd}$ is  an intersection
$\mathbb{F}_{j}^{-}\cap \mathbb{F}_j^{+}$ of two facets
$\mathbb{F}_j^{\pm}=\pi_{\mathfrak{n}^{\pm}_j,1}\cap{\bf
N}(\Lambda)\in\mathcal{F}^1({\bf N}(\Lambda))$.
\end{theorem}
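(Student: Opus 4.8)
The plan is to pass first to the Fourier side. Since $d=1$, the operator $\mathcal{H}^{P_\Lambda}_{(1,1)}$ is convolution on $\mathbb{R}$ with a kernel carrying the two‑parameter dilation structure $t\mapsto(2^{-j_1}t_1,2^{-j_2}t_2)$, and its multiplier is $\xi\mapsto\mathcal{I}(P_\Lambda,\xi,(1,1))$. For $p=2$ the boundedness is literally equivalent to $\sup_\xi|\mathcal{I}(P_\Lambda,\xi,(1,1))|<\infty$. For general $1<p<\infty$ I would run a Littlewood--Paley decomposition in the bi‑dilation structure: write $\mathcal{H}^{P_\Lambda}_{(1,1)}=\sum_{J\in\mathbb{Z}_+^2}\mathcal{H}_J$ as in \eqref{8765}, observe that each $\mathcal{H}_J$ is uniformly $L^p$–bounded (its kernel is a fixed smooth, compactly supported bump after rescaling), and reduce the summation in $J$, by standard square–function and vector–valued arguments together with the one–dimensional theory of singular integrals along curves (which controls the ``partial'' sums in which one of $t_1,t_2$ ranges over $|t_i|\approx1$), to the $L^2$ multiplier bound plus its $\xi$–derivative analogues. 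The derivative analogues are the same type of object: $\xi\partial_\xi\mathcal{I}$ is $\int(-i\,\xi P_\Lambda(t))e^{-i\xi P_\Lambda(t)}\prod h(t_\ell)\,dt$, whose phase rescales exactly as before. So everything rests on the asymptotics of $\mathcal{I}(P_\Lambda,\xi,(1,1))$ as $|\xi|\to\infty$.

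Next I would carry out the dyadic and Newton–polygon decomposition. After discarding a possible constant term, no compact edge of ${\bf N}(\Lambda,\{1,2\})=\mathrm{Ch}(\Lambda+\mathbb{R}_+^2)$ passes through the origin. Rescaling $t_i\mapsto2^{-j_i}t_i$ turns the phase into $\sum_{\mathfrak{n}\in\Lambda}\xi a_{\mathfrak{n}}2^{-\langle J,\mathfrak{n}\rangle}t^{\mathfrak{n}}$, and I would partition $\mathbb{Z}_+^2$ into the dual cones of the faces: the two–dimensional vertex cones $\mathfrak{m}_j^*=\mathrm{CoSp}(\mathfrak{n}_j^-,\mathfrak{n}_j^+)$ and the one–dimensional edge rays, using the machinery of Section \ref{ptt}. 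By the dominating property \eqref{dar}, on $\mathfrak{m}_j^*$ the monomial $t^{\mathfrak{m}_j}$ carries the largest coefficient $|\xi a_{\mathfrak{m}_j}|2^{-\langle J,\mathfrak{m}_j\rangle}$. I split each cone by whether this dominant size is $\gtrsim1$ or $\lesssim1$. In the oscillatory regime, van der Corput estimates give decay $\lesssim(|\xi|2^{-\langle J,\mathfrak{m}_j\rangle})^{-\delta}$, which sums over the cone to $O(1)$. In the Taylor regime the exponential is replaced by $1-i(\text{phase})$, and, exactly as in \eqref{m1.1} and Proposition \ref{pr5g}, only exponents $\mathfrak{n}=(\mathrm{odd},\mathrm{odd})$ survive integration against $\prod h(t_\ell)$. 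A short computation then shows that the edge rays (handled by the one–dimensional theory, since one variable stays at unit scale) and the vertex cones attached to vertices with an even component contribute only $O(1)$; hence the entire unbounded part of $\mathcal{I}(P_\Lambda,\xi,(1,1))$ is supported on the vertices in $\mathcal{F}^0_{odd}$.

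For an odd vertex $\mathfrak{m}_j=(m_j,n_j)$ I would substitute $u=t_1^{m_j}$, $v=t_2^{n_j}$ on its cone. This produces the factor $1/(m_jn_j)$, turns the surviving coefficient into $\mathrm{sgn}(\xi a_{m_jn_j})$ after the $u,v$–integration, and transports the cone and its bounding rays by the matrix $\mathrm{diag}(m_j,n_j)$. Summing the surviving Taylor terms over the lattice points of $\mathfrak{m}_j^*$ in the regime $|\xi|2^{-\langle J,\mathfrak{m}_j\rangle}\lesssim1$, and comparing the level sets $\{\langle J,\mathfrak{m}_j\rangle=\ell\}$, the contribution telescopes: the ``bulk'' of each level set produces a convergent geometric sum, while the two ends of the level set (sitting on the rays $\mathfrak{n}_j^-$ and $\mathfrak{n}_j^+$) leave a net logarithmic term $\log|\xi|$ times the \emph{boundary} vector $\mathrm{diag}(m_j,n_j)(\mathfrak{n}_j^+-\mathfrak{n}_j^-)$, weighted by $\mathrm{sgn}(\xi a_{m_jn_j})/(m_jn_j)$. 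Summing over all odd vertices, the coefficient of $\log|\xi|$ is exactly the left–hand side of \eqref{sppt}. Thus $\sup_\xi|\mathcal{I}(P_\Lambda,\xi,(1,1))|<\infty$, together with the derivative bounds needed for the $L^p$ argument, holds precisely when \eqref{sppt} is satisfied; this gives sufficiency. For necessity, when \eqref{sppt} fails one of the two coordinate components of that coefficient is nonzero, and I would test $\mathcal{H}^{P_\Lambda}_{(1,1)}$ against a Knapp–type function whose Fourier transform is concentrated near $|\xi|\approx R$ to produce an operator norm growing like $\log R$, contradicting $L^p$–boundedness.

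The main obstacle I expect is the bookkeeping in the Taylor regime of the odd–vertex cones. One must (i) excise the transition neighbourhoods of the bounding rays $\mathfrak{n}_j^\pm$, where the adjacent edge polynomial is no longer negligible, show those contributions are $O(1)$, and reattach them without double counting the shared rays; and (ii) verify that the telescoping over a cone genuinely produces the \emph{vector} $\mathrm{diag}(m_j,n_j)(\mathfrak{n}_j^+-\mathfrak{n}_j^-)$ and not merely its length — this is exactly why \eqref{sppt} is a system of two scalar identities rather than one. Pinning down these sharp constants, and then propagating them through the Littlewood--Paley/square–function reduction so that the failure of \eqref{sppt} is detected for \emph{every} $p\in(1,\infty)$ and not just $p=2$, is the heart of the argument.
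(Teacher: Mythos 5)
First, note that the paper you are working from does not prove Theorem \ref{T33} at all: it is quoted as background from \cite{P1}, so your sketch has to stand on its own, and as it stands it has a genuine gap at its central step. Your claim that in a vertex cone the oscillatory regime ``sums over the cone to $O(1)$'' is false. In a two--dimensional cone $\mathfrak{m}_j^*$ the number of lattice points $J$ with $J\cdot\mathfrak{m}_j=\ell$ grows (on average) linearly in $\ell$, so the van der Corput bound gives $\sum_{\ell\le \log_2|\xi|}(\ell+1)\,(|\xi|2^{-\ell})^{-\delta}\approx\log|\xi|$, not $O(1)$. This is not just a lossy estimate: already for $P(t)=t_1t_2$ the model box integral $G(\lambda)=\int\!\!\int e^{i\lambda uv}h(u)h(v)\,du\,dv$ is nonzero at every dyadic scale $\lambda$, and the logarithmic growth of $\int\!\!\int\sin(\xi t_1t_2)\,\frac{dt_1}{t_1}\frac{dt_2}{t_2}\approx 2\pi\log|\xi|$ ($\xi>0$) collects contributions from \emph{all} scales $\lambda$ --- the constant carries the factor $\int_0^\infty\sin\mu\,\frac{d\mu}{\mu}=\pi/2$, which you only see by keeping $\lambda\gtrsim1$. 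Consequently your reduction ``the entire unbounded part of $\mathcal{I}$ is supported on the Taylor regime of the odd vertices'' is unjustified: each odd vertex produces a $\log|\xi|$ term from both regimes, and sufficiency under (\ref{sppt}) requires showing that the \emph{total} per-vertex coefficient (a full dyadic sum in $\lambda$, which must in addition be shown to depend on $(m_j,n_j)$ only through the explicit factors in (\ref{sppt})) cancels; cancelling only the Taylor pieces while discarding the oscillatory pieces as bounded leaves the proof incomplete. Likewise, vertices with an even component need the difference/geometric-mean argument (in the spirit of Lemma \ref{lemb5.2} and Proposition \ref{propyy}), not bare van der Corput, again because of the linear level count.

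Second, the passage from the cone analysis to the precise formula (\ref{sppt}) is only asserted (``a short computation'', ``telescopes''), and your closing remark that one must recover ``the vector \ldots and not merely its length --- this is exactly why (\ref{sppt}) is a system of two scalar identities'' indicates the formula has not been digested: with the normalization $\pi_{\mathfrak{n}_j^{\pm},1}$ one has $\mathfrak{n}_j^{\pm}\cdot(m_j,n_j)=1$, hence $\mathfrak{n}_j^{+}-\mathfrak{n}_j^{-}=c_j\,(n_j,-m_j)$ for a scalar $c_j$, and $\frac{1}{m_jn_j}\bigl(\begin{smallmatrix}m_j&0\\0&n_j\end{smallmatrix}\bigr)(\mathfrak{n}_j^{+}-\mathfrak{n}_j^{-})=c_j\,(1,-1)$; so (\ref{sppt}) is a single scalar identity $\sum_j\mathrm{sgn}(a_{m_jn_j})\,c_j=0$, with $c_j=\pm\det(\mathfrak{n}_j^{-},\mathfrak{n}_j^{+})$ the angular width of the dual cone. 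The real content of the theorem is the identification of the coefficient of $\log|\xi|$ with a universal nonzero constant times this sum --- this is where the lattice-point count per level, the normalization of the normals, and your substitution $u=t_1^{m_j}$, $v=t_2^{n_j}$ (applied across all scales, not only in the Taylor regime) must be carried out, and it is exactly what is missing. A minor point: for necessity you do not need a Knapp-type construction; since $\mathcal{H}^{P_\Lambda}_{1_S}$ is a convolution operator on $\mathbb{R}^1$, $L^p$ boundedness for any $1<p<\infty$ forces its multiplier $\mathcal{I}(P_\Lambda,\cdot,(1,1))$ to be bounded, so an unbounded log coefficient already rules out every $p$.
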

  A. Carbery, S. Wainger and J. Wright  \cite{CWW-elescorial}
 obtain the asymptotic behaviors of the oscillatory singular
 integrals
associated with   analytic phase functions $P(t_1,t_2)$, which
extends  Theorem \ref{T1}   to the class of analytic functions. They
\cite{CWW-elescorial} also find an example of finite type surface
$(t_1,t_2,P(t_1,t_2))$ with its formal Taylor series satisfying
evenness hypothesis, however $\mathcal{H}_{r}^{P}$ not bounded in
$L^2(\mathbb{R}^3)$. We also refer to \cite{CHKY2} dealing with a
certain class of flat surfaces $(t_1,t_2,P(t_1,t_2))$  without any
curvature. In the general setting of
polynomial surfaces defining the Double Hilbert transform,
  M. Pramanik and C. W. Yang   \cite{MY} obtain the $L^p$
theorem:
  \begin{theorem}[Double Hilbert transform \cite{MY}] \label{T4}
  Let $\,\Lambda=\left(\{{\bf e}_1\},\{{\bf e}_2\},
 \Lambda_{1},\cdots,\Lambda_{k}\right) $ and $S=\{1,2\}$ with $n=2$ and $d=k+2$.
 Suppose that $P(t_1,t_2)=(P_{\Lambda_1}(t),\cdots,P_{\Lambda_k}(t))$ and
  $P_{\Lambda}=(t_1,t_2,P(t_1,t_2))$.
   For $1<p<\infty$, the local double
Hilbert transform $\mathcal{H}^{P_\Lambda}_{1_S}$  is bounded in
$L^p(\mathbb{R}^{d})$ if and only if for every $A\in GL(k)$, every
vertex $\mathfrak{m}\in {\bf N}((AP)_\nu,S)$ with $\nu=1,\cdots,k$ has at least one
even numbered component.
  \end{theorem}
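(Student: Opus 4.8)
This is the specialization $n=2$, $S=\{1,2\}$ (so $1_S=(1,1)$ and $I(S)=(0,1)^2$), $d=k+2$, graph data $\Lambda=(\{{\bf e}_1\},\{{\bf e}_2\},\Lambda_1,\dots,\Lambda_k)$ of the machinery of this paper, and the plan is to run the argument of Main Theorem~\ref{main4} for the fixed polynomial $P_\Lambda$, exploiting two simplifications. First, since $P_\Lambda$ is a graph over $(t_1,t_2)$, the boundedness of $\mathcal{H}^{P_\Lambda}_{1_S}$ is equivalent to the uniform bound $\sup_{r\in I(S)}\|\mathcal{H}^{P_\Lambda}_r\|_{L^p\to L^p}<\infty$ (the scales $r_i\uparrow1$ cost only a bounded factor; cf.\ the discussion preceding Corollary~\ref{ccoo}), and by the $L^p$ tools of Section~\ref{sec6} this reduces to the uniform multiplier estimate $\sup_{\xi\in\mathbb{R}^{k+2}}|\mathcal{I}(P_\Lambda,\xi,(1,1))|<\infty$ (together with the accompanying square-function bounds for $p\ne2$). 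Second, for $n=2$ the overlapping-cone condition~(\ref{4.1dd}) is automatic once the rank condition~(\ref{4.1vo}) holds (Remark after Main Theorem~\ref{main4}), so throughout I would work with the class $\mathcal{F}_{\rm l}(\vec{{\bf N}}(AP,S))$ of $d$-tuples of faces whose union has rank~$\le1$.

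\textbf{Necessity.} Conjugating $\mathcal{H}^{P_\Lambda}_r$ by the linear map $\text{diag}(I_2,A)\in GL(d)$ on $\mathbb{R}^{k+2}$ gives $\mathcal{H}^{(t_1,t_2,AP)}_r$ with comparable operator norm; hence, if the stated condition fails, after relabeling $\Lambda$ we may assume some ${\bf N}(\Lambda_\nu)$ has an all-odd vertex $\mathfrak m=(m_1,m_2)$. Localize $\mathcal{I}(P_\Lambda,\xi,(1,1))=\sum_{J\in\mathbb{Z}_+^2}\mathcal{I}_J(P_\Lambda,\xi)$ to the two-dimensional cone $\{\mathfrak m\}^*$ and take $\xi$ concentrated in the $\nu$-th nonlinear frequency, $\xi_{\nu+2}=\lambda$; on $\{\mathfrak m\}^*$ the monomial $c^\nu_{\mathfrak m}2^{-J\cdot\mathfrak m}t^{\mathfrak m}$ strictly dominates every other monomial by~(\ref{dar}), so up to an error bounded exactly as in the reduction estimate~(\ref{danbi5}) the localized sum equals $\int_{[-1,1]^2}\sin(\lambda c^\nu_{\mathfrak m}t^{\mathfrak m})\frac{dt_1}{t_1}\frac{dt_2}{t_2}$; because $m_1,m_2$ are odd this equals $4\int_{(0,1)^2}\sin(\lambda c^\nu_{\mathfrak m}t^{\mathfrak m})\frac{dt_1}{t_1}\frac{dt_2}{t_2}$, which grows like $\log\lambda$. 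Testing against an $f$ whose Fourier support isolates this cone (as in \cite{CWW},\cite{P1},\cite{P2}) rules out cancellation from the other pieces, so $\mathcal{H}^{P_\Lambda}_{1_S}$ is unbounded.

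\textbf{Sufficiency.} Assume every vertex of every ${\bf N}((AP)_\nu,S)$ with $A\in GL(k)$, $1\le\nu\le k$, has an even component. I would split $\mathbb{Z}_+^2=\bigcup_{\mathbb F\in\mathcal{F}(\vec{{\bf N}}(\Lambda,S))}\bigl(\bigcap_{\nu}\mathbb F_\nu^*\cap\mathbb{Z}^2\bigr)$, and on each cone perform the face reduction~(\ref{danbi5}) along a chain of faces~(\ref{pathg}), the error terms forming a geometric-type series over the cone, hence summable. It then remains to bound $\sum_{J\in\bigcap\mathbb F_\nu^*\cap\mathbb{Z}^2}\mathcal{I}_J(P_{\mathbb F},\xi)$ uniformly in $\xi$. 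When $\text{rank}\bigl(\bigcup_\nu\mathbb F_\nu\bigr)=2$ the surviving exponents span $\mathbb{R}^2$ and van der Corput in a sheared pair of variables gives $|\mathcal{I}_J(P_{\mathbb F},\xi)|\lesssim\min(1,(\text{leading coefficient})^{-\delta})$, a geometric series over the cone whose largest term is $\lesssim1$ uniformly in $\xi$. When $\text{rank}\bigl(\bigcup_\nu\mathbb F_\nu\bigr)\le1$ the surviving exponents lie on a single ray through the origin: on a coordinate axis $P_{\mathbb F}$ depends on one variable only, so $\mathcal{I}_J(P_{\mathbb F},\xi)\equiv0$ since $h$ is odd; off the axes the components ${\bf e}_1,{\bf e}_2$ drop out of $\mathbb F$ and each $\mathbb F_\nu\cap\Lambda_\nu$ is a single vertex of ${\bf N}(\Lambda_\nu)$ (two distinct points of a ray through $0$ cannot lie on a common proper face), so $\bigcup_\nu(\mathbb F_\nu\cap\Lambda_\nu)$ is an even set by the hypothesis and $\mathcal{I}_J(P_{\mathbb F},\xi)\equiv0$ by Proposition~\ref{pr5g}.

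The point at which the full $GL(k)$ hypothesis --- not merely its $A=I$ instance --- is indispensable is that a special $\xi$ (equivalently a special $A\in GL(d)$) can cancel leading monomials of several components at once, so the cone decomposition must in fact range over $\mathcal{F}_{\rm l}(\vec{{\bf N}}(AP,S))$ for all $A\in GL(d)$; its reduction to $GL(k)$ rests on two facts special to the graph case --- every on-axis exponent $(m,0)$ or $(0,m)$ has an even component and so contributes nothing to an odd set, and a bare linear monomial $t_i$ dominates every exponent $\mathfrak n$ with $\mathfrak n_i\ge1$, so an all-odd vertex of a linear combination of the components can occur only when the $t_1$- and $t_2$-coefficients of that combination vanish, i.e.\ on a purely nonlinear combination $(A'P)_\nu$ with $A'\in GL(k)$. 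I expect this bookkeeping, together with checking that the chain-of-faces reduction~(\ref{danbi5}) stays summable through the intermediate, possibly low-rank, stages (the analogue of Sections~\ref{sec9}--\ref{suffes}), to be the main obstacle; the remainder is the by-now-standard oscillation/cancellation dichotomy of \cite{CWW} and \cite{MY}.
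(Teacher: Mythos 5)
First, note that the paper does not prove this statement at all: Theorem~\ref{T4} is quoted from Pramanik--Yang \cite{MY} as background, and the paper in fact runs the logic in the opposite direction (the remark following it says the $n=2$, $S=\{1,2\}$ cases of Main Theorems~\ref{main3} and \ref{main4} follow from Theorem~\ref{T4} by modifying \emph{its} necessity proof). So your proposal has to stand on its own, and as written it has a genuine gap in the necessity direction. Theorem~\ref{T4} is an \emph{individual} statement about the fixed polynomial $P$, whereas the paper's necessity machinery (Theorem~10.8/Lemma~\ref{lem92d}) is \emph{universal}: it produces unboundedness for some choice of coefficients in $\mathcal{P}_\Lambda$, not for the given $P$. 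Your shortcut --- take $\xi$ with only $\xi_{\nu+2}=\lambda$ nonzero and read off a $\log\lambda$ divergence from one all-odd vertex --- fails precisely because of the coefficient-cancellation phenomenon the introduction and Theorem~\ref{T33} describe: for $P_{\Lambda_\nu}(t)=t_1t_2^3-t_1^3t_2$ both vertices are all-odd, yet at $\xi=(0,0,\lambda)$ the full multiplier stays bounded, the two vertices' $\log$-divergences cancelling. Your remedy, ``testing against an $f$ whose Fourier support isolates this cone,'' does not make sense: the cones index the scales $J$, not the frequency variable $\xi$, and no frequency restriction of $f$ removes the other $J$-cones' contribution to the multiplier at a fixed $\xi$. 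What actually works (and what \cite{CWW}, \cite{MY} and the paper's own $\mu^S_\delta$-scaling argument at the end of Section~\ref{sec15} do) is either to activate the linear frequencies $\xi_1,\xi_2$ to isolate the scales of the chosen vertex, or to pass to the scaling limit along a direction in $(\mathfrak m^*)^{\circ}$, which reduces the \emph{given} $P$ to a single-monomial phase where the divergence cannot be cancelled.

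On the sufficiency side your reduction of the hypothesis is essentially sound for $n=2$ (on-axis faces are automatically even; off-axis rank-$\le 1$ tuples consist of vertices on a common ray through the origin, where evenness of the union reduces to each vertex having an even entry; and a component of $AP$ containing $t_1$ or $t_2$ has only vertices of the form $(1,0),(0,1),(0,b),(a,0)$, so the $GL(k+2)$ condition collapses to the $GL(k)$ one), but the analytic core is deferred rather than proved. Since the $\Lambda_\nu$'s in $P=(P_{\Lambda_1},\dots,P_{\Lambda_k})$ need not be disjoint, the oscillatory decay (\ref{nnii}) can fail for the original components: a special $\xi$ can cancel the coefficients of a shared leading monomial, and then ``van der Corput in a sheared pair of variables'' gives nothing. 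This is exactly what forces the construction of the coefficient-dependent matrices $U=A_3A_2A_1$ and the decay estimate of Lemma~\ref{lemari}, together with the refined cone decomposition over all classes $[A]\in\mathcal{A}(P)$ in Section~\ref{sec15}; your sketch acknowledges this as ``the main obstacle'' but does not carry it out, and without it the rank-$2$ case of your dichotomy is unproven. In short: the skeleton (cone decomposition, face reduction (\ref{danbi5}), evenness/rank dichotomy) matches the paper's general method, but the two steps that make the individual, non-disjoint statement true --- an individual (coefficient-stable) necessity argument and the $GL(d)$-conjugation repair of the decay estimates --- are respectively wrong as stated and missing.
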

  \begin{remark}
 The results of Main Theorems \ref{main3} and \ref{main4} for $n=2$ and $S=\{1,2\}$
 follows from Theorem \ref{T4} by a slight modification of its necessary proof.
  \end{remark}
The triple Hilbert transforms $\mathcal{H}^{P_\Lambda}_{1_S}$ with
$S=\{1,2,3\}$ and $\Lambda=(\{{\bf e}_1\},\{{\bf e}_2\},\{{\bf
e}_3\},\Lambda_4)$ were studied in the two papers \cite{CWW}
\cite{CHKY} published in 2009. In \cite{CWW}, A. Carbery, S. Wainger
and J. Wright have discovered a remarkable differences between the
triple and the double Hilbert transforms. The $L^2$ boundedness of
the triple Hilbert transform $\mathcal{H}^{P_\Lambda}_{1_S}$ depends
on
 the coefficients of $P_{\Lambda}$ as well as the Newton polyhedron ${\bf
N}(\Lambda_4)$,
  whereas that of the double Hilbert transform depends only on the Newton polygon ${\bf
N}(\Lambda_3)$. They establish two types of theorems.  First one
gives the necessary and sufficient condition that  the operators
$\mathcal{H}^{P_\Lambda}_{1_S}$ are bounded in $L^2$ for all class
of polynomials $P_\Lambda\in\mathcal{P}(\Lambda)$ when   $\Lambda$
is given. This theorem is called the universal theorem. The second
theorem is to inform the necessary and sufficient condition that the
one individual operator $\mathcal{H}^{P_\Lambda}_{1_S}$ is bounded
in $L^2$ when a polynomial $P_\Lambda$ is given. This theorem is
called the individual theorem. The condition of the first theorem is
expressed solely in terms of ${\bf N}(\Lambda_4)$ but that of the
second in terms of individual coefficients of given polynomial
$P_\Lambda$ in question. Here we only state their universal theorem.
\medskip
\begin{theorem}[Triple Hilbert transform \cite{CWW}]\label{T5}  Let
$\,1<p<\infty\,.$
 Given $S=\{1,2,3\}$ and $\,\Lambda=\left(\{{\bf e}_{1}\},\{{\bf e}_2\},\{{\bf
e}_3\},\Lambda_{4}\right)$, suppose that
\begin{itemize}
\item[(H1)] Every entry of a vertex in ${\bf N}(\Lambda_{4},S)$ is positive.
\item[(H2)]
\begin{itemize}
\item[(a)] Each edge ${\bf N}(\Lambda_{4},S)$ is not contained on any hyperplane parallel to a
coordinate plane.
\item[(b)] The projection of the line containing an edge ${\bf N}(\Lambda_{4},S)$ onto a
coordinate plane does not pass through the origin.
\end{itemize}
\item[(H3)] The plane determined by any three vertices in ${\bf N}(\Lambda_{4},S)$ does not contain the
origin.
\end{itemize}
Then the triple Hilbert transform $\mathcal{H}^{P_\Lambda}_{1_S}$ is
bounded in $L^2(\mathbb{R}^4)$   for all
$P_\Lambda\in\mathcal{P}_{\Lambda}$ if and only if every vertex in
${\bf N}(\Lambda_4,S)$   has at least two even entries,  and every
edges $E$ of ${\bf N}(\Lambda_{4},S)$ satisfies that there exists a
one component such that the entry of that component of every vector
in $E\cap\Lambda_4$ is even.
\end{theorem}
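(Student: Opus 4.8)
The plan is to run, for this $n=3$, $d=4$, graph case with $S=N_3$, the dyadic/cone-decomposition scheme of (\ref{8765})--(\ref{danbi5}), using Proposition~\ref{pr5g} as the arithmetic engine that produces vanishing, and to use (H1)--(H3) only to translate the vertex/edge phrasing of the statement into the rank/evenness language of that scheme. By the Fourier-multiplier identity the assertion is equivalent to: $\sup_{\xi\in\mathbb{R}^4}|\mathcal{I}(P_\Lambda,\xi,1_S)|<\infty$ for every $P_\Lambda\in\mathcal{P}_\Lambda$. Since $\Lambda_1=\{{\bf e}_1\}$, $\Lambda_2=\{{\bf e}_2\}$, $\Lambda_3=\{{\bf e}_3\}$ are singletons, each $\mathbb{F}_i\in\mathcal{F}({\bf N}(\{{\bf e}_i\}))$ is either empty or contains ${\bf e}_i$, so for every tuple $\mathbb{F}=(\mathbb{F}_1,\ldots,\mathbb{F}_4)$ we have $\bigcup_\nu(\mathbb{F}_\nu\cap\Lambda_\nu)=(\mathbb{F}_4\cap\Lambda_4)\cup A$ with $A\subset\{{\bf e}_1,{\bf e}_2,{\bf e}_3\}$. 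By (H1) every vertex of ${\bf N}(\Lambda_4,S)$ has rank $1$ and lies off every coordinate hyperplane; by (H2) every edge has rank $2$ (its line misses the origin by (H2)(b), so its points do not lie on a line through $0$); by (H3) every $2$-dimensional face has rank $3$. Consequently $\text{rank}(\bigcup_\nu\mathbb{F}_\nu)\le n-1=2$ forces $\mathbb{F}_4$ to be a vertex $\{\mathfrak{m}\}$ (with $\text{rank}(\{\mathfrak{m}\}\cup A)\le 2$) or an edge $E$ (with $\text{rank}(E\cup A)\le 2$, so $A\subset\text{Sp}(E)$), and a parity analysis then identifies the requirement ``$\bigcup_\nu(\mathbb{F}_\nu\cap\Lambda_\nu)$ is an even set for every such $\mathbb{F}$'' with the two displayed conditions --- two even entries at each vertex, a common even coordinate along each $E\cap\Lambda_4$.

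For sufficiency, grant the displayed conditions and write, by (\ref{8765}) and $Z(S)=\bigcup_{\mathbb{F}}\bigcap_\nu\mathbb{F}_\nu^*$, $\mathcal{I}(P_\Lambda,\xi,1_S)=\sum_{\mathbb{F}}\sum_{J\in\bigcap_\nu\mathbb{F}_\nu^*\cap\mathbb{Z}^3}\mathcal{I}_J(P_\Lambda,\xi)$, a finite outer sum over the normal fan. For each $\mathbb{F}$ build the descending chain $\mathbb{F}_\nu=\mathbb{F}_\nu(0)\supset\cdots\supset\mathbb{F}_\nu(N)=\mathbb{F}_\nu$ of (\ref{pathg}) and invoke (\ref{danbi5}). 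The tail term $\sum_J|\mathcal{I}_J(P_{\mathbb{F}},\xi)|$ vanishes identically by Proposition~\ref{pr5g} when $\mathbb{F}_4$ is a vertex or an edge --- precisely the low-rank strata, where the evenness hypothesis forces $\bigcup_\nu(\mathbb{F}_\nu\cap\Lambda_\nu)$ to be even --- while on the remaining full-rank strata the phase carries a genuine monomial in each variable, a non-isotropic van der Corput bound gives $|\mathcal{I}_J(P_{\mathbb{F}},\xi)|\lesssim\prod_\nu\min(1,(2^{-J\cdot\mathfrak{m}_\nu}|\xi_\nu|)^{-\delta})$ for some $\delta>0$ and the extremal $\mathfrak{m}_\nu\in\mathbb{F}_\nu\cap\Lambda_\nu$, and this is summable over $J\in\bigcap_\nu\mathbb{F}_\nu^*$ by the dominating property (\ref{dar}). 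The successive differences $\mathcal{I}_J(P_{\mathbb{F}(s-1)},\xi)-\mathcal{I}_J(P_{\mathbb{F}(s)},\xi)$ are treated the same way, the truncated-Taylor remainder in the monomials lost from $\mathbb{F}(s-1)$ to $\mathbb{F}(s)$ supplying the decay that is otherwise missing at a rank-deficient stratum; the point of choosing (\ref{pathg}) is to keep $\bigcap_\nu(\mathbb{F}_\nu^*(s))^\circ\ne\emptyset$ and $J\cdot\mathfrak{m}$ comparable to the extremal value at every stage so that these $J$-sums converge. Summing over the finitely many $\mathbb{F}$ gives $\sup_\xi|\mathcal{I}(P_\Lambda,\xi,1_S)|<\infty$, hence the $L^2(\mathbb{R}^4)$ bound.

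For necessity, suppose some vertex $\mathfrak{m}$ of ${\bf N}(\Lambda_4,S)$ has at most one even entry. Choose $J_0$ in the open normal cone at $\mathfrak{m}$, set $\xi=(0,0,0,\lambda\,2^{J_0\cdot\mathfrak{m}})$ with $\lambda\to\infty$, and restrict the $J$-sum to the cone $\{J\in\{\mathfrak{m}\}^*\}$; since every other $\Lambda_4$-monomial is dominated there by $t^{\mathfrak{m}}$ it contributes a bounded error after a Taylor expansion, and $\mathcal{I}(P_\Lambda,\xi,1_S)$ reduces, up to a bounded term, to a sum over that cone of dilates of $\text{p.v.}\int_{[-1,1]^3}e^{ic\, t^{\mathfrak{m}}}\prod_\ell\frac{dt_\ell}{t_\ell}$. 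As $\{\mathfrak{m}\}$ (together with a suitable low-rank $A\subset\{{\bf e}_1,{\bf e}_2,{\bf e}_3\}$) is an odd set, this kernel does not vanish and the integral grows logarithmically in $\lambda$ --- the prototype being $\int_{-1}^1\!\int_{-1}^1\sin(\lambda t_1t_2)\frac{dt_1}{t_1}\frac{dt_2}{t_2}\approx\log\lambda$ --- so $\sup_\xi|\mathcal{I}(P_\Lambda,\xi,1_S)|=\infty$ for an appropriate choice of the coefficients of $P_\Lambda$. If instead an edge $E$ has no common even coordinate along $E\cap\Lambda_4$, the same construction with $J_0$ in the relative interior of $E^*$ reduces $\mathcal{I}$ to a two-parameter model, a double Hilbert transform along the curve parametrized by the monomials of $E\cap\Lambda_4$, which is unbounded by the obstruction underlying Theorem~\ref{T1}.

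I expect the sufficiency step to be the main obstacle, in two places. First, the edge-cone stratum: identifying the dominant part there as a genuine double Hilbert transform along a curve and matching its boundedness with the common-even-coordinate criterion, which is exactly where (H2) is needed to keep the relevant rank equal to $n-1$ and the curve non-degenerate, and where the chain (\ref{pathg}) must be arranged so that the overlap condition $\bigcap_\nu(\mathbb{F}_\nu^*(s))^\circ\ne\emptyset$ persists until the rank-deficient face is reached --- otherwise the telescoping errors in (\ref{danbi5}) cannot be summed in $J$. Second, the combinatorial dictionary: verifying that (H1)--(H3) are exactly strong enough for the vertex/edge statement to coincide with ``$\bigcup_\nu(\mathbb{F}_\nu\cap\Lambda_\nu)$ is even whenever $\text{rank}(\bigcup_\nu\mathbb{F}_\nu)\le n-1$'' --- in the absence of genericity, low-rank $2$-faces or edges whose lines pass through a coordinate plane at the origin can appear, and then, as in \cite{CWW}, boundedness becomes sensitive to the individual coefficients of $P_\Lambda$ and no purely geometric criterion of this form survives.
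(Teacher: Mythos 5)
Your overall route is legitimate and is in fact the one the paper itself envisages: Theorem \ref{T5} is only quoted from \cite{CWW} here, and the paper's remark after Theorem \ref{T6} says its hypotheses coincide with those of Corollary \ref{ccoo} for $n=3$, so deriving it from the cone decomposition (\ref{8765}), the descending chains (\ref{pathg})--(\ref{danbi5}), Proposition \ref{pr5g} and Proposition \ref{propyy} is exactly the intended specialization; your sufficiency outline matches that scheme at sketch level. The combinatorial dictionary you postpone is provable but not a one-liner: (H1) limits $A$ to a single ${\bf e}_i$ at a vertex (giving ``at least two even entries''), (H2)(a)--(b) force ${\bf e}_i\notin\mathrm{Sp}(E)$ and $\mathrm{rank}(E)=2$ so that $A=\emptyset$ on edges, (H3) makes $2$-faces full rank, and the equivalence ``$E\cap\Lambda_4$ even set $\Leftrightarrow$ common even component'' is \emph{false} in isolation (e.g.\ $\{(2,1,1),(1,2,3)\}$ is an even collinear set with admissible direction and no common even coordinate); it only becomes true once the vertex condition is already in force, via a parity/pigeonhole argument on the two parity classes carried by the lattice points of the edge. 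That argument must appear in the proof.

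The genuine gap is in your necessity construction. If the offending vertex $\mathfrak m$ has exactly one even entry (the case the theorem is really about), then with $\xi=(0,0,0,\lambda\,2^{J_0\cdot\mathfrak m})$ the model you reduce to, $\mathrm{p.v.}\int_{[-1,1]^3}e^{ic\,t^{\mathfrak m}}\prod_\ell \frac{dt_\ell}{t_\ell}$, vanishes identically (the integrand is odd in the variable whose exponent is even), so no logarithmic growth is produced; the prototype $\sin(\lambda t_1t_2)$ works only because $(1,1)$ is all-odd. The actual obstruction is the odd set $\{\mathfrak m\}\cup\{{\bf e}_i\}$, so the linear phases $\xi_1t_1,\xi_2t_2,\xi_3t_3$ must participate, and ``restricting the $J$-sum to the cone'' is not a licensed operation at fixed $r=1_S$: the paper instead passes to the face integral by the scaling limit of Lemma \ref{lem92d} (and, for fixed $1_S$ in the graph case, the dilation argument at the end of Section \ref{sec15}), and then proves non-vanishing of the limiting face integral by the $\mathbb{Z}_2$-parity and odd-test-function argument of Section \ref{sec13} --- a step your sketch replaces by assertion. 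The same applies to the edge case: after restriction to $E$ the phase is a sum of several monomials, and its failure to vanish under the ``no common even coordinate'' hypothesis requires the change of variables through the projection $P_X$ and the non-vanishing argument (\ref{nec55g}), not a citation of the vertex obstruction underlying Theorem \ref{T1}. As written, the necessity half does not go through.
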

\begin{remark}
They found a vector polynomial
$P_\Lambda(t)=(t_1,t_2,t_3,P_{\Lambda_4}(t))$   such that the
corresponding triple Hilbert transform
$\mathcal{H}^{P_\Lambda}_{1_S}$ is bounded on $L^2(\mathbb{R}^4)$
although ${\bf N}(\Lambda_{4},S)$ breaks the above evenness
condition.
\end{remark}
In \cite{CHKY},  Y.K. Cho, H. Hong,  C.W. Yang and the author proved
the theorem without assuming the three hypotheses H1-H3 so that
\begin{theorem}[Triple Hilbert transform \cite{CHKY}]\label{T6}  Let
$\,1<p<\infty\,.$
 Given $S=\{1,2,3\}$ and $\,\Lambda=\left(\{{\bf e}_{1}\},\{{\bf e}_2\},\{{\bf
e}_3\},\Lambda_{4}\right), $   the triple Hilbert transform
$\mathcal{H}^{P_\Lambda}_{1_S}$  is bounded in $L^p(\mathbb{R}^4)$
for all $P_\Lambda\in\mathcal{P}_{\Lambda}$ if and only if every
$\mathbb{F}\in\mathcal{F}({\bf N}(\Lambda_4))$ for
$\text{rank}\left((\mathbb{F}\cap \Lambda_4)\cup A\right)\le 2$ with
$A\subset\{{\bf e}_1,{\bf e}_2,{\bf e}_3\}$  satisfies that there
exists a one component such that the entry of that component of
every vector in $\mathbb{F}\cap\Lambda_4$ is even.
\end{theorem}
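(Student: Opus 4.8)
The plan is to treat this as the $n=3$, $d=4$ graph case of the universal $L^p$ problem of this paper, so that it is subsumed by Main Theorem \ref{main3} and more precisely by Corollary \ref{ccoo} (after matching its even-set phrasing with the common-even-coordinate phrasing of the statement, via the Remark following Corollary \ref{ccoo}); I sketch the direct argument, which is the $n=3$ prototype of the scheme described in the Introduction. First I would write $\mathcal{I}(P_\Lambda,\xi,1_S)=\sum_{J\in\mathbb{Z}_+^3}\mathcal{I}_J(P_\Lambda,\xi)$ as in (\ref{8765}) and split $\mathbb{Z}_+^3\setminus\{0\}$ into the finitely many subcones $\bigl(\bigcap_{\nu=1}^4(\mathbb{F}_\nu^*)^{\circ}\bigr)\cap\mathbb{Z}^3$ indexed by tuples of faces $\mathbb{F}_\nu\in\mathcal{F}({\bf N}(\Lambda_\nu,S))$, where for $\nu\le3$ the polyhedron is the fixed ${\bf e}_\nu+\mathbb{R}_+^3$. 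Fixing one tuple $\mathbb{F}$ and setting $\Lambda'=(\mathbb{F}_\nu\cap\Lambda_\nu)_{\nu=1}^4$, I would use the dominating property (\ref{dar}) --- which says that the monomials dropped in passing from $P_\Lambda$ to the face polynomial $P_{\Lambda'}$ carry coefficients $2^{-J\cdot\mathfrak{n}}$ geometrically smaller than the retained $2^{-J\cdot\mathfrak{m}}$ as $J$ moves into the cone --- together with the van der Corput / integration-by-parts $L^p$ tools of Section \ref{sec6}, to bound $\sum_J|\mathcal{I}_J(P_\Lambda,\xi)-\mathcal{I}_J(P_{\Lambda'},\xi)|$ by a convergent geometric series uniformly in $\xi$; this is the $n=3$ instance of the reduction estimate (\ref{ide22})/(\ref{danbi5}). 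It then remains to bound $\sum_{J\in\bigcap_\nu(\mathbb{F}_\nu^*)^{\circ}}\mathcal{I}_J(P_{\Lambda'},\xi)$ for each $\mathbb{F}$.

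At this point I would dichotomize on $\text{rank}\bigl(\bigcup_{\nu}\mathbb{F}_\nu\bigr)$. If it equals $3$, then after a monomial substitution the phase of $\mathcal{I}_J(P_{\Lambda'},\xi)$ is nondegenerate in all three variables on $\mathrm{supp}\,\prod h(t_\ell)$ once the relevant products $|\xi_\nu|\,2^{-J\cdot\mathfrak{m}_\nu}$ are large, so van der Corput gives $|\mathcal{I}_J(P_{\Lambda'},\xi)|\lesssim 2^{-\varepsilon\max_\ell J_\ell}$ (the small-frequency range being absorbed by Taylor expansion), and the sum over the cone converges with no cancellation needed. If instead $\text{rank}\bigl(\bigcup_\nu\mathbb{F}_\nu\bigr)\le 2$ --- the low-rank condition (\ref{4.1vo}) --- then $\bigcup_\nu\mathbb{F}_\nu\cap\Lambda_\nu=(\mathbb{F}_4\cap\Lambda_4)\cup A$ with $A\subseteq\{{\bf e}_1,{\bf e}_2,{\bf e}_3\}$ and $\text{rank}\bigl((\mathbb{F}_4\cap\Lambda_4)\cup A\bigr)\le 2$, so the hypothesis (read as an even-set condition) gives that this set is even and hence $\mathcal{I}_J(P_{\Lambda'},\xi)\equiv 0$ by Proposition \ref{pr5g}, since each $h$ is odd. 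Summing over the finitely many tuples $\mathbb{F}$ then yields (\ref{pgob}), hence the $L^p$ bound for $1<p<\infty$.

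For necessity I would argue by contraposition. If the condition fails there is a face $\mathbb{F}_4$ of ${\bf N}(\Lambda_4,S)$ and a set $A\subseteq\{{\bf e}_1,{\bf e}_2,{\bf e}_3\}$ with $\text{rank}\bigl((\mathbb{F}_4\cap\Lambda_4)\cup A\bigr)\le 2$ for which some sum $\sum\alpha_j\mathfrak{m}_j$, $\alpha_j\in\{0,1\}$, of vectors of $(\mathbb{F}_4\cap\Lambda_4)\cup A$ has all components odd. I would then choose $P_\Lambda$ whose $\Lambda_4$-part is supported exactly on $\mathbb{F}_4$ and test $\mathcal{I}(P_\Lambda,\xi,1_S)$ along $\xi\to\infty$ enlarging simultaneously the relevant products $|\xi_\nu|2^{-J\cdot\mathfrak{m}_\nu}$ over $J$ in the cone $\mathbb{F}_4^*\cap\mathbb{Z}_+^3$: on each such $J$, $\mathcal{I}_J$ degenerates to a perturbation of a two-dimensional divergent model of the type $\int_{-1}^{1}\int_{-1}^{1}\sin(\xi t_1t_2)\frac{dt_1}{t_1}\frac{dt_2}{t_2}$ from the Introduction, and these contributions add up in absolute value over the cone, forcing $\sup_\xi|\mathcal{I}(P_\Lambda,\xi,1_S)|=\infty$, i.e.\ $\mathcal{H}^{P_\Lambda}_{1_S}$ unbounded on $L^2$.

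The step I expect to be the main obstacle is the case where $\text{rank}\bigl(\bigcup_\nu\mathbb{F}_\nu\bigr)$ is exactly $2$ in the reduction above: one must subdivide the cone $\mathbb{F}_4^*$ along a decreasing chain of faces $\mathbb{F}_4=\mathbb{F}_4(0)\supset\cdots\supset\mathbb{F}_4(N)$ as in (\ref{pathg}), so that the evenness vanishing of Proposition \ref{pr5g} becomes available at the right scale while the geometric summation in $J$ still converges --- which requires keeping the overlap $\bigcap_\nu(\mathbb{F}_\nu^*(s))^{\circ}\ne\emptyset$ all along the chain. For $n=3$ the rank can only descend through $3,2,\le1$, so the chain is short and this is tractable; but it is exactly this mechanism that, for general $n\ge 3$, forces the \emph{cones} of ${\bf N}(\Lambda_4,S)$ --- not merely its faces --- into the final characterization.
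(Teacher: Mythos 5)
Your framing is the right one: the paper never proves Theorem \ref{T6} directly (it is quoted from \cite{CHKY}); inside this paper it is recovered as the $n=3$, $S=\{1,2,3\}$ instance of Corollary \ref{ccoo}, whose proof runs through Main Theorem \ref{main3}, and your sufficiency sketch follows exactly that route (cone decomposition as in Proposition \ref{74sj}, reduction to face polynomials via the dominating property, vanishing by Proposition \ref{pr5g} in the low-rank overlapping case, summation in the full-rank case). Two cautions there: the difference $\sum_J|\mathcal{I}_J(P_\Lambda,\xi)-\mathcal{I}_J(P_{\Lambda'},\xi)|$ is \emph{not} a geometric series uniformly in $\xi$ in one step --- smallness from the mean value theorem must be interpolated against van der Corput decay and summed through the frequency-localized, rank-$n$ machinery of Proposition \ref{propyy}; and the relevant rank for that proposition is controlled through points of the faces $\mathbb{F}_\nu(s)$, not merely through $\bigcup\mathbb{F}_\nu\cap\Lambda_\nu$, which is precisely why the essential-cone chain of Section \ref{sec9} and the two-sided size control of Proposition \ref{prop66aa} are needed. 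Since you explicitly flag the chain construction as the missing engine, I read this half as the paper's own argument in compressed form rather than a different one.

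The genuine gap is in your necessity argument. First, you ``choose $P_\Lambda$ whose $\Lambda_4$-part is supported exactly on $\mathbb{F}_4$,'' but no such polynomial lies in $\mathcal{P}_\Lambda$: every exponent of $\Lambda_4$ must carry a nonzero coefficient. The paper's substitute is the scaling Lemma \ref{lem92d}, which uses a direction $\mathbf{u}\in\bigcap_\nu(\mathbb{F}_\nu^*)^{\circ}$ (this is where the overlapping-cone hypothesis enters the necessity, too) to transfer the assumed uniform bound for the \emph{full} polynomial to a uniform bound for the face integral $\mathcal{I}(P_{\mathbb{F}},\xi,a,b)$; without such a limiting argument the non-face monomials cannot be discarded. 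Second, the claim that the cone contributions ``add up in absolute value'' to a divergent two-dimensional model is unjustified and, in general, false: oscillatory contributions from an odd face can cancel depending on the coefficients (this is exactly the phenomenon behind Theorem \ref{T33}), so for the universal statement one must \emph{construct} coefficients and a frequency for which divergence actually occurs. The paper does this by choosing the face of minimal rank violating the condition, changing variables by a monomial map adapted to the normal form (\ref{ma1}) so that the inner $m$-dimensional integral has a limit $\mathcal{J}(P_{\mathbb{F}},\xi)$ independent of the transverse variables (which then produce the factor $\prod\log(b_j/a_j)$), and proving $\mathcal{J}(P_{\mathbb{F}},\xi)\neq 0$ for some choice of coefficients by contradiction, using the lower-dimensional sufficiency bound (Proposition \ref{lem83} plus Theorem \ref{th60}, needed even to justify dominated convergence) and the $\mathbb{Z}_2^n$ parity decomposition tested against odd Schwartz functions. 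None of these steps --- the scaling reduction, the minimal-rank selection, the independence of the transverse variables, and above all the nonvanishing argument --- is present or replaceable by the absolute-accumulation heuristic in your sketch, so the necessity half as written does not go through.
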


\begin{remark}
The hypotheses in Theorems \ref{T1}, \ref{T2}, \ref{T5} and \ref{T6}
are same as those of Corollary \ref{ccoo} for $n=2,3$. It is
interesting to find  for $n\ge 3$  an asymptotic behavior of
$\mathcal{I}(P_\Lambda,\xi,1_S)$ with a coefficient of logarithm in
$\xi$
  having a similar form  to (\ref{sppt}) in Theorem \ref{T33}.
\end{remark}
As a variable coefficient version, we define
$\mathcal{H}^{P}(f)(x)$ by
\begin{eqnarray*}
\int_{\prod_{j=1}^n\,[-r_j,r_j]} f\bigl(x_1-t_1,\cdots,x_n-t_n,x_{n+1}-P(x_1,\cdots,x_n,t_1,\cdots,t_n)\bigr)\,
\frac{dt_1}{t_1}\cdots\frac{dt_n}{t_n}
\end{eqnarray*}
whose corresponding oscillatory singular integral operator is given by
 \begin{eqnarray*}
\mathcal{T}^{P}_{\lambda} \, (f)(x)&=&\text{p.v.}
\int_{\{y:|x_j-y_j|<r_j\}} \frac{e^{i \lambda
P(x,y)}}{(x_1-y_1)\cdots (x_n-y_n)}\, f(y)dy_1\cdots
dy_n.
\end{eqnarray*}
In view of the analogy between the integral operators of D. H. Phong
and E. M. Stein \cite{PS} and the scalar valued integral of Varchenko \cite{V},
one may find the criteria for determining the uniform $L^2$
boundedness $\mathcal{T}^{P}_{\lambda}$ in $\lambda$ in terms of the
Newton polyhedron  associated with the polynomial $P(x,y)$. More
generalized  version   is the multi-parameter singular Radon transform:
\begin{eqnarray*} \mathcal{R}^{P_\Lambda} \, (f)(x)&=&\text{p.v.}
\int_{\prod_{j=1}^n\,[-r_j,r_j]} f\bigl(P_\Lambda(x,t)\bigr)\,
\frac{dt_1}{t_1}\cdots\frac{dt_n}{t_n}.
\end{eqnarray*}
Recently, E. M. Stein and B. Street in \cite{SS1,SS2,SS3} obtain the
$L^p$ boundedness for a certain class of multi-parameter singular
Radon transform. A. Nagel, F. Ricci, E. M. Stein and S. Wainger in
\cite{NRSW1} study the singular integral operators on a homogeneous
nilpotent Lie group that are given by convolution with flag kernels.
Here flag kernels are product type singular kernels that are
generalized version of our kernel
$\frac{1}{t_1}\cdots\frac{1}{t_n}$. This result was preceded by
\cite{NRS} that proves the $L^p$ boundedness of convolution
operators with some special type of flag kernels. This result
applies  to obtain   $L^p$ regularity for the solutions of
Cauchy-Riemann equations on CR manifolds.

\section{Representation of   faces   and their cones}\label{secfar}
We study  representations of  faces $\mathbb{F}$ and their cones $\mathbb{F}^*$ of
a polyhedron $\mathbb{P}=\mathbb{P}(\Pi)$ in $\mathbb{R}^n$.
 It is well known that every face
$\mathbb{F}$ has an expression
$\bigcap_{j=1}^N\pi_{\mathfrak{q}_j,r_j}\cap\mathbb{P}$ with some
generators $\pi_{\mathfrak{q}_j,r_j}\in \Pi$ and its cone
$\mathbb{F}^*$ expressed as $\text{CoSp}(\{\mathfrak{q}_j
\}_{j=1}^N)$.  We shall prove this representation formula and give
 some detailed description of generators for the case  $
 \text{dim}(\mathbb{P})<n$.

\subsection{Low  Dimensional Polyhedron in $\mathbb{R}^n$}
A polyhedron $\mathbb{P}=\mathbb{P}(\Pi)$  in $\mathbb{R}^n$ with
$\text{dim}(\mathbb{P})=m\le n $ is regarded as a $m$ dimensional
polyhedron in the affine space $V_{am}(\mathbb{P})$ of dimension  $m$
defined  in (\ref{jf2}).  Since   $V_{am}(\mathbb{P})$ itself is a polyhedron in
$\mathbb{R}^n$, we  choose the generator $\Pi$ of $\mathbb{P}$ and split it
into two parts $\Pi=\Pi_a\cup\Pi_b$: $\Pi_b$ a generator of
$V_{am}(\mathbb{P})$ and $\Pi_a$  a generator of $\mathbb{P}$ in
$V_{am}(\mathbb{P})$.  See the left picture in Figure \ref{graph1}.

  \begin{figure}
  \centerline{\includegraphics[width=14cm,height=10cm]{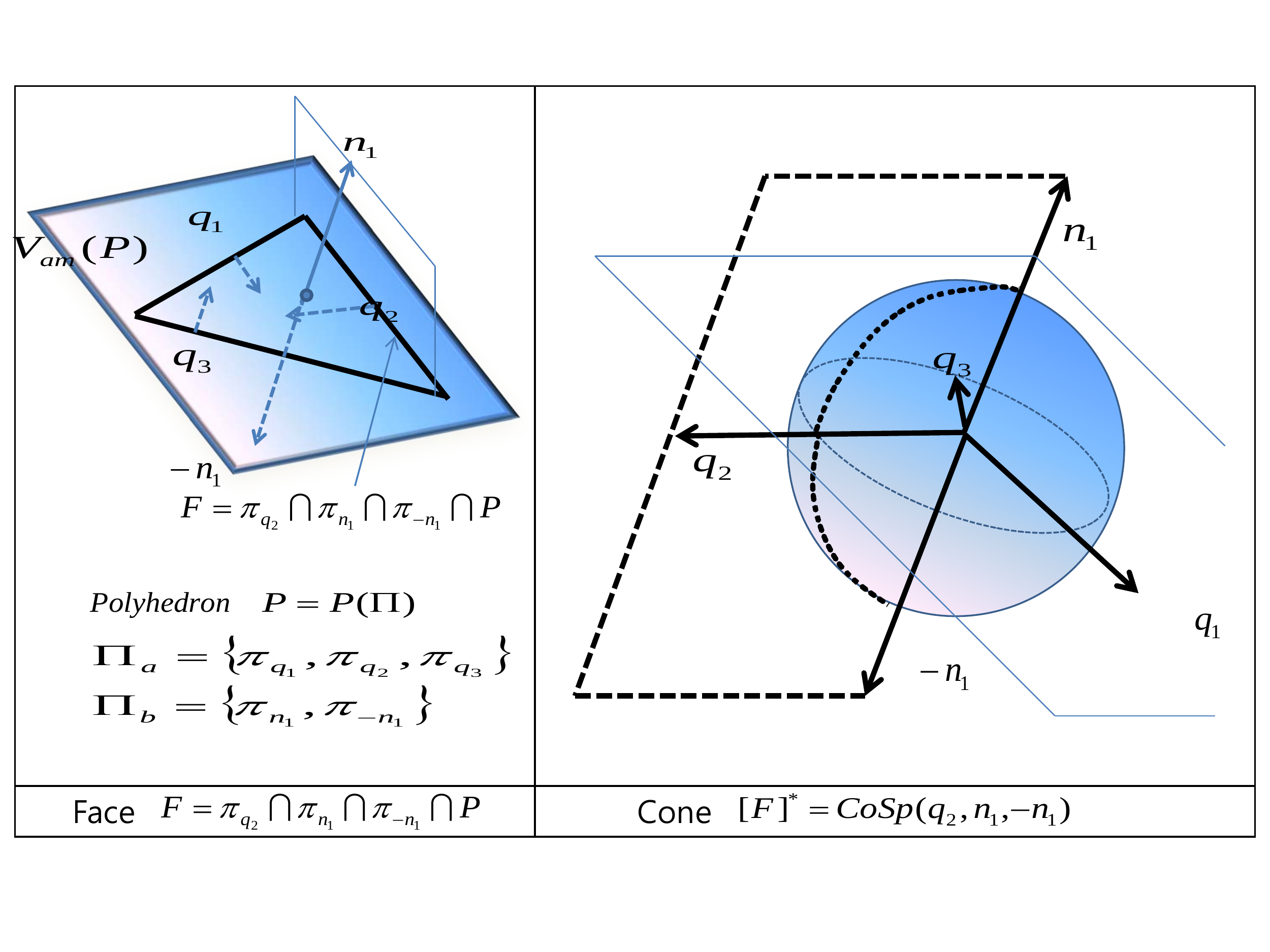}}
  \caption{Low Dimensional Polyhedron, Face and Cone.} \label{graph1}
 \end{figure}

 \begin{lemma}\label{lemj77}
Let $\mathbb{P}\subset \mathbb{R}^n$ be a   polyhedron with
$\text{dim}(\mathbb{P})=\text{dim}(V_{am}(\mathbb{P}))=m\le n $.
Then $ \mathbb{P}=\mathbb{P}(\Pi_{a}\cup\Pi_{b})$ such that
\begin{eqnarray}\label{tn19}
\qquad  V_{am}(\mathbb{P})&=&\mathbb{P}(\Pi_{b})\ \text{in $\mathbb{R}^n$ with}\
  \Pi_{b}=\{\pi_{\pm\mathfrak{n}_i,\pm s_i}\}_{i=1}^{n-m} \   \text{with}\
  \mathfrak{n}_i\perp \mathfrak{n}_j \ \text{for}\ i\ne j\,,\label{7118} \\
   \mathbb{P}&=&\mathbb{P}(\Pi_{a}') \ \text{in $V_{am}(\mathbb{P})$ with}\
 \Pi_{a}=\{\pi_{\mathfrak{q}_j,r_j}\}_{j=1}^M  \ \text{and}\
  \Pi_{a}'=\{ \pi_{\mathfrak{q}_j,r_j}\cap
V_{am}(\mathbb{P})\}_{j=1}^M,  \label{71182}
 \end{eqnarray}
where  $\mathfrak{q}_j\in V(\mathbb{P})=\text{Sp}^{\perp}(\{\mathfrak{n}_i\}_{i=1}^{n-m})$.
\end{lemma}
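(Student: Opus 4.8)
The plan is to peel off the hyperplanes that cut out the ambient affine space $V_{am}(\mathbb{P})$ from those that cut out $\mathbb{P}$ inside it, and then to replace the normals of the latter by their orthogonal projections onto $V(\mathbb{P})$; everything needed is elementary linear algebra. Throughout, recall that $\mathbb{P}\neq\emptyset$ (so that $V_{am}(\mathbb{P})$ is defined), that $\dim V(\mathbb{P})=m$, and write $V_{am}(\mathbb{P})=\mathfrak{p}+V(\mathbb{P})$ as in Definition \ref{df35}.

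First I would construct $\Pi_b$. Set $W=V(\mathbb{P})^{\perp}$, a subspace of dimension $n-m$, choose an orthonormal basis $\mathfrak{n}_1,\dots,\mathfrak{n}_{n-m}$ of $W$, and put $s_i=\langle\mathfrak{n}_i,\mathfrak{p}\rangle$. Since $\{\mathfrak{n}_i\}$ is a basis of $W$ and $V(\mathbb{P})=W^{\perp}$, a point $y\in\mathbb{R}^n$ lies in $V_{am}(\mathbb{P})$ iff $y-\mathfrak{p}\in V(\mathbb{P})$ iff $\langle\mathfrak{n}_i,y\rangle=s_i$ for every $i$; hence
\[
V_{am}(\mathbb{P})=\bigcap_{i=1}^{n-m}\pi_{\mathfrak{n}_i,s_i}=\bigcap_{i=1}^{n-m}\bigl(\pi^{+}_{\mathfrak{n}_i,s_i}\cap\pi^{+}_{-\mathfrak{n}_i,-s_i}\bigr)=\mathbb{P}(\Pi_b),\qquad \Pi_b=\{\pi_{\pm\mathfrak{n}_i,\pm s_i}\}_{i=1}^{n-m},
\]
with $\mathfrak{n}_i\perp\mathfrak{n}_j$ for $i\neq j$ and $\text{Sp}^{\perp}(\{\mathfrak{n}_i\})=W^{\perp}=V(\mathbb{P})$. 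This is exactly (\ref{7118}).

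Next I would project a generator of $\mathbb{P}$ into $V(\mathbb{P})$. By Definition \ref{d12} fix a generator with $\mathbb{P}=\bigcap_{j}\pi^{+}_{\tilde{\mathfrak{q}}_j,\tilde r_j}$, and split each normal as $\tilde{\mathfrak{q}}_j=\mathfrak{q}_j+\mathfrak{w}_j$ with $\mathfrak{q}_j\in V(\mathbb{P})$ and $\mathfrak{w}_j\in W$. For $y\in V_{am}(\mathbb{P})$ the vector $y-\mathfrak{p}\in V(\mathbb{P})$ is orthogonal to $\mathfrak{w}_j$, so $\langle\mathfrak{w}_j,y\rangle=\langle\mathfrak{w}_j,\mathfrak{p}\rangle=:c_j$ depends only on $j$; hence on $V_{am}(\mathbb{P})$ the constraint $\langle\tilde{\mathfrak{q}}_j,y\rangle\geq\tilde r_j$ is equivalent to $\langle\mathfrak{q}_j,y\rangle\geq r_j$ with $r_j:=\tilde r_j-c_j$. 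I then discard every index $j$ with $\mathfrak{q}_j=0$: since $\emptyset\neq\mathbb{P}\subset V_{am}(\mathbb{P})$ one must then have $r_j\leq 0$, so the $j$th constraint holds on all of $V_{am}(\mathbb{P})$ and dropping it changes nothing. Relabelling the surviving indices as $j=1,\dots,M$, so that $\mathfrak{q}_j\in V(\mathbb{P})\setminus\{0\}$, and setting $\Pi_a=\{\pi_{\mathfrak{q}_j,r_j}\}_{j=1}^{M}$, I get, using $\mathbb{P}\subset V_{am}(\mathbb{P})$,
\[
\mathbb{P}=V_{am}(\mathbb{P})\cap\bigcap_{j}\pi^{+}_{\tilde{\mathfrak{q}}_j,\tilde r_j}=V_{am}(\mathbb{P})\cap\bigcap_{j=1}^{M}\pi^{+}_{\mathfrak{q}_j,r_j}=\mathbb{P}(\Pi_b)\cap\mathbb{P}(\Pi_a)=\mathbb{P}(\Pi_a\cup\Pi_b).
\]

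Finally, for the description of $\mathbb{P}$ inside $V_{am}(\mathbb{P})$: each $\mathfrak{q}_j\in V(\mathbb{P})\setminus\{0\}$ makes $y\mapsto\langle\mathfrak{q}_j,y\rangle$ nonconstant on $V_{am}(\mathbb{P})$, so $\pi_{\mathfrak{q}_j,r_j}\cap V_{am}(\mathbb{P})$ is a genuine hyperplane of the affine space $V_{am}(\mathbb{P})$ in the sense of Definition \ref{d11}, and $\Pi_a'=\{\pi_{\mathfrak{q}_j,r_j}\cap V_{am}(\mathbb{P})\}_{j=1}^{M}$ is a legitimate generator there; intersecting the last display with $V_{am}(\mathbb{P})$ (which already contains $\mathbb{P}$) gives $\mathbb{P}=\bigcap_{j=1}^{M}\bigl(\pi^{+}_{\mathfrak{q}_j,r_j}\cap V_{am}(\mathbb{P})\bigr)=\mathbb{P}(\Pi_a')$ in $V_{am}(\mathbb{P})$, which is (\ref{71182}). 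I do not anticipate a real obstacle: the only things to watch are that the $W$-part of each functional $\langle\tilde{\mathfrak{q}}_j,\cdot\rangle$ is constant on $V_{am}(\mathbb{P})$ — immediate from $W\perp V(\mathbb{P})$ — and that discarding the indices with vanishing projection does not alter $\mathbb{P}$, which needs only $\mathbb{P}\neq\emptyset$.
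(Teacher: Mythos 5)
Your proposal is correct and follows essentially the same route as the paper: take an orthonormal basis of $V(\mathbb{P})^{\perp}$ to produce $\Pi_b$ generating $V_{am}(\mathbb{P})$, and obtain $\Pi_a$ by projecting the normals of a generator of $\mathbb{P}$ onto $V(\mathbb{P})$, absorbing the constant contribution of the orthogonal component (evaluated at a base point of $V_{am}(\mathbb{P})$) into the constants $r_j$. Your explicit discarding of indices with vanishing projection (and the remark that $m=n$ gives $\Pi_b=\emptyset$) is a harmless refinement of what the paper does implicitly.
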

\begin{proof}
If $n=m$, then let $\Pi_a=\Pi$  and $\Pi_b=\emptyset$ so that
$\mathbb{P}(\Pi_b)=\mathbb{R}^n$. Let $m<n$. There exist $n-m$
orthonormal vectors $\mathfrak{n}_j$'s and some constants $s_j$'s
such that
\begin{eqnarray*}
V(\mathbb{P})=\bigcap_{i=1}^{n-m}\pi_{\mathfrak{n}_i,0}\ \ \text{so
that}\ \
V_{am}(\mathbb{P})=\bigcap_{i=1}^{n-m}\pi_{\mathfrak{n}_i,s_i}.
\end{eqnarray*}
where
$V(\mathbb{P})=\text{Sp}^{\perp}(\{\mathfrak{n}_i\}_{i=1}^{n-m})$.
  By (\ref{jf2}),
$V_{am}(\mathbb{P})=V(\mathbb{P})+\mathfrak{r}$ with
$\mathfrak{r}\in\mathbb{P}$ and
$s_i=\mathfrak{r}\cdot\mathfrak{n}_i$. This combined with
$\pi_{\mathfrak{n}_i,s_i}^+\cap
\pi_{-\mathfrak{n}_i,-s_i}^{+}=\pi_{\mathfrak{n}_i,s_i} $ implies
\begin{eqnarray*}
 V_{am}(\mathbb{P})=\mathbb{P}(\Pi^b)\ \ \text{with}\ \ \Pi_{b}=\{\pi_{\pm\mathfrak{n}_i,\pm s_i}:i=1,\cdots,n-m\},
 \end{eqnarray*}
which yields (\ref{7118}). By Definition \ref{d12},  there are
$\mathfrak{p}_j\in\mathbb{R}^n$ such that
\begin{eqnarray}\label{hhpa2}
\mathbb{P}=\bigcap_j\{{\bf x}\in V_{am}(\mathbb{P}):\langle
\mathfrak{p}_j,{\bf x}\rangle\ge \rho_j\}.
\end{eqnarray}
  Let ${\bf
x}\in\mathbb{P}$ and $P_{V(\mathbb{P})}$ be a projection map to the
vector space $V(\mathbb{P})$. Then from ${\bf x}-\mathfrak{r}\in
V(\mathbb{P})$,
$$\langle \mathfrak{p}_j,{\bf
x}\rangle=\langle P_{V(\mathbb{P})}(\mathfrak{p}_j) ,{\bf
x}\rangle+\langle P_{V^{\perp}(\mathbb{P})}(\mathfrak{p}_j),{\bf
x}\rangle=\langle P_{V(\mathbb{P})}(\mathfrak{p}_j) ,{\bf
x}\rangle+\langle Proj_{V^{\perp}(\mathbb{P})}(\mathfrak{p}_j),{\bf
\mathfrak{r}}\rangle.
$$
We put $Proj_{V(\mathbb{P})}(\mathfrak{p}_j)=\mathfrak{q}_j$ and
$r_j=\rho_j-\langle Proj_{V^{\perp}(\mathbb{P})}(\mathfrak{p}_j),\mathfrak{r}\rangle$ and rewrite (\ref{hhpa2}) as
$$ \mathbb{P}=\bigcap_{j=1}^M\{{\bf x}\in
V_{am}(\mathbb{P}):\langle \mathfrak{q}_j,{\bf x}\rangle\ge r_j\} \
\text{where}\ \mathfrak{q}_j=Proj_{V(\mathbb{P})}(\mathfrak{p}_j)\in
V(\mathbb{P}) =\text{Sp}^{\perp}(\{\mathfrak{n}_i\}_{i=1}^{n-m}).
$$
This proves (\ref{71182}). Finally,
$\mathbb{P}=\mathbb{P}(\Pi_{a}\cup\Pi_{b})$ follows from
(\ref{7118}) and (\ref{71182}).
\end{proof}

\subsection{Representation  of Face }
\begin{lemma}\label{lemjqc}
Let $\mathbb{P}=\mathbb{P}(\Pi)$ with
$\text{dim}(V_{am}(\mathbb{P}))=m$ and $\mathbb{F}$ be a proper face
of $\mathbb{P}$. Then
\begin{eqnarray*}
\exists\, \pi\in\Pi\ \ \text{such that}\ \ \mathbb{F}\subset \pi.
\end{eqnarray*}
\end{lemma}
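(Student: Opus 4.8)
The plan is to reduce the claim to the topological description of $\partial\mathbb{P}$ via Lemma \ref{2323s}, and then to promote a single pointwise incidence into incidence of the whole face by placing the chosen point in the relative interior of $\mathbb{F}$. First I would dispose of the trivial case $\mathbb{F}=\emptyset$, for which any $\pi\in\Pi$ works, and assume $\mathbb{F}\neq\emptyset$. Since $\mathbb{F}=\pi_{\mathfrak{q},r}\cap\mathbb{P}$ is convex, its relative interior $\mathbb{F}^{\circ}$ (in the sense of Definition \ref{dfu4}) is nonempty; fix $\mathfrak{p}\in\mathbb{F}^{\circ}$.

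The first step is to observe that $\mathfrak{p}\in\partial\mathbb{P}$. Since $\mathbb{F}$ is a proper face, $\dim V_{am}(\mathbb{F})=\dim\mathbb{F}<m=\dim V_{am}(\mathbb{P})$, and $V_{am}(\mathbb{F})\subseteq V_{am}(\mathbb{P})$, so $V_{am}(\mathbb{F})$ is a proper affine subspace of $V_{am}(\mathbb{P})$; hence $\mathbb{F}$ has empty interior with respect to the topology of $V_{am}(\mathbb{P})$, so $\mathfrak{p}$ is not an interior point of $\mathbb{P}$ and therefore lies on $\partial\mathbb{P}$. Applying Lemma \ref{2323s} then produces a generating hyperplane $\pi_{\mathfrak{q}_{0},r_{0}}\in\Pi$ with $\mathfrak{p}\in\pi_{\mathfrak{q}_{0},r_{0}}$, i.e. $\langle\mathfrak{q}_{0},\mathfrak{p}\rangle=r_{0}$.

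The second step is to upgrade this to $\mathbb{F}\subseteq\pi_{\mathfrak{q}_{0},r_{0}}$. Because $\mathbb{F}\subseteq\mathbb{P}\subseteq\pi_{\mathfrak{q}_{0},r_{0}}^{+}$, the affine functional $u\mapsto\langle\mathfrak{q}_{0},u\rangle$ satisfies $\langle\mathfrak{q}_{0},u\rangle\ge r_{0}=\langle\mathfrak{q}_{0},\mathfrak{p}\rangle$ on $\mathbb{F}$, so it attains its minimum over $\mathbb{F}$ at the relative interior point $\mathfrak{p}$. I would then argue such a functional must be constant on $\mathbb{F}$: if $\langle\mathfrak{q}_{0},v\rangle>r_{0}$ for some $v\in\mathbb{F}$, then $v\neq\mathfrak{p}$, and since $\mathfrak{p}\in\mathbb{F}^{\circ}$ the point $\mathfrak{p}+\epsilon(\mathfrak{p}-v)$ (an affine combination of $\mathfrak{p},v\in V_{am}(\mathbb{F})$) lies in $\mathbb{F}$ for all small $\epsilon>0$; but $\langle\mathfrak{q}_{0},\mathfrak{p}+\epsilon(\mathfrak{p}-v)\rangle=r_{0}+\epsilon(r_{0}-\langle\mathfrak{q}_{0},v\rangle)<r_{0}$, contradicting $\mathbb{F}\subseteq\pi_{\mathfrak{q}_{0},r_{0}}^{+}$. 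Hence $\langle\mathfrak{q}_{0},\cdot\rangle\equiv r_{0}$ on $\mathbb{F}$, which is exactly $\mathbb{F}\subseteq\pi_{\mathfrak{q}_{0},r_{0}}$, and the proof is complete.

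The only delicate point is this last step: Lemma \ref{2323s} by itself merely assigns to each boundary point \emph{some} generating hyperplane through it, and a priori distinct points of $\mathbb{F}$ could need distinct hyperplanes; it is the choice of $\mathfrak{p}$ in the relative interior of $\mathbb{F}$ (together with the elementary convexity fact that a linear functional attaining its minimum at a relative interior point is constant) that forces one hyperplane to contain the entire face. No use of Lemma \ref{lemj77} or of the splitting $\Pi=\Pi_{a}\cup\Pi_{b}$ is needed.
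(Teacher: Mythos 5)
Your proposal is essentially correct, and it takes a genuinely different route from the paper's. The paper argues combinatorially: it observes $\mathbb{F}\subset\partial\mathbb{P}\subset\bigcup_{\pi\in\Pi}\pi\cap\mathbb{P}$, invokes Lemma \ref{lemj77} to replace $\Pi$ by $\Pi_a'$ so that each $\pi\cap\mathbb{P}$ is a facet, and then repeats the convexity argument of Lemma \ref{lem411d} to force the convex set $\mathbb{F}$ into a single one of these facets. You instead localize at one relative-interior point $\mathfrak{p}\in\mathbb{F}^{\circ}$: Lemma \ref{2323s} places $\mathfrak{p}$ on some generating hyperplane $\pi_{\mathfrak{q}_0,r_0}\in\Pi$, and since $\langle\mathfrak{q}_0,\cdot\rangle\ge r_0$ on $\mathbb{F}\subset\mathbb{P}\subset\pi_{\mathfrak{q}_0,r_0}^{+}$ and attains the value $r_0$ at the relative-interior point $\mathfrak{p}$, your extension argument with $\mathfrak{p}+\epsilon(\mathfrak{p}-v)$ correctly forces the functional to be constant on $\mathbb{F}$, i.e. $\mathbb{F}\subset\pi_{\mathfrak{q}_0,r_0}$; this uses only Definition \ref{dfu4}'s identification of $\mathbb{F}^{\circ}$ with the topological relative interior. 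What your route buys is a shorter, self-contained proof valid for an arbitrary generator $\Pi$, with no appeal to Lemma \ref{lemj77} or to the facet structure; its only extra input is the standard (and in the paper unproved) fact that a nonempty convex polyhedron has nonempty relative interior, i.e. $\mathbb{F}\ne\partial\mathbb{F}$. The paper's longer route reuses machinery (the splitting $\Pi=\Pi_a\cup\Pi_b$ and the Lemma \ref{lem411d} argument) that it needs anyway for Lemma \ref{0941} and Proposition \ref{facerep}, and en route identifies $\partial\mathbb{P}$ with the union of the facets $\pi\cap\mathbb{P}$, $\pi\in\Pi_a'$.

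One step of your write-up needs repair, though the fix is immediate. You justify $\mathfrak{p}\in\partial\mathbb{P}$ by saying that $\mathbb{F}$ has empty interior in $V_{am}(\mathbb{P})$, ``so $\mathfrak{p}$ is not an interior point of $\mathbb{P}$''; that inference is not valid for a general subset (a set with empty interior can consist entirely of interior points of a larger set), and to make it work for a face one must insert the supporting-hyperplane step: if a relative ball about $\mathfrak{p}$ lay in $\mathbb{P}\subset\pi_{\mathfrak{q},r}^{+}$ with $\mathfrak{p}\in\pi_{\mathfrak{q},r}$ the supporting hyperplane of $\mathbb{F}$, then $\langle\mathfrak{q},\cdot\rangle$ would be constant on that ball, so the ball would lie in $\pi_{\mathfrak{q},r}\cap\mathbb{P}=\mathbb{F}$, contradicting $\dim\mathbb{F}<m$. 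In the paper's framework you can bypass this entirely: by Definition \ref{dfu5}, $\partial\mathbb{P}$ is the union of the proper faces, so $\mathbb{F}\subset\partial\mathbb{P}$, and hence $\mathfrak{p}\in\partial\mathbb{P}$, holds by definition --- which is exactly how the paper's own proof begins.
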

\begin{proof}
By Definition \ref{dfu5} and by Lemma \ref{2323s},
\begin{eqnarray}\label{4b4b}
\mathbb{F} \subset \partial \mathbb{P}\subset
\bigcup_{\pi\in\Pi}\pi\cap\mathbb{P}.
\end{eqnarray}
By (\ref{71182}) of Lemma \ref{lemj77}, we may take $\Pi=\Pi_a'$ in
(\ref{4b4b}). Thus from (2) of Lemma \ref{lemss},   each
$\pi\cap\mathbb{P}$ with $\pi\in\Pi$ is a face of $\mathbb{P}$ with
dimension $m-1$. Therefore the second $\subset$ in (\ref{4b4b}) is
replaced by $=$. We next use the same argument as in the proof of
Lemma \ref{lem411d} to obtain that $\mathbb{F}$ in (\ref{4b4b}) is
contained in one face $\pi\cap\mathbb{P}$ in (\ref{4b4b}).
\end{proof}

\begin{definition}[Facet]
Let $\mathbb{P}=\mathbb{P}(\Pi)$ be a polyhedron in an  affine space $V$
such that $\text{dim}(\mathbb{P})=\text{dim}(V_{am}(\mathbb{P}))=m$. Then $m-1$
dimensional face $\mathbb{F} $ of $\mathbb{P}$ is called a
facet of $\mathbb{P}$.
\end{definition}
\begin{lemma}\label{0941}
Let $\mathbb{P}=\mathbb{P}(\Pi)$ be a polyhedron in an  affine space
$V$ such that $ \text{dim}(
\mathbb{P})=\text{dim}(V_{am}(\mathbb{P}))=m$ where $\Pi=\Pi_a\cup
\Pi_b$ as in Lemma \ref{lemj77}. Then every facet $\mathbb{F}$ of
$\mathbb{P}$    is expressed as
$$\mathbb{F}=\pi\cap  \mathbb{P}\ \ \text{and}\ \
\mathbb{P}\setminus\mathbb{F}\subset(\pi^{+})^{\circ} \ \text{for some $\pi\in\Pi_{a}\subset\Pi$}. $$
\end{lemma}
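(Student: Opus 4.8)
The plan is to transfer the problem into the ambient affine space $V_{am}(\mathbb{P})$, where $\mathbb{P}$ is full-dimensional and generated by $\Pi_a'$, and then read off the facet from the boundary description already established. First I would recall from Lemma \ref{lemj77} that, viewed inside $V_{am}(\mathbb{P})$, one has $\mathbb{P}=\mathbb{P}(\Pi_a')$ with $\Pi_a'=\{\pi_{\mathfrak{q}_j,r_j}\cap V_{am}(\mathbb{P})\}_{j=1}^M$, each $\pi_{\mathfrak{q}_j,r_j}\in\Pi_a$, and each normal $\mathfrak{q}_j\in V(\mathbb{P})\setminus\{0\}$. Since $\text{dim}(\mathbb{F})=m-1<m=\text{dim}(\mathbb{P})$, the facet $\mathbb{F}$ is a proper face, so Lemma \ref{lemjqc}, applied inside $V_{am}(\mathbb{P})$ to the generator $\Pi_a'$, produces an index $j$ with $\mathbb{F}\subset\pi_{\mathfrak{q}_j,r_j}\cap V_{am}(\mathbb{P})$; writing $\pi=\pi_{\mathfrak{q}_j,r_j}\in\Pi_a$ and using $\mathbb{P}\subset V_{am}(\mathbb{P})$, this is just $\mathbb{F}\subset\pi$.

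Next I would compare $\mathbb{F}$ with $\mathbb{G}:=\pi\cap\mathbb{P}$, which is a face of $\mathbb{P}$ by Lemma \ref{lemss}(2). Because $\mathfrak{q}_j\neq 0$ lies in $V(\mathbb{P})$, the functional $\langle\mathfrak{q}_j,\cdot\rangle$ is non-constant on $V_{am}(\mathbb{P})$, so $\pi\cap V_{am}(\mathbb{P})$ is a genuine hyperplane of $V_{am}(\mathbb{P})$ and hence $\text{dim}(\mathbb{G})\le m-1$. On the other hand $\mathbb{F}\subset\mathbb{G}$, so $\mathbb{F}\preceq\mathbb{G}$ by Lemma \ref{lemss}(1) and $m-1=\text{dim}(\mathbb{F})\le\text{dim}(\mathbb{G})\le m-1$, forcing $\text{dim}(\mathbb{G})=\text{dim}(\mathbb{F})$. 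Since a proper face of $\mathbb{G}$ has dimension strictly smaller than $\text{dim}(\mathbb{G})$, the relation $\mathbb{F}\preceq\mathbb{G}$ cannot be strict, and therefore $\mathbb{F}=\mathbb{G}=\pi\cap\mathbb{P}$ with $\pi\in\Pi_a$.

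It remains to upgrade $\mathbb{P}\subset\pi^{+}$ to a strict containment away from $\mathbb{F}$. Since $\pi\cap V_{am}(\mathbb{P})\in\Pi_a'$ is one of the defining hyperplanes of $\mathbb{P}=\mathbb{P}(\Pi_a')$, every $x\in\mathbb{P}$ satisfies $\langle\mathfrak{q}_j,x\rangle\ge r_j$; and if $x\in\mathbb{P}\setminus\mathbb{F}$ then $x\notin\pi$ (otherwise $x\in\pi\cap\mathbb{P}=\mathbb{F}$), so $\langle\mathfrak{q}_j,x\rangle>r_j$, i.e. $x\in(\pi^{+})^{\circ}$. Hence $\mathbb{P}\setminus\mathbb{F}\subset(\pi^{+})^{\circ}$, which is the asserted conclusion.

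The only step I expect to require genuine care is the bookkeeping in the first two paragraphs: one must ensure that the supporting hyperplane furnished by Lemma \ref{lemjqc} is drawn from $\Pi_a$ rather than from the ``trivial'' part $\Pi_b$, whose members all contain $\mathbb{P}$ and so cut out only $\mathbb{P}$ itself, never a proper face. Carrying out the whole argument inside $V_{am}(\mathbb{P})$ with the generator $\Pi_a'$ is precisely what makes this automatic. Once the hyperplane is correctly located, the dimension count closing $\mathbb{F}=\mathbb{G}$ and the open/closed half-space bookkeeping are routine.
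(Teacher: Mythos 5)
Your proof is correct and takes essentially the same route as the paper: both reduce to the ambient space $V_{am}(\mathbb{P})$ via Lemma \ref{lemj77} and apply Lemma \ref{lemjqc} to the generator $\Pi_a'$ to locate $\pi\in\Pi_a$ with $\mathbb{F}\subset\pi$. Your finish—comparing $\mathbb{F}$ with the face $\mathbb{G}=\pi\cap\mathbb{P}$ by a dimension count (using Lemma \ref{lemss}) and reading the strict inclusion $\mathbb{P}\setminus\mathbb{F}\subset(\pi^{+})^{\circ}$ directly off the defining inequalities—is a harmless variant of the paper's closing step, which instead identifies $\pi\cap V_{am}(\mathbb{P})$ with the facet's supporting hyperplane via uniqueness of the $(m-1)$-dimensional hyperplane containing the $(m-1)$-dimensional set $\mathbb{F}$.
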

\begin{proof}
By Lemma \ref{lemj77}, we regard $ \mathbb{P}=\mathbb{P}(\Pi'_a)  $
 as a polyhedron in the $m$ dimensional  affine space
$V_{am}(\mathbb{P})$. Here $\pi'=\pi\cap V_{am}(\mathbb{P})\in\Pi_a'
$ is a $m-1$ dimensional  hyperplane in $V_{am}(\mathbb{P})$.  By
Lemma \ref{lemjqc},
\begin{eqnarray}\label{jqc44}
\exists\, \pi'\in\Pi'_a\ \ \text{such that}\ \ \mathbb{F}\subset
\pi'=\pi\cap V_{am}(\mathbb{P}) \ \ \text{where}\ \pi \in\Pi_a.
\end{eqnarray}
On the other hand, by Definition \ref{dfac},
there exists an $m-1$ dimensional hyperplane $\pi_{\mathfrak{q},r}$ in $V_{am}(\mathbb{P})$
such that \begin{eqnarray}\label{jqc444}
 \text{ $\mathbb{F}=\pi_{\mathfrak{q},r}\cap\mathbb{P}$
and $\mathbb{P}\setminus\mathbb{F}\subset
(\pi_{\mathfrak{q},r}^+)^{\circ}$. }
\end{eqnarray}
In view of (\ref{jqc44}) and (\ref{jqc444}), both $m-1$ dimensional
hyperplanes $\pi' $ and $\pi_{\mathfrak{q},r}$ in
$V_{am}(\mathbb{P})$ contain the  $m-1$ dimensional polyhedron
$\mathbb{F}$.   Thus
 $ \pi'=\pi_{\mathfrak{q},r}$. By this and (\ref{jqc444}),
\begin{equation*}\label{44gs}
\mathbb{F}=\pi_{\mathfrak{q},r}\cap\mathbb{P}=\pi'\cap\mathbb{P}=\left(\pi\cap
V_{am}(\mathbb{P})\right)\cap\mathbb{P}=\pi\cap\mathbb{P}
      \ \text{where $\pi\in\Pi_a$}
      \end{equation*}
      and
      $\mathbb{P}\setminus\mathbb{F}\subset
(\pi_{\mathfrak{q},r}^+)^{\circ}=((\pi')^+)^{\circ}\subset(\pi^+)^{\circ}.
$
\end{proof}

\begin{proposition}[Face Representation]\label{facerep}
Let $\mathbb{P}=\mathbb{P}(\Pi)$ be a  polyhedron in $\mathbb{R}^n$
where $\Pi=\Pi_a\cup \Pi_b$ as in Lemma \ref{lemj77}. Let
$\text{dim}(\mathbb{P}) =m\le n.$  Then   every face $\mathbb{F}$ of
$\mathbb{P}$ with $\text{dim}(\mathbb{F})\le n-1$ has the expression
\begin{eqnarray}\label{456g}
 \mathbb{F}=\bigcap_{\pi \in \Pi(\mathbb{F})} \mathbb{F}_{\pi}  \ \
 \text{where $\mathbb{F}_{\pi}=
 \pi\cap \mathbb{P}$ }\ \text{where}\ \  \Pi(\mathbb{F})=
 \{\pi_{\mathfrak{p}_j}\}_{j=1}^N\subset\Pi.
\end{eqnarray}
  \end{proposition}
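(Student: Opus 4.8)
The plan is to take $\Pi(\mathbb{F})$ to be \emph{all} generators that contain $\mathbb{F}$ and to show that the resulting intersection of faces is already $\mathbb{F}$. Set
$$\Pi(\mathbb{F})=\{\pi\in\Pi:\mathbb{F}\subset\pi\},\qquad \mathbb{F}_\pi=\pi\cap\mathbb{P},\qquad \mathbb{G}=\bigcap_{\pi\in\Pi(\mathbb{F})}\mathbb{F}_\pi .$$
We may assume $\mathbb{F}\neq\emptyset$. Each $\mathbb{F}_\pi\supset\mathbb{F}$, so $\mathbb{F}\subset\mathbb{G}$ trivially and the point is the reverse inclusion. If $\dim(\mathbb{F})=m:=\dim(\mathbb{P})$, then $\dim(\mathbb{F})\le n-1$ forces $m<n$, so $\Pi_b\neq\emptyset$ in the splitting $\Pi=\Pi_a\cup\Pi_b$ of Lemma \ref{lemj77}; a face of full dimension must equal $\mathbb{P}$ (its supporting hyperplane would otherwise meet $V_{am}(\mathbb{P})$ in a proper affine subspace), and since $\mathbb{P}\subset V_{am}(\mathbb{P})\subset\pi$ for $\pi\in\Pi_b$ we get $\Pi_b\subset\Pi(\mathbb{F})$, $\mathbb{F}_\pi=\mathbb{P}$ for all $\pi\in\Pi(\mathbb{F})$, hence $\mathbb{G}=\mathbb{P}=\mathbb{F}$. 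So assume from now on that $\mathbb{F}$ is a proper face. Then $\Pi(\mathbb{F})\neq\emptyset$ by Lemma \ref{lemjqc}; $\mathbb{G}$ is a face of $\mathbb{P}$ by Lemma \ref{lemss}(3); and from $\mathbb{F}\subset\mathbb{G}$ and Lemma \ref{lemss}(1) we get $\mathbb{F}\preceq\mathbb{G}$.

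The heart of the matter is to rule out $\mathbb{F}\subsetneq\mathbb{G}$. Assume it; then $\mathbb{F}$ is a \emph{proper} face of $\mathbb{G}$ (a face properly contained in $\mathbb{G}$ has strictly smaller dimension, as one sees by comparing ambient affine spaces and the supporting hyperplane), hence $\mathbb{F}\subset\partial\mathbb{G}$ by Definition \ref{dfu5}. I then present $\mathbb{G}$ as a polyhedron inside its ambient affine space $V_{am}(\mathbb{G})$ with generators inherited from $\Pi$: since $\mathbb{G}\subset\pi$ for every $\pi\in\Pi(\mathbb{F})$ we have $V_{am}(\mathbb{G})\subset\bigcap_{\pi\in\Pi(\mathbb{F})}\pi$, so each inequality $\pi^{+}$ with $\pi\in\Pi(\mathbb{F})$ is vacuous on $V_{am}(\mathbb{G})$ and
$$\mathbb{G}=\bigcap_{\pi\in\Pi\setminus\Pi(\mathbb{F})}\bigl(\pi^{+}\cap V_{am}(\mathbb{G})\bigr)\quad\text{in}\quad V_{am}(\mathbb{G}).$$
Thus $\mathbb{G}$ admits a Lemma \ref{lemj77}-type generator whose ``$\Pi_a$''-part consists of the proper hyperplanes among $\{\pi\cap V_{am}(\mathbb{G}):\pi\in\Pi\setminus\Pi(\mathbb{F})\}$. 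Now $\mathbb{F}$ is a convex subset of $\partial\mathbb{G}$, so Lemma \ref{lem411d} puts $\mathbb{F}$ inside a facet $\mathbb{H}$ of $\mathbb{G}$, and Lemma \ref{0941} (applied to $\mathbb{G}$ with that generator) writes $\mathbb{H}=\pi_0\cap\mathbb{G}$ for some $\pi_0\in\Pi\setminus\Pi(\mathbb{F})$ (more precisely, for its restriction to $V_{am}(\mathbb{G})$). But then $\mathbb{F}\subset\mathbb{H}\subset\pi_0$, so $\pi_0\in\Pi(\mathbb{F})$ — contradicting $\pi_0\in\Pi\setminus\Pi(\mathbb{F})$. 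Hence $\mathbb{G}=\mathbb{F}$, and relabelling $\Pi(\mathbb{F})=\{\pi_{\mathfrak{p}_j}\}_{j=1}^{N}$ gives (\ref{456g}).

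The step I expect to demand the most care is exactly this bookkeeping: verifying that when the face $\mathbb{G}$ is cut down from $\mathbb{R}^n$ to its ambient affine space $V_{am}(\mathbb{G})$, its facet-defining hyperplanes may be taken to be restrictions of members of $\Pi\setminus\Pi(\mathbb{F})$, so that Lemma \ref{0941} returns a hyperplane which pulls back to an element of $\Pi$ lying \emph{outside} $\Pi(\mathbb{F})$. A slightly heavier alternative is an induction on $\dim(\mathbb{P})-\dim(\mathbb{F})$: embed $\mathbb{F}$ in a facet $\mathbb{F}_1$ of $\mathbb{P}$ via Lemma \ref{lemjqc}, apply the inductive hypothesis to $\mathbb{F}$ viewed as a face of $\mathbb{F}_1$ (a polyhedron of one lower dimension generated by the restrictions of $\Pi$), and translate the resulting facet-intersection back up to $\mathbb{P}$ via $\pi'\cap\mathbb{F}_1=(\pi'\cap\mathbb{P})\cap\mathbb{F}_1$. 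Either way, the remaining ingredients are immediate from Lemmas \ref{lemss}, \ref{lemjqc}, \ref{lemj77}, \ref{0941} and \ref{lem411d}.
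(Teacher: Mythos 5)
Your argument is correct, but it runs along a genuinely different line from the paper's. The paper proves (\ref{456g}) by induction on the codimension of $\mathbb{F}$ in $\mathbb{P}$: via Lemma \ref{lemjqc} it embeds $\mathbb{F}$ into a facet $\mathbb{P}'=\pi_{\mathfrak{q},r}\cap\mathbb{P}$ with $\pi_{\mathfrak{q},r}\in\Pi_a$, observes (its inclusion (\ref{70})) that $\mathbb{P}'$ carries the induced generator $\{\pi_{\mathfrak{q},r}\cap\pi:\pi\in\Pi_a(\mathbb{P})\}$, applies Lemma \ref{0941} to $\mathbb{F}$ as a facet of $\mathbb{P}'$, and pulls the answer back up through $\pi'\cap\mathbb{P}'=(\pi_{\mathfrak{q},r}\cap\mathbb{P})\cap(\pi\cap\mathbb{P})$ — essentially the ``heavier alternative'' you sketch at the end. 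You instead take the canonical choice $\Pi(\mathbb{F})=\{\pi\in\Pi:\mathbb{F}\subset\pi\}$ all at once, and rule out $\mathbb{F}\subsetneq\mathbb{G}=\bigcap_{\pi\in\Pi(\mathbb{F})}\mathbb{F}_\pi$ by placing $\mathbb{F}\subset\partial\mathbb{G}$ inside a facet of $\mathbb{G}$ (Lemma \ref{lem411d}) and showing via Lemma \ref{0941}, applied to $\mathbb{G}$ presented in $V_{am}(\mathbb{G})$ with the constraints coming from $\Pi\setminus\Pi(\mathbb{F})$, that this facet is cut by some $\pi_0\notin\Pi(\mathbb{F})$ containing $\mathbb{F}$ — a contradiction. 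The bookkeeping you flag (rewriting $\pi_0^+\cap V_{am}(\mathbb{G})$ with a normal projected into $V(\mathbb{G})$, and discarding the constraints that are vacuous on $V_{am}(\mathbb{G})$, noting that $V_{am}(\mathbb{G})\subset\pi$ forces $\pi\in\Pi(\mathbb{F})$) is exactly the same induced-generator issue the paper absorbs into (\ref{70}), so neither route escapes it; what your version buys is a non-inductive, one-shot argument and the explicit identification of $\Pi(\mathbb{F})$ as all generators through $\mathbb{F}$ (which automatically contains $\Pi_b$, consistent with (\ref{4g00})), while the paper's induction is constructive and produces the facet decomposition (\ref{45g}) used later. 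Your treatment of the improper face and of the full-dimensional case matches the paper's, and like the paper you implicitly set aside $\mathbb{F}=\emptyset$, which is harmless here.
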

\begin{remark}\label{remfp}
 We split the generator $\Pi(\mathbb{F})=\{\pi_{\mathfrak{p}_j}\}_{j=1}^N=
 \Pi_a(\mathbb{F})\cup\Pi_b(\mathbb{F})$ in (\ref{456g}) so that
\begin{eqnarray}\label{4g00}
 \quad \Pi_a(\mathbb{F})=\Pi(\mathbb{F})\cap\Pi_a=\{\pi_{\mathfrak{q}_j}\}_{j=1}^\ell  \ \ \text{and}\ \
  \Pi_b(\mathbb{F})=\Pi(\mathbb{F})\cap\Pi_b=\Pi_b=\{\pi_{\pm\mathfrak{n}_i}\}_{i=1}^{n-m}.
\end{eqnarray}
  See the left side of Figure \ref{graph1}. Denote only normal vectors
   $\{\mathfrak{p}_j\}_{j=1}^N$ in $\Pi(\mathbb{F})$ by   $\Pi(\mathbb{F})$ also.
  \end{remark}
\begin{proof}[Proof of Proposition \ref{facerep}]
Let $\text{dim}(\mathbb{F})=m\le n-1$. An improper face
$\mathbb{F}=\mathbb{P}$ has an expression
\begin{eqnarray*}
\mathbb{P}=\bigcap_{\pi\in\Pi_b}\pi\cap\mathbb{P}\ \ \text{where\
$\pi \in \Pi_b$}.
\end{eqnarray*}
It suffices to show that
each proper face $\mathbb{F}$ of $\mathbb{P}(\Pi)$ is expressed as
\begin{eqnarray}\label{45g}
 \mathbb{F}=\bigcap_{j=1}^M\mathbb{F}_j\ \ \text{ where   $\mathbb{F}_j=\pi_j\cap  \mathbb{P}$ with
$\pi_j\in\Pi_{a}\subset\Pi$ are facets of $\mathbb{P}$}.
\end{eqnarray}
 To show (\ref{45g}), we first let
   $\mathbb{F}$ be a face of codimension 1 of the
$m$-dimensional ambient affine space $V_{am}(\mathbb{P})$. Then
$\mathbb{F}$ itself is a facet of $\mathbb{P}$ such that
$\mathbb{F}=\pi\cap  \mathbb{P}$ with $\pi\in\Pi_{a}\subset\Pi$ from
Lemma \ref{0941}. Let $\mathbb{F}$ be a face of codimension 2 of the
$m$-dimensional ambient affine space $V_{am}(\mathbb{P})$.   By
 Lemma \ref{lemjqc},
\begin{eqnarray*}
\text{$\pi_{\mathfrak{q},r}\in \Pi_a$ such that $\mathbb{F}\subset
\pi_{\mathfrak{q},r}$.}
\end{eqnarray*}
 By  (2) of Lemma \ref{lemss},
 \begin{eqnarray}
\text{$\mathbb{P}'=\pi_{\mathfrak{q},r}\cap\mathbb{P}$ is a facet of
$\mathbb{P}$ such that $\text{dim}(\mathbb{P}')=m-1$.}
\end{eqnarray}
Moreover, observe that $\mathbb{P}'$ itself is an $m-1$ dimensional
polyhedron with
\begin{equation}\label{70}
\Pi_a(\mathbb{P}')\subset\{\pi_{\mathfrak{q},r}\cap\pi:\pi\in\Pi_a(\mathbb{P})\}.
\end{equation}
  By $\mathbb{F}\subset \mathbb{P}'$ and (1) of Lemma \ref{lemss},
   $m-2$ dimensional face $\mathbb{F}$ of $\mathbb{P}$ is
a facet of an $m-1$ dimensional polyhedron $\mathbb{P}'$. Hence by
Lemma \ref{0941}, there exists $\pi'\in\Pi_a(\mathbb{P}') $ in
(\ref{70}) such that $\mathbb{F}=\pi'\cap\mathbb{P}'$. Thus, by
(\ref{70}) there exists $\pi\in\Pi_a(\mathbb{P})$ such that
$\pi'=\pi_{\mathfrak{q},r}\cap\pi$ and
$$\mathbb{F}=\pi'\cap\mathbb{P}'=(\pi_{\mathfrak{q},r}\cap\pi)\cap\mathbb{P}'=
 \left(\pi_{\mathfrak{q},r}\cap\mathbb{P}\right)\cap
 \left(\pi\cap\mathbb{P}\right)=
\mathbb{F}_{\pi_{\mathfrak{q},r}}\cap \mathbb{F}_{\pi}$$
 where $\mathbb{F}_{\pi}$ and $\mathbb{F}_{\pi_{\mathfrak{q},r}}$ are facets of $\mathbb{P}$.
We finish the proof of (\ref{45g}) inductively.
\end{proof}

\subsection{Representation of Cone}
\begin{proposition}[Cone representation]
\label{pr50}
Every  proper face $\mathbb{F}\preceq \mathbb{P}=\mathbb{P}(\Pi)$
  having a generator $\Pi(\mathbb{F})=\{\mathfrak{p}_j\}_{j=1}^N$ with expression
  (\ref{456g})
has its cone of the form:
\begin{eqnarray*}
(\mathbb{F}^*)^{\circ}| \mathbb{P}&=&(\mathbb{F}^*)^{\circ}|(\mathbb{P},\mathbb{R}^n)=
\rm{CoSp}^{\circ}(\{\mathfrak{p}_i:i=1,\cdots, N\}).
\end{eqnarray*}
Here $ \mathbb{F}^*| \mathbb{P}=\mathbb{F}^*|(\mathbb{P},\mathbb{R}^n)=
\rm{CoSp}(\{\mathfrak{p}_i:i=1,\cdots, N\})$ similarly.
\end{proposition}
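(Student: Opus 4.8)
The plan is to prove both inclusions of the identity $(\mathbb{F}^*)^{\circ}|\mathbb{P}=\mathrm{CoSp}^{\circ}(\{\mathfrak{p}_i\}_{i=1}^N)$, where $\{\mathfrak{p}_j\}_{j=1}^N=\Pi(\mathbb{F})=\Pi_a(\mathbb{F})\cup\Pi_b(\mathbb{F})$ is the generator furnished by Proposition \ref{facerep} and Remark \ref{remfp}, and then to read off the statement about $\mathbb{F}^*|\mathbb{P}$ by taking closures. I write $\mathbb{F}=\bigcap_{j=1}^N\pi_{\mathfrak{p}_j,r_j}\cap\mathbb{P}$ with $\mathbb{P}\setminus\mathbb{F}_{\pi_{\mathfrak{p}_j}}\subset(\pi_{\mathfrak{p}_j,r_j}^+)^{\circ}$ for each $j$ (for the two-sided planes $\pi_{\pm\mathfrak{n}_i}\in\Pi_b$ the face they cut out is all of $\mathbb{P}$, so these contribute nothing to $\mathbb{P}\setminus\mathbb{F}_\pi$).

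For the inclusion $\mathrm{CoSp}^{\circ}(\{\mathfrak{p}_j\})\subseteq(\mathbb{F}^*)^{\circ}|\mathbb{P}$, take $\mathfrak{q}=\sum_{j=1}^N c_j\mathfrak{p}_j$ with all $c_j>0$, and set $r=\sum_{j=1}^N c_j r_j$. For ${\bf u}\in\mathbb{F}$ we have $\langle\mathfrak{p}_j,{\bf u}\rangle=r_j$ for every $j$, hence $\langle\mathfrak{q},{\bf u}\rangle=r$. For ${\bf y}\in\mathbb{P}\setminus\mathbb{F}$, by (\ref{456g}) there is some index $\ell$ with ${\bf y}\notin\mathbb{F}_{\pi_{\mathfrak{p}_\ell}}$, i.e. ${\bf y}\in\mathbb{P}\setminus\mathbb{F}_{\pi_{\mathfrak{p}_\ell}}\subset(\pi_{\mathfrak{p}_\ell,r_\ell}^+)^{\circ}$, so $\langle\mathfrak{p}_\ell,{\bf y}\rangle>r_\ell$; while $\langle\mathfrak{p}_j,{\bf y}\rangle\ge r_j$ for all $j$ since ${\bf y}\in\mathbb{P}$. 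Multiplying by $c_j>0$ and summing gives the strict inequality $\langle\mathfrak{q},{\bf y}\rangle>r$. This is exactly the third characterization of $(\mathbb{F}^*)^{\circ}$ in Definition \ref{dualface}, so $\mathfrak{q}\in(\mathbb{F}^*)^{\circ}|\mathbb{P}$. (One should also note $\mathfrak{q}\ne 0$: this follows because $\pi_{\mathfrak{q},r}$ is a genuine supporting hyperplane with $\mathbb{F}$ on it and $\mathbb{P}\setminus\mathbb{F}$ strictly off it, which is impossible for $\mathfrak{q}=0$ unless $\mathbb{F}=\mathbb{P}$, excluded since $\mathbb{F}$ is a proper face.)

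The reverse inclusion $(\mathbb{F}^*)^{\circ}|\mathbb{P}\subseteq\mathrm{CoSp}^{\circ}(\{\mathfrak{p}_j\})$ is the part I expect to require the most care. Let $\mathfrak{q}\in(\mathbb{F}^*)^{\circ}$, so there is $r$ with $\langle\mathfrak{q},{\bf u}\rangle=r<\langle\mathfrak{q},{\bf y}\rangle$ for ${\bf u}\in\mathbb{F}$, ${\bf y}\in\mathbb{P}\setminus\mathbb{F}$; thus $\pi_{\mathfrak{q},r}$ is a supporting hyperplane whose trace on $\mathbb{P}$ is exactly $\mathbb{F}$. First decompose $\mathfrak{q}$ orthogonally relative to $V(\mathbb{P})=\mathrm{Sp}^\perp(\{\mathfrak{n}_i\})$: the component of $\mathfrak{q}$ orthogonal to $V(\mathbb{P})$ lies in $\mathrm{Sp}(\{\mathfrak{n}_i\})=\mathrm{Sp}(\Pi_b)$, which is already in $\mathrm{Sp}(\Pi_b(\mathbb{F}))$ with coefficients of arbitrary sign (both $\pm\mathfrak{n}_i$ are available), so it suffices to treat $\mathfrak{q}\in V(\mathbb{P})$, i.e. work inside $V_{am}(\mathbb{P})$ with the polyhedron $\mathbb{P}(\Pi_a')$ of full dimension $m$. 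Then I invoke the face representation and the structure of $\mathbb{P}'=\pi_{\mathfrak{q},r}\cap\mathbb{P}=\mathbb{F}$: since $\mathbb{F}$ is cut out precisely by the sub-generator $\Pi_a(\mathbb{F})=\{\mathfrak{q}_1,\dots,\mathfrak{q}_\ell\}$, a standard Farkas-type / Lemma \ref{lemss}(3) argument shows $\mathfrak{q}$ must be a nonnegative combination $\sum_{j=1}^\ell c_j\mathfrak{q}_j$ with the $c_j$ corresponding to the "active" facets all strictly positive — otherwise dropping a $\mathfrak{q}_j$ with $c_j=0$ would enlarge the trace of $\pi_{\mathfrak{q},r}$ beyond $\mathbb{F}$, contradicting that $\pi_{\mathfrak{q},r}$ realizes $\mathbb{F}$ exactly (this is where I would either cite \cite{F} or run an explicit induction on $\mathrm{codim}\,\mathbb{F}$ inside $V_{am}(\mathbb{P})$, peeling off one facet $\pi_{\mathfrak{q}_1,r_1}$ at a time as in the proof of Proposition \ref{facerep} and applying the inductive hypothesis to the facet $\mathbb{P}'$ with its generator \eqref{70}). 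Combining the $V(\mathbb{P})$-part and the $\mathrm{Sp}(\Pi_b)$-part gives $\mathfrak{q}\in\mathrm{CoSp}^\circ(\Pi(\mathbb{F}))$. Finally, the statement for $\mathbb{F}^*|\mathbb{P}$ itself follows since $\mathbb{F}^*$ is the closure of $(\mathbb{F}^*)^\circ$ (it is a polyhedron, equal to the closure of its interior) and $\mathrm{CoSp}(\{\mathfrak{p}_j\})$ is the closure of $\mathrm{CoSp}^\circ(\{\mathfrak{p}_j\})$, as recorded in Example \ref{exam11}.
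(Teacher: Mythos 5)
Your overall skeleton agrees with the paper's: you prove both inclusions, split the generator into $\Pi_a(\mathbb{F})\cup\Pi_b$, reduce to the full-dimensional polyhedron $\mathbb{P}(\Pi_a')$ inside $V_{am}(\mathbb{P})$, and absorb the $V^{\perp}(\mathbb{P})$-component into $\mathrm{CoSp}^{\circ}(\{\pm\mathfrak{n}_i\})$; your proof of $\mathrm{CoSp}^{\circ}(\{\mathfrak{p}_j\})\subset(\mathbb{F}^*)^{\circ}$ is the same summation argument as (\ref{452g}). The genuine gap is the reverse inclusion, which is the actual content of the proposition: you do not prove that every $\mathfrak{q}\in(\mathbb{F}^*)^{\circ}$ is a strictly positive combination of the generators, but defer it to a ``standard Farkas-type / Lemma \ref{lemss}(3) argument,'' a citation of \cite{F}, or an induction you do not carry out. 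Lemma \ref{lemss}(3) cannot do this job: it only says that nonnegative combinations of the generators support faces (the easy direction), not that every functional exposing $\mathbb{F}$ arises this way. Moreover, the one-line justification you offer for strict positivity --- that dropping a generator with $c_j=0$ ``would enlarge the trace of $\pi_{\mathfrak{q},r}$ beyond $\mathbb{F}$'' --- is itself an unproved claim: to make it work you must exhibit a point of $\mathbb{P}$ lying on all the remaining facets but strictly off the dropped one, and producing such a point is exactly the nontrivial geometric step.

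The paper supplies precisely this missing content in Lemma \ref{conerep} (built on Lemma \ref{lem3399} for facets and Lemma \ref{lem37u} for translation invariance): given $\mathfrak{q}\in(\mathbb{F}^*)^{\circ}$ one writes $\mathfrak{q}=\sum_j s_j\mathfrak{q}_j+{\bf u}$ with ${\bf u}\perp U=\mathrm{Sp}\{\mathfrak{q}_j\}$, shows ${\bf u}=0$ because the differences ${\bf u}_\ell-{\bf u}_1$ of points of the $k$-dimensional face $\mathbb{F}$ are orthogonal to $\mathfrak{q}$ and to every $\mathfrak{q}_j$ and span $U^{\perp}$ (a dimension count giving $\dim U^{\perp}=k$), and then, for each $l$, tests against a point ${\bf y}\in\bigl(\bigcap_{j\ne l}\mathbb{F}_j\bigr)\setminus\mathbb{F}_l$ to force $s_l>0$ via (\ref{lg1})--(\ref{lg2}). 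If you want a proof in the self-contained, elementary spirit of the paper you must either reproduce this linear-algebra argument or actually prove the Farkas-type statement; as written, the hardest half of the equivalence is asserted rather than proved. (Your closing remark that the closed-cone identity follows by taking closures is reasonable, though the paper simply repeats the argument with non-strict inequalities to obtain (\ref{snn26}).)
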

\begin{remark}
See the
right side of Figure \ref{graph1}, which elucidate the relation between
 faces and their cones.
In the above, $\mathbb{F}^*|(\mathbb{P},\mathbb{R}^n)=
\mathbb{F}^*(\mathbb{P},V(\mathbb{P}))\oplus V^{\perp}(\mathbb{P})$ where
$\mathbb{F}^*|(\mathbb{P},V(\mathbb{P}))=
\rm{CoSp}(\{\mathfrak{p}_i:\mathfrak{p}_i\in \Pi_a(\mathbb{F})\})$ with $\Pi_a(\mathbb{F})$ in (\ref{4g00}).
From this, we also
obtain that $\text{dim}(\mathbb{F})+\text{dim}(\mathbb{F}^*|(\mathbb{P},\mathbb{R}^n))=n$ whereas
 $\text{dim}(\mathbb{F})+\text{dim}(\mathbb{F}^*|(\mathbb{P},V(\mathbb{P}))=\text{dim}(V(\mathbb{P}))$.
 \end{remark}
\begin{lemma}\label{lem3399}
Let $\mathbb{P}=\mathbb{P}(\Pi)$ be a  polyhedron in an inner
product space $V$ with $\text{dim}(\mathbb{P})=\text{dim}(V)=n$.
Let  $\mathbb{F}
\in\mathcal{F}(\mathbb{P})$ be a  facet  expressed as
\begin{equation}
\mathbb{F}= \pi_{\mathfrak{q},r}\cap \mathbb{P}\  \text{where}\
\pi_{\mathfrak{q},r}\in\Pi\ \ \text{and}\ \ \mathbb{P}\setminus
\mathbb{F}\subset (\pi_{ \mathfrak{q},r
}^+)^{\circ}.\label{jnn}\end{equation} Then
$$ (\mathbb{F}^*)^{\circ}|(\mathbb{P},V)=\rm{CoSp}^{\circ}(\mathfrak{q}).$$
\end{lemma}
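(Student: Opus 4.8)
The plan is to prove the displayed equality by the two inclusions, of which the inclusion $\text{CoSp}^{\circ}(\mathfrak{q})\subseteq(\mathbb{F}^*)^{\circ}|(\mathbb{P},V)$ is immediate and the reverse inclusion is where the facet hypothesis is used. For the easy direction, fix $c>0$. Rewriting the hypothesis (\ref{jnn}) in the pointwise form given in (\ref{brg2}) — namely $\langle\mathfrak{q},{\bf u}\rangle=r<\langle\mathfrak{q},{\bf y}\rangle$ for all ${\bf u}\in\mathbb{F}$ and ${\bf y}\in\mathbb{P}\setminus\mathbb{F}$ — and multiplying through by $c>0$ gives $\langle c\mathfrak{q},{\bf u}\rangle=cr<\langle c\mathfrak{q},{\bf y}\rangle$, so $c\mathfrak{q}$ is an interior normal vector of $\mathbb{F}$ with associated constant $cr$; hence $c\mathfrak{q}\in(\mathbb{F}^*)^{\circ}|(\mathbb{P},V)$.

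For the converse, take $\mathfrak{q}'\in(\mathbb{F}^*)^{\circ}|(\mathbb{P},V)$ with witnessing constant $r'$, so that $\mathfrak{q}'\neq 0$, $\langle\mathfrak{q}',{\bf u}\rangle=r'$ for ${\bf u}\in\mathbb{F}$, and $\langle\mathfrak{q}',{\bf y}\rangle>r'$ for ${\bf y}\in\mathbb{P}\setminus\mathbb{F}$. The key point is that $\mathbb{F}$ is a facet, so $\text{dim}(\mathbb{F})=n-1$ and the ambient affine space $V_{am}(\mathbb{F})$ from (\ref{jf2}) is an $(n-1)$-dimensional affine subspace of $V$. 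Now $\pi_{\mathfrak{q},r}$ and $\pi_{\mathfrak{q}',r'}$ are each hyperplanes in $V$ (since $\mathfrak{q},\mathfrak{q}'\neq 0$ and $\text{dim}(V)=n$), and both contain $\mathbb{F}$; by minimality of $V_{am}(\mathbb{F})$ we get $V_{am}(\mathbb{F})\subseteq\pi_{\mathfrak{q},r}\cap\pi_{\mathfrak{q}',r'}$. An $(n-1)$-dimensional affine subspace contained in an $(n-1)$-dimensional affine subspace must coincide with it, so $\pi_{\mathfrak{q},r}=V_{am}(\mathbb{F})=\pi_{\mathfrak{q}',r'}$ as sets. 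Two linear equations defining the same hyperplane have proportional data, whence $\mathfrak{q}'=c\mathfrak{q}$ and $r'=cr$ for some scalar $c\neq 0$. To fix the sign, pick any ${\bf y}\in\mathbb{P}\setminus\mathbb{F}$ — this set is nonempty because a facet of an $n$-dimensional polyhedron is a proper face — and observe $\langle\mathfrak{q},{\bf y}\rangle-r>0$ while $\langle\mathfrak{q}',{\bf y}\rangle-r'=c(\langle\mathfrak{q},{\bf y}\rangle-r)>0$, forcing $c>0$. Thus $\mathfrak{q}'\in\text{CoSp}^{\circ}(\mathfrak{q})$. The final assertion $\mathbb{F}^*|(\mathbb{P},V)=\text{CoSp}(\mathfrak{q})$ then follows either by rerunning the same argument with the weak inequality in (\ref{brg}), or simply by taking closures of the two sets already identified.

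The only step with real content is the dimension count identifying $V_{am}(\mathbb{F})$ with both supporting hyperplanes $\pi_{\mathfrak{q},r}$ and $\pi_{\mathfrak{q}',r'}$; the rest is a direct unwinding of Definitions \ref{dfac} and \ref{dualface}. The point to be careful about is that throughout we work relative to $V$, so "hyperplane" means codimension one in $V$, and the identity $\text{dim}(\mathbb{F})=n-1$ rests on the hypothesis $\text{dim}(\mathbb{P})=\text{dim}(V)=n$ — this full-dimensionality is precisely what makes the codimension bookkeeping close, which is why the lemma is stated in that generality (and why Proposition \ref{pr50} must later restore the factor $V^{\perp}(\mathbb{P})$ in the non-full-dimensional case).
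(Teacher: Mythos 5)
Your proof is correct and takes essentially the same approach as the paper: the inclusion $\text{CoSp}^{\circ}(\mathfrak{q})\subset(\mathbb{F}^*)^{\circ}|(\mathbb{P},V)$ is the identical direct verification, and the reverse inclusion rests on the same dimension count — because $\mathbb{F}$ is an $(n-1)$-dimensional face of the full-dimensional $\mathbb{P}$, any supporting hyperplane of $\mathbb{F}$ is forced (via $V_{am}(\mathbb{F})$) to coincide with $\pi_{\mathfrak{q},r}$, so its normal is a multiple of $\mathfrak{q}$. Your explicit verification that the proportionality constant is positive, using a point of $\mathbb{P}\setminus\mathbb{F}$, spells out a step the paper's contradiction argument leaves implicit, but it is the same underlying idea.
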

\begin{proof}
 Let $\mathfrak{q}'=c\,\mathfrak{q}\subset \text{CoSp}^{\circ}(\mathfrak{q})$
  with $\mathfrak{q}$ in (\ref{jnn}) and $c>0$. Then
 $\mathfrak{q}'$   satisfies (\ref{brg2})  in Definition
\ref{dualface}. So $\mathfrak{q}'\in (\mathbb{F}^*)^{\circ}$. Thus
$\text{CoSp}^{\circ}(\mathfrak{q})\subset (\mathbb{F}^*)^{\circ}$.
 Let
$\mathfrak{p}\in (\mathbb{F}^*)^{\circ}$. Then by (\ref{brr2}),
$\pi_{ \mathfrak{p},r}\cap \mathbb{P}=\mathbb{F}$. This combined
with (\ref{jnn}) implies that $\mathbb{F}=(\pi_{ \mathfrak{p},r}\cap
\mathbb{P})\cap \mathbb{F}=\pi_{ \mathfrak{p},r}\cap
(\pi_{\mathfrak{q},r}\cap \mathbb{P})$. So, $\text{dim}(\mathbb{F})<n-1$ if $\mathfrak{p}\notin
\text{CoSp}^{\circ}(\mathfrak{q})$. Thus $\mathfrak{p}\in
\text{CoSp}^{\circ}(\mathfrak{q})$, which proves $
(\mathbb{F}^*)^{\circ}\subset \text{CoSp}^{\circ}(\mathfrak{q})$.
\end{proof}

\begin{lemma}\label{conerep}
Let $\mathbb{P}=\mathbb{P}(\Pi)$ be a   polyhedron in an inner
product space $V$ with $\text{dim}(\mathbb{P})=\text{dim}(V)=n$.
Let $\mathbb{F} \in\mathcal{F}(\mathbb{P})$ be a $k$ dimensional
face of $\mathbb{P}$ with a generator $\Pi(\mathbb{F})=\{\mathfrak{q}_j\}_{j=1}^M$, that is,
$$\mathbb{F}=\bigcap_{j=1}^M\mathbb{F}_{j}$$ where
$\mathbb{F}_{j}=\pi_{\mathfrak{q}_j,r_j}\cap \mathbb{P}$ is a facet
of $\mathbb{P}$ so that $
(\mathbb{F}_j^*)^{\circ}=\rm{CoSp}^{\circ}(\{\mathfrak{q}_j\})$ for
$j=1,\cdots,M$. Then,
\begin{eqnarray}\label{sj3}
  (\mathbb{F}^*)^{\circ}|(\mathbb{P},V)=\rm{CoSp}^{\circ}(\{\mathfrak{q}_j\}_{j=1}^M ).
\end{eqnarray}
\end{lemma}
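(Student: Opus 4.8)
The statement is precisely the top‑dimensional case ($\dim\mathbb{P}=n$, so $V(\mathbb{P})=V$ and $\Pi_b=\emptyset$) of the cone‑representation, and Lemma \ref{lem3399} is its one‑facet instance; so the plan is to prove the two inclusions between $(\mathbb{F}^*)^{\circ}|(\mathbb{P},V)$ and $\text{CoSp}^{\circ}(\{\mathfrak{q}_j\}_{j=1}^M)$ and deduce the analogous identity for $\mathbb{F}^*$ itself by replacing strict inequalities with weak ones throughout (i.e.\ in $(\ref{brg})$ versus $(\ref{brr2})$). The backbone for the nontrivial inclusion is an induction on the codimension $c=n-k$ of $\mathbb{F}$, parallel to the proof of Proposition \ref{facerep}, with Lemma \ref{lem3399} serving as the base case $c=1$ (and $c=0$ being the improper face $\mathbb{F}=\mathbb{P}$ with $M=0$).

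For $\text{CoSp}^{\circ}(\{\mathfrak{q}_j\})\subset(\mathbb{F}^*)^{\circ}$ I would argue exactly as in the proof of Lemma \ref{lemss}(3). Since each $\mathbb{F}_j=\pi_{\mathfrak{q}_j,r_j}\cap\mathbb{P}$ is a facet with $(\mathbb{F}_j^*)^{\circ}=\text{CoSp}^{\circ}(\mathfrak{q}_j)$, Lemma \ref{lem3399} (via $(\ref{jnn})$) gives $\langle\mathfrak{q}_j,\mathbf{u}\rangle=r_j\le\langle\mathfrak{q}_j,\mathbf{y}\rangle$ for all $\mathbf{u}\in\mathbb{F}_j$ and $\mathbf{y}\in\mathbb{P}$, with strict inequality when $\mathbf{y}\in\mathbb{P}\setminus\mathbb{F}_j$. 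Given $\mathfrak{q}=\sum_{j=1}^Mc_j\mathfrak{q}_j$ with every $c_j>0$, summing these over $j$ and using $\mathbb{F}=\bigcap_j\mathbb{F}_j$ and $\mathbb{P}\setminus\mathbb{F}=\bigcup_j(\mathbb{P}\setminus\mathbb{F}_j)$ yields $\langle\mathfrak{q},\mathbf{u}\rangle=\sum_jc_jr_j<\langle\mathfrak{q},\mathbf{y}\rangle$ for all $\mathbf{u}\in\mathbb{F}$ and $\mathbf{y}\in\mathbb{P}\setminus\mathbb{F}$; moreover $\mathfrak{q}\neq0$, since otherwise testing this against any point of the (nonempty, as $\mathbb{F}$ is proper) set $\mathbb{P}\setminus\mathbb{F}$ gives $0<0$. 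Hence $\mathfrak{q}\in(\mathbb{F}^*)^{\circ}$ by $(\ref{brr2})$.

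For the reverse inclusion with $c\ge 2$: by Lemma \ref{lemjqc} some generator, say $\pi_{\mathfrak{q}_1,r_1}$, contains $\mathbb{F}$, so $\mathbb{F}\subset\mathbb{F}_1$ and $\mathbb{F}\preceq\mathbb{F}_1$ by Lemma \ref{lemss}(1). Set $\mathbb{P}'=\mathbb{F}_1=\pi_{\mathfrak{q}_1,r_1}\cap\mathbb{P}$, viewed as a top‑dimensional polyhedron in its ambient hyperplane $V'=\pi_{\mathfrak{q}_1,r_1}$ with direction space $W=\mathfrak{q}_1^{\perp}$; by $(\ref{70})$ the generators of $\mathbb{P}'$ are the traces $\pi_{\mathfrak{q}_j,r_j}\cap V'$, with normals $P_W\mathfrak{q}_j$ ($P_W$ = orthogonal projection onto $W$). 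As a face of $\mathbb{P}'$, $\mathbb{F}$ has codimension $c-1$, so the induction hypothesis applies to $\mathbb{P}'$ and gives $(\mathbb{F}^*|(\mathbb{P}',W))^{\circ}=\text{CoSp}^{\circ}(\{P_W\mathfrak{q}_j:j\in J'\})$, where $J'\subset\{2,\dots,M\}$ indexes those $\mathfrak{q}_j$ whose traces are facets of $\mathbb{P}'$. Now take $\mathfrak{p}\in(\mathbb{F}^*|(\mathbb{P},V))^{\circ}$ and write $\mathfrak{p}=P_W\mathfrak{p}+\lambda\mathfrak{q}_1$. Since $\langle\mathfrak{q}_1,\cdot\rangle\equiv r_1$ on $\mathbb{P}'$, the strict separation of $\mathbb{F}$ from $\mathbb{P}\setminus\mathbb{F}$ by $\mathfrak{p}$ restricts, on $\mathbb{P}'$, to a strict separation of $\mathbb{F}$ from $\mathbb{P}'\setminus\mathbb{F}$ by $P_W\mathfrak{p}$; hence $P_W\mathfrak{p}=\sum_{j\in J'}c_j\,P_W\mathfrak{q}_j$ with all $c_j>0$, and, writing $P_W\mathfrak{q}_j=\mathfrak{q}_j-\mu_j\mathfrak{q}_1$ and collecting the $\mathfrak{q}_1$‑terms, $\mathfrak{p}=\sum_{j\in J'}c_j\mathfrak{q}_j+c_1\mathfrak{q}_1$ for some real $c_1$.

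The step I expect to be the main obstacle is the final bookkeeping: turning $\mathfrak{p}$ into a \emph{strictly positive} combination of \emph{all} of $\mathfrak{q}_1,\dots,\mathfrak{q}_M$, not merely of $\mathfrak{q}_1$ together with $\{\mathfrak{q}_j:j\in J'\}$. Two things must be handled. First, one must show $c_1>0$ (and more generally that, for any index $j_0\notin J'$, which occurs exactly when $P_W\mathfrak{q}_{j_0}\in\text{CoSp}^{\circ}(\{P_W\mathfrak{q}_j:j\in J'\})$, the relation $P_W\mathfrak{q}_{j_0}=\sum_{j\in J'}\beta_j P_W\mathfrak{q}_j$ with $\beta_j>0$ can be used to redistribute a small positive weight onto $\mathfrak{q}_{j_0}$ while keeping all coefficients positive). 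The positivity of $c_1$ I would extract by testing the strict inequality $\langle\mathfrak{p},\mathbf{y}-\mathbf{u}\rangle>0$ (for $\mathbf{u}\in\mathbb{F}$) against points $\mathbf{y}\in\mathbb{P}\setminus\mathbb{F}$ transverse to the facet $\mathbb{F}_1$ — e.g.\ points of $\mathbb{P}$ that satisfy all of $\mathbb{F}_2,\dots,\mathbb{F}_M$ but not $\mathbb{F}_1$, or the edges/extreme directions of $\mathbb{P}$ at $\mathbb{F}$ not lying in $\mathbb{F}_1$ — on which the $\sum_{j\ge2}$ part contributes zero and $\langle\mathfrak{q}_1,\mathbf{y}-\mathbf{u}\rangle>0$ isolates the sign of $c_1$; this is precisely where $\dim\mathbb{P}=n$ is used. (As a conceptual alternative, one can note that $\dim\mathbb{P}=n$ makes $\mathbb{F}^*$ a pointed polyhedral cone, each $\mathbb{F}_j^*=\text{CoSp}(\mathfrak{q}_j)$ is a $1$‑dimensional face of $\mathbb{F}^*$ by Lemma \ref{lem2525}, and one may then invoke that a pointed polyhedral cone is the convex span of its extreme rays while the relative interior of a finitely generated cone is its set of strictly positive combinations — but the inductive route keeps the argument within the elementary linear‑algebra framework of the paper.)
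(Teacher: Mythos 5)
Your ``$\supset$'' inclusion is fine and is exactly the paper's argument (the summation trick of Lemma \ref{lemss}(3), as in (\ref{452g})). For the hard inclusion ``$\subset$'', however, you take a different route from the paper — induction on codimension by slicing along one facet hyperplane — whereas the paper argues directly: it writes $\mathfrak{q}\in(\mathbb{F}^*)^{\circ}$ as $\mathfrak{q}=\sum_j s_j\mathfrak{q}_j+\mathbf{u}$ with $\mathbf{u}\in U^{\perp}$, $U=\text{Sp}(\{\mathfrak{q}_j\})$, kills $\mathbf{u}$ by pairing against $k$ linearly independent difference vectors inside $\mathbb{F}$ (which are orthogonal to $\mathfrak{q}$ and to every $\mathfrak{q}_j$ and hence span $U^{\perp}$), and then proves $s_l>0$ for \emph{every} $l$ at once by testing against a point $\mathbf{y}\in\bigl(\bigcap_{j\ne l}\mathbb{F}_j\bigr)\setminus\mathbb{F}_l$. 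Your $c_1>0$ step is this same test, but applied only to the lifted coordinate, with the induction hypothesis supposed to supply positivity of the remaining coefficients.

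The genuine gap is precisely the step you flag as the main obstacle: it is not carried out, and the one concrete claim you make there is not justified as stated. You assert that $j_0\notin J'$ happens exactly when $P_W\mathfrak{q}_{j_0}\in\text{CoSp}^{\circ}(\{P_W\mathfrak{q}_j: j\in J'\})$; what is actually available is weaker and needs proof: the trace $\mathbb{F}_{j_0}\cap\mathbb{P}'$ is a face of $\mathbb{P}'$ with $P_W\mathfrak{q}_{j_0}$ in the interior of \emph{its} cone, so a \emph{second} application of the induction hypothesis (to that face, not to $\mathbb{F}$) gives $P_W\mathfrak{q}_{j_0}$ as a strictly positive combination of the projected normals of the facets of $\mathbb{P}'$ containing that trace — a priori a proper subset of $J'$, and even identifying those facets with traces of the $\mathfrak{q}_j$, $j\in J'$, requires (\ref{70}) together with reading $\Pi(\mathbb{F})$ as \emph{all} generators of $\Pi$ whose facets contain $\mathbb{F}$ (without that maximality the lemma itself is false, e.g.\ the apex of a square pyramid written as the intersection of two opposite triangular facets). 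The same unproved facts are needed earlier, to check that the induction hypothesis applies to $\mathbb{F}\preceq\mathbb{P}'$ with generator $\{P_W\mathfrak{q}_j\}_{j\in J'}$ at all, i.e.\ that the non-facet traces ($j\in\{2,\dots,M\}\setminus J'$) are redundant in $\mathbb{F}=\bigcap_{j\ge 2}(\pi_{\mathfrak{q}_j,r_j}\cap\mathbb{P}')$. With these points supplied the redistribution argument (move a small $\epsilon>0$ of weight onto each $\mathfrak{q}_{j_0}$, using $c_1>0$ and $c_j>0$ strictly) does close the proof, so the inductive strategy is salvageable; but as written the crucial positivity/bookkeeping step is a sketch resting on an incorrect-as-stated claim, which is exactly the work the paper's orthogonal-decomposition proof avoids.
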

\begin{proof}
 Let $ \mathfrak{q} \in
\rm{CoSp}^{\circ}(\{\mathfrak{q}_j\}_{j=1}^M )$.  Then (\ref{452g})
with $\mathfrak{q}=\sum_{j=1}^Mc_j\mathfrak{q}_j$   yields
(\ref{brr2}).  Thus $\mathfrak{q}\in (\mathbb{F}^*)^{\circ}$, which
proves  $\supset$ of (\ref{sj3}). We next show  $\subset$ of
(\ref{sj3}). Let $U=\text{Sp}\{\mathfrak{q}_j:j=1,\cdots,M\}$.
Subtract a vector $\mathfrak{r}\in \mathbb{F} =\bigcap_{j=1}^M\pi_{\mathfrak{q}_j,r_j}\cap \mathbb{P}$,
$$V\left( \bigcap_{j=1}^M\pi_{\mathfrak{q}_j,0} \right)=V\left( \bigcap_{j=1}^M\pi_{\mathfrak{q}_j,0}\cap
(\mathbb{P}-\mathfrak{r})\right)=V(\mathbb{F}-\mathfrak{r})=V(\mathbb{F})\ \ \text{and}\ \ \text{dim}(V(\mathbb{F}))=k.$$
 Thus
 $\text{dim}\left(U^{\perp} \right)=k$ and
  $\text{dim}\left(U \right)=n-k$. Let $\mathfrak{q}\in  (\mathbb{F}^*)^{\circ}|(\mathbb{P},V)\subset V=U\oplus
U^{\perp}$. Then,
\begin{eqnarray}\label{u3}
\mathfrak{q}=\sum_{j=1}^N s_j\mathfrak{q}_j+{\bf u}\ \ \ \text{for
some $s_j\in\mathbb{R}^n$  and}\ \ {\bf u}\in U^{\perp}.
\end{eqnarray}
Since $\mathbb{F}$ is a $k$ dimensional face, we can choose $k$
linearly independent vectors
$$\{{\bf u}_{2}-{\bf u}_1,\cdots,{\bf u}_{k+1}-{\bf u}_1:{\bf u}_{\ell}\in
\mathbb{F}\}.$$  Since $\mathfrak{q} \in  (\mathbb{F}^*)^{\circ}$
and $\mathfrak{q}_j \in  (\mathbb{F}_j^*)^{\circ}$ where $(\mathbb{F}^*)^{\circ}=
 (\mathbb{F}^*)^{\circ}|(\mathbb{P},V)$,  by (\ref{brr2})
and ${\bf u}_\ell,{\bf u}_1\in \mathbb{F}=\bigcap \mathbb{F}_j$,
\begin{eqnarray}\label{u2}
 \langle\mathfrak{q},  {\bf u}_\ell-{\bf u}_1 \rangle=\langle\mathfrak{q}_j,
 {\bf u}_\ell-{\bf u}_1 \rangle=0\ \ \text{for
 all $\ell=2,\cdots,k+1$. }
\end{eqnarray}
This implies that $
 \{{\bf u}_{2}-{\bf u}_1,\cdots,{\bf u}_{k+1}-{\bf u}_1 \}\subset
 U^{\perp}
$
and forms a basis of  $U^{\perp}$  because
 $\text{dim}(U^{\perp})=k$. Hence   ${\bf u}\in   U^{\perp}$ is
 expressed as
\begin{eqnarray*}
{\bf u}=\sum_{\ell=2}^{k+1} c_{\ell}({\bf u}_\ell-{\bf u}_1).
\end{eqnarray*}
Thus by (\ref{u2}), we have $ \langle \mathfrak{q},  {\bf
u}\rangle=0\   \text{and}\   \langle \mathfrak{q}_j, {\bf
u}\rangle=0\   \text{for}\  j=1,\cdots,M. $ Therefore, in (\ref{u3})
\begin{eqnarray*}
0=\langle \mathfrak{q},  {\bf u}\rangle =\sum_{j=1}^N s_j \langle
\mathfrak{q}_j\cdot  {\bf u}\rangle+\langle {\bf u}, {\bf
u}\rangle=|{\bf u}|^2 ,
\end{eqnarray*}
which implies that
\begin{eqnarray}\label{mmp}
  \mathfrak{q}=\sum_{j=1}^M s_j \mathfrak{q}_j \ \ \text{for $\mathfrak{q}
  \in  (\mathbb{F}^*)^{\circ}$.}
\end{eqnarray}
We now fix $\ell$ and show $s_\ell>0$. Since $\mathbb{F}_j$'s are
facets of one polyhedron, we can choose
$$ {\bf y}\in \left(\bigcap_{1\le j\le M, j\ne l}
\mathbb{F}_{j}\right)\setminus \mathbb{F}_l\subset
\mathbb{P}\setminus \mathbb{F}_{l}\subset \mathbb{P}\setminus
\mathbb{F}\ \ \text{and}\ \ {\bf u}\in
\mathbb{F}=\bigcap\mathbb{F}_j.
$$  Thus for $\mathfrak{q}\in (\mathbb{F}^*)^{\circ}$ and $\mathfrak{q}_l\in (\mathbb{F}_l^*)^{\circ}$,
\begin{eqnarray}\label{lg1}
\langle\mathfrak{q}, {\bf y}-{\bf u}\rangle>0 \ \ \text{and}\ \
\langle\mathfrak{q}_l,{\bf y}-{\bf u}\rangle >0.
\end{eqnarray}
Since $ {\bf y},{\bf u}  \in  \mathbb{F}_{j}$ and $\mathfrak{q}_j\in
(\mathbb{F}_j^*)^{\circ} $ for all $j\in\{1,\cdots,M\}\setminus\{
l\}$,
\begin{eqnarray}\label{lg2}
\langle\mathfrak{q}_j,{\bf y}-{\bf u}\rangle=0.
\end{eqnarray}
By   (\ref{lg1})-(\ref{lg2})  in (\ref{mmp}), we obtain that
$s_l>0$. Similarly $s_{j}>0$ for all $1\le j\le M$. Therefore
$\mathfrak{q}\in \rm{CoSp}^{\circ}(\{\mathfrak{q}_j\}_{j=1}^M )$.
Thus $(\mathbb{F}^*)^{\circ}
\subset\rm{CoSp}^{\circ}(\{\mathfrak{q}_j\}_{j=1}^M ).$
\end{proof}

We note that $\mathbb{F}^*$ is translation-invariant in the
following sense.
\begin{lemma}\label{lem37u}
Let $\mathfrak{m}\in V$. Then
$[(\mathbb{F}+\mathfrak{m})^*]^{\circ}|(\mathbb{P}+\mathfrak{m},V)=(\mathbb{F}^*)^{\circ}|(\mathbb{P},V)$.
\end{lemma}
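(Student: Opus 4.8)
The plan is to reduce the statement to the algebraic description of the interior of a cone recorded in (\ref{brr2}) and to observe that a translation merely shifts the constant $r$ appearing there by $\langle\mathfrak{q},\mathfrak{m}\rangle$, leaving everything else intact. First I would record two elementary facts. (i) Since adding the fixed vector $\mathfrak{m}$ is a bijection of $V$ carrying $\mathbb{P}$ onto $\mathbb{P}+\mathfrak{m}$ and $\mathbb{F}$ onto $\mathbb{F}+\mathfrak{m}$, it carries complements to complements, so $(\mathbb{P}+\mathfrak{m})\setminus(\mathbb{F}+\mathfrak{m})=(\mathbb{P}\setminus\mathbb{F})+\mathfrak{m}$. (ii) If $\mathfrak{q},r$ realize $\mathbb{F}$ as a face of $\mathbb{P}$ in the sense of (\ref{4g1}), then $\mathfrak{q}$ and $r+\langle\mathfrak{q},\mathfrak{m}\rangle$ realize $\mathbb{F}+\mathfrak{m}$ as a face of $\mathbb{P}+\mathfrak{m}$; hence $\mathbb{F}+\mathfrak{m}\preceq\mathbb{P}+\mathfrak{m}$ and the left-hand side of the claimed identity is meaningful.

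Next I would unwind both sides by means of the third description in (\ref{brr2}). A vector $\mathfrak{q}\in V$ lies in $(\mathbb{F}^*)^{\circ}|(\mathbb{P},V)$ if and only if there is $r\in\mathbb{R}$ with
$$\langle\mathfrak{q},{\bf u}\rangle=r<\langle\mathfrak{q},{\bf y}\rangle\quad\text{for all ${\bf u}\in\mathbb{F}$ and ${\bf y}\in\mathbb{P}\setminus\mathbb{F}$.}$$
Likewise, using fact (i) to identify $(\mathbb{P}+\mathfrak{m})\setminus(\mathbb{F}+\mathfrak{m})$, the vector $\mathfrak{q}\in V$ lies in $[(\mathbb{F}+\mathfrak{m})^*]^{\circ}|(\mathbb{P}+\mathfrak{m},V)$ if and only if there is $r'\in\mathbb{R}$ with
$$\langle\mathfrak{q},{\bf u}+\mathfrak{m}\rangle=r'<\langle\mathfrak{q},{\bf y}+\mathfrak{m}\rangle\quad\text{for all ${\bf u}\in\mathbb{F}$ and ${\bf y}\in\mathbb{P}\setminus\mathbb{F}$.}$$
Subtracting $\langle\mathfrak{q},\mathfrak{m}\rangle$ from each term turns the second system into the first with $r=r'-\langle\mathfrak{q},\mathfrak{m}\rangle$, and conversely adding $\langle\mathfrak{q},\mathfrak{m}\rangle$ recovers the second from the first with $r'=r+\langle\mathfrak{q},\mathfrak{m}\rangle$. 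Thus the two membership conditions are equivalent, which is exactly the asserted equality of sets.

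There is essentially no hard step here: the entire content is the shift $r\mapsto r+\langle\mathfrak{q},\mathfrak{m}\rangle$. The only points that deserve a line of care are the verification in fact (ii) that $\mathbb{F}+\mathfrak{m}\preceq\mathbb{P}+\mathfrak{m}$, and keeping track of the restriction to $V$, which is harmless because $\mathfrak{m}\in V$ guarantees $\mathbb{P}+\mathfrak{m}\subset V$ and the very same test vectors $\mathfrak{q}\in V$ are used on both sides. (An analogous argument, replacing $r<\langle\mathfrak{q},{\bf y}\rangle$ by $r\le\langle\mathfrak{q},{\bf y}\rangle$, proves the corresponding statement $\,(\mathbb{F}+\mathfrak{m})^*|(\mathbb{P}+\mathfrak{m},V)=\mathbb{F}^*|(\mathbb{P},V)\,$ for the cones themselves, should it be needed.)
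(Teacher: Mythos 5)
Your proposal is correct and matches the paper's own argument: the paper likewise characterizes membership in $[(\mathbb{F}+\mathfrak{m})^*]^{\circ}|(\mathbb{P}+\mathfrak{m},V)$ via the strict-inequality condition and converts it to the untranslated condition through the shift $\rho'=\rho-\langle\mathfrak{q},\mathfrak{m}\rangle$, using that translation carries $\mathbb{P}\setminus\mathbb{F}$ onto $(\mathbb{P}+\mathfrak{m})\setminus(\mathbb{F}+\mathfrak{m})$. Your extra remarks (that $\mathbb{F}+\mathfrak{m}\preceq\mathbb{P}+\mathfrak{m}$ and the non-strict version for the full cone) are harmless additions, not a different route.
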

\begin{proof}
Note that $\mathfrak{q}\in
[(\mathbb{F}+\mathfrak{m})^*]^{\circ}|(\mathbb{P}+\mathfrak{m},V)$ if and only
if there exists $\rho$ such that
$$  \langle\mathfrak{q},{\bf u}+\mathfrak{m}\rangle=\rho< \langle\mathfrak{q},
{\bf y}+\mathfrak{m}\rangle\
 \ \text{for $ {\bf u}+\mathfrak{m} \in \mathbb{F}+\mathfrak{m}$ and $ {\bf y}
 +\mathfrak{m} \in  (\mathbb{P}+\mathfrak{m})\setminus (\mathbb{F}+\mathfrak{m})$},$$
that is equivalent to  the following:
$$\exists\, \rho'=\rho-\langle\mathfrak{q}, \mathfrak{m}\rangle\ \text{such that}\
\langle\mathfrak{q},{\bf u} \rangle=\rho' < \langle\mathfrak{q}, {\bf y}\rangle \
 \ \text{for $ {\bf u}  \in \mathbb{F} $ and $ {\bf y}  \in  \mathbb{P}\setminus\mathbb{F} $}$$
which means that $\mathfrak{q}\in (\mathbb{F}^*)^{\circ}|(\mathbb{P},V)$.
\end{proof}

\begin{proof}[Proof of Proposition \ref{pr50}]
We rewrite (\ref{456g}) and (\ref{4g00}) as
\begin{eqnarray} \label{gel3}
\mathbb{F}=\left(\bigcap_{\pi\in  \Pi_a(\mathbb{F})} \mathbb{F}_{\pi}\right)\bigcap
 \left(\bigcap_{\pi\in\Pi_b}\mathbb{F}_\pi\right)=
 \bigcap_{\pi\in  \Pi_a(\mathbb{F})} \mathbb{F}_{\pi}
\end{eqnarray}
where
\begin{itemize}
\item[(1)] $\Pi_a(\mathbb{F})=\{\pi_{\mathfrak{q}_j}\}_{j=1}^\ell\subset\Pi_a$,
$\mathbb{F}_{\pi} =\pi  \cap \mathbb{P}=\pi'\cap\, \mathbb{P}$
 a facet of $\mathbb{P}$ with $\pi'=\pi\cap  V_{am}(\mathbb{P})\in \Pi_a'$
\item[(2)]  $\Pi_b=\{\pi_{\pm\mathfrak{n}_i,\pm
s_i}\}_{i=1}^{n-m}$, $\mathbb{F}_{\pi}=\pi  \cap \mathbb{P}=\mathbb{P}$
 where $V(\mathbb{P})=\text{Sp}^{\perp}(
\{\mathfrak{n}_i\}_{i=1}^{n-m} ).$
\end{itemize}
We claim that $\mathbb{F}$ has a cone of the following form:
\begin{eqnarray*}
(\mathbb{F}^*)^{\circ}|
\mathbb{P}&=&\rm{CoSp}^{\circ}(\{\mathfrak{q}_j:j=1,\cdots,\ell
\})\oplus
V(\mathbb{P})^{\perp}\nonumber\\
&=&\text{CoSp}^{\circ}(\{\mathfrak{q}_j:j=1,\cdots,\ell\})\oplus
\text{CoSp}^{\circ}(\{\mathfrak{n}_i,-\mathfrak{n}_i\}_{i=1}^{n-m}
).
\end{eqnarray*}
By (\ref{jf2}),
\begin{eqnarray*}
 \exists \ \mathfrak{m}\in
V \ \   \text{such that}\ \ V_{am}(\mathbb{P})=\mathfrak{m}+
V(\mathbb{P}).
\end{eqnarray*}
 We first work with $\mathfrak{m}=0$. By of (\ref{71182}) of Lemma \ref{lemj77},
  we regard $\mathbb{P}$
 as
a  polyhedron $\mathbb{P}(\Pi_a')$ defined in  $
V_{am}(\mathbb{P})$. Thus by  (1) of (\ref{gel3}) and Lemma
\ref{conerep},
$$(\mathbb{F}^*)^{\circ}|\mathbb{P},V(\mathbb{P}))=
\rm{CoSp}^{\circ}(\{\mathfrak{q}_j:j=1,\cdots,\ell\}).$$
This means that  $\mathfrak{q}\in
\rm{CoSp}^{\circ}(\{\mathfrak{q}_j:j=1,\cdots,\ell\})$ if and only
if $\mathfrak{q}\in
(\mathbb{F}^*)^{\circ}|(\mathbb{P},V(\mathbb{P}))$, that is,
$$  \mathfrak{q}\in V(\mathbb{P})\ \text{and}\ r\
\text{such that}\ \langle\mathfrak{q}, {\bf u}\rangle=r<\langle\mathfrak{q}, {\bf
y}\rangle\ \text{for all ${\bf u}\in \mathbb{F}$,\ ${\bf y}\in
\mathbb{P}\setminus \mathbb{F}$}.$$   By this combined with $\langle
\mathfrak{n},{\bf u}\rangle= \langle \mathfrak{n},{\bf y}\rangle=0\
\text{ for all $\mathfrak{n}\in V (\mathbb{P})^{\perp}$ and ${\bf
u},{\bf y}\in V (\mathbb{P})$},$ we see that
$$\text{ $\mathfrak{q}\in
\text{CoSp}^{\circ}(\{\mathfrak{q}_j:j=1,\cdots,\ell \})\oplus
V(\mathbb{P})^\perp$ }$$ if and only if
$$\ \exists\, \mathfrak{q}\in  V(\mathbb{P})\oplus V(\mathbb{P})^\perp
=\mathbb{R}^n \ \text{and}\ r\ \text{such that}\
\langle\mathfrak{q}, {\bf u}\rangle=r<\langle\mathfrak{q}, {\bf y}\rangle\ \text{for all
${\bf u}\in \mathbb{F}$,\ ${\bf y}\in \mathbb{P}\setminus
\mathbb{F}$}. $$  Hence we have for a proper face $\mathbb{F}$,
\begin{eqnarray}\label{snn2}
(\mathbb{F}^*)^{\circ}|
(\mathbb{P},\mathbb{R}^n)=\rm{CoSp}^{\circ}(\{\mathfrak{q}_j:j=1,\cdots,\ell
\})\oplus V^{\perp}(\mathbb{P}).
\end{eqnarray}
The case  $\mathfrak{m}\ne 0$ follows from the case $\mathfrak{m}=0$
in (\ref{snn2}) and Lemma \ref{lem37u}. Similarly,
\begin{eqnarray}\label{snn26}
 \mathbb{F}^* |(\mathbb{P},\mathbb{R}^n)=\rm{CoSp}(\{\mathfrak{q}_j:j=1,\cdots,\ell
\})\oplus V^{\perp}(\mathbb{P}).
\end{eqnarray}
We finished the proof of Proposition  \ref{pr50}.
\end{proof}
\begin{remark}\label{pitse}
By (2) of (\ref{gel3}), an improper face $\mathbb{P}$ has the
expression that
$$ \mathbb{P}=
  \bigcap_{\pi\in\Pi_b}\mathbb{F}_\pi =
  \bigcap_{\pi\in\Pi_b}\pi\cap\mathbb{P}\
  \text{where $\Pi_b=\{\pi_{\pm \mathfrak{n}_i}\}_{i=1}^{n-m}$.} $$
 Then we see that
\begin{eqnarray}\label{e8v}
\quad \mathbb{P}^* |
\mathbb{P}=V^{\perp}(\mathbb{P})=\text{CoSp}(\{\pm \mathfrak{n}_i
\}_{i=1}^{n-m})\ \  \text{and}\  \ (\mathbb{P}^*)^{\circ}|
\mathbb{P}=V^{\perp}(\mathbb{P})\setminus\{0\}.
\end{eqnarray}
 This accords with
  Definition \ref{dualface} together with  (\ref{snn2}) and (\ref{snn26}).
  Finally, when $\mathbb{P}=\mathbb{P}(\Pi)$ with $\Pi=\{\mathfrak{p}_j\}_{j=1}^N$,
 we take $\mathbb{F}^*=\text{CoSp}(\{\mathfrak{p}_j\}_{j=1}^N)$ if $\mathbb{F}=\emptyset$.
\end{remark}
In  Example \ref{ex32}, we construct the faces and cones for
the Newton Polyhedrons ${\bf N}(\Lambda_1)$ and ${\bf N}(\Lambda_2)$
 associated with a polynomial  $P_{\Lambda}(t)=(P_{\Lambda_1}(t_1,t_2,t_3),
 P_{\Lambda_1}(t_1,t_2,t_3))$
and check the hypotheses of Main Theorem \ref{main3} for $n=3$ and
$d=2$ with $S=\{1,2,3\}$.

 \begin{figure}
 \centerline{\includegraphics[width=14cm,height=10cm]{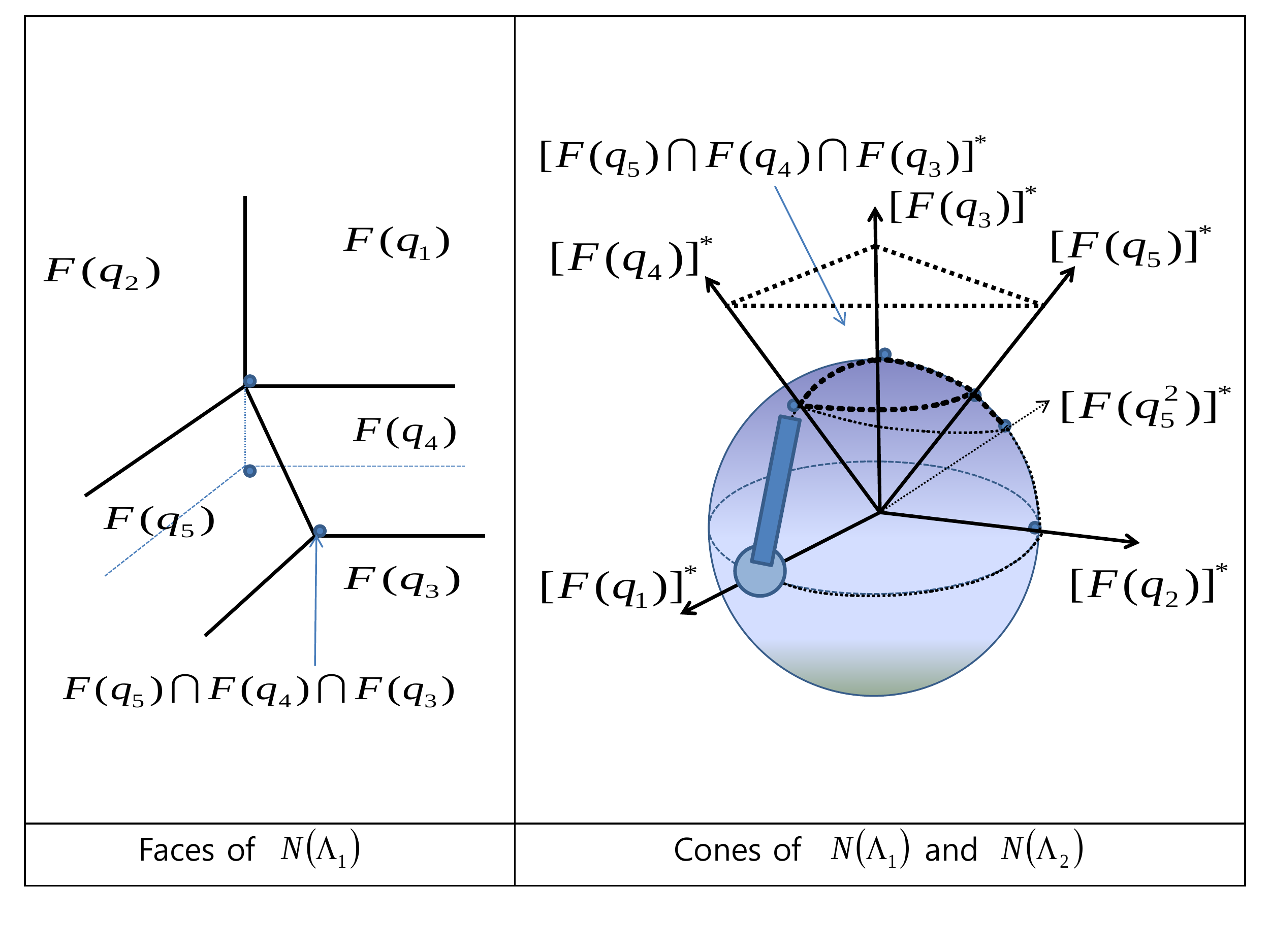}}
 \caption{Faces and their cones of ${\bf N}(\Lambda_1)$ and ${\bf N}(\Lambda_2)$.} \label{graph2}
 \end{figure}

\begin{exam}\label{ex32}
Consider the polynomial
$
P_\Lambda(t)=(c_{\mathfrak{m}_1}^1t^{\mathfrak{m}_1}+c_{\mathfrak{n}_1}^1t^{\mathfrak{n}_1},
 c_{\mathfrak{m}_2}^2t^{\mathfrak{m}_2}+ c_{\mathfrak{n}_2}^2t^{\mathfrak{n}_2})$
 where
\begin{eqnarray*}
\Lambda_1&=&\{\mathfrak{m}_1=(0,0,2),\mathfrak{n}_1=(3,3,0)\},
 \\
\Lambda_2&=&\{\mathfrak{m}_2=(0,0,3),\mathfrak{n}_2=(3,2,1)\}.
 \end{eqnarray*}
Normal vectors $\{\mathfrak{q}^\nu_j\}_{j=1}^5$ of facets of ${\bf N}(\Lambda_\nu)$ for $\nu=1,2$ are
$$  \text{$\mathfrak{q}_j^\nu={\bf e}_j$ for $j=1,2,3,\mathfrak{q}_4^\nu=\frac{(2,0,3)}{\sqrt{13}} ,
\mathfrak{q}_5^1=\frac{(0,2,3)}{\sqrt{13}}$, and
$\mathfrak{q}_5^2=\frac{(0,1,1)}{\sqrt{2}}$.}$$ See Figure
\ref{graph2}, where normal vectors $\mathfrak{q}_j^{\nu}$ are
written without the superscript $\nu=1$ for simplicity.  All the
faces of ${\bf N}(\Lambda_\nu)$ for $\nu=1,2$ are written as
\begin{eqnarray*}
\mathcal{F}^2({\bf
N}(\Lambda_\nu))&=&\left\{\mathbb{F}(\mathfrak{q}_j^\nu)=\pi_{\mathfrak{q}_j^\nu}\cap
{\bf N}(\Lambda_\nu):j=1,\cdots,5\right\},  \\
\mathcal{F}^1({\bf N}(\Lambda_\nu))&=&\left\{
\mathbb{F}(\mathfrak{q}_i^\nu)\cap
\mathbb{F}(\mathfrak{q}_j^\nu):(i,j)=(1,2),(1,4),(2,5),(3,4),(3,5),(4,5)
 \right\}, \\
\mathcal{F}^0({\bf N}(\Lambda_\nu))&=&\left\{
\mathbb{F}(\mathfrak{q}_1^\nu)\cap
\mathbb{F}(\mathfrak{q}_2^\nu)\cap
\mathbb{F}(\mathfrak{q}_4^\nu)\cap \mathbb{F}(\mathfrak{q}_5^\nu),\
\mathbb{F}(\mathfrak{q}_3^\nu)\cap
\mathbb{F}(\mathfrak{q}_4^\nu)\cap
\mathbb{F}(\mathfrak{q}_5^\nu)\right\}=\left\{\mathfrak{m}_\nu,\mathfrak{n}_\nu\right\}.
\end{eqnarray*}
  Cones of 0-faces
(vertices) are
\begin{eqnarray*}
\left(\mathcal{F}^0({\bf N}(\Lambda_\nu))\right)^*=\left\{
\mathfrak{m}_\nu^*=\text{CoSp}
\left(\mathfrak{q}_1^\nu,\mathfrak{q}_2^\nu,\mathfrak{q}_4^\nu,\mathfrak{q}_5^\nu
\right) \ \text{and}\ \ \mathfrak{n}_\nu^*=\text{CoSp}
\left(\mathfrak{q}_3^\nu,
\mathfrak{q}_4^\nu,\mathfrak{q}_5^\nu\right)\right\}.
\end{eqnarray*}
Cones of 1-faces (edges) are
\begin{eqnarray*}
\left(\mathcal{F}^1({\bf
N}(\Lambda_\nu))\right)^*=\left\{[\mathbb{F}(\mathfrak{q}_i^\nu)\cap
\mathbb{F}(\mathfrak{q}_j^\nu)]^*=\text{CoSp}
\left(\mathfrak{q}_i^\nu,\mathfrak{q}_j^\nu\right) \right\}.
\end{eqnarray*}
Cones of 2-faces  are
\begin{eqnarray*}
 \left(\mathcal{F}^2({\bf N}(\Lambda_\nu))\right)^* =\left\{
(\mathbb{F}(\mathfrak{q}_j^\nu))^*= \text{CoSp}
\left(\mathfrak{q}^\nu_j \right) \right\}.
\end{eqnarray*}
All possible combinations $\bigcup_\nu\mathbb{F}_\nu\cap\Lambda_\nu$
with $\text{rank}\left( \bigcup_\nu\mathbb{F}_\nu  \right)\le 2$ are
even sets except the following two odd set:
\begin{itemize}
  \item[(1)] odd set $
\{\mathfrak{n}_1,\mathfrak{m}_2\}$, \item[(2)]
 odd set $\{
\mathfrak{m}_1,\mathfrak{n}_1, \mathfrak{m}_2\}$.
\end{itemize}
We can check the following in view of Figure \ref{graph2},
 where we add the cone $(\mathbb{F}(\mathfrak{q}_5^2))^*=\text{CoSp}(\mathfrak{q}^2_5)$
 for the face $\mathbb{F}(\mathfrak{q}_5^2)$ of
${\bf N}(\Lambda_2)$.\\ From $\{\mathfrak{n}_1,\mathfrak{m}_2\}$
where $ \mathfrak{n}_1^*=\text{CoSp} \left(\mathfrak{q}_3^1,
\mathfrak{q}_4^1,\mathfrak{q}_5^1\right)$ and $
\mathfrak{m}_2^*=\text{CoSp}
\left(\mathfrak{q}_1^2,\mathfrak{q}_2^2,\mathfrak{q}_4^2,\mathfrak{q}_5^2
\right) $,
$$(\mathfrak{n}_1^*)^{\circ}\cap (\mathfrak{m}_2^*)^{\circ}=\emptyset\ \text{and}\ \
\mathfrak{n}_1^*\cap
\mathfrak{m}_2^*=\text{CoSp}\left(\frac{(2,0,3)}{\sqrt{13}}\right).$$
 From $\{
\overline{\mathfrak{m}_1\mathfrak{n}_1},\mathfrak{m}_2\}$ where $
\overline{\mathfrak{m}_1\mathfrak{n}_1}^*=\text{CoSp}\left(
 \mathfrak{q}_4^1,\mathfrak{q}_5^1\right)$ and $
\mathfrak{m}_2^*=\text{CoSp}
\left(\mathfrak{q}_1^2,\mathfrak{q}_2^2,\mathfrak{q}_4^2,\mathfrak{q}_5^2
\right) $,
 $$(\overline{\mathfrak{m}_1\mathfrak{n}_1}^*)^{\circ}
 \cap (\mathfrak{m}_2^*)^{\circ}=\emptyset\ \ \text{and}\ \
 \overline{\mathfrak{m}_1\mathfrak{n}_1}^* \cap  \mathfrak{m}_2^*=
 \text{CoSp}\left(\frac{(2,0,3)}{\sqrt{13}}\right).$$
 As we point out in Remark \ref{r32}, it is not  just cones $\bigcap\mathbb{F}_\nu^*$,
 but  their interiors $\bigcap (\mathbb{F}_\nu^*)^{\circ}$
 that satisfy the overlapping condition  (\ref{4.1dd}). Thus even if   $
\{\mathfrak{n}_1,\mathfrak{m}_2\}$ and $\{
\mathfrak{m}_1,\mathfrak{n}_1, \mathfrak{m}_2\}$ are odd sets, it
does not prevent the uniform boundedness of the integrals:
$$  \sup_{r_j\in (0,1),\xi\in\mathbb{R}^2}\left|\int_{\prod (-r_j,r_j)}
e^{i\xi_1(c_{\mathfrak{m}_1}^1t^{\mathfrak{m}_1}+c_{\mathfrak{n}_1}^1t^{\mathfrak{n}_1})
+\xi_2(
 c_{\mathfrak{m}_2}^2t^{\mathfrak{m}_2}+
  c_{\mathfrak{n}_2}^2t^{\mathfrak{n}_2})}
  \frac{dt_1}{t_1}\frac{dt_2}{t_2}\frac{dt_3}{t_3}\right| \le C. $$
\end{exam}
\medskip

\subsection{Representations of Unbounded Faces}

\begin{lemma}\label{dg23}
Let $\Lambda\subset\mathbb{Z}_+^n$ and $S_0\subset S\subset N_n$.
Suppose that
 $\mathbb{F}\in\mathcal{F}\left({\bf N}(\Lambda,S)\right)$
  such that $\mathfrak{q}=(q_j)\in  (\mathbb{F}^*)^{\circ}|{\bf
N}(\Lambda,S)$
 where $q_j=0$ if $j\in S_0$ and $q_j>0$ if $j\in S\setminus S_0$.
 Then,
\begin{eqnarray}\label{dss}
\mathbb{F}=\mathbb{F}+\mathbb{R}_+^{S_0},
\end{eqnarray}
and
\begin{eqnarray}\label{ds41}
\mathbb{F}={\bf N}(\Lambda\cap \mathbb{F}, S_0).
\end{eqnarray}
Here $S_0$ can be an empty set.
\end{lemma}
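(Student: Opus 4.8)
The plan is to unpack the hypothesis $\mathfrak{q}\in(\mathbb{F}^*)^{\circ}|{\bf N}(\Lambda,S)$ by means of Definition \ref{dualface}, in the form (\ref{brr2}): there is $r\in\mathbb{R}$ with
\[\langle\mathfrak{q},{\bf u}\rangle=r<\langle\mathfrak{q},{\bf y}\rangle\quad\text{for all }{\bf u}\in\mathbb{F}\text{ and }{\bf y}\in{\bf N}(\Lambda,S)\setminus\mathbb{F},\]
so that $\langle\mathfrak{q},\cdot\rangle\ge r$ on ${\bf N}(\Lambda,S)$ with equality precisely on $\mathbb{F}$. Before treating the two identities I would isolate two elementary observations. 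First, since $q_j=0$ for $j\in S_0$ and $q_j>0$ for $j\in S\setminus S_0$, we have $q_j\ge 0$ for every $j\in S$; hence for $w\in\mathbb{R}_+^{S}$ one has $\langle\mathfrak{q},w\rangle=\sum_{j\in S\setminus S_0}q_jw_j\ge 0$, and $\langle\mathfrak{q},w\rangle=0$ forces $w_j=0$ for all $j\in S\setminus S_0$, i.e.\ $w\in\mathbb{R}_+^{S_0}$; in particular $\langle\mathfrak{q},v\rangle=0$ for every $v\in\mathbb{R}_+^{S_0}$. Second, writing a point of ${\bf N}(\Lambda,S)=\text{Ch}(\Lambda+\mathbb{R}_+^{S})$ as a finite convex combination $\sum_i c_i(\lambda_i+w_i)$ with $\lambda_i\in\Lambda$, $w_i\in\mathbb{R}_+^{S}$, $c_i\ge 0$, $\sum_i c_i=1$, the identity $\sum_i c_i(\lambda_i+w_i)+w=\sum_i c_i\bigl(\lambda_i+(w_i+w)\bigr)$ shows ${\bf N}(\Lambda,S)+\mathbb{R}_+^{S}\subseteq{\bf N}(\Lambda,S)$.

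To prove (\ref{dss}): the inclusion $\mathbb{F}\subseteq\mathbb{F}+\mathbb{R}_+^{S_0}$ is clear. Conversely, given ${\bf u}\in\mathbb{F}$ and $v\in\mathbb{R}_+^{S_0}\subseteq\mathbb{R}_+^{S}$, the second observation gives ${\bf u}+v\in{\bf N}(\Lambda,S)$, while $\langle\mathfrak{q},{\bf u}+v\rangle=r+\langle\mathfrak{q},v\rangle=r$ by the first observation. Since every point of ${\bf N}(\Lambda,S)\setminus\mathbb{F}$ has $\langle\mathfrak{q},\cdot\rangle>r$, the point ${\bf u}+v$ must lie in $\mathbb{F}$, which yields (\ref{dss}).

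To prove (\ref{ds41}): for $\supseteq$, from (\ref{dss}) we get $(\Lambda\cap\mathbb{F})+\mathbb{R}_+^{S_0}\subseteq\mathbb{F}+\mathbb{R}_+^{S_0}=\mathbb{F}$, and since $\mathbb{F}$ is convex its convex hull ${\bf N}(\Lambda\cap\mathbb{F},S_0)=\text{Ch}\bigl((\Lambda\cap\mathbb{F})+\mathbb{R}_+^{S_0}\bigr)$ is contained in $\mathbb{F}$. For $\subseteq$, take ${\bf y}\in\mathbb{F}\subseteq{\bf N}(\Lambda,S)$ and write ${\bf y}=\sum_i c_i(\lambda_i+w_i)$ as in the second observation with all $c_i>0$. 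Then $r=\langle\mathfrak{q},{\bf y}\rangle=\sum_i c_i\bigl(\langle\mathfrak{q},\lambda_i\rangle+\langle\mathfrak{q},w_i\rangle\bigr)$; since $\lambda_i\in{\bf N}(\Lambda,S)$ gives $\langle\mathfrak{q},\lambda_i\rangle\ge r$ and the first observation gives $\langle\mathfrak{q},w_i\rangle\ge 0$, while $\sum_i c_i=1$, equality forces $\langle\mathfrak{q},\lambda_i\rangle=r$ and $\langle\mathfrak{q},w_i\rangle=0$ for each $i$. The first condition places $\lambda_i$ in $\pi_{\mathfrak{q},r}\cap{\bf N}(\Lambda,S)=\mathbb{F}$, hence $\lambda_i\in\Lambda\cap\mathbb{F}$; the second, again by the first observation, gives $w_i\in\mathbb{R}_+^{S_0}$. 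Therefore ${\bf y}\in\text{Ch}\bigl((\Lambda\cap\mathbb{F})+\mathbb{R}_+^{S_0}\bigr)={\bf N}(\Lambda\cap\mathbb{F},S_0)$ (and in passing $\Lambda\cap\mathbb{F}\ne\emptyset$).

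Once the two observations are in place, this is essentially bookkeeping; the only point needing care is to keep the exponent set $\Lambda$ and the cone directions $\mathbb{R}_+^{S}$ in their separate roles in the mixed convex-combination estimate of the last paragraph. The degenerate case $S_0=\emptyset$ (where $\mathbb{R}_+^{S_0}=\{0\}$, so that (\ref{dss}) is vacuous and (\ref{ds41}) reads $\mathbb{F}=\text{Ch}(\Lambda\cap\mathbb{F})$) is covered verbatim, and I do not anticipate a genuine obstacle.
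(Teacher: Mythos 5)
Your proof is correct and follows essentially the same route as the paper: both arguments use the strict separation $\langle\mathfrak{q},\cdot\rangle=r$ on $\mathbb{F}$ versus $>r$ on ${\bf N}(\Lambda,S)\setminus\mathbb{F}$ together with the sign pattern of the $q_j$, and both analyze a finite convex combination from $\Lambda+\mathbb{R}_+^{S}$ to force the generators into $\Lambda\cap\mathbb{F}$ and the cone directions into $\mathbb{R}_+^{S_0}$. Your only deviation is cosmetic: you enforce equality in a single convex-combination estimate where the paper argues in two steps (first $\Omega\subset\mathbb{F}$, then decomposing each $\mathfrak{m}={\bf z}+\sum a_j{\bf e}_j$), and you make explicit the closure ${\bf N}(\Lambda,S)+\mathbb{R}_+^{S}\subseteq{\bf N}(\Lambda,S)$ that the paper leaves implicit.
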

\begin{proof}[Proof of (\ref{dss})]
Since $0\in
 \mathbb{R}^{S_0}_+ $,
$\mathbb{F}\subset\mathbb{F}+ \mathbb{R}_+^{S_0}$.  Let
$\mathfrak{m}+\sum_{j\in S_0}a_j{\bf e}_j\in \mathbb{F}+
\mathbb{R}_+^{S_0} $ where $\mathfrak{m}\in\mathbb{F}$. Assume that
$\mathfrak{m}+\sum_{j\in S_0}a_j{\bf e}_j\in {\bf
N}(\Lambda,S)\setminus \mathbb{F}$. By Definition \ref{dualface},
 \begin{eqnarray*}
\langle\mathfrak{m},\mathfrak{q}\rangle< \left\langle \mathfrak{m}+\sum_{j\in
S_0}a_j{\bf e}_j,\mathfrak{q}\right\rangle\ \ \text{for
$\mathfrak{q}\in (\mathbb{F}^*)^{\circ}|{\bf N}(\Lambda,S)$},
\end{eqnarray*}
which is impossible because $q_j=0$ for $j\in S_0$ in the
hypothesis.
   Thus
\begin{eqnarray*}
\mathfrak{m}+\sum_{j\in S_0}a_j{\bf e}_j\in\mathbb{F}.
\end{eqnarray*}
This implies that $\mathbb{F}+ \mathbb{R}_+^{S_0}
\subset\mathbb{F}$.
\end{proof}
\begin{proof}[Proof of (\ref{ds41})]
By definition, ${\bf N}(\Lambda\cap\mathbb{F}, S_0)$ is the smallest
convex set containing $(\Lambda\cap\mathbb{F})+ \mathbb{R}^{S_0}_+
$. In view of (\ref{dss}),  $\mathbb{F}$ contains the set set
$(\Lambda\cap\mathbb{F})+ \mathbb{R}^{S_0}_+ $. Thus,
$${\bf
N}(\Lambda\cap\mathbb{F}, S_0)\subset \mathbb{F}. $$ We next show
that  $\mathbb{F}\subset {\bf N}(\Lambda\cap\mathbb{F}, S_0).$ Let
${\bf x}\in \mathbb{F}\subset{\bf N}(\Lambda,
S)=\text{Ch}(\Lambda+\mathbb{R}_+^{S})$. Then,
$${\bf x}=\sum_{\mathfrak{m}\in  \Omega}c_{\mathfrak{m}}\mathfrak{m}\
\ \text{with $\Omega$ is a finite subset of $
\Lambda+\mathbb{R}_+^{S}$}$$  where $\sum_{\mathfrak{m}\in
\Omega}c_{\mathfrak{m}}=1$ and $c_\mathfrak{m}>0$. Assume that
$\mathfrak{m}\in\Omega\cap \mathbb{F}^c\ne\emptyset$. Then by
Definition \ref{dualface},  for $\mathfrak{q}=(q_j)\in
(\mathbb{F}^*)^{\circ}|{\bf N}(\Lambda,S)$,
$\langle\mathfrak{m},\mathfrak{q}\rangle>\langle{\bf x},\mathfrak{q}\rangle$. Thus
$$\langle{\bf x},\mathfrak{q}\rangle=\sum_{\mathfrak{m}\in
\mathbb{F}\cap\Omega }c_{\mathfrak{m}}\langle\mathfrak{m},\mathfrak{q}\rangle+
\sum_{\mathfrak{m}\in  \mathbb{F}^c\cap\Omega
}c_{\mathfrak{m}}\langle\mathfrak{m},\mathfrak{q}\rangle>
\left(\sum_{\mathfrak{m}\in  \Omega}c_{\mathfrak{m}}\right)\langle{\bf
x},\mathfrak{q}\rangle=\langle{\bf x},\mathfrak{q}\rangle $$ which is a
contradiction. So  $\Omega\cap \mathbb{F}^c=\emptyset$.
 Hence
\begin{eqnarray}\label{essbb}
{\bf x}=\sum_{\mathfrak{m}\in \Omega}c_{\mathfrak{m}}\mathfrak{m}\
\text{where}\ \Omega\subset
\mathbb{F}\cap\left(\Lambda+\mathbb{R}_+^{S}\right).
\end{eqnarray}
Here each $\mathfrak{m}\in \Omega\subset\mathbb{F}
\cap\left(\Lambda+\mathbb{R}_+^{S}\right)$ above is expressed as
\begin{eqnarray}\label{oee}
\mathfrak{m}= {\bf z}+\sum_{j\in S}a_j{\bf e}_j\in\mathbb{F}\
\text{where}\ {\bf z}\in\Lambda\ \text{and}\ a_j\ge 0.
\end{eqnarray}
By  Definition \ref{dualface},  for $\mathfrak{q}=(q_j)\in
(\mathbb{F}^*)^{\circ}|{\bf N}(\Lambda,S)$,
$\mathfrak{m}\in\mathbb{F}$ and ${\bf z}\in {\bf N}(\Lambda,S)$,
 \begin{eqnarray}\label{oeey}
\left\langle {\bf z}+\sum_{j\in S}a_j{\bf e}_j,
\mathfrak{q}\right\rangle\le \langle {\bf z}, \mathfrak{q}\rangle.
\end{eqnarray}
If ${\bf z} \in {\bf N}(\Lambda,S)\setminus\mathbb{F}$, then the
inequality in (\ref{oeey}) is strict. This is impossible because
$q_j$ with $j\in S$ in $\mathfrak{q}$ is nonnegative in the above
hypothesis.
 Thus  ${\bf z}\in \mathbb{F}$ in (\ref{oee}).
  Moreover $a_j=0$ for $j\in S\setminus S_0$ in (\ref{oeey})
 because $q_j>0$ for $j\in  S\setminus S_0$.
 Therefore ${\bf z}\in \mathbb{F}\cap \Lambda$ and $j\in S_0$ in (\ref{oee}). Hence,
 in (\ref{essbb}),
\begin{eqnarray*}
{\bf x}=\sum_{\mathfrak{m}\in \Omega}c_{\mathfrak{m}}\mathfrak{m}\ \
\text{where}\ \ \Omega\subset (\mathbb{F}\cap
\Lambda)+\mathbb{R}_+^{S_0}\ \  \text{and}\ \ \sum_{\mathfrak{m}\in
\Omega}c_{\mathfrak{m}}=1,
\end{eqnarray*}
that is ${\bf x}\in
\text{Ch}((\mathbb{F}\cap\Lambda)+\mathbb{R}_+^{S_0})={\bf
N}(\mathbb{F}\cap \Lambda,S_0)$, which implies $ \mathbb{F}\subset {\bf
N}(\Lambda\cap\mathbb{F}, S_0) .$
\end{proof}

\subsection{Essential Faces}
In constructing a sequence $\{\mathbb{F}^*_\nu(s)\}_{s=0}^N$ in
(\ref{pathg}), we need the following concept of faces.
\begin{definition}\label{dfu1}
 Let $\mathbb{P}$ be a  polyhedron such that $\mathbb{B}\subset\mathbb{P}$.
 Then a set $F(\mathbb{B}|\mathbb{P})$ is defined to be the smallest face of $\mathbb{P}$ containing $\mathbb{B}$
 in the sense that $$\mathbb{B}\subset
 F(\mathbb{B}|\mathbb{P})\preceq \mathbb{P}\ \ \text{and}\ \  \mathbb{B} \nsubseteq
 \mathbb{G}\   \text{ for any}\
 \mathbb{G} \precneqq F(\mathbb{B}|\mathbb{P}).$$
 We call $F(\mathbb{B}|\mathbb{P})$ the essential face of $\mathbb{P}$ containing $\mathbb{B}$.
 See the first and second pictures in Figure \ref{graph3}.
 \end{definition}

  \begin{figure}
 \centerline{\includegraphics[width=14cm,height=10cm]{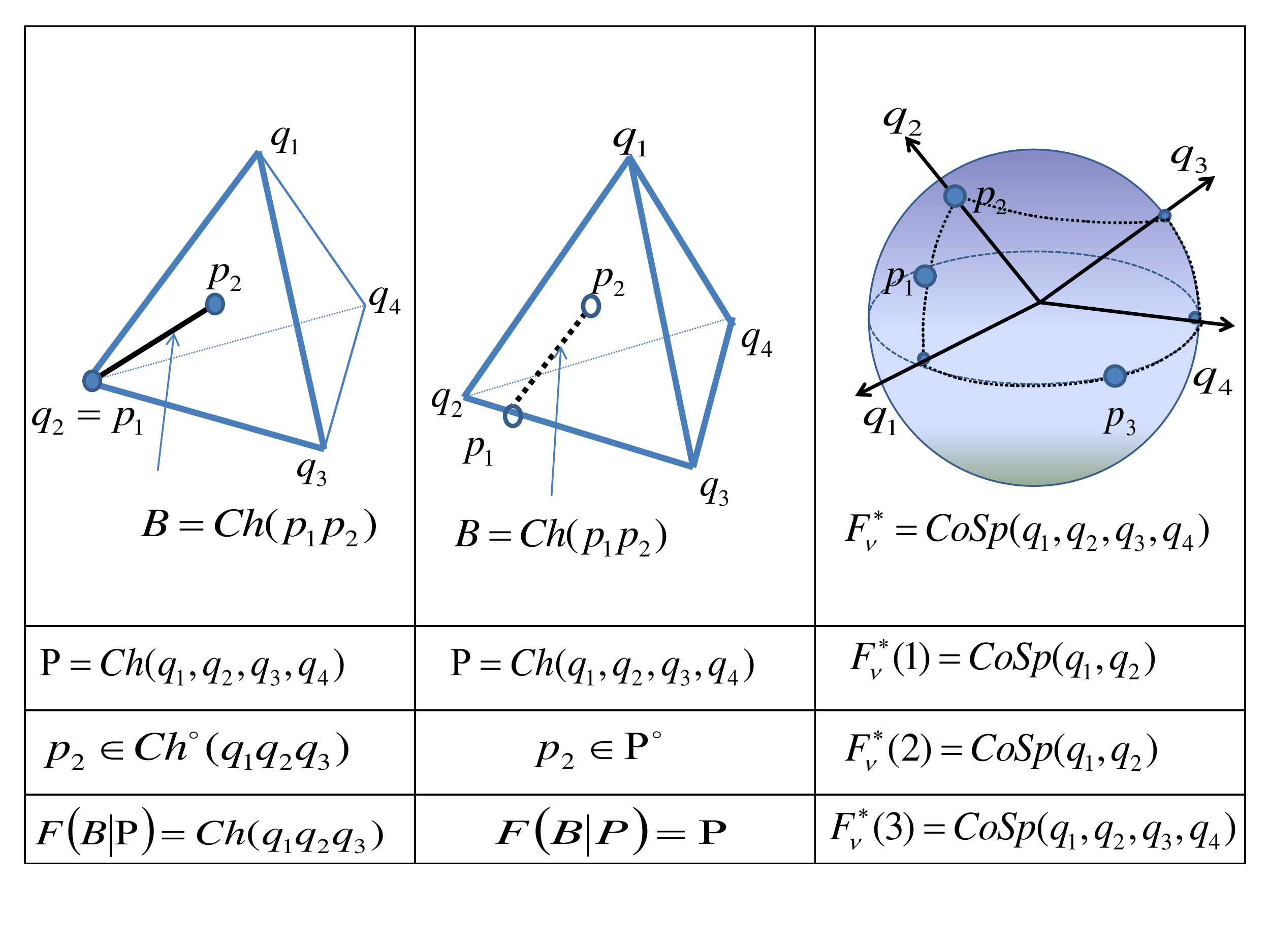}}
  \caption{Essential Faces.} \label{graph3}
 \end{figure}

\begin{lemma}\label{lem7k}
Let $\mathbb{P}$ be a  polyhedron such that  $\mathbb{B}\cap
\mathbb{P}^{\circ}\ne \emptyset$. Then
$$F(\mathbb{B}|\mathbb{P})=\mathbb{P}. $$
\end{lemma}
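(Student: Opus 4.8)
The plan is to show that $\mathbb{P}$ is the \emph{only} face of $\mathbb{P}$ containing $\mathbb{B}$; since the essential face $F(\mathbb{B}|\mathbb{P})$ is, by Definition \ref{dfu1}, a face of $\mathbb{P}$ that contains $\mathbb{B}$, it will then follow that $F(\mathbb{B}|\mathbb{P})=\mathbb{P}$. I would begin by fixing a point ${\bf x}\in\mathbb{B}\cap\mathbb{P}^{\circ}$, which exists by hypothesis.

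The first step is to observe that no point of $\mathbb{P}^{\circ}$ lies in a proper face of $\mathbb{P}$. Indeed, by Definition \ref{dfu5} applied to the face $\mathbb{P}$ itself, $\partial\mathbb{P}=\bigcup_{\mathbb{G}\precneqq\mathbb{P}}\mathbb{G}$ is the union of all proper faces of $\mathbb{P}$, so each proper face is contained in $\partial\mathbb{P}$; and by Definition \ref{dfu4}, $\mathbb{P}^{\circ}=\mathbb{P}\setminus\partial\mathbb{P}$. Hence ${\bf x}$ lies in no proper face of $\mathbb{P}$. Next, let $\mathbb{F}\preceq\mathbb{P}$ be an arbitrary face with $\mathbb{B}\subset\mathbb{F}$. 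Then ${\bf x}\in\mathbb{F}$, so $\mathbb{F}$ cannot be proper, i.e.\ $\text{dim}(\mathbb{F})=\text{dim}(\mathbb{P})$. A face of full dimension must coincide with $\mathbb{P}$: since $\mathbb{F}\subset\mathbb{P}$, we have $V_{am}(\mathbb{F})\subset V_{am}(\mathbb{P})$ (Definition \ref{df35}), and equality of dimensions forces $V_{am}(\mathbb{F})=V_{am}(\mathbb{P})$; but a supporting hyperplane $\pi_{\mathfrak{q},r}$ of $\mathbb{F}$ satisfies $\mathbb{F}\subset\pi_{\mathfrak{q},r}$ (Definition \ref{dfac}), hence $V_{am}(\mathbb{P})=V_{am}(\mathbb{F})\subset\pi_{\mathfrak{q},r}$, so $\mathbb{P}\subset\pi_{\mathfrak{q},r}$ and therefore $\mathbb{F}=\pi_{\mathfrak{q},r}\cap\mathbb{P}=\mathbb{P}$. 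Thus every face of $\mathbb{P}$ containing $\mathbb{B}$ equals $\mathbb{P}$, and in particular $F(\mathbb{B}|\mathbb{P})=\mathbb{P}$.

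I do not expect a real obstacle: the statement is a direct unwinding of the definitions of the boundary $\partial\mathbb{P}$, the (relative) interior $\mathbb{P}^{\circ}$, and the essential face. The only place that requires a short argument is the identification of a full-dimensional face of $\mathbb{P}$ with $\mathbb{P}$ itself, which I would dispose of exactly as above, using the supporting-hyperplane description of a face from Definition \ref{dfac} together with the ambient-affine-space discussion in Definition \ref{df35}.
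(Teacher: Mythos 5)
Your proof is correct and takes essentially the same route as the paper: since every proper face lies in $\partial\mathbb{P}$ (Definition \ref{dfu5}) and $\mathbb{P}^{\circ}=\mathbb{P}\setminus\partial\mathbb{P}$ (Definition \ref{dfu4}), a set meeting $\mathbb{P}^{\circ}$ cannot be contained in a proper face, so $F(\mathbb{B}|\mathbb{P})$ must be $\mathbb{P}$ — exactly the paper's contradiction argument. Your extra verification that a face of full dimension coincides with $\mathbb{P}$ is a detail the paper leaves implicit, but it does not change the argument.
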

\begin{proof}
We see that  $F(\mathbb{B}|\mathbb{P}) \preceq\mathbb{P}$. Assume
that $F(\mathbb{B}|\mathbb{P}) \precneqq\mathbb{P}$. Then $
\mathbb{B}\subset F(\mathbb{B}|\mathbb{P})
\subset\partial\mathbb{P}$. This is a contradiction to the
hypothesis $\mathbb{B}\cap  \mathbb{P}^{\circ}\ne \emptyset$.
\end{proof}
\begin{lemma}\label{lem77k}
Let $\mathbb{P}$ be a  polyhedron such that $\mathbb{B}\subset\mathbb{P}$. Then
$$(F(\mathbb{B}|\mathbb{P}))^{\circ}\cap \text{Ch}(\mathbb{B})\ne \emptyset. $$
\end{lemma}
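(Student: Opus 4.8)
Lemma \ref{lem77k} asserts that the interior of the essential face $F(\mathbb{B}|\mathbb{P})$ meets the convex hull $\text{Ch}(\mathbb{B})$. The plan is to reduce to the ambient affine space of $\mathbb{G}:=F(\mathbb{B}|\mathbb{P})$ and argue by the minimality of $\mathbb{G}$: since $\mathbb{B}\subset\mathbb{G}$ but $\mathbb{B}$ is not contained in any proper face of $\mathbb{G}$, the set $\text{Ch}(\mathbb{B})$ cannot be swallowed by $\partial\mathbb{G}$.

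\begin{proof}
Write $\mathbb{G}=F(\mathbb{B}|\mathbb{P})$, so that $\mathbb{B}\subset\mathbb{G}\preceq\mathbb{P}$ and $\mathbb{B}\nsubseteq\mathbb{H}$ for every $\mathbb{H}\precneqq\mathbb{G}$ by Definition \ref{dfu1}. By (1) of Lemma \ref{lemss}, the faces $\mathbb{H}\precneqq\mathbb{G}$ are exactly the proper faces of $\mathbb{G}$, and $\partial\mathbb{G}=\bigcup_{\mathbb{H}\precneqq\mathbb{G}}\mathbb{H}$ by Definition \ref{dfu5}. Since $\text{Ch}(\mathbb{B})$ is a convex subset of $\mathbb{G}$, it suffices to show $\text{Ch}(\mathbb{B})\nsubseteq\partial\mathbb{G}$; then $\text{Ch}(\mathbb{B})\cap\mathbb{G}^{\circ}=\text{Ch}(\mathbb{B})\setminus\partial\mathbb{G}\ne\emptyset$, which is the claim.

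Assume for contradiction that $\text{Ch}(\mathbb{B})\subset\partial\mathbb{G}$. Apply Lemma \ref{lem411d} with $\mathbb{P},\mathbb{F}$ there replaced by $\mathbb{G},\mathbb{G}$ and the convex set $\mathbb{B}$ there replaced by the convex set $\text{Ch}(\mathbb{B})$: we obtain a face $\mathbb{H}$ with $\dim(\mathbb{H})=\dim(\mathbb{G})-1$ such that $\text{Ch}(\mathbb{B})\subset\mathbb{H}\preceq\mathbb{G}$. In particular $\mathbb{B}\subset\text{Ch}(\mathbb{B})\subset\mathbb{H}$ with $\mathbb{H}\precneqq\mathbb{G}$, contradicting the minimality of $\mathbb{G}=F(\mathbb{B}|\mathbb{P})$ in Definition \ref{dfu1}. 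Hence $\text{Ch}(\mathbb{B})\nsubseteq\partial\mathbb{G}$, and $(F(\mathbb{B}|\mathbb{P}))^{\circ}\cap\text{Ch}(\mathbb{B})\ne\emptyset$.
\end{proof}

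The one point requiring minor care is the hypothesis of Lemma \ref{lem411d}, which asks for a convex set contained in $\partial\mathbb{F}$; under our contradiction assumption $\text{Ch}(\mathbb{B})$ is exactly such a set, so the application is legitimate. The main (and only real) obstacle is recognizing that Lemma \ref{lem411d} is the right tool: it upgrades "a convex subset of the boundary" to "a convex subset of a single facet," and it is precisely this collapse onto one proper face that contradicts the definition of the essential face. Everything else is bookkeeping with Lemmas \ref{lemss} and the definition of $\partial\mathbb{G}$.
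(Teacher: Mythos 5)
Your proof is correct and follows essentially the same route as the paper: assume $\text{Ch}(\mathbb{B})\subset\partial F(\mathbb{B}|\mathbb{P})$, invoke Lemma \ref{lem411d} to place $\text{Ch}(\mathbb{B})$ inside a single proper face, and contradict the minimality in Definition \ref{dfu1}. The extra bookkeeping (applying Lemma \ref{lem411d} with the polyhedron taken to be $\mathbb{G}$ itself rather than $\mathbb{P}$) is harmless and changes nothing essential.
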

\begin{proof}
If not, $\text{Ch}(\mathbb{B})\subset \partial
F(\mathbb{B}|\mathbb{P})$. By Lemma \ref{lem411d},
$\text{Ch}(\mathbb{B}) \subset\mathbb{G}\precneqq
F(\mathbb{B}|\mathbb{P})$, which is impossible by Definition
\ref{dfu1}.
\end{proof}
\begin{lemma}\label{lemj7}
 Let $\mathbb{P}$ be a  polyhedron in $\mathbb{R}^n$ and let $\mathbb{B}\subset\mathbb{P}$ be a convex set.
 Then
\begin{eqnarray*}
\mathbb{B}^{\circ}\subset  F(\mathbb{B}|\mathbb{P})^{\circ}.
\end{eqnarray*}
\end{lemma}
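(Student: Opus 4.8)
The plan is to argue by contradiction. Write $F:=F(\mathbb{B}|\mathbb{P})$, the essential face of Definition \ref{dfu1}, and recall that $\mathbb{B}^{\circ}$ is the relative interior of $\mathbb{B}$, i.e. its interior inside the smallest affine space containing it (consistently with Definition \ref{dfu4}); if $\mathbb{B}^{\circ}=\emptyset$ there is nothing to prove, so assume it is nonempty and suppose, for contradiction, that some $\mathfrak{p}\in\mathbb{B}^{\circ}$ does not lie in $F^{\circ}$. Since $\mathfrak{p}\in\mathbb{B}\subset F$, Definition \ref{dfu4} forces $\mathfrak{p}\in\partial F$, and then Definition \ref{dfu5} produces a proper face $\mathbb{G}\precneqq F$ with $\mathfrak{p}\in\mathbb{G}$. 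By the minimality built into Definition \ref{dfu1}, $\mathbb{B}\nsubseteq\mathbb{G}$, so we may fix a point ${\bf y}\in\mathbb{B}\setminus\mathbb{G}$.

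Next I would bring in the supporting hyperplane of $\mathbb{G}$ inside $F$. By the characterization (\ref{4g1}) in Definition \ref{dfac} there are $\mathfrak{q}\in\mathbb{R}^n$ and $r\in\mathbb{R}$ with $\langle\mathfrak{q},{\bf u}\rangle=r<\langle\mathfrak{q},{\bf w}\rangle$ for all ${\bf u}\in\mathbb{G}$ and ${\bf w}\in F\setminus\mathbb{G}$; equivalently $\langle\mathfrak{q},{\bf v}\rangle\ge r$ for every ${\bf v}\in F$, with equality precisely on $\mathbb{G}$. Evaluating at $\mathfrak{p}\in\mathbb{G}$ and at ${\bf y}\in F\setminus\mathbb{G}$ gives $\langle\mathfrak{q},\mathfrak{p}\rangle=r$ and $\langle\mathfrak{q},{\bf y}\rangle>r$; in particular ${\bf y}\ne\mathfrak{p}$.

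The crucial step is to use that $\mathfrak{p}$ is a relative interior point of the convex set $\mathbb{B}$: the chord from ${\bf y}$ to $\mathfrak{p}$ lies in the affine hull of $\mathbb{B}$ and can be prolonged a little beyond $\mathfrak{p}$ without leaving $\mathbb{B}$, so there is $\epsilon>0$ with ${\bf z}:=\mathfrak{p}+\epsilon(\mathfrak{p}-{\bf y})\in\mathbb{B}\subset F$. Then $\langle\mathfrak{q},{\bf z}\rangle=\langle\mathfrak{q},\mathfrak{p}\rangle-\epsilon(\langle\mathfrak{q},{\bf y}\rangle-\langle\mathfrak{q},\mathfrak{p}\rangle)=r-\epsilon(\langle\mathfrak{q},{\bf y}\rangle-r)<r$, contradicting $\langle\mathfrak{q},{\bf v}\rangle\ge r$ for all ${\bf v}\in F$. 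Hence no such $\mathfrak{p}$ exists and $\mathbb{B}^{\circ}\subset F^{\circ}$.

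I expect the only point needing a word of care to be this last ``prolongation of a chord'' step, that is, the elementary fact that from a relative interior point of a convex set one can move outward along any secant line and still stay inside; everything else is a direct unwinding of the face definitions \ref{dfac}, \ref{dfu1}, \ref{dfu4}, \ref{dfu5} already set up in Section \ref{ptt} (one also tacitly uses that a face of a polyhedron is again a polyhedron, as is implicit throughout, so that \ref{4g1} applies to $\mathbb{G}\precneqq F$).
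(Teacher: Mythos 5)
Your proof is correct, and while it follows the same overall skeleton as the paper's (argue by contradiction: a point of $\mathbb{B}^{\circ}$ lying in $\partial F(\mathbb{B}|\mathbb{P})$ sits in some proper face $\mathbb{G}\precneqq F(\mathbb{B}|\mathbb{P})$, and the minimality in Definition \ref{dfu1} says $\mathbb{B}$ cannot be trapped there), the finishing move is genuinely different. The paper first invokes Lemma \ref{lem77k} to conclude $\mathbb{B}\cap F(\mathbb{B}|\mathbb{P})^{\circ}\ne\emptyset$, hence $V_{am}(\mathbb{B})\nsubseteq V_{am}(\mathbb{G})$, and then uses the transversality observation (\ref{4bb}): small balls of $V_{am}(\mathbb{B})$ around the point must cross to the open negative side of $V_{am}(\mathbb{G})$, which is disjoint from $F(\mathbb{B}|\mathbb{P})\supset\mathbb{B}$, so the point is not in $\mathbb{B}^{\circ}$. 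You instead use minimality only to produce a single point ${\bf y}\in\mathbb{B}\setminus\mathbb{G}$, take the supporting functional of $\mathbb{G}$ in $F(\mathbb{B}|\mathbb{P})$ from (\ref{4g1}), and prolong the chord from ${\bf y}$ slightly past the relative interior point to violate the half-space inequality on $F(\mathbb{B}|\mathbb{P})$. This buys a shorter, more self-contained argument: it bypasses Lemma \ref{lem77k} (and hence the convexity input coming through Lemma \ref{lem411d}) as well as the unproved observation (\ref{4bb}); indeed your chord step only uses that $\mathbb{B}^{\circ}$ is the interior relative to $V_{am}(\mathbb{B})$ and that ${\bf y}\in V_{am}(\mathbb{B})$, so convexity of $\mathbb{B}$ is not really needed. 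The paper's route, by contrast, keeps the argument purely in terms of affine hulls and neighborhoods, which is the language it reuses elsewhere. Your tacit appeals (a face of a polyhedron is again a polyhedron, so (\ref{4g1}) applies to $\mathbb{G}\precneqq F(\mathbb{B}|\mathbb{P})$) are consistent with how the paper itself uses Definition \ref{dfu5}, so no gap there.
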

\begin{proof}
We need the following observation:
 If two affine spaces $V_1$ and $V_2$
 meet  at $z\in V_1\cap V_2$ with $V_1\nsubseteq  V_2$ (transversally), then
\begin{eqnarray}\label{4bb}
\text{ $B_{V_1}(z,\epsilon)\cap (V_2^{+})^{\circ}\ne \emptyset$ and
$B_{V_1}(z,\epsilon)\cap (V_2^{-})^{\circ}\ne \emptyset$ for any
$\epsilon>0$ }
\end{eqnarray}
where $B_{V_1}(z,\epsilon)=\{v\in V_1:|v-z|<\epsilon\}$ is an
$\epsilon$-neighborhood of $z$ in $V_1$. Let
$z\in\mathbb{B}^{\circ}$. Then we show that $z\in
(F(\mathbb{B}|\mathbb{P}))^{\circ}$. Since $z\in
\mathbb{B}^{\circ}\subset \mathbb{B}\subset
F(\mathbb{B}|\mathbb{P})$,  it suffices to prove that $z\in\partial
F(\mathbb{B}|\mathbb{P})$ leads to a contradiction that $z\notin
\mathbb{B}^{\circ}$. If $ z\in \partial F(\mathbb{B}|\mathbb{P})$,
then by Definition \ref{dfu5},
     $z\in \mathbb{G}\precneqq F(\mathbb{B}|\mathbb{P})$
     where $\mathbb{G}\subset \partial F(\mathbb{B}|\mathbb{P})$.
     Let $V_{am}(\mathbb{G})$ be the plane
     containing $\mathbb{G}$ with
 \begin{eqnarray}\label{e77}
     \text{dim}(V_{am}(\mathbb{G}))=k-1\le k=\text{dim}(F(\mathbb{B}|\mathbb{P}))\
     \text{and}\  F(\mathbb{B}|\mathbb{P})\subset V_{am}^+(\mathbb{G}).
     \end{eqnarray}
By Definition \ref{dfu1} and Lemma \ref{lem77k},  $\mathbb{B}\cap
F(\mathbb{B}|\mathbb{P})^{\circ}\ne\emptyset$,
  that is, $V_{am}(\mathbb{B})\nsubseteq  V_{am}(\mathbb{G})$. From $z\in  \mathbb{B}^{\circ}$ and $z\in
  \mathbb{G}$, it follows that
  $z\in V_{am}(\mathbb{B})\cap V_{am}(\mathbb{G})$. By (\ref{4bb}) and (\ref{e77})
  with $V_1=V_{am}(\mathbb{B}) $ and $V_2=V_{am}(\mathbb{G})$,
  \begin{eqnarray*}
 B_{V_{am}(\mathbb{B})}(z,\epsilon) \nsubseteq F(\mathbb{B}|\mathbb{P}),\ \
  \text{which implies that}\ \ B_{V_{am}(\mathbb{B})}(z,\epsilon) \nsubseteq \mathbb{B}\ \
  \text{for any $\epsilon>0$.}
\end{eqnarray*}
This means that  $z\notin  \mathbb{B}^{\circ}$.
\end{proof}

\section{Preliminaries Estimates}\label{sec6}
In this section, we prove Proposition \ref{propyy}, which is an
elementary tool for the $L^p$ estimation driven by the finite type
conditions in the same spirit of \cite{CNSW} and  \cite{SW}.
Proposition \ref{propyy} and Proposition \ref{pr5g} are  two basic $L^p$ estimation tools
used for the proof of  sufficiency parts of Main Theorems 1-3.
\subsection{Preliminary
Inequalities}

Under the same setting as in the definition of multiple Hilbert
transforms (1.1), we consider the multi-parameter maximal function
\begin{equation}
\mathcal{M}_\Lambda f(x) = \sup_{r_1,\cdots,
r_n>0}\,\frac{1}{r_1\cdots
r_n}\,\int_{-r_1}^{r_1}\cdots\int_{-r_n}^{r_n}\, \left|f\left(x
-P_\Lambda(t)\right)\right|\,dt\end{equation} defined for each
locally integrable function $f$ on $\mathbb{R}^d$.

\begin{theorem} For $\,1<p\leq\infty\,,$
$\mathcal{M}_\Lambda$ is a bounded operator from $L^p(\mathbb{R}^d)$
into itself and there exists a bound $C_p$ depending only on $\,p,
n, d\,$ and the maximal degree of the polynomials $P_\nu$ such that
$$\left\|\,\mathcal{M}_\Lambda
f\,\right\|_{L^p(\mathbb{R}^d)}\,\leq\,C_p\,
\|f\|_{L^p(\mathbb{R}^d)}\,.$$
\end{theorem}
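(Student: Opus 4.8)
The plan is to reduce everything to an $L^2$ estimate. The endpoint $p=\infty$ is trivial, since $\mathcal M_\Lambda f$ is an average of $|f|$; interpolating with that bound covers $2\le p<\infty$, and for $1<p<2$ I would linearize $\mathcal M_\Lambda$, pass to the adjoint family, and repeat the argument. Replacing $f$ by $|f|$, assume $f\ge0$. To set up the $L^2$ bound I would first discretize the supremum: taking $\psi$ a fixed bump supported on the unit shell $\{1/2\le|s_j|\le1\}$ and setting $A_J f(x)=\int\psi(s)\,f\big(x-P(2^Js)\big)\,ds$ for $J\in\mathbb Z^n$, one has $\mathcal M_\Lambda f\le C_n\sup_{J\in\mathbb Z^n}A_J f$, because the average of $f$ over a rectangle $\prod\{|t_j|<r_j\}$ with $2^{K_j-1}<r_j\le2^{K_j}$ is at most $2^n\sup_{J\le K}A_J f$ (the normalizations $2^{-(J_1+\cdots+J_n)}$ form a geometric series of total mass $2^n$). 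The point of the rescaling $t_j=2^{J_j}s_j$ is that each $A_J$ is the unit-scale shell average along $s\mapsto P(2^Js)$, whose monomials are those of $P$ with coefficients $c^\nu_{\mathfrak m}$ replaced by $c^\nu_{\mathfrak m}2^{J\cdot\mathfrak m}$; in particular every $A_J$ has degree $\le D:=\max_\nu\deg P_\nu$, and its Fourier multiplier is $m_J(\xi)=\int\psi(s)\exp\!\big(i\sum_{\mathfrak m\in\bigcup_\nu\Lambda_\nu}2^{J\cdot\mathfrak m}\langle\xi,c_{\mathfrak m}\rangle\,s^{\mathfrak m}\big)\,ds$.

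For the $L^2$ estimate I would use that $|m_J(\xi)|\le1$ and, by van der Corput's lemma (hence with a constant depending only on $n$ and $D$), $|m_J(\xi)|\le C\big(1+2^{J\cdot\mathfrak m}|\langle\xi,c_{\mathfrak m}\rangle|\big)^{-\delta}$ with $\delta=\delta(n,D)>0$ for whichever $\mathfrak m$ realizes $\max_{\mathfrak m'}2^{J\cdot\mathfrak m'}|\langle\xi,c_{\mathfrak m'}\rangle|$. Then I would introduce a product smoothing $E_J$ — a tensor product of one-dimensional averages at the anisotropic dyadic scales dictated by $J$ and by $\bigcup_\nu\Lambda_\nu$ — with $\widehat{E_J}\equiv1$ on the region where $\max_{\mathfrak m}2^{J\cdot\mathfrak m}|\langle\xi,c_{\mathfrak m}\rangle|\lesssim1$, and split $A_J=(A_J-A_JE_J)+A_JE_J$. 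The smooth pieces are harmless: being convolutions of positive kernels, the $A_JE_J$ are dominated by a maximal average over an $n$-parameter anisotropic family of sets in $\mathbb R^d$, which is bounded on $L^2$ (one linearizes the dilation structure and iterates one-dimensional Hardy-Littlewood maximal functions). For the rough pieces one would like to square-function, $\sup_J|(A_J-A_JE_J)f|\le\big(\sum_J|(A_J-A_JE_J)f|^2\big)^{1/2}$, i.e.\ to bound $\sup_\xi\sum_{J\in\mathbb Z^n}|m_J(\xi)|^2|1-\widehat{E_J}(\xi)|^2$.

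This last step is where I expect the main obstacle: the sum does \emph{not} converge over all of $\mathbb Z^n$, because — exactly as in the body of this paper — the level sets $\{J:\ \max_{\mathfrak m}2^{J\cdot\mathfrak m}|\langle\xi,c_{\mathfrak m}\rangle|\approx2^\ell\}$ need not be finite. The plan is to first split $\mathbb Z^n$ according to which monomials ``see'' $\xi$: along the coordinate directions in which no monomial does, $m_J$ does not depend on $J_j$ and the corresponding supremum is reabsorbed into the smooth maximal operator, while over the complementary, genuinely curved directions the level sets are finite with cardinality growing polynomially in $\ell$, so $\sum_\ell\ell^{\,n-1}2^{-2\delta\ell}<\infty$ closes the $L^2$ estimate with a constant depending only on $n,d$ and $D$. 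This organization of $\mathbb Z^n$ is a stripped-down cousin of the cone/face combinatorics of Sections \ref{secfar}--\ref{condec}: stripped down precisely because, $\mathcal M_\Lambda$ being positive, one may always afford to lose the ``inactive'' pieces to the bounded strong maximal function, whereas for the Hilbert transforms the cancellation $\int dt_j/t_j=0$ must be preserved throughout. Alternatively, and more briefly, one may simply invoke the established $L^p$ theory of multi-parameter maximal functions adapted to polynomial dilation structures, following \cite{SW} in one parameter and \cite{NW,RS} in several. With the $L^2$ bound in hand, interpolation with $L^\infty$ gives $2\le p<\infty$ and duality gives $1<p<2$, with final constant depending only on $p,n,d$ and $D$.
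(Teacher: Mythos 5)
Your reduction to an $L^2$ square--function estimate breaks down at the step you yourself flag as the main obstacle, and the proposed fix (discard the coordinate directions of $J$ that no monomial ``sees'') does not repair it. Take $n=d=2$, $\Lambda_1=\{(1,0)\}$, $\Lambda_2=\{(1,1)\}$, $P(t)=(t_1,\,t_1t_2)$, and $\xi=(1,1)$: both monomials see $\xi$ and together have rank $2$, so in your scheme no direction of $\mathbb Z^2$ is inert; yet along $J=(0,-k)$, $k\ge 0$, one has $2^{J\cdot(1,0)}|\xi_1|=1$ while $2^{J\cdot(1,1)}|\xi_2|=2^{-k}$, so $|m_J(\xi)|$ has no van der Corput decay and $|1-\widehat{E_J}(\xi)|$ stays bounded below, whence $\sum_{J}|m_J(\xi)|^2|1-\widehat{E_J}(\xi)|^2=\infty$ even though $\mathcal M_\Lambda$ is certainly bounded. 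The point is that the level sets of $\max_{\mathfrak m}2^{J\cdot\mathfrak m}|\langle\xi,c_{\mathfrak m}\rangle|$ can be non-compact because a monomial that is active for $\xi$ may be negligible on a half-space of $J$'s while another stays of unit size; the cardinality is not polynomial in $\ell$. Closing this requires comparing, region by region in $J$, with the operators attached to the locally dominant sub-collections of monomials (an induction on faces/cones in the spirit of Sections \ref{secfar}--\ref{suffes}, or the Stein--Wainger/Ricci--Stein induction on the number of monomials), i.e.\ precisely the machinery you hoped positivity would let you strip away; a single smooth comparison $E_J$ does not suffice.

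The passage to $1<p<2$ is a second genuine gap: for maximal operators one cannot ``linearize, pass to the adjoint family, and repeat the argument.'' The adjoint of $f\mapsto A_{J(x)}f(x)$ is $g\mapsto\sum_J A_J^{*}\bigl(g\,\chi_{\{J(\cdot)=J\}}\bigr)$, which is not a maximal operator and is not dominated by one, so there is nothing to ``repeat''; interpolation with $L^\infty$ only gives $p\ge 2$. The standard ways below $L^2$ are a vector-valued bootstrap of the type in Lemma \ref{lem2.2}, or simply the full Ricci--Stein theorem -- and note the paper genuinely needs exponents near $1$ (condition (ii) of Lemma \ref{lem2.2} feeding Proposition \ref{propyy}), so settling for $p\ge2$ is not an option. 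Finally, be aware that the paper does not prove this theorem directly at all: the remark following the statement obtains it by combining Theorem 7.1 of \cite{RS} with the lifting argument of Chapter 11 of \cite{S}, which is exactly your one-sentence fallback. That citation route is sound; your main sketch, taken as a standalone proof, is not yet a proof for the two reasons above.
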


\begin{remark} This result can be proved by combining a theorem of Ricci and Stein
(\cite{RS}, Theorem 7.1) and the so-called lifting argument (see
Chapter 11 of \cite{S}).  \end{remark}
\begin{remark}
B. Street  in \cite{SS3}
showed the $L^p$ boundedness for a variable coefficient version of
$\mathcal{M}_\Lambda$ associated with analytic functions. Furthermore, A. Nagel and M.
Pramanik in \cite{NP} obtain the $L^p$ boundedness for a different
kind of multi-parameter maximal operators, that were motivated by the
study of several complex variables. This maximal average is taken
over family of sets (balls) that are defined by finite number of
monomial inequalities. In particular, to establish the $L^p$ theory
in \cite{NP}, the geometric properties of the associated polyhedra
are also systematically studied.
\end{remark}

Take a function $\,\psi\in C^\infty_c([-2,2])\,$ such that $\,0\leq
\psi\leq 1\,$ and $\,\psi(u) =1\,$ for $\,|u|\leq 1/2\,.$ Put
$\,\eta(u) = \psi(u) -\psi(2u)\,.$ Given an integer $k\in\mathbb{Z}$ and
$\,\alpha, \beta, \gamma\in\{1,\cdots,n\}\,,$ we consider the
measures $A_k^{\,\alpha,\beta}$ and $P_k^\gamma$ defined in terms of
Fourier transforms
\begin{equation}\label{2.2}
\left(A_k^{\,\alpha, \beta}\right)\widehat{}\,(\xi) =
\psi\left(\frac{\xi_\alpha}{2^k\,\xi_\beta}\right)\,,\quad
\left(P_k^\gamma\right)\widehat{}\,(\xi) =
\eta\left(2^k\,\xi_\gamma\right)\,.
\end{equation}

\begin{lemma}\label{lem2.1}
Suppose that $\{\mathfrak{m}_k\}_{k=1}^M, \{\mathfrak{q}_j\}_{j=1}^N \subset\mathbb{Z}^n\,$
 where
$\,{\rm{rank}}\,\left[ \{\mathfrak{q}_j\}_{j=1}^N \right]=n\,.$ Given $\,\alpha_k,\beta_k, \gamma_j \in
\mathbb{R}\,,$ define
\begin{equation}\label{2.3}
A_J = A_{J\cdot\mathfrak{m}_1}^{\,\alpha_1, \beta_1}\ast\cdots\ast
A_{J\cdot\mathfrak{m}_M}^{\,\alpha_M, \beta_M}\quad\text{and}\quad
P_J=P_{J\cdot\mathfrak{q}_1}^{\gamma_1}\ast\cdots \ast
P_{J\cdot\mathfrak{q}_N}^{\gamma_N}
\end{equation}
for each $\,J\in\mathbb{Z}^n\,.$ Then for $1<p<\infty$,
\begin{equation}\label{2.3m}
 \biggl\|\,\left(\sum_{J\in\mathbb{Z}^n}
\left| P_J\ast
f\right|^2\right)^{1/2}\,\biggr\|_{L^p(\mathbb{R}^d)} \,\lesssim\,
\left\| f\right\|_{L^p(\mathbb{R}^d)}\,,
\end{equation}
and
\begin{equation}\label{2.3n}
 \biggl\|\,\left(\sum_{J\in\mathbb{Z}^n}
\left|A_J\ast P_J\ast
f\right|^2\right)^{1/2}\,\biggr\|_{L^p(\mathbb{R}^d)} \,\lesssim\,
\left\| f\right\|_{L^p(\mathbb{R}^d)}\,.
\end{equation}
\end{lemma}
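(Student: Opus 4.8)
The plan is to reduce both bounds to $\ell^2$-valued $L^p$ inequalities and to obtain them from two ingredients: a vector-valued (iterated one-dimensional) Littlewood--Paley inequality for the Fourier pieces $P_J$, and the observation that each $A^{\alpha,\beta}_k$ acts, up to a harmless rescaling, as a Mikhlin multiplier in only two of the coordinate variables. Throughout I regard $P_J$ and $A_J$ also as Fourier multiplier operators; they all commute, and $|\widehat{A^{\alpha,\beta}_k}|\le 1$ pointwise.

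\emph{Step 1: the square function (\ref{2.3m}).} Because $\mathrm{rank}\,[\{\mathfrak{q}_j\}_{j=1}^N]=n$, the linear map $L(J)=(J\cdot\mathfrak{q}_1,\dots,J\cdot\mathfrak{q}_N)$ has trivial kernel, hence is injective on $\mathbb{Z}^n$; and $\widehat{P_J}(\xi)=\prod_{j=1}^N\eta(2^{J\cdot\mathfrak{q}_j}\xi_{\gamma_j})$ depends on $J$ only through $\vec k=L(J)\in\mathbb{Z}^N$. Letting $Q_{\vec k}$ have multiplier $\prod_j\eta(2^{k_j}\xi_{\gamma_j})$, injectivity gives the termwise majorization $\sum_{J\in\mathbb{Z}^n}|P_J f|^2\le\sum_{\vec k\in\mathbb{Z}^N}|Q_{\vec k}f|^2$. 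Now group the indices $j$ by the value of $\gamma_j$: since $\mathrm{supp}\,\eta\subset\{1/4\le|u|\le2\}$, the factor $\prod_{\gamma_j=\ell}\eta(2^{k_j}\xi_\ell)$ is nonzero only when the corresponding $k_j$ lie within a bounded mutual distance, so after fixing the finitely many difference patterns each such product collapses to a single smooth bump in $\xi_\ell$ adapted to one dyadic scale. Hence $\big\|\big(\sum_{\vec k\in\mathbb{Z}^N}|Q_{\vec k}f|^2\big)^{1/2}\big\|_p$ is dominated by finitely many terms of the shape $\big\|\big(\sum_{(k_1,\dots,k_m)\in\mathbb{Z}^m}|R^1_{k_1}\cdots R^m_{k_m}f|^2\big)^{1/2}\big\|_p$ with $m\le n$, distinct coordinate directions, and each $R^i_{k_i}$ a smooth Littlewood--Paley projection in one variable at scale $2^{-k_i}$. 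Such an iterated square function is $\lesssim\|f\|_p$: the one-dimensional smooth square function $g\mapsto(R^1_{k_1}g)_{k_1}$ is a vector-valued Calder\'on--Zygmund operator, hence bounded on $L^p(\mathbb{R};H)$, and, by Fubini, on $L^p(\mathbb{R}^d;H)$ for any Hilbert space $H$; applying this with $H=\ell^2$ successively in the directions $\ell_1,\dots,\ell_m$ and collapsing the nested $\ell^2$-sums proves (\ref{2.3m}).

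\emph{Step 2: the estimate (\ref{2.3n}).} Since $A_J,P_J$ are Fourier multipliers, (\ref{2.3n}) will follow from (\ref{2.3m}) once the \emph{diagonal} operator $D\colon (h_J)_J\mapsto(A_J h_J)_J$ is shown to be bounded on $L^p(\mathbb{R}^d;\ell^2)$; indeed then $\big\|\big(\sum_J|A_J\ast P_J\ast f|^2\big)^{1/2}\big\|_p=\|D((P_J f)_J)\|_{L^p(\ell^2)}\lesssim\big\|\big(\sum_J|P_J f|^2\big)^{1/2}\big\|_p\lesssim\|f\|_p$. Factor $D=D_M\circ\cdots\circ D_1$, where $D_k$ replaces $h_J$ by $A^{\alpha_k,\beta_k}_{J\cdot\mathfrak{m}_k}h_J$; it suffices to bound each $D_k$. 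Grouping the $J$'s by the integer $m=J\cdot\mathfrak{m}_k$ makes $D_k$ block-diagonal, each block being the single operator $A^{\alpha_k,\beta_k}_m$ applied coordinatewise to an $\ell^2$-valued function, so it is enough to bound $A^{\alpha,\beta}_m$ on $L^p(\mathbb{R}^d;\ell^2)$ uniformly in $m$. If $\alpha=\beta$ this is multiplication by the constant $\psi(2^{-m})$. If $\alpha\neq\beta$, the multiplier $\psi\big(\xi_\alpha/(2^m\xi_\beta)\big)$ is homogeneous of degree $0$ and $C^\infty$ off the origin as a function of $(\xi_\alpha,\xi_\beta)$ alone, so $A^{\alpha,\beta}_m$ acts only in $(x_\alpha,x_\beta)$ through a Mikhlin--H\"ormander multiplier on $\mathbb{R}^2$; such a multiplier is bounded on $L^p(\mathbb{R}^2;H)$ for every Hilbert $H$, hence on $L^p(\mathbb{R}^d;H)$ by Fubini. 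Uniformity in $m$ is immediate: $\psi(\xi_\alpha/(2^m\xi_\beta))=m_0(2^{-m}\xi_\alpha,\xi_\beta)$ with $m_0(\xi_\alpha,\xi_\beta)=\psi(\xi_\alpha/\xi_\beta)$, and the $L^p(\ell^2)$-multiplier norm is invariant under the dilation $\xi_\alpha\mapsto 2^{-m}\xi_\alpha$. This gives the diagonal bound, hence (\ref{2.3n}).

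\emph{Where the difficulty lies.} Most of the above is bookkeeping; the two points that need care are (i) in Step 1, isolating the bounded difference patterns when several $\gamma_j$ coincide, so that the product of $\eta$'s really does collapse to a product Littlewood--Paley decomposition in distinct variables before the iterated vector-valued inequality is invoked; and (ii) in Step 2, the fact that although $\psi(\xi_\alpha/(2^m\xi_\beta))$ is \emph{not} a Mikhlin multiplier on $\mathbb{R}^d$ (its $d$-dimensional H\"ormander constants are unbounded as $m\to\infty$), it is one on the two-dimensional slice with $L^p(\ell^2)$-operator norm uniform in $m$ purely by scaling --- and this is exactly what lets the infinitely many distinct operators $A_J$ be controlled through the single diagonal operator $D$. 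Note that the rank hypothesis on $\{\mathfrak{q}_j\}$ enters only through the injectivity of $L$ (which decouples the sum over $J\in\mathbb{Z}^n$ into product Littlewood--Paley sums), and that no hypothesis relating the $\gamma_j$ to the $\alpha_k,\beta_k$ is needed.
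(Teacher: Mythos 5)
Your Step 1 is essentially sound and is a genuinely different, more hands-on route than the paper's (the paper proves both bounds at once by randomizing with Rademacher functions and invoking the multi-parameter Marcinkiewicz multiplier theorem). The trouble is in Step 2, at the sentence claiming that since $D_k$ is block-diagonal with each block the single operator $A^{\alpha_k,\beta_k}_m$ acting coordinatewise, ``it is enough to bound $A^{\alpha,\beta}_m$ on $L^p(\mathbb{R}^d;\ell^2)$ uniformly in $m$.'' For $p\neq 2$ this implication is false: what you actually need is the diagonal square-function estimate $\bigl\|\bigl(\sum_J|A^{\alpha_k,\beta_k}_{J\cdot\mathfrak{m}_k}f_J|^2\bigr)^{1/2}\bigr\|_p\lesssim\bigl\|\bigl(\sum_J|f_J|^2\bigr)^{1/2}\bigr\|_p$, i.e. an $R$-boundedness-type property of the family $\{A^{\alpha,\beta}_m\}_{m\in\mathbb{Z}}$, and uniform boundedness of the blocks (even of their Hilbert-space-valued extensions, even with norms equal by your scaling argument) is strictly weaker. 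A standard counterexample to the reasoning pattern: the translations $T_mf=f(\cdot-m)$ on $\mathbb{R}$ are isometries of $L^p$, as are their $\ell^2$-valued extensions, yet with $f_m=\chi_{[0,1]}$ for $1\le m\le K$ one has $\bigl\|\bigl(\sum_m|T_mf_m|^2\bigr)^{1/2}\bigr\|_p=K^{1/p}$ while $\bigl\|\bigl(\sum_m|f_m|^2\bigr)^{1/2}\bigr\|_p=K^{1/2}$, so the block-diagonal operator is unbounded on $L^p(\ell^2)$ for $p<2$ (and for $p>2$ by duality). Thus your scaling-invariance observation gives uniform bounds only and cannot by itself deliver the bound for $D_k$; this is exactly where your closing remark that ``no hypothesis relating the $\gamma_j$ to the $\alpha_k,\beta_k$ is needed'' goes wrong, because you have discarded the frequency localization that makes the estimate work.

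The paper never separates $A_J$ from $P_J$: it writes $\bigl\|\bigl(\sum_J|A_J\ast P_J\ast f|^2\bigr)^{1/2}\bigr\|_p^p\approx\int_U\bigl\|\sum_Jr_J(t)\,A_J\ast P_J\ast f\bigr\|_p^p\,dt$ and checks that the randomized scalar symbol $m(\xi)=\sum_Jr_J(t)\widehat{A_J}(\xi)\widehat{P_J}(\xi)$ satisfies the multi-parameter Marcinkiewicz condition $\bigl|\partial_{\nu_1}\cdots\partial_{\nu_\ell}m(\xi)\bigr|\lesssim|\xi_{\nu_1}|^{-1}\cdots|\xi_{\nu_\ell}|^{-1}$ uniformly in $t$: the supports of the factors $\eta(2^{-J\cdot\mathfrak{q}_j}\xi_{\gamma_j})$, combined with the rank condition on $\{\mathfrak{q}_j\}$, force only $O(1)$ values of $J$ to contribute at each fixed $\xi$, and each summand obeys the product-type bounds uniformly, so the Marcinkiewicz constants are finite and the multi-parameter multiplier theorem applies. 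If you want to keep your decoupled scheme, you must replace the invalid block-diagonal step by an actual proof of the diagonal estimate for the family $\{A^{\alpha,\beta}_m\}$ composed with the $P_J$'s --- for instance by reinstating the frequency localization and verifying a uniform Marcinkiewicz condition for the randomized sums, which is in substance the paper's argument.
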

\begin{proof}
It suffices to deal  with the sum over $\mathbb{Z}^n_+$. We show (\ref{2.3n}).  With
$\left(r_J(t)\right)$ denoting the Rademacher functions of product
form,
$$\biggl\|\left(\sum_{J\in\mathbb{Z}^n_+}
\left|A_J\ast P_J\ast
f\right|^2\right)^{1/2}\biggr\|_{L^p(\mathbb{R}^d)}^p \,\approx\,
\int_U\bigl\|\sum_{J\in\mathbb{Z}^n_+}\,r_J(t)\,A_J\ast P_J\ast f
\bigr\|_{L^p(\mathbb{R}^d)}^p dt$$ where $\,U=[0,1]^n\,.$ Consider
the symbol
$$ m(\xi) = \biggl(\sum_{J\in\mathbb{Z}^n_+}\,r_J(t)\,A_J\ast P_J\biggr)
\,\,\,\widehat{}\,(\xi)\,.$$ Using the full rank condition for the
$\mathfrak{q}_j$ and the support conditions, it can be shown that
$m$ satisfies
$$\left|\frac{\partial^\ell }{\partial_{\nu_1}\cdots\partial_{\nu_\ell}}\,
m(\xi_1, \cdots, \xi_d)\,\right|
\,\leq\,\frac{C_\ell}{\left|\xi_{\nu_1}\right|
\cdots\left|\xi_{\nu_\ell}\right|}$$ for every $\,\ell
=1,\cdots,d\,,$ where $\,1\leq \nu_1<\cdots<\nu_\ell\leq d\,.$ Thus
the desired conclusion follows from the multi-parameter
Marcinkiewicz multiplier theorem. (\ref{2.3m}) follows similarly.
\end{proof}

\begin{lemma}\label{lem2.2}
 Let $\,(\sigma_J)_{J\in\mathbb{Z}^n}\,$
be a sequence of positive measures on $\mathbb{R}^d$ with the
following properties :
\begin{align*}
&{\rm(i)}\quad\bigl\|\,\sigma_J\ast
f\,\bigr\|_{L^1(\mathbb{R}^d)}\,\lesssim\,
\|f\|_{L^1(\mathbb{R}^d)}\quad (J\in\mathbb{Z}^n)\\
&{\rm(ii)}\quad \left\|\,\sup_{J\in\mathbb{Z}^n}\,\left|\sigma_J\ast
f\right|\,\right\|_{L^{p_0}(\mathbb{R}^d)}\,\lesssim\,
\|f\|_{L^{p_0}(\mathbb{R}^d)}
\end{align*}
for some $\,1<p_0\leq 2\,$. Then
$$\biggl\|\left(\sum_{J\in\mathbb{Z}^n}
\left|\sigma_J\ast
f_J\right|^2\right)^{1/2}\biggr\|_{L^{p_1}(\mathbb{R}^d)}
\,\lesssim\, \biggl\|\left(\sum_{J\in\mathbb{Z}^n} \left|
f_J\right|^2\right)^{1/2}\biggr\|_{L^{p_1}(\mathbb{R}^d)}$$ for
$p_1$ determined by $\,1/p_1 \leq 1/2\,(1+1/p_0)\,.$
\end{lemma}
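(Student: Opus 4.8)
The plan is to prove this by the standard square–function argument (of Duoandikoetxea–Rubio de Francia type): an elementary $L^2$ estimate, a further estimate obtained by duality against the maximal operator of hypothesis (ii), and interpolation. Set $Tg=(\sigma_J\ast g_J)_J$, a \emph{linear} map, so that the asserted inequality is exactly the boundedness of $T$ on $L^{p_1}(\mathbb{R}^d;\ell^2(\mathbb{Z}^n))$. First I would record two consequences of positivity together with (i): testing (i) against an approximate identity (and using $\|\sigma_J\ast\phi\|_1=\int\sigma_J\ast\phi=\sigma_J(\mathbb{R}^d)$ for nonnegative $\phi$) gives $A:=\sup_J\sigma_J(\mathbb{R}^d)<\infty$, whence $\|\sigma_J\ast g\|_{L^q}\le A\|g\|_{L^q}$ for every $1\le q\le\infty$ by Minkowski's inequality; and Cauchy–Schwarz in the measure $\sigma_J$ gives the pointwise bound $|\sigma_J\ast g(x)|^2\le A\,(\sigma_J\ast|g|^2)(x)$. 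The case $p_1=2$ is then immediate: by orthogonality of the sum,
\[
\Big\|\big(\textstyle\sum_J|\sigma_J\ast g_J|^2\big)^{1/2}\Big\|_{L^2}^2=\sum_J\|\sigma_J\ast g_J\|_{L^2}^2\le A^2\sum_J\|g_J\|_{L^2}^2=A^2\Big\|\big(\textstyle\sum_J|g_J|^2\big)^{1/2}\Big\|_{L^2}^2 .
\]

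Next I would treat exponents $p_1>2$ by duality. Writing $\|Tg\|_{L^{p_1}(\ell^2)}^2=\big\|\sum_J|\sigma_J\ast g_J|^2\big\|_{L^{p_1/2}}$ and choosing $u\ge0$ with $\|u\|_{(p_1/2)'}=1$ realizing that norm, the pointwise bound above gives
\[
\Big\|\sum_J|\sigma_J\ast g_J|^2\Big\|_{L^{p_1/2}}\le A\int\Big(\sum_J\sigma_J\ast|g_J|^2\Big)u=A\sum_J\int|g_J|^2\,(\widetilde\sigma_J\ast u)\le A\int\Big(\sum_J|g_J|^2\Big)\,\sigma^{\sharp}u ,
\]
where $\widetilde\sigma_J$ is the reflection of $\sigma_J$ and $\sigma^{\sharp}u=\sup_J\widetilde\sigma_J\ast u$. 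Hypotheses (i) and (ii) transfer verbatim to the reflected measures, so $\sigma^{\sharp}$ is bounded on $L^{p_0}$, and it is bounded on $L^{\infty}$ by the mass bound $A<\infty$; by interpolation it is bounded on $L^{(p_1/2)'}$ as soon as $(p_1/2)'\ge p_0$. An application of Hölder's inequality with exponents $p_1/2$ and $(p_1/2)'$ then closes the estimate for $T$ on $L^{p_1}(\ell^2)$ in this range.

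Finally, the exponents $p_1<2$ down to the stated bound $1/p_1=\tfrac12(1+1/p_0)$ follow by duality of $T$ itself: the adjoint of $T$ on $L^{p_1}(\ell^2)$ is $h\mapsto(\widetilde\sigma_J\ast h_J)_J$ on $L^{p_1'}(\ell^2)$, the reflected measures again satisfy (i)–(ii), so the previous paragraph applies to them at the conjugate exponent $p_1'>2$, and dualizing returns the estimate at $p_1$; the values of $p_1$ not yet covered are obtained by interpolating the $L^2(\ell^2)$ bound with the bounds just proved, using the $\ell^2$–valued interpolation theorem. The step I expect to be the real work is the duality estimate of the second paragraph: one must convert the nonlinear square sum into a linear form controlled by the maximal operator of (ii), and this is exactly where the admissible range enters, through the requirement that the dual exponent $(p_1/2)'$ be at least $p_0$. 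The only delicate bookkeeping is to keep track of where positivity of the $\sigma_J$ is used — in the Cauchy–Schwarz step and in the passage $\int(\sigma_J\ast h)u=\int h(\widetilde\sigma_J\ast u)$ — and to observe that (i) and (ii) are stable under reflection (and, for the measures $A_J\ast P_J$ used in the sequel, the $\sigma_J$ are in fact symmetric).
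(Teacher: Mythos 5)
Your argument is the classical Duoandikoetxea--Rubio de Francia square-function scheme, and it does recover the exponent that matters, but it contains one genuine gap exactly where you flagged possible trouble: the claim that hypotheses (i) and (ii) ``transfer verbatim to the reflected measures''. Hypothesis (i) does transfer (total mass is reflection invariant), but (ii) does not: boundedness of $\sup_J|\sigma_J\ast f|$ on $L^{p_0}$ gives no information about $\sup_J|\widetilde\sigma_J\ast f|$, and the lemma carries no symmetry assumption. This matters precisely in your second paragraph, where for $p_1>2$ you must bound $\sigma^{\sharp}u=\sup_J\widetilde\sigma_J\ast u$ on $L^{(p_1/2)'}$: interpolating with the trivial $L^\infty$ bound requires an $L^{p_0}$ bound for the \emph{reflected} maximal operator, which the hypotheses do not supply. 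Your third paragraph, by contrast, is sound once the bookkeeping is made explicit: dualizing $T$ replaces $\sigma_J$ by $\widetilde\sigma_J$, and the duality step inside the $p>2$ argument reflects once more, so the maximal operator that actually appears is the original one of (ii); this correctly yields the range $2p_0/(p_0+1)\le p_1\le 2$, in particular the exponent $1/p_1=\tfrac12(1+1/p_0)$ used in the sequel. Even granting symmetric $\sigma_J$, your method gives the two-sided range $|1/p_1-\tfrac12|\le 1/(2p_0)$, not the literal one-sided condition in the statement; so either restrict the direct argument to $p_1\le 2$ or add symmetry where the lemma is applied.

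The paper's own proof takes a different and shorter route that avoids reflections altogether: it regards $T[(f_J)]=(\sigma_J\ast f_J)$ as a linear operator on mixed-norm spaces, observes that (i) makes $T$ bounded on $L^1(\ell^1)$, that positivity together with (ii) makes $T$ bounded on $L^{p_0}(\ell^\infty)$ (since $|\sigma_J\ast f_J|\le \sigma_J\ast\sup_K|f_K|$), and then applies the vector-valued Riesz--Thorin theorem, landing on $L^{p_1}(\ell^2)$ exactly at $1/p_1=\tfrac12(1+1/p_0)$. What the paper's argument buys is brevity and no need for Cauchy--Schwarz, duality against a maximal operator, or reflected measures; what your argument buys (when the reflection issue is repaired, e.g.\ under symmetry) is the full symmetric range of exponents around $2$ rather than a single interpolation point. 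Neither argument reaches $1/p_1<\tfrac12-\tfrac1{2p_0}$, so the ``$\le$'' in the statement should be read with that caveat.
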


\begin{proof} For $\,1\leq p, \,q\leq\infty\,,$ consider
the operator $T$ defined by $\,T[(f_J)] = (\sigma_J\ast f_J)\,$ on
the mixed-norm spaces $L^p(\ell^q)$. The condition (i) implies that
$T$ maps $L^1(\ell^1)$ boundedly into itself. The condition (ii) and
the positivity of each $\sigma_J$ imply that $T$ maps
$L^{p_0}(\ell^\infty)$ boundedly into itself. It follows from the
vector-valued Riesz-Thorin interpolation theorem that $T$ maps
$L^{p_1}(\ell^2)$ boundedly into itself.
\end{proof}

\subsection{Basic $L^p$ estimates}
\begin{proposition}\label{propyy}
Let $\{H_J\}_{J\in\mathbb{Z}^d}$ be a class of measures such that
$\widehat{H}_J$ be the Fourier multiplier of $H_J$. Suppose that
\begin{eqnarray}\label{jr1v}
\left|\widehat{H}_J(\xi)\right|&\le& C\min\left\{
\left|2^{-J\cdot\mathfrak{q}_i}\xi_{\nu_i}\right|^{-\delta_1},
\left|2^{-J\cdot\mathfrak{q}_i}\xi_{\nu_i}\right|^{\delta_2}:i=1,\cdots,N\right\}
\end{eqnarray}
where \begin{eqnarray}\label{jr1v1}
\text{rank}\{\mathfrak{q}_i:i=1,\cdots,r\}=n.
\end{eqnarray}
 Then  for   $C_2=C/(1-2^{-\min\{\delta_1,\delta_2\}/N})^N$ with $C,\delta_1,\delta_2,N$ in (\ref{jr1v}),
\begin{eqnarray}\label{jr14}
 \sum_{J\in Z}\left|\widehat{H}_J(\xi)\right| \le
C_2\ \ \text{where} \ Z\subset \mathbb{Z}^n
\end{eqnarray}
which implies that for $A_J$  of the form defined as in (\ref{2.3}) and for any
$Z\subset\mathbb{Z}^n$,
\begin{eqnarray}\label{jr14anir}
&&\left\|\sum_{J\in Z} H_J*A_J*f\right\|_{L^2(\mathbb{R}^d)}\le
C_2\left\| f\right\|_{L^2(\mathbb{R}^d)}. \nonumber
\end{eqnarray}
Moreover, suppose that $1<p\le \infty$
\begin{eqnarray}\label{jq}
\left\|\sup_{J\in Z} |H_J|*f\right\|_{L^p(\mathbb{R}^d)}\le C
\left\| f\right\|_{L^p(\mathbb{R}^d)}.
\end{eqnarray}
Then,  for $1<p\le \infty$
\begin{eqnarray}\label{jr1x}
\left\|\sum_{J\in Z} H_J*A_J*f\right\|_{L^p(\mathbb{R}^d)}\le C_p
\left\| f\right\|_{L^p(\mathbb{R}^d)}.
\end{eqnarray}
\end{proposition}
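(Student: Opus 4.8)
The plan is to treat Proposition~\ref{propyy} in two logically separate pieces: first the uniform bound \eqref{jr14} on the symbols, which is elementary and pointwise in $\xi$; and then the $L^2$ and $L^p$ operator bounds, which follow from \eqref{jr14} together with the square-function machinery of Lemmas~\ref{lem2.1} and~\ref{lem2.2}. For the first piece, I would fix $\xi$ and estimate the sum $\sum_{J\in Z}|\widehat H_J(\xi)|$ by exploiting \eqref{jr1v1}: since $\{\mathfrak{q}_i\}$ contains $n$ linearly independent vectors, the map $J\mapsto(J\cdot\mathfrak{q}_1,\dots,J\cdot\mathfrak{q}_n)$ is a (finite-index) injection of $\mathbb{Z}^n$ into $\mathbb{Z}^n$, so it suffices to sum over the images $(k_1,\dots,k_n)=(J\cdot\mathfrak{q}_i)_{i=1}^n$. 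On each such term, \eqref{jr1v} gives $|\widehat H_J(\xi)|\le C\prod_{i=1}^n\min\{|2^{-k_i}\xi_{\nu_i}|^{-\delta_1},|2^{-k_i}\xi_{\nu_i}|^{\delta_2}\}^{1/N}$ after splitting the minimum of $N$ quantities into a product of $N$-th roots (using $\min_j a_j\le(\prod_j a_j)^{1/N}$ when all $a_j\le 1$, and handling the general case by the standard trick of bounding the min by the geometric mean). Then $\sum_{k\in\mathbb{Z}}\min\{|2^{-k}\lambda|^{-\delta},|2^{-k}\lambda|^{\delta}\}^{1/N}$ is a geometric series in both tails summing to at most $2/(1-2^{-\min\{\delta_1,\delta_2\}/N})$ uniformly in $\lambda$, and multiplying the $n$ one-dimensional sums gives the constant $C_2=C/(1-2^{-\min\{\delta_1,\delta_2\}/N})^{N}$ claimed in \eqref{jr14}.

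For the $L^2$ bound \eqref{jr14anir}, I would write $\sum_{J\in Z}H_J*A_J*f$ and pass to Fourier multipliers: the multiplier of $\sum_J H_J*A_J$ is $\sum_{J\in Z}\widehat H_J(\xi)\widehat A_J(\xi)$. Because each $\widehat A_J$ is a product of the cutoffs in \eqref{2.2}, the functions $\widehat A_J$ have (boundedly) finite overlap in $\xi$ for $J$ ranging over $\mathbb{Z}^n$ — more precisely, for each $\xi$ only $O(1)$ values of $J$ give $\widehat A_J(\xi)\ne 0$ — but in fact one does not even need that; it is cleaner to bound $\big|\sum_J \widehat H_J(\xi)\widehat A_J(\xi)\big|\le \sum_J|\widehat H_J(\xi)|\,\|\widehat A_J\|_\infty\lesssim \sum_J|\widehat H_J(\xi)|\le C_2$ by \eqref{jr14}, since $\|\widehat A_J\|_\infty\le 1$. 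Hence $\sum_{J\in Z}H_J*A_J$ is a Fourier multiplier of norm $\le C_2$, giving \eqref{jr14anir} by Plancherel.

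For the $L^p$ bound \eqref{jr1x}, the plan is a vector-valued/square-function argument. Split $\sum_{J\in Z}H_J*A_J*f$ using a Littlewood--Paley-type resolution: by \eqref{jr1v1}, after passing to the coordinates $J\cdot\mathfrak{q}_i$, the factors $P_J$ in \eqref{2.3} (with the $\mathfrak{q}_i$ here playing the role of the $\mathfrak{q}_j$ in Lemma~\ref{lem2.1}) can be arranged so that $\sum_{J}P_J*H_J*A_J$ recovers $\sum_J H_J*A_J$ up to acceptable error, or more simply one writes $H_J*A_J*f = H_J*A_J*(\widetilde P_J*f)$ where $\widetilde P_J$ is a fattened version of $P_J$ adapted to the frequency support forced by $\widehat A_J$ together with the decay in \eqref{jr1v}. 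Then estimate
\begin{equation*}
\Big\|\sum_{J\in Z}H_J*A_J*f\Big\|_{L^p}\le \Big\|\Big(\sum_{J\in Z}|H_J*A_J*\widetilde P_J*f|^2\Big)^{1/2}\Big\|_{L^p}
\end{equation*}
(valid once the pieces are almost-orthogonally arranged, or via Rademacher functions exactly as in the proof of Lemma~\ref{lem2.1}), apply Lemma~\ref{lem2.2} with $\sigma_J=|H_J|$ — whose hypotheses (i) and (ii) are exactly $\|H_J*f\|_{L^1}\lesssim\|f\|_{L^1}$ (which follows from \eqref{jr14} at $p=1$ via $\|\widehat H_J\|_\infty\le C_2$ is not enough; instead use that $H_J$ is a measure of bounded mass, which is what \eqref{jr1v} at $\xi=0$ and the hypothesis that the $H_J$ are measures give) and the maximal bound \eqref{jq} — to pull out $|H_J|$, and then control $\big\|(\sum_J|\widetilde P_J*f|^2)^{1/2}\big\|_{L^p}\lesssim\|f\|_{L^p}$ by \eqref{2.3m} of Lemma~\ref{lem2.1}. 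Finally interpolate: \eqref{jr14anir} gives $p=2$ and the vector-valued estimate gives a range around $2$; combining with the trivial $L^\infty\to L^\infty$-type bound from \eqref{jq} and duality covers all $1<p\le\infty$.

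I expect the main obstacle to be the bookkeeping in the $L^p$ step: correctly choosing the auxiliary Littlewood--Paley projectors $\widetilde P_J$ so that (a) they genuinely reproduce $H_J*A_J*f$ up to harmless terms, (b) their square function is $L^p$-bounded via Lemma~\ref{lem2.1}, which requires the full-rank condition \eqref{jr1v1} to be used precisely to match the hypothesis ${\rm rank}\{\mathfrak{q}_j\}=n$ there, and (c) the $A_J$ cutoffs and the $\min$-type decay in \eqref{jr1v} are jointly respected. The uniform symbol bound \eqref{jr14} and the $L^2$ estimate are routine; the subtlety is entirely in organizing the $p\ne 2$ argument so that Lemma~\ref{lem2.2} applies with $\sigma_J=|H_J|$ and the leftover square function is handled by the already-proven \eqref{2.3m}.
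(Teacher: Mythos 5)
Your handling of \eqref{jr14} and of the $L^2$ bound is fine and is essentially the paper's own argument in a more direct form: the full-rank condition \eqref{jr1v1} makes $J\mapsto (J\cdot\mathfrak{q}_1,\dots,J\cdot\mathfrak{q}_n)$ boundedly finite-to-one, the minimum in \eqref{jr1v} is dominated by a geometric mean, the resulting $n$ two-sided geometric series give \eqref{jr14}, and Plancherel with $\|\widehat{A}_J\|_\infty\le 1$ gives the $L^2$ estimate. (The paper runs the same computation through the partition of unity $\sum_{\ell_i}\eta(2^{-J\cdot\mathfrak{q}_i-\ell_i}\xi_{\nu_i})=1$; see \eqref{hr6}.)

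The $L^p$ step, however, has a genuine gap. First, the reduction to a square function via a single projector $\widetilde P_J$ is not available: $\widehat{H}_J\widehat{A}_J$ has no compact frequency support (the cutoffs in \eqref{2.2} defining $A_J$ only constrain ratios $\xi_\alpha/\xi_\beta$, and \eqref{jr1v} is a decay estimate, not a support condition), so there is no identity $H_J*A_J*\widetilde P_J=H_J*A_J$, and $\sum_J P_J*H_J*A_J$ does not reproduce $\sum_J H_J*A_J$ either; moreover, Rademacher randomization lets you pass from a multiplier bound to a square-function bound, not from a fixed (non-randomized) sum to a square function. The correct mechanism, which is what the paper does, is a doubly indexed decomposition: for each $J$ write $f=\sum_{L\in\mathbb{Z}^N}\mathcal{P}^{\mathfrak{q}}_{J,L}*f$ (a genuine Littlewood--Paley partition in the variables $2^{-J\cdot\mathfrak{q}_i}\xi_{\nu_i}$), insert the fattened $\widetilde{\mathcal{P}}_{J,L}$, and use the dual of \eqref{2.3m} to dominate each fixed-$L$ piece by a square function.

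Second, and more importantly, even granting that reduction for each fixed $L$, your chain (Lemma \ref{lem2.2} with $\sigma_J=|H_J|$ followed by \eqref{2.3n}) produces a bound with no decay in $L$, and the sum over $L\in\mathbb{Z}^N$ then diverges. The essential point your sketch omits is the gain \eqref{hr6}: on the frequency support of $\mathcal{P}^{\mathfrak{q}}_{J,L}$ the hypothesis \eqref{jr1v} gives $|\widehat{H}_J(\xi)|\lesssim 2^{-b|L|}$, so the fixed-$L$ estimate at $p=2$ decays geometrically in $|L|$; interpolating this decaying $L^2$ bound against the non-decaying vector-valued bound coming from Lemma \ref{lem2.2} and \eqref{jq}, and bootstrapping, yields $2^{-\theta b|L|}$ decay for every $1<p<\infty$, which is exactly what makes the $L$-sum converge (this is \eqref{6.4g}). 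Interpolating the already-summed operator between $p=2$ and an ``$L^\infty$-type'' bound cannot replace this: $\sum_J H_J*A_J$ is not bounded on $L^\infty$, and the decay must be exploited before summing in $L$. Finally, a small but real flaw: hypothesis (i) of Lemma \ref{lem2.2}, i.e.\ a uniform $L^1$ bound (uniformly bounded total variation of the $H_J$), does not follow from \eqref{jr1v} at $\xi=0$ --- that only controls $\int dH_J$, not $\|H_J\|$; it must be assumed, as it implicitly is in the paper's application, where the $H_J$ are dyadic pieces of uniformly bounded mass.
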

\begin{proof}[Proof of (\ref{jr14})]
Let
\begin{eqnarray*}
\big(P_{-J\cdot \mathfrak{q}_{i}- \ell_{i}}\big)^{\wedge}(\xi)&=&
\eta\left( 2^{-J\cdot \mathfrak{q}_{i}-
\ell_{i}}\,\xi_{\nu_i}\right)\quad\text{for}\quad i=1,\cdots,
N\,\label{kdd}
\end{eqnarray*}
with a view to restricting frequency variables as
\begin{eqnarray}
  2^{-J\cdot \mathfrak{q}_{i}}\,\left|\xi_{\nu_i}\right|
\,&\approx &\, 2^{\ell_{i}}\quad\text{for}\quad
i=1,\cdots,N\,.\label{heh}
\end{eqnarray}
 We use  for (\ref{jr1v}) and (\ref{heh}) to obtain that
\begin{eqnarray}
\qquad \prod_{i=1}^N\eta\left( 2^{-J\cdot \mathfrak{q}_{i}-
\ell_{i}}\,\xi_{\nu_i}\right) \left|\widehat{H}_{J}(\xi)\right|\,\le \,C2^{- b\,|L|}\ \text{for
 }  \ b=\frac{\min\{\delta_1,\delta_2\}}{N} \, \ \text{and}\ C\
\text{ in (\ref{jr1v})}.\label{hr6}
\end{eqnarray}
where $|L|=\sum_{i=1}^{N} |\ell_i|$.
 Then by using positivity of $\eta$ and $\sum_{\ell_i\in\mathbb{Z}}\eta\left( 2^{-J\cdot \mathfrak{q}_{i}-
\ell_{i}}\,\xi_{\nu_i}\right)=1$,
\begin{eqnarray*}
\sum_{|J|\le R}\left|\widehat{H}_J(\xi)\right| &=&\sum_{L\in \mathbb{Z}^N}\prod_{i=1}^N\eta\left( 2^{-J\cdot \mathfrak{q}_{i}-
\ell_{i}}\,\xi_{\nu_i}\right)\sum_{|J|\le R}\left|\widehat{H}_J(\xi)\right|\\
&=&\sum_{L\in \mathbb{Z}^N}\sum_{|J|\le R}\prod_{i=1}^N\eta\left( 2^{-J\cdot \mathfrak{q}_{i}-
\ell_{i}}\,\xi_{\nu_i}\right)\left|\widehat{H}_J(\xi)\right|\\
&\le&\sum_{L\in \mathbb{Z}^N}C2^{- b\,|L|}\le C/(1-2^{-\min\{\delta_1,\delta_2\}/N})^N
\end{eqnarray*}
 where the first inequality follows from (\ref{hr6}) and the observation that for each fixed $\xi$, there exists finitely many
$J$ such that $\prod_{i=1}^N\eta\left( 2^{-J\cdot \mathfrak{q}_{i}-
\ell_{i}}\,\xi_{\nu_i}\right) \ne 0$. We proved (\ref{jr14}).
\end{proof}
\begin{proof}[Proof of (\ref{jr1x})]
Define
$$
\mathcal{P}_{J,L}^{\mathfrak{q}} =P_{-J\cdot \mathfrak{q}_{1}-
\ell_{1}}*\cdots*P_{-J\cdot \mathfrak{q}_{N}- \ell_{N}}\,, \
\text{where}\ \ L=(\ell_i)_{i=1}^N\in\mathbb{Z}^{N}.
$$
We use the Littlewood-Paley decomposition  for each $J\in\mathbb{Z}$:
\begin{equation*}
\sum_{ L\in\mathbb{Z}^{N}} \mathcal{P}_{J,L}^{\mathfrak{q}}\ast f
=f.
\end{equation*}
Define $\tilde{\mathcal{P}}_{J,L}^{\mathfrak{q}}$  by replacing
$\eta(\cdot)$ with $\eta(\cdot/2)$ in (\ref{kdd}). Then
$\tilde{\mathcal{P}}_{J,L}^{\mathfrak{q}}*\mathcal{P}_{J,L}^{\mathfrak{q}}=\mathcal{P}_{J,L}^{\mathfrak{q}}$.
Thus
$$  \sum_{J\in Z} H_J*A_J*\mathcal{P}_{J,L}^{\mathfrak{q}}*f=\sum_{J\in Z}\tilde{\mathcal{P}}_{J,L}^{\mathfrak{q}}*H_J*A_J*\mathcal{P}_{J,L}^{\mathfrak{q}}*f.$$
By Applying  the dual inequality  of (\ref{2.3m}) in Lemma \ref{lem2.1},
$$ \left\|\sum_{J\in Z}\tilde{\mathcal{P}}_{J,L}^{\mathfrak{q}}*H_J*A_J*
\mathcal{P}_{J,L}^{\mathfrak{q}}*f\right\|_{L^p(\mathbb{R}^{d})}\le
\biggl\| \biggl(\sum_{J\in Z}
\left|H_J*A_J*\mathcal{P}_{J,L}^{\mathfrak{q}}*f\right|^{2}\biggr)^{1/2}
\biggl\|_{L^p(\mathbb{R}^{d})}.  $$ Thus, it is sufficient to find a
constant $\,b>0\,$ independent of $ L \in\mathbb{Z}^N\,$ such that
\begin{equation}\label{6.4g}
 \biggl\| \biggl(\sum_{J\in Z}
\left|H_J*A_{J}
*
\mathcal{P}_{J,L}^{\mathfrak{q}}*f\right|^{2}\biggr)^{1/2}
\biggl\|_{L^p(\mathbb{R}^{d})} \,
  \le  \,
\tilde{C}2^{-b\, |L| }\| f\|_{L^p(\mathbb{R}^{d})}\,,
\end{equation}
where $\tilde{C}$ is a
multiple of $C$ in (\ref{jr1v}).
 By the rank condition   (\ref{jr1v1}) and (\ref{2.3n}) in Lemma
\ref{lem2.1},
\begin{eqnarray}\label{6.10b}
\biggl\| \Bigl(\sum_{J\in Z}|A^{\sigma}_{J}
*
\mathcal{P}_{J,L}^{\mathfrak{q}}*f|^2\Bigl)^{\frac{1}{2}}
\biggl\|_{L^p(\mathbb{R}^d)} &\lesssim&\parallel f
\parallel_{L^p(\mathbb{R}^d)}.
\end{eqnarray}
For $p=2$, we use (\ref{jr14}),(\ref{hr6}) and (\ref{6.10b})  to obtain (\ref{6.4g}).
  Applying a standard bootstrap
argument combined with (\ref{jq}), Lemmas \ref{lem2.1} and
\ref{lem2.2}, we obtain (\ref{6.4g}) for the other values of $p\ne 2$.
The proof of (\ref{jr1x}) is now complete.
\end{proof}
\begin{remark}\label{rem00}
The decay condition in (\ref{jr1v}) always holds for the case that
$\Lambda_\nu$'s are mutually disjoint. Given
$\mathbb{G}=(\mathbb{G}_\nu)\in\mathcal{F}({\bf N}(\Lambda,S))$, we
have from the multi-dimensional Van der Corput Lemma,
\begin{eqnarray}\label{nniiii}
|\mathcal{I}_J(P_{\mathbb{G}},\xi)|\le
C\min\left\{|2^{-J\cdot\mathfrak{m}_\nu}\xi_\nu|^{-\delta},1:\mathfrak{m}_\nu\in\mathbb{G}_\nu\cap\Lambda_\nu\
\text{for}\ \nu=1,\cdots,d\right\}.
\end{eqnarray}
\end{remark}

\section{Cone Type Decompositions}\label{condec}
\subsection{Cone  Decompositions}
Recall that $Z(S)=\prod Z_i$ with $Z_i=\mathbb{R}_+$ for $i\in S$
and $Z_i=\mathbb{R}$ for $i\in N_n\setminus S$ as in Definition
\ref{dine}. We decompose $Z(S)$ into finite number of different
cones that appears in (\ref{8765}) and (\ref{danbi5}) as follows:
\begin{proposition}\label{74sj}
 Let
  $\Lambda=(\Lambda_\nu)$ with
$\Lambda_\nu\subset \mathbb{Z}_+^n$ and  $S\subset \{1,\cdots,n\} $.
Then,
$$\bigcup_{\mathbb{F} \in\mathcal{F}\left( \vec{ {\bf
N}}(\Lambda,S)\right)}   {\rm Cap}(\mathbb{F}^*) = Z(S) \
\text{where}\
\rm{Cap}(\mathbb{F}^*)=\bigcap_{\nu=1}^d\mathbb{F}_\nu^*\
\text{for}\ \mathbb{F}=(\mathbb{F}_\nu)\in\mathcal{F}({\bf
N}(\Lambda,S)).$$ Moreover,
$$\bigcup_{\mathbb{F} \in\mathcal{F}\left( \vec{ {\bf
N}}(\Lambda,S)\right)}   {\rm Cap}((\mathbb{F}^*)^{\circ})  =
Z(S)\setminus\{0\}\ \text{where}\
\rm{Cap}((\mathbb{F}^*)^{\circ})=\bigcap_{\nu=1}^d(\mathbb{F}_\nu^*)^{\circ}
.$$
\end{proposition}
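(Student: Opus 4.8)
The plan is to identify $Z(S)$ with the set of directions $\mathfrak{q}\in\mathbb{R}^n$ along which \emph{every} Newton polyhedron ${\bf N}(\Lambda_\nu,S)$ is bounded below, and to observe that any such $\mathfrak{q}$ simultaneously selects one face $\mathbb{F}_\nu$ of each ${\bf N}(\Lambda_\nu,S)$ with $\mathfrak{q}\in\mathbb{F}_\nu^*$. Two elementary facts about $\mathbb{P}={\bf N}(\Omega,S)={\rm Ch}(\Omega+\mathbb{R}_+^S)$ (Definition \ref{ded28}) are needed. First, $\mathfrak{m}\in\mathbb{P}\Rightarrow \mathfrak{m}+\mathbb{R}_+^S\subset\mathbb{P}$: writing $\mathfrak{m}=\sum_k c_k(\mathfrak{p}_k+v_k)$ with $\mathfrak{p}_k\in\Omega$, $v_k\in\mathbb{R}_+^S$, $c_k\ge 0$, $\sum_k c_k=1$, any $w\in\mathbb{R}_+^S$ is absorbed into the $v_k$. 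Second, for $\mathfrak{q}=(q_j)$ with $q_j\ge 0$ for all $j\in S$ (that is, $\mathfrak{q}\in Z(S)$) one has $\inf\{\langle\mathfrak{q},{\bf x}\rangle:{\bf x}\in\mathbb{P}\}=\min\{\langle\mathfrak{q},\mathfrak{p}\rangle:\mathfrak{p}\in\Omega\}$, the minimum being attained at some $\mathfrak{p}^*\in\Omega\subset\mathbb{P}$; while for $\mathfrak{q}\notin Z(S)$ this infimum equals $-\infty$, since $\langle\mathfrak{q},\mathfrak{p}+t{\bf e}_j\rangle\to-\infty$ as $t\to\infty$ whenever $j\in S$ and $q_j<0$.

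The inclusion $\bigcup_{\mathbb{F}}{\rm Cap}(\mathbb{F}^*)\subset Z(S)$ is the easy half. If $\mathfrak{q}\in{\rm Cap}(\mathbb{F}^*)=\bigcap_{\nu}\mathbb{F}_\nu^*$ then in particular $\mathfrak{q}\in\mathbb{F}_1^*|{\bf N}(\Lambda_1,S)$, so by Definition \ref{dualface} there is an $r\in\mathbb{R}$ with $\langle\mathfrak{q},{\bf x}\rangle\ge r$ for every ${\bf x}\in{\bf N}(\Lambda_1,S)$; here we use $\mathbb{F}_1\subset\pi_{\mathfrak{q},r}\cap{\bf N}(\Lambda_1,S)$ together with ${\bf N}(\Lambda_1,S)\setminus\mathbb{F}_1\subset\pi_{\mathfrak{q},r}^+$, and the empty-face convention of Remark \ref{pitse} is subsumed. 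Thus $\langle\mathfrak{q},\cdot\rangle$ is bounded below on ${\bf N}(\Lambda_1,S)$, which by the first fact forces $\langle\mathfrak{q},{\bf e}_j\rangle=q_j\ge 0$ for every $j\in S$, i.e.\ $\mathfrak{q}\in Z(S)$. The same computation gives $\bigcap_\nu(\mathbb{F}_\nu^*)^{\circ}\subset Z(S)$; since a cone interior contains no zero vector (Definition \ref{dualface}), in fact $\bigcup_{\mathbb{F}}{\rm Cap}((\mathbb{F}^*)^{\circ})\subset Z(S)\setminus\{0\}$.

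For the reverse inclusions, fix $\mathfrak{q}\in Z(S)$ and, for each $\nu$, put $\rho_\nu=\min\{\langle\mathfrak{q},{\bf x}\rangle:{\bf x}\in{\bf N}(\Lambda_\nu,S)\}$ (attained, by the second fact) and $\mathbb{F}_\nu=\pi_{\mathfrak{q},\rho_\nu}\cap{\bf N}(\Lambda_\nu,S)$. Then $\mathbb{F}_\nu$ is a nonempty face of ${\bf N}(\Lambda_\nu,S)$: $\langle\mathfrak{q},\cdot\rangle=\rho_\nu$ on $\mathbb{F}_\nu$ and $\langle\mathfrak{q},\cdot\rangle>\rho_\nu$ on ${\bf N}(\Lambda_\nu,S)\setminus\mathbb{F}_\nu$ — the inequality being strict because $\rho_\nu$ is the minimum — so (\ref{4g1}) holds, and moreover $\mathfrak{q}\in(\mathbb{F}_\nu^*)^{\circ}$ as soon as $\mathfrak{q}\ne 0$. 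With $\mathbb{F}=(\mathbb{F}_\nu)\in\mathcal{F}(\vec{{\bf N}}(\Lambda,S))$ we obtain $\mathfrak{q}\in\bigcap_\nu\mathbb{F}_\nu^*={\rm Cap}(\mathbb{F}^*)$, and $\mathfrak{q}\in{\rm Cap}((\mathbb{F}^*)^{\circ})$ when $\mathfrak{q}\ne 0$; for $\mathfrak{q}=0$ one takes the improper faces $\mathbb{F}_\nu={\bf N}(\Lambda_\nu,S)$, whose cones contain $0$ by (\ref{e8v}). Combining the two directions yields both stated equalities.

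I expect the only delicate point to be the bookkeeping around degenerate cases: the improper faces $\mathbb{F}_\nu={\bf N}(\Lambda_\nu,S)$ (with cone $V^\perp({\bf N}(\Lambda_\nu,S))$, by (\ref{e8v})), the empty face (handled by Remark \ref{pitse}), and the vector $\mathfrak{q}=0$ — together with confirming that the infimum defining $\rho_\nu$ is genuinely attained so that $\mathbb{F}_\nu$ qualifies as a nonempty face in the sense of Definition \ref{dfac}. The geometric core — boundedness below of $\langle\mathfrak{q},\cdot\rangle$ on ${\bf N}(\Omega,S)$ being equivalent to nonnegativity of the components of $\mathfrak{q}$ on the recession cone $\mathbb{R}_+^S$ — is routine.
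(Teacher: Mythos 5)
Your argument is correct, and it reaches the conclusion by a more direct route than the paper. The forward inclusion is the same as the paper's (this is exactly Lemma \ref{lem132}: boundedness below of $\langle\mathfrak{q},\cdot\rangle$ on ${\bf N}(\Lambda_\nu,S)$ plus invariance under $\mathbb{R}_+^{S}$ forces $q_j\ge 0$ for $j\in S$). For the reverse inclusion the paper does not construct the face directly from $\mathfrak{q}$: it first shows $Z(S)\subset\mathrm{CoSp}(\{\mathfrak{q}_j\})$, the cone spanned by the generators of the polyhedron (Lemma \ref{lem715}, via the two-sided bounds $m_k\le x_k\le M_k$ and Lemma \ref{lem713}), then invokes Lemma \ref{b32}, i.e.\ $\mathrm{CoSp}(\{\mathfrak{q}_j\})=\bigcup_{\mathbb{F}\preceq\mathbb{P}}\mathbb{F}^*$, whose proof rests on the representation results (Propositions \ref{facerep} and \ref{pr50}); the interior version is then deduced from the identity (\ref{o550}). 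You bypass all of that machinery: for $\mathfrak{q}\in Z(S)$ you get finiteness \emph{and attainment} of $\rho_\nu=\inf\{\langle\mathfrak{q},{\bf x}\rangle:{\bf x}\in{\bf N}(\Lambda_\nu,S)\}$ directly from the description ${\rm Ch}(\Omega+\mathbb{R}_+^S)$ (the infimum equals the minimum over the finite set $\Lambda_\nu$), and the face $\mathbb{F}_\nu=\pi_{\mathfrak{q},\rho_\nu}\cap{\bf N}(\Lambda_\nu,S)$ you construct satisfies the defining condition (\ref{brg2}) exactly, so $\mathfrak{q}\in(\mathbb{F}_\nu^*)^{\circ}$ for $\mathfrak{q}\ne 0$ comes out of the construction rather than out of (\ref{o550}). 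Two advantages of your version: the attainment of the infimum is proved cleanly (the paper's Lemma \ref{lem713} only asserts nonemptiness of $\pi_{{\bf e},\rho}\cap\mathbb{P}$ from closedness, which is terse), and the proposition becomes independent of the face/cone representation formulas. What the paper's route buys is reuse: Lemmas \ref{lem713} and \ref{b32} and the representation propositions are needed elsewhere anyway, so the paper gets Proposition \ref{74sj} essentially for free once they are in place. Your handling of the degenerate cases (empty face via Remark \ref{pitse}, improper faces with cone $V^{\perp}(\mathbb{P}_\nu)$ when $\mathfrak{q}\perp V(\mathbb{P}_\nu)$, and $\mathfrak{q}=0$ excluded from the interior statement) matches the paper's conventions, so no gap there.
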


\begin{lemma}\label{lem713}
Let $\mathbb{P}=\mathbb{P}(\Pi)$ with
$\Pi=\{\pi_{\mathfrak{q}_j,r_j}:j=1,\cdots,N\}$ be a  polyhedron.
Then $$\inf\{\langle{\bf x}, {\bf e}\rangle:{\bf x}\in\mathbb{P}\}>-\infty\ \
\text{if and only if}\ \ {\bf e}\in
\rm{CoSp}(\{\mathfrak{q}_j:j=1,\cdots,N\}).$$
\end{lemma}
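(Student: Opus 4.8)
The plan is to read this as a geometric form of Farkas' lemma and prove it with the elementary tools the paper favors. Write $\mathbb{P}=\mathbb{P}(\Pi)=\bigcap_{j=1}^{N}\pi_{\mathfrak{q}_j,r_j}^{+}=\{{\bf x}\in\mathbb{R}^n:\langle\mathfrak{q}_j,{\bf x}\rangle\ge r_j,\ j=1,\dots,N\}$, and assume $\mathbb{P}\neq\emptyset$, which is the case in every application of the lemma (there $\mathbb{P}$ is a Newton polyhedron, hence nonempty). The implication $(\Leftarrow)$ is immediate: if ${\bf e}=\sum_{j=1}^{N}c_j\mathfrak{q}_j$ with all $c_j\ge 0$, then for every ${\bf x}\in\mathbb{P}$ one has $\langle{\bf x},{\bf e}\rangle=\sum_{j=1}^{N}c_j\langle{\bf x},\mathfrak{q}_j\rangle\ge\sum_{j=1}^{N}c_jr_j$, so the infimum is bounded below by the finite number $\sum_{j=1}^{N}c_jr_j$.

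For the converse I would argue by contraposition, producing a half-line inside $\mathbb{P}$ along which $\langle\,\cdot\,,{\bf e}\rangle$ tends to $-\infty$. Put $C=\text{CoSp}(\{\mathfrak{q}_j\}_{j=1}^{N})$ and suppose ${\bf e}\notin C$. Since $C$ is a closed convex cone (closedness is the one point needing real work; see below), the separating hyperplane theorem yields ${\bf v}\in\mathbb{R}^n$ with $\langle{\bf v},{\bf e}\rangle<\langle{\bf v},{\bf y}\rangle$ for all ${\bf y}\in C$. As $0\in C$ and $C$ is a cone, $\inf_{{\bf y}\in C}\langle{\bf v},{\bf y}\rangle$ is either $0$ or $-\infty$, and strict separation excludes $-\infty$; hence $\langle{\bf v},{\bf y}\rangle\ge 0$ for all ${\bf y}\in C$, in particular $\langle{\bf v},\mathfrak{q}_j\rangle\ge 0$ for every $j$, while $\langle{\bf v},{\bf e}\rangle<0$. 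Fixing ${\bf x}_0\in\mathbb{P}$ and setting ${\bf x}_t={\bf x}_0+t{\bf v}$ for $t\ge 0$, we get $\langle\mathfrak{q}_j,{\bf x}_t\rangle=\langle\mathfrak{q}_j,{\bf x}_0\rangle+t\langle\mathfrak{q}_j,{\bf v}\rangle\ge r_j$, so ${\bf x}_t\in\mathbb{P}$, whereas $\langle{\bf x}_t,{\bf e}\rangle=\langle{\bf x}_0,{\bf e}\rangle+t\langle{\bf v},{\bf e}\rangle\to-\infty$ as $t\to\infty$. Therefore $\inf\{\langle{\bf x},{\bf e}\rangle:{\bf x}\in\mathbb{P}\}=-\infty$, which is the contrapositive of $(\Rightarrow)$.

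The main obstacle is the closedness of the finitely generated cone $C=\text{CoSp}(\{\mathfrak{q}_j\}_{j=1}^{N})$, which legitimizes the separation step. I would obtain it from the conic Carathéodory theorem: a support-reduction argument (if the generators actually used in a representation of a point are linearly dependent, subtract a suitable multiple of a linear dependence to kill one coefficient while keeping all coefficients nonnegative, and iterate) shows that every point of $C$ is a nonnegative combination of a linearly independent subfamily of $\{\mathfrak{q}_j\}_{j=1}^{N}$; hence $C=\bigcup_{B}\text{CoSp}(B)$ is a finite union over linearly independent subsets $B\subseteq\{\mathfrak{q}_j\}_{j=1}^{N}$, and each $\text{CoSp}(B)$ is the image of the orthant $[0,\infty)^{|B|}$ under an injective linear map, hence closed, so $C$ is closed as a finite union of closed sets. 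Alternatively one can bypass closedness and separation altogether by invoking Farkas' lemma directly in the form ``exactly one of $\{\exists\,{\bf c}\ge 0:\ \sum_j c_j\mathfrak{q}_j={\bf e}\}$ and $\{\exists\,{\bf v}:\ \langle\mathfrak{q}_j,{\bf v}\rangle\ge 0\ \forall j,\ \langle{\bf e},{\bf v}\rangle<0\}$'', which the elementary linear algebra (Fourier--Motzkin elimination) relied on elsewhere in the paper supplies; the vector ${\bf v}$ of the second alternative is exactly the direction used in the preceding paragraph, and everything else is routine.
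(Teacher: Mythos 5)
Your proof is correct, but it takes a genuinely different route from the paper's. For the hard direction ($\inf>-\infty\Rightarrow{\bf e}\in\text{CoSp}(\{\mathfrak{q}_j\})$) the paper sets $\rho=\inf\{\langle{\bf x},{\bf e}\rangle:{\bf x}\in\mathbb{P}\}$, observes that $\mathbb{F}=\pi_{{\bf e},\rho}\cap\mathbb{P}$ is a nonempty face with supporting hyperplane $\pi_{{\bf e},\rho}$ (so ${\bf e}\in\mathbb{F}^*$), and then invokes its representation machinery (Propositions \ref{facerep} and \ref{pr50}) to conclude $\mathbb{F}^*\subset\text{CoSp}(\{\mathfrak{q}_j\}_{j=1}^N)$; the easy direction is the same computation as yours. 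You instead prove the contrapositive by Farkas-type duality: closedness of the finitely generated cone (via your conic Carath\'eodory reduction) plus strict separation yields ${\bf v}$ with $\langle\mathfrak{q}_j,{\bf v}\rangle\ge 0$ for all $j$ and $\langle{\bf e},{\bf v}\rangle<0$, and the ray ${\bf x}_0+t{\bf v}$ stays in $\mathbb{P}$ while $\langle{\bf x}_0+t{\bf v},{\bf e}\rangle\to-\infty$. What your route buys: it is self-contained (no appeal to the face/cone representation results) and it sidesteps the attainment of the infimum, which the paper justifies only by ``since $\mathbb{P}$ is a closed set'' even though attainment on an unbounded closed set genuinely uses the polyhedral structure; the cost is having to establish closedness of $\text{CoSp}$, which you do correctly. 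The paper's route is shorter given the machinery already built and stays inside its formalism. Note that both arguments (and the lemma itself) implicitly assume $\mathbb{P}\ne\emptyset$ and that the listed half-spaces cut out $\mathbb{P}$ inside $\mathbb{R}^n$ (i.e.\ $\Pi$ includes the generator $\Pi_b$ of $V_{am}(\mathbb{P})$ when $\dim\mathbb{P}<n$); you state the first assumption explicitly, and it holds in every application of the lemma.
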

\begin{proof}
 Let  $\rho=\inf\{\langle{\bf x}, {\bf e}\rangle:{\bf x}\in\mathbb{P}\}>-\infty$ and set
 the plane $\pi_{{\bf e},\rho}=\{{\bf x}\in V:\langle {\bf x}, {\bf e}\rangle=\rho\}$.
 Since $\mathbb{P}$ is a closed set, $\mathbb{F}$ defined by
 $\pi_{{\bf e},\rho}\cap\mathbb{P}$ is a non-empty closed set.
 From $\rho\le {\bf x}\cdot {\bf e}$ for all ${\bf x}\in \mathbb{P}$
  and $\mathbb{F}=\pi_{{\bf e},\rho}\cap\mathbb{P}$,
\begin{equation}\label{lp12}
\mathbb{P}\setminus \mathbb{F}\subset  \left(\pi_{{\bf e},\rho}^{+}\right)^{\circ}.
\end{equation}
 Thus $\mathbb{F}\preceq \mathbb{P}$ and ${\bf e}\in\mathbb{F}^*
 \subset\rm{CoSp}(\{\mathfrak{q}_j:j=1,\cdots,N\})$ by using Propositions \ref{facerep} and \ref{pr50}.
 To  show the other direction, let
 ${\bf e}=\sum_{j=1}^N c_j\mathfrak{q}_j\in \rm{CoSp}(\{\mathfrak{q}_j:j=1,\cdots,N\})$. Then
\begin{equation*}
\langle {\bf e}, {\bf x}\rangle=\sum_{j=1}^N c_j\langle \mathfrak{q}_j, {\bf x}\rangle\ge
\sum_{j=1}^Nc_jr_j>-\infty
\end{equation*}
 for all ${\bf x}\in\mathbb{P}=\bigcap_{j=1}^N\{\langle\mathfrak{q}_j, {\bf x}\rangle\ge r_j:j=1,\cdots,N\}$.
\end{proof}

\begin{lemma}\label{b32}
 Let $\mathbb{P}=\mathbb{P}(\Pi)$ with $\Pi=\{\pi_{\mathfrak{q}_j,r_j}:j=1,\cdots,N\}$
  be a  polyhedron.
 Then
 $$ \rm{CoSp}(\{\mathfrak{q}_j:j=1,\cdots,N\}) =\bigcup_{\mathbb{F}\preceq\mathbb{P}}\mathbb{F}^*.$$
\end{lemma}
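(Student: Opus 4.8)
The plan is to prove the two inclusions separately, with Lemma \ref{lem713} (the criterion for when a linear functional is bounded below on $\mathbb{P}$) and the definition of the cone $\mathbb{F}^*$ in Definition \ref{dualface} as essentially the only inputs; everything substantive has already been packaged there.

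For the inclusion $\text{CoSp}(\{\mathfrak{q}_j\}_{j=1}^N)\subseteq\bigcup_{\mathbb{F}\preceq\mathbb{P}}\mathbb{F}^*$, I would fix ${\bf e}\in\text{CoSp}(\{\mathfrak{q}_j\}_{j=1}^N)$ and set $\rho=\inf\{\langle{\bf x},{\bf e}\rangle:{\bf x}\in\mathbb{P}\}$, which is finite by Lemma \ref{lem713}. As in the proof of that lemma, $\mathbb{F}:=\pi_{{\bf e},\rho}\cap\mathbb{P}$ is then a non-empty face of $\mathbb{P}$ satisfying $\mathbb{P}\setminus\mathbb{F}\subset(\pi_{{\bf e},\rho}^+)^{\circ}$. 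Since $\langle{\bf e},{\bf u}\rangle=\rho\le\langle{\bf e},{\bf y}\rangle$ for every ${\bf u}\in\mathbb{F}$ and ${\bf y}\in\mathbb{P}\setminus\mathbb{F}$, Definition \ref{dualface} gives ${\bf e}\in\mathbb{F}^*$, so ${\bf e}$ lies in the union on the right; the degenerate value ${\bf e}=0$ simply yields $\mathbb{F}=\mathbb{P}$, whose cone contains $0$.

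For the reverse inclusion I would take ${\bf e}\in\mathbb{F}^*$ for some face $\mathbb{F}\preceq\mathbb{P}$. If $\mathbb{F}=\emptyset$, then $\mathbb{F}^*=\text{CoSp}(\{\mathfrak{q}_j\}_{j=1}^N)$ by the convention recorded in Remark \ref{pitse} and there is nothing to prove. If $\mathbb{F}\ne\emptyset$, Definition \ref{dualface} supplies $r\in\mathbb{R}$ with $\langle{\bf e},{\bf u}\rangle=r\le\langle{\bf e},{\bf y}\rangle$ for all ${\bf u}\in\mathbb{F}$ and ${\bf y}\in\mathbb{P}\setminus\mathbb{F}$; splitting an arbitrary ${\bf x}\in\mathbb{P}$ into the cases ${\bf x}\in\mathbb{F}$ and ${\bf x}\in\mathbb{P}\setminus\mathbb{F}$ shows $\langle{\bf e},{\bf x}\rangle\ge r$ throughout, so $\inf\{\langle{\bf x},{\bf e}\rangle:{\bf x}\in\mathbb{P}\}\ge r>-\infty$, and Lemma \ref{lem713} yields ${\bf e}\in\text{CoSp}(\{\mathfrak{q}_j\}_{j=1}^N)$.

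The argument is short precisely because the heavy lifting is done by Lemma \ref{lem713} and the representation results (Propositions \ref{facerep} and \ref{pr50}) it rests on. The one point deserving care is in the first inclusion: one must know that the finite infimum $\rho$ is actually \emph{attained}, so that $\pi_{{\bf e},\rho}\cap\mathbb{P}$ is a genuine non-empty face rather than the empty set — this is exactly the non-emptiness asserted and used inside the proof of Lemma \ref{lem713}. The only remaining boundary case, $\mathbb{F}=\emptyset$ in the reverse inclusion, is handled by the empty-face convention.
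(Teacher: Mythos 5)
Your proof is correct and takes essentially the same route as the paper: the inclusion $\subset$ is exactly the paper's argument (finite infimum via Lemma \ref{lem713}, then $\mathbb{F}=\pi_{{\bf e},\rho}\cap\mathbb{P}$ is a nonempty face whose cone contains ${\bf e}$), including the same appeal to the non-emptiness established inside the proof of Lemma \ref{lem713}. For $\supset$ the paper cites Propositions \ref{facerep} and \ref{pr50} directly, whereas you deduce boundedness below from ${\bf e}\in\mathbb{F}^*$ and invoke the converse direction of Lemma \ref{lem713}; since that direction of Lemma \ref{lem713} is itself proved from those propositions, the two arguments are equivalent, and your treatment of the empty face via Remark \ref{pitse} is consistent with the paper's convention.
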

\begin{proof}
We first show $\subset$. Let   ${\bf e}=\sum_{j=1}^N
c_j\mathfrak{q}_j \in \rm{CoSp}(\{\mathfrak{q}_j:j=1,\cdots,N\})
 $ and let
$$\rho=\inf\left\{ \langle{\bf e}, {\bf x}\rangle=\sum_{j=1}^N c_j\langle\mathfrak{q}_j,
{\bf x}\rangle:{\bf x}\in\mathbb{P}\right\} $$ that exits from Lemma
\ref{lem713}. Set $\mathbb{F}=\pi_{{\bf e},\rho}\cap\mathbb{P}$. By
(\ref{lp12}),
  $\mathbb{F}$ is a face of $\mathbb{P}$ with a supporting
  plane $\pi_{{\bf e},\rho}$. Thus
  ${\bf e}\in \mathbb{F}^*$. The other direction $\supset$
  follows from Propositions \ref{facerep} and  \ref{pr50}.
\end{proof}

\begin{lemma}\label{lem132}
 Let ${\bf u}=(u_1,\cdots,u_n)\in  \mathbb{F}^* $ and $
\mathbb{F}\in\mathcal{F}({\bf N}(\Lambda,S))$. Then $u_j\ge 0$ for
all $j\in S$.
\end{lemma}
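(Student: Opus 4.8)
The plan is to unpack the definition of the cone $\mathbb{F}^{*}$ and exploit that ${\bf N}(\Lambda,S)=\mathrm{Ch}(\Lambda+\mathbb{R}_{+}^{S})$ contains, for each $\mathfrak{m}\in\Lambda$, the whole translated orthant $\mathfrak{m}+\mathbb{R}_{+}^{S}$, hence unbounded rays in every coordinate direction ${\bf e}_{j}$ with $j\in S$. No deep input is needed; the argument is a boundedness-below observation.

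First I would fix ${\bf u}=(u_{1},\dots,u_{n})\in\mathbb{F}^{*}|{\bf N}(\Lambda,S)$. By Definition \ref{dualface} (the second description in \eqref{brg}), there is $r\in\mathbb{R}$ with $\langle{\bf u},{\bf w}\rangle=r$ for all ${\bf w}\in\mathbb{F}$ and $\langle{\bf u},{\bf y}\rangle\ge r$ for all ${\bf y}\in{\bf N}(\Lambda,S)\setminus\mathbb{F}$; combining the two gives $\langle{\bf u},{\bf x}\rangle\ge r$ for every ${\bf x}\in{\bf N}(\Lambda,S)$, i.e.\ the functional $\langle{\bf u},\cdot\rangle$ is bounded below on the polyhedron.

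Next I would test this against the rays. Pick any $\mathfrak{m}\in\Lambda$ (it is nonempty) and any $j\in S$. For every $t\ge 0$ the point $\mathfrak{m}+t{\bf e}_{j}$ lies in $\Lambda+\mathbb{R}_{+}^{S}\subset{\bf N}(\Lambda,S)$, so $\langle{\bf u},\mathfrak{m}\rangle+t\,u_{j}=\langle{\bf u},\mathfrak{m}+t{\bf e}_{j}\rangle\ge r$ for all $t\ge 0$. If $u_{j}<0$, letting $t\to+\infty$ would force $r=-\infty$, a contradiction. Hence $u_{j}\ge 0$, and since $j\in S$ was arbitrary the claim follows.

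I do not expect a genuine obstacle here; the only things to check carefully are the implication ``${\bf u}\in\mathbb{F}^{*}$'' $\Rightarrow$ ``$\langle{\bf u},\cdot\rangle$ bounded below on all of ${\bf N}(\Lambda,S)$'' (immediate from the two inequalities in \eqref{brg}) and that $\Lambda\ne\emptyset$ so a test point $\mathfrak{m}$ is available. As an alternative I could invoke Lemma \ref{lem713}: every normal vector $\mathfrak{q}_{j}$ in a generator of ${\bf N}(\Lambda,S)$ has its $j$-th entry $\ge 0$ for $j\in S$ (again because the polyhedron is unbounded in the ${\bf e}_{j}$ direction), and then $\mathbb{F}^{*}\subset\mathrm{CoSp}(\{\mathfrak{q}_{j}\})$ by Propositions \ref{facerep} and \ref{pr50} gives the same conclusion; but the direct argument above is shorter and self-contained.
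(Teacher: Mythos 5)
Your argument is correct and is essentially the paper's own proof: both test the linear functional $\langle{\bf u},\cdot\rangle$ along the ray $\mathfrak{m}+t{\bf e}_j\subset{\bf N}(\Lambda,S)$, $j\in S$, and use the lower bound coming from the definition of $\mathbb{F}^*$ to force $u_j\ge 0$. The only cosmetic difference is that the paper takes the base point $\mathfrak{m}\in\mathbb{F}$ (so $u_jt=\langle{\bf u},\mathfrak{m}+t{\bf e}_j-\mathfrak{m}\rangle\ge 0$ directly), whereas you take $\mathfrak{m}\in\Lambda$ and let $t\to\infty$.
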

\begin{proof}
Let $\mathfrak{m}\in \mathbb{F}$ and $j\in S$. Then
 $\mathfrak{m}+r{\bf e}_j\in {\bf N}(\Lambda,S)$ for all $r\ge 0$.
By Definition \ref{brg} and ${\bf u}\in\mathbb{F}^*$,  $u_jr=\langle{\bf
u},  \mathfrak{m}+r{\bf e}_j-\mathfrak{m} \rangle\ge 0$.
 Thus $u_j\ge 0$. See Figure \ref{graph5}.
 \end{proof}

\begin{lemma}\label{lem715}
 Let $S\subset N_n$ and $\Omega\subset\mathbb{Z}_+^n$ be a finite set.
  Suppose that $\mathbb{P}=\mathbb{P}(\Pi)$ with $\Pi=\{\pi_{\mathfrak{q}_j,r_j}:j=1,\cdots,N\}$
  is a   polyhedron given by ${\bf N}(\Omega,
S)$. Then
   $$\bigcup_{\mathbb{F} \in\mathcal{F}\left({\bf
N}(\Omega,S)\right)} \mathbb{F}^* = Z(S). $$
Moreover, $\bigcup_{\mathbb{F} \in\mathcal{F}\left({\bf
N}(\Omega,S)\right)} (\mathbb{F}^*)^{\circ} = Z(S)\setminus\{0\} $.
\end{lemma}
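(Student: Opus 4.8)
The plan is to reduce the statement to the cone-representation machinery already established. First I would note that ${\bf N}(\Omega,S)$ is indeed a polyhedron in the sense of Definition \ref{d12}, so Lemma \ref{b32} applies directly: writing $\Pi = \{\pi_{\mathfrak{q}_j,r_j}\}_{j=1}^N$ for its generator, we get
\begin{eqnarray*}
\bigcup_{\mathbb{F}\preceq{\bf N}(\Omega,S)}\mathbb{F}^* = \rm{CoSp}(\{\mathfrak{q}_j:j=1,\cdots,N\}).
\end{eqnarray*}
Thus the first assertion amounts to identifying $\rm{CoSp}(\{\mathfrak{q}_j\}_{j=1}^N)$ with $Z(S)$. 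The inclusion ``$\subseteq Z(S)$'' follows from Lemma \ref{lem132}: every ${\bf u}\in\mathbb{F}^*$ satisfies $u_j\ge 0$ for $j\in S$, and since $Z(S)$ is exactly the set of vectors with nonnegative $j$-th coordinate for $j\in S$ and arbitrary coordinate for $j\in N_n\setminus S$, we are done (note that for $j\in N_n\setminus S$ there is no constraint, so nothing to check). Alternatively one can argue directly: each generating normal $\mathfrak{q}_j$ of ${\bf N}(\Omega,S)$ lies in $Z(S)$ because ${\bf N}(\Omega,S)\supset \Omega+\mathbb{R}_+^S$ forces $\langle\mathfrak{q}_j,{\bf e}_i\rangle\ge 0$ for $i\in S$, and $Z(S)$ is a convex cone, hence closed under $\rm{CoSp}$.

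For the reverse inclusion ``$Z(S)\subseteq\bigcup_{\mathbb{F}}\mathbb{F}^*$'', by Lemma \ref{lem713} it suffices to show that every ${\bf e}\in Z(S)$ satisfies $\inf\{\langle{\bf x},{\bf e}\rangle : {\bf x}\in{\bf N}(\Omega,S)\} > -\infty$; then Lemma \ref{b32} (or directly the face $\pi_{{\bf e},\rho}\cap{\bf N}(\Omega,S)$ produced in its proof) places ${\bf e}$ in some $\mathbb{F}^*$. So fix ${\bf e}=(e_1,\dots,e_n)$ with $e_j\ge 0$ for $j\in S$ and $e_j=0$ for $j\in N_n\setminus S$ (this last is the definition of $Z(S)$ — I would double-check the paper's $\mathbb{R}_+^S$/$Z(S)$ conventions here, but the point is that the ``free'' directions $j\in N_n\setminus S$ carry coordinate $0$ in $Z(S)$, not an arbitrary real). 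Any ${\bf x}\in{\bf N}(\Omega,S)=\text{Ch}(\Omega+\mathbb{R}_+^S)$ can be written ${\bf x}=\sum_k c_k(\mathfrak{m}_k + \mathfrak{r}_k)$ with $\mathfrak{m}_k\in\Omega$, $\mathfrak{r}_k\in\mathbb{R}_+^S$, $c_k\ge 0$, $\sum c_k=1$. Then $\langle{\bf x},{\bf e}\rangle = \sum_k c_k\langle\mathfrak{m}_k,{\bf e}\rangle + \sum_k c_k\langle\mathfrak{r}_k,{\bf e}\rangle \ge \min_k\langle\mathfrak{m}_k,{\bf e}\rangle \ge \min_{\mathfrak{m}\in\Omega}\langle\mathfrak{m},{\bf e}\rangle > -\infty$, where the first inequality uses $\langle\mathfrak{r}_k,{\bf e}\rangle\ge 0$ (since $\mathfrak{r}_k$ is supported on $S$ with nonnegative entries and $e_j\ge 0$ on $S$) and $\Omega$ is finite. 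This gives the needed infimum bound, hence $Z(S)\subseteq\rm{CoSp}(\{\mathfrak{q}_j\}_{j=1}^N) = \bigcup_\mathbb{F}\mathbb{F}^*$.

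For the ``moreover'' statement, I would argue that $\bigcup_\mathbb{F}(\mathbb{F}^*)^\circ = \bigl(\bigcup_\mathbb{F}\mathbb{F}^*\bigr)\setminus\{0\} = Z(S)\setminus\{0\}$. The inclusion $\bigcup(\mathbb{F}^*)^\circ\subseteq Z(S)$ is immediate from $(\mathbb{F}^*)^\circ\subseteq\mathbb{F}^*$, and $0\notin(\mathbb{F}^*)^\circ$ by Definition \ref{dualface} (interiors consist of nonzero normals). For the reverse: given a nonzero ${\bf e}\in Z(S)$, by the above it lies in $\mathbb{F}^*$ for $\mathbb{F}=\pi_{{\bf e},\rho}\cap{\bf N}(\Omega,S)$, $\rho=\inf\langle{\bf x},{\bf e}\rangle$; but the key point is that ${\bf e}\in(\mathbb{G}^*)^\circ$ where $\mathbb{G}$ is the smallest face whose supporting cone contains ${\bf e}$, i.e. $\mathbb{G}=\pi_{{\bf e},\rho}\cap{\bf N}(\Omega,S)$ itself — indeed $\pi_{{\bf e},\rho}\cap{\bf N}(\Omega,S)=\mathbb{G}$ and ${\bf N}(\Omega,S)\setminus\mathbb{G}\subset(\pi_{{\bf e},\rho}^+)^\circ$ by construction, which is exactly the defining condition (\ref{brr2}) for ${\bf e}\in(\mathbb{G}^*)^\circ$. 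So every nonzero ${\bf e}\in Z(S)$ lies in some $(\mathbb{G}^*)^\circ$, completing the proof. The main obstacle I anticipate is purely bookkeeping: being careful that the normals $\mathfrak{q}_j$ generating ${\bf N}(\Omega,S)$ really do all lie in $Z(S)$ (so that $\rm{CoSp}$ of them stays inside $Z(S)$ and does not overshoot), and matching the sign/support conventions of $\mathbb{R}_+^S$, $Z(S)$, and $1_S$ from Definitions \ref{ded28} and \ref{dine}; the convexity and infimum arguments themselves are routine.
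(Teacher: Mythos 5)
Your proposal is correct and follows essentially the paper's route: both inclusions rest on Lemmas \ref{lem132}, \ref{lem713} and \ref{b32}, exactly as in the paper's proof. The only real differences are minor: for $Z(S)\subset\bigcup_{\mathbb{F}}\mathbb{F}^*$ the paper checks finiteness of $\inf\{\langle{\bf x},{\bf e}\rangle:{\bf x}\in{\bf N}(\Omega,S)\}$ only for the generators $\pm{\bf e}_k$ ($k\in N_n\setminus S$) and ${\bf e}_k$ ($k\in S$) of $Z(S)$ and then uses that $\text{CoSp}(\{\mathfrak{q}_j\})$ is a convex cone, while you verify it directly for arbitrary ${\bf e}\in Z(S)$; and for the ``moreover'' statement the paper appeals to (\ref{o550}) (via Definitions \ref{dfu5}--\ref{dualface} and (\ref{e8v})), whereas you argue directly that any nonzero ${\bf e}\in Z(S)$ satisfies ${\bf e}\in(\mathbb{G}^*)^{\circ}$ for $\mathbb{G}=\pi_{{\bf e},\rho}\cap{\bf N}(\Omega,S)$ with $\rho$ the (attained) infimum, which is an equally valid and somewhat more self-contained justification of the same fact. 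One slip should be corrected: in your parenthetical you claim that vectors of $Z(S)$ have coordinate $0$ in the directions $j\in N_n\setminus S$; by Definition \ref{dine} those coordinates range over all of $\mathbb{R}$ (it is $\mathbb{R}_+^{S}$, not $Z(S)$, that vanishes off $S$), so as literally written your verification of the reverse inclusion covers only $\mathbb{R}_+^{S}\subsetneq Z(S)$. Fortunately the estimate you give uses only $\langle\mathfrak{r}_k,{\bf e}\rangle\ge 0$, which requires $e_j\ge 0$ for $j\in S$ and nothing about $e_j$ for $j\notin S$ (since $\mathfrak{r}_k$ vanishes there), so dropping the restriction $e_j=0$ repairs the statement with no change to the computation.
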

\begin{proof}
It follows $\subset$ from Lemma \ref{lem132}.   We next show
$\supset$. Put
\begin{eqnarray*}
m_k&=&\min\{u_k:{\bf u}=(u_1,\cdots,u_n)\in \Omega\},\\
M_k &=&\max\{u_k:{\bf u}=(u_1,\cdots,u_n)\in \Omega\}
\end{eqnarray*}
By ${\bf N}(\Omega,S)=\rm{Ch}\{{\bf u}+\mathbb{R}_+^S:{\bf
u}\in\Omega\}$,
\begin{itemize}
\item[(1)] if $k\in N_n\setminus S$, then $m_k\le x_k\le M_k$ for all ${\bf x}=(x_1,\cdots,x_n)\in {\bf
N}(\Omega,S)$,
    \item[(2)] if $k \in S$, then $m_k\le x_k $ for all ${\bf x}=(x_1,\cdots,x_n)\in {\bf
    N}(\Omega,S)$.
\end{itemize}
Thus,
  if $k\in N_n\setminus S$,   then ${\bf e}_k,-{\bf e}_k\in\rm{CoSp}(\{\mathfrak{q}_j:j=1,\cdots,N\}) $ by Lemma \ref{lem713}
  and (1) above.
 If $k \in S$,  then ${\bf e}_k\in\rm{CoSp}(\{\mathfrak{q}_j:j=1,\cdots,N\}) $ by
 Lemma \ref{lem713}
 and (2) above.
  Hence $$Z(S)=\rm{CoSp}(\{\pm{\bf e}_k: k\in N_n\setminus S\}\cup \{{\bf e}_k: k\in  S\})
\subset  \rm{CoSp}(\{\mathfrak{q}_j:j=1,\cdots,N\}).$$ By Lemma
\ref{b32}, $Z(S) \subset \bigcup_{\mathbb{F}
\in\mathcal{F}\left({\bf N}(\Omega,S)\right)} \mathbb{F}^*$. By
Definitions \ref{dfu5}-\ref{dualface} together with  (\ref{e8v}),
\begin{eqnarray}\label{o550}
 \bigcup_{\mathbb{F} \in\mathcal{F}\left({\bf
N}(\Omega,S)\right)} (\mathbb{F}^*)^{\circ}=\bigcup_{\mathbb{F}
\in\mathcal{F}\left({\bf N}(\Omega,S)\right)}
\mathbb{F}^*\setminus\{0\}.
\end{eqnarray}
This implies the last statement.
 \end{proof}
\begin{proof}[Proof of Proposition \ref{74sj}]
 By    Lemma \ref{lem715},
 $$ \bigcup_{\mathbb{F}_\nu\in\mathcal{F}({\bf N}(\Lambda_\nu,S))}\mathbb{F}_\nu^*= Z(S) \ \ \text{for every $\nu=1,\cdots,d$}.$$
 Hence, by taking an intersection for $\nu=1,\cdots,d$,
 $$ \bigcup_{\mathbb{F} \in\mathcal{F}\left( \vec{ {\bf
N}}(\Lambda,S)\right)}   {\rm Cap}(\mathbb{F}^*) =
\bigcap_{\nu=1}^d\bigcup_{\mathbb{F}_\nu\in\mathcal{F}({\bf
N}(\Lambda_\nu,S))}\mathbb{F}_\nu^*=Z(S) .$$ The last
statement follows from (\ref{o550}).
\end{proof}

\subsection{Projection to Sphere; Boundary Deleted Neighborhood}
We show that
\begin{proposition}\label{l50}
Suppose that  $\text{rank}\left(\bigcup_{\nu=1}^d {\bf N}(\Lambda_\nu,S) \right)
\le n-1$ and the hypothesis of Main
Theorems \ref{main3} holds. Then for $1<p<\infty$,
\begin{equation*}
\left\|\sum_{J\in Z(S)}
H^{P_{\Lambda}}_J*f\right\|_{L^p(\mathbb{R}^d)}\,\lesssim\,\left\|
f\right\|_{L^p(\mathbb{R}^d) }
\end{equation*}
where $\mathcal{I}_J(P_{\Lambda},\xi)=\left(H^{P_{\Lambda}}_J\right)^{\wedge}(\xi)$.
\end{proposition}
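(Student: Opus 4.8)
The plan is to exploit the low rank hypothesis together with the cone decomposition of Proposition \ref{74sj} and the even set criterion of Main Theorem \ref{main3}. First I would write
\[
\sum_{J\in Z(S)}H^{P_\Lambda}_J=\sum_{\mathbb{F}\in\mathcal{F}(\vec{\bf N}(\Lambda,S))}\ \sum_{J\in {\rm Cap}(\mathbb{F}^*)\cap\mathbb{Z}^n}H^{P_\Lambda}_J,
\]
which is a \emph{finite} sum over $\mathbb{F}$, so it suffices to bound each inner sum over $J\in{\rm Cap}(\mathbb{F}^*)$ separately. Because $\text{rank}(\bigcup_\nu {\bf N}(\Lambda_\nu,S))\le n-1$, every face tuple $\mathbb{F}$ occurring here satisfies $\text{rank}(\bigcup_\nu\mathbb{F}_\nu)\le n-1$; moreover whenever ${\rm Cap}((\mathbb{F}^*)^\circ)\ne\emptyset$ we have $\mathbb{F}\in\mathcal{F}_{\rm lo}(\vec{\bf N}(\Lambda,S))$, so the hypothesis of Main Theorem \ref{main3} forces $\bigcup_\nu(\mathbb{F}_\nu\cap\Lambda_\nu)$ to be an even set, and Proposition \ref{pr5g} then gives $\mathcal{I}_J(P_{\mathbb{F}},\xi)\equiv 0$ for those $J$.

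Next I would run the reduction scheme advertised in (\ref{danbi5}): for each fixed $\mathbb{F}$, construct (as will be done in Section \ref{sec9}) a decreasing chain of faces $\mathbb{F}(0)=\vec{\bf N}(\Lambda,S)\supseteq\mathbb{F}(1)\supseteq\cdots\supseteq\mathbb{F}(N)=\mathbb{F}$ and telescope
\[
\sum_{J\in {\rm Cap}(\mathbb{F}^*)}H^{P_\Lambda}_J
=\sum_{s=1}^{N}\sum_{J\in {\rm Cap}(\mathbb{F}^*)}\bigl(H^{P_{\mathbb{F}(s-1)}}_J-H^{P_{\mathbb{F}(s)}}_J\bigr)
+\sum_{J\in {\rm Cap}(\mathbb{F}^*)}H^{P_{\mathbb{F}}}_J.
\]
The final term is either identically zero (when the even set condition applies, via Proposition \ref{pr5g}) or has full rank, in which case the multiplier estimate in Proposition \ref{propyy} — with decay bounds coming from the multidimensional van der Corput lemma as in Remark \ref{rem00} — delivers the $L^p$ bound. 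Each difference term $H^{P_{\mathbb{F}(s-1)}}_J-H^{P_{\mathbb{F}(s)}}_J$ is handled the same way: on the cone ${\rm Cap}(\mathbb{F}^*)$ the dominating property (\ref{dar}) makes the monomials in $\mathbb{F}(s-1)\setminus\mathbb{F}(s)$ small, giving a gain of a negative power $2^{-\varepsilon d(J)}$ in the transverse directions, so that the hypotheses (\ref{jr1v})–(\ref{jr1v1}) of Proposition \ref{propyy} are met and (\ref{jr1x}) applies with the maximal function bound from the multi-parameter maximal theorem supplying (\ref{jq}).

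The main obstacle I anticipate is the same one flagged in the introduction: keeping the overlapping cone condition $\bigcap_\nu(\mathbb{F}_\nu^*(s))^\circ\ne\emptyset$ alive along the entire chain $s=1,\dots,N$, so that each intermediate face tuple stays inside $\mathcal{F}_{\rm lo}$ and the van der Corput exponents do not degenerate. This is precisely why the chain in (\ref{pathg}) must be built carefully rather than taken arbitrarily — the size control of $J\cdot\mathfrak{m}$ for $\mathfrak{m}\in\mathbb{F}_\nu(s)$ and $J\in{\rm Cap}(\mathbb{F}^*)$ has to be uniform in $J$, and obtaining it requires the nesting of cones $\mathbb{F}_\nu^*(s)\subseteq\mathbb{F}_\nu^*(s+1)$ together with the rank bookkeeping. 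Once that chain is in hand, the rest is an application of the $L^p$ machinery of Section \ref{sec6}, assembled finitely many times.
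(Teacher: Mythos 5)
Your reduction to the evenness criterion is on the right track: under the rank hypothesis every face tuple has $\text{rank}\left(\bigcup_\nu\mathbb{F}_\nu\right)\le n-1$, and whenever the interiors of the cones overlap the hypothesis of Main Theorem \ref{main3} kills the model multiplier via Proposition \ref{pr5g}. The gap is in how you control what is left. Your dichotomy for the chain terms --- ``either the even set condition applies, or the rank is full and Proposition \ref{propyy} applies'' --- cannot be run here: since $\text{rank}\left(\bigcup_{\nu=1}^d{\bf N}(\Lambda_\nu,S)\right)\le n-1$, all exponents lie in a proper subspace, so the rank-$n$ hypothesis (\ref{jr1v1}) of Proposition \ref{propyy} can never be met, and that proposition is the only summation device in your outline. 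Consequently the difference terms $H^{P_{\mathbb{F}(s-1)}}_J-H^{P_{\mathbb{F}(s)}}_J$ are left without a bound that is summable in $J$: on the exact cone ${\rm Cap}(\mathbb{F}^*)$ the dominating property (\ref{dar}) is only the non-strict inequality $2^{-J\cdot\mathfrak{m}}\le 2^{-J\cdot\tilde{\mathfrak{m}}}$, and the exponential gain you claim degenerates as $J/|J|$ approaches the boundary of the cone. You flag this as ``the main obstacle'' but defer it to the chain of Section \ref{sec9}; that machinery, however, closes its estimates either through the full-rank case of Proposition \ref{propyy} (unavailable here) or through an appeal to the low-rank case, i.e.\ to Proposition \ref{l50} itself, so as it stands your argument is circular. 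A smaller untreated point: with the closed-cone decomposition of Proposition \ref{74sj} there remain tuples for which ${\rm Cap}(\mathbb{F}^*)\ne\emptyset$ while ${\rm Cap}((\mathbb{F}^*)^{\circ})=\emptyset$; for these neither the evenness argument nor anything else in your outline applies.

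The paper's proof avoids both problems by changing the decomposition rather than the summation device: it covers $Z(S)$ by the boundary-deleted $\epsilon$-neighborhoods $\mathbb{S}_{\epsilon}[\mathbb{F}_\nu^*]$ of the cones projected to the sphere (Definition \ref{deef}, Lemma \ref{lem840}). On such a region Lemma \ref{lem17778} gives a uniform lower bound $\frac{J}{|J|}\cdot(\mathfrak{m}-\tilde{\mathfrak{m}})\ge c>0$ for $\mathfrak{m}\notin\mathbb{F}_\nu$, so Lemma \ref{lemb5.2} yields $\left\|\left(H^{P_\Lambda}_J-H^{P_\mathbb{F}}_J\right)*f\right\|_{L^p}\lesssim 2^{-c|J|}\|f\|_{L^p}$, which is summed absolutely in $J$ with no rank condition whatsoever (Lemma \ref{lemgool}); and Lemma \ref{des} guarantees that the thickened cones intersect only when $\bigcap_\nu(\mathbb{F}_\nu^*)^{\circ}\ne\emptyset$, so every surviving main term lies in $\mathcal{F}_{\rm lo}$ and vanishes by evenness. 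If you wish to keep the exact-cone decomposition you must supply a substitute for this quantitative, boundary-uniform decay --- for instance by reassigning $J$ near the boundary of ${\rm Cap}(\mathbb{F}^*)$ to cones of larger faces --- which is exactly what the $\epsilon/M^{k}$ hierarchy in Definition \ref{deef} accomplishes.
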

To show Proposition \ref{l50}, we consider the projective cone of $\mathbb{F}^*$ to the sphere $\mathbb{S}^{n-1}$.
\begin{definition}
In stead of working with the cone $ \mathbb{F}^* $ of a face
$\mathbb{F}$, it is sometimes convenient to work with its
intersection $\mathbb{F}^*\cap\mathbb{S}^{n-1}$ with the sphere. We
denote it and its boundary  by
$$ \mathbb{S}[\mathbb{F}^*]=\mathbb{F}^*\cap\mathbb{S}^{n-1}\ \ \text{and}\
\
\partial\mathbb{S}[\mathbb{F}^*]=(\partial \mathbb{F}^*)\cap\mathbb{S}^{n-1}.$$
\end{definition}
Let $K\in \mathbb{S}^{n-1}$, then we define the
$\epsilon$-neighborhood of $K$ by
\begin{eqnarray*}
N_{\epsilon}(K)=\{x\in \mathbb{S}^{n-1}: |x-y|<\epsilon\ \text{for
some $y\in K$}\}.
\end{eqnarray*}
\begin{definition}\label{deef}[Boundary Deleted $\epsilon$-neighborhood of
$\mathbb{S} [\mathbb{F}^*] $ Let $\mathbb{P}$ be a polyhedron in
$\mathbb{R}^n$ of  $\text{dim} (\mathbb{P})=m< n$ and let
$\mathbb{S}[\mathbb{F}^*] \in\mathbb{S}^{n-1}$ with
$\mathbb{F}\in\mathcal{F}^{m-k}(\mathbb{P})$ where $k=0, \cdots,m$.
To give some width to $\mathbb{S}[\mathbb{F}^*] \setminus
N_{\epsilon}(\partial \mathbb{S}[\mathbb{F}^*] ))$, we define a
boundary deleted $\epsilon$-neighborhood
$\mathbb{S}_{\epsilon}[\mathbb{F}^*]$ by
\begin{eqnarray*}
\mathbb{S}_{\epsilon}[\mathbb{F}^*]&=&N_{\epsilon/M^k}\left(\mathbb{S}[\mathbb{F}^*])
\right)\ \
\text{for $k=0$ where $\mathbb{F}^*=\text{CoSp}(\Pi_b)=V^{\perp}(\mathbb{P})$}\,,\label{ua}\\
\mathbb{S}_{\epsilon}[\mathbb{F}^*]&=&N_{\epsilon/M^{k+1/3}}\left(\mathbb{S}[\mathbb{F}^*]\setminus
N_{\epsilon/M^{k-1/3}}(\partial \mathbb{S}[\mathbb{F}^*])\right)\ \
\text{for $1\le k\le m$ }\nonumber
\end{eqnarray*}
where $\mathbb{F}^*=\text{CoSp}(\Pi_a(\mathbb{F}))\oplus
V^{\perp}(\mathbb{P})=\text{CoSp}(\{\mathfrak{q}_j\}_{j=1}^{\ell})\oplus
V^{\perp}(\mathbb{P})$ as in (\ref{snn26}). Here $M$ will be chosen
to be a large positive number. For the case that   $\text{dim}
(\mathbb{P})=n$ with $\mathbb{F}\in\mathcal{F}^{m-k}(\mathbb{P})$
where $k=1, \cdots,n$,  we define a boundary deleted
$\epsilon$-neighborhood $\mathbb{S}_{\epsilon}[\mathbb{F}^*]$ by
\begin{eqnarray*}
\mathbb{S}_{\epsilon}[\mathbb{F}^*]&=&N_{\epsilon/M^k}\left(\mathbb{S}[\mathbb{F}^*])
\right)\ \
\text{for $k=1$ where $\mathbb{F}^*=\text{CoSp}(\mathfrak{q}_j)$}\,,\label{ua}\\
\mathbb{S}_{\epsilon}[\mathbb{F}^*]&=&N_{\epsilon/M^{k+1/3}}\left(\mathbb{S}[\mathbb{F}^*]\setminus
N_{\epsilon/M^{k-1/3}}(\partial \mathbb{S}[\mathbb{F}^*])\right)\ \
\text{for $2\le k\le n$ }\nonumber
\end{eqnarray*}
where $\mathbb{F}^*=\text{CoSp}(\{\mathfrak{q}_j\}_{j=1}^{\ell})$.
See $\mathbb{S}_{\epsilon}[\mathbb{F}^*(\mathfrak{q}_1)]$ and
$\mathbb{S}_{\epsilon}[\mathbb{F}^*(\mathfrak{q}_1,\mathfrak{q}_4)]$
 in the right side of Figure \ref{graph2}.
 \end{definition}

\begin{lemma}\label{lem840}
Let $\Omega\subset\mathbb{Z}_+^n$ and $\mathbb{P}={\bf
N}(\Omega,S)\subset\mathbb{R}^n$ be a polyhedron.   Then
$$ \bigcup_{\mathbb{F}\in\mathcal{F}({\bf
N}(\Omega,S))}
\mathbb{S}[\mathbb{F}^*]\subset\bigcup_{\mathbb{F}\in\mathcal{F}({\bf
N}(\Omega,S))} \mathbb{S}_{\epsilon}[\mathbb{F}^*].
$$
\end{lemma}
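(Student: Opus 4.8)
The plan is to show that the union of the closed spherical cones $\mathbb{S}[\mathbb{F}^*]$ over all faces $\mathbb{F}$ of ${\bf N}(\Omega,S)$ is covered by the union of the boundary-deleted neighborhoods $\mathbb{S}_{\epsilon}[\mathbb{F}^*]$, by induction on the codimension $k$ of the face (equivalently, on $\dim\mathbb{F}$, going from the top-dimensional $\mathbb{F}={\bf N}(\Omega,S)$ down to vertices). First I would fix a point $\mathfrak{q}\in\mathbb{S}[\mathbb{F}^*]$ for some face $\mathbb{F}$, and let $\mathbb{G}$ be the essential face of the cone polyhedron $\mathbb{F}^*$ containing $\{\mathfrak{q}\}$ — or, working on the face side, let $\mathbb{G}$ be the (unique) face of ${\bf N}(\Omega,S)$ with $\mathfrak{q}\in(\mathbb{G}^*)^{\circ}$, which exists by Proposition \ref{74sj} (more precisely by the decomposition $Z(S)\setminus\{0\}=\bigcup_{\mathbb{F}}({\bf N}(\Omega,S))\,\mathrm{Cap}((\mathbb{F}^*)^{\circ})$, here with $d=1$, i.e.\ $Z(S)\setminus\{0\}=\bigcup_{\mathbb{F}}(\mathbb{F}^*)^{\circ}$ from Lemma \ref{lem715}). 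So it suffices to show every $\mathfrak{q}\in\mathbb{S}[(\mathbb{G}^*)^{\circ}]$ lies in $\mathbb{S}_{\epsilon}[\mathbb{G}^*]$, and then invoke (\ref{o550}) / Lemma \ref{lem715} once more to see $\bigcup_{\mathbb{G}}\mathbb{S}[(\mathbb{G}^*)^{\circ}]$ already exhausts $\bigcup_{\mathbb{F}}\mathbb{S}[\mathbb{F}^*]$.

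The substance is then the containment $(\mathbb{G}^*)^{\circ}\cap\mathbb{S}^{n-1}\subset\mathbb{S}_{\epsilon}[\mathbb{G}^*]$ for each $\mathbb{G}$. Using the cone representation Proposition \ref{pr50}, write $\mathbb{G}^*=\mathrm{CoSp}(\{\mathfrak{q}_j\}_{j=1}^{\ell})\oplus V^{\perp}(\mathbb{P})$ and $(\mathbb{G}^*)^{\circ}=\mathrm{CoSp}^{\circ}(\{\mathfrak{q}_j\}_{j=1}^{\ell})\oplus V^{\perp}(\mathbb{P})$ (Example \ref{exam11}). A point $\mathfrak{q}$ in the \emph{open} cone is bounded away from $\partial\mathbb{G}^*$, because each proper subface of $\mathbb{G}^*$ is $\mathrm{CoSp}$ of a proper subcollection of the generators together with $V^{\perp}(\mathbb{P})$, so $\partial\mathbb{G}^*$ is a finite union of lower-dimensional such cones; compactness of $\mathbb{S}[(\mathbb{G}^*)^{\circ}]\cap\{\text{distance}\ge\delta\ \text{to}\ \partial\}$ and the finitely many faces $\mathbb{G}$ let me pick a uniform $\delta>0$. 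One then chooses the free parameter $M$ (and remembers $\epsilon$ is fixed but $M$ is "a large positive number" per Definition \ref{deef}) large enough that for every codimension $k$ and every $\mathbb{G}\in\mathcal{F}^{m-k}(\mathbb{P})$ the ratio bounds $\epsilon/M^{k-1/3}$ and $\epsilon/M^{k+1/3}$ are both $<\delta$; since there are finitely many faces and finitely many values of $k$, this is a single choice of $M$. With such $M$, any $\mathfrak{q}$ with $\mathrm{dist}(\mathfrak{q},\partial\mathbb{S}[\mathbb{G}^*])\ge\delta$ lies outside $N_{\epsilon/M^{k-1/3}}(\partial\mathbb{S}[\mathbb{G}^*])$ and is trivially within $N_{\epsilon/M^{k+1/3}}$ of itself, hence $\mathfrak{q}\in\mathbb{S}_{\epsilon}[\mathbb{G}^*]$; the $k=0$ (or $k=1$ in the full-dimensional case) case is even simpler since then $\mathbb{S}_{\epsilon}[\mathbb{G}^*]$ is just $N_{\epsilon/M^k}(\mathbb{S}[\mathbb{G}^*])\supset\mathbb{S}[\mathbb{G}^*]$.

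The induction itself is essentially bookkeeping: having covered all $\mathbb{S}[(\mathbb{G}^*)^{\circ}]$ for $\dim\mathbb{G}\ge j$, any $\mathfrak{q}\in\mathbb{S}[\mathbb{F}^*]$ with $\dim\mathbb{F}=j-1$ either lies in $(\mathbb{F}^*)^{\circ}$, handled directly above, or lies in $\partial\mathbb{F}^*=\bigcup_{\mathbb{G}\succneqq\mathbb{F}}\mathbb{G}^*$ (Lemma \ref{lem2525} identifies $\partial\mathbb{F}^*$ with cones of strictly larger faces $\mathbb{G}$), hence in some $\mathbb{S}[\mathbb{G}^*]$ with $\dim\mathbb{G}>\dim\mathbb{F}$, already covered. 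The main obstacle I anticipate is purely quantitative: verifying that a single $M$ works uniformly across all faces and all codimensions, i.e.\ that the nested radii $\epsilon/M^{k\pm1/3}$ in Definition \ref{deef} are calibrated so that the "shrunk then re-fattened" neighborhoods still cover the open cones — this needs the separation $\delta$ (distance from the closed spherical set $\mathbb{S}[(\mathbb{G}^*)^{\circ}]$, suitably truncated, to $\partial\mathbb{S}[\mathbb{G}^*]$) to dominate $\epsilon/M^{k-1/3}$ while $\epsilon/M^{k+1/3}$ stays small, which forces $M$ large depending only on $\epsilon$, $n$, and the finite face lattice of ${\bf N}(\Omega,S)$. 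Everything else reduces to the face/cone representation results (Propositions \ref{facerep}, \ref{pr50}) and the covering statement Lemma \ref{lem715} already proved.
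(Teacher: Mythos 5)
There is a genuine gap, and it sits at the load-bearing step. Your reduction via Lemma \ref{lem715} to the open pieces $\mathbb{S}[(\mathbb{G}^*)^{\circ}]$ is fine, but the claimed containment $(\mathbb{G}^*)^{\circ}\cap\mathbb{S}^{n-1}\subset\mathbb{S}_{\epsilon}[\mathbb{G}^*]$ is false for every face whose spherical cone has nonempty boundary (every proper face when $\dim\mathbb{P}=m<n$, every face of codimension $k\ge 2$ when $\dim\mathbb{P}=n$). By Definition \ref{deef}, $\mathbb{S}_{\epsilon}[\mathbb{G}^*]$ is obtained by first deleting the band $N_{\epsilon/M^{k-1/3}}(\partial\mathbb{S}[\mathbb{G}^*])$ and only then fattening by the strictly smaller radius $\epsilon/M^{k+1/3}$; hence every point of $\mathbb{S}_{\epsilon}[\mathbb{G}^*]$ lies at distance at least $\epsilon/M^{k-1/3}-\epsilon/M^{k+1/3}>0$ from $\partial\mathbb{S}[\mathbb{G}^*]$, for any choice of $M$. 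On the other hand the open spherical cone accumulates at $\partial\mathbb{S}[\mathbb{G}^*]$ (scale-invariance of $(\mathbb{G}^*)^{\circ}$ and density of the relative interior give points of $\mathbb{S}[(\mathbb{G}^*)^{\circ}]$ at arbitrarily small positive distance from the boundary), so those points are not in $\mathbb{S}_{\epsilon}[\mathbb{G}^*]$. Your supporting argument is circular: there is no uniform $\delta>0$ separating the open cone from its boundary, and the compact set on which you invoke compactness is already truncated at distance $\ge\delta$, so it only certifies the truncated part; choosing $M$ large afterwards cannot repair this, since the excluded band has positive width for every $M$.

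What closes the gap — and what the paper's proof does — is a descending induction on codimension in which the near-boundary part of $\mathbb{S}[\mathbb{F}^*]$ is absorbed not into $\mathbb{S}_{\epsilon}[\mathbb{F}^*]$ but into $\mathbb{S}_{\epsilon}[\mathbb{G}^*]$ for strictly larger faces $\mathbb{G}\succneqq\mathbb{F}$: by Lemma \ref{lem2525} and Proposition \ref{pr50}, $\partial\mathbb{S}[\mathbb{F}^*]\subset\bigcup_{\mathbb{G}\succneqq\mathbb{F}}\mathbb{S}[\mathbb{G}^*]$, and these superfaces carry a smaller index $k'$, hence larger radii $\epsilon/M^{k'\pm 1/3}$; a point of $\mathbb{S}[\mathbb{F}^*]$ within $\epsilon/M^{k-1/3}$ of $\partial\mathbb{S}[\mathbb{F}^*]$ fits inside the $\epsilon/M^{k'+1/3}$-fattening one level up because $k\ge k'+1$ gives $\epsilon/M^{k-1/3}\le\epsilon/M^{k'+2/3}$, with $M$ large enough that the finitely many such errors accumulated along a chain of faces are absorbed, and the recursion terminates at the top face, where $\mathbb{S}_{\epsilon}$ is a plain neighborhood with nothing deleted. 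Your final ``bookkeeping'' paragraph has the right skeleton (decompose $\mathbb{S}[\mathbb{F}^*]$ into open cone pieces of $\mathbb{F}$ and its superfaces), but at each stage it appeals to the false inclusion; replacing that appeal by the absorption of near-boundary points into the superface neighborhoods turns your sketch into the paper's argument.
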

\begin{proof}
We prove the case $\text{dim}(\mathbb{P})=m<n$.  By Definition \ref{deef},
$$\bigcup_{\mathbb{F}\in\mathcal{F}^{m}({\bf N}(\Omega,S))}
\mathbb{S}[\mathbb{F}^*]\subset\bigcup_{\mathbb{F}\in\mathcal{F}^{m}({\bf
N}(\Omega,S))} \mathbb{S}_{\epsilon}[\mathbb{F}^*]=
\bigcup_{\mathbb{F}\in\mathcal{F}^{m}({\bf N}(\Omega,S))}
N_{\epsilon/M}\left(\mathbb{S}[\mathbb{F}^*] \right).$$ Using this
and Definition \ref{deef},
$$\bigcup_{\mathbb{F}\in\mathcal{F}^{m-1}({\bf N}(\Omega,S))}
\mathbb{S}[\mathbb{F}^*]\subset
\left(\bigcup_{\mathbb{F}\in\mathcal{F}^{m-1}({\bf N}(\Omega,S))}
\mathbb{S}_{\epsilon}[\mathbb{F}^*]\right)\cup
\left(\bigcup_{\mathbb{F}\in\mathcal{F}^{m}({\bf N}(\Omega,S))}
\mathbb{S}_{\epsilon}[\mathbb{F}^*]\right).$$ Inductive application
of this inclusion completes the proof.
\end{proof}
Note that by Proposition  \ref{74sj},
\begin{eqnarray}\label{j1}
 \bigcup_{\mathbb{F}=(\mathbb{F}_\nu) \in\mathcal{F}\left( \vec{ {\bf N}}(P,S)\right)}
\bigcap_{\nu=1}^d\mathbb{S} [\mathbb{F}^*_\nu] =
Z(S)\cap\mathbb{S}^{n-1}.
 \end{eqnarray}
 By Lemma \ref{lem840} together with (\ref{j1}), we have
\begin{lemma}
Let $\Lambda=(\Lambda_\nu)$ with $\Lambda_\nu\subset\mathbb{Z}_+^n$
and $\vec{{\bf N}}(\Lambda,S)=\left({\bf N}(\Lambda_\nu,S)\right)$.
Then
$$Z(S)\cap\mathbb{S}^{n-1}\subset\bigcup_{\mathbb{F}=(\mathbb{F}_\nu)\in\mathcal{F}(\vec{{\bf
N}}(\Lambda,S))} \bigcap_{\nu=1}^d
 \mathbb{S}_{\epsilon}[\mathbb{F}^*_\nu]. $$
\end{lemma}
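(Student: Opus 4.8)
The proof is a direct consequence of the two facts stated immediately before it, combined once more with a covering argument of the type used in Lemma \ref{lem840}. The plan is as follows. First I would invoke identity (\ref{j1}), which says that every point of $Z(S)\cap\mathbb{S}^{n-1}$ lies in $\bigcap_{\nu=1}^d\mathbb{S}[\mathbb{F}^*_\nu]$ for some $d$-tuple $\mathbb{F}=(\mathbb{F}_\nu)\in\mathcal{F}(\vec{{\bf N}}(\Lambda,S))$. Thus it suffices to show that each such intersection of exact (un-thickened) spherical cones is contained in a union over faces of the boundary-deleted neighborhoods $\bigcap_{\nu=1}^d\mathbb{S}_\epsilon[\mathbb{F}^*_\nu]$.

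The key step is to intersect, coordinate $\nu$ by coordinate $\nu$, the inclusion furnished by Lemma \ref{lem840}. For each fixed $\nu\in\{1,\dots,d\}$, Lemma \ref{lem840} applied to $\Omega=\Lambda_\nu$ gives
\begin{eqnarray*}
\bigcup_{\mathbb{F}_\nu\in\mathcal{F}({\bf N}(\Lambda_\nu,S))}\mathbb{S}[\mathbb{F}_\nu^*]\subset\bigcup_{\mathbb{G}_\nu\in\mathcal{F}({\bf N}(\Lambda_\nu,S))}\mathbb{S}_\epsilon[\mathbb{G}_\nu^*].
\end{eqnarray*}
In particular, given the tuple $(\mathbb{F}_\nu)$ coming from (\ref{j1}), each $\mathbb{S}[\mathbb{F}_\nu^*]$ is covered by the union $\bigcup_{\mathbb{G}_\nu}\mathbb{S}_\epsilon[\mathbb{G}_\nu^*]$. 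Taking the intersection over $\nu=1,\dots,d$ and distributing the intersection over the (finite) unions, one obtains that $\bigcap_{\nu=1}^d\mathbb{S}[\mathbb{F}_\nu^*]$ is contained in the union, over all $d$-tuples $(\mathbb{G}_\nu)\in\mathcal{F}(\vec{{\bf N}}(\Lambda,S))$, of $\bigcap_{\nu=1}^d\mathbb{S}_\epsilon[\mathbb{G}_\nu^*]$. Combining this with (\ref{j1}) yields
$$Z(S)\cap\mathbb{S}^{n-1}\subset\bigcup_{\mathbb{G}=(\mathbb{G}_\nu)\in\mathcal{F}(\vec{{\bf N}}(\Lambda,S))}\bigcap_{\nu=1}^d\mathbb{S}_\epsilon[\mathbb{G}_\nu^*],$$
which is exactly the asserted inclusion (after renaming $\mathbb{G}$ back to $\mathbb{F}$).

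The only point requiring a word of care — and the one I would expect to be the main obstacle — is that the thickening parameter $\epsilon/M^{k}$ in Definition \ref{deef} depends on the codimension $k$ of the face, so one must check that a single choice of $M$ (sufficiently large, depending on $\Lambda$ and $S$ but not on the point) simultaneously makes the covering in Lemma \ref{lem840} valid for every $\nu$. Since there are only finitely many Newton polyhedra ${\bf N}(\Lambda_\nu,S)$, $\nu=1,\dots,d$, and each has finitely many faces, one takes $M$ to be the maximum of the finitely many constants produced by the $d$ separate applications of Lemma \ref{lem840}; this is harmless and is already implicit in the statement of that lemma. With this uniform $M$ fixed, the distribution of intersections over finite unions is purely set-theoretic, and the proof is complete.
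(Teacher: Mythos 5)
Your proposal is correct and follows essentially the same route as the paper, which deduces the lemma directly by combining (\ref{j1}) with Lemma \ref{lem840}, applied polyhedron by polyhedron, and then distributing the finite intersection over the finite unions. The remark about choosing $M$ uniformly over the finitely many polyhedra ${\bf N}(\Lambda_\nu,S)$ is a fine (and harmless) point of care that the paper leaves implicit.
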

Using this we can decompose for sufficiently small $\epsilon>0$,
\begin{eqnarray}\label{j2}
\sum_{J\in Z(S)} H^{P_{\Lambda}}_J=\sum_{\mathbb{F}
\in\mathcal{F}(\vec{{\bf N}}(\Lambda,S))} \sum_{J/|J|\in
Z\subset\bigcap_{\nu=1}^d
 \mathbb{S}_{\epsilon}[\mathbb{F}^*_\nu] } H^{P_{\Lambda}}_J.
\end{eqnarray}
In order to check the overlapping condition (\ref{4.1dd}), we need
the following lemma.
\begin{lemma}\label{des}
 Let
$\mathbb{F}_\nu\in\mathcal{F}({\bf N}(\Lambda_\nu,S))$ for
$\nu=1,\cdots,d$. Then for some sufficiently small $\epsilon>0$, we
have the property that $ \bigcap_{\nu=1}^d
 \mathbb{S}_{\epsilon}[\mathbb{F}^*_\nu] \ne \emptyset$
implies that $
\bigcap_{\nu=1}^d\mathbb{S}[(\mathbb{F}^*_\nu)^{\circ}]\ne
\emptyset$.
\end{lemma}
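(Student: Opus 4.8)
The plan is to prove the contrapositive-style statement by carefully tracking what it means for the boundary-deleted neighborhoods to intersect, as $\epsilon \to 0$. The key observation is that $\mathbb{S}_{\epsilon}[\mathbb{F}^*_\nu]$ is built, layer by layer in the codimension $k$ of the face, by taking small neighborhoods (of radius a fixed power of $\epsilon/M$) of $\mathbb{S}[\mathbb{F}^*_\nu]$ \emph{after deleting} a slightly larger neighborhood $N_{\epsilon/M^{k-1/3}}(\partial\mathbb{S}[\mathbb{F}^*_\nu])$ of the boundary. The deletion of the boundary is precisely what forces any point of $\bigcap_\nu \mathbb{S}_\epsilon[\mathbb{F}^*_\nu]$ to be (uniformly in $\epsilon$) bounded away from $\bigcup_\nu \partial\mathbb{S}[\mathbb{F}^*_\nu]$, hence to lie near the relative interiors $\mathbb{S}[(\mathbb{F}_\nu^*)^\circ]$.

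First I would argue by contradiction: suppose that for every $\epsilon > 0$ (in particular along a sequence $\epsilon_j \to 0$) we have $\bigcap_{\nu=1}^d \mathbb{S}_{\epsilon_j}[\mathbb{F}^*_\nu] \neq \emptyset$, yet $\bigcap_{\nu=1}^d \mathbb{S}[(\mathbb{F}^*_\nu)^\circ] = \emptyset$. Pick $x_j \in \bigcap_\nu \mathbb{S}_{\epsilon_j}[\mathbb{F}^*_\nu]$; by compactness of $\mathbb{S}^{n-1}$, pass to a subsequence with $x_j \to x_\infty$. Since each $\mathbb{S}_{\epsilon_j}[\mathbb{F}^*_\nu] \subset N_{\epsilon_j/M^{k_\nu + 1/3}}(\mathbb{S}[\mathbb{F}^*_\nu])$ and $\mathbb{S}[\mathbb{F}^*_\nu] = \mathbb{F}_\nu^* \cap \mathbb{S}^{n-1}$ is closed, we get $x_\infty \in \bigcap_\nu \mathbb{S}[\mathbb{F}^*_\nu]$. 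The next step is the crucial one: show $x_\infty \notin \partial\mathbb{S}[\mathbb{F}^*_\nu]$ for any $\nu$. Indeed, for each $j$, $x_j$ lies in the neighborhood of radius $\epsilon_j/M^{k_\nu+1/3}$ of a point $y_j \in \mathbb{S}[\mathbb{F}^*_\nu] \setminus N_{\epsilon_j/M^{k_\nu - 1/3}}(\partial\mathbb{S}[\mathbb{F}^*_\nu])$, so $\operatorname{dist}(x_j, \partial\mathbb{S}[\mathbb{F}^*_\nu]) \ge \epsilon_j/M^{k_\nu-1/3} - \epsilon_j/M^{k_\nu+1/3} = \epsilon_j M^{-k_\nu+1/3}(1 - M^{-2/3}) > 0$. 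This lower bound is itself of order $\epsilon_j$, so it degenerates, and a direct passage to the limit only gives $\operatorname{dist}(x_\infty, \partial\mathbb{S}[\mathbb{F}^*_\nu]) \ge 0$, which is useless. Here is where the chosen exponents matter: the whole point of the scale hierarchy $M^{k-1/3}$ versus $M^{k+1/3}$ across codimensions is that when a point simultaneously lies in $\mathbb{S}_\epsilon[\mathbb{F}^*_\nu]$ and is within $\epsilon_j/M^{k_\nu - 1/3}$ of $\partial\mathbb{S}[\mathbb{F}^*_\nu]$, then by $\partial\mathbb{S}[\mathbb{F}^*_\nu] \subset \bigcup_{\mathbb{G} \succneqq \mathbb{F}_\nu} \mathbb{S}[\mathbb{G}^*]$ (Lemmas~\ref{lem2525}, \ref{2323s}) it actually lands in $\mathbb{S}_\epsilon[\mathbb{G}^*]$ for a strictly lower-codimension face $\mathbb{G}$, so one may \emph{replace} $\mathbb{F}_\nu$ by a larger face and repeat. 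Thus after finitely many such replacements (the codimension strictly drops each time) one reaches faces $\mathbb{F}_\nu'$ with $x_j \in \mathbb{S}_{\epsilon_j}[(\mathbb{F}_\nu')^*]$ and $x_j$ genuinely far — by a scale $\epsilon_j/M^{k'_\nu}$ with the larger power dominating — from $\partial\mathbb{S}[(\mathbb{F}_\nu')^*]$; this forces $x_\infty \in \bigcap_\nu \mathbb{S}[((\mathbb{F}_\nu')^*)^\circ]$, contradicting the assumed emptiness once we note (by Lemma~\ref{lem2525} again, via $\mathbb{F}_\nu \preceq \mathbb{F}_\nu'$ so $(\mathbb{F}_\nu')^* \preceq \mathbb{F}_\nu^*$ hence $\mathbb{S}[((\mathbb{F}_\nu')^*)^\circ] \subset \mathbb{S}[\mathbb{F}_\nu^*]$) that having the cones of \emph{some} enlarged faces overlap in their interiors contradicts nothing unless we are careful — so in fact one should set up the statement to be proved as: for sufficiently small $\epsilon$, $\bigcap_\nu \mathbb{S}_\epsilon[\mathbb{F}^*_\nu] \ne \emptyset$ implies there exist faces $\mathbb{G}_\nu \succeq \mathbb{F}_\nu$ with $\bigcap_\nu \mathbb{S}[(\mathbb{G}^*_\nu)^\circ] \ne \emptyset$, and then observe that in the application the $\mathbb{F}_\nu$ in question are already maximal among the relevant collection, forcing $\mathbb{G}_\nu = \mathbb{F}_\nu$.

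The main obstacle will be making the "finitely many replacements" argument uniform in $\epsilon$: one must choose $M$ and then $\epsilon_0$ depending only on the (finite) combinatorial data of the polyhedra $\{{\bf N}(\Lambda_\nu, S)\}_\nu$ — the number of faces, the minimal angles between supporting hyperplanes of distinct faces, the minimal gap $\operatorname{dist}(\mathbb{S}[\mathbb{F}^*], \partial\mathbb{S}[\mathbb{G}^*])$ over all pairs with $\mathbb{F}^* \not\subset \partial\mathbb{G}^*$ — so that the nesting of scales $M^{-1/3} \cdot M^{-k}$ always strictly separates "being in the deleted neighborhood at level $k$" from "being within the error radius of the boundary." Concretely I would fix $M$ larger than $\bigl(\max$ number of faces$\bigr)/\bigl(\min$ geometric gap$\bigr)$ and then take $\epsilon_0$ smaller than that same minimal geometric gap; with these choices the inequality $\epsilon/M^{k-1/3} - \epsilon/M^{k+1/3} > 0$ together with the geometric separation of genuinely distinct faces rules out the degenerate scenario, and the contradiction closes. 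I would present this as a finite induction on $\sum_\nu \operatorname{codim}(\mathbb{F}_\nu)$, with the base case being all $\mathbb{F}_\nu$ of codimension zero (i.e.\ the full polyhedron or a facet, where $\mathbb{S}[\mathbb{F}^*]$ has empty boundary and the claim is immediate).
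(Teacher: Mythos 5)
There is a genuine gap, and you essentially flag it yourself. Your limiting argument along $\epsilon_j\to 0$ only produces a point $x_\infty$ lying in $\bigcap_\nu\mathbb{S}[((\mathbb{F}'_\nu)^*)^\circ]$ for some enlarged faces $\mathbb{F}'_\nu\succeq\mathbb{F}_\nu$, and this does not contradict $\bigcap_\nu\mathbb{S}[(\mathbb{F}_\nu^*)^\circ]=\emptyset$, since for $\mathbb{F}'_\nu\ne\mathbb{F}_\nu$ the set $\mathbb{S}[((\mathbb{F}'_\nu)^*)^\circ]$ sits inside $\partial\mathbb{S}[\mathbb{F}_\nu^*]$. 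Your proposed repair --- prove the weaker statement ``there exist $\mathbb{G}_\nu\succeq\mathbb{F}_\nu$ with $\bigcap_\nu\mathbb{S}[(\mathbb{G}_\nu^*)^\circ]\ne\emptyset$'' and then argue that in the application the $\mathbb{F}_\nu$ are maximal, forcing $\mathbb{G}_\nu=\mathbb{F}_\nu$ --- is unfounded: in (\ref{j2}) and (\ref{4.3orang}) Lemma \ref{des} is invoked for \emph{every} tuple $\mathbb{F}\in\mathcal{F}(\vec{{\bf N}}(\Lambda,S))$ occurring in the cone decomposition, and these tuples are in no sense maximal; the vanishing must be obtained for $\mathcal{I}_J(P_{\mathbb{F}},\xi)$ built from the very faces $\mathbb{F}_\nu$ at hand. (One could try to salvage your weaker statement downstream, using that evenness passes to subsets and that the rank hypothesis in Proposition \ref{l50} is global, but that is a modification of the surrounding argument which you neither state nor verify.) In addition, the replacement step itself is unproved: if $x_j$ is within $\epsilon_j/M^{k_\nu-1/3}$ of $\partial\mathbb{S}[\mathbb{F}_\nu^*]\subset\bigcup_{\mathbb{G}\succneqq\mathbb{F}_\nu}\mathbb{S}[\mathbb{G}^*]$, that places $x_j$ near some $\mathbb{S}[\mathbb{G}^*]$, but membership in $\mathbb{S}_{\epsilon_j}[\mathbb{G}^*]$ also requires avoiding the deleted neighborhood $N_{\epsilon_j/M^{k'-1/3}}(\partial\mathbb{S}[\mathbb{G}^*])$ at the \emph{larger} scale attached to $\mathbb{G}$, which you do not establish; so even the weaker statement is only sketched.

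For comparison, the paper proves the contrapositive for the fixed tuple in one stroke: if $\bigcap_\nu\mathbb{S}[(\mathbb{F}_\nu^*)^\circ]=\emptyset$, then the closed sets $\mathbb{S}[\mathbb{F}_\nu^*]\setminus N_{\epsilon/M^{k(\nu)-1/3}}(\partial\mathbb{S}[\mathbb{F}_\nu^*])$ (together with $\mathbb{S}[\mathbb{F}_\nu^*]$ for the improper faces, where $\mathbb{S}[\mathbb{F}_\nu^*]=\mathbb{S}[(\mathbb{F}_\nu^*)^\circ]$) already have empty intersection for every $\epsilon$, because each is contained in $\mathbb{S}[(\mathbb{F}_\nu^*)^\circ]$; one then thickens each by the much smaller radius $\epsilon/M^{k(\nu)+1/3}$, and for $M$ large and $\epsilon$ small the intersection remains empty, which by Definition \ref{deef} is exactly $\bigcap_\nu\mathbb{S}_\epsilon[\mathbb{F}_\nu^*]=\emptyset$. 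The quantitative issue you correctly worry about (all separations scale like $\epsilon$) is absorbed there by the factor $M^{2/3}$ between the deletion radius and the thickening radius, not by passing to a limit; no enlargement of faces ever occurs, so the conclusion is obtained for the same tuple $(\mathbb{F}_\nu)$, which is what the application requires. If you want to pursue your route, you must close the loop by showing that the limit point can be taken in $\bigcap_\nu\mathbb{S}[(\mathbb{F}_\nu^*)^\circ]$ itself, or else restate and reprove the downstream Proposition \ref{l50} to work with the enlarged faces.
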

\begin{proof}[Proof of lemma \ref{des}]
We prove the case $\text{dim}(\mathbb{P})=m<n$. It suffices to find
an $\epsilon>0$ such that
$$\bigcap_{\nu=1}^d\mathbb{S}[(\mathbb{F}^*_\nu)^{\circ}]= \emptyset\ \ \text{implies that}\ \
\bigcap_{\nu=1}^d  \mathbb{S}_{\epsilon}[\mathbb{F}^*_\nu] =
\emptyset.$$ Suppose that $d$-tuple $(\mathbb{F}_\nu)$ of faces are given so that
\begin{eqnarray}\label{m4}
\bigcap_{\nu=1}^d\mathbb{S}[(\mathbb{F}^*_\nu)^{\circ}]= \emptyset.
 \end{eqnarray}
Note that $\mathbb{S}[\mathbb{F}^*_\nu]\setminus N_{
\epsilon}(\partial \mathbb{S}[\mathbb{F}^*_\nu])\subset
\mathbb{S}[(\mathbb{F}^*_\nu)^{\circ}]$ for any positive number
$\epsilon>0$ and $\mathbb{S}[
\mathbb{F}^*_\nu]=\mathbb{S}[(\mathbb{F}^*_\nu)^{\circ}]$ for
$\text{dim}(\mathbb{F}_\nu)= m $. From this, we
splits  (\ref{m4}) into two smaller parts:
\begin{eqnarray}\label{m6}
\bigcap_{\nu;\,\text{dim}(\mathbb{F}_\nu)\le
m-1}\left(\mathbb{S}[\mathbb{F}^*_\nu]\setminus N_{
\epsilon/M^{k(\nu)-1/3}}(\partial
\mathbb{S}[\mathbb{F}^*_\nu])\right)\bigcap
\bigcap_{\nu;\,\text{dim}(\mathbb{F}_\nu)= m} \mathbb{S}[\mathbb{F}^*_\nu]
\subset \bigcap_{\nu=1}^d\mathbb{S}[(\mathbb{F}^*_\nu)^{\circ}]=
\emptyset.
 \end{eqnarray}
Since $\mathbb{S}[\mathbb{F}^*_\nu]$ and
$\mathbb{S}[\mathbb{F}^*_\nu]\setminus N_{
\epsilon/M^{k(\nu)-1/3}}(\partial \mathbb{S}[\mathbb{F}^*_\nu])$ are
 closed sets in $\mathbb{S}^{n-1}$ in (\ref{m6}), we take a little bit thicker
  intersection in $\nu=1,\cdots,d$ with some large $M$
and small $\epsilon$ to obtain  that
\begin{eqnarray*}
\bigcap_{\nu;\,\text{dim}(\mathbb{F}_\nu)\le m-1}
N_{\epsilon/M^{k(\nu)+1/3}}\left(\mathbb{S}[\mathbb{F}^*_\nu]\setminus
N_{\epsilon/M^{k(\nu)-1/3}}(\partial
\mathbb{S}[\mathbb{F}^*_\nu])\right)\bigcap
\bigcap_{\nu;\,\text{dim}(\mathbb{F}_\nu)= m}
\mathbb{S}_{\epsilon/M}[\mathbb{F}^*_\nu]= \emptyset.
 \end{eqnarray*}
By Definition \ref{deef}, we have
\begin{eqnarray*}
\bigcap_\nu  \mathbb{S}_{\epsilon}[\mathbb{F}^*_\nu] = \emptyset.
 \end{eqnarray*}
This proves Lemma \ref{des}. The case $\text{dim}(\mathbb{P})=n$
follows similarly.
\end{proof}
 \begin{lemma}\label{lem17778}
Let   $\mathbb{F}$ be a face of    $\mathbb{P}={\bf N}(\Omega,S)$
with $\text{dim}(\mathbb{P})=m<n$.  Suppose that
$\tilde{\mathfrak{m}}\in \mathbb{F}\cap\Omega$ and $\mathfrak{m}\in
\Omega\setminus \mathbb{F}$. Then
 for all $\mathfrak{p}\in
 \mathbb{S}_{\epsilon}[\mathbb{F}^*]$ with $\text{dim}(\mathbb{F})=m-k$ where $k=1,\cdots,m$,
\begin{eqnarray}
 \mathfrak{p}\cdot(\mathfrak{m}-\tilde{\mathfrak{m}})\ge c
>0 \ \ \text{where} \ \ c \ \text{is independent of $\mathfrak{p}$}.\label{gar}
\end{eqnarray}
\end{lemma}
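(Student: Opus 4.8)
The plan is to prove (\ref{gar}) first on the ``core'' $B:=\mathbb{S}[\mathbb{F}^*]\setminus N_{\epsilon/M^{k-1/3}}(\partial\mathbb{S}[\mathbb{F}^*])$ which sits inside $\mathbb{S}_{\epsilon}[\mathbb{F}^*]=N_{\epsilon/M^{k+1/3}}(B)$, and then to carry the bound over to all of $\mathbb{S}_{\epsilon}[\mathbb{F}^*]$ by a perturbation argument. (If $B=\emptyset$ then $\mathbb{S}_{\epsilon}[\mathbb{F}^*]=\emptyset$ and there is nothing to prove, so assume $B\neq\emptyset$.) Write $\mathfrak{w}:=\mathfrak{m}-\tilde{\mathfrak{m}}$. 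Since $\mathfrak{m},\tilde{\mathfrak{m}}\in\Omega\subset\mathbb{P}={\bf N}(\Omega,S)$ and $\tilde{\mathfrak{m}}\in\mathbb{F}$, one has $\mathfrak{w}\in\mathbb{P}-\tilde{\mathfrak{m}}\subset V(\mathbb{P})$, hence $\mathfrak{w}\perp V^{\perp}(\mathbb{P})$; moreover $\mathfrak{m}\in\Omega\setminus\mathbb{F}\subset\mathbb{P}\setminus\mathbb{F}$, so already by formula (\ref{brr2}) of Definition~\ref{dualface} we get $\mathfrak{p}\cdot\mathfrak{w}>0$ for every $\mathfrak{p}\in(\mathbb{F}^*)^{\circ}$. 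The real task is to make this lower bound quantitative on $B$, in fact of size comparable to $\epsilon/M^{k-1/3}$.

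For the quantitative bound I would use the facet representation. By Propositions~\ref{facerep}--\ref{pr50} and Remark~\ref{remfp}, write $\Pi_a(\mathbb{F})=\{\pi_{\mathfrak{q}_j}\}_{j=1}^{\ell}$ with $\mathfrak{q}_j\in V(\mathbb{P})$, $\mathbb{F}=\bigcap_{j=1}^{\ell}(\pi_{\mathfrak{q}_j}\cap\mathbb{P})$, and $\mathbb{F}^*=\text{CoSp}(\{\mathfrak{q}_j\}_{j=1}^{\ell})\oplus V^{\perp}(\mathbb{P})$, $(\mathbb{F}^*)^{\circ}=\text{CoSp}^{\circ}(\{\mathfrak{q}_j\}_{j=1}^{\ell})\oplus V^{\perp}(\mathbb{P})$. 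By Definition~\ref{d12} one has $\langle\mathfrak{q}_j,\cdot\rangle\ge r_j$ on $\mathbb{P}$, with equality exactly on $\pi_{\mathfrak{q}_j}\cap\mathbb{P}$; since $\tilde{\mathfrak{m}}\in\mathbb{F}$ lies in every $\pi_{\mathfrak{q}_j}\cap\mathbb{P}$, this gives $\mathfrak{q}_j\cdot\mathfrak{w}=\langle\mathfrak{q}_j,\mathfrak{m}\rangle-r_j\ge 0$ for all $j$, and as $\mathfrak{m}\notin\mathbb{F}$ there is an index $j_0$ with $\mathfrak{q}_{j_0}\cdot\mathfrak{w}\ge\delta$, where
\begin{equation*}
\delta:=\min\bigl\{\langle\mathfrak{q}_j,\mathfrak{m}'\rangle-r_j:\ 1\le j\le\ell,\ \mathfrak{m}'\in\Omega,\ \langle\mathfrak{q}_j,\mathfrak{m}'\rangle>r_j\bigr\}>0
\end{equation*}
(finiteness of $\Omega$). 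Now take $\mathfrak{p}_0\in B$; then $\mathfrak{p}_0\in(\mathbb{F}^*)^{\circ}$, so we may write $\mathfrak{p}_0=\sum_{j=1}^{\ell}a_j\mathfrak{q}_j+{\bf v}$ with all $a_j>0$ and ${\bf v}\in V^{\perp}(\mathbb{P})$, and using $\mathfrak{w}\perp{\bf v}$,
\begin{equation*}
\mathfrak{p}_0\cdot\mathfrak{w}=\sum_{j=1}^{\ell}a_j\,(\langle\mathfrak{q}_j,\mathfrak{m}\rangle-r_j)\ \ge\ a_{j_0}\,\delta .
\end{equation*}
Since $\partial\mathbb{F}^*$ is the union of the proper faces of the cone $\mathbb{F}^*$, i.e.\ of the sub-cones on which some coefficient vanishes, the constraint $\text{dist}(\mathfrak{p}_0,\partial\mathbb{S}[\mathbb{F}^*])\ge\epsilon/M^{k-1/3}$ built into $B$ lets us choose the representation with $\min_j a_j\ge c_1\,\epsilon/M^{k-1/3}$ for a constant $c_1>0$ depending only on the fixed vectors $\{\mathfrak{q}_j\}$ — the one routine linear-algebra point. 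Hence $\mathfrak{p}_0\cdot\mathfrak{w}\ge 2c$ on $B$, with $2c:=c_1\delta\,\epsilon/M^{k-1/3}>0$.

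Finally I would propagate from $B$ to $\mathbb{S}_{\epsilon}[\mathbb{F}^*]=N_{\epsilon/M^{k+1/3}}(B)$: given $\mathfrak{p}\in\mathbb{S}_{\epsilon}[\mathbb{F}^*]$ pick $\mathfrak{p}_0\in B$ with $|\mathfrak{p}-\mathfrak{p}_0|<\epsilon/M^{k+1/3}$, whence
\begin{equation*}
\mathfrak{p}\cdot\mathfrak{w}\ \ge\ \mathfrak{p}_0\cdot\mathfrak{w}-|\mathfrak{p}-\mathfrak{p}_0|\,|\mathfrak{w}|\ \ge\ 2c-\frac{\epsilon}{M^{k+1/3}}\,D,\qquad D:=\max_{\mathfrak{m}',\mathfrak{m}''\in\Omega}|\mathfrak{m}'-\mathfrak{m}''| ,
\end{equation*}
and because $\dfrac{\epsilon D/M^{k+1/3}}{2c}=\dfrac{D}{c_1\delta}\,M^{-2/3}\to 0$ as $M\to\infty$, fixing $M$ large enough — uniformly over the finitely many faces $\mathbb{F}$ and the finitely many pairs $\tilde{\mathfrak{m}},\mathfrak{m}$ — yields $\mathfrak{p}\cdot\mathfrak{w}\ge c>0$, which is (\ref{gar}) with $c$ independent of $\mathfrak{p}$. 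The case $\text{dim}(\mathbb{P})=n$ is identical, with $V^{\perp}(\mathbb{P})=\{0\}$ and $k\in\{2,\dots,n\}$. The step that genuinely needs care — and the reason the exponents $k\pm\frac{1}{3}$ appear in Definition~\ref{deef} — is the bookkeeping of the two scales: the inner deletion radius $\epsilon/M^{k-1/3}$ must be large enough to produce the lower bound $2c\asymp\epsilon/M^{k-1/3}$ through the coefficient estimate, while the outer thickening radius $\epsilon/M^{k+1/3}$ of the error term must be negligible against it, their ratio being $M^{-2/3}$.
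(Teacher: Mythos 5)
Your proof is correct and follows essentially the same route as the paper's: represent $\mathbb{F}^*$ via Proposition \ref{pr50}, extract $\mathfrak{q}_{j_0}\cdot(\mathfrak{m}-\tilde{\mathfrak{m}})\ge\delta>0$ and $\mathfrak{q}_j\cdot(\mathfrak{m}-\tilde{\mathfrak{m}})\ge 0$ from the facet description, lower-bound the conic coefficients on the boundary-deleted core by a multiple of the deletion radius (the paper's (\ref{9011}), stated there with the slightly weaker exponent $k-1/4$), kill the $V^{\perp}(\mathbb{P})$ component by orthogonality, and absorb the $\epsilon/M^{k+1/3}$-thickening error by taking $M$ large. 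The only difference is cosmetic: you make the two-scale bookkeeping (the $M^{-2/3}$ ratio) explicit where the paper simply says ``for sufficiently large $M$''.
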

\begin{remark}\label{rem6.1}
We shall use Lemma \ref{lem17778} for the estimate of the difference   $\mathcal{I}_J(P_{
\Omega},\xi)-\mathcal{I}_J(P_{\mathbb{F}},\xi)$ where
$J/|J|=\mathfrak{p}$. We do not need
Lemma \ref{lem17778} if $\text{dim}(\mathbb{F})=m-k$ with $k=0$, since  $\mathbb{F}={\bf N}(\Omega,S)$ for the case $\text{dim}(\mathbb{F})=m$ so that $\mathcal{I}_J(P_{
\Omega},\xi)-\mathcal{I}_J(P_{\mathbb{F}},\xi)\equiv 0$.
 For the case $\text{dim}(\mathbb{P})=n$, (\ref{gar}) also holds for all $
 \mathbb{S}_{\epsilon}[\mathbb{F}^*]$ with $\text{dim}(\mathbb{F})=n-k$ where
 $k=1,\cdots,n$.
\end{remark}
\begin{proof}[Proof of Lemma \ref{rem6.1}]
 By Proposition \ref{pr50},
 \begin{eqnarray*}
\mathbb{F}^*=\mathbb{F}^*|\mathbb{P} =\text{CoSp}(\{\mathfrak{q}_j\}_{j=1}^\ell
\}\cup\{\pm\mathfrak{n}_i\}_{i=1}^{n-m} )
\end{eqnarray*}
  where  $\{\mathfrak{q}_j\}_{j=1}^\ell
$ and $\{\pm\mathfrak{n}_i\}_{i=1}^{n-m} $ is defined as in
(\ref{4g00}). Here we can take  $\mathfrak{q}_j\in
\mathbb{S}^{n-1}$. Then
$$ \mathbb{S}[\mathbb{F}^*]=\text{CoSp}(\{\mathfrak{q}_j\}_{j=1}^\ell
\}\cup\{\pm\mathfrak{n}_i\}_{i=1}^{n-m}
)\cap\mathbb{S}^{n-1}=\left\{\mathfrak{q}\in\mathbb{S}^{n-1}:\mathfrak{q}=\sum_{j}c_j\mathfrak{q}_j+\mathfrak{r}\
\text{with}\ c_j>0\right\}$$ where $\mathfrak{r}=\sum_{i=1}^{n-m}
c_{i,\pm}\, (\pm \mathfrak{n}_i) \in V(\mathbb{P})^{\perp}$. Thus,
for sufficiently large $M$,
\begin{equation}\label{9011}
\mathbb{S}[\mathbb{F}^*]\setminus N_{\epsilon/M^{k -1/3}}(\partial
\mathbb{S}[\mathbb{F}^*])\subset
\left\{\mathfrak{q}\in\mathbb{S}^{n-1}:\mathfrak{q}=\sum_{j}c_j\mathfrak{q}_j+\mathfrak{r}\
\text{with}\ c_j> \frac{\epsilon}{ M^{k -1/4}}\right\}
\end{equation}
where $\mathfrak{r}\in V^{\perp}(\mathbb{P})$. By (\ref{gel3}),
\begin{eqnarray*}
\mathbb{F}= \bigcap_{j}^{\ell}\mathbb{F}_{j}\  \ \text{with}\ \
\mathbb{F}_{j}=\pi_{\mathfrak{q}_j}\cap\mathbb{P}.
\end{eqnarray*}
From $\tilde{\mathfrak{m}}\in \mathbb{F}$ and $\mathfrak{m}\in
\Omega\setminus \mathbb{F}$,
\begin{eqnarray*}
  \mathfrak{m} \in \mathbb{P}\setminus \mathbb{F}_{k} \ \ \text{ for
some $k\in\{1,\cdots,\ell\}$  and}\ \
 \tilde{\mathfrak{m}}\in\mathbb{F}\subset\mathbb{F}_{k}.
\end{eqnarray*}
Thus, by   Definition \ref{dualface},
\begin{eqnarray} \label{4ggvvb}
\mathfrak{q}_k\cdot (\mathfrak{m}-\tilde{\mathfrak{m}})>\eta_k>0\
\ \text{and}\ \
 \mathfrak{q}_j\cdot (\mathfrak{m}-\tilde{\mathfrak{m}})\ge 0\ \ \text{for
 $j=1,\cdots,\ell$}
 \end{eqnarray}
 where $\eta_k$ depends on $\Omega$. Let
$\mathfrak{q}\in \mathbb{S}[\mathbb{F}^*]\setminus N_{\epsilon/M^{k
-1/3}}(\partial \mathbb{S}[\mathbb{F}^*])$. Then by (\ref{9011}),
$$\text{$\mathfrak{q} =\sum_{j=1}^\ell c_j\mathfrak{q}_j+\mathfrak{r}  $
where $c_j\ge  \epsilon/M^{k -1/4} $ and $\mathfrak{r}\in
V^{\perp}(\mathbb{P})$.}$$ Thus, we use  (\ref{4ggvvb}) and the fact
$\mathfrak{r}\cdot(\mathfrak{m}-\tilde{\mathfrak{m}})=0$ (which
follows from $\mathfrak{r}\in V^{\perp}(\mathbb{P})$) to have
\begin{eqnarray}
\mathfrak{q}\cdot(\mathfrak{m}-\tilde{\mathfrak{m}})&=&\sum_{j=1}^\ell
c_j\mathfrak{q}_j\cdot(\mathfrak{m}-\tilde{\mathfrak{m}})
\nonumber\\
&=&c_k\mathfrak{q}_k\cdot
(\mathfrak{m}-\tilde{\mathfrak{m}})+\sum_{j=1,j\ne
k}^\ell c_j\mathfrak{q}_j\cdot (\mathfrak{m}-\tilde{\mathfrak{m}})\label{2882}\\
&\ge& c_k\mathfrak{q}_k\cdot
(\mathfrak{m}-\tilde{\mathfrak{m}})+0\ge (\epsilon/M^{k
-1/4})\eta_k\ge \epsilon\,\eta/M^{k -1/4}>0\nonumber
\end{eqnarray}
where $\eta=\min\{\eta_k:k=1,\cdots,\ell\}$. Finally, let
$$\mathfrak{p}\in
\mathbb{S}_{\epsilon}[\mathbb{F}^*]=N_{\epsilon/M^{k+1/3}}
\left(\mathbb{S}[\mathbb{F}^*]\setminus
N_{\epsilon/M^{k-1/3}}(\partial \mathbb{S}[\mathbb{F}^*])\right).$$
Then there exists $\mathfrak{q}\in \mathbb{S}[\mathbb{F}^*]\setminus
N_{\epsilon/M^{k-1/3}}(\partial \mathbb{S}[\mathbb{F}^*])$
satisfying (\ref{2882}) and
$|\mathfrak{p}-\mathfrak{q}|<\epsilon/M^{k+1/3}$. For sufficiently
large $M>0$, we have $
\mathfrak{p}\cdot(\mathfrak{m}-\tilde{\mathfrak{m}}) \ge \epsilon\,
\eta/(2M^{k -1/4}) $, which proves (\ref{gar}).
\end{proof}
In view of (\ref{j2}),
\begin{eqnarray}\label{ttjj}
\left\|\sum_{J\in Z(S)}
H^{P_{\Lambda}}_J*f\right\|_{L^p(\mathbb{R}^d)}\le \sum_{\mathbb{F}
\in\mathcal{F}(\vec{{\bf N}}(\Lambda,S))} \left\|\sum_{J/|J|\in
\bigcap_{\nu=1}^d
 \mathbb{S}_{\epsilon}[\mathbb{F}^*_\nu]}
H^{P_{\Lambda}}_J*f\right\|_{L^p(\mathbb{R}^d)}.
\end{eqnarray}
Furthermore,
\begin{lemma}\label{lemb5.2}
Let $\mathbb{P}_\nu={\bf N}(\Lambda_\nu,S)$ be a polyhedron in $\mathbb{R}^n$ with $\text{dim}(\mathbb{P}_\nu)=m_\nu$ and let $\mathbb{F}_\nu\preceq\mathbb{P}_\nu$ for each $\nu=1,\cdots,d$. Suppose that there exists $\nu\in\{1,\cdots,d\}$ such that
$\mathbb{F}_\nu\in\mathcal{F}^{m_\nu-k_\nu}(\mathbb{P}_\nu)$ with $k_\nu\ge 1$, that is, $\mathbb{F}_\nu \precneqq \mathbb{P}_\nu$.
 Then,
\begin{equation}\label{4.3g}
\left\|\left(H^{P_{\Lambda}}_J-H_J^{P_\mathbb{F}}\right)*f\right\|_{L^p(\mathbb{R}^d)
}\le 2^{-c|J|}\left\| f\right\|_{L^p(\mathbb{R}^d) }\ \ \text{for
$J/|J|\in \bigcap_{\nu=1}^d\mathbb{S}_{\epsilon}[\mathbb{F}_\nu^*]$}.
\end{equation}
\end{lemma}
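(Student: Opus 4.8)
The plan is to compare the symbols $\mathcal{I}_J(P_{\Lambda},\xi)$ and $\mathcal{I}_J(P_{\mathbb{F}},\xi)$ pointwise in $\xi$ and $J$, then invoke the $L^p$ machinery of Proposition \ref{propyy}. First I would recall from (\ref{4.0}) that both $H^{P_\Lambda}_J$ and $H^{P_{\mathbb{F}}}_J$ are given by oscillatory integrals $\int_{\mathbb{R}^n}e^{i\Phi(t)}\prod_\ell h(t_\ell)\,dt$, where for $H^{P_\Lambda}_J$ the phase involves all monomials $2^{-J\cdot\mathfrak{m}}\langle\xi,c_\mathfrak{m}\rangle t^{\mathfrak{m}}$ with $\mathfrak{m}\in\bigcup_\nu \Lambda_\nu$ (intersected with the whole polyhedron, i.e.\ no truncation), while for $H^{P_{\mathbb{F}}}_J$ only the monomials with $\mathfrak{m}\in\bigcup_\nu(\mathbb{F}_\nu\cap\Lambda_\nu)$ survive. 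The key geometric input is Lemma \ref{lem17778}: for each $\nu$ with $\mathbb{F}_\nu\precneqq\mathbb{P}_\nu$, if $\tilde{\mathfrak{m}}\in\mathbb{F}_\nu\cap\Lambda_\nu$ and $\mathfrak{m}\in\Lambda_\nu\setminus\mathbb{F}_\nu$, then $J/|J|=\mathfrak{p}\in\mathbb{S}_\epsilon[\mathbb{F}_\nu^*]$ forces $\mathfrak{p}\cdot(\mathfrak{m}-\tilde{\mathfrak{m}})\ge c>0$, hence $2^{-J\cdot\mathfrak{m}}\le 2^{-c|J|}\,2^{-J\cdot\tilde{\mathfrak{m}}}$. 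Thus every coefficient removed in passing from $P_\Lambda$ to $P_{\mathbb{F}}$ is exponentially (in $|J|$) dominated by a surviving coefficient of the same component $\nu$.

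Next I would fix $\xi$ and write $\mathcal{I}_J(P_{\Lambda},\xi)-\mathcal{I}_J(P_{\mathbb{F}},\xi) = \int_{\mathbb{R}^n}\bigl(e^{i\Phi_\Lambda(t)}-e^{i\Phi_{\mathbb{F}}(t)}\bigr)\prod_\ell h(t_\ell)\,dt$. On the support of $\prod_\ell h(t_\ell)$ we have $|t_\ell|\approx 1$, so $|e^{i\Phi_\Lambda}-e^{i\Phi_{\mathbb{F}}}|\le |\Phi_\Lambda-\Phi_{\mathbb{F}}| \le \sum_{\mathfrak{m}\in\bigcup(\Lambda_\nu\setminus\mathbb{F}_\nu)}2^{-J\cdot\mathfrak{m}}|\langle\xi,c_\mathfrak{m}\rangle|$. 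This is useful when each $2^{-J\cdot\mathfrak{m}}|\xi_\nu|$ is small. When it is large we instead use the decay estimate: by the multidimensional van der Corput lemma (as in (\ref{nniiii}) of Remark \ref{rem00}), each of $|\mathcal{I}_J(P_{\Lambda},\xi)|$ and $|\mathcal{I}_J(P_{\mathbb{F}},\xi)|$ is bounded by $C\min\{|2^{-J\cdot\tilde{\mathfrak{m}}_\nu}\xi_\nu|^{-\delta},1\}$ for any $\tilde{\mathfrak{m}}_\nu\in\mathbb{F}_\nu\cap\Lambda_\nu$. Interpolating the two bounds in the variable $u_\nu := 2^{-J\cdot\tilde{\mathfrak{m}}_\nu}\xi_\nu$ — small $u_\nu$ from the difference estimate (which gives a factor $2^{-c|J|}|u_\nu|$ using the domination above), large $u_\nu$ from the decay estimate (which gives $|u_\nu|^{-\delta}$) — yields $|\mathcal{I}_J(P_{\Lambda},\xi)-\mathcal{I}_J(P_{\mathbb{F}},\xi)|\lesssim 2^{-c'|J|}\min\{|u_\nu|^{\delta_2},|u_\nu|^{-\delta_1}\}$ for some $\delta_1,\delta_2>0$, uniformly over the cone $J/|J|\in\bigcap_\nu\mathbb{S}_\epsilon[\mathbb{F}_\nu^*]$.

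With this bound in hand, the operator $G_J := H^{P_\Lambda}_J - H^{P_{\mathbb{F}}}_J$ has multiplier $\widehat{G}_J(\xi)$ satisfying exactly the hypothesis (\ref{jr1v}) of Proposition \ref{propyy} — a min of powers $|2^{-J\cdot\mathfrak{q}_i}\xi_{\nu_i}|^{\pm\delta}$, here with an extra gain $2^{-c'|J|}$ — with the rank condition (\ref{jr1v1}) supplied by throwing in, for the remaining coordinate directions, the factors already built into $A_J$ (or, more simply, by augmenting with coordinate vectors $\pm{\bf e}_k$; on the cone $\bigcap_\nu\mathbb{S}_\epsilon[\mathbb{F}_\nu^*]$ one can always complete $\{\mathfrak{q}_i\}$ to a full-rank set since this cone lies inside $Z(S)$). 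Summing over the cone and using the extra exponential factor $2^{-c'|J|}$ collapses the geometric series, and the same bootstrap through Lemmas \ref{lem2.1}, \ref{lem2.2} and the maximal bound (\ref{jq}) — here $(\ref{jq})$ for $|G_J|$ follows from $|H^{P_\Lambda}_J|,|H^{P_{\mathbb{F}}}_J|\le C\mathcal{M}_\Lambda$ — upgrades $L^2$ to $L^p$ for $1<p<\infty$, giving the stated bound $2^{-c|J|}\|f\|_{L^p}$ for the single-scale operator (the displayed inequality (\ref{4.3g}) is per $J$, so one does not even need to sum). The main obstacle I anticipate is the bookkeeping in the interpolation step: ensuring that the gain $2^{-c'|J|}$ survives the passage from "all monomials'' to "face monomials'' uniformly in $\xi$, which hinges on correctly pairing each discarded exponent $\mathfrak{m}\in\Lambda_\nu\setminus\mathbb{F}_\nu$ with a surviving exponent $\tilde{\mathfrak{m}}_\nu$ \emph{of the same component} $\nu$ (so that the $\langle\xi,c_\mathfrak{m}\rangle$ and the van der Corput reference monomial share the variable $\xi_\nu$), and keeping track of the $d$ separate cone constraints simultaneously.
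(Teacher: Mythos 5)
Your symbol estimate is exactly the paper's argument: pick $\tilde{\mathfrak{m}}_\nu\in\mathbb{F}_\nu\cap\Lambda_\nu$ for each $\nu$ with $\mathbb{F}_\nu\precneqq\mathbb{P}_\nu$, use Lemma \ref{lem17778} to get $J/|J|\cdot(\mathfrak{m}-\tilde{\mathfrak{m}}_\nu)\ge\beta>0$ on $\bigcap_{\nu=1}^d\mathbb{S}_{\epsilon}[\mathbb{F}_\nu^*]$, bound the difference of the two oscillatory integrals by the mean value theorem, bound each of them by van der Corput as in (\ref{nniiii}), and take a geometric mean of the two bounds to conclude $\sup_{\xi}\left|\mathcal{I}_J(P_{\Lambda},\xi)-\mathcal{I}_J(P_{\mathbb{F}},\xi)\right|\lesssim 2^{-c|J|}$, uniformly over the truncated cone. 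This is precisely how the paper obtains the case $p=2$ (Plancherel), including the point you flag about pairing each discarded $\mathfrak{m}\in\Lambda_\nu\setminus\mathbb{F}_\nu$ with a surviving $\tilde{\mathfrak{m}}_\nu$ in the same component $\nu$.

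Where your plan has a genuine gap is the passage from $p=2$ to general $p$. Proposition \ref{propyy} is the wrong tool for this lemma: it bounds sums $\sum_{J\in Z}H_J*A_J*f$, it requires inserting the auxiliary operators $A_J$ (which do not occur in (\ref{4.3g})) and a full-rank family of exponents, and its output carries no per-$J$ factor, so it cannot produce the single-scale inequality $\|(H^{P_\Lambda}_J-H^{P_\mathbb{F}}_J)*f\|_{L^p}\le 2^{-c|J|}\|f\|_{L^p}$ that the lemma asserts; likewise the square-function bootstrap of Lemmas \ref{lem2.1}--\ref{lem2.2} and the maximal bound (\ref{jq}) are devices for handling families of $J$, not a fixed $J$. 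The missing step is much simpler and is what the paper does: $H^{P_\Lambda}_J-H^{P_\mathbb{F}}_J$ is the difference of two measures of total variation bounded uniformly in $J$ (both are $\int(\cdot)\prod_\ell h(t_\ell)\,dt$ against bounded cutoffs), hence trivially bounded on $L^1$ and on $L^\infty$ with constants independent of $J$; interpolating these endpoint bounds with the $L^2$ bound $2^{-c|J|}$ yields (\ref{4.3g}) for all $1<p<\infty$ (with a smaller constant $c$ depending on $p$). With that replacement your argument is complete; the rank-completion, the $A_J$'s and the maximal function are not needed for this lemma and only enter later, when such differences are summed over the cone.
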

\begin{proof}
Let $$B=\left\{\nu:\text{ $\mathbb{F}_{\nu}\precneqq {\bf
N}(\Lambda_\nu,S)$, that is, $
\mathbb{F}_\nu\in\mathcal{F}^{m_\nu-k_\nu}(\mathbb{P}_\nu)$ where $k_\nu\ge
1$}\right\}.$$  For each $\nu\in B$, choose
$\tilde{\mathfrak{m}}_{\nu}\in \mathbb{F}_{\nu}\cap\Lambda_{\nu}$
and $\mathfrak{m}\in \Lambda_{\nu}\setminus \mathbb{F}_{\nu}$. By
Lemma \ref{lem17778}, observe that for  there exists $\beta>0$ such
that
\begin{eqnarray}\label{kensi}
J/|J|\cdot(\mathfrak{m}-\tilde{\mathfrak{m}}_{\nu})>\beta\
\text{for all}\ \    J/|J|\in
 \mathbb{S}_{\epsilon}[\mathbb{F}_\nu^*]\  \ \text{with $\nu\in B$}
 \end{eqnarray}
  where $c$ is
independent of   $ J/|J|$.   By (\ref{nniiii}), the Fourier
multipliers of $H^{P_\Lambda}_J$ ($=H^{P_{{\bf N}(\Lambda,S)}}_J $)
and $H^{P_\mathbb{F}}_J$ are
\begin{equation}\label{4.3a}
\left|\mathcal{I}_J(P_{\Lambda},\xi) \right|, \
\left|\mathcal{I}_J(P_{\mathbb{F}},\xi) \right|\,\lesssim\,
\min\,\left\{\,\left|\,2^{-J\cdot \tilde{\mathfrak{m}}_\nu}\,
\xi_{\nu} a_{\tilde{\mathfrak{m}}_\nu}^{\nu} \right|^{-\delta}\,:\,
\tilde{\mathfrak{m}}_\nu\in \mathbb{F}_{\nu},\,\
\nu=1,\cdots,d\right\}\,.
\end{equation}
By the mean value theorem,
\begin{eqnarray}\label{4.3b}
\left|\mathcal{I}_J(P_{\Lambda},\xi)-
\mathcal{I}_J(P_{\mathbb{F}},\xi)\right| &\lesssim& \sum_{\nu\in
B}\sum_{\mathfrak{m}\in\Lambda_{\nu}\setminus \mathbb{F}_{\nu}}
\left|2^{-J\cdot \mathfrak{m}}\, \xi_{\nu}
a_{\mathfrak{m}}^{\nu}\,\right|^{\delta}.  \nonumber
\end{eqnarray}
  By (\ref{kensi})-(\ref{4.3b}),
\begin{eqnarray}\label{4.3b}
\sup_{\xi}\left|\mathcal{I}_J(P_{\Lambda},\xi)-
\mathcal{I}_J(P_{\mathbb{F}},\xi)\right|&\lesssim&
\sum_{\mathfrak{m}\in\Lambda_{\nu}\setminus \mathbb{F}_{\nu}}
\left|2^{-J\cdot (\mathfrak{m}-\tilde{\mathfrak{m}}_{\nu})}
 \,\right|^{\delta/2}\, \lesssim\,   2^{-\beta\delta|J|/2}.\nonumber
\end{eqnarray}
This implies that (\ref{4.3g}) holds for $p=2$. Interpolation with
$p=1$ or $p=\infty$ yields the range  $1<p<\infty$.
\end{proof}
We  sum up (\ref{4.3g}) of Lemma (\ref{lemb5.2}) to obtain the
following lemma.
\begin{lemma}\label{lemgool}
\begin{equation}\label{4.3orang}
\left\|\sum_{J\in Z(S)}
H^{P_{\Lambda}}_J*f\right\|_{L^p(\mathbb{R}^d)}\,\lesssim\,\left\|
f\right\|_{L^p(\mathbb{R}^d) }+\sum_{\mathbb{F} \in\mathcal{F}({\bf
N}(\Lambda,S))}\left\|\sum_{J/|J|\in
\bigcap_{\nu=1}^d\mathbb{S}_{\epsilon}[\mathbb{F}_\nu^*]} H_J^{P_\mathbb{F}}
*f\right\|_{L^p(\mathbb{R}^d) }.
\end{equation}
\end{lemma}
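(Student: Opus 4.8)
The plan is to obtain Lemma \ref{lemgool} as a direct consequence of the cone decomposition (\ref{j2})--(\ref{ttjj}), the difference estimate of Lemma \ref{lemb5.2}, and the finiteness of $\mathcal{F}(\vec{{\bf N}}(\Lambda,S))$. Starting from (\ref{ttjj}), I would fix a $d$-tuple $\mathbb{F}=(\mathbb{F}_\nu)\in\mathcal{F}(\vec{{\bf N}}(\Lambda,S))$ and, on the index set $\{J:\, J/|J|\in\bigcap_{\nu=1}^d\mathbb{S}_{\epsilon}[\mathbb{F}_\nu^*]\}$, split
\[
H^{P_{\Lambda}}_J=H^{P_{\mathbb{F}}}_J+\bigl(H^{P_{\Lambda}}_J-H^{P_{\mathbb{F}}}_J\bigr).
\]
By the triangle inequality, the $L^p$ norm of $\sum_J H^{P_{\Lambda}}_J*f$ over this set is at most the $L^p$ norm of $\sum_J H^{P_{\mathbb{F}}}_J*f$ --- which is exactly the $\mathbb{F}$-summand on the right-hand side of (\ref{4.3orang}) --- plus the remainder $\sum_{J/|J|\in\bigcap_\nu\mathbb{S}_{\epsilon}[\mathbb{F}_\nu^*]}\bigl\|(H^{P_{\Lambda}}_J-H^{P_{\mathbb{F}}}_J)*f\bigr\|_{L^p}$.

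It then remains to bound this remainder by a constant multiple of $\|f\|_{L^p}$, uniformly in $\mathbb{F}$. Here I would distinguish two cases. If $\mathbb{F}_\nu={\bf N}(\Lambda_\nu,S)$ for every $\nu$ --- the unique tuple all of whose entries are improper faces --- then $\mathbb{F}_\nu\cap\Lambda_\nu=\Lambda_\nu$, hence $P_{\mathbb{F}}=P_{\Lambda}$ and the difference vanishes identically, contributing nothing. Otherwise some component $\mathbb{F}_\nu$ is a proper face, i.e.\ $\mathbb{F}_\nu\in\mathcal{F}^{m_\nu-k_\nu}(\mathbb{P}_\nu)$ with $k_\nu\ge1$, so Lemma \ref{lemb5.2} applies and gives $\|(H^{P_{\Lambda}}_J-H^{P_{\mathbb{F}}}_J)*f\|_{L^p}\le 2^{-c|J|}\|f\|_{L^p}$ for all $J$ with $J/|J|\in\bigcap_\nu\mathbb{S}_{\epsilon}[\mathbb{F}_\nu^*]$; summing the convergent geometric series $\sum_{J\in\mathbb{Z}^n}2^{-c|J|}<\infty$ controls the remainder by $C_{\mathbb{F}}\|f\|_{L^p}$.

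Finally, I would add up over the finitely many $\mathbb{F}\in\mathcal{F}(\vec{{\bf N}}(\Lambda,S))$: the main terms reassemble into the sum appearing on the right of (\ref{4.3orang}), while all the remainder terms together are bounded by $\bigl(\sum_{\mathbb{F}}C_{\mathbb{F}}\bigr)\|f\|_{L^p}\lesssim\|f\|_{L^p}$, which yields (\ref{4.3orang}). I do not expect a serious obstacle: the analytic content --- the oscillatory-integral decay behind (\ref{4.3g}) --- has already been carried out in Lemmas \ref{lem17778} and \ref{lemb5.2}, so the remaining work is the bookkeeping of isolating the all-improper tuple and observing that the decay constant $c$ in Lemma \ref{lemb5.2}, though it depends on $\mathbb{F}$, is independent of $J$, which is precisely what (\ref{4.3g}) asserts.
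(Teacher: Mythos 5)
Your proposal is correct and follows essentially the same route as the paper: the paper obtains (\ref{4.3orang}) by starting from (\ref{ttjj}), splitting $H^{P_\Lambda}_J=H^{P_\mathbb{F}}_J+(H^{P_\Lambda}_J-H^{P_\mathbb{F}}_J)$ on each cone neighborhood, and summing the decay estimate (\ref{4.3g}) of Lemma \ref{lemb5.2} in $J$, with the all-improper tuple contributing nothing exactly as you note (cf.\ Remark \ref{rem6.1}). No gap; your bookkeeping of the finite sum over $\mathbb{F}$ and the convergence of $\sum_J 2^{-c|J|}$ is what the paper's one-line proof implicitly relies on.
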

  By using Lemma \ref{lemgool}, we are now able to obtain Proposition \ref{l50}: Under the assumption
\begin{eqnarray}\label{09000}
\text{rank}\left(\bigcup_{\nu=1}^d {\bf N}(\Lambda_\nu,S) \right)
\le n-1\end{eqnarray}
and the hypothesis of Main
Theorems \ref{main3},
 we have
 \begin{equation*}
\left\|\sum_{J\in Z(S)}
H^{P_{\Lambda}}_J*f\right\|_{L^p(\mathbb{R}^d)}\,\lesssim\,\left\|
f\right\|_{L^p(\mathbb{R}^d) }.
\end{equation*}
\begin{proof}[Proof of Proposition \ref{l50}]
Since there are finitely many
$\mathbb{F}=(\mathbb{F}_\nu)\in\mathcal{F}(\vec{{\bf
N}}(\Lambda,S))$ in (\ref{4.3orang}), it suffice to work with one fixed $\mathbb{F}$ on the right hand side.  By
(\ref{4.3orang}) and Lemma  \ref{des}, it suffices to show that
\begin{equation*}
\left\|\sum_{J/|J|\in
\bigcap_{\nu=1}^d\mathbb{S}_{\epsilon}[\mathbb{F}_\nu^*]}
H_J^{P_\mathbb{F}}*f\right\|_{L^p(\mathbb{R}^d) }\lesssim \left\|
f\right\|_{L^p(\mathbb{R}^d) }\ \ \text{ only if}\ \
 \bigcap\mathbb{S}[(\mathbb{F}_\nu^*)^{\circ}]\ne \emptyset.
\end{equation*}
Note that
\begin{itemize}
\item[(1)]
$\text{rank}\left(\bigcup_{\nu=1}^d \mathbb{F}_\nu
 \right)\le \text{rank}\left(\bigcup_{\nu=1}^d  {\bf N}(\Lambda_\nu,S)
 \right)\le  n-1$,
 \item[(2)]
  $\bigcap_{\nu=1}^d (\mathbb{F}_\nu^*)^{\circ}\supset
 \bigcap_{\nu=1}^d\mathbb{S}[(\mathbb{F}_\nu^*)^{\circ}]\ne\emptyset $.
 \end{itemize}
  From this and the evenness hypothesis of Main Theorem \ref{main3},
  it follows that $\bigcup_{\nu=1}^d
\mathbb{F}_\nu\cap\Lambda_\nu
 $ is an even set. Thus
 $\mathcal{I}_J(P_{\mathbb{F}},\xi)\equiv 0$ for all $J$.
\end{proof}
\subsection{Sufficiency Theorem}
We shall prove the sufficient part of Main Theorems \ref{main18} and
\ref{main3} by showing Theorem \ref{th60} below. Let
$\Lambda=(\Lambda_\nu)_{\nu=1}^d$ with $\Lambda_\nu\subset
\mathbb{Z}_+^n$ and $S\subset N_n$. To each
$\mathbb{F}\in\mathcal{F}(\vec{ {\bf N}}(\Lambda,S))$ and
$J\in\mathbb{Z}^n$, we recall (\ref{4.0}):
\begin{equation}\label{oocc}
\mathcal{I}_J(P_{\mathbb{F}},\xi)=\int e^{i\left(\xi_1
\sum_{\mathfrak{m}
  \in \mathbb{F}_1\cap\Lambda_1}c_{\mathfrak{m}}^12^{-J\cdot \mathfrak{m}}t^{\mathfrak{m}}+  \cdots+ \xi_d \sum_{\mathfrak{m}
  \in \mathbb{F}_d\cap\Lambda_d}c_{\mathfrak{m}}^d2^{-J\cdot \mathfrak{m}}t^{\mathfrak{m}}\right)}\prod h(t_\nu)dt_1\cdots
  dt_n
\end{equation}
where $\mathcal{I}_J(P_{\vec{ {\bf N}}(\Lambda,S)},\xi)=\mathcal{I}_J(P_{\Lambda},\xi)$. Then $\mathcal{I}_J(P_{\mathbb{F}},\xi)$
  is the Fourier multiplier of the operator
\begin{equation*}
 f\rightarrow H^{\mathbb{F}}_J*f.
\end{equation*}
\begin{theorem}\label{th60}
 Let
$\Lambda=(\Lambda_\nu)_{\nu=1}^d$ with $\Lambda_\nu\subset
\mathbb{Z}_+^n$ and  $S\subset \{1,\cdots,n\} $.  Suppose that for
$\mathbb{G}=(\mathbb{G}_\nu)\in\mathcal{F}(\vec{{\bf
N}}(\Lambda,S))$,
\begin{eqnarray}\label{nnii}
|\mathcal{I}_J(P_{\mathbb{G}},\xi)|\le
C\min\left\{|2^{-J\cdot\mathfrak{m}_\nu}\xi_\nu|^{-\delta}:\mathfrak{m}_\nu\in\mathbb{G}_\nu\cap\Lambda_\nu\
\text{for}\ \nu=1,\cdots,d\right\}.
\end{eqnarray}
Suppose that
 \begin{eqnarray*}
 \bigcup_{\nu=1}^d(\mathbb{F}_\nu\cap\Lambda_\nu)\ \text{
 is an even set for $\mathbb{F}\in \mathcal{F}_{\rm{lo}}(\vec{ {\bf
N}}(\Lambda,S)) $ }
\end{eqnarray*}
where $\mathcal{F}_{\rm{lo}}(\vec{ {\bf N}}(\Lambda,S))$ is defined
in Definition \ref{ird}.
 Then for any $\mathbb{F}\in\mathcal{F}(\vec{{\bf
N}}(\Lambda,S))$,
 \begin{eqnarray}\label{6760hha}
 \sum_{J  \in
 \rm{Cap}(\mathbb{F}^*)}\left|\mathcal{I}_J(P_\Lambda,\xi) \right|\le C_2\  \ \text{where \  $\rm{Cap}(\mathbb{F}^*)=\bigcap_{\nu=1}^d\mathbb{F}_\nu^*$},
 \end{eqnarray}
 and for $1<p<\infty$,
\begin{eqnarray}\label{6760}
 \left\|\sum_{J \in Z\subset
 \rm{Cap}(\mathbb{F}^*)}H^{P_\Lambda}_J*f\,
  \right\|_{L^p(\mathbb{R}^d) }\le   C_p \left\| f
  \right\|_{L^p(\mathbb{R}^d) }.
 \end{eqnarray}
\end{theorem}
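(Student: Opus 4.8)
The plan is to establish (\ref{6760hha}) and (\ref{6760}) simultaneously via the cone-reduction scheme of the introduction: telescope $\mathcal{I}_J(P_\Lambda,\xi)$ along a chain of tuples of faces so that all but one of the resulting pieces decay geometrically in $|J|$, while the surviving piece is either controlled by Proposition \ref{propyy} or is identically zero by Proposition \ref{pr5g}. Because of the phenomenon in Example \ref{ex32} (a cap $\bigcap_\nu\mathbb{F}^*_\nu$ may be non-empty while $\bigcap_\nu(\mathbb{F}^*_\nu)^{\circ}$ is empty), I would first refine the index set. Using that $\mathbb{F}^*_\nu$ is the disjoint union of the relative interiors $(\mathbb{G}^*_\nu)^{\circ}$ over faces $\mathbb{G}_\nu\succeq\mathbb{F}_\nu$, write $\text{Cap}(\mathbb{F}^*)\cap\mathbb{Z}^n$ as a finite disjoint union of the sets $Z_{\vec{\mathbb{G}}}=\bigcap_{\nu=1}^d(\mathbb{G}^*_\nu)^{\circ}\cap\mathbb{Z}^n$ over tuples $\vec{\mathbb{G}}=(\mathbb{G}_\nu)\in\mathcal{F}(\vec{{\bf N}}(\Lambda,S))$ with $\mathbb{G}_\nu\succeq\mathbb{F}_\nu$ and $\bigcap_\nu(\mathbb{G}^*_\nu)^{\circ}\ne\emptyset$. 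On each $Z_{\vec{\mathbb{G}}}$ the overlapping cone condition (\ref{4.1dd}) holds for $\vec{\mathbb{G}}$ by construction, so it suffices to prove, for each such $\vec{\mathbb{G}}$, that $\sum_{J\in Z_{\vec{\mathbb{G}}}}|\mathcal{I}_J(P_\Lambda,\xi)|\le C$ and $\bigl\|\sum_{J\in Z_{\vec{\mathbb{G}}}}H^{P_\Lambda}_J*f\bigr\|_{L^p}\lesssim\|f\|_{L^p}$.

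Fix such a $\vec{\mathbb{G}}$. One constructs (this is the content of Section \ref{sec9}) a chain
$$\vec{{\bf N}}(\Lambda,S)=\mathbb{F}(0)\supseteq\mathbb{F}(1)\supseteq\cdots\supseteq\mathbb{F}(N)=\vec{\mathbb{G}},\qquad \mathbb{F}^*_\nu(0)\subseteq\cdots\subseteq\mathbb{F}^*_\nu(N)=\mathbb{G}^*_\nu,$$
with each $\mathbb{F}_\nu(s)\cap\Lambda_\nu\ne\emptyset$, and with the key feature that for every $s$ and every $J\in Z_{\vec{\mathbb{G}}}$ one has the gap estimate $J\cdot(\mathfrak{m}-\tilde{\mathfrak{m}}_\nu)\ge c|J|$ whenever $\mathfrak{m}\in(\mathbb{F}_\nu(s-1)\setminus\mathbb{F}_\nu(s))\cap\Lambda_\nu$ and $\tilde{\mathfrak{m}}_\nu\in\mathbb{F}_\nu(s)\cap\Lambda_\nu$ --- the chain-level analogue of Lemma \ref{lem17778}. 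Granting this, I telescope $\mathcal{I}_J(P_\Lambda,\xi)=\sum_{s=1}^N\bigl(\mathcal{I}_J(P_{\mathbb{F}(s-1)},\xi)-\mathcal{I}_J(P_{\mathbb{F}(s)},\xi)\bigr)+\mathcal{I}_J(P_{\vec{\mathbb{G}}},\xi)$. For the $s$-th difference, both $\mathbb{F}(s-1)$ and $\mathbb{F}(s)$ are genuine faces, so the Van der Corput bound (\ref{nnii}) applies to each; combining it with the elementary inequality $|e^{iA}-e^{iB}|\le\min\{2,|A-B|\}$, the mean value theorem, and the gap estimate above yields $\sup_\xi|\mathcal{I}_J(P_{\mathbb{F}(s-1)},\xi)-\mathcal{I}_J(P_{\mathbb{F}(s)},\xi)|\lesssim 2^{-c|J|}$. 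Summing the geometric series over $J\in Z_{\vec{\mathbb{G}}}$ disposes of the difference terms in (\ref{6760hha}); for (\ref{6760}) one inserts a Littlewood--Paley factor $A_J$ and applies Proposition \ref{propyy} together with the maximal estimate for $\mathcal{M}_\Lambda$ to pass from $p=2$ to $1<p<\infty$.

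It remains to treat the terminal term $\mathcal{I}_J(P_{\vec{\mathbb{G}}},\xi)$ summed over $Z_{\vec{\mathbb{G}}}$. If $\text{rank}\bigl(\bigcup_{\nu}\mathbb{G}_\nu\bigr)=n$, choose $\mathfrak{m}_\nu\in\mathbb{G}_\nu\cap\Lambda_\nu$ whose span has full rank; then (\ref{nnii}) supplies exactly the decay hypothesis (\ref{jr1v}) with the full-rank condition (\ref{jr1v1}), and Proposition \ref{propyy} yields both $\sum_{J\in Z_{\vec{\mathbb{G}}}}|\mathcal{I}_J(P_{\vec{\mathbb{G}}},\xi)|\le C_2$ and the $L^p$ bound. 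If instead $\text{rank}\bigl(\bigcup_\nu\mathbb{G}_\nu\bigr)\le n-1$, then together with $\bigcap_\nu(\mathbb{G}^*_\nu)^{\circ}\ne\emptyset$ we get $\vec{\mathbb{G}}\in\mathcal{F}_{\rm lo}(\vec{{\bf N}}(\Lambda,S))$, so the evenness hypothesis of the theorem forces $\bigcup_\nu(\mathbb{G}_\nu\cap\Lambda_\nu)$ to be an even set, and Proposition \ref{pr5g} gives $\mathcal{I}_J(P_{\vec{\mathbb{G}}},\xi)\equiv0$. Summing over the finitely many $\vec{\mathbb{G}}$ completes the proof of (\ref{6760hha}) and (\ref{6760}).

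The main obstacle is the chain construction of Section \ref{sec9}: one must produce tuples $\mathbb{F}(s)$ that strictly decrease (so the telescope is finite), whose cones increase, which keep every $\mathbb{F}_\nu(s)\cap\Lambda_\nu$ non-empty, and --- most delicately --- which deliver the uniform gap estimate at each step for all $J\in Z_{\vec{\mathbb{G}}}$, even though such $J$ typically lies on the boundary of the smaller cones $\mathbb{F}^*_\nu(s)$ with $s<N$. This is precisely where the essential-face apparatus of Section \ref{secfar} (Definition \ref{dfu1} and Lemmas \ref{lem7k}--\ref{lemj7}) and the description of unbounded faces (Lemma \ref{dg23}) are needed, and where the overlapping-cone subtlety illustrated by Example \ref{ex32} forces the preliminary refinement into the pieces $Z_{\vec{\mathbb{G}}}$ rather than a single telescoping toward the given face $\mathbb{F}$.
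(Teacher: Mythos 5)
Your overall architecture (refine the cap, telescope along a descending chain of faces, kill the terminal term either by Proposition \ref{propyy} under full rank or by evenness plus Proposition \ref{pr5g} under low rank with overlapping interiors) matches the paper's strategy, but the step you lean on to sum the telescoped differences contains a genuine error. You claim that for every $J\in Z_{\vec{\mathbb{G}}}=\bigcap_\nu(\mathbb{G}^*_\nu)^{\circ}\cap\mathbb{Z}^n$ one has the gap estimate $J\cdot(\mathfrak{m}-\tilde{\mathfrak{m}}_\nu)\ge c|J|$, and hence $\sup_\xi|\mathcal{I}_J(P_{\mathbb{F}(s-1)},\xi)-\mathcal{I}_J(P_{\mathbb{F}(s)},\xi)|\lesssim 2^{-c|J|}$, summable as a geometric series. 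This is false: membership in the open cone gives strict positivity of $J\cdot(\mathfrak{m}-\tilde{\mathfrak{m}}_\nu)$, but no lower bound proportional to $|J|$, because $J$ may hug a facet of $\mathbb{G}^*_\nu$. Concretely, if $J=\alpha_1\mathfrak{p}_1+\alpha_2\mathfrak{p}_2$ with $\alpha_1\to\infty$, $\alpha_2=1$, and $\mathfrak{m}-\tilde{\mathfrak{m}}_\nu\perp\mathfrak{p}_1$, then $J\cdot(\mathfrak{m}-\tilde{\mathfrak{m}}_\nu)=O(1)$ while $|J|\to\infty$; such configurations occur for any face of codimension $\ge 2$. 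A bound of the form $2^{-c|J|}$ is available only in the boundary-deleted regime of Lemma \ref{lemb5.2} (where $J/|J|$ stays in $\mathbb{S}_{\epsilon}[\mathbb{F}^*_\nu]$), not on all of $Z_{\vec{\mathbb{G}}}$, so your geometric-series summation of the differences, and with it the whole telescoping step, collapses.

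This is exactly the difficulty the paper isolates in Section \ref{sec9}: it splits $\rm{Cap}(\mathbb{F}^*)$ into sectors $\alpha_1\ge\cdots\ge\alpha_N$ and proves in Proposition \ref{prop66aa} only the two-sided control $C_1\alpha_s\le J\cdot(\mathfrak{m}-\tilde{\mathfrak{m}}_\nu)$ for $\mathfrak{m}\in\mathbb{F}_\nu(s-1)\setminus\mathbb{F}_\nu(s)$ and $J\cdot(\mathfrak{n}-\tilde{\mathfrak{m}}_\nu)\le C_2\alpha_s$ for $\mathfrak{n}\in\mathbb{F}_\nu(s-1)$, i.e.\ a gain of $2^{-c\alpha_s}$ rather than $2^{-c|J|}$. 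Since $\alpha_s$ can stay bounded while $|J|\to\infty$, the difference terms are not absolutely summable by size alone; the paper instead interpolates the $2^{-c_1\alpha_s}$ gain against the growth bound $2^{c_2\alpha_s}|2^{-J\cdot\mathfrak{n}_\nu}\xi_\nu|$ to obtain $|2^{-J\cdot\mathfrak{n}_\nu}\xi_\nu|^{\pm\epsilon}$ for \emph{all} lattice points $\mathfrak{n}_\nu$ of $\mathbb{F}_\nu(s-1)$ (not merely exponents of $P_\Lambda$), and then invokes Proposition \ref{propyy} under the full-rank condition on $\bigcup_\nu\mathbb{F}_\nu(s-1)$ — which is why the telescoping must be stopped at the first index where that rank drops, with Lemma \ref{lemgg} supplying the overlapping interiors so that evenness annihilates the remaining term. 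A related, smaller omission: for your terminal full-rank case, (\ref{nnii}) only provides the decay half of hypothesis (\ref{jr1v}); the growth half $|2^{-J\cdot\mathfrak{q}_i}\xi_{\nu_i}|^{+\delta_2}$ has to be manufactured (in the paper, via the frequency-ordering multipliers $A_J^{\gamma}$, the mean value theorem, and the identity $J\cdot\mathfrak{n}_\nu=J\cdot\tilde{\mathfrak{m}}_\nu$ on the cap), and the full-rank family must be taken from the faces themselves rather than from $\Lambda_\nu$, using Lemma \ref{dg23} and Lemma \ref{ddcca}; your sketch does not address either point.
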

  By (\ref{nniiii}), the condition (\ref{nnii}) is satisfied
if $\Lambda_\nu$'s are mutually disjoint. Thus, Theorem \ref{th60} together with Proposition
\ref{74sj}  immediately leads the sufficient
part of Main Theorems \ref{main18} and  \ref{main3}.
\begin{remark}\label{rem91}
In the above, $C_2$ in (\ref{6760hha}) is majorized by
\begin{eqnarray}\label{1300}
C_R\prod_{\nu}\prod_{\mathfrak{m}\in\Lambda_\nu}
(|c^\nu_{\mathfrak{m}} |+1/|c^\nu_{\mathfrak{m}} |)^{1/R}\ \
\text{for some large $R$}.
\end{eqnarray}
\end{remark}
\medskip
\section{Descending  Faces v.s. Ascending Cones}\label{sec9}
  Suppose
that we are given
$\mathbb{F}=(\mathbb{F}_\nu)\in\mathcal{F}(\mathbb{P})$ with
$\mathbb{P}=(\mathbb{P}_\nu)$ where $\mathbb{P}_\nu={\bf N}(\Lambda_\nu,S)$.
To establish (\ref{6760hha}), as we have planed in (\ref{ide22}), (\ref{pathg}) and (\ref{danbi5}), we
shall choose an appropriate descending chain $\{\mathbb{F}(s):s=0,\cdots,N\}$ in $\mathcal{F}(\mathbb{P})$ such that
\begin{eqnarray}\label{path}
\quad\mathbb{P}=\mathbb{F}(0) \succeq
 \cdots\succeq\mathbb{F}(s) \succeq\cdots \succeq\mathbb{F}(N)=\mathbb{F} \ \
 \text{($\mathbb{F}_\nu(s-1)\succeq \mathbb{F}_\nu(s)\ \ \text{for each $\nu$}$)}.
\end{eqnarray}
We shall make the  estimates:
 \begin{eqnarray}\label{ab12}
 \sum_{J \in
 \rm{Cap}(\mathbb{F}^*)}\left|\mathcal{I}_J(P_{\mathbb{F}(s-1)},\xi)- \mathcal{I}_J(P_{\mathbb{F}(s)},\xi)\right|\le
 C \ \text{for}\ s=1,\cdots,N.
 \end{eqnarray}
 To perform this estimates successfully, we need to have the full rank condition for applying Proposition \ref{propyy}:
 \begin{eqnarray}\label{dif1}
\text{rank}\left( \bigcup_{\nu=1}^d\mathbb{F}_\nu(s-1)\right)=n.
 \end{eqnarray}
 Without the full rank condition, we need to have the overlapping condition for applying Proposition \ref{pr5g}:
   \begin{eqnarray}
 \text{Cap}(\mathbb{F}^*(s)^{\circ})=\bigcap_{\nu=1}^d(\mathbb{F}_\nu^*(s))^{\circ}\ne\emptyset.\label{micl}
 \end{eqnarray}
The following technical difficulty arises for each  (\ref{dif1}) and (\ref{micl}). \\
 {\bf Difficulty  satisfying overlapping property (\ref{micl})}.
By Lemma \ref{lem2525},  we see that
$$\text{Cap}(\mathbb{F}^*(s-1))\ne \emptyset
\Rightarrow\text{Cap}(\mathbb{F}^*(s))\ne \emptyset\ \ \text{
whenever}\ \   \mathbb{F}_\nu(s-1)\succeq \mathbb{F}_\nu(s) \
\text{for all $\nu$.}$$ However,
$\text{Cap}(\mathbb{F}^*(s-1)^{\circ})\ne \emptyset
\Rightarrow\text{Cap}(\mathbb{F}^*(s)^{\circ})\ne \emptyset$ is not
always true even if $\mathbb{F}_\nu(s-1)\succeq \mathbb{F}_\nu(s) $
for all $\nu$. To keep (\ref{micl}), we construct (\ref{path}) in
Definition \ref{dfu2} so that $\text{Cap}(\mathbb{F}^*(s)^{\circ})$
with every $s=1,\cdots,N$  contains some common portion of
$\text{Cap}(\mathbb{F}^*)$ in (\ref{cce}). For this, we use the
concept of the essential faces as defined  in Definition
\ref{dfu1}.\\
 {\bf Difficulty satisfying the full rank condition (\ref{dif1})}.
  Even if we have (\ref{dif1}), we might have $$\text{rank}\left( \bigcup_{\nu=1}^d\mathbb{F}_\nu(s-1)\cap\Lambda_\nu\right)\le n-1.$$
For this case, in order to satisfy (\ref{jr1v})  in Proposition
\ref{propyy},
 $\left|\mathcal{I}_J(P_{\mathbb{F}(s-1)},\xi)-
\mathcal{I}_J(P_{\mathbb{F}(s)},\xi)\right|$ in (\ref{ab12}) must be dominated by
  $ |2^{-J\cdot\mathfrak{m}}\xi_\nu|^c $ not only with
  $\mathfrak{m}\in \mathbb{F}_\nu(s-1)\cap\Lambda_\nu$
  exponents of polynomial $P_\Lambda$, but also with
  $\mathfrak{m}\in \mathbb{F}_\nu(s-1)$ not exponents of that polynomial.
  To fulfill this requirement, we shall make an efficient size control tool for
  $$\left\{2^{-J\cdot\mathfrak{m}}:\mathfrak{m}\in\mathbb{F}_\nu(s)\right\}_{s=1}^N\
  \ \text{with $J\in \rm{Cap}(\mathbb{F}^*) $ fixed},$$
  in Proposition \ref{prop66aa}.

 \subsection{Construction of Descending  Faces and Ascending Cones}
 Given a  a face
$\mathbb{F} =(\mathbb{F}_\nu)\in\mathcal{F}(\mathbb{P})$,   an
intersection $\bigcap_{\nu=1}^d\mathbb{F}_\nu^{*} $  of  cones
   is itself   a
cone type polyhedron. Thus there exist
$\mathfrak{p}_1,\cdots,\mathfrak{p}_N$ in
$\bigcap_{\nu=1}^d\mathbb{F}_\nu^{*}$:
\begin{equation}\label{cce}
\rm{Cap}(\mathbb{F}^{*})=\bigcap_{\nu=1}^d\mathbb{F}_\nu^{*}  =
\text{CoSp}(\mathfrak{p}_1,\cdots,\mathfrak{p}_N).
\end{equation}    In
order to show (\ref{6760hha}) and (\ref{6760}), we first split
$\rm{Cap}(\mathbb{F}^*)$ as
$$\rm{Cap}(\mathbb{F}^*)=\bigcup
\rm{Cap}(\mathbb{F}^*)(\sigma)$$ where union is over all permutations
$\sigma:\{1,\cdots,N\}\rightarrow \{1,\cdots,N\}$ and
$$\rm{Cap}(\mathbb{F}^*)(\sigma)=\{ \alpha_1\mathfrak{p}_{1}+\cdots+
\alpha_N\mathfrak{p}_{N}\in\rm{Cap}(\mathbb{F}^*)
:\alpha_{\sigma(1)} \ge \alpha_{\sigma(2)} \ge\cdots\ge
\alpha_{\sigma(N)} \ge 0\}.$$ To prove (\ref{6760hha}),  it suffices
to show for each  $\sigma $,
$$ \sum_{J \in
\rm{Cap}(\mathbb{F}^*)(\sigma)}\left|\mathcal{I}_J(P_\Lambda,\xi) \right|\le C_2.$$
 Since the order of $\mathfrak{p}_1,\cdots,\mathfrak{p}_N$ is random, it
suffices to work with only $\sigma=id$ where
\begin{eqnarray}\label{g58}
\quad \rm{Cap}(\mathbb{F}^*)(id)=\{
\alpha_1\mathfrak{p}_{1}+\cdots+\alpha_N\mathfrak{p}_{N}
\in\rm{Cap}(\mathbb{F}^*):\alpha_1\ge \alpha_2\ge\cdots\ge \alpha_N\ge
0\}.
\end{eqnarray}

\begin{definition}\label{ec}[Intersection of Cones]
Let $\mathbb{P}=(\mathbb{P}_\nu)$ so that
$\mathbb{P}_\nu=\mathbb{P}(\Pi^\nu)$ is a polyhedron in
$\mathbb{R}^n$ and
$\text{dim}(\mathbb{P}_\nu)=\text{dim}(V(\mathbb{P}_\nu))=m_\nu\le n
$. Suppose that $\Pi^\nu=\Pi_a^\nu\cup\Pi_b^\nu $ where
$\Pi_{a}^{\nu}=\{ \mathfrak{q}^\nu_j \}_{j=1}^{L_\nu}$ is a
generator for $\mathbb{P}_\nu$ in $V_{am}(\mathbb{P})$, and
$\Pi_b^\nu =\{ \pm\mathfrak{n}_i^\nu \}_{i=1}^{n-m_\nu}$ is a
generator for $V_{am}(\mathbb{P}_\nu)$ in $\mathbb{R}^n$ as in Lemma
\ref{tn19}.
 By
Propositions \ref{facerep}  and \ref{pr50} with Remark \ref{pitse},
a face $\mathbb{F}_\nu$    having an expression:
\begin{eqnarray*}
 \qquad\mathbb{F}_\nu=\bigcap_{j=1}^{N_\nu}
  \pi_{\mathfrak{p}^{\nu}_j}
 \cap \mathbb{P}_\nu \ \text{where $
 \{ \mathfrak{p}_j^{\nu} \}_{j=1}^{N_\nu}=\{ \mathfrak{q}_j^\nu \}_{j=1}^{\ell_\nu}\cup \{ \pm\mathfrak{n}_i^\nu \}_{i=1}^{n-m_\nu}
  $}
 \end{eqnarray*}
  has its cone of the form:
\begin{eqnarray}\label{ggnn}
\mathbb{F}^*_\nu &=& \rm{CoSp}\left(
\{\mathfrak{p}_j^\nu\}_{j=1}^{N_\nu}\right)\ \ \text{where}\  \
\Pi(\mathbb{F}_\nu)=\{\mathfrak{p}^{\nu}_j\}_{j=1}^{N_\nu}.
\end{eqnarray}
 Here we remind that
\begin{eqnarray}\label{nji}
\text{CoSp}
 \left(\{ \pm\mathfrak{n}_i^\nu
 \}_{i=1}^{n-m_\nu}\right)=V^{\perp}(\mathbb{P}_\nu)\ \ \text{and}\ \ \{ \mathfrak{q}_j^\nu \}_{j=1}^{\ell_\nu} \subset V(\mathbb{P}_\nu).
 \end{eqnarray}
\end{definition}
\begin{lemma}\label{lemk44}
 In proving (\ref{6760hha}), we may
assume  that
\begin{eqnarray*}
\rm{Cap}(\mathbb{F}^{*}) \cap (\mathbb{F}_\nu^*)^{\circ}\ne
\emptyset \ \ \text{for all $\nu$}.
\end{eqnarray*}
\end{lemma}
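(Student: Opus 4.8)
The plan rests on the observation that the summand $\mathcal{I}_J(P_\Lambda,\xi)$ appearing in (\ref{6760hha}) depends only on $J,\xi,\Lambda$ and not on the tuple $\mathbb{F}=(\mathbb{F}_\nu)$; hence the left side of (\ref{6760hha}) depends on $\mathbb{F}$ solely through the index set $\rm{Cap}(\mathbb{F}^*)=\bigcap_{\nu=1}^d\mathbb{F}_\nu^*$. Thus it suffices to show that every $\mathbb{F}\in\mathcal{F}(\mathbb{P})$ may be replaced by some $\mathbb{G}=(\mathbb{G}_\nu)\in\mathcal{F}(\mathbb{P})$ with $\rm{Cap}(\mathbb{G}^*)=\rm{Cap}(\mathbb{F}^*)$ and with $\rm{Cap}(\mathbb{G}^*)\cap(\mathbb{G}_\nu^*)^{\circ}\ne\emptyset$ for every $\nu$: proving (\ref{6760hha}) under this extra hypothesis then yields it in general. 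When $\rm{Cap}(\mathbb{F}^*)=\{0\}$ the sum in (\ref{6760hha}) is the single term $\mathcal{I}_0(P_\Lambda,\xi)$, bounded by a constant of the form (\ref{1300}); so I may and will assume $\rm{Cap}(\mathbb{F}^*)\ne\{0\}$ from now on.

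First I would show that a coordinate $\mu$ violating the overlap can always be enlarged. Suppose $\rm{Cap}(\mathbb{F}^*)\cap(\mathbb{F}_\mu^*)^{\circ}=\emptyset$. Since $\rm{Cap}(\mathbb{F}^*)\subset\mathbb{F}_\mu^*$, each $\mathfrak{q}\in\rm{Cap}(\mathbb{F}^*)$ lies in $\mathbb{F}_\mu^*$ but not in $(\mathbb{F}_\mu^*)^{\circ}$; reading the defining description (\ref{brg2}), for such a $\mathfrak{q}$ the hyperplane $\pi_{\mathfrak{q},\rho}$ with $\rho=\inf\{\langle\mathfrak{q},{\bf x}\rangle:{\bf x}\in\mathbb{P}_\mu\}$ cuts out a face $\pi_{\mathfrak{q},\rho}\cap\mathbb{P}_\mu$ that strictly contains $\mathbb{F}_\mu$. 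Therefore $\rm{Cap}(\mathbb{F}^*)$ is contained in the union of the cones $\mathbb{G}_\mu^*$ over all faces $\mathbb{G}_\mu$ of $\mathbb{P}_\mu$ with $\mathbb{F}_\mu\precneqq\mathbb{G}_\mu$; by Lemma \ref{lem2525} each such cone is a proper face of the polyhedron $\mathbb{F}_\mu^*$, so $\rm{Cap}(\mathbb{F}^*)\subset\partial\mathbb{F}_\mu^*$. (Here $\mathbb{F}_\mu$ must be a proper face of $\mathbb{P}_\mu$, since otherwise $\mathbb{F}_\mu^*=V^{\perp}(\mathbb{P}_\mu)$ with $(\mathbb{F}_\mu^*)^{\circ}=V^{\perp}(\mathbb{P}_\mu)\setminus\{0\}$ by (\ref{e8v}), which would force $\rm{Cap}(\mathbb{F}^*)=\{0\}$.) Because $\rm{Cap}(\mathbb{F}^*)=\bigcap_\nu\mathbb{F}_\nu^*$ is convex, Lemma \ref{lem411d} places it inside a single facet $\mathbb{H}\precneqq\mathbb{F}_\mu^*$; by Lemma \ref{lem2525} together with the cone representation of Proposition \ref{pr50} this facet equals $\mathbb{G}_\mu^*$ for some face $\mathbb{G}_\mu$ of $\mathbb{P}_\mu$ with $\mathbb{F}_\mu\precneqq\mathbb{G}_\mu$, hence $\text{dim}(\mathbb{G}_\mu)>\text{dim}(\mathbb{F}_\mu)$. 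Replacing the $\mu$-th component of $\mathbb{F}$ by $\mathbb{G}_\mu$ gives a tuple $\widehat{\mathbb{F}}$; since $\mathbb{G}_\mu^*\subset\mathbb{F}_\mu^*$ (Lemma \ref{lem2525} again) forces $\rm{Cap}(\widehat{\mathbb{F}}^*)\subset\rm{Cap}(\mathbb{F}^*)$, while the inclusion $\rm{Cap}(\mathbb{F}^*)\subset\mathbb{G}_\mu^*$ gives the reverse, I obtain $\rm{Cap}(\widehat{\mathbb{F}}^*)=\rm{Cap}(\mathbb{F}^*)$.

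Finally I would iterate this step. Each replacement strictly increases $\sum_{\nu=1}^d\text{dim}(\mathbb{F}_\nu)$, a quantity bounded above by $\sum_{\nu=1}^d\text{dim}(\mathbb{P}_\nu)$, so after finitely many replacements the process terminates at a tuple $\mathbb{G}$ with $\rm{Cap}(\mathbb{G}^*)=\rm{Cap}(\mathbb{F}^*)\ne\{0\}$ admitting no violating coordinate, that is, with $\rm{Cap}(\mathbb{G}^*)\cap(\mathbb{G}_\nu^*)^{\circ}\ne\emptyset$ for all $\nu$; this is the claim. The main obstacle I anticipate is the second paragraph: converting the mere failure of the interior condition (\ref{brg2}) into the inclusion $\rm{Cap}(\mathbb{F}^*)\subset\partial\mathbb{F}_\mu^*$ in the polyhedral sense, and verifying that a genuine facet of $\mathbb{F}_\mu^*$ containing $\rm{Cap}(\mathbb{F}^*)$ actually exists --- this is exactly where the reduction to $\rm{Cap}(\mathbb{F}^*)\ne\{0\}$ is used.
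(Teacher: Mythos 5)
Your proposal is correct and follows essentially the same route as the paper: after discarding the trivial case $\rm{Cap}(\mathbb{F}^*)=\{0\}$, a coordinate $\mu$ whose cone interior misses the cap forces $\rm{Cap}(\mathbb{F}^*)\subset\partial\mathbb{F}_\mu^*$, Lemma \ref{lem411d} places the (convex) cap inside a single proper face of $\mathbb{F}_\mu^*$, which by the cone--face duality (Lemma \ref{lem2525}, Proposition \ref{pr50}) is $\mathbb{G}_\mu^*$ for a strictly larger face $\mathbb{G}_\mu\succneqq\mathbb{F}_\mu$, and one iterates without changing $\rm{Cap}(\mathbb{F}^*)$. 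The only cosmetic differences are that you terminate by a dimension count while the paper stops the iteration upon reaching the improper face $\mathbb{P}_\nu$ (using $(\mathbb{P}_\nu^*)^{\circ}=\mathbb{P}_\nu^*\setminus\{0\}$), and you make explicit the (correct) observation that the sum in (\ref{6760hha}) depends on $\mathbb{F}$ only through its cap.
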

\begin{proof}
If the cone $\rm{Cap}(\mathbb{F}^{*})$ is given by $\bigcap_{\nu=1}^d\mathbb{F}_\nu^{*}=\{0\}$,
 the proof of (\ref{6760hha}) is
done since there is only one term $J=0$ in the summation.
Thus we assume that the cone $\rm{Cap}(\mathbb{F}^{*})$  is not $\{0\}$, that is,
\begin{eqnarray}
\left(\bigcap_{\nu=1}^d\mathbb{F}_\nu^{*}
\right)\cap\mathbb{S}^{n-1}\ne \emptyset. \label{m89}
\end{eqnarray}
Let $\left(\bigcap_{\nu=1}^d\mathbb{F}_\nu^{*} \right) \cap
(\mathbb{F}_\nu^*)^{\circ}= \emptyset$, say $\nu=1$. Then from  $
\bigcap_{\nu=1}^d\mathbb{F}_\nu^{*} \subset (\mathbb{F}_1^*)$ and
Definitions \ref{dfu5} and \ref{dfu4}, we have $
\bigcap_{\nu=1}^d\mathbb{F}_\nu^{*}
 \subset\partial\mathbb{F}_1^*$. By Lemma \ref{lem411d}
there exists $ \mathbb{F}_{1,1}^* \precneqq \mathbb{F}_1^*$ (so $
\mathbb{F}_1
  \precneqq \mathbb{F}_{1,1} $) such that $ \bigcap_{\nu=1}^d\mathbb{F}_\nu^{*}  \subset
\mathbb{F}_{1,1}^*$. So we  replace $\mathbb{F}_1^*$ in $
\bigcap_{\nu=1}^d\mathbb{F}_\nu^{*} $
  by $\mathbb{F}_{1,1}^*$ with keeping (\ref{m89}). If
$\left(\bigcap_{\nu=1}^d\mathbb{F}_\nu^{*} \right) \cap
(\mathbb{F}_1^*)^{\circ}\ne\emptyset$ where
$\mathbb{F}_1=\mathbb{F}_{1,1}$,  we stop. Otherwise
$\left(\bigcap_{\nu=1}^d\mathbb{F}_\nu^{*} \right) \cap
(\mathbb{F}_1^*)^{\circ}=\emptyset$ where
$\mathbb{F}_1=\mathbb{F}_{1,1}$, we repeat this process until we
have $\left(\bigcap_{\nu=1}^d\mathbb{F}_\nu^{*} \right) \cap
(\mathbb{F}_1^*)^{\circ}\ne\emptyset$ satisfying (\ref{m89}) where
$\mathbb{F}_1$ is taken as new $\mathbb{F}_{1,k}$ such that $$
\mathbb{F}_{1,k}^* \precneqq
\cdots\precneqq\mathbb{F}_{1,1}^*\precneqq\mathbb{F}_1^*\,\ (
\mathbb{F}_1 \precneqq
\mathbb{F}_{1,2}\precneqq\cdots\precneqq\mathbb{F}_{1,k}).$$ Assume
we arrive at the final round with $\mathbb{F}_{1,k}=\mathbb{P}_1$ in
(\ref{m89}). By (\ref{m89}) and Remark \ref{pitse} that tells
$(\mathbb{P}^*_1)^{\circ}= \mathbb{P}^*_1\setminus\{0\}$,
$$\left(\mathbb{P}^*_1\cap\left(\bigcap_{\nu=2}^d\mathbb{F}_\nu^{*}\right)
\right)\cap
(\mathbb{P}^*_1)^{\circ}=\left(\mathbb{P}^*_1\cap\left(\bigcap_{\nu=2}^d\mathbb{F}_\nu^{*}\right)
\right)\cap  \mathbb{P}^*_1 \setminus\{0\} \ne \emptyset.$$ Hence our process  ends
up with  $\left(\bigcap_{\nu=1}^d\mathbb{F}_\nu^{*} \right) \cap
(\mathbb{F}_1^*)^{\circ}\ne\emptyset$ satisfying (\ref{m89}) where
 $\mathbb{F}_1$ is taken possibly as a  face between the original
  $\mathbb{F}_1$ and the entire $\mathbb{P}_1$. By applying the same argument to $\nu=2,\cdots,d$,
we finish the proof.
\end{proof}
\begin{remark}
Lemma \ref{lemk44} combined with Lemma \ref{lem7k} tells us
$\text{Cap}(\mathbb{F}^*)$ is an essential part of
$\mathbb{F}_\nu^*$ in the sense that
$F(\text{Cap}(\mathbb{F}^*)|\mathbb{F}_\nu^*)=\mathbb{F}_\nu^*$.
This
 is used for proving   Lemma \ref{bnnn}.
 \end{remark}

\begin{definition}\label{dfu2}[Essential Cone]
Suppose  that polyhedrons $\mathbb{P}=(\mathbb{P}_\nu)$ and
faces $\mathbb{F}=(\mathbb{F}_\nu)$ are given as in Definition
\ref{ec}. Fix the order of $\{ \mathfrak{p}_j \}_{j=1}^{N}$ in
(\ref{cce}) and note that $\{ \mathfrak{p}_j \}_{j=1}^{N},
\{ \pm\mathfrak{n}_i^\nu \}_{i=1}^{n-m_\nu}\subset \mathbb{F}_\nu^*$.
Then for each
  $\nu=1,\cdots,d$ and $s=0,1,\cdots,N$,
 define a cone:
\begin{eqnarray}\label{45asi}
 \mathcal{C}_\nu(s)=\text{CoSp}
 \left(\{ \pm\mathfrak{n}_i^\nu \}_{i=1}^{n-m_\nu} \cup \{ \mathfrak{p}_j \}_{j=1}^{s}
 \right)\subset \mathbb{F}_\nu^*
 \end{eqnarray}
 where from (\ref{nji}) and (\ref{cce})
 \begin{eqnarray}
 \mathcal{C}_\nu(0)&=&\text{CoSp}
 \left(\{ \pm\mathfrak{n}_i^\nu
 \}_{i=1}^{n-m_\nu}\right)=V^{\perp}(\mathbb{P}_\nu),\label{oja}\\
 \mathcal{C}_\nu(N)&=&\text{CoSp}
 \left(\{ \pm\mathfrak{n}_i^\nu \}_{i=1}^{n-m_\nu} \cup\{  \mathfrak{p}_j \}_{j=1}^{N}
 \right)   \supset\text{CoSp}
 \left(\{  \mathfrak{p}_j \}_{j=1}^{N}
 \right)=\text{Cap}(\mathbb{F}^*) .\label{ee11}
 \end{eqnarray}
In view of Definition \ref{dfu1}, for each $s=0,\cdots,N$ and
$\nu=1,\cdots,d$, define
 the smallest face  $\mathbb{F}_\nu^*(s)$ of $\mathbb{F}^*_\nu$ containing $\mathcal{C}_\nu(s)$
 by \begin{eqnarray}\label{ee111}
 \mathbb{F}_\nu^*(s)=
F\left(\mathcal{C}_\nu(s) |\mathbb{F}^{*}_\nu\right),
\end{eqnarray}
and call $\mathbb{F}_\nu^*(s)$ the essential cone of
  $\mathbb{F}^*_\nu$ containing $\mathcal{C}_\nu(s)$. See the third
  picture of Figure \ref{graph3}.
\end{definition}
\begin{lemma}\label{lemgg}
Suppose that the  $\mathbb{F}_\nu^*(s)$ are defined above.    Then
for $s=1,\cdots,N$,
$$
\bigcap_{\nu=1}^d\left(\mathbb{F}_\nu^*(s)\right)^{\circ} \ne
\emptyset.$$
\end{lemma}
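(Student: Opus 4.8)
The plan is to show that for each $s=1,\dots,N$ the intersection $\bigcap_{\nu=1}^d(\mathbb{F}_\nu^*(s))^\circ$ contains a concrete, $s$-independent ``witness'' direction built from the generators $\{\mathfrak{p}_j\}_{j=1}^N$ of $\mathrm{Cap}(\mathbb{F}^*)$ fixed in (\ref{cce}). Recall that $\mathcal{C}_\nu(s)=\mathrm{CoSp}(\{\pm\mathfrak{n}_i^\nu\}_{i=1}^{n-m_\nu}\cup\{\mathfrak{p}_j\}_{j=1}^s)$ and $\mathbb{F}_\nu^*(s)=F(\mathcal{C}_\nu(s)\mid\mathbb{F}_\nu^*)$ is the essential face, so by Lemma \ref{lemj7} we have $\mathcal{C}_\nu(s)^\circ\subset(\mathbb{F}_\nu^*(s))^\circ$. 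Hence it suffices to produce, for each $s\ge 1$, a single point lying in $\mathcal{C}_\nu(s)^\circ$ for \emph{every} $\nu$ simultaneously. First I would invoke Lemma \ref{lem77k}, applied to the convex set $\mathbb{B}=\mathcal{C}_\nu(s)$ inside the polyhedron $\mathbb{F}_\nu^*$: it gives $\mathrm{Ch}(\mathcal{C}_\nu(s))\cap(\mathbb{F}_\nu^*(s))^\circ\ne\emptyset$; combined with the fact that $\mathcal{C}_\nu(s)^\circ$ (in the sense of Example \ref{exam11}, the span with all strictly positive coefficients on the $\mathfrak{p}_j$'s and arbitrary real coefficients on the $\pm\mathfrak{n}_i^\nu$'s) is the relative interior of the cone, any point with all coefficients $\alpha_1,\dots,\alpha_s>0$ lies in $\mathcal{C}_\nu(s)^\circ$.

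The key observation is that the generators $\{\pm\mathfrak{n}_i^\nu\}$ are specific to $\nu$ but the generators $\{\mathfrak{p}_j\}$ are common to all $\nu$, since $\mathfrak{p}_j\in\mathrm{Cap}(\mathbb{F}^*)=\bigcap_\nu\mathbb{F}_\nu^*$. So for fixed $s$ consider the vector
\begin{eqnarray*}
\mathfrak{q}(s)=\sum_{j=1}^s\mathfrak{p}_j.
\end{eqnarray*}
I claim $\mathfrak{q}(s)\in\mathcal{C}_\nu(s)^\circ$ for every $\nu$. Indeed $\mathfrak{q}(s)$ is a combination of $\{\mathfrak{p}_j\}_{j=1}^s$ with all coefficients equal to $1>0$ and zero coefficients on the $\pm\mathfrak{n}_i^\nu$; by Lemma \ref{lemj7} (translation/relative-interior statement) and Example \ref{exam11} this puts $\mathfrak{q}(s)$ in the relative interior of the cone $\mathcal{C}_\nu(s)=\mathrm{CoSp}(\{\pm\mathfrak{n}_i^\nu\}\cup\{\mathfrak{p}_j\}_{j=1}^s)$ — here one must be slightly careful: the relative interior of $\mathrm{CoSp}(B)$ when $B$ contains the spanning set $\{\pm\mathfrak{n}_i^\nu\}$ of a subspace $W$ together with vectors $\mathfrak{p}_1,\dots,\mathfrak{p}_s$ consists of $\{w+\sum\alpha_j\mathfrak{p}_j: w\in W,\ \alpha_j>0\}$, so taking $w=0$ and $\alpha_j=1$ works. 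Thus $\mathfrak{q}(s)\in\bigcap_{\nu=1}^d\mathcal{C}_\nu(s)^\circ\subset\bigcap_{\nu=1}^d(\mathbb{F}_\nu^*(s))^\circ$, which is therefore nonempty.

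The main obstacle I anticipate is the careful handling of the relative-interior/interior distinction when the generating set of a cone is not in ``general position'' — in particular verifying that $\mathrm{CoSp}^\circ$ as defined in Example \ref{exam11} (all $\alpha_j>0$) really coincides with $\mathbb{B}^\circ$ in the topology of $V_{am}(\mathbb{B})$ so that Lemma \ref{lemj7} applies cleanly; one has to check that adding the subspace generators $\{\pm\mathfrak{n}_i^\nu\}$, which generate $V^\perp(\mathbb{P}_\nu)$ by (\ref{nji}), does not collapse the interior. A secondary point is confirming $\mathbb{F}_\nu^*(s)$ is well-defined, i.e.\ that $\mathcal{C}_\nu(s)\subset\mathbb{F}_\nu^*$ so that the essential face $F(\mathcal{C}_\nu(s)\mid\mathbb{F}_\nu^*)$ exists — this follows directly from $\{\pm\mathfrak{n}_i^\nu\}\subset\mathbb{F}_\nu^*$ (Remark \ref{pitse}, as $V^\perp(\mathbb{P}_\nu)\subset\mathbb{F}_\nu^*$) and $\{\mathfrak{p}_j\}\subset\mathrm{Cap}(\mathbb{F}^*)\subset\mathbb{F}_\nu^*$. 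Once these routine convexity facts are in place the nonemptiness is immediate from the explicit witness $\mathfrak{q}(s)=\sum_{j\le s}\mathfrak{p}_j$.
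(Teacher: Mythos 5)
Your proposal is correct and takes essentially the same route as the paper: the paper's proof uses the identical witness $\mathfrak{p}_1+\cdots+\mathfrak{p}_s$, places it in $\mathcal{C}_\nu(s)^{\circ}$ for every $\nu$ by writing $0=\sum_{i}\mathfrak{n}_i^{\nu}+\sum_{i}(-\mathfrak{n}_i^{\nu})$ so that all generators carry strictly positive coefficients (Example \ref{exam11}), and then invokes Lemma \ref{lemj7} to get $\mathcal{C}_\nu(s)^{\circ}\subset\bigl(\mathbb{F}_\nu^*(s)\bigr)^{\circ}$. Your handling of the relative interior with the subspace generators $\{\pm\mathfrak{n}_i^{\nu}\}$ is exactly the point the paper addresses in the same way, so no gap remains.
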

\begin{proof}
Fix $s\in\{1,\cdots,N\}$. By  $\mathcal{C}_\nu(s)=\text{CoSp}
 \left(\{ \pm\mathfrak{n}_i^{\nu} \}_{i=1}^{n-m_{\nu}} \cup \{ \mathfrak{p}_j \}_{j=1}^{s}
 \right)$ in  (\ref{45asi}),
 \begin{eqnarray}\label{hhh5}
 \mathfrak{p}_1+\cdots+\mathfrak{p}_s= \sum_{j=1}^s\mathfrak{p}_j+
  \left(\sum_{i=1}^{n-m_\nu}\mathfrak{n}_i^{\nu}+
  \sum_{i=1}^{n-m_\nu}-\mathfrak{n}_i^{\nu}\right)\in \mathcal{C}_\nu(s)^{\circ}
\end{eqnarray}
for each $\nu=1,\cdots,d$. By Lemma \ref{lemj7},
\begin{eqnarray}\label{hhh56}
 \mathcal{C}_\nu(s)^{\circ}\subset F\left(\mathcal{C}_\nu(s)
|\mathbb{F}^{*}_{\nu}\right)^{\circ}=
 \left(\mathbb{F}_\nu^*(s)\right)^{\circ}.
\end{eqnarray}
  By (\ref{hhh5}) and (\ref{hhh56}),
$
\mathfrak{p}_1+\cdots+\mathfrak{p}_s\in \bigcap_{\nu=1}^d
 \left(\mathbb{F}_\nu^*(s)\right)^{\circ}$ for $s\ge
 1$.
\end{proof}
\begin{remark}
Lemma \ref{lemgg} does not hold for the case $s=0$. However, this
initial case estimate for
$\left(\mathbb{F}_\nu(0)\right)_{\nu=1}^d=\left({\bf
N}(\Lambda_\nu,S)\right)_{\nu=1}^d$ with the low rank condition
(\ref{09000}), is already finished in Proposition \ref{l50}.
\end{remark}

 \begin{lemma}\label{bnnn}
  For each $\nu=1,\cdots,d$,
\begin{eqnarray*}
 \mathbb{F}_\nu^*(N)=\mathbb{F}_\nu^*\ \ \text{and}\ \
 \mathbb{F}_\nu^*(0)=V^{\perp}(\mathbb{P}_\nu).
 \end{eqnarray*}
 \end{lemma}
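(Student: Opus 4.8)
Lemma \ref{bnnn} asserts two equalities. The claim $\mathbb{F}_\nu^*(0) = V^{\perp}(\mathbb{P}_\nu)$ is immediate from the definitions: by (\ref{oja}) we have $\mathcal{C}_\nu(0) = V^{\perp}(\mathbb{P}_\nu)$, and by Remark \ref{pitse} (see (\ref{e8v})), $V^{\perp}(\mathbb{P}_\nu) = \mathbb{P}_\nu^*|\mathbb{P}_\nu$, which is itself the cone of the improper face $\mathbb{P}_\nu$. Since $\mathbb{P}_\nu^*$ is a face of $\mathbb{F}_\nu^*$ (indeed it is the minimal face, being $V^{\perp}(\mathbb{P}_\nu)$ and contained in every cone of a face of $\mathbb{P}_\nu$), the smallest face of $\mathbb{F}_\nu^*$ containing $\mathcal{C}_\nu(0) = V^{\perp}(\mathbb{P}_\nu)$ is $V^{\perp}(\mathbb{P}_\nu)$ itself. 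Thus $\mathbb{F}_\nu^*(0) = F(\mathcal{C}_\nu(0)|\mathbb{F}_\nu^*) = V^{\perp}(\mathbb{P}_\nu)$.

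For the equality $\mathbb{F}_\nu^*(N) = \mathbb{F}_\nu^*$, the plan is to show that $\mathcal{C}_\nu(N)$ meets the relative interior of $\mathbb{F}_\nu^*$ and then invoke Lemma \ref{lem7k}. By (\ref{ee11}), $\mathcal{C}_\nu(N) \supset \text{Cap}(\mathbb{F}^*) = \text{CoSp}(\{\mathfrak{p}_j\}_{j=1}^N)$, so it suffices to show $\text{Cap}(\mathbb{F}^*) \cap (\mathbb{F}_\nu^*)^{\circ} \ne \emptyset$. But this is exactly what Lemma \ref{lemk44} permits us to assume: in proving (\ref{6760hha}) we may assume $\text{Cap}(\mathbb{F}^*) \cap (\mathbb{F}_\nu^*)^{\circ} \ne \emptyset$ for all $\nu$. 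Given this, pick $\mathfrak{q} \in \text{Cap}(\mathbb{F}^*) \cap (\mathbb{F}_\nu^*)^{\circ}$; then $\mathfrak{q} \in \mathcal{C}_\nu(N) \cap (\mathbb{F}_\nu^*)^{\circ}$, so $\mathcal{C}_\nu(N)$ meets the interior of the polyhedron $\mathbb{F}_\nu^*$. Applying Lemma \ref{lem7k} with $\mathbb{P} = \mathbb{F}_\nu^*$ and $\mathbb{B} = \mathcal{C}_\nu(N)$ yields $F(\mathcal{C}_\nu(N)|\mathbb{F}_\nu^*) = \mathbb{F}_\nu^*$, i.e.\ $\mathbb{F}_\nu^*(N) = \mathbb{F}_\nu^*$ by definition (\ref{ee111}).

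The one point requiring care is the logical dependence: Lemma \ref{lemk44} is stated as a reduction valid "in proving (\ref{6760hha})", so $\mathbb{F}_\nu^*(N) = \mathbb{F}_\nu^*$ should likewise be understood as holding under that reduction — which is the only context in which the chain $\{\mathbb{F}(s)\}$ is used. Alternatively, if one wants an unconditional statement, one can note that whenever $\text{Cap}(\mathbb{F}^*) \cap (\mathbb{F}_\nu^*)^\circ = \emptyset$ the face $\mathbb{F}_\nu$ itself would be enlarged by the procedure in Lemma \ref{lemk44}, so after that replacement the hypothesis holds; the argument above then applies verbatim to the (possibly enlarged) faces. I would phrase the proof so that it cites Lemma \ref{lemk44} to supply $\text{Cap}(\mathbb{F}^*) \cap (\mathbb{F}_\nu^*)^\circ \ne \emptyset$ and then runs the two-line deduction from Lemma \ref{lem7k}. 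No genuine obstacle arises; the content is entirely a matter of unwinding Definitions \ref{dfu1}, \ref{dfu2} and combining Lemmas \ref{lemk44} and \ref{lem7k} with Remark \ref{pitse}.
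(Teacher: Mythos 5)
Your proposal is correct and follows essentially the same route as the paper: the identity $\mathbb{F}_\nu^*(N)=\mathbb{F}_\nu^*$ is obtained from (\ref{ee11}) together with Lemmas \ref{lemk44} and \ref{lem7k} (with the same understanding that it holds under the reduction of Lemma \ref{lemk44}), and $\mathbb{F}_\nu^*(0)=V^{\perp}(\mathbb{P}_\nu)$ from (\ref{oja}) and Remark \ref{pitse}. The only cosmetic difference is that the paper justifies the fact that $\mathbb{P}_\nu^*=V^{\perp}(\mathbb{P}_\nu)$ is a face of $\mathbb{F}_\nu^*$ by citing Lemma \ref{lem2525} (since $\mathbb{F}_\nu\preceq\mathbb{P}_\nu$), whereas you assert it somewhat informally; citing that lemma would make your second paragraph's counterpart fully precise.
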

 \begin{proof}
The first identity follows from Lemmas \ref{lem7k} and  \ref{lemk44}
together with (\ref{ee11}) above.  The second from (\ref{oja})
  with $V^{\perp}(\mathbb{P}_\nu)=\mathbb{P}_\nu^*$  in
Remark \ref{pitse} and $\mathbb{P}_\nu^*\preceq \mathbb{F}_\nu^*$ in
Lemma \ref{lem2525}.
 \end{proof}
Since $\mathbb{F}_\nu^*(s) $  is a face of a cone $\mathbb{F}^*_\nu=
\text{CoSp}(\{ \mathfrak{p}_j^{\nu} \}_{j=1}^{N_\nu} )$, it is also
a cone expressed as:
\begin{eqnarray}\label{blp2}
 \qquad\mathbb{F}_\nu^*(s)=\text{CoSp}(\{ \mathfrak{p}_j^\nu  \}_{j\in B_s^\nu}
) \ \ \text{where}\ \ \{ \mathfrak{p}_j^\nu  \}_{j\in
B_s^\nu}\subset \{ \mathfrak{p}_j^\nu \}_{j=1}^{N_\nu}= \Pi(\mathbb{F}_\nu)\subset\Pi(\mathbb{P}_\nu).
 \end{eqnarray}
Here, by Lemma \ref{bnnn} with (\ref{ggnn}) and (\ref{nji}),
\begin{eqnarray}\label{h7k}
 \mathbb{F}_\nu^*(N)=\text{CoSp}(\{ \mathfrak{p}_j^\nu  \}_{j=1}^{N_\nu}
) \ \ \text{and }\ \
\mathbb{F}_\nu^*(0)=\text{CoSp}(\pm\mathfrak{n}_i^{\nu}\}_{i=1}^{n-m_\nu}).
 \end{eqnarray}
  By   (\ref{blp2})
combined with (3) of Lemma \ref{lemss},   we assign to each
$\mathbb{F}_\nu^*(s) $, a
   face $\mathbb{F}_\nu(s)$ of $\mathbb{P}_\nu $ whose cone   is
   $\mathbb{F}_\nu^*(s)$:
\begin{eqnarray*}
\mathbb{F}_\nu(s)=\bigcap_{j\in B_s^\nu}
\left(\pi_{\mathfrak{p}_j^\nu}\cap \mathbb{P}_\nu\right).
\end{eqnarray*}
By (\ref{h7k}),
 \begin{eqnarray}\label{ennnn24}
\quad\
\mathbb{F}_\nu(0)=\bigcap_{i=1}^{n-m_\nu}\left(\pi_{\pm\mathfrak{n}_i^\nu}\cap
 \mathbb{P}_\nu\right)=\mathbb{P}_\nu\
 \text{and}\ \mathbb{F}_\nu(N)=\bigcap_{j\in B_N^\nu}\left(\pi_{\mathfrak{p}_j^\nu}\cap
 \mathbb{P}_\nu\right)=\mathbb{F}_\nu.
 \end{eqnarray}

\begin{proposition}\label{lemmdd}
For each fixed $\nu=1,\cdots,d$, we have an ascending sequence
$\{\mathbb{F}_\nu^*(s)\}_{s=0}^N$ and a descending sequence
$\{\mathbb{F}_\nu(s)\}_{s=0}^N$.
\begin{eqnarray*}
 V^{\perp}(\mathbb{P}_\nu)&=&\mathbb{F}_\nu^*(0) \preceq \mathbb{F}_\nu^*(1)
 \preceq\cdots\preceq\mathbb{F}_\nu^*(N)=\mathbb{F}^*_\nu,\\
  \mathbb{P}_\nu& =&\mathbb{F}_\nu(0) \succeq \mathbb{F}_\nu(1)
\succeq\cdots\succeq \mathbb{F}_\nu(N)=\mathbb{F}_\nu.
\end{eqnarray*}
\end{proposition}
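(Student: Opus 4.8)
The plan is to derive the ascending chain of cones $\mathbb{F}_\nu^*(s)$ first, then transfer it to the descending chain of faces $\mathbb{F}_\nu(s)$ by the duality of Lemma \ref{lem2525}; the two endpoints have already been identified in Lemma \ref{bnnn} and in (\ref{ennnn24}), so only the middle inclusions remain.

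The first step is a monotonicity property of the essential-face operation: if $\mathbb{B}\subset\mathbb{B}'$ are subsets of a polyhedron $\mathbb{Q}$, then $F(\mathbb{B}\,|\,\mathbb{Q})\preceq F(\mathbb{B}'\,|\,\mathbb{Q})$. Indeed, $F(\mathbb{B}'\,|\,\mathbb{Q})$ is a face of $\mathbb{Q}$ that contains $\mathbb{B}'\supset\mathbb{B}$; intersecting it with the face $F(\mathbb{B}\,|\,\mathbb{Q})$ and using Lemma \ref{lemss} (intersections of faces are again faces, via the representation of Proposition \ref{facerep}), one gets a face of $F(\mathbb{B}\,|\,\mathbb{Q})$ containing $\mathbb{B}$, which by the minimality in Definition \ref{dfu1} must equal $F(\mathbb{B}\,|\,\mathbb{Q})$; hence $F(\mathbb{B}\,|\,\mathbb{Q})\subset F(\mathbb{B}'\,|\,\mathbb{Q})$, and then part (1) of Lemma \ref{lemss} upgrades this to $F(\mathbb{B}\,|\,\mathbb{Q})\preceq F(\mathbb{B}'\,|\,\mathbb{Q})$. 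Applying this with $\mathbb{Q}=\mathbb{F}_\nu^*$, $\mathbb{B}=\mathcal{C}_\nu(s-1)$, $\mathbb{B}'=\mathcal{C}_\nu(s)$ — the inclusion $\mathcal{C}_\nu(s-1)\subset\mathcal{C}_\nu(s)$ being immediate from (\ref{45asi}), since the span only acquires the extra generator $\mathfrak{p}_s$ — and recalling $\mathbb{F}_\nu^*(s)=F(\mathcal{C}_\nu(s)\,|\,\mathbb{F}_\nu^*)$ from (\ref{ee111}), we obtain
\[
V^{\perp}(\mathbb{P}_\nu)=\mathbb{F}_\nu^*(0)\preceq\mathbb{F}_\nu^*(1)\preceq\cdots\preceq\mathbb{F}_\nu^*(N)=\mathbb{F}_\nu^*,
\]
where the two endpoint identifications are exactly Lemma \ref{bnnn}.

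The last step is to dualize. Since $\mathbb{F}_\nu^*(s)$ is a face of the cone $\mathbb{F}_\nu^*$, it is itself a cone of the form $\text{CoSp}(\{\mathfrak{p}_j^\nu\}_{j\in B_s^\nu})$ as in (\ref{blp2}); by (3) of Lemma \ref{lemss} the set $\mathbb{F}_\nu(s)=\bigcap_{j\in B_s^\nu}(\pi_{\mathfrak{p}_j^\nu}\cap\mathbb{P}_\nu)$ is a face of $\mathbb{P}_\nu$, and by the cone representation (Proposition \ref{pr50}) its cone is precisely $\mathbb{F}_\nu^*(s)$, that is $(\mathbb{F}_\nu(s))^*=\mathbb{F}_\nu^*(s)$. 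Thus the chain above reads $(\mathbb{F}_\nu(s-1))^*\preceq(\mathbb{F}_\nu(s))^*$, and Lemma \ref{lem2525}, applied with $\mathbb{F}=\mathbb{F}_\nu(s)$ and $\mathbb{G}=\mathbb{F}_\nu(s-1)$, turns this into $\mathbb{F}_\nu(s)\preceq\mathbb{F}_\nu(s-1)$; combined with $\mathbb{F}_\nu(0)=\mathbb{P}_\nu$ and $\mathbb{F}_\nu(N)=\mathbb{F}_\nu$ from (\ref{ennnn24}) this gives the asserted descending sequence. I do not expect a genuine obstacle here: the proposition is essentially a bookkeeping consequence of the essential-face machinery of Section \ref{ptt}. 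The only points demanding care are getting the direction right in the contravariant Lemma \ref{lem2525}, and confirming that the face $\mathbb{F}_\nu(s)$ assigned to $\mathbb{F}_\nu^*(s)$ really has $\mathbb{F}_\nu^*(s)$ as its cone — both handled by Propositions \ref{facerep} and \ref{pr50} together with Lemma \ref{lemss}.
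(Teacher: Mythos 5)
Your proposal is correct and follows essentially the same route as the paper's own (very terse) proof: deduce $\mathbb{F}_\nu^*(s-1)\preceq\mathbb{F}_\nu^*(s)$ from $\mathcal{C}_\nu(s-1)\subset\mathcal{C}_\nu(s)$, dualize via Lemma \ref{lem2525} to get $\mathbb{F}_\nu(s)\preceq\mathbb{F}_\nu(s-1)$, and quote Lemma \ref{bnnn} and (\ref{ennnn24}) for the endpoints. The only difference is that you spell out the monotonicity of the essential-face operation and the identification $(\mathbb{F}_\nu(s))^*=\mathbb{F}_\nu^*(s)$ via Propositions \ref{facerep} and \ref{pr50}, details the paper leaves implicit.
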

\begin{proof}
  Since    $\mathcal{C}_\nu(s-1)
\subset \mathcal{C}_\nu(s)$ with $s\ge 1$,
$$\mathbb{F}_\nu^*(s-1)\preceq \mathbb{F}_\nu^*(s). $$ By Lemma
\ref{lem2525}, $ \mathbb{F}_\nu(s)\preceq \mathbb{F}_\nu(s-1)$. The
cases $s=0,N$ are in Lemma \ref{bnnn} and (\ref{ennnn24}).
  \end{proof}

\subsection{Size Control Number}
 Before showing
 $$\sum_{J\in \text{Cap}(\mathbb{F}^*)(id)}|\mathcal{I}_J(P_{\mathbb{F}(s-1)},\xi)-
 \mathcal{I}_J(P_{\mathbb{F}(s)},\xi)|\le
 C$$ in Section 8,  we shall investigate the size of
 $2^{-J\cdot\mathfrak{m}}$ with $\mathfrak{m}\in
 \mathbb{F}_\nu(s-1)\setminus\mathbb{F}_\nu(s)$ and
   $$J\in\text{Cap}(\mathbb{F}^*)(id)=
 \left\{J=\sum_{j=1}^N\alpha_j\mathfrak{p}_j:\alpha_1\ge\cdots\ge\alpha_{s}\ge \cdots \ge\alpha_N\ge0 \right\}.$$
We assert in Proposition
 \ref{prop66aa} that
  $\alpha_s$ ($s=1,\cdots,N$)  is the key number controlling sizes:
  $$ 2^{-C_2\alpha_s}\le \frac{2^{-\mathfrak{m}\cdot J}}{2^{-\tilde{\mathfrak{m}}\cdot J}}
  =\frac{\text{Effect of Mean Value Property}}{\text{Effect of Decay Property}}\le 2^{-C_1\alpha_s} $$ where
  $\mathfrak{m}\in \mathbb{F}_\nu(s-1)\setminus\mathbb{F}_\nu(s)  $ and
  $\tilde{\mathfrak{m}}\in \mathbb{F}_\nu$. Here $C_1,C_2>0$ are independent of $J$.
  \begin{definition}\label{d93}
Given a cone
$\mathbb{F}^*=\text{CoSp}(\mathfrak{p}_1,\cdots,\mathfrak{p}_N)$,
its  $r$-neighborhood  is defined by
$$\mathcal{D}_r\left(\mathbb{F}^*\right)=\left\{\sum_{j=1}^Nc_j\mathfrak{p}_j\in \mathbb{F}^*:
c_j>r>0\right\}.$$
\end{definition}
We shall use the following three lemmas  to prove Proposition
\ref{prop66aa}.
\begin{lemma}\label{lemcc}
Suppose that
$\text{CoSp}(\mathfrak{p}_1,\cdots,\mathfrak{p}_k)^{\circ}
\subset\text{CoSp}(\mathfrak{q}_1,\cdots,\mathfrak{q}_N)^{\circ}$.
 Then there exists $c>0$ depending only on $\mathfrak{p}_i, \mathfrak{q}_j$
 with $1\le i\le k$ and $1\le j\le N$ such that
$$\mathcal{D}_r(\text{CoSp}(\mathfrak{p}_1,\cdots,\mathfrak{p}_k))\subset
\mathcal{D}_{cr}(\text{CoSp}(\mathfrak{q}_1,\cdots,\mathfrak{q}_N)). $$
\end{lemma}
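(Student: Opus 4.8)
The plan is to reduce the inclusion of $r$-neighborhoods to a compactness argument on the sphere, exploiting the openness of the interiors of the convex spans. First I would normalize: since both sides of the claimed inclusion are invariant under positive scaling of the generating vectors and of the parameter $r$, I may assume each $\mathfrak{p}_i$ and $\mathfrak{q}_j$ lies in $\mathbb{S}^{n-1}$ (or at least has norm comparable to $1$). Fix a vector ${\bf v}=\sum_{i=1}^k c_i\mathfrak{p}_i\in\mathcal{D}_r(\text{CoSp}(\mathfrak{p}_1,\cdots,\mathfrak{p}_k))$, so $c_i>r>0$ for all $i$. I must produce coefficients $d_j>cr$ with ${\bf v}=\sum_{j=1}^N d_j\mathfrak{q}_j$, where $c>0$ depends only on the two finite sets of vectors.

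The key step is to handle the extreme case ${\bf v}=\mathfrak{p}_1+\cdots+\mathfrak{p}_k$ first: this vector lies in $\text{CoSp}(\mathfrak{p}_1,\cdots,\mathfrak{p}_k)^{\circ}$ (by Example \ref{exam11}), hence by hypothesis in $\text{CoSp}(\mathfrak{q}_1,\cdots,\mathfrak{q}_N)^{\circ}$, so there exist $d_j^0>0$ with $\mathfrak{p}_1+\cdots+\mathfrak{p}_k=\sum_j d_j^0\mathfrak{q}_j$; set $c_0=\min_j d_j^0>0$, which depends only on the vectors (choosing, if necessary, one fixed such representation). For a general ${\bf v}=\sum c_i\mathfrak{p}_i$ with $c_i>r$, write $c_i = r + (c_i-r)$ and split ${\bf v} = r(\mathfrak{p}_1+\cdots+\mathfrak{p}_k) + \sum_i (c_i-r)\mathfrak{p}_i$. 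The first summand equals $r\sum_j d_j^0\mathfrak{q}_j$ with each coefficient $\geq r c_0$; the second summand lies in $\text{CoSp}(\mathfrak{p}_1,\cdots,\mathfrak{p}_k)\subset\text{CoSp}(\mathfrak{q}_1,\cdots,\mathfrak{q}_N)$ (the inclusion of closures follows from the inclusion of open cones by taking closures, using that a convex cone is the closure of its relative interior when that interior is nonempty — and here $\text{CoSp}(\mathfrak{p}_1,\cdots,\mathfrak{p}_k)^{\circ}\neq\emptyset$ since it contains $\sum\mathfrak{p}_i$), so it equals $\sum_j e_j\mathfrak{q}_j$ with $e_j\geq 0$. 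Adding, ${\bf v}=\sum_j (r d_j^0 + e_j)\mathfrak{q}_j$ with each coefficient $\geq r c_0 = (c_0) r$, which is exactly membership in $\mathcal{D}_{c_0 r}(\text{CoSp}(\mathfrak{q}_1,\cdots,\mathfrak{q}_N))$ with $c=c_0$.

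I expect the main obstacle to be the passage from the inclusion of \emph{open} cones to the inclusion of \emph{closed} cones, which is used to legitimize writing the ``remainder'' $\sum_i(c_i-r)\mathfrak{p}_i$ as a nonnegative combination of the $\mathfrak{q}_j$. This requires knowing that $\text{CoSp}(\mathfrak{p}_1,\cdots,\mathfrak{p}_k) = \overline{\text{CoSp}(\mathfrak{p}_1,\cdots,\mathfrak{p}_k)^{\circ}}$, i.e.\ that the finitely generated cone is the closure of its interior; this holds precisely because $\text{CoSp}^{\circ}$ is nonempty (it contains $\sum\mathfrak{p}_i$), and is a standard fact about polyhedral cones — alternatively one argues directly that $\mathfrak{p}_i = \lim_{\epsilon\to 0^+}(\mathfrak{p}_i + \epsilon\sum_{l}\mathfrak{p}_l)$ with the approximants in the open cone. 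A secondary point requiring care is that $c_0$ genuinely depends only on the vectors and not on ${\bf v}$ or $r$: this is immediate once one fixes \emph{one} representation of $\mathfrak{p}_1+\cdots+\mathfrak{p}_k$ as $\sum_j d_j^0\mathfrak{q}_j$ at the outset, before the argument begins.
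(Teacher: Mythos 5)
Your core idea coincides with the paper's: send $\mathfrak{p}_1+\cdots+\mathfrak{p}_k$, which lies in $\text{CoSp}^{\circ}(\mathfrak{p}_1,\cdots,\mathfrak{p}_k)\subset\text{CoSp}^{\circ}(\mathfrak{q}_1,\cdots,\mathfrak{q}_N)$ (Example \ref{exam11}), to a fixed representation $\sum_j d_j^0\mathfrak{q}_j$ with all $d_j^0>0$, and then split a general element of $\mathcal{D}_r(\text{CoSp}(\mathfrak{p}_1,\cdots,\mathfrak{p}_k))$ into a multiple of this vector plus a remainder. The gap is in how you split. Peeling off the full budget $r$ from each coefficient leaves the remainder $\sum_i(c_i-r)\mathfrak{p}_i$ with only nonnegative coefficients, so you must place it in the \emph{closed} cone $\text{CoSp}(\mathfrak{q}_1,\cdots,\mathfrak{q}_N)$. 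Your justification does not reach that far: taking closures in $\text{CoSp}^{\circ}(\mathfrak{p})\subset\text{CoSp}^{\circ}(\mathfrak{q})$, and your approximation $\mathfrak{p}_i=\lim_{\epsilon\to 0^+}\bigl(\mathfrak{p}_i+\epsilon\sum_l\mathfrak{p}_l\bigr)$, only show the easy inclusion $\text{CoSp}(\mathfrak{p}_1,\cdots,\mathfrak{p}_k)\subset\overline{\text{CoSp}^{\circ}(\mathfrak{q}_1,\cdots,\mathfrak{q}_N)}$. To conclude membership in $\text{CoSp}(\mathfrak{q}_1,\cdots,\mathfrak{q}_N)$ you additionally need $\overline{\text{CoSp}^{\circ}(\mathfrak{q}_1,\cdots,\mathfrak{q}_N)}\subset\text{CoSp}(\mathfrak{q}_1,\cdots,\mathfrak{q}_N)$, i.e.\ that a finitely generated cone is closed. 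That is true (Minkowski--Weyl/Carath\'eodory), but it is a nontrivial external fact, not established in this paper, and note that $\text{CoSp}^{\circ}$ in Definition \ref{d13} is defined algebraically, so the slogan ``a convex cone is the closure of its relative interior'' also needs care before it applies verbatim.

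The paper's proof removes the difficulty by a slightly different split: write $\alpha_j=(\alpha_j-\tfrac{r}{2})+\tfrac{r}{2}$, so the leftover $\sum_j(\alpha_j-\tfrac{r}{2})\mathfrak{p}_j$ has coefficients $\alpha_j-\tfrac{r}{2}>\tfrac{r}{2}>0$ and therefore lies in the \emph{open} cone $\text{CoSp}^{\circ}(\mathfrak{p}_1,\cdots,\mathfrak{p}_k)\subset\text{CoSp}^{\circ}(\mathfrak{q}_1,\cdots,\mathfrak{q}_N)$, where the hypothesis applies directly; no closed-cone inclusion is needed, and the term $\tfrac{r}{2}\sum_j c_j\mathfrak{q}_j$ supplies the lower bound on the coefficients. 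If you adopt this half-budget split, your argument closes without appealing to any unproven fact. One further small point: $\mathcal{D}_{cr}$ in Definition \ref{d93} requires coefficients strictly greater than $cr$, and your bound $\geq rc_0$ allows equality; take $c=c_0/2$ (the paper builds this in by normalizing $c_j>2c$).
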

\begin{proof}
From
$\mathfrak{p}_1+\cdots+\mathfrak{p}_k\in\text{CoSp}(\mathfrak{p}_1,\cdots,\mathfrak{p}_k)^{\circ}\subset
\text{CoSp}(\mathfrak{q}_1,\cdots,\mathfrak{q}_N)^{\circ}$, we see
that
\begin{eqnarray}\label{3jjj}
 \mathfrak{p}_1+\cdots+\mathfrak{p}_k=\sum_{j=1}^Nc_j\mathfrak{q}_j\
\ \text{where $c_j>2c$ with $c$ depending on $\mathfrak{p}_i$'s}.
\end{eqnarray}
Let $\mathfrak{p}\in
\mathcal{D}_r(\text{CoSp}(\mathfrak{p}_1,\cdots,\mathfrak{p}_k))$.
By using (\ref{3jjj}), we split $\mathfrak{p}$ into two parts
\begin{eqnarray}\label{g58n}
\mathfrak{p}=\sum_{j=1}^k\alpha_j\mathfrak{p}_j=\sum_{j=1}^k
\left(\alpha_j-\frac{r}{2}\right)\mathfrak{p}_j+\frac{r}{2}\sum_{j=1}^N
c_j\mathfrak{q}_j\ \ \text{where}\ \alpha_j>r.
\end{eqnarray}
Since $\alpha_j-r/2\ge r/2>0$, the first term on the right hand side
of (\ref{g58n})
$$\sum_{j=1}^k\left(\alpha_j-\frac{r}{2}\right)\mathfrak{p}_j\in
\text{CoSp}(\mathfrak{p}_1,\cdots,\mathfrak{p}_k)^{\circ}\subset
\text{CoSp}(\mathfrak{q}_1,\cdots,\mathfrak{q}_N)^{\circ}. $$  Also
 the second term on the right hand side of (\ref{g58n}) is
$$\frac{r}{2}\sum_{j=1}^N c_j\mathfrak{q}_j\in
\mathcal{D}_{cr}(\text{CoSp}(\mathfrak{q}_1,\cdots,\mathfrak{q}_N))$$  because
$(r/2)c_j\ge rc$. So $\mathfrak{p}\in
\mathcal{D}_{cr}(\text{CoSp}(\mathfrak{q}_1,\cdots,\mathfrak{q}_N))$.
\end{proof}

\begin{lemma}\label{lem1777}
Let $\mathbb{P}$ be a polyhedron and let $\mathbb{F}$ be an  proper
face of $ \mathbb{P}$. Suppose that $\tilde{\mathfrak{m}}\in
\mathbb{F}$ and $\mathfrak{m}\in \mathbb{P}\setminus \mathbb{F}$.
Then
 for all $\mathfrak{p}\in
\mathcal{D}_r\left(\mathbb{F}^*\right)$,
\begin{eqnarray*}
 \mathfrak{p}\cdot(\mathfrak{m}-\tilde{\mathfrak{m}})\ge c
>0 \ \ \text{where} \ \ c \ \text{depends on}\  r,\mathfrak{m},\tilde{\mathfrak{m}}.
\end{eqnarray*}
\end{lemma}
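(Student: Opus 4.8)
\textbf{Proof proposal for Lemma \ref{lem1777}.}

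The plan is to use the cone representation of Proposition \ref{pr50} together with Definition \ref{dualface} to produce a uniform lower bound. First I would invoke the face representation (Proposition \ref{facerep}) to write $\mathbb{F}=\bigcap_{j=1}^{N_\nu}\pi_{\mathfrak{q}_j}\cap\mathbb{P}$ for suitable generators, so that by Proposition \ref{pr50} the cone is $\mathbb{F}^*=\mathrm{CoSp}(\{\mathfrak{q}_j\}_{j=1}^{N_\nu})$, say with $\mathbb{F}^*=\mathrm{CoSp}(\mathfrak{q}_1,\dots,\mathfrak{q}_M)$ after relabeling (including the $\pm\mathfrak{n}_i$ directions coming from $V^\perp(\mathbb{P})$). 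Each $\mathfrak{q}_j$ is the normal vector of a facet $\mathbb{F}_j=\pi_{\mathfrak{q}_j,r_j}\cap\mathbb{P}$ with $(\mathbb{F}_j^*)^\circ=\mathrm{CoSp}^\circ(\mathfrak{q}_j)$. Since $\mathfrak{m}\in\mathbb{P}\setminus\mathbb{F}$ and $\mathbb{F}=\bigcap_j\mathbb{F}_j$, there is at least one index $k$ with $\mathfrak{m}\in\mathbb{P}\setminus\mathbb{F}_k$, while $\tilde{\mathfrak{m}}\in\mathbb{F}\subset\mathbb{F}_k$; the strict separation in Definition \ref{dualface} (or Definition \ref{dfac}, (\ref{4g1})) then gives a constant $\eta_k>0$ with $\mathfrak{q}_k\cdot(\mathfrak{m}-\tilde{\mathfrak{m}})\ge\eta_k$, and for every other $j$ we have $\mathfrak{q}_j\cdot(\mathfrak{m}-\tilde{\mathfrak{m}})\ge 0$ since $\mathfrak{m},\tilde{\mathfrak{m}}\in\mathbb{P}$ and $\mathfrak{q}_j\in\mathbb{P}^*$-type bounding vectors. (For the $\pm\mathfrak{n}_i$ directions spanning $V^\perp(\mathbb{P})$ the pairing is exactly $0$, since $\mathfrak{m}-\tilde{\mathfrak{m}}\in V(\mathbb{P})$.)

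Next I would expand $\mathfrak{p}\in\mathcal{D}_r(\mathbb{F}^*)$. By Definition \ref{d93}, $\mathfrak{p}=\sum_{j=1}^M c_j\mathfrak{q}_j$ with every $c_j>r$. Then
\begin{eqnarray*}
\mathfrak{p}\cdot(\mathfrak{m}-\tilde{\mathfrak{m}})=\sum_{j=1}^M c_j\,\mathfrak{q}_j\cdot(\mathfrak{m}-\tilde{\mathfrak{m}})\ge c_k\,\mathfrak{q}_k\cdot(\mathfrak{m}-\tilde{\mathfrak{m}})\ge r\,\eta_k>0,
\end{eqnarray*}
where I dropped all the nonnegative terms $j\ne k$. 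This gives the claimed bound with $c=r\eta_k$, which depends only on $r$ and on $\mathfrak{m},\tilde{\mathfrak{m}}$ (through $\eta_k$, which in turn depends only on $\mathbb{P}$ and the pair of lattice points), and is independent of $\mathfrak{p}$ itself. One technical point to handle carefully is the choice of the representation $\mathfrak{p}=\sum c_j\mathfrak{q}_j$: $\mathcal{D}_r(\mathbb{F}^*)$ is defined relative to a fixed generating set, so I would fix that generating set at the outset (the one coming from Proposition \ref{pr50}) and note that the definition of $\mathcal{D}_r$ in Definition \ref{d93} is stated with respect to it.

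I do not expect a serious obstacle here — the statement is essentially the dual-cone reformulation of the separation property of a face, and the only mild subtlety is bookkeeping: making sure that "at least one generator strictly separates $\mathfrak{m}$ from $\tilde{\mathfrak{m}}$" is legitimate, which follows because $\mathfrak{m}\notin\mathbb{F}=\bigcap_j\mathbb{F}_j$ forces $\mathfrak{m}$ outside some $\mathbb{F}_k$, combined with the fact (from Lemma \ref{lem3399} and Definition \ref{dualface}) that a facet normal genuinely strictly separates points of $\mathbb{P}\setminus\mathbb{F}_k$ from points of $\mathbb{F}_k$. If one prefers to avoid choosing a single facet, an alternative is to sum the weak inequalities $\mathfrak{q}_j\cdot(\mathfrak{m}-\tilde{\mathfrak{m}})\ge 0$ as in (\ref{452g}) of Lemma \ref{lemss} to get a single supporting functional that is strict on $\mathbb{P}\setminus\mathbb{F}$, and then compare $\mathfrak{p}$ against it; either route yields the uniform positive lower bound. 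This lemma is the non-sphere analogue of Lemma \ref{lem17778}, and its proof follows the same template as the computation in (\ref{4ggvvb})--(\ref{2882}) there, only simpler because no $\epsilon$-neighborhoods are involved.
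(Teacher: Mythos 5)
Your proposal is correct and follows essentially the same route as the paper's own proof: decompose $\mathbb{F}^*$ via Proposition \ref{pr50} into the facet normals $\mathfrak{q}_j$ plus the $V^\perp(\mathbb{P})$ directions (which pair to zero with $\mathfrak{m}-\tilde{\mathfrak{m}}$), pick a facet $\mathbb{F}_k$ with $\mathfrak{m}\notin\mathbb{F}_k$ to get strict positivity $\mathfrak{q}_k\cdot(\mathfrak{m}-\tilde{\mathfrak{m}})\ge\eta>0$, and drop the remaining nonnegative terms to conclude $\mathfrak{p}\cdot(\mathfrak{m}-\tilde{\mathfrak{m}})\ge r\eta$. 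This matches the paper's computation (\ref{4aa})--(\ref{4ggvv}) line for line, including the constant $c=r\eta$.
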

\begin{remark}
This  lemma is needed  only when $\mathbb{F} \precneqq\mathbb{P}$
for the same reason in Remark \ref{rem6.1}. The proof of this
lemma is also similar to that of Lemma \ref{lem17778}.
\end{remark}
\begin{proof}
Let $\Pi(\mathbb{F})=\{\mathfrak{p}_j\}_{j=1}^N =\{\mathfrak{q}_j\}_{j=1}^\ell \cup \{\pm\mathfrak{n}_i\}_{i=1}^{n-m}$
  where  $\{\mathfrak{q}_j\}_{j=1}^\ell=\Pi_a(\mathbb{F}) \subset \Pi_a
$ and $\{\pm\mathfrak{n}_i\}_{i=1}^{n-m}=\Pi_b $ so that
 \begin{eqnarray}\label{4aa}
 \mathbb{F}^*| \mathbb{P} =\text{CoSp}(\{\mathfrak{q}_j\}_{j=1}^\ell
\}\cup\{\pm\mathfrak{n}_i\}_{i=1}^{n-m} ).\end{eqnarray}
  By (\ref{gel3}),
\begin{eqnarray*}
\mathbb{F}= \bigcap_{j=1}^{\ell} \mathbb{F}_{j}\  \ \text{with}\ \
\mathbb{F}_{j}=\pi_{\mathfrak{q}_j}\cap\mathbb{P}.
\end{eqnarray*}
Since $\tilde{\mathfrak{m}}\in \mathbb{F}$ and $\mathfrak{m}\in
\mathbb{P}\setminus \mathbb{F}$,
\begin{eqnarray} \label{gbgg}
  \mathfrak{m} \in \mathbb{P}\setminus \mathbb{F}_{k} \ \ \text{ for
some $k\in\{1,\cdots,\ell\}$  and}\ \
 \tilde{\mathfrak{m}}\in\mathbb{F}\subset\mathbb{F}_k.
\end{eqnarray}
Thus by (\ref{gbgg}) and (\ref{brr2}) in Definition \ref{dualface},
\begin{eqnarray} \label{4ggvv}
\mathfrak{q}_k\cdot (\mathfrak{m}-\tilde{\mathfrak{m}})>\eta>0\   \
\text{and}\ \
 \mathfrak{q}_j\cdot (\mathfrak{m}-\tilde{\mathfrak{m}})\ge 0\ \ \text{for
 $j=1,\cdots,\ell$}.\end{eqnarray}
 where $\eta$ depends on $ \mathfrak{m},\tilde{\mathfrak{m}}$. Let
$\mathfrak{p}\in\mathcal{D}_r(\mathbb{F}^*)$. Then $\mathfrak{p}
=\sum_{j=1}^\ell  c_j\mathfrak{q}_j+\mathfrak{r}  $ where $c_j\ge r$
and $\mathfrak{r}\in V^{\perp}(\mathbb{P})$  according to Definition
\ref{d93} and (\ref{4aa}). Thus, by using (\ref{4ggvv}) and
$\mathfrak{r}\cdot(\mathfrak{m}-\tilde{\mathfrak{m}})=0$,
\begin{eqnarray*}
\mathfrak{p}\cdot(\mathfrak{m}-\tilde{\mathfrak{m}})&=&
\sum_{j=1}^\ell c_j\mathfrak{q}_j\cdot(\mathfrak{m}-\tilde{\mathfrak{m}})\\
&=&c_k\mathfrak{q}_k\cdot (\mathfrak{m}-\tilde{{\bf
m}})+\sum_{j=1,j\ne
k}^\ell c_j\mathfrak{q}_j\cdot (\mathfrak{m}-\tilde{\mathfrak{m}})\\
&\ge& c_k\mathfrak{q}_k\cdot
(\mathfrak{m}-\tilde{\mathfrak{m}})+0\ge r \eta >0
\end{eqnarray*}
where $c=r \eta$ is depending on $r,
\mathfrak{m},\tilde{\mathfrak{m}} $ and independent of
$\mathfrak{p}$.
\end{proof}

\begin{lemma}\label{lemn}
Let $\mathbb{F}=(\mathbb{F}_\nu)\in\mathcal{F}(\vec{{\bf
N}}(\Lambda,S))$ and let $\mathbb{F}_\nu^*(s)$ where $s= 1,\cdots,N$
be defined as in Definition \ref{dfu2}. Then for each $s=
1,\cdots,N$, every vector
$\mathfrak{p}=\sum_{j=1}^N\alpha_j\mathfrak{p}_j
\in\rm{Cap}(\mathbb{F}^*)(id)=\left\{\sum_{j=1}^N\alpha_j\mathfrak{p}_j:\alpha_1\ge\cdots\ge\alpha_N\ge0\right\}$
defined in (\ref{g58}) is expressed as
\begin{eqnarray*}
\mathfrak{p}=\mathfrak{r}_1(s)+\mathfrak{r}_2(s)+\mathfrak{r}_3(s)
\end{eqnarray*}
where for each $\nu=1,\cdots,d$,
\begin{itemize}
\item[(1)] $\mathfrak{r}_1(s)\in
 \mathbb{F}_\nu^*(s-1) $,
\item[(2)]  $\mathfrak{r}_2(s)=\alpha_{s}\mathfrak{u}(s)$ where
 $\mathfrak{u}(s)\in \mathcal{D}_r(\mathbb{F}_\nu^*(s)) $ for  $r>0$
  independent of $\alpha_1,\cdots,\alpha_N$,
\item[(3)] $\mathfrak{r}_3(s)=\alpha_{s+1}
\mathfrak{p}_{s+1}+ \cdots+\alpha_N\mathfrak{p}_N\in
\mathbb{F}_\nu^*(N) $.
\end{itemize}
\end{lemma}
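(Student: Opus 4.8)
## Proof Proposal for Lemma \ref{lemn}

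The plan is to decompose the ordered sum $\mathfrak{p}=\sum_{j=1}^N\alpha_j\mathfrak{p}_j$ at the index $s$, exploiting the monotonicity $\alpha_1\ge\cdots\ge\alpha_N\ge 0$ together with the two structural facts established just above: first, that $\mathcal{C}_\nu(s)=\text{CoSp}(\{\pm\mathfrak{n}_i^\nu\}_{i=1}^{n-m_\nu}\cup\{\mathfrak{p}_j\}_{j=1}^s)\subset\mathbb{F}_\nu^*(s)$ with $\mathbb{F}_\nu^*(s)=F(\mathcal{C}_\nu(s)\,|\,\mathbb{F}_\nu^*)$, and second, that $\mathfrak{p}_1+\cdots+\mathfrak{p}_s$ lies in the interior $(\mathbb{F}_\nu^*(s))^\circ$ (this is (\ref{hhh5})--(\ref{hhh56}) in the proof of Lemma \ref{lemgg}). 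The natural splitting is: put $\mathfrak{r}_1(s)=\sum_{j=1}^{s-1}(\alpha_j-\alpha_s)\mathfrak{p}_j$, put $\mathfrak{r}_2(s)=\alpha_s(\mathfrak{p}_1+\cdots+\mathfrak{p}_s)$, and put $\mathfrak{r}_3(s)=\sum_{j=s+1}^N\alpha_j\mathfrak{p}_j$. Summing these indeed recovers $\mathfrak{p}$, since $\sum_{j=1}^{s-1}(\alpha_j-\alpha_s)\mathfrak{p}_j+\alpha_s\sum_{j=1}^s\mathfrak{p}_j=\sum_{j=1}^{s-1}\alpha_j\mathfrak{p}_j+\alpha_s\mathfrak{p}_s=\sum_{j=1}^s\alpha_j\mathfrak{p}_j$.

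Next I would verify the three membership claims for each fixed $\nu$. For (1): since $\alpha_j-\alpha_s\ge 0$ for $j<s$ by the ordering, $\mathfrak{r}_1(s)\in\text{CoSp}(\mathfrak{p}_1,\dots,\mathfrak{p}_{s-1})\subset\mathcal{C}_\nu(s-1)\subset\mathbb{F}_\nu^*(s-1)$ (when $s=1$ this is the zero vector, which lies in $\mathbb{F}_\nu^*(0)=V^\perp(\mathbb{P}_\nu)$ by Lemma \ref{bnnn}). For (3): $\mathfrak{r}_3(s)\in\text{CoSp}(\mathfrak{p}_{s+1},\dots,\mathfrak{p}_N)\subset\text{CoSp}(\mathfrak{p}_1,\dots,\mathfrak{p}_N)=\text{Cap}(\mathbb{F}^*)\subset\mathbb{F}_\nu^*=\mathbb{F}_\nu^*(N)$. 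For (2): write $\mathfrak{r}_2(s)=\alpha_s\mathfrak{u}(s)$ with $\mathfrak{u}(s)=\mathfrak{p}_1+\cdots+\mathfrak{p}_s$; we must exhibit $\mathfrak{u}(s)\in\mathcal{D}_r(\mathbb{F}_\nu^*(s))$ for some $r>0$ independent of the $\alpha_j$'s. Here one uses that $\mathbb{F}_\nu^*(s)$, being a face of the cone $\mathbb{F}_\nu^*=\text{CoSp}(\{\mathfrak{p}^\nu_j\}_{j=1}^{N_\nu})$, is itself of the form $\text{CoSp}(\{\mathfrak{p}^\nu_j\}_{j\in B_s^\nu})$ as in (\ref{blp2}); since $\mathfrak{u}(s)\in(\mathbb{F}_\nu^*(s))^\circ=\text{CoSp}^\circ(\{\mathfrak{p}^\nu_j\}_{j\in B_s^\nu})$ by (\ref{hhh56}), it has a representation $\mathfrak{u}(s)=\sum_{j\in B_s^\nu}c_j\mathfrak{p}^\nu_j$ with all $c_j>0$. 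Because $\mathfrak{u}(s)$ is a fixed vector (it does not depend on $\alpha$), we may take $r=r_\nu>0$ to be any positive number below $\min_j c_j$, and then finally $r=\min_\nu r_\nu$ works uniformly in $\nu$; crucially $r$ depends only on the finitely many generators $\mathfrak{p}_j$ and $\mathfrak{p}^\nu_j$, not on $\mathfrak{p}$.

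The main obstacle is the uniformity assertion in (2): one must be careful that the bound $r$ in $\mathfrak{u}(s)\in\mathcal{D}_r(\mathbb{F}_\nu^*(s))$ is independent both of the vector $\mathfrak{p}$ (equivalently of the $\alpha_j$'s) and of $\nu$, which is what makes this lemma usable later for a clean size estimate. This is genuinely a point where Lemma \ref{lemcc} should be invoked: it converts the interior containment $\text{CoSp}^\circ(\mathfrak{p}_1,\dots,\mathfrak{p}_s)\subset\text{CoSp}^\circ(\{\mathfrak{p}^\nu_j\}_{j\in B_s^\nu})$ (which holds by (\ref{hhh56}), after noting $(\mathbb{F}_\nu^*(s))^\circ=\text{CoSp}^\circ(\{\mathfrak{p}^\nu_j\}_{j\in B_s^\nu})$ via Proposition \ref{pr50}) into an inclusion $\mathcal{D}_{r'}(\text{CoSp}(\mathfrak{p}_1,\dots,\mathfrak{p}_s))\subset\mathcal{D}_{cr'}(\mathbb{F}_\nu^*(s))$ with $c>0$ depending only on the generators. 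Since $\mathfrak{p}_1+\cdots+\mathfrak{p}_s\in\mathcal{D}_1(\text{CoSp}(\mathfrak{p}_1,\dots,\mathfrak{p}_s))$ (each coefficient equals $1$), taking $r'=1$ gives $\mathfrak{u}(s)\in\mathcal{D}_{c}(\mathbb{F}_\nu^*(s))$, and then $r=\min_\nu c_\nu$ is the desired uniform constant. Once these three membership statements are in place the identity $\mathfrak{p}=\mathfrak{r}_1(s)+\mathfrak{r}_2(s)+\mathfrak{r}_3(s)$ has already been checked, and the proof is complete.
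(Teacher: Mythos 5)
Your proposal is correct and follows essentially the same route as the paper: the identical splitting $\mathfrak{r}_1(s)=\sum_{j<s}(\alpha_j-\alpha_s)\mathfrak{p}_j$, $\mathfrak{r}_2(s)=\alpha_s(\mathfrak{p}_1+\cdots+\mathfrak{p}_s)$, $\mathfrak{r}_3(s)=\sum_{j>s}\alpha_j\mathfrak{p}_j$, with (1) and (3) from the cone inclusions and (2) from $\mathfrak{p}_1+\cdots+\mathfrak{p}_s\in\mathcal{C}_\nu(s)^{\circ}\subset(\mathbb{F}_\nu^*(s))^{\circ}$ via Lemma \ref{lemj7} together with Lemma \ref{lemcc} and (\ref{blp2}), exactly as in the paper. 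Your extra observation that, since $\mathfrak{u}(s)$ is a fixed vector independent of the $\alpha_j$'s, one may simply take $r$ below the minimal coefficient of one strictly positive representation (and then minimize over the finitely many $\nu$ and $s$) is a legitimate shortcut to the same uniform constant.
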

\begin{proof}
We express $\mathfrak{p}=\sum_{j=1}^N\alpha_j\mathfrak{p}_j$ as
$\mathfrak{r}_1(s)+\mathfrak{r}_2(s)+\mathfrak{r}_3(s) $ where
\begin{eqnarray}
\qquad \mathfrak{r}_1(s)&=&\left(\alpha_{1}-\alpha_{s}\right)
\mathfrak{p}_{1}+\cdots+ \left(\alpha_{s-1}-\alpha_{s}\right)
\mathfrak{p}_{s-1},\label{1vk}\\
\mathfrak{r}_2(s)&=&\alpha_{s}\left(\mathfrak{p}_1+\cdots+\mathfrak{p}_{s}\right),\label{2vk}
 \\
\mathfrak{r}_3(s)&=&\alpha_{s+1} \mathfrak{p}_{s+1}+
\cdots+\alpha_N\mathfrak{p}_N.\label{3vk}
\end{eqnarray}
  By  (\ref{45asi}) and (\ref{1vk}),  $$\mathfrak{r}_1(s) \in
 \mathbb{F}_\nu^*(s-1) $$ proving (1). We next show (2).  By
 $\mathcal{C}_\nu(s)=\text{CoSp}
 \left(\{ \pm\mathfrak{n}_i^\nu \}_{i=1}^{n-m-\nu} \cup \{ \mathfrak{p}_j \}_{j=1}^{s}
 \right)$ in (\ref{45asi}),
\begin{eqnarray}\label{2g4}
  \quad \mathfrak{p}_1+\cdots+ \mathfrak{p}_{s} &=&\sum_{j=1}^s\mathfrak{p}_j+
  \left(\sum_{i=1}^{n-m_\nu}\mathfrak{n}_i^\nu +
  \sum_{i=1}^{n-m}-\mathfrak{n}_i^\nu\right)\in
\mathcal{D}_t\left(\mathcal{C}_\nu(s) \right)\ \ \text{for $t=1$}.
\end{eqnarray}
By (\ref{blp2}),
\begin{eqnarray}\label{sw49}
\mathbb{F}_\nu^*(s)=F(\mathcal{C}_\nu(s)|\mathbb{F}^*)=
 \text{CoSp}(\{ \mathfrak{p}_j^\nu  \}_{j\in B_s^\nu}).
\end{eqnarray}By Lemmas \ref{lemj7},
\begin{eqnarray}\label{9999}
\quad\mathcal{C}_\nu(s)^{\circ} &\subset&
F(\mathcal{C}_\nu(s)|\mathbb{F}^*_\nu)^{\circ}=\text{CoSp}(\{
\mathfrak{p}_j^\nu \}_{j\in B_s^\nu} )^{\circ}.
\end{eqnarray}
By using (\ref{sw49}),(\ref{9999}) and Lemmas \ref{lemcc},
$$\mathcal{D}_t(\mathcal{C}_\nu(s))
\subset   \mathcal{D}_{ct}(\text{CoSp}(\{ \mathfrak{p}_j^\nu \}_{j\in B_s^\nu}
) ) =\mathcal{D}_{ct}(\mathbb{F}_\nu^*(s))
$$ for some $c>0$ depending on   $\mathfrak{p}_j^\nu$'s.
By this and (\ref{2g4}), put
\begin{eqnarray*}
 \mathfrak{u}(s)=\mathfrak{p}_1+\cdots+\mathfrak{p}_s\in
\mathcal{D}_{ct}(\mathbb{F}_\nu^*(s)).
\end{eqnarray*}
Set $r=ct >0$. Note (2) follows from $\mathfrak{r}_2(s)= \alpha_{s}  \mathfrak{u}(s)$ in
(\ref{2vk}). Finally (3) follows  from (\ref{3vk}),(\ref{45asi}) and
(\ref{ee111}).
\end{proof}
Using Lemmas \ref{lem1777} and \ref{lemn}, we obtain
\begin{proposition}\label{prop66aa}
Let $\mathbb{F}_\nu(s)$ and $\mathbb{F}_\nu^*(s)$ where
$\nu=1,\cdots,d$ and  $s=1,\cdots,N$ be defined as in Definition
\ref{dfu2}.
 Suppose that
  $$\mathfrak{p}=\sum_{j=1}^N\alpha_j\mathfrak{p}_j
   \in \rm{Cap}(\mathbb{F}^*)(id)=
 \left\{\sum_{j=1}^N\alpha_j\mathfrak{p}_j:\alpha_1\ge\cdots\ge\alpha_N\ge0\right\},$$
and $ \tilde{\mathfrak{m}} \in
 \mathbb{F}_\nu(N)=\mathbb{F}_\nu $. Then  for   $s=1,\cdots,N$, there exist
 $C_1,C_2>0$ such that
 \begin{eqnarray}
 \mathfrak{p}\cdot   (\mathfrak{m}-\tilde{\mathfrak{m}}) &\ge& C_1\alpha_{s}\
 \ \text{for}\ \ \mathfrak{m}\in
\mathbb{F}_\nu(s-1) \setminus \mathbb{F}_\nu(s),\label{995aa}\\
\mathfrak{p}\cdot   (\mathfrak{n}-\tilde{\mathfrak{m}})&\le&C_2
\alpha_{s}\ \ \text{for}\ \ \mathfrak{n}\in
 \mathbb{F}_\nu(s-1)\label{996bb}
 \end{eqnarray}
 where   $C_1,C_2>0$ are independent of $\mathfrak{p}\in \rm{Cap}(\mathbb{F}^*)(id)$,
 but may depend on $\mathfrak{m},\tilde{\mathfrak{m}}$ and $\mathfrak{n}$.
 \end{proposition}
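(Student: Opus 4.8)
The plan is to decompose $\mathfrak{p}$ according to the descending chain $\{\mathbb{F}_\nu(s)\}$ and exploit the separation estimates already established. By Lemma \ref{lemn}, every $\mathfrak{p}=\sum_{j=1}^N\alpha_j\mathfrak{p}_j\in\rm{Cap}(\mathbb{F}^*)(id)$ splits as $\mathfrak{p}=\mathfrak{r}_1(s)+\mathfrak{r}_2(s)+\mathfrak{r}_3(s)$ with $\mathfrak{r}_1(s)\in\mathbb{F}_\nu^*(s-1)$, $\mathfrak{r}_2(s)=\alpha_s\mathfrak{u}(s)$ with $\mathfrak{u}(s)\in\mathcal{D}_r(\mathbb{F}_\nu^*(s))$ for some fixed $r>0$, and $\mathfrak{r}_3(s)\in\mathbb{F}_\nu^*(N)=\mathbb{F}_\nu^*$. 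The idea is that $\mathfrak{r}_1(s)$ and $\mathfrak{r}_3(s)$ contribute nonnegatively (since they lie in cones dual to faces between $\mathbb{F}_\nu$ and $\mathbb{P}_\nu$) while $\mathfrak{r}_2(s)$ contributes the controlling term proportional to $\alpha_s$.

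First I would prove (\ref{995aa}). Fix $s$ and $\nu$, and take $\mathfrak{m}\in\mathbb{F}_\nu(s-1)\setminus\mathbb{F}_\nu(s)$ and $\tilde{\mathfrak{m}}\in\mathbb{F}_\nu\subset\mathbb{F}_\nu(s)\subset\mathbb{F}_\nu(s-1)$. Since $\mathfrak{r}_1(s)\in\mathbb{F}_\nu^*(s-1)$ and both $\mathfrak{m},\tilde{\mathfrak{m}}\in\mathbb{F}_\nu(s-1)$, Definition \ref{dualface} gives $\mathfrak{r}_1(s)\cdot(\mathfrak{m}-\tilde{\mathfrak{m}})=0$. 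Since $\mathfrak{r}_3(s)\in\mathbb{F}_\nu^*$ and $\tilde{\mathfrak{m}}\in\mathbb{F}_\nu$ while $\mathfrak{m}\in\mathbb{P}_\nu$, again by Definition \ref{dualface} we get $\mathfrak{r}_3(s)\cdot(\mathfrak{m}-\tilde{\mathfrak{m}})\ge 0$. For the middle term, $\mathfrak{u}(s)\in\mathcal{D}_r(\mathbb{F}_\nu^*(s))$, $\tilde{\mathfrak{m}}\in\mathbb{F}_\nu(s)$, and $\mathfrak{m}\in\mathbb{P}_\nu\setminus\mathbb{F}_\nu(s)$, so Lemma \ref{lem1777} applies (with $\mathbb{P}=\mathbb{P}_\nu$, $\mathbb{F}=\mathbb{F}_\nu(s)$), yielding $\mathfrak{u}(s)\cdot(\mathfrak{m}-\tilde{\mathfrak{m}})\ge c>0$ where $c$ depends only on $r,\mathfrak{m},\tilde{\mathfrak{m}}$ and not on $\mathfrak{p}$. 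Adding the three pieces gives $\mathfrak{p}\cdot(\mathfrak{m}-\tilde{\mathfrak{m}})\ge \alpha_s c$, which is (\ref{995aa}) with $C_1=c$. Here I must check that $\mathfrak{m}\notin\mathbb{F}_\nu(s)$ really holds: since $\mathbb{F}_\nu(s)\subseteq\mathbb{F}_\nu(s-1)$ and $\mathfrak{m}\in\mathbb{F}_\nu(s-1)\setminus\mathbb{F}_\nu(s)$, this is immediate, which is exactly why $\mathfrak{m}$ is taken outside $\mathbb{F}_\nu(s)$ rather than outside $\mathbb{F}_\nu(s-1)$.

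Next I would prove (\ref{996bb}). Take $\mathfrak{n}\in\mathbb{F}_\nu(s-1)$ and $\tilde{\mathfrak{m}}\in\mathbb{F}_\nu\subset\mathbb{F}_\nu(s-1)$. As before, $\mathfrak{r}_1(s)\cdot(\mathfrak{n}-\tilde{\mathfrak{m}})=0$ since both points lie in $\mathbb{F}_\nu(s-1)$ and $\mathfrak{r}_1(s)\in\mathbb{F}_\nu^*(s-1)$; also $\mathfrak{r}_3(s)\cdot(\mathfrak{n}-\tilde{\mathfrak{m}})\le 0$ (reversed sign: $\tilde{\mathfrak{m}}\in\mathbb{F}_\nu$ minimizes $\mathfrak{r}_3(s)\cdot(\cdot)$ over $\mathbb{P}_\nu\supseteq\{\mathfrak{n}\}$). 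For the middle term, write $\mathfrak{u}(s)=\sum_j c_j\mathfrak{p}_j^\nu+\mathfrak{r}$ with bounded coefficients $c_j$ (it lies in the fixed cone $\mathbb{F}_\nu^*(s)$, built from the finitely many fixed generators), and estimate $|\mathfrak{u}(s)\cdot(\mathfrak{n}-\tilde{\mathfrak{m}})|\le C$ where $C$ depends only on the fixed generators and on $\mathfrak{n},\tilde{\mathfrak{m}}$ — here I use that $\mathfrak{n},\tilde{\mathfrak{m}}$ range over the finitely many lattice points of $\Lambda_\nu$, so the bound is uniform. Summing, $\mathfrak{p}\cdot(\mathfrak{n}-\tilde{\mathfrak{m}})\le 0+\alpha_s C+0=C_2\alpha_s$ with $C_2=C$. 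Note that for (\ref{996bb}) we only need an upper bound, so we may discard the (nonpositive) $\mathfrak{r}_3(s)$ term; if $\mathfrak{n}$ were not required to lie in $\mathbb{F}_\nu(s-1)$ the $\mathfrak{r}_1(s)$ term would be an issue, but $\mathfrak{r}_1(s)\cdot(\mathfrak{n}-\tilde{\mathfrak{m}})=0$ only because both points are in $\mathbb{F}_\nu(s-1)$.

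The main obstacle I anticipate is bookkeeping the uniformity of the constants: one must verify that $C_1,C_2$ genuinely do not depend on the particular $\mathfrak{p}\in\rm{Cap}(\mathbb{F}^*)(id)$, only on the fixed data $\{\mathfrak{p}_j\},\{\mathfrak{p}_j^\nu\},\{\mathfrak{n}_i^\nu\}$ and on the lattice points $\mathfrak{m},\tilde{\mathfrak{m}},\mathfrak{n}\in\Lambda_\nu$. This is guaranteed because the decomposition $\mathfrak{p}=\mathfrak{r}_1(s)+\mathfrak{r}_2(s)+\mathfrak{r}_3(s)$ of Lemma \ref{lemn} uses a fixed generator $\mathfrak{u}(s)$ (independent of the $\alpha_j$) and a fixed $r$, and the signs of the pairings of $\mathfrak{r}_1(s),\mathfrak{r}_3(s)$ follow purely from Definition \ref{dualface} regardless of the magnitudes of the coefficients. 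A second, minor point to be careful about: Lemma \ref{lem1777} requires $\mathbb{F}_\nu(s)$ to be a \emph{proper} face of $\mathbb{P}_\nu$ for the estimate to be non-vacuous; when $\mathbb{F}_\nu(s)=\mathbb{P}_\nu$ the set $\mathbb{F}_\nu(s-1)\setminus\mathbb{F}_\nu(s)$ is empty (since $\mathbb{F}_\nu(s-1)\subseteq\mathbb{P}_\nu$) and (\ref{995aa}) holds trivially, so there is nothing to prove in that degenerate case.
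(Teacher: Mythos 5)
Your overall route is the paper's route: decompose $\mathfrak{p}=\mathfrak{r}_1(s)+\mathfrak{r}_2(s)+\mathfrak{r}_3(s)$ via Lemma \ref{lemn}, get the lower bound from Lemma \ref{lem1777} applied to $\mathfrak{u}(s)\in\mathcal{D}_r(\mathbb{F}_\nu^*(s))$, and control the remaining pieces by cone positivity; your treatment of (\ref{995aa}) is correct and essentially identical to the paper's (your sharper observation $\mathfrak{r}_1(s)\cdot(\mathfrak{m}-\tilde{\mathfrak{m}})=0$ is fine, since $\mathfrak{r}_1(s)\in\mathbb{F}_\nu^*(s-1)$ is constant on the face $\mathbb{F}_\nu(s-1)$ containing both points).

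However, there is a genuine error in your proof of (\ref{996bb}). You claim $\mathfrak{r}_3(s)\cdot(\mathfrak{n}-\tilde{\mathfrak{m}})\le 0$ and justify it by saying that $\tilde{\mathfrak{m}}\in\mathbb{F}_\nu$ minimizes $\mathfrak{r}_3(s)\cdot(\cdot)$ over $\mathbb{P}_\nu$; but that very fact gives $\mathfrak{r}_3(s)\cdot\tilde{\mathfrak{m}}\le\mathfrak{r}_3(s)\cdot\mathfrak{n}$, i.e.\ $\mathfrak{r}_3(s)\cdot(\mathfrak{n}-\tilde{\mathfrak{m}})\ge 0$, the opposite sign (by Definition \ref{dualface}, each $\mathfrak{p}_j\in\mathbb{F}_\nu^*$ attains its minimum over $\mathbb{P}_\nu$ on $\mathbb{F}_\nu$). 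Hence you cannot discard the $\mathfrak{r}_3(s)$ term when proving an \emph{upper} bound: when $\mathfrak{n}\in\mathbb{F}_\nu(s-1)\setminus\mathbb{F}_\nu$ this term is typically strictly positive. The correct repair is exactly where the paper uses the ordering defining $\rm{Cap}(\mathbb{F}^*)(id)$, which your argument for (\ref{996bb}) never invokes: since $\mathfrak{r}_3(s)=\sum_{j>s}\alpha_j\mathfrak{p}_j$ with $0\le\alpha_j\le\alpha_s$ and each $\mathfrak{p}_j\cdot(\mathfrak{n}-\tilde{\mathfrak{m}})\ge 0$, one has
\begin{equation*}
\mathfrak{r}_3(s)\cdot(\mathfrak{n}-\tilde{\mathfrak{m}})\le\alpha_s\sum_{j>s}\mathfrak{p}_j\cdot(\mathfrak{n}-\tilde{\mathfrak{m}})\le C\alpha_s ,
\end{equation*}
with $C$ depending only on the fixed generators and on $\mathfrak{n},\tilde{\mathfrak{m}}$; combined with $\mathfrak{r}_1(s)\cdot(\mathfrak{n}-\tilde{\mathfrak{m}})=0$ and $\mathfrak{r}_2(s)\cdot(\mathfrak{n}-\tilde{\mathfrak{m}})=\alpha_s\,\mathfrak{u}(s)\cdot(\mathfrak{n}-\tilde{\mathfrak{m}})\le C'\alpha_s$ this gives (\ref{996bb}). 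The omission matters: without the monotonicity $\alpha_{s+1},\dots,\alpha_N\le\alpha_s$ the bound $\mathfrak{p}\cdot(\mathfrak{n}-\tilde{\mathfrak{m}})\lesssim\alpha_s$ is simply false in general, so any argument that does not use it (as yours, written, does not) cannot be complete.
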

\begin{proof}
By  Lemma \ref{lemn}, we have $\mathfrak{p}= \mathfrak{r}_1(s)+
\mathfrak{r}_2(s)+ \mathfrak{r}_3(s)$ satisfying (1),(2) and (3).
Since $\mathfrak{m}\in \mathbb{F}_\nu(s-1)\setminus \mathbb{F}_\nu(s)$
and $ \tilde{\mathfrak{m}} \in
 \mathbb{F}_\nu(N)\subset \mathbb{F}_\nu(s)$, the property (2) of Lemma \ref{lemn}
 combined with Lemma \ref{lem1777}  yields that
 $
 \mathfrak{u}(s)\cdot   (\mathfrak{m}-\tilde{\mathfrak{m}}) >c >0,
 $
that is
 \begin{eqnarray}\label{j0j1a}
 \mathfrak{r}_2(s)\cdot   (\mathfrak{m}-\tilde{\mathfrak{m}})\ge c\alpha_{s}\ \
 \text{where $c$ is independent of  $\mathfrak{p}$}.
 \end{eqnarray}
Since   $\mathfrak{r}_1(s)+\mathfrak{r}_3(s)\in \mathbb{F}_\nu^*(N)$
where $\mathbb{F}_\nu^*(1)\preceq \mathbb{F}_\nu^*(N)$ and $
\tilde{\mathfrak{m}} \in
 \mathbb{F}_\nu(N)$,
  \begin{eqnarray}\label{jjka}
 (\mathfrak{r}_1(s)+\mathfrak{r}_3(s))\cdot   (\mathfrak{m}-\tilde{\mathfrak{m}}) \ge 0.
 \end{eqnarray}
Thus (\ref{995aa}) follows from  (\ref{j0j1a}) and (\ref{jjka}).
Finally,
 the property (1) of Lemma \ref{lemn}  together with the fact  $\mathfrak{n} \in  \mathbb{F}_\nu(s-1)$ and
$\tilde{\mathfrak{m}}\in \mathbb{F}_\nu(N)\subset
\mathbb{F}_\nu(s-1)$  yields
 $$\mathfrak{r}_1(s)\cdot   (\mathfrak{n}-\tilde{\mathfrak{m}})=0.$$
Using this and $\alpha_{s}\ge \alpha_{s+1}\ge \cdots\ge \alpha_N$ in
(\ref{2vk}) and  (\ref{3vk}),
  \begin{eqnarray*}
 \mathfrak{p}\cdot   (\mathfrak{n}-\tilde{\mathfrak{m}})=
 (\mathfrak{r}_2(s)+\mathfrak{r}_3(s))\cdot   (\mathfrak{n}-\tilde{\mathfrak{m}})
 \lesssim \alpha_{s}
 \end{eqnarray*}
which proves (\ref{996bb}).
\end{proof}

\section{Proof of Sufficiency}\label{suffes}
In this section, we shall finish the proof of Theorem \ref{th60}.
Remind (\ref{oocc}) and write the Fourier multiplier for the
operator $
 f\rightarrow H^{\mathbb{F}(s)}_J*f
$ with $\mathbb{F}_\nu(s)\preceq \mathbb{F}_\nu(0)=\mathbb{P}_\nu={\bf N}(\Lambda_\nu,S)$ as
\begin{eqnarray*}
\mathcal{I}_J(P_{\mathbb{F}(s)},\xi) =\int e^{i\sum_{\nu=1}^d\left(
\sum_{\mathfrak{m}
  \in \mathbb{F}(s,\nu)\cap\Lambda_\nu}c_{\mathfrak{m}}^{\nu}
  2^{-J\cdot \mathfrak{m}}t^{\mathfrak{m}} \right)\xi_\nu} \prod h(t_\nu)dt.
\end{eqnarray*}
\begin{lemma}\label{ddcca}
 For   $s=0,1,\cdots,N$
and $J\in \bigcap_{\nu=1}^d\mathbb{F}_\nu^*$,
\begin{eqnarray}
&& \left|  \mathcal{I}_J(P_{\mathbb{F}(s)},\xi) \right| \le
CK\min\left\{ |2^{-J\cdot
   \mathfrak{m}_\nu }\xi_\nu|^{-\delta}: \mathfrak{m}_\nu\in \mathbb{P}_\nu={\bf N}(\Lambda_\nu,S)  \ \text{where}\
   \nu=1,\cdots,d\right\}\label{janna}\\
   &&\qquad\qquad\qquad\text{where}\ \ K=\prod_{\nu}\prod_{\mathfrak{m}\in\Lambda_\nu}
(|c^\nu_{\mathfrak{m}} |+1/|c^\nu_{\mathfrak{m}}
|)^{1/\delta}.\nonumber
 \end{eqnarray}
\end{lemma}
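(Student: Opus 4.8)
\textit{Approach.} The statement is really a marriage of the hypothesis (\ref{nnii}) of Theorem~\ref{th60} (the multi--dimensional Van der Corput estimate, valid here by Remark~\ref{rem00} and (\ref{nniiii})) with the cone--dominance of $J\in\mathrm{Cap}(\mathbb{F}^*)=\bigcap_{\nu}\mathbb{F}_\nu^{*}$. By Proposition~\ref{lemmdd} each $\mathbb{F}_\nu(s)$ is a face of $\mathbb{P}_\nu={\bf N}(\Lambda_\nu,S)$, so $\mathbb{F}(s)=(\mathbb{F}_\nu(s))_{\nu}\in\mathcal{F}(\vec{{\bf N}}(\Lambda,S))$ and (\ref{nnii}) applies with $\mathbb{G}=\mathbb{F}(s)$, giving, for every $\nu$ and every $\mathfrak{m}_\nu\in\mathbb{F}_\nu(s)\cap\Lambda_\nu$,
\[
|\mathcal{I}_J(P_{\mathbb{F}(s)},\xi)|\le C\,K\,|2^{-J\cdot\mathfrak{m}_\nu}\xi_\nu|^{-\delta},
\]
where the coefficient factor $K$ is exactly what appears when one keeps track of the dependence of the Van der Corput constant on $\{c^\nu_{\mathfrak{m}}\}$ after factoring out the largest coefficient.

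\textit{Upgrading the range of $\mathfrak{m}_\nu$.} It remains to replace $\mathbb{F}_\nu(s)\cap\Lambda_\nu$ by all of $\mathbb{P}_\nu$. We may assume each $\mathbb{F}_\nu$ is nonempty, since the empty face has cone $Z(S)$, which is already covered by the cones of the nonempty faces (Lemma~\ref{lem715}); then $\mathbb{F}_\nu(s)\supseteq\mathbb{F}_\nu\neq\emptyset$ for all $s$ by the descending property in Proposition~\ref{lemmdd}. A nonempty face $\mathbb{F}_\nu$ of ${\bf N}(\Lambda_\nu,S)=\mathrm{Ch}(\Lambda_\nu+\mathbb{R}_+^{S})$ contains a point of $\Lambda_\nu$: pick a supporting functional $\mathfrak{q}\in(\mathbb{F}_\nu^{*})^{\circ}$; by Lemma~\ref{lem132} one has $\mathfrak{q}\in Z(S)$, hence $\min_{{\bf N}(\Lambda_\nu,S)}\langle\mathfrak{q},\cdot\rangle=\min_{\Lambda_\nu}\langle\mathfrak{q},\cdot\rangle$ is attained at some $\tilde{\mathfrak{m}}_\nu\in\Lambda_\nu$, and this $\tilde{\mathfrak{m}}_\nu$ lies in $\mathbb{F}_\nu=\pi_{\mathfrak{q},\rho}\cap\mathbb{P}_\nu\subseteq\mathbb{F}_\nu(s)$. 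Since $J\in\mathbb{F}_\nu^{*}$, Definition~\ref{dualface} gives $J\cdot\tilde{\mathfrak{m}}_\nu=\min\{J\cdot\mathfrak{m}:\mathfrak{m}\in\mathbb{P}_\nu\}$, so $J\cdot\tilde{\mathfrak{m}}_\nu\le J\cdot\mathfrak{m}_\nu$, i.e. $|2^{-J\cdot\tilde{\mathfrak{m}}_\nu}\xi_\nu|\ge|2^{-J\cdot\mathfrak{m}_\nu}\xi_\nu|$, for every $\mathfrak{m}_\nu\in\mathbb{P}_\nu$. Applying the displayed bound with $\mathfrak{m}_\nu=\tilde{\mathfrak{m}}_\nu$ yields
\[
|\mathcal{I}_J(P_{\mathbb{F}(s)},\xi)|\le C\,K\,|2^{-J\cdot\tilde{\mathfrak{m}}_\nu}\xi_\nu|^{-\delta}\le C\,K\,|2^{-J\cdot\mathfrak{m}_\nu}\xi_\nu|^{-\delta}\quad\text{for all }\mathfrak{m}_\nu\in\mathbb{P}_\nu,
\]
and taking the infimum over $\nu=1,\dots,d$ and over $\mathfrak{m}_\nu\in\mathbb{P}_\nu$ gives (\ref{janna}).

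\textit{Where the work is.} If one prefers not to invoke (\ref{nnii}) as a black box, the inequality at $\tilde{\mathfrak{m}}_\nu$ must be proved by hand. Setting $\mu=2^{-J\cdot\tilde{\mathfrak{m}}_\nu}\xi_\nu$ and using $J\cdot\mathfrak{m}\ge J\cdot\tilde{\mathfrak{m}}_\nu$ for $\mathfrak{m}\in\mathbb{F}_\nu(s)\subseteq\mathbb{P}_\nu$, the $\nu$-th phase factors as $\mu\,Q^{J,s}_\nu(t)$ with $Q^{J,s}_\nu(t)=\sum_{\mathfrak{m}\in\mathbb{F}_\nu(s)\cap\Lambda_\nu}c^{\nu}_{\mathfrak{m}}\,2^{\,J\cdot\tilde{\mathfrak{m}}_\nu-J\cdot\mathfrak{m}}\,t^{\mathfrak{m}}$: a polynomial with a \emph{fixed} exponent set ($\subseteq\Lambda_\nu$), with all coefficients bounded by $|c^{\nu}_{\mathfrak{m}}|$, and with the $t^{\tilde{\mathfrak{m}}_\nu}$--coefficient equal to the fixed nonzero number $c^{\nu}_{\tilde{\mathfrak{m}}_\nu}$. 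One then invokes the multi--dimensional Van der Corput lemma to get $|\mathcal{I}_J|\lesssim K\,|\mu|^{-\delta}$ uniformly, the point being that the companion phases $\sum_{\mathfrak{m}\in\mathbb{F}_{\mu'}(s)\cap\Lambda_{\mu'}}c^{\mu'}_{\mathfrak{m}}2^{-J\cdot\mathfrak{m}}t^{\mathfrak{m}}$ with $\mu'\neq\nu$ have exponent sets disjoint from that of $Q^{J,s}_\nu$ (the hypothesis of Theorem~\ref{th60}), so, however large their coefficients, they cannot cancel the oscillation carried by $Q^{J,s}_\nu$. I expect this Van der Corput estimate --- quantitative decay in one designated coefficient in the presence of arbitrarily large companion coefficients, with constant controlled by $K$ --- to be the only genuinely nontrivial point; the rest is the convex--geometric reduction above.
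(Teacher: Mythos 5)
Your proof is correct and follows essentially the same route as the paper: apply the hypothesis (\ref{nnii}) to $\mathbb{F}(s)$ using the descending chain $\mathbb{F}_\nu\subseteq\mathbb{F}_\nu(s)$ so the decay at some $\tilde{\mathfrak{m}}_\nu\in\mathbb{F}_\nu\cap\Lambda_\nu$ is available (the paper gets this nonemptiness/extension from (\ref{ds41}) of Lemma \ref{dg23}, you rederive it via a supporting functional), and then upgrade to all $\mathfrak{m}_\nu\in\mathbb{P}_\nu$ by the cone dominance $J\cdot\tilde{\mathfrak{m}}_\nu\le J\cdot\mathfrak{m}_\nu$ for $J\in\mathbb{F}_\nu^*$, exactly as in (\ref{e550}). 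The only cosmetic difference is that you bypass the intermediate step of extending the minimum from $\mathbb{F}_\nu\cap\Lambda_\nu$ to all of $\mathbb{F}_\nu$, which changes nothing of substance.
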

\begin{proof}
By (\ref{nnii}) in Theorem \ref{th60} and $\mathbb{F}_\nu(s)
\succeq\mathbb{F}_\nu(N)=\mathbb{F}_\nu$,
 for all $s=0,1,\cdots,N$,
 \begin{eqnarray*}\label{eland}
 \left|\mathcal{I}_J(P_{\mathbb{F}(s)},\xi)\right| &\le  C& \min\left\{ |c_{\tilde{\mathfrak{m}}}^{\nu}2^{-J\cdot
  \tilde{\mathfrak{m}}_\nu }\xi_\nu|^{-\delta}:\tilde{\mathfrak{m}}_\nu
  \in \mathbb{F}_\nu\cap\Lambda_\nu \right\}\\
  &\le  C& \min\left\{ |c_{\tilde{\mathfrak{m}}}^{\nu}2^{-J\cdot
  \tilde{\mathfrak{m}}_\nu }\xi_\nu|^{-\delta}:\tilde{\mathfrak{m}}_\nu
  \in  \mathbb{F}_\nu \right\}
 \end{eqnarray*}
 where the second inequality follows from (\ref{ds41}) in Lemma \ref{dg23}.
This combined with the fact that for $J\in  \mathbb{F}_\nu^*$ for each $\nu=1,\cdots,d$,
\begin{eqnarray}\label{e550}
 2^{-J\cdot \mathfrak{m}_{\nu}}\le 2^{-J\cdot \tilde{\mathfrak{m}}_{\nu}}\ \text{where
 $\mathfrak{m}_{\nu}\in
\mathbb{F}_\nu(0)=\mathbb{P}_\nu$ and $\tilde{\mathfrak{m}}_\nu\in
\mathbb{F}_\nu(N)=\mathbb{F}_\nu$,}
\end{eqnarray}
 yields (\ref{janna}).
\end{proof}
Choose $d$ vectors
$$ \tilde{\mathfrak{m}}_\nu\in
\mathbb{F}_\nu(N)\cap\Lambda_\nu=\mathbb{F}_\nu\cap\Lambda_\nu\ \
\text{for each}\ \ \nu=1,\cdots,d .$$
 According to (\ref{2.2}),
 define for each $\{\alpha,\beta\}\subset\{1,\cdots,d\}$ with $\alpha>
 \beta$,
 $$\widehat{A_J^{(\alpha,\beta)}}(\xi)=\psi\left(\frac{2^{-J\cdot\tilde{\mathfrak{m}}_{\alpha}}\xi_{\alpha}}
 { 2^{-J\cdot\tilde{\mathfrak{m}}_{\beta}}\xi_{\beta} }\right)\ \ \text{and}\ \
 \widehat{A_J^{(\beta,\alpha)}}(\xi)
 =1-\widehat{A_J^{(\alpha,\beta)}}(\xi).$$
There are $M=\binom d2$ collections of $(\alpha,\beta)$ with $\alpha>\beta$ in $ \{1,\cdots,d\}$.   Then
 $$1=\prod_{(\alpha,\beta)\subset \{1,\cdots,d\},\, \alpha<\beta}
 \left(\widehat{A_J^{(\alpha,\beta)}}(\xi)+\widehat{A_J^{(\beta,\alpha)}}(\xi)\right)
 =\sum_{\gamma}\widehat{A_J^{\gamma}}(\xi)  $$
 where  $ \widehat{A_J^{\gamma}}(\xi)=
\prod_{k=1}^{M}\widehat{A_J^{(\alpha_k,\beta_k)}}(\xi)$ with
$\gamma=((\alpha_k,\beta_k))_{k=1}^M $ and
  the summation above is  over all possible $2^M$ choices of $\gamma$
  having $\alpha_k<\beta_k$ or $\alpha_k>\beta_k$ for each $k\in \{1,\cdots,M\}$.
In order to show  (\ref{6760}), we prove that for each
$\gamma=((\alpha_j,\beta_j))_{j=1}^M$,
\begin{eqnarray}\label{p6761}
 \left\|\sum_{J \in  \rm{Cap}(\mathbb{F}^*)(id)}H^{ {\bf N}(\Lambda,S)}_J*A_J^{\gamma}*f\right\|_{L^p(\mathbb{R}^d)}\lesssim
  \left\| f\right\|_{L^p(\mathbb{R}^d)}.
 \end{eqnarray}
 Note that there exists an $n$-permutation $\sigma$ such that
 $$\text{supp}\left( \widehat{A_J^{\gamma}} \right)
 \subset\left\{\xi\in\mathbb{R}^d:
 \left|2^{-J\cdot\tilde{\mathfrak{m}}_{\sigma(1)}}\xi_{\sigma(1)}\right|
 \lesssim\cdots \lesssim \left|2^{-J\cdot\tilde{\mathfrak{m}}_{\sigma(d)}}\xi_{\sigma(d)}\right|\right\}.$$
Without loss of generality,
\begin{eqnarray}\label{nmm}
\text{supp}\left( \widehat{A_J^{\gamma}} \right)
 \subset\left\{\xi\in\mathbb{R}^d:|2^{-J\cdot\tilde{\mathfrak{m}}_{1}}\xi_{1}|\lesssim\cdots\lesssim
|2^{-J\cdot\tilde{\mathfrak{m}}_{d}}\xi_{d}|\right\}.
\end{eqnarray}
In proving (\ref{p6761}) it suffices to show that for
$A_J=A^{\gamma}_J$ satisfying (\ref{nmm}),
\begin{eqnarray}\label{p676}
\left\|\sum_{J \in \rm{Cap}(\mathbb{F}^*)(id)}H^{\mathbb{F}(0)}_J*
A_J*f\right\|_{L^p(\mathbb{R}^d)}\lesssim  \left\|
f\right\|_{L^p(\mathbb{R}^d)}
 \end{eqnarray}
 where $\mathbb{F}(0)=(\mathbb{F}_\nu(0))_{\nu=1}^d=({\bf N}(\Lambda_\nu,S))={\bf
 N}(\Lambda,S)$.
\begin{remark}
From now on, we write $A\lesssim  B$ when $A\le C B$ where  $C$ is a constant
multiple of the constant $K$ in Lemma \ref{ddcca}.
\end{remark}
By Proposition \ref{l50} and ${\bf N}(\Lambda_\nu,S)=\mathbb{F}_\nu(0)$,
it suffices to assume that
\begin{eqnarray}
\text{rank}\left(\bigcup_{\nu=1}^d \mathbb{F}_\nu(0)
 \right)=n.\label{wlp}
\end{eqnarray}
 To show (\ref{p676}), by Proposition    \ref{lemmdd},
it suffices to prove (\ref{1394}) and (\ref{139b}) below:
\begin{eqnarray}
&& \text{If $\text{rank}\left(\bigcup_{\nu=1}^d
 \mathbb{F}_\nu(s-1) \right) =n$, then}\label{1394}\\
&&\qquad\quad \left\|\sum_{J \in
\rm{Cap}(\mathbb{F}^*)(id)}\left(H^{\mathbb{F}(s-1)}_J-H^{\mathbb{F}(s)}_J\right)
* A_J*f\,\right\|_{L^p(\mathbb{R}^d)} \lesssim \,
 \|f\|_{L^p(\mathbb{R}^d)}. \nonumber
 \end{eqnarray}
 Also,
\begin{eqnarray}
 \left\|\sum_{J \in
\rm{Cap}(\mathbb{F}^*)(id)}H^{\mathbb{F}(N)}_J*
A_J*f\right\|_{L^p(\mathbb{R}^d)\rightarrow
L^p(\mathbb{R}^d)}\lesssim \,  \|f\|_{L^p(\mathbb{R}^d)}
.\label{139b}
 \end{eqnarray}
We claim that (\ref{1394}) and (\ref{139b}) imply (\ref{p676}).
Assume that (\ref{1394}) and (\ref{139b}) are true.
 Let  $\text{rank}\left(\bigcup_{\nu=1}^d
 \mathbb{F}_\nu(s-1)  \right)=n$ for all $s=1,\cdots,N$.
 Then (\ref{1394}) and (\ref{139b}) yield that
 $$ \left\|\sum_{J \in
\rm{Cap}(\mathbb{F}^*)(id)}H^{\mathbb{F}(0)}_J*
A_J*f\right\|_{L^p(\mathbb{R}^d)} \lesssim
\|f\|_{L^p(\mathbb{R}^d)}.$$ Let $\text{rank}\left(\bigcup_{\nu=1}^d
 \mathbb{F}_\nu(s-1) \right)=n$ for $s=1,\cdots,r,$ and
$\text{rank}\left(\bigcup_{\nu=1}^d
 \mathbb{F}_\nu(r) \right)\le n-1$. Then (\ref{1394})  yields
that
\begin{equation} \label{hdssn}
 \left\|\sum_{J \in
\rm{Cap}(\mathbb{F}^*)(id)}H^{\mathbb{F}(0)}_J*
A_J*f\,\right\|_{L^p(\mathbb{R}^d)}  \lesssim\,
\left\|f\right\|_{L^p(\mathbb{R}^d)}+ \left\|\sum_{J \in
\rm{Cap}(\mathbb{F}^*)(id)}H^{\mathbb{F}(r)}_J*
A_J*f\,\right\|_{L^p(\mathbb{R}^d)} .
\end{equation}
By (\ref{wlp}),   $r\ge 1$. Thus, by Lemma \ref{lemgg}, we have an
overlapping condition
  $$\bigcup_{\nu=1}^d\left(\mathbb{F}_\nu^*(r)\right)^{\circ} \ne\emptyset.$$
From the hypothesis of Theorem \ref{th60}    and the rank condition
  $$\text{rank}\left(\bigcup_{\nu=1}^d
 \mathbb{F}_\nu(r) \right)\le n-1,$$ it follows that
$\bigcup_{\nu=1}^d\left(\mathbb{F}_\nu(r)\cap \Lambda_\nu\right)$ is
an even set. Thus the convolution kernel $H^{\mathbb{F}(r)}_J$
vanishes in the right hand side of (\ref{hdssn}) as its Fourier
multiplier
  $\mathcal{I}_J(P_{\mathbb{F}(r)},\xi)\equiv 0$.
\begin{proof}[Proof of (\ref{1394})]
Let $s\in\{1,\cdots,N\}$ fixed. Choose $\mu\in\{1,\cdots,d\}$ such
that
\begin{eqnarray}
\text{rank}\left(\bigcup_{\nu=\mu}^d  \mathbb{F}_\nu(s-1)
\right)&=&n, \label{vbn}
\\  \text{rank}\left(\bigcup_{\nu=\mu+1}^d
 \mathbb{F}_\nu(s-1) \right)&\le&n-1\label{vbn2}
\end{eqnarray}
where
$\bigcup_{\nu=\mu+1}^{d}\mathbb{F}\left(s-1,\nu\right)\cap\Lambda_\nu=\emptyset$
for the case  $\mu=d$. For each $s$, set
\begin{eqnarray*}
\mathbb{F}'(s-1)&=&(\emptyset,\cdots,\emptyset,\mathbb{F}_{\mu+1}(s-1),\cdots,\mathbb{F}_d(s-1))\\
\mathbb{F}'(s)&=&(\emptyset,\cdots,\emptyset,\mathbb{F}_{\mu+1}(s),\cdots,\mathbb{F}_d(s)).
\end{eqnarray*}
In order to show (\ref{1394}), we shall prove
\begin{eqnarray}
  \left\|\sum_{J \in
\rm{Cap}(\mathbb{F}^*)(id)}
\left(H^{\mathbb{F}'(s-1)}_J-H^{\mathbb{F}'(s)}_J\right)
* A_J*f\,\right\|_{L^p(\mathbb{R}^d)} \lesssim \,
 \|f\|_{L^p(\mathbb{R}^d)}, \label{sq33}
 \end{eqnarray}
 and
\begin{equation}
 \left\|\sum_{J \in \rm{Cap}(\mathbb{F}^*)(id)}
\left(H^{\mathbb{F}(s-1)}_J-H^{\mathbb{F}'(s-1)}_J -
 H^{\mathbb{F}(s)}_J+H^{\mathbb{F}'(s)}_J\right)
* A_J*f\,\right\|_{L^p(\mathbb{R}^d)}  \lesssim \,
 \|f\|_{L^p(\mathbb{R}^d)}.\label{eue}
 \end{equation}
 \begin{proof}[Proof of (\ref{sq33})]
Note that from  (\ref{vbn2}) and
$\mathbb{F}_{\nu}\left(s\right)\preceq\mathbb{F}_{\nu}\left(s-1\right)$,
$$\text{rank}\left(
\bigcup_{\nu=\mu+1}^{d}  \mathbb{F}_{\nu}(s-1)\right)  \le n-1,\ \
\text{and}\ \ \text{rank}\left( \bigcup_{\nu=\mu+1}^{d}
 \mathbb{F}_{\nu}(s) \right)\le n-1.
$$
 By Lemma \ref{lemgg}, for $s=2,\cdots,N$,
$$\bigcap_{\nu=\mu+1}^{d}(\mathbb{F}_{\nu}^*(s-1))^{\circ}\ne
\emptyset\ \ \text{and}\ \
\bigcap_{\nu=\mu+1}^{d}(\mathbb{F}_\nu^*(s))^{\circ}\ne \emptyset.$$
Thus
$$\bigcup_{\nu=\mu+1}^{d}\left(\mathbb{F}_{\nu}(s-1)
\cap\Lambda_\nu\right)\ \ \text{and}\ \
\bigcup_{\nu=\mu+1}^{d}\left(\mathbb{F}_{\nu}(s)
\cap\Lambda_\nu\right) \ \text{are even sets.}$$  Thus
$$ \mathcal{I}_J(\mathbb{F}'(s-1),\xi)
  = \mathcal{I}_J(\mathbb{F}'(s ),\xi)
  \equiv 0.$$
  We next consider the case for $s=1$, that is,
  \begin{eqnarray}
  \left\|\sum_{J \in
\rm{Cap}(\mathbb{F}^*)(id)}
\left(H^{\mathbb{F}'(0)}_J-H^{\mathbb{F}'(1)}_J\right)
* A_J*f\,\right\|_{L^p(\mathbb{R}^d)}
 \end{eqnarray}
 where $\mathcal{I}_J(\mathbb{F}'(1),\xi)
  \equiv 0$  by the previous argument.  By applying the Proposition \ref{l50} with
$$\text{rank}\left(
\bigcup_{\nu=\mu+1}^{d}  \mathbb{F}_{\nu}(0) \right)\le n-1 \ \
\text{and}\ \ \mathbb{F}_{\nu}(0)={\bf N}(\Lambda_\nu,S),
 $$
we obtain
 \begin{eqnarray*}
  \left\|\sum_{J \in
\rm{Cap}(\mathbb{F}^*)(id)} \left(H^{\mathbb{F}'(0)}_J \right)
* A_J*f\,\right\|_{L^p(\mathbb{R}^d)}\lesssim \left\|
f\,\right\|_{L^p(\mathbb{R}^d)}.
 \end{eqnarray*}
 Therefore we proved (\ref{sq33}).
 \end{proof}

\begin{proof}[Proof of (\ref{eue})]
 We denote by $\mathcal{I}_J(\mathbb{F}(s-1),\mathbb{F}(s),\xi)$
 the Fourier multiplier of $$\left(H^{\mathbb{F}(s-1)}_J-H^{\mathbb{F}'(s-1)}_J -
 H^{\mathbb{F}(s)}_J+H^{\mathbb{F}'(s)}_J\right)*A_J.$$
Then  $\mathcal{I}_J(\mathbb{F}(s-1),\mathbb{F}(s),\xi)$ is
\begin{equation}
 \biggl(\left(
  \mathcal{I}_J(\mathbb{F}(s-1),\xi)- \mathcal{I}_J(\mathbb{F}'(s-1),\xi)\right)-\left(\mathcal{I}_J(\mathbb{F}(s),\xi)
   -\mathcal{I}_J(\mathbb{F}'(s),\xi)\right)\biggl)\widehat{A_J}(\xi).\label{jkjk1}
\end{equation}
We shall show that for  all
 $J\in\rm{Cap}(\mathbb{F}^*)(id)$,
\begin{eqnarray}
  \left|\mathcal{I}_J(\mathbb{F}(s-1),\mathbb{F}(s),\xi)\right|
 \, \lesssim  \,  \min\left\{|2^{- J\cdot
 \mathfrak{n}_{\nu}}\xi_\nu|^{\pm\epsilon} :\mathfrak{n}_{\nu}\in
  \mathbb{F}_{\nu}(s-1)\right\}_{\nu=\mu}^d.\label{dadan1}
\end{eqnarray}
This combined with the rank condition (\ref{vbn})   and Proposition
\ref{propyy} implies (\ref{eue}). To show (\ref{dadan1}), by Lemma
\ref{ddcca}, it suffices to show that
 \begin{eqnarray}
  \left|\mathcal{I}_J(\mathbb{F}(s-1),\mathbb{F}(s),\xi)\right|
\, \lesssim \,\min\left\{|2^{- J\cdot
 \mathfrak{n}_{\nu}}\xi_\nu|^{\epsilon} :\mathfrak{n}_{\nu}\in
  \mathbb{F}_{\nu}(s-1) \ \text{for}\
  \right\}_{\nu=\mu}^d.\label{2300}
\end{eqnarray}
Thus, the proof of (\ref{eue}) is finished if  (\ref{2300}) is
proved.
 \end{proof}
\begin{proof}[Proof of (\ref{2300})]
We write
$ \mathcal{I}_J(\mathbb{F}(s-1),\mathbb{F}(s),\xi) $ as
\begin{equation}
 \biggl(\left(
  \mathcal{I}_J(\mathbb{F}(s-1),\xi)- \mathcal{I}_J(\mathbb{F}(s),\xi)\right)-\left(\mathcal{I}_J(\mathbb{F}'(s-1),\xi)
   -\mathcal{I}_J(\mathbb{F}'(s),\xi)\right)\biggl)\widehat{A_J}(\xi).\label{jkjk2}
\end{equation}
By using mean value theorem in (\ref{jkjk2}),
\begin{eqnarray}
\qquad\
\left|\mathcal{I}_J(\mathbb{F}(s-1),\mathbb{F}(s),\xi)\right|
\lesssim\, \sum_{\nu=1}^d\sum_{\mathfrak{m}_\nu\in
(\mathbb{F}_{\nu}(s-1)\cap\Lambda_\nu)\setminus
(\mathbb{F}_{\nu}(s)\cap\Lambda_\nu)} |c_{\mathfrak{m}}^\nu2^{- J\cdot
 \mathfrak{m}_\nu}\xi_\nu|\label{23100}.
\end{eqnarray}
By (\ref{995aa}) of Proposition \ref{prop66aa},  for any
$\mathfrak{m}_\nu\in \mathbb{F}_{\nu}(s-1)\setminus \mathbb{F}_{\nu}(s)$
and   $\tilde{\mathfrak{m}}_\nu\in \mathbb{F}_{\nu}(N)$, there exists
a constant $b>0$ independent of $J$ and $\alpha_s$  such that
$$J\cdot   (\mathfrak{m}_{\nu}-\tilde{\mathfrak{m}}_{\nu})
\ge b\, \alpha_{s}\ \ \text{where}\ \
 J=\sum_{j=1}^N\alpha_j\mathfrak{p}_j
   \in \rm{Cap}(\mathbb{F}^*)(id).   $$
 So in (\ref{23100}),
\begin{eqnarray*}
|2^{-J\cdot\mathfrak{m}_\nu}\xi_\nu|\lesssim 2^{-b\alpha_{s}
}|2^{-J\cdot\tilde{\mathfrak{m}}_\nu}\xi_\nu|.
\end{eqnarray*}
This together with Lemma \ref{ddcca}  yields that in (\ref{23100}),
\begin{eqnarray}\label{22mm}
  \left|\mathcal{I}_J(\mathbb{F}(s-1),\mathbb{F}(s),\xi)\right|
   \,\lesssim\, \sum_{\nu=1}^d2^{-b\alpha_{s}
}|2^{-J\cdot\tilde{\mathfrak{m}}_\nu}\xi_\nu|\,\lesssim\,
2^{-c_1\alpha_{s}}.
\end{eqnarray}
By using the mean value theorem in (\ref{jkjk1}) together with
(\ref{e550}) and the support condition of $\widehat{A_J}$ in
(\ref{nmm}),
\begin{eqnarray}
 \left|\mathcal{I}_J(\mathbb{F}(s-1),\mathbb{F}(s),\xi)\right|&\lesssim&  \sum_{\nu=1}^{\mu}
  \sum_{\mathfrak{m}_\nu\in \left(\mathbb{F}_{\nu}(s-1)\cup\,\mathbb{F}_{\nu}(s)\right) \cap\Lambda_\nu}| \xi_{\nu}
  2^{-J\cdot \mathfrak{m}_\nu}|\nonumber\\&\lesssim&   |\xi_{\mu}
  2^{-J\cdot\tilde{\mathfrak{m}}_\mu}| \ \ \text{for any $\tilde{\mathfrak{m}}_\mu\in \mathbb{F}_{\nu}(N)$. }\label{pp44}
 \end{eqnarray}
By (\ref{22mm}),(\ref{pp44}) and (\ref{nmm}),
\begin{eqnarray}
 \left|\mathcal{I}_J(\mathbb{F}(s-1),\mathbb{F}(s),\xi)\right|&\lesssim&
  \min\{|\xi_{\mu}2^{-J\cdot\tilde{\mathfrak{m}}_\mu}|,2^{-c_1\alpha_s}: \tilde{\mathfrak{m}}_\mu\in
  \mathbb{F}_{\mu}(N)\} \nonumber\\
 & \lesssim&
  \min\{|\xi_{\nu}2^{-J\cdot\tilde{\mathfrak{m}}_\nu}|,2^{-c_1\alpha_s}: \tilde{\mathfrak{m}}_\nu\in
  \mathbb{F}_{\nu}(N)\}_{\nu=\mu}^{d}\label{pp2}.
 \end{eqnarray}
  By
(\ref{996bb}) of Proposition \ref{prop66aa},
 $$J\cdot   (\mathfrak{n}_\nu-\tilde{\mathfrak{m}}_\nu)  \lesssim  \alpha_{s} \ \ \text{where $\mathfrak{n}_\nu\in \mathbb{F}_{\nu}(s-1) $
 and $ \tilde{\mathfrak{m}}_\nu \in
 \mathbb{F}_{\nu}(N)$}.$$
  Hence for any $\mathfrak{n}_\nu\in
\mathbb{F}_{\nu}(s-1) $
 and $ \tilde{\mathfrak{m}}_\nu \in
 \mathbb{F}_{\nu}(N)$ with $\nu=\mu,\mu+1,\cdots,d$ in (\ref{pp2}),
\begin{eqnarray}
|2^{-J\cdot\tilde{\mathfrak{m}}_\nu}\xi_\nu|\lesssim
2^{c_2\alpha_{s} } |2^{-J\cdot\mathfrak{n}_\nu}\xi_\nu|.\label{eo09}
 \end{eqnarray}
Then by (\ref{pp2}) and (\ref{eo09}),
\begin{eqnarray*}
 \left|\mathcal{I}_J(\mathbb{F}(s-1),\mathbb{F}(s),\xi)\right|
  & \lesssim & \min\left\{2^{c_2\alpha_{s}
}|\xi_{\nu}2^{-J\cdot
\mathfrak{n}_{\nu}}|,2^{-c_1\alpha_s}:\mathfrak{n}_{\nu}\in
\mathbb{F}_\nu(s-1)\right\}_{\nu=\mu}^{d}
   \\
   &  \lesssim &\min\left\{ |\xi_{\nu}2^{-J\cdot
\mathfrak{n}_{\nu}}|^{\epsilon}:\mathfrak{n}_{\nu}\in
\mathbb{F}_\nu(s-1) \right\}_{\nu=\mu}^{d} .\nonumber
\end{eqnarray*}
 This yields  (\ref{2300}).
\end{proof}
Therefore the proof of (\ref{eue}) is finished.
\end{proof}

\begin{proof}[Proof of (\ref{139b})]
Assume that  $\text{rank}\left(\bigcup_{\nu=1}^d
 \mathbb{F}_{\nu}(N) \right)\le n-1$.  By this and  Lemma \ref{lemgg},  $\bigcup_{\nu=1}^d
\mathbb{F}_{\nu}(N)\cap\Lambda_\nu$ is an even set so that $
\mathcal{I}_J(\mathbb{F}(N),\xi)  \equiv 0$. Thus we suppose that
$$\text{rank}\left(\bigcup_{\nu=1}^d  \mathbb{F}_{\nu}(N) \right)=
n.$$
 As in (\ref{vbn}) and (\ref{vbn2}), we choose
$\mu\in\{1,\cdots,d\}$ such that
\begin{eqnarray} \text{rank}\left(
\bigcup_{\nu=\mu}^{d} \mathbb{F}_{\nu}(N)\right)  \,=\,n  \ \
\text{and}\ \
  \text{rank}\left(
\bigcup_{\nu=\mu+1}^{d} \mathbb{F}_{\nu}(N)\right)
 \,\le\,n-1.\label{lemgg2}
\end{eqnarray}
Set
\begin{eqnarray}
\mathbb{F}'(N)=(\emptyset,\cdots,\emptyset,\mathbb{F}_{\mu+1}(N),\cdots,\mathbb{F}_d(N))\
\text{for $\mu\le d-1$},
\end{eqnarray}
and $\mathbb{F}'(N)=(\emptyset,\cdots,\emptyset)$ for $\mu= d$. By
Lemma \ref{lemgg}
 $$\bigcap_{\nu=\mu+1}^{d}(\mathbb{F}_{\nu}^*(N))^{\circ}\ne
\emptyset.$$  Thus by this and (\ref{lemgg2}),
 $\bigcup_{\nu=\mu+1}^{d}\left(\mathbb{F}_{\nu}(N)
\cap\Lambda_\nu\right)$ is an even set.   So, $
\mathcal{I}_J(\mathbb{F}'(N),\xi)  \equiv 0$. Thus it suffices to
show that
\begin{eqnarray}
 \left\|\sum_{J \in
\rm{Cap}(\mathbb{F}^*)(id)}\left(H^{\mathbb{F}(N)}_J-H^{\mathbb{F}'(N)}_J\right)*
A_J*f\right\|_{L^p(\mathbb{R}^d) } \lesssim\left\|
f\right\|_{L^p(\mathbb{R}^d)} .\label{b45a}
 \end{eqnarray}
Let $J\in \rm{Cap}(\mathbb{F}^*)(id)\subset
\bigcap\mathbb{F}_\nu^*\subset\mathbb{F}_\nu^*=\mathbb{F}_{\nu}^*(N).$
  Then
  for every $\mathfrak{n}_\nu\in \mathbb{F}_{\nu}(N)=\mathbb{F}_\nu$\,,
 \begin{eqnarray*}
\quad J\cdot (\mathfrak{n}_\nu-\tilde{\mathfrak{m}}_\nu)=0  \ \
\text{ where   $ \tilde{\mathfrak{m}}_\nu \in
 \mathbb{F}_{\nu}(N)$.}
 \end{eqnarray*}
By this and the support condition (\ref{nmm}) for
$\widehat{A_J}(\xi)$ such that $
 \left|2^{-J\cdot\tilde{\mathfrak{m}}_{1}}\xi_{1}\right|\lesssim\cdots\lesssim
|2^{-J\cdot\tilde{\mathfrak{m}}_{d}}\xi_{d}|,
$
\begin{eqnarray}
 \left|\left(\mathcal{I}_J(\mathbb{F}(N),\xi)
 -\mathcal{I}_J(\mathbb{F}'(N),\xi)\right)\widehat{A_J}(\xi)\right|
 & \lesssim& \sum_{\nu=1}^\mu \sum_{\mathfrak{m}_\nu\in \mathbb{F}_{\nu}(N)\cap\Lambda_\nu}
 \left|2^{-J\cdot \mathfrak{m}_{\nu}}\xi_\nu\right|
 \approx \left|2^{-J\cdot\tilde{\mathfrak{m}}_{\mu}}\xi_\mu\right|\nonumber\\
 & \lesssim &\min\left\{ \left|2^{-J\cdot\tilde{\mathfrak{m}}_{\nu}}\xi_\nu\right|:\nu=\mu,\cdots,d \right\}
 \nonumber\\
 &  \lesssim&  \min\left\{ \left|2^{-J\cdot \mathfrak{n}_{\nu}}\xi_\nu\right|:
 \mathfrak{n}_\nu \in  \mathbb{F}_{\nu}(N)
 \right\}_{\nu=\mu}^d.\label{40809}
 \end{eqnarray}
Thus Lemma \ref{ddcca} combined with
$\text{rank}\left(\bigcup_{\nu=\mu}^d
 \mathbb{F}_{\nu}(N)\right) = n$ together  and Proposition
\ref{propyy} yields (\ref{b45a}). This completes the proof of
(\ref{139b}). Therefore we finish the proof of (\ref{6760}).
Similarly, we   also obtain (\ref{6760hha}) as in (\ref{jr14}).
\end{proof}

\section{Necessity Theorem}\label{sec10}
To prove the necessity part of Main Theorem, we need more properties
of cones.
\subsection{Transitivity Rule for Cones}
\begin{proposition}\label{lemggu}
Let $\mathbb{P}\subset\mathbb{R}^n$ be a polyhedron and
$\mathbb{F},\mathbb{G}\in\mathcal{F}(\mathbb{P})$ such that
$\mathbb{G}\preceq \mathbb{F}$. Suppose that $\mathfrak{q}\in
(\mathbb{F}^*)^{\circ}| \mathbb{P}$. Suppose that $\mathfrak{p}\in
(\mathbb{G}^*)^{\circ}| \mathbb{F}$. Then there exists
$\epsilon_0>0$ such that
$$0<\epsilon<\epsilon_0\ \ \text{implies that}\ \ \mathfrak{q}+\epsilon\mathfrak{p}\in
(\mathbb{G}^*)^{\circ}| \mathbb{P}.$$
\end{proposition}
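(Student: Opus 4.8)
The plan is to verify directly the defining inequality (\ref{4g1}) for the cone $(\mathbb{G}^*)^{\circ}|\mathbb{P}$ with the vector $\mathfrak{q}+\epsilon\mathfrak{p}$, namely to produce $\rho\in\mathbb{R}$ such that
$$\langle\mathfrak{q}+\epsilon\mathfrak{p},{\bf u}\rangle=\rho<\langle\mathfrak{q}+\epsilon\mathfrak{p},{\bf y}\rangle\ \ \text{for all ${\bf u}\in\mathbb{G}$ and ${\bf y}\in\mathbb{P}\setminus\mathbb{G}$}.$$
First I would record what the hypotheses give. Since $\mathfrak{q}\in(\mathbb{F}^*)^{\circ}|\mathbb{P}$, there is $r\in\mathbb{R}$ with $\langle\mathfrak{q},{\bf u}\rangle=r$ for all ${\bf u}\in\mathbb{F}$ and $\langle\mathfrak{q},{\bf y}\rangle>r$ for all ${\bf y}\in\mathbb{P}\setminus\mathbb{F}$. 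Since $\mathfrak{p}\in(\mathbb{G}^*)^{\circ}|\mathbb{F}$, there is $s\in\mathbb{R}$ with $\langle\mathfrak{p},{\bf u}\rangle=s$ for all ${\bf u}\in\mathbb{G}$ and $\langle\mathfrak{p},{\bf z}\rangle>s$ for all ${\bf z}\in\mathbb{F}\setminus\mathbb{G}$. Set $\rho=r+\epsilon s$. On $\mathbb{G}$ the identity $\langle\mathfrak{q}+\epsilon\mathfrak{p},{\bf u}\rangle=r+\epsilon s=\rho$ is immediate since $\mathbb{G}\subset\mathbb{F}$.

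The remaining task is the strict inequality on $\mathbb{P}\setminus\mathbb{G}$, which I would split into two cases according to whether a point lies in $\mathbb{F}$ or not. \emph{Case 1:} ${\bf z}\in\mathbb{F}\setminus\mathbb{G}$. Then $\langle\mathfrak{q},{\bf z}\rangle=r$ and $\langle\mathfrak{p},{\bf z}\rangle>s$, so $\langle\mathfrak{q}+\epsilon\mathfrak{p},{\bf z}\rangle=r+\epsilon\langle\mathfrak{p},{\bf z}\rangle>r+\epsilon s=\rho$ for every $\epsilon>0$; no smallness is needed here. \emph{Case 2:} ${\bf y}\in\mathbb{P}\setminus\mathbb{F}$. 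Here $\langle\mathfrak{q},{\bf y}\rangle>r$, but $\langle\mathfrak{p},{\bf y}\rangle$ could be much smaller than $s$, so I need $\epsilon$ small enough that the gain in the $\mathfrak{q}$-direction dominates the possible loss in the $\mathfrak{p}$-direction. Since $\mathbb{P}={\bf N}(\Lambda_\nu,S)$-type polyhedra in this paper are finitely generated, $\mathbb{P}\setminus\mathbb{F}$ has finitely many vertices (and finitely many unbounded edge directions); more robustly, one reduces to checking the inequality on the finite vertex set after accounting for the recession cone, on which $\mathfrak{q}$ and $\mathfrak{p}$ are both nonnegative in the relevant coordinates by Lemma \ref{lem132}. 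On the finite vertex set $V_0\subset\mathbb{P}\setminus\mathbb{F}$ put $a=\min_{{\bf y}\in V_0}(\langle\mathfrak{q},{\bf y}\rangle-r)>0$ and $b=\max_{{\bf y}\in V_0}|\langle\mathfrak{p},{\bf y}\rangle-s|<\infty$, and choose $\epsilon_0=a/(b+1)$; then for $0<\epsilon<\epsilon_0$ and ${\bf y}\in V_0$,
$$\langle\mathfrak{q}+\epsilon\mathfrak{p},{\bf y}\rangle=r+(\langle\mathfrak{q},{\bf y}\rangle-r)+\epsilon s+\epsilon(\langle\mathfrak{p},{\bf y}\rangle-s)\ge \rho+a-\epsilon b>\rho.$$
A general point of $\mathbb{P}\setminus\mathbb{F}$ is a convex combination of such vertices plus a recession vector; convexity propagates the strict inequality from the vertices, and the recession part only helps (it contributes a nonnegative amount to both $\langle\mathfrak{q},\cdot\rangle$ and $\langle\mathfrak{p},\cdot\rangle$, since $\mathfrak{q},\mathfrak{p}$ lie in the corresponding cones).

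Combining the two cases, $\langle\mathfrak{q}+\epsilon\mathfrak{p},{\bf u}\rangle=\rho<\langle\mathfrak{q}+\epsilon\mathfrak{p},{\bf w}\rangle$ for all ${\bf u}\in\mathbb{G}$ and ${\bf w}\in\mathbb{P}\setminus\mathbb{G}$, which is exactly the condition in (\ref{brr2}) for $\mathfrak{q}+\epsilon\mathfrak{p}\in(\mathbb{G}^*)^{\circ}|\mathbb{P}$. I expect the main obstacle to be the bookkeeping in Case 2 for \emph{unbounded} polyhedra: one must argue carefully that it suffices to test the inequality on the finitely many vertices (handling the recession cone via the sign conditions from Lemma \ref{lem132}, or equivalently via the representation $\mathbb{F}^*=\mathrm{CoSp}(\Pi_a(\mathbb{F}))\oplus V^{\perp}(\mathbb{P})$ from Proposition \ref{pr50}), so that the minimum $a$ and maximum $b$ above are genuinely attained and finite. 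Everything else is a routine continuity/finiteness argument.
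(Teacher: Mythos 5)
Your two-case strategy is sound and Case 1 is fine, but Case 2 has a genuine gap exactly at the recession cone. The claim that the recession part ``only helps'' because $\mathfrak{q}$ and $\mathfrak{p}$ are nonnegative in the relevant coordinates by Lemma \ref{lem132} is unjustified for $\mathfrak{p}$: since $\mathfrak{p}\in(\mathbb{G}^*)^{\circ}|\mathbb{F}$, the functional $\langle\mathfrak{p},\cdot\rangle$ is bounded below only on $\mathbb{F}$, and Lemma \ref{lem132} (applied through Lemma \ref{dg23}, which identifies $\mathbb{F}$ with ${\bf N}(\Lambda\cap\mathbb{F},S_0)$) gives $p_j\ge 0$ only for $j\in S_0$, i.e.\ only on the recession cone of $\mathbb{F}$, not on the generally larger recession cone of $\mathbb{P}$. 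Concretely, take $\mathbb{P}=\mathbb{R}_+^2$, $\mathbb{F}=\{(x,0):x\ge 0\}$, $\mathbb{G}=\{0\}$, $\mathfrak{q}=(0,1)\in(\mathbb{F}^*)^{\circ}|\mathbb{P}$ and $\mathfrak{p}=(1,-10)\in(\mathbb{G}^*)^{\circ}|\mathbb{F}$: your vertex set $V_0\subset\mathbb{P}\setminus\mathbb{F}$ is empty, so your recipe $\epsilon_0=a/(b+1)$ imposes no constraint on $\epsilon$, yet $\mathfrak{q}+\epsilon\mathfrak{p}=(\epsilon,1-10\epsilon)$ lies in $(\mathbb{G}^*)^{\circ}|\mathbb{P}$ only for $\epsilon<1/10$; the constraint comes entirely from the recession direction ${\bf e}_2$, on which $\langle\mathfrak{p},\cdot\rangle<0$.

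The gap is repairable within your framework. Writing $\mathbb{P}=\mathrm{Ch}(V)+C$ with $V$ finite and $C$ the recession cone generated by finitely many $w_i$, one has $\langle\mathfrak{q},w_i\rangle\ge 0$ always, and $\langle\mathfrak{q},w_i\rangle=0$ forces $w_i\in\mathrm{rec}(\mathbb{F})$ (because $\mathbb{F}=\pi_{\mathfrak{q},r}\cap\mathbb{P}$), whence $\langle\mathfrak{p},w_i\rangle\ge 0$; so the smallness condition on $\epsilon$ must be imposed not only over $V_0$ but also over the generators with $\langle\mathfrak{q},w_i\rangle>0$. One must also check the degenerate configuration in which all vertex and recession contributions vanish: there the point lies in $\mathbb{F}$ and in $\pi_{\mathfrak{p},s}\cap\mathbb{F}=\mathbb{G}$, so strictness is not lost on $\mathbb{P}\setminus\mathbb{G}$. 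With these repairs your argument becomes a valid alternative to the paper's proof, which avoids any Minkowski--Weyl decomposition: the paper splits $\mathbb{P}$ along the supporting hyperplane $\pi_{\mathfrak{p}}$ of $\mathbb{G}$ in $\mathbb{F}$, disposes of $\mathbb{P}^{+}$ as in your Case 1, and on $\mathbb{P}^{-}$ obtains the needed uniformity from the compactness bound of Lemma \ref{lem4114} on normalized projections $\mathrm{P}_{V^{\perp}(\mathbb{G})}(\mathfrak{n}-\mathfrak{m})/|\mathrm{P}_{V^{\perp}(\mathbb{G})}(\mathfrak{n}-\mathfrak{m})|$, taking $\epsilon_0=r/(100|\mathfrak{p}|)$.
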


 \begin{figure}
 \centerline{\includegraphics[width=10cm,height=7cm]{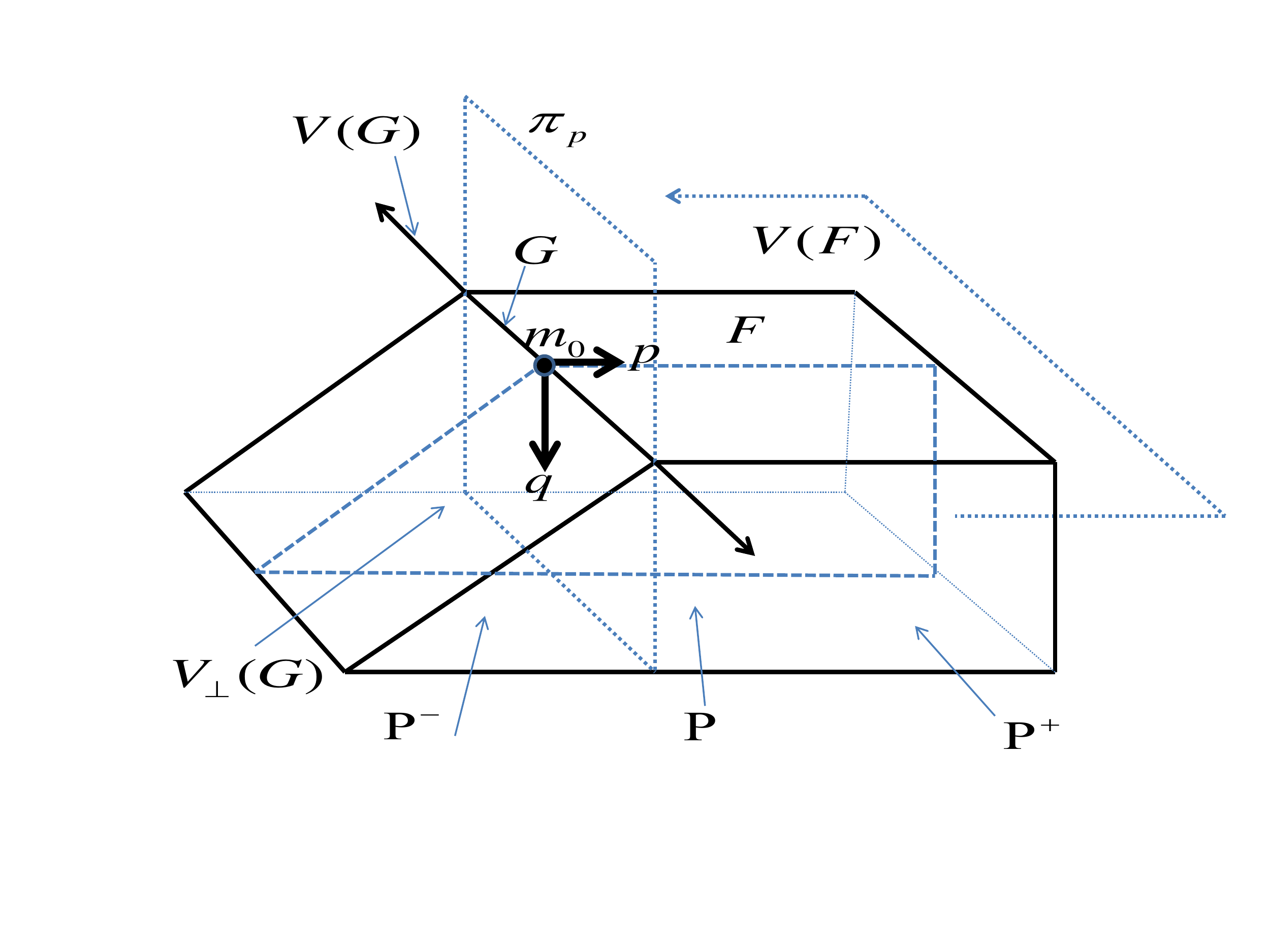}}
 \caption{Transitivity.} \label{graph4}
 \end{figure}

See Figure \ref{graph4} that visualizes  Proposition \ref{lemggu}
and Lemma \ref{lem4114}.
\begin{definition}
Let $V$ be a subspace of $\mathbb{R}^n$. Denote a projection from $\mathbb{R}^n$ to $V$ by $P_V$.
\end{definition}
\begin{lemma}\label{lem4114}
Let $\mathbb{P}$ be a polyhedron in $\mathbb{R}^n$ and
$\mathbb{G}\preceq \mathbb{P}$. Given  $\mathfrak{q}\in
(\mathbb{G}^*)^{\circ}| \mathbb{P}$,  there exists $r>0$ depending
only $\mathfrak{q}$ such that for any $\mathfrak{n}\in
\mathbb{P}\setminus \mathbb{G}$ and $\mathfrak{m}\in \mathbb{G}$,
\begin{eqnarray}
\mathfrak{q} \cdot \frac{{\rm
P}_{V^{\perp}(\mathbb{G})}(\mathfrak{n}-\mathfrak{m})}{|{\rm
P}_{V^{\perp}(\mathbb{G})}(\mathfrak{n}-\mathfrak{m}) |}\ge r>0.
\end{eqnarray}
\end{lemma}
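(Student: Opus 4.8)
\textbf{Proof proposal for Lemma \ref{lem4114}.}

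The plan is to reduce the statement to a compactness argument after quotienting out the translation-freedom along $\mathbb{G}$. First I would fix $\mathfrak{q}\in(\mathbb{G}^*)^{\circ}|\mathbb{P}$. By Definition \ref{dualface} (specifically (\ref{brr2})), there is a supporting hyperplane $\pi_{\mathfrak{q},\rho}$ with $\mathbb{G}=\pi_{\mathfrak{q},\rho}\cap\mathbb{P}$ and $\mathbb{P}\setminus\mathbb{G}\subset(\pi_{\mathfrak{q},\rho}^{+})^{\circ}$, i.e.
\begin{eqnarray*}
\langle\mathfrak{q},\mathfrak{m}\rangle=\rho<\langle\mathfrak{q},\mathfrak{n}\rangle\ \ \text{for all }\mathfrak{m}\in\mathbb{G},\ \mathfrak{n}\in\mathbb{P}\setminus\mathbb{G}.
\end{eqnarray*}
A first key observation is that $\mathfrak{q}\in V^{\perp}(\mathbb{G})$: for $\mathfrak{m},\mathfrak{m}'\in\mathbb{G}$ we have $\langle\mathfrak{q},\mathfrak{m}-\mathfrak{m}'\rangle=\rho-\rho=0$, and $V(\mathbb{G})$ is spanned by such differences. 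Hence $\langle\mathfrak{q},\mathrm{P}_{V^{\perp}(\mathbb{G})}(\mathfrak{n}-\mathfrak{m})\rangle=\langle\mathfrak{q},\mathfrak{n}-\mathfrak{m}\rangle=\langle\mathfrak{q},\mathfrak{n}\rangle-\rho>0$ for every $\mathfrak{n}\in\mathbb{P}\setminus\mathbb{G}$ and $\mathfrak{m}\in\mathbb{G}$; moreover this quantity is independent of $\mathfrak{m}$. So the numerator $\langle\mathfrak{q},\mathrm{P}_{V^{\perp}(\mathbb{G})}(\mathfrak{n}-\mathfrak{m})\rangle$ is strictly positive, and the only issue is a uniform lower bound after normalizing by $|\mathrm{P}_{V^{\perp}(\mathbb{G})}(\mathfrak{n}-\mathfrak{m})|$.

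Next I would handle the normalization. Since the projection kills $V(\mathbb{G})$, write $\mathfrak{n}-\mathfrak{m}=\mathrm{P}_{V(\mathbb{G})}(\mathfrak{n}-\mathfrak{m})+\mathrm{P}_{V^{\perp}(\mathbb{G})}(\mathfrak{n}-\mathfrak{m})$; the first summand does not affect the quotient, so it suffices to bound
\begin{eqnarray*}
\inf\left\{\Big\langle\mathfrak{q},\tfrac{w}{|w|}\Big\rangle : w=\mathrm{P}_{V^{\perp}(\mathbb{G})}(\mathfrak{n}-\mathfrak{m}),\ \mathfrak{n}\in\mathbb{P}\setminus\mathbb{G},\ \mathfrak{m}\in\mathbb{G}\right\}
\end{eqnarray*}
away from $0$. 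The set of unit vectors $w/|w|$ arising this way lies in the compact sphere of $V^{\perp}(\mathbb{G})$, and $\mathfrak{q}\mapsto\langle\mathfrak{q},\cdot\rangle$ is continuous, so it is enough to show the closure of this set of unit directions stays inside the open half-space $\{v\in V^{\perp}(\mathbb{G}):\langle\mathfrak{q},v\rangle>0\}$; then the infimum, being attained on a compact set, is a positive number $r$ depending only on $\mathfrak{q}$ (through $\mathbb{P}$ and $\mathbb{G}$, which are fixed). For this I would use that the relevant $w$'s actually come from a polyhedral cone: by Face Representation (Proposition \ref{facerep}) and Cone Representation (Proposition \ref{pr50}), $\mathbb{P}-\mathfrak{m}$ near $\mathbb{G}$ is governed by finitely many generators, so the directions $\mathrm{P}_{V^{\perp}(\mathbb{G})}(\mathfrak{n}-\mathfrak{m})/|\cdot|$ range over (a subset of) the unit directions of a finitely-generated cone contained in $\{\langle\mathfrak{q},\cdot\rangle\ge 0\}$, whose intersection with $\mathbb{G}$ is only the origin; hence its closure avoids $\{\langle\mathfrak{q},\cdot\rangle=0\}$ except at $0$, which is excluded after normalization.

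\textbf{Main obstacle.} The delicate point is upgrading ``$\langle\mathfrak{q},w\rangle>0$ for each individual $w$'' to a \emph{uniform} positive lower bound on the normalized directions: a priori there could be a sequence $\mathfrak{n}_k\in\mathbb{P}\setminus\mathbb{G}$ with $\mathrm{P}_{V^{\perp}(\mathbb{G})}(\mathfrak{n}_k-\mathfrak{m})$ having unit directions converging to some $v_0$ with $\langle\mathfrak{q},v_0\rangle=0$ (this is exactly what happens ``at infinity'' if $\mathbb{P}$ is unbounded, or as $\mathfrak{n}_k\to\mathbb{G}$). Ruling this out is where the polyhedral structure is essential: I would argue that such a limiting direction $v_0$ would have to lie in the recession cone / tangent cone of $\mathbb{P}$ at $\mathbb{G}$ projected to $V^{\perp}(\mathbb{G})$, and that cone is finitely generated with all generators satisfying $\langle\mathfrak{q},\cdot\rangle>0$ strictly (since $\mathfrak{q}\in(\mathbb{G}^*)^{\circ}$), so a nonnegative combination of them cannot be $\mathfrak{q}$-orthogonal unless it is $0$. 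Making this ``tangent cone'' reduction precise — i.e.\ showing every normalized $\mathrm{P}_{V^{\perp}(\mathbb{G})}(\mathfrak{n}-\mathfrak{m})$ lies in the convex hull of finitely many such strictly-positive directions — is the one step that needs genuine care rather than routine manipulation, and it leans on Lemma \ref{lemj77}, Proposition \ref{facerep} and Proposition \ref{pr50} for the finite-generation of the relevant cones.
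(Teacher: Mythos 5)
Your proposal is correct and takes essentially the same route as the paper's proof: after noting $\mathfrak{q}\perp V(\mathbb{G})$, everything reduces to obtaining a positive minimum of $\mathfrak{q}\cdot(\,\cdot\,)$ over the compact set of normalized projected directions. The obstacle you flag is exactly the point the paper settles by a concrete device rather than a tangent-cone discussion: since ${\rm P}_{V^{\perp}(\mathbb{G})}(\mathfrak{n}-\mathfrak{m})={\rm P}_{V^{\perp}(\mathbb{G})}(\mathfrak{n}-\mathfrak{m}_0)$ for any fixed $\mathfrak{m}_0\in\mathbb{G}$, the image ${\rm P}_{V^{\perp}(\mathbb{G})}(\mathbb{P}-\mathfrak{m}_0)$ is again a polyhedron having $0$ as a vertex exposed by $\mathfrak{q}$, and the set of unit directions out of a vertex of a polyhedron is closed in $\mathbb{S}^{n-1}$, so continuity of $\mathfrak{s}\mapsto\mathfrak{q}\cdot\mathfrak{s}$ on this compact set yields the uniform $r>0$ you need.
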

\begin{proof}
We start with the case that  $\mathbb{G}=\{\mathfrak{m}_0\} $ is a
vertex of $\mathbb{P}$. Observe that given a polyhedron
$\mathbb{Q}$, there exists a small positive number  $\epsilon$ such
that $$ \left\{\frac{\mathfrak{n}}{|\mathfrak{n}|}:\mathfrak{n}\in
\mathbb{Q}\setminus\{0\}\ \text{where $0$ is a vertex of
$\mathbb{Q}$}\right\}=\left\{\frac{\mathfrak{n}}{|\mathfrak{n}|}:\mathfrak{n}\in
\mathbb{Q}, |\mathfrak{n}|\ge\epsilon \right\},$$ which   is a
closed set in the sphere $\mathbb{S}^{n-1}$. By this,  we set a closed set in  $\mathbb{S}^{n-1}$:
\begin{eqnarray}\label{s0}
 S(\mathbb{P}-\mathfrak{m}_0)=\left\{\frac{ \mathfrak{n}-\mathfrak{m}_0}{| \mathfrak{n}-\mathfrak{m}_0
 |}:\mathfrak{n}\in \mathbb{P}\setminus \{\mathfrak{m}_0\}\right\}.
\end{eqnarray}
  By $\mathfrak{q}\in (\mathbb{G}^*)^{\circ}| \mathbb{P}$, we have for
all $\mathfrak{n}\in\mathbb{P}\setminus \{\mathfrak{m}_0\},$
\begin{eqnarray*}
\mathfrak{q}\cdot
\frac{\mathfrak{n}-\mathfrak{m}_0}{|\mathfrak{n}-\mathfrak{m}_0|}>0,
\ \text{which with  (\ref{s0}) implies that} \ \
\mathfrak{q}\cdot\mathfrak{s}>0\ \text{for}\ \mathfrak{s}\in
S(\mathbb{P}-\mathfrak{m}_0).
\end{eqnarray*}
A map $\mathfrak{s}\rightarrow \mathfrak{q}\cdot\mathfrak{s}$ is
continuous on the compact set $S(\mathbb{P}-\mathfrak{m}_0)$. So it
has a minimum $r>0$:
\begin{eqnarray}\label{43bona}
\mathfrak{q}\cdot  \mathfrak{s} \ge r \ \text{for all}\ \
\mathfrak{s}\in S(\mathbb{P}-\mathfrak{m}_0).
\end{eqnarray}
From $V(\mathbb{G})=\{0\}$ and $V^{\perp}(\mathbb{G})=\mathbb{R}^n$,
\begin{eqnarray*}
{\rm
P}_{V^{\perp}(\mathbb{G})}(\mathfrak{n}-\mathfrak{m}_0)=\mathfrak{n}-\mathfrak{m}_0.
\end{eqnarray*}
From this together with (\ref{s0}),
\begin{eqnarray*}
 S(\mathbb{P}-\mathfrak{m}_0)=\left\{\frac{ {\rm
P}_{V^{\perp}(\mathbb{G})}(\mathfrak{n}-\mathfrak{m}_0)}{| {\rm
P}_{V^{\perp}(\mathbb{G})}(\mathfrak{n}-\mathfrak{m}_0)
 |}:\mathfrak{n}\in \mathbb{P}\setminus \{\mathfrak{m}_0\}\right\}.
\end{eqnarray*}
By this and (\ref{43bona}),
\begin{eqnarray*}
\mathfrak{q}\cdot  \frac{ {\rm
P}_{V^{\perp}(\mathbb{G})}(\mathfrak{n}-\mathfrak{m}_0)}{| {\rm
P}_{V^{\perp}(\mathbb{G})}(\mathfrak{n}-\mathfrak{m}_0)
 |} \ge r \ \text{for all}\ \
\mathfrak{n}\in \mathbb{P}\setminus \{\mathfrak{m}_0\}.
\end{eqnarray*}
We next consider the general case that $\mathbb{G}$ is a
$k$-dimensional face of $\mathbb{P}$. We shall use the following two
properties:
\begin{eqnarray}\label{635}
\left\{ {\bf x}:\rm{P}_{V^{\perp}(\mathbb{G})}({\bf
x})=0\right\}=V(\mathbb{G})
\end{eqnarray}
and
\begin{eqnarray}\label{636}
\ \text{If $\mathfrak{n}\in \mathbb{P}\setminus\mathbb{G}$ and
$\mathfrak{m}\in\mathbb{G}$,\ then} \ \
\mathfrak{n}-\mathfrak{m}\notin V(\mathbb{G}).
\end{eqnarray}
Choose any $\mathfrak{m}_0\in\mathbb{G}\preceq\mathbb{P}$. Since an image of
polyhedron under any linear transform is also a polyhedron, we see
that $\rm{P}_{V^{\perp}(\mathbb{G})}(\mathbb{P}-\mathfrak{m}_0)$ is
a polyhedron in $V^\perp(\mathbb{G})$. Moreover,
\begin{eqnarray}\label{d22p}
\text{$0$ is a vertex of
$\rm{P}_{V^{\perp}(\mathbb{G})}(\mathbb{P}-\mathfrak{m}_0)$.}
\end{eqnarray}
\begin{proof}[Proof of (\ref{d22p})]
By Definition \ref{dualface},  we see that for $\mathfrak{q}\in
(\mathbb{G}^*)^{\circ}| \mathbb{P}$ and for
$\mathfrak{m}\in\mathbb{G}$ and $\mathfrak{n}\in\mathbb{P}\setminus
\mathbb{G}$,
\begin{eqnarray}\label{sauo}
0<\mathfrak{q}\cdot(\mathfrak{n}-\mathfrak{m}).
\end{eqnarray}
 Let $\rm{P}_{V^{\perp}(\mathbb{G})}(\mathfrak{n}-\mathfrak{m}_0)\in
 \rm{P}_{V^{\perp}(\mathbb{G})}(\mathbb{P}-\mathfrak{m}_0)\setminus\{0\}$,
that is,
$\rm{P}_{V^{\perp}(\mathbb{G})}(\mathfrak{n}-\mathfrak{m}_0)\ne 0$.
Then we have $\mathfrak{n}-\mathfrak{m}_0 \notin V(\mathbb{G})$ by
(\ref{635}), that is, $\mathfrak{n}\notin\mathbb{G}$.
Thus $\mathfrak{n}\in\mathbb{P}\setminus \mathbb{G}$. By
(\ref{sauo}),
$$\mathfrak{q} \cdot 0=0<\mathfrak{q}\cdot(\mathfrak{n}-\mathfrak{m})=\mathfrak{q}\cdot(\mathfrak{n}-\mathfrak{m}_0)=
\mathfrak{q}\cdot
\rm{P}_{V^{\perp}(\mathbb{G})}(\mathfrak{n}-\mathfrak{m}_0)$$ where
$\mathfrak{q}\perp  V(\mathbb{G})$   in the last equality. Thus
   the condition (\ref{4g1}) of Definition \ref{dfac} holds.
\end{proof}
In view of (\ref{s0}) and (\ref{d22p}), we set a compact set
\begin{eqnarray}\label{s088}
K=
S(\rm{P}_{V^{\perp}(\mathbb{G})}(\mathbb{P}-\mathfrak{m}_0)-0)=\left\{\frac{
\mathfrak{n}-0}{|
 \mathfrak{n}-0
 |}:\mathfrak{n}\in \rm{P}_{V^{\perp}(\mathbb{G})}(\mathbb{P}-\mathfrak{m}_0)\setminus
 \{0\}\right\}.
\end{eqnarray}
In the above,
\begin{eqnarray}\label{ashiw}
\rm{P}_{V^{\perp}(\mathbb{G})}(\mathbb{P}-\mathfrak{m}_0)\setminus
\{0\}=\rm{P}_{V^{\perp}(\mathbb{G})}((\mathbb{P}\setminus\mathbb{G})-\mathbb{G}),
\end{eqnarray}
\begin{proof}[Proof of (\ref{ashiw})]
Let ${\bf z}\in
\rm{P}_{V^{\perp}(\mathbb{G})}(\mathbb{P}-\mathfrak{m}_0)\setminus
\{0\}$. Then ${\bf
z}=\rm{P}_{V^{\perp}(\mathbb{G})}(\mathfrak{n}-\mathfrak{m}_0)\ne 0$
with $\mathfrak{n}\in\mathbb{P}$. From (\ref{635})
$\mathfrak{n}-\mathfrak{m}_0\notin V(\mathbb{G})$, which also implies that
 $\mathfrak{n}\notin \mathbb{G}$. Thus ${\bf
z}\in\rm{P}_{V^{\perp}(\mathbb{G})}((\mathbb{P}\setminus\mathbb{G})-\mathbb{G})$.\\
Let ${\bf
z}\in\rm{P}_{V^{\perp}(\mathbb{G})}((\mathbb{P}\setminus\mathbb{G})-\mathbb{G})$.
Then ${\bf
z}=\rm{P}_{V^{\perp}(\mathbb{G})}(\mathfrak{n}-\mathfrak{m})$ with
$\mathfrak{n}\in \mathbb{P}\setminus\mathbb{G}$ and
$\mathfrak{m}\in\mathbb{G}$. Thus
\begin{eqnarray*}
{\bf
z}&=&\rm{P}_{V^{\perp}(\mathbb{G})}(\mathfrak{n}-\mathfrak{m}_0+\mathfrak{m}_0-\mathfrak{m})\\
&=&\rm{P}_{V^{\perp}(\mathbb{G})}(\mathfrak{n}-\mathfrak{m}_0)+
\rm{P}_{V^{\perp}(\mathbb{G})}(\mathfrak{m}_0-\mathfrak{m})
\\
&=&\rm{P}_{V^{\perp}(\mathbb{G})}(\mathfrak{n}-\mathfrak{m}_0)\ne 0
\end{eqnarray*}
where the last inequality follows from (\ref{635}) and (\ref{636}).
Hence ${\bf z}\in
\rm{P}_{V^{\perp}(\mathbb{G})}(\mathbb{P}-\mathfrak{m}_0)\setminus
\{0\}$.
\end{proof}
By (\ref{ashiw}), we rewrite the compact set in (\ref{s088}) as
\begin{eqnarray}\label{kiji}
 K
 =\left\{\frac{\rm{P}_{V^{\perp}(\mathbb{G})}(\mathfrak{n}-\mathfrak{m})  }
 {|\rm{P}_{V^{\perp}(\mathbb{G})}(\mathfrak{n}-\mathfrak{m})
 |}:\mathfrak{n}\in  \mathbb{P}\setminus\mathbb{G}, \mathfrak{m}\in\mathbb{G}\right\}.
\end{eqnarray}
By $\mathfrak{q}\in (\mathbb{G}^*)^{\circ}|\mathbb{P}$, for all
$\mathfrak{n}\in\mathbb{P}\setminus \mathbb{G}$ and
$\mathfrak{m}\in\mathbb{G}$,
\begin{eqnarray*}
\mathfrak{q}\cdot
\rm{P}_{V^{\perp}(\mathbb{G})}(\mathfrak{n}-\mathfrak{m})=\mathfrak{q}\cdot
 (\mathfrak{n}-\mathfrak{m})>0
\end{eqnarray*}
because of $\mathfrak{q}\perp V(\mathbb{G})$ and  Definition
\ref{dualface}.
  Therefore
$$ \mathfrak{q}\cdot\mathfrak{s}>0\ \text{for $\mathfrak{s}\in K$.}$$
From this combined with the compactness of $K$, there exists $r>0$
such that
$$ \mathfrak{q}\cdot\mathfrak{s}\ge r \ \text{for all $\mathfrak{s}\in K$.}$$
By (\ref{kiji}),
 for any $\mathfrak{n}\in
\mathbb{P}\setminus \mathbb{G}$ and $\mathfrak{m}\in \mathbb{G}$,
\begin{eqnarray*}
\mathfrak{q} \cdot \frac{{\rm
P}_{V^{\perp}(\mathbb{G})}(\mathfrak{n}-\mathfrak{m})}{|{\rm
P}_{V^{\perp}(\mathbb{G})}(\mathfrak{n}-\mathfrak{m}) |}\ge r>0.
\end{eqnarray*}
This completes the proof of Lemma \ref{lem4114}.
\end{proof}

\begin{proof}[Proof of Proposition \ref{lemggu}]
 It suffices to show that $0<\epsilon<\epsilon_0$ implies that
for all $\mathfrak{n}\in\mathbb{P}\setminus\mathbb{G}$ and
$\mathfrak{m}\in\mathbb{G}$,
\begin{eqnarray}\label{cc222}
(\mathfrak{q}+\epsilon\mathfrak{p})\cdot(\mathfrak{n}-\mathfrak{m})>
0.
 \end{eqnarray}
We first observe from  $ \mathfrak{q}\in (\mathbb{F}^*)^{\circ}|
\mathbb{P}$,
\begin{eqnarray}\label{nb2}
\mathfrak{q}\cdot(\mathfrak{n}-\mathfrak{m})\ge 0\ \ \text{for all
$\mathfrak{n}\in\mathbb{P}\setminus\mathbb{G}$ and
$\mathfrak{m}\in\mathbb{G}$}.
\end{eqnarray}
Moreover, combined with $\mathfrak{p}\in (\mathbb{G}^*)^{\circ}| \mathbb{F}$,
\begin{eqnarray}\label{nb3}
\mathfrak{q}\cdot(\mathfrak{n}-\mathfrak{m})> 0\ \ \text{for all
$\mathfrak{n}\in\mathbb{P}\setminus\mathbb{F}$ and
$\mathfrak{m}\in\mathbb{G}$},\label{nb3}\\
\mathfrak{p}\cdot(\mathfrak{n}-\mathfrak{m})>0 \ \ \text{for all
$\mathfrak{n}\in\mathbb{F}\setminus\mathbb{G}$ and
$\mathfrak{m}\in\mathbb{G}$}\label{nooni}
\end{eqnarray}
Let $\pi_{\mathfrak{p}}$ be a supporting plane of
$\mathbb{G}\preceq\mathbb{F}$,
\begin{eqnarray*}
\mathbb{G}\subset \pi_{\mathfrak{p}}\ \ \text{and}\ \
\mathbb{F}\setminus\mathbb{G}\subset (\pi_{\mathfrak{p}}^+)^{\circ}.
\end{eqnarray*}
Split
$$\mathbb{P}=\left(\mathbb{P}\cap \pi_{\mathfrak{p}}^-\right)\cup
\left(\mathbb{P}\cap \pi_{\mathfrak{p}}^+\right)=\mathbb{P}^{-}\cup
\mathbb{P}^{+}\ \text{that are visualized in Figure  \ref{graph4}}.$$
 {\bf Case 1}. Suppose $\mathfrak{n}\in\mathbb{P}^+\setminus\mathbb{G}$.
  Then in view that $\mathfrak{n}\in\mathbb{P}^+\subset
\pi_{\mathfrak{p}}^+$,
\begin{eqnarray}\label{dvo}
\mathfrak{p}\cdot(\mathfrak{n}-\mathfrak{m})\ge 0\ \ \text{for all
$\mathfrak{n}\in\mathbb{P}\setminus\mathbb{G}$ and
$\mathfrak{m}\in\mathbb{G}$}.
\end{eqnarray}
By (\ref{nb2}) and (\ref{dvo}), we have $\ge $ in  (\ref{cc222}). Thus
either (\ref{nb3}) or (\ref{nooni}) yields $>$ in (\ref{cc222}).
 \\
 {\bf Case 2}. Suppose that  $\mathfrak{n}\in
\mathbb{P}^-\setminus\mathbb{G}$. Note that $\mathbb{G}\preceq
\mathbb{P}^-$. Consider a  hyperplane
$\pi_{\mathfrak{q}}(\mathbb{F})$ containing $\mathbb{F}$ and whose
normal vector is $\mathfrak{q}$. Then
$\pi_{\mathfrak{q}}(\mathbb{F})$ is a supporting plane of
$\mathbb{G}\preceq \mathbb{P}^-$. Thus
\begin{eqnarray*}
\mathfrak{q}\in (\mathbb{G}^*)^{\circ}|(\mathbb{P}^-,\mathbb{R}^n).
\end{eqnarray*}
Hence by Lemma \ref{lem4114}, for all $\mathfrak{n}\in
\mathbb{P}^-\setminus\mathbb{G}$ and $\mathfrak{m}\in\mathbb{G}$,
\begin{eqnarray} \label{joau}
\mathfrak{q}\cdot\frac{{\rm
P}_{V^{\perp}(\mathbb{G})}(\mathfrak{n}-\mathfrak{m})}{|{\rm
P}_{V^{\perp}(\mathbb{G})}(\mathfrak{n}-\mathfrak{m})| }\ge r>0\ \
\text{and}\ \ {\rm
P}_{V^{\perp}(\mathbb{G})}(\mathfrak{n}-\mathfrak{m})\ne 0
\end{eqnarray}
where the last follows from (\ref{635}) and (\ref{636}). Split
$$\mathfrak{n}-\mathfrak{m}={\rm
P}_{V(\mathbb{G})}(\mathfrak{n}-\mathfrak{m})+ {\rm
P}_{V^{\perp}(\mathbb{G})}(\mathfrak{n}-\mathfrak{m}).$$ Since
$\mathfrak{q}\in (\mathbb{G}^*)^{\circ}|(\mathbb{P}^-,\mathbb{R}^n)$
and $\mathfrak{p}\in (\mathbb{G}^*)^{\circ}| \mathbb{F}$, that is
$\mathfrak{q},\mathfrak{p}\perp V(\mathbb{G})$,
\begin{eqnarray*}
\mathfrak{q}\cdot  {\rm
P}_{V(\mathbb{G})}(\mathfrak{n}-\mathfrak{m}) =\mathfrak{p}\cdot
{\rm P}_{V(\mathbb{G})}(\mathfrak{n}-\mathfrak{m}) =0.
\end{eqnarray*}
So
\begin{eqnarray*}
(\mathfrak{q}+\epsilon\mathfrak{p})\cdot
(\mathfrak{n}-\mathfrak{m})=
(\mathfrak{q}+\epsilon\mathfrak{p})\cdot {\rm
P}_{V^{\perp}(\mathbb{G})}(\mathfrak{n}-\mathfrak{m}) .
\end{eqnarray*}
Choose $\epsilon_0=\frac{r}{100|\mathfrak{p}|}$ and
$0<\epsilon<\epsilon_0$. Then, by (\ref{joau}),
\begin{eqnarray*}
(\mathfrak{q}+\epsilon\mathfrak{p})\cdot(\mathfrak{n}-\mathfrak{m})
&=&(\mathfrak{q}+\epsilon\mathfrak{p})\cdot{\rm
P}_{V^{\perp}(\mathbb{G})}(\mathfrak{n}-\mathfrak{m})\\
&\ge&  r \left|{\rm
P}_{V^{\perp}(\mathbb{G})}(\mathfrak{n}-\mathfrak{m})\right|-
\epsilon |\mathfrak{p}|\left|{\rm
P}_{V^{\perp}(\mathbb{G})}(\mathfrak{n}-\mathfrak{m})\right|\\
&\ge& \frac{99}{100}r\left|{\rm
P}_{V^{\perp}(\mathbb{G})}(\mathfrak{n}-\mathfrak{m})\right|>0.
 \end{eqnarray*}
 This completes the proof of Proposition \ref{lemggu}
\end{proof}

\begin{lemma}\label{lem15}
Let $\mathbb{P}_\nu\subset\mathbb{R}^n$ be a polyhedron and
$\mathbb{F}_\nu,\mathbb{G}_\nu\in\mathcal{F}(\mathbb{P}_\nu)$ such
that $\mathbb{G}_\nu\preceq \mathbb{F}_\nu$. Suppose that
$\mathfrak{q}\in\bigcap_\nu
(\mathbb{F}_\nu^*)^{\circ}|(\mathbb{P}_\nu,\mathbb{R}^n)$. Suppose
$\mathfrak{p}\in \bigcap_\nu
(\mathbb{G}_\nu^*)^{\circ}|(\mathbb{F}_\nu,\mathbb{R}^n)$. Then there exists a
vector ${\bf w}\in \bigcap
(\mathbb{G}_\nu^*)^{\circ}|(\mathbb{P}_\nu,\mathbb{R}^n)$.
\end{lemma}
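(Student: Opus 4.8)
The statement is the simultaneous (over $\nu=1,\dots,d$) version of the transitivity rule for cones, Proposition \ref{lemggu}, so the plan is to apply that proposition coordinate-by-coordinate in $\nu$ and then extract a common threshold. Concretely, fix $\nu\in\{1,\dots,d\}$. By hypothesis $\mathfrak{q}\in (\mathbb{F}_\nu^*)^{\circ}|(\mathbb{P}_\nu,\mathbb{R}^n)$ and $\mathfrak{p}\in (\mathbb{G}_\nu^*)^{\circ}|(\mathbb{F}_\nu,\mathbb{R}^n)$, with $\mathbb{G}_\nu\preceq\mathbb{F}_\nu\preceq\mathbb{P}_\nu$; these are exactly the hypotheses of Proposition \ref{lemggu} applied to the single polyhedron $\mathbb{P}_\nu$ and its faces $\mathbb{F}_\nu,\mathbb{G}_\nu$. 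Hence Proposition \ref{lemggu} produces a number $\epsilon_0^{(\nu)}>0$ such that
\begin{eqnarray*}
0<\epsilon<\epsilon_0^{(\nu)}\ \Longrightarrow\ \mathfrak{q}+\epsilon\mathfrak{p}\in (\mathbb{G}_\nu^*)^{\circ}|(\mathbb{P}_\nu,\mathbb{R}^n).
\end{eqnarray*}

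The second step is to make the threshold uniform in $\nu$. Since there are only finitely many indices $\nu=1,\dots,d$, I would set $\epsilon_0=\min_{1\le\nu\le d}\epsilon_0^{(\nu)}>0$, choose any $\epsilon$ with $0<\epsilon<\epsilon_0$, and put ${\bf w}=\mathfrak{q}+\epsilon\mathfrak{p}$. For each $\nu$ we have $\epsilon<\epsilon_0\le\epsilon_0^{(\nu)}$, so the implication above gives ${\bf w}\in (\mathbb{G}_\nu^*)^{\circ}|(\mathbb{P}_\nu,\mathbb{R}^n)$; intersecting over $\nu$ yields ${\bf w}\in\bigcap_{\nu=1}^d(\mathbb{G}_\nu^*)^{\circ}|(\mathbb{P}_\nu,\mathbb{R}^n)$, which is the desired conclusion.

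There is essentially no hard step here: all the geometric work (the splitting of each $\mathbb{P}_\nu$ along a supporting hyperplane of $\mathbb{G}_\nu\preceq\mathbb{F}_\nu$, and the use of Lemma \ref{lem4114} to control the projection onto $V^{\perp}(\mathbb{G}_\nu)$) is already carried out inside Proposition \ref{lemggu}. The only thing to be slightly careful about is that $\epsilon_0^{(\nu)}$ genuinely depends on $\nu$ (it is built from the constant $r$ in Lemma \ref{lem4114} for $\mathbb{P}_\nu$ and from $|\mathfrak{p}|$), so one must not hope for a single universal $\epsilon_0$ before invoking finiteness of $d$; taking the minimum is what makes this legitimate. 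One should also record that the intersections in the hypotheses are nonempty precisely because they contain $\mathfrak{q}$, respectively $\mathfrak{p}$, so Proposition \ref{lemggu} is being applied to bona fide interior-cone vectors.
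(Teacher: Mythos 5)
Your proof is correct and is essentially identical to the paper's own argument: apply Proposition \ref{lemggu} for each $\nu$ to get a threshold $\epsilon_\nu>0$, take $\epsilon_0=\min_\nu\epsilon_\nu$, and set ${\bf w}=\mathfrak{q}+\epsilon\mathfrak{p}$ for any $0<\epsilon<\epsilon_0$. Nothing is missing.
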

\begin{proof}
By Proposition \ref{lemggu}, there exists $\epsilon_{\nu}>0$ such
that for $0<\epsilon<\epsilon_\nu$, $
\mathfrak{q}+\epsilon\mathfrak{p}\in
(\mathbb{G_\nu}^*)^{\circ}|(\mathbb{P}_\nu,\mathbb{R}^n). $ Choose
$\epsilon_0=\min \{\epsilon_\nu:\nu=1,\cdots,d\}$. Then for
$0<\epsilon<\epsilon_0$,  $
\mathfrak{q}+\epsilon\mathfrak{p}\in\bigcap_{\nu=1}^d
(\mathbb{G_\nu}^*)^{\circ}|(\mathbb{P}_\nu,\mathbb{R}^n).$
\end{proof}

\subsection{Lemma  for  Necessity}
\begin{lemma}\label{lem92d}
Let $\Lambda=(\Lambda_\nu)$ with $\Lambda_\nu\subset\mathbb{Z}_+^n$
and let $P_\Lambda\in\mathcal{P}_\Lambda$. Fix
$S\subset\{1,\cdots,n\}$. Given
$\mathbb{F}=(\mathbb{F}_\nu)\in\mathcal{F}({\bf N}(\Lambda,S))$,
define
 \begin{eqnarray*}
 \mathcal{I}(P_{\mathbb{F}},\xi,r)&=&\int_{\prod(-r_j,r_j)} e^{i\left(\xi_1 \sum_{\mathfrak{m}
  \in \mathbb{F}_1\cap\Lambda_1}c_{\mathfrak{m}}^1 t^{\mathfrak{m}}+  \cdots+ \xi_d\sum_{\mathfrak{m}
  \in \mathbb{F}_d\cap\Lambda_d}c_{\mathfrak{m}}^d t^{\mathfrak{m}}\right)} \frac{dt_1}{t_1}\cdots
  \frac{dt_n}{t_n},\\
   \mathcal{I}(P_{\mathbb{F}},\xi,a,b)&=&\int_{\prod\{a_j<|t_j|<b_j\}} e^{i\left(\xi_1 \sum_{\mathfrak{m}
  \in \mathbb{F}_1\cap\Lambda_1}c_{\mathfrak{m}}^1 t^{\mathfrak{m}}+  \cdots+ \xi_d\sum_{\mathfrak{m}
  \in \mathbb{F}_d\cap\Lambda_d}c_{\mathfrak{m}}^d t^{\mathfrak{m}}\right)} \frac{dt_1}{t_1}\cdots
  \frac{dt_n}{t_n}.
\end{eqnarray*}
Suppose that
 $$\sup_{r\in I(S),\xi\in\mathbb{R}^d}\left|\mathcal{I}(P_\Lambda,\xi,r) \right|< \infty $$
where $I(S)$ is as defined in (\ref{an50}).   Suppose also  that
 \begin{eqnarray*}
  {\bf u}=(u_1,\cdots,u_n)\in\bigcap_{\nu=1}^d
(\mathbb{F}_{\nu}^{*})^{\circ} \ne \emptyset
\end{eqnarray*}
where
\begin{eqnarray}\label{swgg}
  \text{ $u_{j}>0$  for $j\in S\setminus S_0$ and
$u_j=0$ for $j\in S_0\subset S$ by Lemma \ref{lem132}}.
\end{eqnarray}
 Then
 \begin{eqnarray}\label{bd66}
 \sup_{\xi\in\mathbb{R}^d,  a,b\in I(S_0)  }\left|
 \mathcal{I}(P_{\mathbb{F}},\xi,a,b)\right|&<&\infty.
\end{eqnarray}
\end{lemma}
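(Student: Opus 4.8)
\ \ The plan is to deduce (\ref{bd66}) from the hypothesis $\sup_{\xi,\,r\in I(S)}\left|\mathcal{I}(P_\Lambda,\xi,r)\right|<\infty$ by an anisotropic rescaling of the $t$-variables in the direction ${\bf u}$ followed by a passage to the limit. First I would record, by a routine reduction, the equivalent uniform bound over annular boxes
$$M:=\sup_{\xi\in\mathbb{R}^d}\ \sup_{a,b\in I(S),\ 0<a_j<b_j}\left|\mathcal{I}(P_\Lambda,\xi,a,b)\right|<\infty.$$
One obtains this by replacing the annular restriction $\{a_j<|t_j|<b_j\}$, one coordinate at a time, by the difference of the two principal-value integrals in $t_j$ over $\{|t_j|<b_j\}$ and over $\{|t_j|<a_j\}$ (the singularity at $t_j=0$ cancels, so once a coordinate has been made annular it no longer carries a principal value); this writes $\mathcal{I}(P_\Lambda,\xi,a,b)$ as a signed sum of at most $2^n$ principal-value box integrals $\mathcal{I}(P_\Lambda,\xi,r^E)$, each radius $r^E_j$ being one of $a_j,b_j$, so that $r^E\in I(S)$ since $a_j,b_j\le 1$ for $j\in S$.

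Next, since ${\bf u}\in\bigcap_{\nu}(\mathbb{F}^*_\nu)^{\circ}$, Definition \ref{dualface} (cf.\ (\ref{dar})) provides for each $\nu$ a number $r_\nu$ with $\mathbb{F}_\nu=\pi_{{\bf u},r_\nu}\cap{\bf N}(\Lambda_\nu,S)$ and ${\bf N}(\Lambda_\nu,S)\setminus\mathbb{F}_\nu\subset(\pi^{+}_{{\bf u},r_\nu})^{\circ}$; hence ${\bf u}\cdot\mathfrak{m}=r_\nu$ for $\mathfrak{m}\in\mathbb{F}_\nu\cap\Lambda_\nu$ and ${\bf u}\cdot\mathfrak{m}>r_\nu$ for $\mathfrak{m}\in\Lambda_\nu\setminus\mathbb{F}_\nu$, so $\gamma:=\min_{\nu}\min_{\mathfrak{m}\in\Lambda_\nu\setminus\mathbb{F}_\nu}\left({\bf u}\cdot\mathfrak{m}-r_\nu\right)>0$ (interpreting $\gamma=+\infty$ when every $\Lambda_\nu\subset\mathbb{F}_\nu$, in which case the argument below is trivial). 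Now fix $\eta\in\mathbb{R}^d$ and $a',b'\in I(S_0)$; we may assume $a'_j<b'_j$ for all $j$, the other case being trivial. For $\rho>1$ set
$$\xi_\nu(\rho)=\rho^{r_\nu}\eta_\nu\ \ (\nu=1,\dots,d),\qquad a_j(\rho)=\rho^{-u_j}a'_j,\qquad b_j(\rho)=\rho^{-u_j}b'_j\ \ (j=1,\dots,n).$$
By (\ref{swgg}), namely $u_j=0$ for $j\in S_0$ and $u_j>0$ for $j\in S\setminus S_0$, for all large $\rho$ one has $a(\rho),b(\rho)\in I(S)$ and $a_j(\rho)<b_j(\rho)$, and the scale-invariant substitution $t_j=\rho^{-u_j}s_j$ (under which $\{a_j(\rho)<|t_j|<b_j(\rho)\}$ becomes $\{a'_j<|s_j|<b'_j\}$ and $dt_j/t_j$ is unchanged) turns $\mathcal{I}\left(P_\Lambda,\xi(\rho),a(\rho),b(\rho)\right)$ into
$$\int_{\prod_j\{a'_j<|s_j|<b'_j\}}\exp\left(i\sum_{\nu=1}^d\eta_\nu\sum_{\mathfrak{m}\in\Lambda_\nu}c_{\mathfrak{m}}^\nu\,\rho^{\,r_\nu-{\bf u}\cdot\mathfrak{m}}\,s^{\mathfrak{m}}\right)\prod_j\frac{ds_j}{s_j}.$$

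Finally, on the fixed compact set $\prod_j\{a'_j\le|s_j|\le b'_j\}$ the exponents with $\mathfrak{m}\in\mathbb{F}_\nu\cap\Lambda_\nu$ are $0$ and those with $\mathfrak{m}\in\Lambda_\nu\setminus\mathbb{F}_\nu$ are $\le-\gamma<0$, so the phase converges uniformly to $\langle\eta,P_{\mathbb{F}}(s)\rangle$ at rate $O(\rho^{-\gamma})$; since the product measure $\prod_j|ds_j/s_j|$ is finite on that set, dominated convergence gives
$$\mathcal{I}(P_{\mathbb{F}},\eta,a',b')=\lim_{\rho\to\infty}\mathcal{I}\left(P_\Lambda,\xi(\rho),a(\rho),b(\rho)\right),$$
whence $\left|\mathcal{I}(P_{\mathbb{F}},\eta,a',b')\right|\le M$. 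As $\eta\in\mathbb{R}^d$ and $a',b'\in I(S_0)$ are arbitrary, this is precisely (\ref{bd66}). I expect the main obstacle to be the very first step: making the reduction to annular integrals rigorously compatible with the iterated principal value, i.e.\ justifying that the several one-dimensional principal-value limits may be interchanged among themselves and with the absolutely convergent integrations in the remaining variables; this is where the uniformity built into the Taylor-expansion-plus-cancellation argument for the existence of $\mathcal{I}(P_\Lambda,\xi,r)$ (recalled in the Introduction) is used. Everything afterward is an explicit change of variables plus a uniform-convergence limit on a finite-measure set, with the sign hypothesis (\ref{swgg}) on the components of ${\bf u}$ being exactly what keeps the rescaled radii $a(\rho),b(\rho)$ inside $I(S)$.
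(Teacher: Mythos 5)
Your proposal is correct and follows essentially the same route as the paper: an anisotropic dilation $t_j\mapsto\rho^{-u_j}t_j$ adapted to ${\bf u}\in\bigcap_{\nu}(\mathbb{F}_\nu^*)^{\circ}$ together with the rescaling $\xi_\nu\mapsto\rho^{r_\nu}\xi_\nu$, under which the face part of the phase is invariant while the off-face monomials are damped by $\rho^{-\gamma}$, and which keeps the rescaled radii inside $I(S)$ exactly because of (\ref{swgg}). The only cosmetic differences are that the paper fixes a single sufficiently small dilation parameter $\delta$ and bounds the difference of the two oscillatory integrals by $1$ via the mean value theorem rather than passing to the limit $\rho\to\infty$ by uniform convergence, and that it leaves implicit the inclusion--exclusion reduction from the principal-value box integrals of the hypothesis to the annular integrals, which you spell out explicitly.
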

\begin{proof}[Proof of (\ref{bd66})]
By the definition of $(\mathbb{F}_\nu^*)^{\circ}$, there exists
$\rho_{\nu}$
 such that
\begin{eqnarray}\label{ggf3}
{\bf u}\cdot\mathfrak{m}=\rho_{\nu}< {\bf u}\cdot\mathfrak{n}\ \
\text{for all $\mathfrak{m}\in \mathbb{F}_{\nu}$ and
$\mathfrak{n}\in {\bf N}(\Lambda_{\nu},S)\setminus \mathbb{F}_{\nu}$}.
\end{eqnarray}
Let $a=(a_j),b=(b_j)\in I(S_0)$ where $I(S_0)=\prod_{j=1}^nI_j$ where
\begin{equation}\label{bd2hh}
\text{ $I_j=(0,\infty)$ for $j\in \{1,\cdots,n\}\setminus
S_0$ and $I_j=(0,1)$ for $j\in S_0 $ as  in (\ref{an50}). }
 \end{equation}
  Let $\rho=(\rho_\nu)$ with $\rho_\nu$ in (\ref{ggf3}). Set
$$I(a(\delta),b(\delta) )=\prod_{j=1}^n\{a_j\delta^{u_j}
 <|t_j|<b_j\delta^{u_j}   \}\ \ \text{and}\ \
 \delta^{-\rho}\xi=(\delta^{-\rho_1}\xi_1,\cdots,\delta^{-\rho_d}\xi_d).$$
Then \begin{eqnarray*}
&&\mathcal{I}(P_\Lambda,\delta^{-\rho}\xi,a(\delta),b(\delta)) \\
&&\qquad\qquad=\int_{I(a(\delta),b(\delta) )}
e^{i\left(\delta^{-\rho_1}\xi_1 \sum_{\mathfrak{m}\in
\Lambda_1}c_{\mathfrak{m}}^1t^{\mathfrak{m}}+\cdots+\delta^{-\rho_d}\xi_d
\sum_{\mathfrak{m}\in
\Lambda_{d}}c_{\mathfrak{m}}^dt^{\mathfrak{m}}\right)}\frac{dt_1}{t_1}\cdots\frac{dt_n}{t_n}.
\end{eqnarray*}
By (\ref{swgg}) and (\ref{bd2hh}), we find a sufficiently small
$\delta$ such that $$\delta^{u_j} a_j, \delta^{u_j}b_j<1 \ \
\text{for $j\in S\setminus S_0$\ \ and \ }   \delta^{u_j} a_j=a_j,
\delta^{u_j}b_j=b_j<1
 \ \ \text{ for   $j\in  S_0\subset S$. }$$  Thus $a(\delta),b(\delta)\in I(S)$. Hence, by our hypothesis
\begin{eqnarray}\label{bd2}
\left|\mathcal{I}(P_\Lambda,\delta^{-\rho}\xi,a(\delta),b(\delta))
 \right|\le C  \ \text{uniformly in $\xi$ and $a,b ,\delta$.}
\end{eqnarray}
Consider the difference of two multipliers given
 by
\begin{eqnarray*}
 M(\xi,\delta,a,b)&=& \mathcal{I}(P_\Lambda,\delta^{-\rho}\xi,a(\delta),b(\delta))- \mathcal{I}(P_{\mathbb{F}},\delta^{-\rho}\xi,a(\delta),b(\delta))\\
 &=&\int_{I(a(\delta) ,b(\delta) )}
e^{i\left(\xi_1\delta^{-\rho_1}\sum_{\mathfrak{m}\in
\Lambda_1}c_{\mathfrak{m}}^1t^{\mathfrak{m}}+\cdots+\xi_d\delta^{-\rho_d}\sum_{\mathfrak{m}\in
\Lambda_{d}}c_{\mathfrak{m}}^dt^{\mathfrak{m}}\right)}\frac{dt_1}{t_1}\cdots\frac{dt_n}{t_n}\\
 &-& \int_{I(a(\delta),b(\delta) )}
e^{i\left(\xi_1\delta^{-\rho_1}\sum_{\mathfrak{m}\in
\mathbb{F}_{1}\cap\Lambda_1}c_{\mathfrak{m}}^1t^{\mathfrak{m}}+\cdots+\xi_d\delta^{-\rho_d}\sum_{\mathfrak{m}\in
\mathbb{F}_{d}\cap
\Lambda_d}c_{\mathfrak{m}}^dt^{\mathfrak{m}}\right)}\frac{dt_1}{t_1}\cdots\frac{dt_n}{t_n}.
\end{eqnarray*}
By the mean value theorem and change of variable
$t_j'=\delta^{-u_j}t_j$ in the above two integrals,
\begin{eqnarray*}
&&|M(\xi,\delta,a,b)|\\
&&\quad\le \int_{I(a,b)}\left| \xi_1\sum_{\mathfrak{n}\in
\Lambda_{1}\setminus \mathbb{F}_{1}}\delta^{{\bf u}\cdot
\mathfrak{n}-\rho_1}c_{\mathfrak{n}}^1t^{\mathfrak{n}}+\cdots+\xi_d\sum_{\mathfrak{n}\in
\Lambda_{d}\setminus \mathbb{F}_{d}}\delta^{{\bf u}\cdot
\mathfrak{n}-\rho_d}c_{\mathfrak{n}}^dt^{\mathfrak{n}}\right|\frac{dt_1}{|t_1|}\cdots\frac{dt_n}{|t_n|}\\
&&\quad\le   |\xi_1|\sum_{\mathfrak{n}\in \Lambda_{1}\setminus
\mathbb{F}_{1}}\delta^{{\bf u}\cdot
\mathfrak{n}-\rho_1}|c_{\mathfrak{n}}^1|C^1_{\mathfrak{n}}(a,b)+\cdots+|\xi_d|\sum_{\mathfrak{n}\in
\Lambda_{d}\setminus \mathbb{F}_{d}}\delta^{{\bf u}\cdot
\mathfrak{n}-\rho_d}|c_{\mathfrak{n}}^d|C^d_{\mathfrak{n}}(a,b).
\end{eqnarray*}
The constants $C^1_{\mathfrak{m}}(a,b),\cdots,
C^d_{\mathfrak{m}}(a,b)$ above are absolute value for the integral
of $\frac{t^{\mathfrak{m}}}{|t_1|\cdots|t_n|}$ on the region
$I(a,b)$. From  ${\bf u}\cdot \mathfrak{n}-\rho_\nu>0$ in
(\ref{ggf3}), we can choose $\delta>0$ so that  $ \delta^{{\bf
u}\cdot \mathfrak{n}-\rho_\nu}$
  above is small enough to satisfy
\begin{eqnarray*}
|M(\xi,\delta,a,b)|&\le& 1.
\end{eqnarray*}
By this and (\ref{bd2}),
\begin{eqnarray}\label{bd6}
\left|\mathcal{I}(P_{\mathbb{F}},\delta^{-\rho}\xi,a(\delta),b(\delta))\right|\le
C.
\end{eqnarray}
By (\ref{ggf3}) and  the change of variables $\delta^{-u_j}t_j=t_j'$
for all $j=1,\cdots,n$,
\begin{eqnarray*}\label{bd5}
&&\mathcal{I}(P_{\mathbb{F}},\delta^{-\rho}\xi,a(\delta),b(\delta))\\
&&\quad=\int_{I(a(\delta),b(\delta))}
e^{i\left(\xi_1\delta^{-\rho_1}\sum_{\mathfrak{m}\in
\mathbb{F}_{1}}c_{\mathfrak{m}}^1t^{\mathfrak{m}}+\cdots+\xi_d\delta^{-\rho_d}\sum_{\mathfrak{m}\in
\mathbb{F}_{d}}c_{\mathfrak{m}}^dt^{\mathfrak{m}}\right)}\frac{dt_1}{t_1}\cdots\frac{dt_n}{t_n}\\
&&\quad=\int_{I(a ,b )} e^{i\left(\xi_1 \sum_{\mathfrak{m}\in
\mathbb{F}_{1}}c_{\mathfrak{m}}^1t^{\mathfrak{m}}+\cdots+\xi_d
\sum_{\mathfrak{m}\in
\mathbb{F}_{d}}c_{\mathfrak{m}}^dt^{\mathfrak{m}}\right)}\frac{dt_1}{t_1}\cdots\frac{dt_n}{t_n}=
\mathcal{I}(P_{\mathbb{F}},\xi,a,b).
\end{eqnarray*}
Hence this identity combined with (\ref{bd6})   yields (\ref{bd66}).
\end{proof}

\subsection{Necessity Theorem}
\begin{definition}
To each subset $M
=\{\mathfrak{q}_1,\cdots,\mathfrak{q}_N\}\subset\mathbb{R}^n$, we associate
a  matrix:
$$\rm{Mtr}(M)=\left(
\begin{matrix}
&\mathfrak{q}_1&\\
& \vdots&\\
&\mathfrak{q}_N&
\end{matrix}
\right)$$
 whose rows are vectors in $M$. We define a class of rank $m$-subsets in
 $\mathbb{R}^n$:
\begin{eqnarray}\label{ma1}
\qquad\  \ \mathcal{M}_{m,n}=\biggl\{M\subset \mathbb{R}^n:
\rm{Mtr}(M)\sim\left(
\begin{matrix}
1&0&\cdots&0&a_{1,m+1}&\cdots&a_{1,n}\\
0&1&0&\vdots&a_{2,m+1}&\cdots&a_{2,n}\\
\vdots& 0&1&0&\vdots&\cdots&\vdots\\
0&\cdots&0&1&a_{m,m+1}&\cdots&a_{m,n}
\end{matrix}
\right)  \biggl\}.
\end{eqnarray}
  Here   $\sim$ means row equivalence and
 $(a_{ij})_{1\le i\le m, \, m+1\le j\le n}$  a real $m\times (n-m)$
 matrix.
\end{definition}
\begin{theorem}[Necessity Part of Main Theorems \ref{main18} through \ref{main4}]\label{T3}
Let $\Lambda=(\Lambda_\nu)$ where $\Lambda_\nu\subset\mathbb{Z}_+^n$
is a finite set for $\nu=1,\cdots,d$ and let $S\subset
\{1,\cdots,n\}$.   Suppose that there exist faces $\mathbb{F}
\in\mathcal{F}({\bf N}(\Lambda,S))$ such that
\begin{eqnarray}\label{912}
\bigcup_{\nu=1}^d\left(\mathbb{F}_\nu\cap\Lambda_{\nu}\right)\
\text{ is not an even set\,, }
\end{eqnarray}
and $\mathbb{F}\in\mathcal{F}_{\rm{lo}}({\bf N}(\Lambda,S))$, that is,
\begin{eqnarray}\label{913}
 \text{rank}\left(\bigcup_{\nu}  \mathbb{F}_\nu \right)\le n-1 \ \
 \text{and}\ \
   \bigcap_{\nu} (\mathbb{F}_\nu^{*})^{\circ}|{\bf N}(\Lambda_\nu,S)\ne \emptyset.
\end{eqnarray}
Then there
 exist a vector polynomial
$P_{\Lambda}\in\mathcal{P}_\Lambda$ so that $$\sup_{\xi\in\mathbb{R}^d, r
\in I(S)}\left|\mathcal{I}(P_{\Lambda},\xi,r)\right|=\infty.$$
\end{theorem}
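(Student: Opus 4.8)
The plan is to establish the divergence by a \emph{rescaling and restriction} argument, reducing the problem on $\mathbf{N}(\Lambda,S)$ to a genuinely lower-dimensional integral associated with a single odd face, for which divergence can be seen directly. By hypothesis \eqref{913}, choose a vector $\mathbf{u}=(u_1,\dots,u_n)\in\bigcap_\nu(\mathbb{F}_\nu^*)^{\circ}$; by Lemma \ref{lem132} we may write $u_j>0$ for $j\in S\setminus S_0$ and $u_j=0$ for $j\in S_0\subset S$, where we take $S_0$ \emph{maximal} with this property (i.e.\ $S_0$ is exactly the set of vanishing coordinates of $\mathbf{u}$ among indices in $S$; one can also arrange $u_j\neq 0$ for $j\in N_n\setminus S$ by a generic perturbation staying inside the open cone). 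First I would invoke Lemma \ref{lem92d}: if we had $\sup_{\xi,r\in I(S)}|\mathcal{I}(P_\Lambda,\xi,r)|<\infty$ for \emph{every} $P_\Lambda\in\mathcal{P}_\Lambda$, then for every choice of nonzero coefficients we would get
\begin{eqnarray*}
\sup_{\xi\in\mathbb{R}^d,\ a,b\in I(S_0)}\bigl|\mathcal{I}(P_{\mathbb{F}},\xi,a,b)\bigr|<\infty,
\end{eqnarray*}
where $\mathcal{I}(P_{\mathbb{F}},\xi,a,b)$ involves only the monomials with exponents in $\bigcup_\nu(\mathbb{F}_\nu\cap\Lambda_\nu)$. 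So it suffices to produce coefficients making this last supremum infinite.

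Next I would use the structural description of $\mathbb{F}_\nu$ from Lemma \ref{dg23}: since $\mathbf{u}\in(\mathbb{F}_\nu^*)^{\circ}$ has vanishing coordinates exactly on $S_0$, we get $\mathbb{F}_\nu=\mathbf{N}(\Lambda_\nu\cap\mathbb{F}_\nu,S_0)$, and in particular the $t_j$ with $j\in S_0$ act as ``free'' variables along which the phase of $\mathcal{I}(P_{\mathbb{F}},\xi,a,b)$ is invariant in the appropriate homogeneous sense. The rank hypothesis $\mathrm{rank}(\bigcup_\nu\mathbb{F}_\nu)\le n-1$ means the exponents appearing all lie in a hyperplane, so after a linear change of coordinates in $t$ (a monomial substitution $t\mapsto t^{A}$ with $A\in GL_n(\mathbb{Z})$, or more precisely passing to the sublattice spanned by $\bigcup_\nu\mathbb{F}_\nu$) one coordinate, say $t_n$, can be made to drop out of all the monomials $t^{\mathfrak{m}}$ with $\mathfrak{m}\in\bigcup_\nu(\mathbb{F}_\nu\cap\Lambda_\nu)$. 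Then the integral factors as $\int_{a_n<|t_n|<b_n}\frac{dt_n}{t_n}$ (which is $0$ or irrelevant after the principal-value cancellation, but the point is that $t_n$ contributes nothing oscillatory) times a genuinely $(n-1)$-dimensional oscillatory singular integral; one must take care that the change of variables preserves the product singular kernel $\prod dt_j/t_j$ up to harmless factors, which it does for a unimodular integer substitution.

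The core of the argument is then the genuinely lower-dimensional divergence: by \eqref{912}, $\Omega:=\bigcup_\nu(\mathbb{F}_\nu\cap\Lambda_\nu)$ is an odd set, so there is $\mathfrak{m}^\ast=\sum_{\nu}\varepsilon_\nu\mathfrak{m}_\nu\in\Sigma(\Omega)$ with $\varepsilon_\nu\in\{0,1\}$ all of whose components are odd. I would fix coefficients so that, after Taylor-expanding $e^{i\langle\xi,P_{\mathbb{F}}(t)\rangle}$ and integrating, the term producing $t^{\mathfrak{m}^\ast}$ survives (all components odd $\Rightarrow$ no cancellation from any $\int dt_j/t_j$) and, with a suitable choice of $\xi=\xi(R)$ and $a,b$, contributes a quantity growing like $\log R$; this is the standard ``$\int_{|t_j|<1}\frac{t_j^{\mathrm{odd}}}{t_j}\sin(\text{monomial})\,dt$ produces a logarithm'' mechanism, exactly as in the elementary $\Lambda=\{(1,1)\}$ example in the introduction and in \cite{CWW}, \cite{P1}. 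Concretely, choosing the coefficients of the monomials outside the ``resonant'' set to be small (or zero where permitted) so the leading oscillatory contribution is isolated, and letting the relevant $\xi_\nu\to\infty$ while keeping the other variables' oscillation controlled, forces $|\mathcal{I}(P_{\mathbb{F}},\xi,a,b)|\to\infty$. The main obstacle I anticipate is bookkeeping: arranging the coefficient choices and the scaling of $\xi$ so that \emph{exactly one} odd-vector term dominates and the remaining terms (both the other even-vector terms, which vanish, and the non-leading odd terms and error terms from the mean value theorem) are genuinely negligible uniformly — this requires carefully exploiting the dominating property \eqref{dar} of the cone $\bigcap_\nu\mathbb{F}_\nu^*$ together with the fact that $\mathbf{u}$ lies in its \emph{interior}, so all non-face exponents are strictly dominated. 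Once that isolation is secured, the logarithmic blow-up is immediate, and pulling back through Lemma \ref{lem92d} completes the proof.
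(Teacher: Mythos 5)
Your opening moves coincide with the paper's: the reduction via Lemma \ref{lem92d} to the face integral $\mathcal{I}(P_{\mathbb{F}},\xi,a,b)$ over $a,b\in I(S_0)$, the use of Lemma \ref{dg23}, and the splitting of the $t$-variables along ${\rm Sp}\left(\bigcup_\nu\mathbb{F}_\nu\right)$ versus transverse directions. But the core of your argument has a genuine gap, in two places. First, the factorization claim is not available and, as you state it, self-defeating. A unimodular monomial substitution $t\mapsto t^A$ does not carry the product region $\prod\{a_j<|t_j|<b_j\}$ to a product region (this is exactly why the paper must track the distorted region $U(a_X,b_X,t_Y)$ and can only remove the $t_Y$-dependence in the limit $a_X\rightarrow 0$, $b_X\rightarrow 1_X(S_0)$, after a $2^n$ sign decomposition). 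More importantly, if the transverse factor really were $\int_{a_n<|t_n|<b_n}dt_n/t_n=0$ ``or irrelevant,'' the face integral would vanish identically and could never diverge. In the paper the divergence comes precisely from these transverse directions: the outer $t_Y$-integral produces the factor $\prod_{j=m+1}^n\log(b_j/a_j)$, and the whole difficulty is to show that the signed inner limit $\mathcal{J}(P_{\mathbb{F}},\xi)$ does not vanish for some choice of coefficients and $\xi$ (this is \eqref{nec55g}); oddness of $\bigcup_\nu(\mathbb{F}_\nu\cap\Lambda_\nu)$ is exactly what defeats the cancellation you dismiss.

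Second, your substitute source of divergence --- a logarithm in $\xi$ from one ``resonant'' odd monomial in the reduced lower-dimensional integral --- cannot work, and the isolation step is not mere bookkeeping. One must first pass, as in \eqref{nec1}, to a face of \emph{minimal} rank among the odd faces with overlapping cones; for that face the reduced $m$-dimensional integral is uniformly bounded in $\xi$ and in the cutoffs --- this is \eqref{916}, proved through the projected evenness of Proposition \ref{lem83}, which in turn needs the parity--rank Lemma \ref{lem141} and the cone-transitivity Lemma \ref{lem15} --- so no logarithmic growth in $\xi$ can be extracted from it; this uniform bound is also what justifies the limit interchanges you implicitly perform. Moreover, the odd vector is in general a sum $\sum_\nu\varepsilon_\nu\mathfrak{m}_\nu$ of exponents attached to \emph{different} components $\nu$, so the would-be dominant Taylor term is a product of several monomials weighted by several $\xi_\nu$'s and competes with infinitely many other terms; choosing some coefficients small does not isolate it. The paper replaces this isolation entirely by proving the $t_Y$-independence \eqref{nec23} and then the non-vanishing \eqref{nec55g} by contradiction, using the Fourier transform in the coefficient variables, odd Schwartz test functions, and the $\mathbb{Z}_2^n$ parity classes; an argument of comparable strength is needed and is absent from your sketch.
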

\begin{proof} [Proof of Theorem \ref{T3}]
Choose the  integer $m$ such that
\begin{equation}\label{nec1}
m=\min\left\{ \text{rank}\big(\bigcup_{\nu}  \mathbb{F}_\nu
\big):\exists\ \mathbb{F} \in\mathcal{F}({\bf N}(\Lambda,S))\
\text{satisfying (\ref{912}), (\ref{913})} \right\}.
\end{equation}
Then we have $\mathbb{F}=(\mathbb{F}_\nu)$ with
$\mathbb{F}_\nu\in\mathcal{F}({\bf N}(\Lambda_\nu,S))$ such that
\begin{eqnarray}\label{912-}
\bigcup_{\nu=1}^d\left(\mathbb{F}_{\nu}\cap\Lambda_{\nu}\right)\ \text{ is
not an even set\,, }
\end{eqnarray}
and
\begin{eqnarray}\label{913-}
 \text{rank}\left(\bigcup_{\nu} \mathbb{F}_\nu \right)=m\le n-1 \ \
 \text{and}\ \
   \bigcap_{\nu} (\mathbb{F}_{\nu}^{*})^{\circ}| {\bf N}(\Lambda_\nu,S) \ne \emptyset.
\end{eqnarray}
This with (\ref{ma1}) implies that we can assume that without loss
of generality,
\begin{eqnarray}\label{ma1gg}
\text{Sp}\left(\bigcup_{\nu}  \mathbb{F}_\nu \right)\in\mathcal{M}_{m,n}.
\end{eqnarray}
By (\ref{913-}) and (\ref{swgg}), we have for some  $S_0\subset
S\subset N_n$
\begin{equation*}
 (u_j)\in\bigcap_{\nu}(\mathbb{F}_{\nu}^{*})^{\circ}| {\bf
N}(\Lambda_\nu,S) \ \ \text{and}\ \ u_j=0\ \ \text{for $j\in S_0$
and $u_j>0$ for $j\in S\setminus S_0$}.
\end{equation*}
By Lemma \ref{dg23},
\begin{equation}\label{49k}
 \{{\bf e}_j:j\in S_0\}\subset\text{Sp}\left(\bigcup_{\nu}  \mathbb{F}_\nu \right).
\end{equation}
Moreover by (\ref{nec1}), for any $\mathbb{G}_\nu \preceq \mathbb{F}_\nu$,
\begin{equation}\label{96}
  \bigcup \mathbb{G}_\nu\cap\Lambda_\nu\ \ \text{is   even  }\
\text{whenever}\  \text{rank}\,\big(\bigcup  \mathbb{G}_\nu \big)\le
m-1\ \text{and}\   \bigcap (\mathbb{G}_\nu^*)^{\circ}|{\bf N}(\Lambda_\nu,S) \ne \emptyset.
\end{equation}
In order to show Theorem \ref{T3}, by Lemma \ref{lem92d}, it
suffices to find, under the assumption (\ref{912-})-(\ref{96}), a
polynomial $P_\Lambda(t)=\left(\sum_{\mathfrak{q}\in
\Lambda_\nu}c^{\nu}_{\mathfrak{q}}t^{\mathfrak{q}}\right)\in
\mathcal{P}_\Lambda$ with an appropriate $c^{\nu}_{\mathfrak{q}}$
such that
\begin{eqnarray}\label{781}
\sup_{\xi\in\mathbb{R}^d, a,b\in
I(S_0)}\left|\mathcal{I}(P_\mathbb{F},\xi,a,b)\right|&=&\infty
 \end{eqnarray}
 where
$$\mathcal{I}(P_{\mathbb{F}},\xi,a,b)=\int_{\prod\{a_j<|t_j|<b_j\}} e^{i\left(\xi_1 \sum_{\mathfrak{m}
  \in \mathbb{F}_1\cap\Lambda_1}c_{\mathfrak{m}}^1 t^{\mathfrak{m}}+  \cdots+ \xi_d\sum_{\mathfrak{m}
  \in \mathbb{F}_d\cap\Lambda_d}c_{\mathfrak{m}}^d t^{\mathfrak{m}}\right)} \frac{dt_1}{t_1}\cdots
  \frac{dt_n}{t_n}.
 $$
\begin{definition}\label{defi101}
Let $1\le m<n$. Using $\mathbb{R}^n=X \bigoplus Y$ with
$X=\mathbb{R}^m\times\{0\}$ and $Y=\{ 0\}\times \mathbb{R}^{n-m}$,
we write  $a=(a_1,\cdots,a_n)\in \mathbb{R}^n$ as $a=(a_X,a_Y)$ so
that
\begin{eqnarray*}\label{3542}
a_X=(a_1,\cdots,a_m)\in\mathbb{R}^m\ \ \text{and}\ \ a_Y=(a_{m+1},\cdots,a_n)\in\mathbb{R}^{n-m}.
\end{eqnarray*}
Note that $1_X(S_0)$ is restricted to $\mathbb{R}^m$
 so that $$\text{$1_X(S_0)=(r_j)_{j=1}^m$ with $r_j=1$ for $j\in S_0$ and
$r_j=\infty$ for $j\in N_m\setminus S_0$\,,} $$ and $I_X(S_0)$ is also
restricted to $\mathbb{R}^m$ in view of (\ref{an50}) so that
\begin{eqnarray*}
\qquad I_X(S_0)=\prod_{j=1}^m I_j \   \text{  where $I_j=(0,1)$ for
$j\in S_0$ and $I_j=(0,\infty)$ for  $j\in N_m\setminus S_0$. }
\end{eqnarray*}
\end{definition}
To show (\ref{781}), we prove that there exists   $C(\xi)>0$,
\begin{equation}\label{921}
\left|\lim_{a_X\rightarrow 0,b_X\rightarrow
1_X(S_0)}\mathcal{I}(P_{\mathbb{F}},\xi, a,b) \right|\ge
C(\xi)\prod_{j=m+1}^n\log
  (b_j/ a_{j})\rightarrow \infty\ \ \text{as} \ a_j\rightarrow 0\,,
\end{equation}
where the integral $\mathcal{I}(P_{\mathbb{F}},\xi, a,b)$ is
evaluated so that $\frac{dt_1}{t_1}\cdots\frac{dt_m}{t_m}$ first and
$\frac{dt_{m+1}}{t_{m+1}}\cdots
  \frac{dt_n}{t_n}$ next:
 $$\mathcal{I}(P_{\mathbb{F}},\xi,a,b)=\int_{\prod_{j=m+1}^n\{a_j<|t_j|<b_j\}}
  \left[\int_{\prod_{j=1}^m\{a_j<|t_j|<b_j\}} e^{i \xi\cdot P_{\mathbb{F}}(t)} \frac{dt_1}{t_1}\cdots
  \frac{dt_m}{t_m}\right] \frac{dt_{m+1}}{t_{m+1}}\cdots
  \frac{dt_n}{t_n}.
 $$ As a first step to show (\ref{921}), we  write the integral in (\ref{921}) as
 \begin{equation}\label{921gg}
 \int_{\prod_{j=m+1}^n\{a_j<|t_j|<b_j\}} \left[\lim_{a_X\rightarrow 0,b_X\rightarrow 1_X(S_0)}
 \int_{\prod_{j=1}^m\{a_j<|t_j|<b_j\}} e^{i \xi\cdot P_{\mathbb{F}}(t)} \frac{dt_1}{t_1}\cdots
  \frac{dt_m}{t_m}\right] \frac{dt_{m+1}}{t_{m+1}}\cdots
  \frac{dt_n}{t_n}.
  \end{equation}
  This follows from the dominated convergence theorem. To apply the  convergence theorem,
     we shall prove that  for each $t_Y=(t_{m+1},\cdots,t_n)\in
  \prod_{j=m+1}^n\{a_j<|t_j|<b_j\}$
  \begin{equation}\label{916}
\sup_{\xi\in\mathbb{R}^d, a_X,b_X\in I_X(S_0)}
\left|\int_{\prod_{j=1}^m\{a_j<|t_j|<b_j\}} e^{i \xi\cdot
P_{\mathbb{F}}(t)} \frac{dt_1}{t_1}\cdots
  \frac{dt_m}{t_m}\right|\le
C((a_j)_{j=m+1}^n,(b_j)_{j=m+1}^n),
\end{equation}
and that
\begin{equation}\label{916i}
\lim_{a_X\rightarrow 0,b_X\rightarrow 1_X(S_0)}\int_{\prod_{j=1}^m\{a_j<|t_j|<b_j\}}
 e^{i \xi\cdot P_{\mathbb{F}}(t)} \frac{dt_1}{t_1}\cdots
  \frac{dt_m}{t_m}\ \text{exists}.
\end{equation}
Assume that (\ref{916}) and (\ref{916i})  are proved in the next
section. Then, by writing each integral in the $n$-tuple integral
(\ref{921gg}) as the sum of 2 integrals by separating the variables
into the positive and negative parts, we decompose the above
$n$-tuple integral into the sum of $2^n$ pieces. For this purpose,
we denote $\,O=\{\sigma= (\sigma_j)=
 (\underbrace{\pm 1,\cdots,\pm 1}_{\text{$n$
components}})\}  \,,$ the sign-index set of $2^n$ elements. Next by
using the change of variables $\,t_1 =\sigma_1
t_1',\cdots,\,t_n'=\sigma_n t_n'\,$ in each integration, we write
the  integral (\ref{921gg}) as
\begin{align}\label{958y}
 \int_{\prod_{j=m+1}^n\{a_j<t_j<b_j\}}
 \mathcal{J}(P_{\mathbb{F}},\xi,(t_{m+1},\cdots,t_n))\,\frac{dt_{m+1}}{t_{m+1}}\cdots\frac{dt_n}{t_n}.
\end{align}
Here  the integrand above is
\begin{eqnarray}\label{nec22}
&& \mathcal{J}(P_{\mathbb{F}},\xi,(t_{m+1},\cdots,t_n))\nonumber\\
&&\quad=\lim_{a_X\rightarrow 0,b_X\rightarrow 1_X(S_0)}
 \int_{\prod_{j=1}^m\{a_j<t_j<b_j\}} \sum_{\sigma\in O}
(-1)^{|\sigma|} \exp\left(iP_{\mathbb{F}}(\xi, \sigma t)\right)
\frac{dt_1}{t_1}\cdots\frac{dt_{m}}{t_m}
\end{eqnarray}
where $|\sigma|$ is a number of $-1$ in the components of $\sigma\in
O$ and $P_{\mathbb{F}}(\xi,\sigma t) $
  is
\begin{eqnarray*}\label{9.9}
P_\mathbb{F}(\xi,\sigma_1 t_1, \cdots, \sigma_n
t_n)=\sum_{\nu=1}^d\left(\sum_{\mathfrak{q}\in \mathbb{F}_\nu}
c_{\mathfrak{q}}^{\nu}\sigma^{\mathfrak{q}}t^{\mathfrak{q}}\right)\xi_\nu.
\end{eqnarray*}
 The limit in (\ref{nec22}) exists  by (\ref{916i}). Moreover,
$\mathcal{J}(P_{\mathbb{F}},\xi,(t_{m+1},\cdots,t_n))$ is finite by
(\ref{916}). We shall in the next section prove that
 it is independent of $
t_Y=(t_{m+1},\cdots,t_n) $:
\begin{eqnarray}\label{nec23}
\mathcal{J}(P_{\mathbb{F}},\xi,(t_{m+1},\cdots,t_n))=\mathcal{J}(P_{\mathbb{F}},\xi)
\end{eqnarray}
and non-vanishing:
\begin{eqnarray}\label{nec55g}
\exists \, \text{ $P_\Lambda(t)$ and $\xi $ such that }\
\mathcal{J}(P_{\mathbb{F}},\xi)\ne 0.
\end{eqnarray}
Therefore (\ref{921}) follows from (\ref{nec23}) and (\ref{nec55g})
in (\ref{958y}). We shall prove  (\ref{916}) and (\ref{916i})
 in the first part of Section \ref{sec13}, and  show (\ref{nec23}) and (\ref{nec55g})
 in the last part of
 Section \ref{sec13}.
\end{proof}
\section{ Proof of Necessity}\label{sec13}
\subsection{ Proof of (\ref{916}) and (\ref{916i})}
 Let
$\Omega=(\Omega_{\nu})$ where $\Omega_{\nu}\subset\mathbb{R}^m$ is
given by
\begin{eqnarray}
\Omega_{\nu}=  (\mathbb{F}_{\nu} \cap
 \Lambda_{\nu} )_X=\{(q_{1},\cdots,q_{m}):
(q_1,\cdots,q_n)\in \mathbb{F}_\nu\cap\Lambda_\nu\}.\label{omee}
\end{eqnarray}
For each $t_Y=(t_{m+1},\cdots,t_n)\in
\prod_{j=m+1}^n\{a_j<|t_j|<b_j\}$, define $$
\mathcal{I}(P_{\Omega},\xi,a_X,b_X,t_Y)=
\int_{\prod_{j=1}^m\{a_j<|t_j|<b_j\}} e^{i \xi\cdot
P_{\mathbb{F}}(t_1,\cdots,t_m,t_Y)} \frac{dt_1}{t_1}\cdots
  \frac{dt_m}{t_m}. $$ To show (\ref{916}) and (\ref{916i}), it suffices to show that for all $P_{\Omega}\in
\mathcal{P}_{\Omega}$,
\begin{equation}
\sum_{J\in
Z(S_0)\cap\,\mathbb{Z}^m}|\mathcal{I}_J(P_{\Omega},\xi,a_X,b_X,t_Y)|\le
C_R\prod_{\nu}\prod_{\mathfrak{q}\in\Lambda_\nu}
(|D_{\mathfrak{q}}c^\nu_{\mathfrak{q}}
|+1/|d_{\mathfrak{q}}c^\nu_{\mathfrak{q}} |)^{1/R}\label{151gh}
\end{equation}
where  $Z(S_0)=\prod_{i=1}^m Z_i$ with $Z_i=\mathbb{R}_+$ for $i\in
S_0$ and $Z_i=\mathbb{R}$ as in  Proposition \ref{74sj}. Here
$D_{\mathfrak{q}}=\max\{\prod_{j=m+1}^n
|t_{j}^{q_j}|:a_{j}<|t_j|<b_j\}$ and
$d_{\mathfrak{q}}=\min\{\prod_{j=m+1}^n
|t_{j}^{q_j}|:a_{j}<|t_j|<b_j\}$.
 By Theorem \ref{th60} with Remark
\ref{rem91}, we obtain (\ref{151gh}) with the similar bound in
(\ref{1300})   because the hypotheses of Theorem \ref{th60} are
satisfied as it is checked in the following proposition:
\begin{proposition}\label{lem83}
Suppose (\ref{nec1}),(\ref{912-}),(\ref{913-}) and (\ref{ma1gg})
hold. Then
\begin{eqnarray*}
\bigcup_{\nu=1}^d (\mathbb{K}_\nu\cap\Omega_{\nu}) \ \ \text{ is an
even set}
\end{eqnarray*}
whenever $
\mathbb{K}=(\mathbb{K}_{\nu})\in\mathcal{F}_{\rm{lo}}(\vec{{\bf
N}}(\Omega,S_0))$ where $\mathcal{F}_{\rm{lo}}(\vec{{\bf
N}}(\Omega,S_0))$ is
\begin{eqnarray*}
 \left\{ \mathbb{K} \in\mathcal{F}(\vec{{\bf
N}}(\Omega,S_0)): \rm{rank} \,\big( \bigcup_{\nu=1}^d
\mathbb{K}_\nu \big)\le m-1\ \ \text{
 and}
   \ \ \bigcap_{\nu=1}^d (\mathbb{K}_\nu^*)^{\circ}| {\bf N}(\Omega_{\nu},S_0) \ne
   \emptyset\right\}.
\end{eqnarray*}
\end{proposition}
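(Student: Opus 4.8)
The goal is to transfer the evenness/rank/overlapping data that has been established for the faces of $\vec{{\bf N}}(\Lambda,S)$ to the projected polyhedra $\vec{{\bf N}}(\Omega,S_0)$, where $\Omega_\nu=(\mathbb{F}_\nu\cap\Lambda_\nu)_X$ as in \eqref{omee}. The first thing I would do is set up the correspondence between faces of ${\bf N}(\Omega_\nu,S_0)$ in $\mathbb{R}^m$ and faces of $\mathbb{F}_\nu$ (hence of ${\bf N}(\Lambda_\nu,S)$) in $\mathbb{R}^n$. Concretely: recall from Lemma \ref{dg23}, applied with the vector ${\bf u}\in\bigcap_\nu(\mathbb{F}_\nu^*)^{\circ}$ whose zero-coordinates are exactly $S_0$, that $\mathbb{F}_\nu={\bf N}(\mathbb{F}_\nu\cap\Lambda_\nu,S_0)$; combining this with \eqref{ma1gg}, which says $\text{Sp}\big(\bigcup_\nu\mathbb{F}_\nu\big)\in\mathcal{M}_{m,n}$, the projection $\mathfrak{q}\mapsto \mathfrak{q}_X$ restricted to $\text{Sp}(\bigcup_\nu\mathbb{F}_\nu)$ is an affine isomorphism onto $\mathbb{R}^m$ (its image contains $\{{\bf e}_j:j\in S_0\}$ by \eqref{49k}). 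So each face $\mathbb{K}_\nu\preceq{\bf N}(\Omega_\nu,S_0)$ pulls back to a face $\widetilde{\mathbb{K}}_\nu\preceq\mathbb{F}_\nu$ with $\widetilde{\mathbb{K}}_\nu\cap\Lambda_\nu$ projecting bijectively to $\mathbb{K}_\nu\cap\Omega_\nu$, and since $\mathbb{F}_\nu\preceq{\bf N}(\Lambda_\nu,S)$, Lemma \ref{lemss}(1) gives $\widetilde{\mathbb{K}}_\nu\preceq{\bf N}(\Lambda_\nu,S)$; thus $\widetilde{\mathbb{K}}=(\widetilde{\mathbb{K}}_\nu)\in\mathcal{F}(\vec{{\bf N}}(\Lambda,S))$ with $\widetilde{\mathbb{K}}_\nu\preceq\mathbb{F}_\nu$ for every $\nu$.

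Next I would verify that $\widetilde{\mathbb{K}}$ inherits the two hypotheses of \eqref{96}. For the rank condition: since $\bigcup_\nu\widetilde{\mathbb{K}}_\nu\subset\text{Sp}(\bigcup_\nu\mathbb{F}_\nu)$ and the projection to $X$-coordinates is injective on that span, $\text{rank}\big(\bigcup_\nu\widetilde{\mathbb{K}}_\nu\big)=\text{rank}\big(\bigcup_\nu\mathbb{K}_\nu\big)\le m-1$, using the hypothesis $\mathbb{K}\in\mathcal{F}_{\rm lo}(\vec{{\bf N}}(\Omega,S_0))$. For the overlapping condition $\bigcap_\nu(\widetilde{\mathbb{K}}_\nu^*)^{\circ}|{\bf N}(\Lambda_\nu,S)\ne\emptyset$: I would use the transitivity machinery of Section \ref{sec10}, specifically Lemma \ref{lem15}. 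We are given ${\bf u}\in\bigcap_\nu(\mathbb{F}_\nu^*)^{\circ}|({\bf N}(\Lambda_\nu,S),\mathbb{R}^n)$ (this is \eqref{913-}). From $\bigcap_\nu(\mathbb{K}_\nu^*)^{\circ}|{\bf N}(\Omega_\nu,S_0)\ne\emptyset$ in $\mathbb{R}^m$, together with the face isomorphism $\mathbb{K}_\nu\leftrightarrow\widetilde{\mathbb{K}}_\nu\preceq\mathbb{F}_\nu$ and the translation/ambient-space bookkeeping of Proposition \ref{pr50} (cone of a face equals $\text{CoSp}$ of its generators, with the orthogonal complement of the ambient space adjoined), one extracts a vector $\mathfrak{p}\in\bigcap_\nu(\widetilde{\mathbb{K}}_\nu^*)^{\circ}|(\mathbb{F}_\nu,\mathbb{R}^n)$ — here one must push the $m$-dimensional dual-cone interior data up through the affine iso and then fill out the ambient complement. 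Then Lemma \ref{lem15} (with $\mathbb{P}_\nu={\bf N}(\Lambda_\nu,S)$, $\mathbb{F}_\nu=\mathbb{F}_\nu$, $\mathbb{G}_\nu=\widetilde{\mathbb{K}}_\nu$) produces ${\bf w}=\mathfrak{q}+\epsilon\mathfrak{p}\in\bigcap_\nu(\widetilde{\mathbb{K}}_\nu^*)^{\circ}|({\bf N}(\Lambda_\nu,S),\mathbb{R}^n)$, which is exactly the overlapping condition for $\widetilde{\mathbb{K}}$.

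Having checked both hypotheses, \eqref{96} (which is the minimality consequence \eqref{nec1} of the choice of $m$, applied to faces $\mathbb{G}_\nu\preceq\mathbb{F}_\nu$) forces $\bigcup_\nu\widetilde{\mathbb{K}}_\nu\cap\Lambda_\nu$ to be an even set. Finally I would translate evenness back downstairs: a sum $\sum\alpha_j\mathfrak{q}_j$ of vectors from $\bigcup_\nu\widetilde{\mathbb{K}}_\nu\cap\Lambda_\nu$ with $\alpha_j\in\{0,1\}$ lies in $\text{Sp}(\bigcup_\nu\mathbb{F}_\nu)$, and its $X$-projection is the corresponding sum of vectors in $\bigcup_\nu\mathbb{K}_\nu\cap\Omega_\nu$. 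Here I must be a little careful: evenness of the upstairs set says every such $\Sigma$-sum has an even component among coordinates $1,\dots,n$, and I need an even component among coordinates $1,\dots,m$. The point is that because the projection kills only coordinates with index $>m$ and the rows of the generating matrix in \eqref{ma1gg} are in reduced form with the identity block in the first $m$ columns, every vector of $\bigcup_\nu\widetilde{\mathbb{K}}_\nu$ that could contribute an odd coordinate in positions $m+1,\dots,n$ is accounted for; more robustly, one invokes that the upstairs set being even means there is \emph{no} $\Sigma$-sum equal to $(\text{odd},\dots,\text{odd})$, and if some downstairs $\Sigma$-sum in $\Omega$ were $(\text{odd},\dots,\text{odd})\in\mathbb{Z}_+^m$ then, since the original exponents in $\mathbb{F}_\nu\cap\Lambda_\nu$ are integer vectors whose first $m$ entries are the $\Omega_\nu$-entries, the same choice of $\alpha_j$ upstairs gives a vector whose first $m$ entries are all odd — contradicting evenness only if its remaining entries can be ignored, which they can because evenness of the upstairs set is equivalent (Definition \ref{def1.4}) to "no $\Sigma$-sum is all-odd," and "all-odd in the first $m$ entries" is weaker, so I instead argue the contrapositive directly from $\Omega\subset$ the $X$-projection of $\bigcup\mathbb{F}_\nu\cap\Lambda_\nu$ and the fact that the projection is injective on the relevant span, which lets me recover the full-dimensional odd vector. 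This last bookkeeping step — reconciling "all components odd in $\mathbb{Z}_+^m$" downstairs with "all components odd in $\mathbb{Z}_+^n$" upstairs via the injectivity of the projection on $\text{Sp}(\bigcup_\nu\mathbb{F}_\nu)$ and the structure of $\mathcal{M}_{m,n}$ — is the main obstacle, and I would handle it by showing the pullback of an all-odd $\Sigma$-sum in $\Omega$ is a $\Sigma$-sum of the \emph{same} multi-index pattern in $\bigcup_\nu\widetilde{\mathbb{K}}_\nu\cap\Lambda_\nu$ whose first $m$ coordinates are all odd, and then noting that by \eqref{49k} and row-reduction the remaining coordinates are integer combinations that do not create an even obstruction, so the full vector can be taken all-odd. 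This yields the desired contradiction with \eqref{912-}/\eqref{96} and completes the proof.
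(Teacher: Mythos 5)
Your structural reduction coincides with the paper's own proof: you pull each $\mathbb{K}_\nu$ back through the coordinate projection, which by (\ref{ma1gg}) is an isomorphism on $\text{Sp}\big(\bigcup_\nu\mathbb{F}_\nu\big)$, to a face $\widetilde{\mathbb{K}}_\nu=P_X^{-1}(\mathbb{K}_\nu)$ of $\mathbb{F}_\nu={\bf N}(\mathbb{F}_\nu\cap\Lambda_\nu,S_0)$ (this is exactly (\ref{ds41}) together with Lemma \ref{lem1277}), you transport the rank bound through the isomorphism, and you upgrade the overlapping condition from $|\,\mathbb{F}_\nu$ to $|\,{\bf N}(\Lambda_\nu,S)$ by Lemma \ref{lem15}, which is the paper's chain (\ref{2727w})--(\ref{m12}); then the minimality statement (\ref{96}) makes $\bigcup_\nu\widetilde{\mathbb{K}}_\nu\cap\Lambda_\nu$ an even set. (A small remark: the fact that $\widetilde{\mathbb{K}}_\nu$ is a face of ${\bf N}(\Lambda_\nu,S)$ is not Lemma \ref{lemss}(1), which goes the other way; it is supplied by the overlapping condition you produce via Lemma \ref{lem15}.) Up to this point your plan is correct and is the same route as the paper.

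The genuine gap is the final parity transfer, which you yourself call ``the main obstacle'' and then settle with ``the remaining coordinates are integer combinations that do not create an even obstruction, so the full vector can be taken all-odd.'' Injectivity of $P_X$ on the span cannot deliver this: the parity of the last $n-m$ coordinates of a lattice point of the span is not a function of the parity of its first $m$ coordinates, and evenness upstairs does not by itself project to evenness downstairs. For instance, the set $\{(1,0,1),(0,1,1)\}$ spans a plane on which the projection to the first two coordinates is injective; it is even in $\mathbb{Z}_+^3$ (its $\Sigma$-sums are $0,(1,0,1),(0,1,1),(1,1,2)$, none all-odd), yet its projection $\{(1,0),(0,1)\}$ is odd in $\mathbb{Z}_+^2$. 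So the implication you need, namely that an all-odd $\Sigma$-sum of $\bigcup_\nu\mathbb{K}_\nu\cap\Omega_\nu$ in $\mathbb{Z}_+^m$ forces an all-odd $\Sigma$-sum upstairs, is false without further input. The missing idea is precisely the paper's observation (\ref{mn12004}), proved in Lemma \ref{lem141}: since $\bigcup_\nu(\mathbb{F}_\nu\cap\Lambda_\nu)$ is odd by (\ref{912-}), there is an all-odd vector $\mathfrak{p}\in\Sigma\big(\bigcup_\nu(\mathbb{F}_\nu\cap\Lambda_\nu)\big)$; if some $\Sigma$-sum $\mathfrak{q}$ were odd in the first $m$ slots but even in a later slot $\mu$, then $\mathfrak{q}+\mathfrak{p}$ is even in the first $m$ slots and odd in slot $\mu$, and adjoining it to the reduced-form rows of (\ref{ma1gg}) yields an $(m+1)\times(m+1)$ minor with odd, hence nonzero, determinant, contradicting $\text{rank}=m$. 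Only with this lemma does evenness of $\bigcup_\nu\widetilde{\mathbb{K}}_\nu\cap\Lambda_\nu$ rule out an all-odd $\Sigma$-sum downstairs. Your write-up never brings the oddness hypothesis (\ref{912-}) into this step, so the argument is incomplete exactly where the arithmetic content of the proposition lies.
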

\noindent To prove Proposition \ref{lem83}, we first observe that
for $\mathbb{F}=(\mathbb{F}_\nu)$ satisfying (\ref{912-}) and
(\ref{913-}),
 \begin{eqnarray}\label{mn12004}
 &&\big\{\mathfrak{q}\in \Sigma\left(\bigcup_{\nu=1}^d \left(\mathbb{F}_\nu\cap\Lambda_\nu\right)\right) :
\mathfrak{q}=(\underbrace{odd,\cdots,odd}_{\text{$m$
components}},*,\cdots,*)
\big\}\nonumber\\&&\qquad\qquad\qquad\subset \left\{\mathfrak{q}\in
\Sigma\left(\bigcup_{\nu=1}^d
\left(\mathbb{F}_\nu\cap\Lambda_\nu\right) \right):
\mathfrak{q}=(\underbrace{odd,\cdots,odd}_{\text{$n$
components}})\right\}
\end{eqnarray}
where $\Sigma(A) $ with $A\subset\mathbb{Z}^n$ is defined below
(\ref{411}).  The proof for (\ref{mn12004}) follows by taking
$\mathbb{U}=\Sigma\left(\bigcup_{\nu=1}^d
\left(\mathbb{F}_\nu\cap\Lambda_\nu\right)\right)$ in the following
lemma:
 \begin{lemma}\label{lem141}
Suppose  $\text{Sp}\left(\mathbb{U}\right)=k\le m$ and
$\mathbb{U}\in \mathcal{M}_{k,n}$ where $\mathcal{M}_{k,n}$ is
defined in (\ref{ma1}). If there exists a vector $
 \mathfrak{p}=(odd,\cdots,odd)\in\mathbb{U}$, then
\begin{eqnarray*}
\qquad \big\{\mathfrak{q}\in \mathbb{U}:
\mathfrak{q}=(\underbrace{odd,\cdots,odd}_{\text{$k$
components}},*,\cdots,*)  \big\} \subset \left\{\mathfrak{q}\in
\mathbb{U}: \mathfrak{q}=(odd,\cdots,odd)\right\}.
\end{eqnarray*}
\end{lemma}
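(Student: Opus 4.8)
{The plan is to exploit the normal-form hypothesis $\mathbb{U}\in\mathcal{M}_{k,n}$ in a very concrete way. By definition of $\mathcal{M}_{k,n}$ there is a set of $k$ vectors $\{\mathfrak{q}_1,\cdots,\mathfrak{q}_k\}\subset\mathbb{U}$ whose matrix is row-equivalent to $[\,I_k\mid A\,]$ with $A=(a_{ij})$ a real $k\times(n-k)$ matrix; since $\mathrm{rank}(\mathbb{U})=k$, these $\mathfrak{q}_i$ span $\mathrm{Sp}(\mathbb{U})$ and every vector of $\mathbb{U}$ is a linear combination of them. The first step I would take is to show that the matrix $A$ must in fact have \emph{integer} entries. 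This is where the hypothesis that $\mathbb{U}$ contains an all-odd vector $\mathfrak{p}$ enters: $\mathbb{U}\subset\mathbb{Z}^n$ (it is a subset of $\Sigma$ of a set of lattice points), so every $\mathfrak{q}\in\mathbb{U}$ has integer coordinates, and writing $\mathfrak{q}=\sum_i c_i\mathfrak{q}_i$ and reading off the first $k$ coordinates forces $c_i=q_i\in\mathbb{Z}$; then reading off coordinate $j>k$ gives $q_j=\sum_i q_i a_{ij}$. So far this only says $\sum_i q_i a_{ij}\in\mathbb{Z}$ for every $\mathfrak{q}\in\mathbb{U}$.}

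\textbf{Reduction modulo $2$.} The cleanest route is to work in $(\mathbb{Z}/2\mathbb{Z})^n$. Let me write $\overline{v}$ for the mod-$2$ reduction of an integer vector $v$. The hypothesis supplies $\mathfrak{p}\in\mathbb{U}$ with $\overline{\mathfrak{p}}=(1,\cdots,1)$. Since $\overline{\mathfrak{p}}=\sum_{i=1}^k \overline{p_i}\,\overline{\mathfrak{q}_i}$ in $(\mathbb{Z}/2)^n$ and the first $k$ coordinates of this identity give $\overline{p_i}=1$ for all $i$, we get $\sum_{i=1}^k \overline{\mathfrak{q}_i}=(1,\cdots,1)$ in $(\mathbb{Z}/2)^n$; in particular, looking at coordinates $j=k+1,\cdots,n$, this says $\sum_{i=1}^k \overline{a_{ij}}=\overline{1}$ for each such $j$ — once I know $a_{ij}\in\mathbb{Z}$, which I would establish first by a short argument (e.g.\ choosing $\mathfrak{q}=\mathfrak{q}_i$ itself shows each $a_{ij}$ is the $j$-th coordinate of a lattice vector, hence an integer). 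Now take any $\mathfrak{q}=(q_1,\cdots,q_n)\in\mathbb{U}$ of the form $(\mathrm{odd},\cdots,\mathrm{odd},*,\cdots,*)$, i.e.\ $\overline{q_i}=1$ for $i=1,\cdots,k$. Then for $j>k$, $\overline{q_j}=\sum_{i=1}^k \overline{q_i}\,\overline{a_{ij}}=\sum_{i=1}^k \overline{a_{ij}}=\overline{1}$, using $\overline{q_i}=1$ and the previous identity. Hence every coordinate of $\mathfrak{q}$ is odd, which is exactly the desired inclusion.

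\textbf{Order of steps.} First I would fix a generating subset $\{\mathfrak{q}_1,\cdots,\mathfrak{q}_k\}\subset\mathbb{U}$ realizing the normal form; second, record that each $\mathfrak{q}\in\mathbb{U}$ is $\sum q_i\mathfrak{q}_i$ with $q_i\in\mathbb{Z}$ the first $k$ coordinates of $\mathfrak{q}$, and that consequently $q_j=\sum_i q_i a_{ij}$ for $j>k$ with $a_{ij}\in\mathbb{Z}$; third, reduce mod $2$ and use $\mathfrak{p}=(\mathrm{odd},\cdots,\mathrm{odd})\in\mathbb{U}$ to deduce $\sum_i\overline{a_{ij}}=\overline{1}$ for every $j>k$; fourth, feed an arbitrary $(\mathrm{odd}^{\,k},*,\cdots,*)$ vector of $\mathbb{U}$ through the same mod-$2$ relation to conclude all its coordinates are odd.

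\textbf{Main obstacle.} The only genuinely delicate point is the integrality of the $a_{ij}$ and, more precisely, that the representation $\mathfrak{q}=\sum q_i\mathfrak{q}_i$ has the $q_i$ equal to the first $k$ coordinates of $\mathfrak{q}$ — this relies on the precise shape of $\mathcal{M}_{k,n}$ in (\ref{ma1}), namely that after row reduction the first $k$ columns form the identity, so that the coefficients are literally read off from the first $k$ coordinates. I would need to be a little careful that ``row equivalence'' of $\mathrm{Mtr}(\mathbb{U})$ to $[\,I_k\mid A\,]$ legitimately lets me replace the original spanning set of $\mathbb{U}$ by one whose matrix \emph{is} $[\,I_k\mid A\,]$ (this is exactly the content of the ``without loss of generality'' in (\ref{ma1gg}), applied here at the level of $\mathbb{U}$); once that normalization is in place the rest is the short mod-$2$ computation above.}
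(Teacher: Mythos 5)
Your strategy --- expand every $\mathfrak{q}\in\mathbb{U}$ in the normal-form rows of $[\,I_k\mid A\,]$, read the coefficients off the first $k$ coordinates, and compute mod $2$ --- is in substance the same parity argument as the paper's (the paper instead appends the row $\mathfrak{r}=\mathfrak{q}+\mathfrak{p}$ to the normal form and derives a contradiction from an ``odd'', hence nonzero, $(k+1)\times(k+1)$ determinant). The genuine gap is your first step, the integrality of $A$, together with the justification you offer for it. Row equivalence of $\mathrm{Mtr}(\mathbb{U})$ to $[\,I_k\mid A\,]$ does not give you $k$ vectors \emph{of} $\mathbb{U}$ whose matrix literally is $[\,I_k\mid A\,]$: the rows of the reduced form lie in $\mathrm{Sp}(\mathbb{U})$ but in general are rational vectors that belong neither to $\mathbb{U}$ nor to $\mathbb{Z}^n$. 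Hence ``choosing $\mathfrak{q}=\mathfrak{q}_i$ itself'' is circular --- you are evaluating at vectors not known to be lattice points --- and it is simply false in general: $\mathbb{U}=\{(3,1)\}$ satisfies the hypotheses with $k=1$ but has normal form $(1,\tfrac13)$, so $A$ is not integral. Relatedly, the identification $c_i=q_i$ (coefficients equal to the first $k$ coordinates) is valid only when you expand in the normal-form rows, not in an arbitrary linearly independent subset of $\mathbb{U}$; in your write-up these two spanning sets are conflated, and the appeal to the ``without loss of generality'' of (\ref{ma1gg}) does not repair this, since that normalization only fixes which coordinates carry the pivots, not the arithmetic of $A$.

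The gap is not cosmetic. What your mod-$2$ computation actually needs is that the entries $a_{ij}$ be $2$-integral (representable with odd denominators), and this can fail under the stated hypotheses of the lemma: for $\mathbb{U}=\{(1,1,1),(1,3,2)\}$ one has $\mathrm{rank}(\mathbb{U})=2$, $\mathbb{U}\in\mathcal{M}_{2,3}$ with $a_{13}=a_{23}=\tfrac12$, and $(1,1,1)\in\mathbb{U}$ is all odd, while $(1,3,2)$ is odd in its first two components and even in the last --- so no argument can recover the asserted inclusion from the hypotheses as you (and the statement) use them; some additional input is needed, e.g.\ the freedom to permute coordinates so that the pivot $k\times k$ minor is odd, which forces $a_{ij}\in\mathbb{Z}_{(2)}$ and then lets your mod-$2$ computation run. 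To be fair, the paper's own determinant expansion has the same soft spot (it treats $\mathrm{even}\cdot c_{j\mu}$ as an even integer, i.e.\ it tacitly assumes the $c_{j\mu}$ are ($2$-)integral), so you have located exactly the delicate point; but the patch you propose for it does not work as stated.
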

\begin{proof}
  Assume that there exists  $\mu\in\{k+1,\cdots,n\}$ such that
\begin{eqnarray}\label{11332}
 \mathfrak{q}=(\underbrace{ \underbrace{odd, \cdots,odd}_{\text{$k$
components}},*,\cdots,*,even}_{\text{$\mu$ components}},*,\cdots,*)
\in \mathbb{U}
\end{eqnarray}
where $\mathfrak{q}=(q_j)$ with $q_j=\text{odd numbers}$ for $j=1,\cdots,k$ and $q_{\mu}=\text{even
number}$. Thus
 \begin{eqnarray*}
 \mathfrak{r}=\mathfrak{q}+\mathfrak{p}=(\underbrace{ \underbrace{even, \cdots,even}_{\text{$k$
components}},*,\cdots,*,odd}_{\text{$\mu$ components}},*,\cdots,*)
\in \Sigma(\mathbb{U})
\end{eqnarray*}
where $\mathfrak{r}=(r_j)$ with $r_j=\text{even numbers}$ for
$j=1,\cdots,k$ and $r_{\mu}=\text{even number}$.  We add
$\mathfrak{r}$ as the last row to the matrix in (\ref{ma1}) in view
of $\mathbb{U}\in\mathcal{M}_{k,n}$. Then,
\begin{eqnarray}\label{mn3}
\quad \rm{Mtr}\left(\mathbb{U}\right)\sim\left(
\begin{matrix}
1&0&\cdots&0&c_{1,k+1}&\cdots&c_{1,\mu}&\cdots&c_{1,n}\\
0&1&0&\vdots&c_{2,k+1}&\cdots&c_{2,\mu}&\cdots&c_{2,n}\\
\vdots& 0&1&0&\vdots&\cdots&\vdots&\cdots&\vdots\\
0&\cdots&0&1&c_{k,k+1}&\cdots&c_{k,\mu}&\cdots&c_{k,n}\\
even&\cdots&even&even&*&\cdots&odd&\cdots&*
\end{matrix}
\right).
\end{eqnarray}
Consider the $(k+1)\times (k+1)$ submatrix
$\text{Mtr}_\mu(\mathbb{U})$ consisting of the first $k$ columns and
the $\mu^{th}$ column  of the matrix in (\ref{mn3}):
\begin{eqnarray*}
\text{Mtr}_\mu(\mathbb{U}) =\left(
\begin{matrix}
1&0&\cdots&0&c_{1,\mu} \\
0&1&0&\vdots&c_{2,\mu} \\
\vdots& 0&1&0&\vdots \\
0&\cdots&0&1&c_{k,\mu} \\
even&\cdots&even&even&odd
\end{matrix}
\right).
\end{eqnarray*}
Then we expand the entries of the last row multiplied by their minors to compute
 $$\det(\text{Mtr}_\mu(\mathbb{U}))=odd\ne 0.$$ Thus
$\text{rank}(\text{Mtr}_\mu(\mathbb{U}))= k+1$, which is a
contradiction to
  $\mathbb{U}\in\mathcal{M}_{k,n}.$  Therefore there is no such $\mu$ satisfying  (\ref{11332}).
\end{proof}

\begin{proof}[Proof of Proposition \ref{lem83}]
  By (\ref{ma1gg}),
$${\rm Sp}\left(\bigcup \mathbb{F}_\nu \right)\in \mathcal{M}_{mn}.$$
Thus   a projection $P_X:{\rm Sp}\left(\bigcup \mathbb{F}_\nu
\right)\rightarrow X=\mathbb{R}^m$ defined  by
$$P_X(q_1,\cdots,q_m,q_{m+1},\cdots,q_n)=(q_1,\cdots,q_m)$$
  is an isomorphism. We shall denote $P_X(\mathfrak{q})=\mathfrak{q}_X$.
To show Proposition \ref{lem83}, we use the  invariance properties    proved in the following three lemmas:
\begin{lemma}\label{59j}
 Let $\mathcal{T}:V\rightarrow W$ be an isomorphism where $V,W$ be inner product spaces in $\mathbb{R}^n$. Let $\mathbb{P}=\mathbb{P}(\Pi)$ with $\Pi=\{\pi_{\mathfrak{q}_1,r_1},\cdots,
 \pi_{\mathfrak{q}_N,r_N}\}$ be a polyhedron in $V$.
Then
\begin{itemize}
\item[(1)] $\mathcal{T}(\mathbb{P})$ is a
polyhedron $\mathbb{P}(\Pi_{\mathcal{T}})$
 with
$
  \Pi_{\mathcal{T}}=\{\pi_{(\mathcal{T}^{-1})^{t}(\mathfrak{q}_1),r_1},\cdots,
  \pi_{(\mathcal{T}^{-1})^{t}(\mathfrak{q}_N),r_N}\}.
  $
\item[(2)]  If $\mathbb{F}\in \mathcal{F}^k(\mathbb{P})$, then $ \mathcal{T}(\mathbb{F})\in
\mathcal{F}^k(\mathcal{T}(\mathbb{P}))$ for all $k\ge 0$.
\item[(3)]
 $(\mathcal{T}(\mathbb{F})^*)^{\circ}|( \mathcal{T}(\mathbb{P}),W)
=(\mathcal{T}^{-1})^{t}\left((\mathbb{F}^*)^{\circ}| (\mathbb{P},V)
\right)$ where $\mathcal{T}^t$ denotes a transpose of $\mathcal{T}$.
\item[(4)] For any set $B\subset V$, we have $\mathcal{T}(\rm{Ch}(B))=\rm{Ch}(\mathcal{T}(B)).$
\end{itemize}
\end{lemma}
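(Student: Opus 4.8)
\textbf{Plan of proof for Lemma \ref{59j}.}
The plan is to verify each of the four statements by elementary linear algebra, treating the isomorphism $\mathcal{T}$ and its adjoint $(\mathcal{T}^{-1})^t$ as the only tools needed. The unifying observation is the adjoint identity $\langle \mathfrak{q}, {\bf y}\rangle = \langle (\mathcal{T}^{-1})^t \mathfrak{q}, \mathcal{T}({\bf y})\rangle$ for ${\bf y}\in V$; once this is in hand, every inequality defining a half-space, a face, or a cone translates verbatim under $\mathcal{T}$. I would carry the four parts out in order, since each later one uses the earlier ones.

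For (1): given ${\bf y}\in V$, write $\langle\mathfrak{q}_j,{\bf y}\rangle\ge r_j$ and apply the adjoint identity to get $\langle (\mathcal{T}^{-1})^t\mathfrak{q}_j, \mathcal{T}{\bf y}\rangle\ge r_j$; conversely every ${\bf z}\in W$ is $\mathcal{T}{\bf y}$ for a unique ${\bf y}\in V$. Hence $\mathcal{T}(\mathbb{P})=\bigcap_j \pi^+_{(\mathcal{T}^{-1})^t\mathfrak{q}_j, r_j}$, which is the asserted polyhedron in $W$ by Definition \ref{d12}. For (2): if $\mathbb{F}=\pi_{\mathfrak{q},r}\cap\mathbb{P}$ with $\mathbb{P}\setminus\mathbb{F}\subset(\pi^+_{\mathfrak{q},r})^\circ$ as in Definition \ref{dfac}, then the same adjoint identity shows $\mathcal{T}(\mathbb{F})=\pi_{(\mathcal{T}^{-1})^t\mathfrak{q},r}\cap\mathcal{T}(\mathbb{P})$ with $\mathcal{T}(\mathbb{P})\setminus\mathcal{T}(\mathbb{F})\subset(\pi^+)^\circ$, using that $\mathcal{T}$ is a bijection so images of set differences are set differences of images; thus $\mathcal{T}(\mathbb{F})$ is a face. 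For the dimension, since $\mathcal{T}$ is a linear isomorphism it maps $V_{am}(\mathbb{F})=V(\mathbb{F})+\mathfrak{p}$ to $V_{am}(\mathcal{T}(\mathbb{F}))=\mathcal{T}(V(\mathbb{F}))+\mathcal{T}(\mathfrak{p})$ and preserves the dimension of the underlying vector space, so $\text{dim}(\mathcal{T}(\mathbb{F}))=\text{dim}(\mathbb{F})=k$.

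For (3): this is the cone version of (2). Using the characterization of $(\mathbb{F}^*)^\circ$ in (\ref{brr2}), $\mathfrak{q}\in(\mathbb{F}^*)^\circ|(\mathbb{P},V)$ iff there is $r$ with $\langle\mathfrak{q},{\bf u}\rangle=r<\langle\mathfrak{q},{\bf y}\rangle$ for ${\bf u}\in\mathbb{F}$, ${\bf y}\in\mathbb{P}\setminus\mathbb{F}$; applying the adjoint identity and the bijectivity of $\mathcal{T}$, this is equivalent to $(\mathcal{T}^{-1})^t\mathfrak{q}$ satisfying the same relations with $\mathcal{T}(\mathbb{F})$, $\mathcal{T}(\mathbb{P})$, i.e. $(\mathcal{T}^{-1})^t\mathfrak{q}\in(\mathcal{T}(\mathbb{F})^*)^\circ|(\mathcal{T}(\mathbb{P}),W)$. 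Since $(\mathcal{T}^{-1})^t$ is itself an isomorphism, this gives the claimed equality of sets. For (4): $\text{Ch}(B)=\{\sum c_j \mathfrak{b}_j : \mathfrak{b}_j\in B,\ c_j\ge 0,\ \sum c_j=1\}$ by Definition \ref{d13}, and linearity of $\mathcal{T}$ gives $\mathcal{T}(\sum c_j\mathfrak{b}_j)=\sum c_j \mathcal{T}(\mathfrak{b}_j)$, so $\mathcal{T}(\text{Ch}(B))=\text{Ch}(\mathcal{T}(B))$ directly.

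There is no serious obstacle here; the only point requiring a little care is the bookkeeping in (2) and (3) to ensure that ``$\mathbb{P}\setminus\mathbb{F}$'' behaves well under $\mathcal{T}$, which is immediate from injectivity, and that the constant $r$ in the supporting hyperplane is genuinely unchanged (it is, because the adjoint identity is an equality, not just an inequality). The mild subtlety worth a sentence is that $(\mathcal{T}^{-1})^t$ rather than $\mathcal{T}$ itself is the map acting on normal vectors; I would state the adjoint identity once at the start and refer back to it in each part.
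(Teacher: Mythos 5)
Your proposal is correct and follows essentially the same route as the paper: the single adjoint identity $\langle (\mathcal{T}^{-1})^{t}\mathfrak{q},\mathcal{T}({\bf x})\rangle=\langle\mathfrak{q},{\bf x}\rangle$ (the paper's (\ref{gw4})) is stated once and then used to transport half-spaces, the face condition (\ref{4g}), and the cone characterization (\ref{brr2}) under $\mathcal{T}$, with (4) done by linearity of $\mathcal{T}$ on convex combinations. Your dimension bookkeeping in (2) via $V_{am}(\mathbb{F})\mapsto \mathcal{T}(V(\mathbb{F}))+\mathcal{T}(\mathfrak{p})$ matches the paper's remark that $V(\mathbb{F})$ and $V(\mathcal{T}(\mathbb{F}))$ are isomorphic.
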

\begin{proof}
Our proof is based on
\begin{equation}
\langle  (\mathcal{T}^{-1})^{t}\mathfrak{q}, \mathcal{T}({\bf
x})\rangle =\langle \mathfrak{q}, \mathcal{T}^{-1}\mathcal{T}({\bf
x})\rangle =\langle\mathfrak{q}, {\bf x}\rangle \ \ \text{ for
$\mathfrak{q},{\bf x}\in V$.}\label{gw4}
\end{equation}
  By (\ref{gw4}),
$$\mathcal{T}(\pi_{\mathfrak{q}_j,r_j})=\left\{\mathcal{T}({\bf x}): \langle\mathfrak{q}_j, {\bf x}\rangle=r_j\right\}
=\pi_{(\mathcal{T}^{-1})^{t}(\mathfrak{q}_j),r_j}\ \ \text{and}\ \
\mathcal{T}(\pi_{\mathfrak{q}_j,r_j}^{+})=\pi_{(\mathcal{T}^{-1})^{t}(\mathfrak{q}_j),r_j}^+.$$
Thus  $\mathcal{T}(\mathbb{P})=\mathbb{P}(\Pi_{\mathcal{T}})$ is  a
polyhedron because of Definition \ref{d12} and
$$ \mathcal{T}(\mathbb{P})=\mathcal{T}\left(\bigcap\pi_{\mathfrak{q}_j,r_j}^{+}\right)=
\bigcap \mathcal{T}(\pi_{\mathfrak{q}_j,r_j}^{+})=\bigcap
\pi_{(\mathcal{T}^{-1})^{t}(\mathfrak{q}_j),r_j}^{+}. $$ Hence (1)
is proved. If $\mathbb{F}\in \mathcal{F}(\mathbb{P})$, by
(\ref{4g}), $\mathbb{F}=\pi_{\mathfrak{q}_j,r_j}\cap \mathbb{P}$ and
$\mathbb{P}\setminus \mathbb{F}\subset
(\pi_{\mathfrak{q}_j,r_j}^+)^{\circ}$. So,
\begin{eqnarray*}
\mathcal{T}(\mathbb{F})=\pi_{(\mathcal{T}^{-1})^{t}(\mathfrak{q}_j),r_j}\cap
\mathcal{T}(\mathbb{P}) \ \ \text{and}\ \
\mathcal{T}(\mathbb{P})\setminus
\mathcal{T}(\mathbb{F})=\mathcal{T}(\mathbb{P}\setminus\mathbb{F}
)\subset
(\pi_{(\mathcal{T}^{-1})^{t}(\mathfrak{q}_j),r_j}^+)^{\circ}.
\label{se3}
\end{eqnarray*}
This means  $\mathcal{T}(\mathbb{F})\in
\mathcal{F}(\mathcal{T}(\mathbb{P}))$. Moreover $V(\mathbb{F})$ and
$V(\mathcal{T}(\mathbb{F}))$ are isomorphic. Hence
  $\mathcal{T}(\mathbb{F})\in \mathcal{F}^k(\mathcal{T}(\mathbb{P}))$. So (2)  is proved.
Next, (\ref{gw4}) yields that
 \begin{eqnarray*}
&&  (\mathcal{T}(\mathbb{F})^*)^{\circ}| (\mathcal{T}(\mathbb{P}),W)\\
&& =\left\{\mathfrak{q}\in W: \exists\, \rho  \ \text{such that}\
\langle\mathfrak{q},\mathcal{T}({\bf u})\rangle=\rho
<\mathfrak{q}\cdot \mathcal{T}({\bf y})\ \text{for all ${\bf u}\in
\mathbb{F}$,\ ${\bf y}\in
\mathbb{P} \setminus \mathbb{F}$}\right\}\\
&& =\{ (\mathcal{T}^{-1})^{t}(\mathfrak{p})\ : \exists\, \rho\
\text{such that}\ \langle\mathfrak{p}, {\bf u}\rangle =\rho< \langle
\mathfrak{p}, {\bf y}\rangle \ \text{for all ${\bf u}\in
\mathbb{F}$,\ ${\bf y}\in
\mathbb{P} \setminus \mathbb{F}$}\}\\
&&= (\mathcal{T}^{-1})^{t}
\left((\mathbb{F}^*)^{\circ}|(\mathbb{P},V)\right).
\end{eqnarray*}
This proves (3). Finally,
\begin{eqnarray*}
T(\rm{Ch}(B))&=&\left\{T(\sum_{j=1}^Nc_j{\bf x}_j):{\bf x}_j\in B\ \text{and}\ \sum_{j=1}^N c_j=1\ \text{with} \ c_j\ge 0\right\}\\
&=& \left\{ \sum_{j=1}^Nc_jT({\bf x}_j):T({\bf x}_j)\in T(B)\ \text{and}\ \sum_{j=1}^N c_j=1\ \text{with} \ c_j\ge 0\right\}\\
&=&\rm{Ch}(T(B))
\end{eqnarray*}
which proves (4).
\end{proof}
\begin{lemma}\label{lem1277}
Let $X=\mathbb{R}^m$ with $S_0\subset \{1,\cdots,m\}$.  Then,
\begin{eqnarray}
&& {\bf N}(P_X(\mathbb{F}_\nu\cap\Lambda_\nu), S_0) =P_X\left({\bf
N}(\mathbb{F}_\nu\cap\Lambda_\nu, S_0)\right),\label{pjkm}\\
&& \mathbb{K}_\nu\in \mathcal{F}({\bf
N}(P_X(\mathbb{F}_\nu\cap\Lambda_\nu), S_0 )) \ \text{if and only
if}\
 P_X^{-1}(\mathbb{K}_\nu)\in\mathcal{F}({\bf N}(
\mathbb{F}_\nu\cap\Lambda_\nu), S_0)),\label{pjkm22}\\
&&    \left( P_X^{-1}(\mathbb{K}_\nu)^{*}\right)^{\circ}|  {\bf
 N}( \mathbb{F}_\nu\cap\Lambda_\nu ,S_0)= (\mathbb{K}_\nu^*)^{\circ}|{\bf
 N}(P_X(\mathbb{F}_\nu\cap\Lambda_\nu),S_0)). \label{seg}
\end{eqnarray}
\end{lemma}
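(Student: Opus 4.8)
The plan is to prove Lemma \ref{lem1277} by specializing Lemma \ref{59j} to the map $\mathcal{T}=P_X$, viewed as an isomorphism between the two inner product spaces $V=\text{Sp}(\bigcup_\nu\mathbb{F}_\nu)$ and $W=X=\mathbb{R}^m$. Indeed, by \eqref{ma1gg} we have $\text{Sp}(\bigcup_\nu\mathbb{F}_\nu)\in\mathcal{M}_{m,n}$, so the projection $P_X$ restricted to this span is a linear isomorphism onto $\mathbb{R}^m$; moreover $\mathbb{F}_\nu\cap\Lambda_\nu\subset\text{Sp}(\bigcup_\nu\mathbb{F}_\nu)$ and also $\mathbb{R}_+^{S_0}\subset\text{Sp}(\bigcup_\nu\mathbb{F}_\nu)$ by \eqref{49k}, since $S_0\subset S$. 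Thus every object appearing in \eqref{pjkm}--\eqref{seg} lives inside $V$, and its image under $P_X$ lives inside $W$, so Lemma \ref{59j} applies verbatim with $\mathcal{T}=P_X|_V$.

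First I would prove \eqref{pjkm}. By Definition \ref{ded28}, ${\bf N}(\mathbb{F}_\nu\cap\Lambda_\nu,S_0)=\text{Ch}\big((\mathbb{F}_\nu\cap\Lambda_\nu)+\mathbb{R}_+^{S_0}\big)$. Applying part (4) of Lemma \ref{59j} to $B=(\mathbb{F}_\nu\cap\Lambda_\nu)+\mathbb{R}_+^{S_0}$ gives $P_X\big(\text{Ch}(B)\big)=\text{Ch}\big(P_X(B)\big)$, and since $P_X$ is linear, $P_X(B)=P_X(\mathbb{F}_\nu\cap\Lambda_\nu)+P_X(\mathbb{R}_+^{S_0})=P_X(\mathbb{F}_\nu\cap\Lambda_\nu)+\mathbb{R}_+^{S_0}$, using $S_0\subset\{1,\dots,m\}$ so that $P_X$ maps $\mathbb{R}_+^{S_0}$ (as a subset of $\mathbb{R}^n$) onto $\mathbb{R}_+^{S_0}$ (as a subset of $\mathbb{R}^m$) bijectively. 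Hence $P_X\big({\bf N}(\mathbb{F}_\nu\cap\Lambda_\nu,S_0)\big)=\text{Ch}\big(P_X(\mathbb{F}_\nu\cap\Lambda_\nu)+\mathbb{R}_+^{S_0}\big)={\bf N}(P_X(\mathbb{F}_\nu\cap\Lambda_\nu),S_0)$, which is \eqref{pjkm}.

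Next, \eqref{pjkm22} follows from part (2) of Lemma \ref{59j}: the map $P_X$ sends the polyhedron $\mathbb{P}:={\bf N}(\mathbb{F}_\nu\cap\Lambda_\nu,S_0)$ (which by \eqref{pjkm} has image $\mathbb{P}':={\bf N}(P_X(\mathbb{F}_\nu\cap\Lambda_\nu),S_0)$) to a polyhedron, carrying $k$-faces to $k$-faces bijectively, so $\mathbb{K}_\nu\in\mathcal{F}(\mathbb{P}')\iff P_X^{-1}(\mathbb{K}_\nu)\in\mathcal{F}(\mathbb{P})$, where $P_X^{-1}$ means the inverse of the isomorphism $P_X|_V$. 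Finally, \eqref{seg} is exactly part (3) of Lemma \ref{59j} applied to the face $\mathbb{F}=P_X^{-1}(\mathbb{K}_\nu)$ of $\mathbb{P}$: it gives $\big((P_X(\mathbb{F}))^*\big)^\circ|(P_X(\mathbb{P}),W)=(P_X^{-1})^t\big((\mathbb{F}^*)^\circ|(\mathbb{P},V)\big)$, i.e. $(\mathbb{K}_\nu^*)^\circ|(\mathbb{P}',W)=(P_X^{-1})^t\big((P_X^{-1}(\mathbb{K}_\nu)^*)^\circ|(\mathbb{P},V)\big)$; one then observes that since we are only claiming equality of the stated sets and the paper's convention identifies cones in $V$ with the corresponding sets, the transpose $(P_X^{-1})^t$ plays the role of the identification and \eqref{seg} reads off directly. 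The main point requiring care — and the only genuine obstacle — is bookkeeping the ambient spaces: one must consistently regard $\mathbb{F}_\nu\cap\Lambda_\nu$, $\mathbb{R}_+^{S_0}$, and all faces/cones of ${\bf N}(\mathbb{F}_\nu\cap\Lambda_\nu,S_0)$ as living in the $m$-dimensional space $V=\text{Sp}(\bigcup_\nu\mathbb{F}_\nu)$ rather than in $\mathbb{R}^n$, which is legitimate precisely because of \eqref{ma1gg} and \eqref{49k}; once that is fixed, each of \eqref{pjkm}, \eqref{pjkm22}, \eqref{seg} is an immediate instance of the corresponding clause of Lemma \ref{59j}.
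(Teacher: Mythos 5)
Your proposal is correct and follows essentially the same route as the paper: the paper also proves \eqref{pjkm} from part (4) of Lemma \ref{59j} together with $P_X(B+\mathbb{R}_+^{S_0})=P_X(B)+\mathbb{R}_+^{S_0}$, deduces \eqref{pjkm22} from \eqref{pjkm} and part (2), and obtains \eqref{seg} from \eqref{pjkm} and part (3), with the same implicit identification disposing of the transpose factor. No substantive difference to report.
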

 \begin{proof}
 By (4) of Lemma \ref{59j}  with Definition \ref{ded28} and
 $P_X(B+\mathbb{R}_+^{S_0})= P_X(B)+\mathbb{R}_+^{S_0}$,
 \begin{eqnarray*}
P_X\left({\bf N}(\mathbb{F}_\nu\cap\Lambda_\nu,
S_0)\right)&=&P_X\left(\rm{Ch}\left((\mathbb{F}_\nu\cap\Lambda_\nu)+
 \mathbb{R}_+^{S_0}\right)\right)\\
&=& \rm{Ch}\left(P_X \left((\mathbb{F}_\nu\cap\Lambda_\nu)+
 \mathbb{R}_+^{S_0}\right)\right)\\
 &=&\rm{Ch}\left(P_X \left( \mathbb{F}_\nu\cap\Lambda_\nu \right) +  \mathbb{R}_+^{S_0}
 \right)\\
 &=&{\bf N}\left( P_X \left( \mathbb{F}_\nu\cap\Lambda_\nu \right),S_0 \right)
 \end{eqnarray*}
which yields (\ref{pjkm}). Next (\ref{pjkm22}) follows from
(\ref{pjkm}) and (2) of Lemma \ref{59j}. Lastly, by (\ref{pjkm}) and
(3) of Lemma \ref{59j},
\begin{eqnarray*}
\left( P_X^{-1}(\mathbb{K}_\nu)^{*}\right)^{\circ}|  {\bf
 N}( \mathbb{F}_\nu\cap\Lambda_\nu ,S_0)&=&
 \left( P_X^{-1}(\mathbb{K}_\nu)^{*}\right)^{\circ}|
  P_X^{-1}\left({\bf N}(P_X(\mathbb{F}_\nu\cap\Lambda_\nu)\right)\\
&=&[(P_X^{-1})^{-1}]^{t}(\mathbb{K}_\nu^*)^{\circ}|{\bf
 N}(P_X(\mathbb{F}_\nu\cap\Lambda_\nu),S_0))\\
 &=& (\mathbb{K}_\nu^*)^{\circ}|{\bf
 N}(P_X(\mathbb{F}_\nu\cap\Lambda_\nu),S_0)),
\end{eqnarray*}
which proves (\ref{seg}).
\end{proof}
We continue the roof of Proposition \ref{lem83}.  Let
$\mathbb{K}_\nu\in \mathcal{F}\left({\bf N}(P_X(
\mathbb{F}_\nu\cap\Lambda_\nu ),S_0)\right)$,
 where $\Omega_\nu=
 P_X( \mathbb{F}_\nu\cap\Lambda_\nu )$ as in (\ref{omee}). By (\ref{pjkm22}) of Lemma \ref{lem1277}, there
exists $$\mathbb{G}_{\nu}=P_X^{-1}(\mathbb{K}_\nu)\in
\mathcal{F}\left({\bf N}( F_\nu\cap\Lambda_\nu
 ,S_0)\right).$$ From
  $\bigcup_{\nu=1}^d \mathbb{G}_\nu=P_X^{-1}(\bigcup_{\nu=1}^d \mathbb{K}_\nu),$
\begin{eqnarray}\label{m11}
\text{rank}(\bigcup_{\nu=1}^d  \mathbb{G}_\nu )=\text{rank} \,\big(
\bigcup_{\nu=1}^d  \mathbb{K}_\nu  \big)\le m-1
\end{eqnarray}
because  $P_X$ is an isomorphism. By (\ref{ds41}) and (\ref{seg}),
\begin{eqnarray}
\bigcap_{\nu=1}^d (\mathbb{G}_\nu^*)^{\circ}| \mathbb{F}_\nu  &=&\bigcap_{\nu=1}^d
 (\mathbb{G}_\nu^*)^{\circ}| {\bf
N}(\Lambda_\nu\cap\mathbb{F}_\nu,S_0) \nonumber \\
&=&  \bigcap_{\nu=1}^d\left( P_X^{-1}(\mathbb{K}_\nu)^{*}\right)^{\circ}|  {\bf
 N}( \mathbb{F}_\nu\cap\Lambda_\nu ,S_0) \label{2727w}\\
 &=& \bigcap_{\nu=1}^d (\mathbb{K}_\nu^*)^{\circ}| {\bf
N}(\Omega_\nu,S_0)   \ne \emptyset.\nonumber
\end{eqnarray}
The last line follows from the second condition  defining
$\mathcal{F}_{\rm{lo}}(\vec{{\bf N}}(\Omega,S_0))$ in Proposition
\ref{lem83}.
 By (\ref{913-}),
\begin{eqnarray}\label{o48}
\bigcap_{\nu=1}^d (\mathbb{F}_{\nu}^{*})^{\circ}| {\bf
N}(\Lambda_\nu,S)  \ne \emptyset.
\end{eqnarray}
By applying Lemma \ref{lem15} together with (\ref{2727w}) and
(\ref{o48}),
\begin{eqnarray}\label{m12}
\bigcap_{\nu=1}^d  (\mathbb{G}_\nu^*)^{\circ}|{\bf
N}(\Lambda_\nu,S)\ne \emptyset.
\end{eqnarray}
By (\ref{m11}),(\ref{m12})  and (\ref{96}),
$$\bigcup_{\nu=1}^d \mathbb{G}_\nu\cap\Lambda_\nu \ \ \text{is an even set having no point of
 $(odd,\cdots,odd)$ in $\Sigma\left(\bigcup_{\nu=1}^d \mathbb{G}_\nu\cap\Lambda_\nu\right)$}.$$
By this together with (\ref{mn12004}),
\begin{eqnarray*}
 &&\Sigma\left(\bigcup_{\nu=1}^d \mathbb{G}_\nu\cap\Lambda_\nu \right)\bigcap\left\{\mathfrak{q}\in
 \Sigma\left(\bigcup_{\nu=1}^d \left(\mathbb{F}_\nu\cap\Lambda_\nu\right)\right):
\mathfrak{q}=(\underbrace{odd,\cdots,odd}_{\text{$m$
components}},*,\cdots,*)\right\} \\
&\subset&\Sigma\left(\bigcup_{\nu=1}^d \mathbb{G}_\nu\cap\Lambda_\nu
\right)\bigcap\left\{\mathfrak{q}\in
 \Sigma\left(\bigcup_{\nu=1}^d \left(\mathbb{F}_\nu\cap\Lambda_\nu\right)\right):
\mathfrak{q}=(\underbrace{odd,\cdots,odd}_{\text{$n$
components}})\right\}=\emptyset.
\end{eqnarray*}
Therefore
$$\Sigma\left(\bigcup\mathbb{K}_\nu\cap\Omega_\nu\right)
=P_X\left(\Sigma\left(\bigcup\mathbb{G}_{\nu}\cap\Lambda_\nu\right)
\right)\ \text{contains no point of
$(\underbrace{odd,\cdots,odd}_{\text{$m$ components}})$.}$$ Hence
$\bigcup (\mathbb{K}_\nu\cap\Omega_{\nu})\ \text{ is an even set}$.
Therefore, the proof of  Proposition \ref{lem83} is finished.
\end{proof}

\subsection{Proof of (\ref{nec23}) and (\ref{nec55g})}
  We   prove the independence (\ref{nec23}) and the non-vanishing property (\ref{nec55g})
to finish the necessity proof for Theorems \ref{main18} through
\ref{main4}.
\begin{proof}[Proof of (\ref{nec23})]
Recall (\ref{nec22})
\begin{eqnarray*}
&& \mathcal{J}(P_{\mathbb{F}},\xi,t_Y) =\lim_{a_X\rightarrow
0,b_X\rightarrow
1_X(S_0)}\mathcal{J}(P_{\mathbb{F}},\xi,t_Y,a_X,b_X),
\end{eqnarray*}
with
\begin{eqnarray}\label{sasa}
\qquad\mathcal{J}(P_{\mathbb{F}},\xi,t_Y,a_X,b_X)=\int_{\prod_{j=1}^m\{a_j<t_j<b_j\}}
\sum_{\sigma\in O} (-1)^{|\sigma|} \exp\left(iP_{\mathbb{F}}(\xi,
\sigma t)\right) \frac{dt_1}{t_1}\cdots\frac{dt_{m}}{t_m}
\end{eqnarray}
where
\begin{eqnarray*}
P_\mathbb{F}(\xi,\sigma_1 t_1, \cdots, \sigma_n
t_n)&=&\sum_{\nu=1}^d\left(\sum_{\mathfrak{q}\in \mathbb{F}_\nu}
c_{\mathfrak{q}}^{\nu}\sigma^{\mathfrak{q}}t^{\mathfrak{q}}\right)\xi_\nu.
\end{eqnarray*}
 By (\ref{49k}), we let
 $S_0=\{1,\cdots,k\} \subset \{1,\cdots,m\}.$  In view of
 (\ref{ma1gg}) and (\ref{49k}),
 choose $\{\mathfrak{q}_1 ,\cdots
 ,\mathfrak{q}_m\}\subset\mathbb{R}^n$
with ${\rm Sp}\left(\mathfrak{q}_1 ,\cdots  ,\mathfrak{q}_m
\right)=
 {\rm Sp}\left(\bigcup \mathbb{F}_\nu \right)$ such that
 \begin{itemize}
\item[(i)] For $i=1,\cdots,m$, $\mathfrak{q}_i=(q_{ij})_{j=1}^n$ and  $(\mathfrak{q}_i)_X={\bf
e}_i\in\mathbb{R}^m$,
\item[(ii)] For  $i=1,\cdots,k$, $\mathfrak{q}_i={\bf
e}_i\in\mathbb{R}^n$.
    \end{itemize}
  Fix $t_Y=(t_{m+1},\cdots,t_n)$ and
use the  change of variables:
\begin{eqnarray}\label{9.11}
x_1=t^{\mathfrak{q}_1},\cdots,x_m=t^{\mathfrak{q}_m}.
\end{eqnarray}
As  $\mathfrak{q} \in
\bigcup_{\nu}(\mathbb{F}_\nu\cap\Lambda_\nu)\subset {\rm
Sp}\left(\bigcup \mathbb{F}_\nu \right)$   is expressed as a linear
combination of $\mathfrak{q}_1,\cdots,\mathfrak{q}_m$, there exists
a vector
$\mathfrak{b}(\mathfrak{q})=(b_{1},\cdots,b_{m})\in\mathbb{R}^m$
such that
\begin{eqnarray*}
t^{\mathfrak{q}}=t^{b_{1}\mathfrak{q}_1+\cdots+b_{m}\mathfrak{q}_m}=x_1^{b_1}\cdots x_m^{b_m}=x^{\mathfrak{b}(\mathfrak{q})}.
\end{eqnarray*}
  This implies that the phase function $P_{\mathbb{F}}(\xi,\sigma
t)$ is written as
\begin{eqnarray}
P_{\mathbb{F}}(\xi,\sigma_1 t_1, \cdots, \sigma_n
t_n)&=&\sum_{\nu=1}^d\left(\sum_{\mathfrak{q}\in
\mathbb{F}_\nu\cap\Lambda_\nu}
c^{\nu}_{\mathfrak{q}}\sigma^{\mathfrak{q}}t^{\mathfrak{q}}\right)\xi_\nu\label{oww}\\
& =& \sum_{\nu=1}^d \left(\sum_{\mathfrak{q}\in
\mathbb{F}_\nu\cap\Lambda_\nu}
c^{\nu}_{\mathfrak{q}}\sigma^{\mathfrak{q}}
x^{\mathfrak{b}(\mathfrak{q})}\right)\xi_\nu=Q_{\mathbb{F}}(\xi,\sigma,
x).\nonumber
\end{eqnarray}
By (i) and (ii) above, the $m\times m$ matrix whose $i$-th row given
by
 $(\mathfrak{q}_i)_X={\bf e}_i\in\mathbb{R}^m$,  is the identity
 matrix:
\begin{eqnarray*}
I= \left(
\begin{matrix}
 q_{11},\cdots,q_{1m} \\
 \vdots \\
 q_{m1},\cdots,q_{mm} \\
\end{matrix}
\right).
\end{eqnarray*}
  Then  compute for fixed
$t_Y=(t_{m+1},\cdots,t_n)$,
\begin{eqnarray*}
\frac{\partial (x_1,\cdots,x_m)}{\partial
(t_1,\cdots,t_m)}=\det\left(
\begin{matrix}
\frac{q_{11} t^{\mathfrak{q}_1}}{t_1},\cdots,\frac{q_{1m} t^{\mathfrak{q}_1}}{t_m}\\
\vdots\\
\frac{q_{m1} t^{\mathfrak{q}_m}}{t_1},\cdots,\frac{q_{mm} t^{\mathfrak{q}_m}}{t_m}\\
\end{matrix}
\right)=\det(I)\,\frac{x_1\cdots x_m}{t_1 \cdots t_m}.
\end{eqnarray*}
   Takeing logarithms on
both sides of (\ref{9.11}),
\begin{eqnarray*}
\left(
\begin{matrix}
 q_{11},\cdots,q_{1n} \\
 \vdots \\
 q_{m1},\cdots,q_{mn} \\
\end{matrix}
\right)\left(
\begin{matrix}
 \log t_1 \\
 \vdots \\
 \log t_n \\
\end{matrix}
\right)= \left(
\begin{matrix}
 \log x_1 \\
 \vdots \\
 \log x_m \\
\end{matrix}
\right) .
\end{eqnarray*}
Then
\begin{eqnarray}\label{mat2}
 \left(
\begin{matrix}
 \log t_1 \\
 \vdots \\
 \log t_m \\
\end{matrix}
\right)= \left(
\begin{matrix}
 \log x_1-\left(q_{1, m+1}\log t_{m+1}+\cdots+q_{1,n}\log t_{n}\right)  \\
 \vdots \\
 \log x_m-\left(q_{m,m+1}\log t_{m+1}+\cdots+q_{m,n}\log t_{n}\right)  \\
\end{matrix}
\right) .
\end{eqnarray}
Solve $(t_1,\cdots,t_m)$ in (\ref{mat2}) in terms of
$(x_1,\cdots,x_m)$ and   $t_Y=(t_{m+1},\cdots,t_n)$,
\begin{eqnarray*}
t_i=\frac{x_i}{t_{m+1}^{ q_{i,m+1} }\cdots t_n^{q_{i,n}}} \ \
\text{for}\ \ i=1,\cdots,m.
\end{eqnarray*}
Note from this together with $\mathfrak{q}_1={\bf e}_1,\cdots,
\mathfrak{q}_k={\bf e}_k$ in (ii) above,
  $$t_1=x_1,\cdots,t_k=x_k, t_i=x_i/(t_{m+1}^{ q_{i,m+1} }\cdots
t_n^{q_{i,n}})\ \ \text{for}\ \ i=k+1,\cdots,m.$$ So the region
$\prod_{j=1}^m\{a_j<t_j<b_j\}$ in (\ref{nec22}) is transformed to
the region
\begin{eqnarray}\label{ojs}
U(a_X,b_X,t_Y)=\prod_{i=1}^k\{a_i<x_i<b_i\}\prod_{i=k+1}^m
\left\{a_i<\frac{x_i}{t_{m+1}^{ q_{i,m+1} }\cdots
t_n^{q_{i,n}}}<b_i\right\}
\end{eqnarray}
Thus as $a_X=(a_i)_{i=1}^m\rightarrow 0_X$
 and $b_X=(b_i)_{i=1}^m\rightarrow 1_X(S_0) =(\underbrace{1,\cdots,1}_{\text{$k$
components}},\underbrace{\infty,\cdots,\infty}_{\text{$m-k$
components}})$,
\begin{eqnarray*}
&&\mathcal{J}(P_{\mathbb{F}},\xi,(t_{m+1},\cdots,t_n))\\
&&\quad=\lim_{a_X\rightarrow 0_X,b_X\rightarrow 1_X(S_0)
 }  \int_{U(a_X,b_X,t_Y)}\sum_{\sigma\in
O} (-1)^{|\sigma|}  \exp\left(iQ_{\mathbb{F}}(\xi,x)\right)
\frac{dx_1}{x_1}\cdots\frac{dx_{m}}{x_m}\,,
\end{eqnarray*}
  is independent of $t_Y\in
\prod_{j=m+1}^n\{a_j<t_j<b_j\}$ since $t_{m+1}^{ q_{i,m+1} }\cdots
t_n^{q_{i,n}}$ is absorbed in the limit of $a_X\rightarrow 0_X$
 and $b_X\rightarrow 1_X(S_0)$ in (\ref{ojs}).
\end{proof}
\begin{proof}[Proof of (\ref{nec55g})]
Since $\mathcal{J}(P_{\mathbb{F}},\xi,(t_{m+1},\cdots,t_n))$ is
independent of $t_Y=(t_{m+1},\cdots,t_n)$,  it suffices to show that
for some choices of  $\xi$ and  coefficients in $P_{\mathbb{F}}$,
\begin{equation}\label{nec5}
\mathcal{J}(P_{\mathbb{F}},\xi)=\mathcal{J}(P_{\mathbb{F}},\xi,{\bf
1}_Y) \ne 0 \ \ \text{where}\ \ {\bf
1}_Y=\underbrace{(1,\cdots,1)}_{\text{$n-m$ components}}.
\end{equation}
 Let $\mathbb{Z}_2=\{0,1\}$ be the additive group   and
 let $\mathbb{Z}_2^n=\left\{(v_1,\cdots,v_n): v_i\in \mathbb{Z}_2 \right\}$.
 Define a function $\Gamma: \mathbb{Z}^n\rightarrow \mathbb{Z}_2^n$
by
 $$\Gamma(q_1,\cdots,q_n) =(\gamma(q_1),\cdots,\gamma(q_n))$$ where
\begin{eqnarray*}
\gamma(q_i)=
\begin{cases}0 \ \text{ if }\ \ q_i \ \ \text{ is an even number,}\\
1 \ \text{ if }\ \ q_i \ \ \text{ is an odd number.}
\end{cases}
\end{eqnarray*}
We put
\begin{eqnarray*}
\Gamma\left(\bigcup_{\nu=1}^d(\mathbb{F}_\nu\cap
\Lambda_{\nu})\right)
 &=&\{{\bf z}_1,\cdots,{\bf z}_{L}\}\subset \mathbb{Z}_2^n.
\end{eqnarray*}
For  $\ell=1,\cdots,L,$ let
\begin{eqnarray*}
\Gamma^{-1}\{{\bf z}_\ell\}&=&\left\{\mathfrak{q}\in
\bigcup_{\nu=1}^d(\mathbb{F}_\nu\cap \Lambda_{\nu}):
 \Gamma(\mathfrak{q})={\bf z}_\ell\right\}.
\end{eqnarray*}
We then have
\begin{eqnarray}\label{m11g}
\bigcup_{\nu=1}^d(\mathbb{F}_\nu\cap
\Lambda_{\nu})=\bigcup_{\ell=1}^{L}\Gamma^{-1}\{{\bf z}_\ell\}.
\end{eqnarray}
Since  $\bigcup_{\nu=1}^d(\mathbb{F}_\nu\cap \Lambda_{\nu})$  is an
odd set,
 there exist ${\bf z}_{1},\cdots,{\bf z}_{s}\in
\Gamma\left(\bigcup_{\nu=1}^d(F_\nu\cap \Lambda_{\nu})\right)$ such
that
\begin{eqnarray}\label{9.15}
{\bf z}_{1}\oplus\cdots \oplus {\bf z}_{s}=(1,\cdots,1)\ \
\text{in}\ \mathbb{Z}_2^n.
\end{eqnarray}
 Assume the contrary to (\ref{nec5}). Then from (\ref{nec22}) and (\ref{nec23}), for all $  \xi_1,\cdots,\xi_d$ and all choices of
coefficients $ c_{\mathfrak{q}}^{\nu}\in\mathbb{R}\setminus\{0\}$,
we have in (\ref{sasa}),
  \begin{eqnarray}
 \mathcal{J}(P_{\mathbb{F}},\xi,{\bf 1}_Y)&=&\lim_{a_X\rightarrow
0,b_X\rightarrow
1_X(S_0)}\mathcal{J}(P_{\mathbb{F}},\xi,{\bf 1}_Y,a_X,b_X)\nonumber\\
&=&
 \lim_{a_X\rightarrow
0,b_X\rightarrow 1_X(S_0)}\int_{\prod_{j=1}^m\{a_j<t_j<b_j\}}
\sum_{\sigma\in O} (-1)^{|\sigma|} \exp\left(iP_{\mathbb{F}}(\xi,
\sigma t)\right)
\frac{dt_1}{t_1}\cdots\frac{dt_{m}}{t_m}\label{914}\\
&=&0.\nonumber
\end{eqnarray}
In view of (\ref{oww}),
\begin{eqnarray}\label{9.9}
\qquad P_{\mathbb{F}}(\xi,\sigma t)&=& \sum_{\nu=1}^d
\left(\sum_{\mathfrak{q}\in
 \mathbb{F}_\nu\cap\Lambda_\nu }
c^{\nu}_{\mathfrak{q}}\sigma^{\mathfrak{q}}t^{\mathfrak{q}}\right)\xi_\nu\\
&=&
 \sum_{\mathfrak{q}\in
\bigcup_{\nu=1}^d(\mathbb{F}_\nu\cap\Lambda_\nu)}
\xi_{\nu(\mathfrak{q})}c^{\nu(\mathfrak{q})}_{\mathfrak{q}}\sigma^{\mathfrak{q}}t^{\mathfrak{q}}
\ \  \text{with}\ \  t=(t_1,\cdots,t_m,{\bf 1}_Y).\nonumber
\end{eqnarray}
Rearrange monomials $t^{\mathfrak{q} }$ in (\ref{9.9}) by using
(\ref{m11g}) and reset their coefficients  so that \begin{itemize}
\item[(1)]
$\xi_{\nu(\mathfrak{q})}c^{\nu(\mathfrak{q})}_{\mathfrak{q}}=\zeta_\ell$
if $\mathfrak{q}\in\Gamma^{-1}\{{\bf z}_\ell\}$ for each
$\ell=1,\cdots,s$\,,
\item[(2)]
$\xi_{\nu(\mathfrak{q})}c^{\nu(\mathfrak{q})}_{\mathfrak{q}}=\zeta_{s+1}$
if $\mathfrak{q}\in
E=\bigcup_{\nu=1}^d(\mathbb{F}_\nu\cap\Lambda_\nu)\setminus
\bigcup_{\ell=1}^s \Gamma^{-1}\{{\bf z}_\ell\}$,
\end{itemize}
which is possible because (\ref{914}) holds for all $\xi$ and all
coefficients $c^{\nu}_{\mathfrak{q}}\in\mathbb{R}\setminus\{0\}$.
Then,
\begin{eqnarray}\label{1616m}
P_{\mathbb{F}}(\xi,\sigma t)&=&
\sum_{\ell=1}^{L}\sum_{\mathfrak{q}\in \Gamma^{-1}\{{\bf z}_\ell\} }
\xi_{\nu(\mathfrak{q})}c^{\nu(\mathfrak{q})}_{\mathfrak{q}}\sigma^{\mathfrak{q}}t^{\mathfrak{q}} \nonumber\\
&=&\zeta_1\sum_{\mathfrak{q}\in
\Gamma^{-1}\{{\bf
z}_1\}}\sigma^{\mathfrak{q}}t^{\mathfrak{q}}+\cdots+\zeta_{s}\sum_{\mathfrak{q}\in
\Gamma^{-1}\{{\bf
z}_s\}}\sigma^{\mathfrak{q}}t^{\mathfrak{q}}+\zeta_{s+1}\sum_{\mathfrak{q}\in
E} \sigma^{\mathfrak{q}}t^{\mathfrak{q}}  \\
&=&\sum_{\ell=1}^{s+1} Q_\ell(\sigma,t)\zeta_\ell. \nonumber
\end{eqnarray}
 Rewrite   $\mathcal{J}(P_{\mathbb{F}},\xi,{\bf
1}_Y,a_X,b_X)$ in (\ref{914}) as
\begin{eqnarray*}
\mathcal{L}(\zeta, a_X,b_X) &=&
\int_{\prod_{j=1}^m\{a_j<t_j<b_j\}}\sum_{\sigma\in O}
(-1)^{|\sigma|} \exp\left(i\sum_{\ell=1}^{s+1}
Q_\ell(\sigma,t)\zeta_\ell\right)
\frac{dt_1}{t_1}\cdots\frac{dt_{m}}{t_m}.
\end{eqnarray*}
 Then   we
see in view of (\ref{914}) that for all  $
\zeta=(\zeta_1,\cdots,\zeta_{s+1})\in\mathbb{R}^{s+1}$
\begin{eqnarray}\label{asw}
\mathcal{L}(\zeta)=\lim_{a_X\rightarrow 0,b_X\rightarrow
1_X(S_0)}\mathcal{L}(\zeta, a_X,b_X)\,=\,0.
\end{eqnarray}
On the other hand by Lemma \ref{lem83} and Theorem \ref{th60} with
Remark \ref{rem91},
\begin{equation*}
 \sup_{\xi,a_X,b_X\in I_X(S_0)}\left|\mathcal{J}(P_{\mathbb{F}},\xi,t_Y,a_X,b_X)\right|\le
C_R\prod_{\nu}\prod_{\mathfrak{q}\in\Lambda_\nu}
(|D_{\mathfrak{q}}c^\nu_{\mathfrak{q}}
|+1/|d_{\mathfrak{q}}c^\nu_{\mathfrak{q}} |)^{1/R}.
\end{equation*}
Here $D_{\mathfrak{q}}=\max\{\prod_{j=m+1}^n
|t_{j}^{q_j}|:a_{j}<|t_j|<b_j\}$ and
$d_{\mathfrak{q}}=\min\{\prod_{j=m+1}^n
|t_{j}^{q_j}|:a_{j}<|t_j|<b_j\}$.
 Thus, by simply plug $\xi_{\nu(\mathfrak{q})}=1$ where
  $\xi_{\nu(\mathfrak{q})}c^{\nu(\mathfrak{q})}_{\mathfrak{q}}=\zeta_\ell$ in (\ref{1616m}),
 \begin{eqnarray}\label{kff}
\quad \sup_{ a_X,b_X\in I_X(S_0)}\left|\mathcal{L}(\zeta,
a_X,b_X)\right|\le C_R\prod_{\ell=1}^{s+1}
(|\zeta_\ell|+1/|\zeta_\ell|)^{1/M}\ \text{for some large $M>0$}.
\end{eqnarray}

We now find a contradiction to (\ref{asw}). Let $f$ be a Schwartz
function on $\mathbb{R}^{s+1}$ of the form
$\widehat{f}(\zeta)=\prod_{\ell=1}^{s+1}
\widehat{f}_\ell(\zeta_\ell)$ with $f_\ell$ a Schwartz function on
$\mathbb{R}$. Then   from (\ref{kff}),
$$\sup_{ a_X,b_X\in I_X(S_0)}|\mathcal{L}(\zeta, a_X,b_X)\widehat{f}(\zeta)|\le
C_R\prod_{\ell=1}^{s+1}
(|\zeta_\ell|+1/|\zeta_\ell|)^{M}|\widehat{f}(\zeta)|$$ which is an
integrable function on $\mathbb{R}^{s+1}$. This enables us to use
the dominated convergence theorem for (\ref{asw}) multiplied by
$\widehat{f}(\zeta)$ to obtain that
\begin{eqnarray}\label{kff1}
\quad 0=\int_{\mathbb{R}^{s+1}}
\mathcal{L}(\zeta)\widehat{f}(\zeta)d\zeta&=&\lim_{a_X\rightarrow
0,b_X\rightarrow 1_X(S_0)}\int_{\mathbb{R}^{s+1}}\mathcal{L}(\zeta,
a_X,b_X)\widehat{f}(\zeta)d\zeta.
\end{eqnarray}
Rewrite the integral of the righthand side as follows. Interchange,
by Fubini's theorem, the order of integration and apply the Fourier
inversion formula for the Schwartz functions:
\begin{eqnarray*}
&&\int_{\mathbb{R}^{s+1}}\mathcal{L}(\zeta,
a_X,b_X)\widehat{f}(\zeta)d\zeta\\
&&=
 \int_{\mathbb{R}^{s+1}}\int_{\prod_{j=1}^m\{a_j<t_j<b_j\}}\sum_{\sigma\in
O} (-1)^{|\sigma|} \exp\left(i\sum_{\ell=1}^{s+1}
Q_\ell(\sigma,t)\zeta_\ell\right)\widehat{f}(\zeta)
\frac{dt_1}{t_1}\cdots\frac{dt_{m}}{t_m}d\zeta\nonumber\\
&& =
\int_{\prod_{j=1}^m\{a_j<t_j<b_j\}}\left[\int_{\mathbb{R}^{s+1}}\sum_{\sigma\in
O} (-1)^{|\sigma|} \exp\left(i\sum_{\ell=1}^{s+1}
Q_\ell(\sigma,t)\zeta_\ell\right)\widehat{f}(\zeta)d\zeta\right]
\frac{dt_1}{t_1}\cdots\frac{dt_{m}}{t_m}\\
&& = \int_{\prod_{j=1}^m\{a_j<t_j<b_j\}}\sum_{\sigma\in O}
(-1)^{|\sigma|} \prod_{\ell=1}^{s+1}
f_\ell\big(Q_\ell(\sigma,t)\big)
\frac{dt_1}{t_1}\cdots\frac{dt_{m}}{t_m}\,,\nonumber
\end{eqnarray*}
where $ Q_\ell(\sigma,t)$ is defined in   (\ref{1616m}).  Here we
choose $f_1,\cdots, f_s$ to be odd Schwartz functions on
$\mathbb{R}$ that are positive on $[0,\infty)$, and
$f_{s+1}(x)=e^{-x^2}$. By using
$\sigma^{\mathfrak{q}}=\sigma^{\Gamma(\mathfrak{q})}=\sigma^{{\bf
z}_\ell}$ for all $\mathfrak{q}\in \Gamma^{-1}({\bf z}_\ell)$ and
oddness of $f_\ell$,
\begin{eqnarray*}
 \sum_{\sigma\in O}(-1)^{|\sigma|} \prod_{\ell=1}^{s+1}
f_\ell\big(Q_\ell(\sigma,t)\big)  & =& \sum_{\sigma\in O}
(-1)^{|\sigma|} \prod_{\ell=1}^{s}
f_\ell(\sigma^{\mathfrak{q}}\sum_{\mathfrak{q}\in \Gamma^{-1}({\bf
z}_\ell)}t^{\mathfrak{q}})\exp\big(-|\sum_{\mathfrak{q}\in
E } \sigma^{\mathfrak{q}}t^{\mathfrak{q}}|^2\big)\\
&=&\sum_{\sigma\in O} (-1)^{|\sigma|} \sigma^{{\bf z}_1+\cdots+{\bf
z}_s}\prod_{\ell=1}^{s} f_\ell(\sum_{\mathfrak{q}\in
\Gamma^{-1}({\bf
z}_\ell)}t^{\mathfrak{q}})\exp\big(-|\sum_{\mathfrak{q}\in
E } \sigma^{\mathfrak{q}}t^{\mathfrak{q}}|^2\big)\\
&=&\sum_{\sigma\in O}\prod_{\ell=1}^{s} f_\ell(\sum_{\mathfrak{q}\in
\Gamma^{-1}({\bf
z}_\ell)}t^{\mathfrak{q}})\exp\big(-|\sum_{\mathfrak{q}\in E }
\sigma^{\mathfrak{q}}t^{\mathfrak{q}}|^2\big)>0\,,
\end{eqnarray*}
where the last equality follows from  (\ref{9.15}) and
 $$
\sigma^{{\bf z}_1+\cdots+{\bf z}_s}=\sigma^{{\bf
z}_1\oplus\cdots\oplus{\bf z}_s}=(-1)^{|\sigma|}.$$ So, the limit in
(\ref{kff1}) is positive, which is a contradiction.  Hence
(\ref{nec5}) is proved.
\end{proof}

\section{Proofs of Corollary \ref{ccoo} and Main Theorem \ref{main4}}\label{sec15}
\subsection{Proof of Corollary \ref{ccoo}}
\begin{proof}[Proof of Corollary \ref{ccoo}]
{\bf Sufficiency}. Suppose that
\begin{eqnarray}
 \text{
 $(\mathbb{F}_{n+1} \cap\Lambda_{n+1})\cup A$
is an even set whenever $\text{rank}(\mathbb{F}_{n+1} \cup A)\le
n-1$} \label{ysk}
 \end{eqnarray}
where $\mathbb{F}_{n+1}\in\mathcal{F}({\bf N}(\Lambda_{n+1},S))$ and
$A\subset\{{\bf e}_1,\cdots,{\bf e}_n\}.$  It suffices to deduce from (\ref{ysk})  that the hypothesis of Main
 Theorem \ref{main3} holds for the case $\Lambda=(\{{\bf
e}_1\},\cdots,\{{\bf e}_n\},\Lambda_{n+1})
 $, since we have already proved Main
 Theorem \ref{main3}.
  Let  $\text{rank}\left(
\bigcup_{\nu=1}^{n+1}\mathbb{F}_{\nu} \right)\le n-1 $ and
$\bigcap_{\nu=1}^{n+1}(\mathbb{F}_\nu^*)^{\circ}\ne\emptyset$. We
claim that $\bigcup_{\nu=1}^{n+1}(\mathbb{F}_{\nu}\cap\Lambda_\nu)$
is an even set.  Observe  that for every nonempty face
$\mathbb{F}_\nu\in \mathcal{F}({\bf N}(\{{\bf e}_\nu\},S))$,\,
$\mathbb{F}_{\nu}\cap\Lambda_\nu=\{{\bf e}_\nu\}$. Thus for
$A=\{{\bf e}_\nu:\mathbb{F}_\nu\ne \emptyset\ \  \text{for}\
\nu=1,\cdots,n\}$, we write
$$
 \bigcup_{\nu=1}^{n+1}\mathbb{F}_{\nu}\cap\Lambda_\nu = \left(
\mathbb{F}_{n+1}\cap\Lambda_{n+1}\right)  \cup A.
$$  By
(\ref{ysk}), $\bigcup_{\nu=1}^{n+1}\mathbb{F}_{\nu}\cap\Lambda_\nu$
is an even set.\\
 {\bf Necessity}.
 Suppose that (\ref{ysk}) does not hold.
  Then there exists $A=\{{\bf e}_{\nu}:\nu\in I\}\subset\{{\bf e}_1,\cdots,{\bf e}_n\}$   and $\mathbb{F}_{n+1}$ such that
$$\text{rank}\left(A\cup
\mathbb{F}_{n+1} \right)\le n-1\ \text{and}\ A\cup (\mathbb{F}_{n+1}
\cap\Lambda_{n+1}) \ \text{is odd.}$$  Let
$\text{Sp}(\mathbb{F}_{n+1})\cap \{{\bf e}_\nu:\nu\in S\}=\{{\bf
e}_{\nu_1},\cdots,{\bf e}_{\nu_k}\}$ where  $\{\nu_1,\cdots,\nu_k\}=S_1\subset
S$. Choose
\begin{itemize}
\item  For $\nu\in N_n\setminus I$, let $\mathbb{F}_\nu=\emptyset$ with
$(\mathbb{F}_\nu^*)^{\circ}=Z(S)\setminus\{0\}$.
\item For $\nu \in
I$, let $\mathbb{F}_\nu=\{{\bf e}_\nu\}+\mathbb{R}^{S_1}$ with
$$(\mathbb{F}_{\nu}^*)^{\circ}=\text{CoSp}^{\circ}(\{{\bf e}_j:j\in
S\setminus S_1\}\cup\{\pm{\bf e}_j:j\in N_n\setminus S\}).$$
\end{itemize}
Then we can observe that
$(\mathbb{F}_{n+1}^*)^{\circ}\subset(\mathbb{F}_{\nu}^*)^{\circ}$
for all $\nu=1,\cdots,n$. Therefore,
$$\text{rank}\left(\bigcup_{\nu=1}^{n+1}
 \mathbb{F}_\nu\right)=\text{rank}\left(A\cup
\mathbb{F}_{n+1} \right)  \le n-1\ \text{and}  \
\bigcap_{\nu=1}^{n+1}
\left(\mathbb{F}^*_{\nu}\right)^{\circ}\ne\emptyset, $$ but
$\bigcup_{\nu=1}^{n+1} \mathbb{F}_\nu\cap\Lambda_\nu=A\cup
(\mathbb{F}_{n+1} \cap\Lambda_{n+1}) $  is an odd set, which implies
that
  the hypothesis of Main Theorem \ref{main3}
breaks. Let $
\mathfrak{q}=(q_1,\cdots,q_n)\in\bigcap_{\nu=1}^d(\mathbb{F}_\nu^*)^{\circ}$ with $q_j=0$
for
   $j\in S_0\subset S$. We follow the same argument for the
   necessity proof in Section \ref{sec10}. Then  we  obtain
(\ref{921}) so that there exists  $P_\Lambda \in\mathcal{P}_\Lambda$
such that
\begin{eqnarray*}
  \left\|
  \mathcal{H}^{P_{\mathbb{F}}}_{1_{S_0}}\right\|_{L^2(\mathbb{R}^d)\rightarrow
  L^2(\mathbb{R}^d)}=\infty.
  \end{eqnarray*}
This implies   $
   \left\|\mathcal{H}^{P}_{1_S}
   \right\|_{L^2(\mathbb{R}^d)}=\infty$ by the following standard
   argument:
   For $\delta>0$, define a dilation
   $$f_{\delta}(x_1,\cdots,x_n,x_{n+1})=f(\delta^{-q_1}x_1,\cdots,\delta^{-q_n}x_n,\delta^{-d}x_{n+1})$$  and a
   measure
   \begin{eqnarray*}
   \mu_{\delta}^S(\phi)&=&\int_{I(S)}
   \phi(\delta^{-q_1}t_1,\cdots,\delta^{-q_n}t_n,\delta^{-d}P(t_1,\cdots,t_n))\frac{dt_1}{t_1}\cdots\frac{dt_n}{t_n}
   \end{eqnarray*}
   satisfying $\mathcal{H}^{P}_{1_S}(f)=[\mu_{\delta}^S*f_{\delta^{-1}}]_{\delta}.$
   By using
   \begin{eqnarray*}
  \lim_{\delta\rightarrow 0}\mu_{\delta}^S(\phi)= \int_{I(S_0)}
   \phi( t_1,\cdots, t_n,
   P_\mathbb{F}(t_1,\cdots,t_n))\frac{dt_1}{t_1}\cdots\frac{dt_n}{t_n},
   \end{eqnarray*}
   we conclude that the boundedness of
   $\left\|
  \mathcal{H}^{P}_{1_{S}}\right\|_{L^2(\mathbb{R}^d)\rightarrow
  L^2(\mathbb{R}^d)}$  implies the boundedness of  $\|
  \mathcal{H}^{P_{\mathbb{F}}}_{1_{S_0}} \|_{L^2(\mathbb{R}^d)\rightarrow
  L^2(\mathbb{R}^d)}$.
\end{proof}
\subsection{Proof of Main Theorem \ref{main4}}
We now develop the argument of \cite{MY} for $n\ge 3$, and
 obtain  Main Theorem \ref{main4}.
\begin{definition}
Let $P \in\mathcal{P}_\Lambda$ where $\Lambda=(\Lambda_\nu)$ with
$\Lambda_\nu\subset \mathbb{Z}_+^n$ and  $S\subset \{1,\cdots,n\} $.
Let $A\in GL(d)$.
 We set the collection of all $\vec{{\bf
 N}}(AP,S)$ with $A\in GL(d)$ in  Definition \ref{de0303},
 $$\mathcal{N}(P,S)=\{\vec{{\bf N}}(AP,S): A\in GL(d)\}.$$
Consider the
 collection of
equivalent classes  $\mathcal{A}(P)=\{[A]: A\in GL(d)\}$ with the
equivalence relation for $A,B\in GL(d)$, $$\text{$A \sim B$ if and
only if $\Lambda(AP)=\Lambda(BP)$.}$$ We see that $\mathcal{A}(P)$
is finite and
 write $\mathcal{A}(P)$ as $\{ [A_k]: k=1,\cdots,N\ \}.$
  Thus we can regard $\mathcal{N}(P,S)$ as the ordered $N$-tuples of
$\vec{{\bf N}}(A_kP,S)$
  (indeed, $Nd$ tuples  of Newton Polyhedrons ${\bf N}((A_kP)_\nu,S)$):
  $$\mathcal{N}(P,S)=\left(\vec{{\bf N}}(AP,S)
\right)_{[A]\in\mathcal{A}(P)}=\left( \vec{{\bf N}}(A_kP,S)
\right)_{k=1}^N.$$ So, we define the class of all  combinations of
$Nd$-tuples of faces by
\begin{eqnarray} \label{dqe}
\mathcal{F}(\mathcal{N}(P,S))&=&\left\{(\mathbb{F}_{[A]})_{[A]\in\mathcal{A}(P)}:\mathbb{F}_{[A]}\in
\mathcal{F}\left(\vec{{\bf N}}(AP,S)\right) \right\}\\
&=&\left\{(\mathbb{F}_{A_k})_{k=1}^N:\mathbb{F}_{A_k}\in
\mathcal{F}\left(\vec{{\bf N}}(A_kP,S)\right) \right\},\nonumber
\end{eqnarray}
 where
$\mathbb{F}_{A_k}=((\mathbb{F}_{A_k})_1,\cdots,(\mathbb{F}_{A_k})_d)$
with $ (\mathbb{F}_{A_k})_\nu\in \mathcal{F}\left({\bf
N}((A_kP)_\nu,S)  \right).$
\end{definition}
To prove Main Theorem \ref{main4}, we apply the
Proposition \ref{74sj} for every $\vec{{\bf N}}(A_kP,S)$ with $k=1,\cdots,N$ to obtain the following general form of cone decomposition.
\begin{lemma}\label{lemdal}
Let $P \in\mathcal{P}_\Lambda$ where $\Lambda=(\Lambda_\nu)$ with
$\Lambda_\nu\subset \mathbb{Z}_+^n$ and  $S\subset \{1,\cdots,n\} $.
Then,
$$
\bigcup_{(\mathbb{F}_{[A]})_{[A]\in\mathcal{A}(P)}\in\mathcal{F}(\mathcal{N}(P,S))}
 \left( \bigcap_{[A]\in\mathcal{A}(P)}  \rm{Cap}(\mathbb{F}_{[A]}^*)
 \right)=Z(S).$$
Given $\Lambda$,  there are
finitely many  Newton polyhedrons in
$ \{\vec{{\bf N}}(AP,S): A\in GL(d), P\in
\mathcal{P}_\Lambda\}$.
\end{lemma}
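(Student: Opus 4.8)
The plan is to reduce the first identity to Proposition~\ref{74sj} applied separately to each member of the finite family $\mathcal{N}(P,S)=\left(\vec{{\bf N}}(A_kP,S)\right)_{k=1}^N$, and then to combine these via the finite distributive law relating unions and intersections of sets. Throughout, the key observation is that the right-hand side $Z(S)$ in Proposition~\ref{74sj} depends only on $S$, not on the exponent tuple fed into the Newton polyhedra.

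First I would fix $P\in\mathcal{P}_\Lambda$ and recall that, by definition, $\mathcal{A}(P)=\{[A_1],\dots,[A_N]\}$ is finite, so that $\mathcal{N}(P,S)$ is an ordered $N$-tuple of $d$-tuples of Newton polyhedra. For each $k$, the $d$-tuple $\Lambda(A_kP)=\left(\Lambda((A_kP)_\nu)\right)_{\nu=1}^d$ consists of finite subsets of $\mathbb{Z}_+^n$, so Proposition~\ref{74sj} applies with $\Lambda$ replaced by $\Lambda(A_kP)$ and gives
$$\bigcup_{\mathbb{F}_{A_k}\in\mathcal{F}(\vec{{\bf N}}(A_kP,S))}\mathrm{Cap}(\mathbb{F}_{A_k}^*)=Z(S).$$
Intersecting these $N$ identities over $k=1,\dots,N$ yields $\bigcap_{k=1}^N\bigl(\bigcup_{\mathbb{F}_{A_k}}\mathrm{Cap}(\mathbb{F}_{A_k}^*)\bigr)=Z(S)$. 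It then remains to rewrite the left-hand side by the distributive law for a finite intersection of unions,
$$\bigcap_{k=1}^N\ \bigcup_{\mathbb{F}_{A_k}\in\mathcal{F}(\vec{{\bf N}}(A_kP,S))}\mathrm{Cap}(\mathbb{F}_{A_k}^*)\ =\ \bigcup_{(\mathbb{F}_{A_k})_{k=1}^N}\ \bigcap_{k=1}^N\mathrm{Cap}(\mathbb{F}_{A_k}^*),$$
where the outer union runs over all $N$-tuples $(\mathbb{F}_{A_k})_{k=1}^N$ with $\mathbb{F}_{A_k}\in\mathcal{F}(\vec{{\bf N}}(A_kP,S))$, i.e.\ exactly over $\mathcal{F}(\mathcal{N}(P,S))$ as defined in (\ref{dqe}). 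Matching the indices $[A]\leftrightarrow A_k$ gives the claimed identity $\bigcup_{(\mathbb{F}_{[A]})\in\mathcal{F}(\mathcal{N}(P,S))}\bigl(\bigcap_{[A]\in\mathcal{A}(P)}\mathrm{Cap}(\mathbb{F}_{[A]}^*)\bigr)=Z(S)$.

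For the final finiteness statement I would argue that every exponent occurring in any component of $AP$ already occurs among the $\Lambda_\mu$. Writing $(AP)_\nu=\sum_{\mu=1}^d a_{\nu\mu}P_\mu$ and $P_\mu=\sum_{\mathfrak{m}\in\Lambda_\mu}c_{\mathfrak{m}}^\mu t^{\mathfrak{m}}$, the monomials appearing in $(AP)_\nu$ have exponents in $\Lambda_{\mathrm{tot}}:=\bigcup_{\mu=1}^d\Lambda_\mu$, hence $\Lambda((AP)_\nu)\subseteq\Lambda_{\mathrm{tot}}$ for all $A\in GL(d)$, $P\in\mathcal{P}_\Lambda$ and $\nu$. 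Since $\Lambda_{\mathrm{tot}}$ is a fixed finite subset of $\mathbb{Z}_+^n$, there are at most $2^{|\Lambda_{\mathrm{tot}}|}$ choices for each $\Lambda((AP)_\nu)$ and hence at most $2^{d|\Lambda_{\mathrm{tot}}|}$ possible $d$-tuples $\Lambda(AP)$; as each such tuple determines $\vec{{\bf N}}(AP,S)$ uniquely through Definition~\ref{ded28}, the family $\{\vec{{\bf N}}(AP,S):A\in GL(d),\,P\in\mathcal{P}_\Lambda\}$ is finite. (The same bound re-proves the finiteness of $\mathcal{A}(P)$ invoked above.)

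There is no serious obstacle here: the analytic content is entirely inside Proposition~\ref{74sj}, and everything else is set-theoretic bookkeeping. The only points needing a little care are checking that the index set of the outer union produced by the distributive law is literally $\mathcal{F}(\mathcal{N}(P,S))$ from (\ref{dqe}), and noting that $Z(S)$ is unchanged when $\Lambda$ is replaced by any $\Lambda(A_kP)$ — both immediate from the definitions.
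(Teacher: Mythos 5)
Your proof is correct and follows essentially the same route as the paper: apply Proposition~\ref{74sj} to each $\vec{{\bf N}}(A_kP,S)$ in the finite family $\mathcal{A}(P)$ and then identify the intersection of unions with the union over $\mathcal{F}(\mathcal{N}(P,S))$ of intersections via the finite distributive law. Your explicit argument for the finiteness statement (all exponents of $(AP)_\nu$ lie in the fixed finite set $\bigcup_\mu\Lambda_\mu$) is a fine addition that the paper leaves implicit.
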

\begin{proof}
By (\ref{dqe}), the left hand side above   is
$$
\bigcup_{(\mathbb{F}_{A_k})_{k=1}^N\in\mathcal{F}(\mathcal{N}(P,S))}
 \left( \bigcap_{k=1}^N \bigcap_{\nu=1}^d(\mathbb{F}_{A_k})^*_\nu \right)=\bigcap_{k=1}^N\bigcup_{
\mathbb{F}_{A_k}\in \mathcal{F}\left( \vec{{\bf
N}} \left(A_kP,S\right)\right)}
\bigcap_{\nu=1}^d(\mathbb{F}_{A_k})^*_\nu.$$ For each fixed $A_k$, Proposition \ref{74sj} yields that
$$\bigcup_{ \mathbb{F}_{A_k}\in \mathcal{F}\left( {\bf
N}\left(A_kP,S\right)\right)}
\bigcap_{\nu=1}^d(\mathbb{F}_{A_k})^*_\nu   =   Z(S),$$ which proves
Lemma \ref{lemdal}.
\end{proof}
 To each $[A]\in\mathcal{A}(P)$, we first assign  a $d$-tuple of faces $\mathbb{F}_{A}\in \mathcal{F}\left( \vec{{\bf
N}}\left(AP,S\right)\right)$. Next, fix
 \begin{equation}\label{fc1}
\left(\mathbb{F}_{A}\right)_{[A]\in \mathcal{A}(P)}.
\end{equation}
To show Main Theorem \ref{main4}, in view of Lemma \ref{lemdal}, it suffices to show that
\begin{eqnarray}
\quad \left\|\sum_{J \in Z}H_J^{P_\Lambda}\right\|\le C\   \text{where $Z =\bigcap_{[A]\in\mathcal{A}(P)} {\rm Cap}(
\mathbb{F}^*_A)$ with $\mathbb{F}_A$ chosen in (\ref{fc1})}.\label{3p3p}
\end{eqnarray}
To show (\ref{3p3p}), we can replace $H_J^{P_\Lambda}$  by $H_J^{UP_\Lambda}$  for some $U\in GL(d)$ and prove that
\begin{eqnarray}
\left\|\sum_{J\in Z  }H_J^{P_\Lambda}\right\|=\left\|\sum_{J\in Z }H_J^{UP_\Lambda}\right\|\le C \label{p5p5}
\end{eqnarray}
where the equality  follows from
 \begin{eqnarray*}
 \int f(x-P(t)) \prod_{\nu=1}^d\frac{\chi(2^{j_\nu}t_\nu)}{t_\nu}dt=
\int f(U^{-1}(Ux-UP(t)) \prod_{\nu=1}^d\frac{\chi(2^{j_\nu}t_\nu)}{t_\nu}dt.
\end{eqnarray*}
Without the disjointness of $\Lambda_\nu$'s, we are lack of
  the decay condition (\ref{nniiii}) in Remark \ref{rem00} and (\ref{nnii}) of Theorem \ref{th60}. In order to recover this,
 we shall modify the proof of \cite{MY} and find an appropriate $U$ to satisfy the desirable  decay estimate
 in Lemma \ref{lemari}.
We work this process for $d=3$. Let $[A_1]\in\mathcal{A}(P)$ with $A_1=I$.
Then
$$ (A_1P)(t) =P(t)=\left( \sum_{\mathfrak{m}\in
\Lambda((A_1P)_\nu)=\Lambda(P_\nu)}c^{\nu}_{\mathfrak{m}}t^{\mathfrak{m}}\right)_{\nu=1}^3.
$$
Take any vector $\mathfrak{m}(A_1,1)\in (\mathbb{F}_{A_1})_1\cap
\Lambda((A_1P)_1)$ where $\mathbb{F}_{A_1}\in\mathcal{F}(\vec{{\bf
N}}(A_1P,S))$ was chosen in (\ref{fc1}) with
$[A_1]\in\mathcal{A}(P)$. Define
\begin{eqnarray*}
A_2=\left(
\begin{matrix}
1&0&0 \\
-\frac{c^2_{\mathfrak{m}(A_1,1)}}{ c^1_{\mathfrak{m}(A_1,1)} } &1&0 \\
-\frac{c^3_{\mathfrak{m}(A_1,1)}}{ c^1_{\mathfrak{m}(A_1,1)} } & 0&1
\end{matrix}
\right)\ \ \text{so that}\ \ A_2A_1P(t)=\left(\begin{matrix}
(A_1P)_1(t)\\
(A_1P)_2(t)-\frac{c^2_{\mathfrak{m}(A_1,1)}}{ c^1_{\mathfrak{m}(A_1,1)} }(A_1P)_1(t) \\
(A_1P)_3(t)-\frac{c^3_{\mathfrak{m}(A_1,1)}}{ c^1_{\mathfrak{m}(A_1,1)} }(A_1P)_1(t)
\end{matrix}\right)
\end{eqnarray*}
where
\begin{itemize}
\item $t^{\mathfrak{m}(A_1,1)}$ does not appear in each of $2^{th}$  and $3^{rd}$
components  of    $ A_2A_1P(t) $.
\end{itemize}
Next choose $\mathfrak{m}(A_2,2)\in
\left(\mathbb{F}_{A_2A_1}\right)_2\cap  \Lambda((A_2A_1P)_2)$ where
$\mathbb{F}_{A_2A_1}\in\mathcal{F}(\vec{{\bf N}}(A_2A_1P,S))$ was chosen
in (\ref{fc1}) with $[A_2A_1]\in\mathcal{A}(P)$. Define a matrix
\begin{eqnarray*}
A_3=\left(
\begin{matrix}
1&0&0 \\
0&1&0  \\
0 & -\frac{c^3_{\mathfrak{m}(A_2,2)}}{ c^2_{\mathfrak{m}(A_2,2)} }&1
\end{matrix}
\right) \text{ so that}\
A_3A_2A_1P(t)=\left(\begin{matrix}
(A_2A_1P)_1(t)=(A_1P)_1(t)\\
(A_2A_1P)_2(t) \\
(A_2A_1P)_3(t)-\frac{c^3_{\mathfrak{m}(A_2,2)}}{ c^2_{\mathfrak{m}(A_2,2)} }(A_2A_1P)_2(t)
\end{matrix}\right)
\end{eqnarray*}
where
\begin{eqnarray}
&&\text{ $t^{\mathfrak{m}(A_1,1)}$ does not appear in each of $2^{th}$ and $3^{rd}$
components   of $ A_3A_2A_1P(t) $},\label{huk1}\\
&&\text{ $t^{\mathfrak{m}(A_2,2)}$ does not appear in the $3^{rd}$   component  of $ A_3A_2A_1P(t) $.}\label{huk2}
\end{eqnarray}
Choose $\mathfrak{m}(A_3,3)\in
\left(\mathbb{F}_{A_3A_2A_1}\right)_3\cap\,
\Lambda((A_3A_2A_1P)_3)$ where
$\mathbb{F}_{A_3A_2A_1}\in\mathcal{F}(\vec{{\bf N}}(A_3A_2A_1P,S))$ was
chosen in (\ref{fc1}) with $[A_3A_2A_1]\in\mathcal{A}(P)$.   Since
$$ \mathfrak{m}(A_k,k)\in (\mathbb{F}_{A_k \cdots A_1})_k\ \ \text{and}\ \   J\in \bigcap_{[A]\in\mathcal{A}(P)} {\rm
Cap}(\mathbb{F}^*_A)\subset
(\mathbb{F}_{A_k \cdots A_1})_k^*\ \ \text{for $k=1,\cdots,3$},$$
we have for each $k=1,2,3$,
\begin{eqnarray}\label{huk3}
2^{-J\cdot \mathfrak{m}(A_k,k)} \ge  2^{-J\cdot \mathfrak{m}}\ \
\text{for} \ \mathfrak{m}\in \Lambda((A_k\cdots
A_1P)_k)
\end{eqnarray}
where $ \Lambda((A_k\cdots
A_1P)_k)=\Lambda((A_3A_2A_1P)_k)$ for each $k$ by construction above.
\begin{lemma}\label{lemari}
Let $U=A_3A_2A_1$ and $\mathbb{F}_U=\mathbb{F}_{A_3A_2A_1}\in\mathcal{F}(\vec{{\bf N}}(UP,S))$
where $A_1,A_2,A_3$ and $\mathbb{F}_{A_3A_2A_1}$ were defined above. For $\mathbb{G}\in \mathcal{F}(\vec{{\bf N}}(UP,S))$ such that $\mathbb{G}\succeq \mathbb{F}_U$, let
$$\mathcal{I}_J([UP]_{\mathbb{G}},\xi)=\int e^{i\left( \sum_{\nu=1}^3 \left(\sum_{\mathfrak{m}\in\mathbb{G}_\nu\cap \Lambda((UP)_\nu)}
2^{-J\cdot\mathfrak{m}}c^\nu_{\mathfrak{m}}t^{\mathfrak{m}}\right)\xi_\nu\right)}\prod_{\ell=1}^n h(t_\ell)dt.$$
Then for $J\in Z=\bigcap_{A\in\mathcal{A}(P)} {\rm
Cap}(\mathbb{F}^*_A)\subset
  {\rm
Cap}\left(\mathbb{F}_{U}^*\right)$, there exists $C>0$ and
$\delta$ that are independent of $J,\xi$ satisfying:
\begin{equation}\label{huk4}
 \left| \mathcal{I}_J([UP]_\mathbb{G},\xi)\right|\le C\min\left\{|2^{-J\cdot\mathfrak{m}}\xi_\nu|^{-\delta}:\mathfrak{m}\in \Lambda((UP)_\nu),\ \nu=1,2,3\right\}.
 \end{equation}
 \end{lemma}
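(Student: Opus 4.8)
\textbf{Proof proposal for Lemma \ref{lemari}.}
The plan is to exploit the multidimensional Van der Corput lemma, as in Remark \ref{rem00}, together with the key structural features \eqref{huk1}--\eqref{huk2} produced by the successive change of basis $U=A_3A_2A_1$. The point of the construction is precisely that, after applying $U$, each monomial $t^{\mathfrak{m}(A_k,k)}$ survives in the $k$-th component of $UP$ but has been eliminated from all later components; this is the analogue of the mutual disjointness hypothesis of Main Theorem \ref{main3} that was needed to guarantee the decay estimate \eqref{nniiii}. First I would fix $J\in Z\subset \mathrm{Cap}(\mathbb{F}_U^*)$ and $\mathbb{G}\succeq\mathbb{F}_U$, and reduce \eqref{huk4} to producing, for each $\nu=1,2,3$, a single exponent $\mathfrak{n}_\nu\in\mathbb{G}_\nu\cap\Lambda((UP)_\nu)$ for which $|2^{-J\cdot\mathfrak{n}_\nu}\xi_\nu|^{-\delta}$ dominates the integral: indeed by \eqref{huk3} and the dominating property \eqref{dar} (applied to the face $\mathbb{F}_U$ and its cone, using $J\in(\mathbb{F}_U)_\nu^*$), for every $\mathfrak{m}\in\Lambda((UP)_\nu)$ one has $2^{-J\cdot\mathfrak{m}}\le 2^{-J\cdot\mathfrak{m}(A_\nu,\nu)}$, so a bound in terms of $\mathfrak{m}(A_\nu,\nu)$ automatically upgrades to the $\min$ over all $\mathfrak{m}\in\Lambda((UP)_\nu)$ asserted in \eqref{huk4}.

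Next I would set up the Van der Corput estimate variable by variable. Because $\mathfrak{m}(A_1,1)$ appears only in the first component of $UP$, differentiating the phase in the direction of a variable $t_\ell$ with $(\mathfrak{m}(A_1,1))_\ell\ge 1$ isolates the monomial $c^1_{\mathfrak{m}(A_1,1)}2^{-J\cdot\mathfrak{m}(A_1,1)}\xi_1 t^{\mathfrak{m}(A_1,1)}$ up to lower-order terms on the support $|t_\ell|\approx 1$; the same works for $\mathfrak{m}(A_2,2)$ in the second component (using \eqref{huk2}) and for $\mathfrak{m}(A_3,3)$ in the third. A cleaner way to organize this, following \cite{MY}, is to localize dyadically in the frequency ratios $2^{-J\cdot\mathfrak{m}(A_\alpha,\alpha)}\xi_\alpha$ relative to $2^{-J\cdot\mathfrak{m}(A_\beta,\beta)}\xi_\beta$ (as in the $\widehat{A_J^{(\alpha,\beta)}}$ decomposition of Section \ref{suffes}) so that one of the three terms dominates; on each such piece one applies the one- or two-dimensional Van der Corput / stationary phase bound to the innermost integrations and the trivial bound $|\int h|\lesssim 1$ to the rest, obtaining $|2^{-J\cdot\mathfrak{m}(A_\alpha,\alpha)}\xi_\alpha|^{-\delta}$ decay in the dominant variable. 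Summing the finitely many dyadic pieces and combining with the reduction of the previous paragraph yields \eqref{huk4} with a $\delta$ and $C$ depending only on $n$, $d$ and the maximal degree.

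The main obstacle is the cross-term interference: a monomial $t^{\mathfrak{m}}$ with $\mathfrak{m}\ne\mathfrak{m}(A_\nu,\nu)$ sitting in the $\nu$-th component could a priori be comparable in size to $t^{\mathfrak{m}(A_\nu,\nu)}$ after the rescaling $2^{-J\cdot(\cdot)}$, and could cancel the oscillation. This is controlled by the vertex/face dominating property: since $J\in(\mathbb{F}_U)_\nu^*$ and $\mathfrak{m}(A_\nu,\nu)\in(\mathbb{F}_U)_\nu$, we have $2^{-J\cdot\mathfrak{m}(A_\nu,\nu)}\ge 2^{-J\cdot\mathfrak{m}}$ for all $\mathfrak{m}\in\Lambda((UP)_\nu)$, and in fact the derivative lower bound $\bigl|(\partial/\partial t)^\alpha(\text{phase})\bigr|\gtrsim 2^{-J\cdot\mathfrak{m}(A_\nu,\nu)}|\xi_\nu|$ of \eqref{dary} holds for a suitable multi-index $\alpha$ on the region $|t_\ell|\approx 1$; so the potentially interfering monomials are genuinely lower order and do not destroy the estimate. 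The one subtlety is that $\mathbb{G}_\nu$ may be larger than $(\mathbb{F}_U)_\nu$, so one must check $\mathbb{G}\succeq\mathbb{F}_U$ still forces $2^{-J\cdot\mathfrak{m}}\le 2^{-J\cdot\mathfrak{m}(A_\nu,\nu)}$ for $\mathfrak{m}\in\mathbb{G}_\nu\cap\Lambda((UP)_\nu)$; this follows because $J\in\mathrm{Cap}(\mathbb{F}_U^*)\subset(\mathbb{F}_U)_\nu^*$ and $\mathbb{F}_U\preceq\mathbb{G}$, so the supporting hyperplane of $(\mathbb{F}_U)_\nu$ with normal (essentially) $J$ attains its minimum over $\mathbb{G}_\nu$ at $\mathfrak{m}(A_\nu,\nu)$, and the argument goes through verbatim.
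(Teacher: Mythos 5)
Your proposal is correct and takes essentially the same route as the paper: reduce via (\ref{huk3}) to decay in the three distinguished quantities $|2^{-J\cdot \mathfrak{m}(A_k,k)}\xi_k|$, split into cases according to which of them dominates (the paper does this with a direct ``$\gg$'' trichotomy rather than a dyadic frequency decomposition), and apply the polynomial Van der Corput lemma, with the eliminations (\ref{huk1})--(\ref{huk2}) guaranteeing an uncancelled coefficient. Your stated ``main obstacle'' is slightly misplaced, since the case analysis is really needed to rule out cross-component cancellation in the coefficient of one shared monomial (e.g.\ $\xi_1$ and $\xi_2$ both multiplying $t^{\mathfrak{m}(A_2,2)}$) rather than interference from other monomials within the same component, which the single-coefficient Van der Corput bound tolerates; but your dominance splitting handles exactly this, as in the paper.
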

\begin{proof}[Proof of (\ref{huk4})]
By (\ref{huk3}), it suffices to show that
\begin{eqnarray}\label{huk5}
\left| \mathcal{I}_J([UP]_\mathbb{G},\xi)\right|\le C |2^{-J\cdot \mathfrak{m}(A_k,k)}\xi_k|^{-\delta}\ \ \text{for}\ \  k=1,2,3.
\end{eqnarray}
The case $k=1$ follows from (\ref{huk1}). To show (\ref{huk5}) for $k=2$, it suffices to consider
 $|2^{-J\cdot \mathfrak{m}(A_2,2)}\xi_2|\gg |2^{-J\cdot \mathfrak{m}(A_1,1)}\xi_1|$. This and (\ref{huk2}) yield the desired result for $k=2$.
Since  (\ref{huk5}) holds for $k=1,2$,  we may assume that  $|2^{-J\cdot \mathfrak{m}(A_3,3)}\xi_3|\gg |2^{-J\cdot \mathfrak{m}(A_k,k)}\xi_k|$ for $k=1,2$. So,  the case $k=3$ is  obtained by  the Van der Corput lemma.
\end{proof}
\begin{proof}[Proof of (\ref{3p3p})]
The first hypothesis of Theorem  \ref{th60} is satisfied by Lemma
\ref{lemari}. The second hypothesis of Theorem  \ref{th60} is also satisfied by the hypothesis (\ref{enm1}) of Main Theorem
\ref{main4} such that
\begin{eqnarray*}
\bigcup_{\nu=1}^d [\mathbb{K}_U]_\nu\cap [\Lambda(UP_\Lambda)]_\nu \ \text{
 is an even set}
\end{eqnarray*}
whenever
$$\mathbb{K}_U\in \left\{\mathbb{K}_U\in\mathcal{F}(\vec{{\bf
 N}}(UP,S)): \bigcap_{\nu=1}^d([\mathbb{K}_U]_\nu^{*})^{\circ}\ne \emptyset  \ \ \text{and}\ \
 \text{rank}\left(\bigcup_{\nu=1}^d  [\mathbb{K}_U]_\nu \right)\le n-1   \right\}.
$$
Therefore by applying Theorem \ref{th60} for  $Z=\bigcap_{[A]\in\mathcal{A}(P)} {\rm Cap}(
\mathbb{F}^*_A)\subset {\rm
Cap}\mathbb{F}_{U}^*$ with $U=A_3A_2A_1$, we obtain  (\ref{3p3p}).
\end{proof}
Finally, the proof of necessity part of Main Theorem \ref{main4} is
the same as that of Main Theorems 1-3 once  it is assumed that the
evenness hypothesis (\ref{enm1}) is broken with a fixed matrix $A$.

\end{document}